\numberwithin{equation}{section}
\newcommand{\DC}{{\mathtt D \mathtt C}}
\newcommand{\sM}{{\frak s}_M}
\newcommand{\la}{\langle}
\newcommand{\ra}{\rangle}
\newcommand{\Sbot}{{\mathbb S}^\bot}
\newcommand{\Lip}{\mathrm{Lip}}
\renewcommand{\t}{\tau}
\newcommand{\io}{\iota}
\newcommand{\Splus}{{\mathbb S}_+} 
\renewcommand{\S}{{\mathbb S}} 
\newcommand{\tLm}{{\mathtt \Lambda} }
\newtheorem{theorem}{Theorem}[section]
\newtheorem{proposition}[theorem]{Proposition}
\newtheorem{lemma}[theorem]{Lemma}
\newtheorem{corollary}[theorem]{Corollary}
\newtheorem{remark}[theorem]{Remark}
\newtheorem{remarks}[theorem]{Remark}
\newtheorem{definition}[theorem]{Definition}
\newcommand{\be}{\begin{equation}}
\newcommand{\ee}{\end{equation}}
\newcommand{\teta}{\theta}
\newcommand{\om}{\omega}
\newcommand{\e}{\varepsilon}
\newcommand{\ov}{\overline}
\newcommand{\R}{\mathbb R}
\newcommand{\C}{\mathbb C}
\newcommand{\ac}{\nu}
\newcommand{\Z}{\mathbb Z}
\newcommand{\N}{\mathbb N}
\newcommand{\T}{\mathbb T}
\renewcommand{\a }{\alpha }
\renewcommand{\b }{\beta }
\newcommand{\s }{\sigma }
\newcommand{\ii }{{\rm i} }
\renewcommand{\d }{\delta }
\newcommand{\g }{\gamma}
\newcommand{\vphi}{\varphi }
\renewcommand{\o }{\omega }
\newcommand{\Dom}{\om \cdot \pa_\vphi}
\newcommand{\mG}{\mathcal G}
\newcommand{\mU}{U}
\newcommand{\tOm}{\mathtt \Omega}
\newcommand{\ph}{\varphi}
\newcommand{\mL}{\mathtt{L}}
\newcommand{\mF}{\mathcal{F}}
\newcommand{\mE}{\mathcal{E}}
\newcommand{\mP}{\mathcal{P}}
\newcommand{\pa}{\partial}
\newcommand{\ompaph}{\om \cdot \partial_\ph}
\def\ba{\begin{aligned}}
\def\ea{\end{aligned}}
\def\beginm{\begin{multline}}
\def\endm{\end{multline}}
\newcommand{\mB}{\mathcal{B}}
\newcommand{\Lipg}{{\rm{Lip}(\g)}}
\begin{document}

\title{{\bf Large KAM tori for \\
quasi-linear  perturbations of KdV}}

\date{}

\author{Massimiliano Berti, Thomas Kappeler, Riccardo Montalto}

\maketitle

\noindent
{\bf Abstract.}
In this paper we prove the persistence of 
space periodic multi-solitons  of arbitrary size
under any quasi-linear Hamiltonian perturbation, 
which is smooth and sufficiently small.
This answers positively a longstanding question 
whether KAM techniques can be further developed  
to prove the existence of quasi-periodic solutions of arbitrary size of 
strongly nonlinear perturbations of integrable PDEs.
\\[1mm]
\noindent
{\em Keywords:} KdV equation, KAM for PDEs, 
Birkhoff coordinates, quasi-periodic solutions, 
finite-gap solutions, periodic multi-solitons. 

\noindent
{\em MSC 2010:} 37K55, 35Q53, 37K10.

\tableofcontents

\section{Introduction}\label{introduction}

The Korteweg-de Vries (KdV) equation
\begin{equation}\label{kdv}
\partial_t u = - \partial_x^3u + 6 u \partial_x u
\end{equation}
is one of the most important model equations for dispersive phenomena 
with numerous applications in
physics. The seminal discovery 
in the late sixties
that \eqref{kdv} admits infinitely many conservation laws
(\cite{Lax}, \cite{MGK}),
and the development of the 
inverse scattering transform method 
(\cite{GGKM}), 
led to the modern theory of infinite dimensional integrable systems 
(e.g. \cite{DKN}, \cite{FT} and references therein). 

One of the most distinguished features of \eqref{kdv} is the existence of sharply localized travelling waves of arbitrarily large amplitudes and particle like properties.
 Kruskal and Zabusky,
who discovered them in numerical experiments in the early sixties, both on the real line and in the periodic setup (cf. \cite{KZ}),
coined the name solitons for them. More generally, 
they found solutions, which are localized near finitely many points in space. In the periodic setup, these solutions are referred 
to as  {	\it periodic multi-solitons} or {\it finite gap} solutions. Due to their 
importance 
in applications, various stability aspects, in particular 
long time asymptotics, 
have been extensively studied.  
A major question  concerns the persistence of 
the multi-solitons under perturbations. 
In the last thirty years, KAM methods
pioneered by Kolmogorov, Arnold, and Moser to treat perturbations of integrable systems
of finite dimension, were developed for PDEs. 
Most of the work focused on 
small amplitude solutions or semilinear perturbations. 
It has been a longstanding question from experts in PDEs
and  in infinite dimensional dynamical systems
whether KAM results
hold also for solutions of arbitrary size under quasi-linear perturbations, 
called strongly nonlinear  in \cite{K},  of integrable PDEs. 

The aim of this paper is to prove  
the first persistence result of periodic
multi-solitons of KdV with {\it arbitrary} size  under {\it strongly nonlinear} 
perturbations -- see Theorem \ref{KAM finite gap} below.
Note that in this case, 
it was not even known if there exist solutions of the
perturbed equation which are global in time. 

To describe the class of perturbations of the KdV equation considered, 
we recall that 
\eqref{kdv}, with space periodic variable $ x $ in $ \T_1 := \T / \Z$,   
can be written in Hamiltonian form,
\begin{equation}\label{hamiltoniana kdv0}
\partial_t u = \partial_x \nabla {H}^{kdv}(u)\, , \qquad 
{H}^{kdv}(u) := 
\int_{\T_1}  \frac12 (\pa_x u)^2(x) + u^3 (x) \, d x 
\, , 
\end{equation}
where $\nabla H^{kdv}$ denotes the $L^2-$gradient of $H^{kdv}$
and $\partial_x$ is the Poisson structure, corresponding
to the Poisson bracket, defined for functionals $F, G$ 
by  
$$
\{ F , G \} := \int_{\T_1} \nabla F \partial_x \nabla G \, d x \, .
$$
We consider 
{\em quasi-linear}  perturbations of \eqref{kdv} 
of the form
\begin{equation}\label{main equation}
\partial_t u  = - \partial_x^3 u + 6 u \partial_x u + 
\e  a(x, u(x), \pa_x u(x))  \partial_x^3u + \cdots   \, ,
\end{equation}
where $\e \in (0, 1)$ is a small parameter and 
$\, \cdots \,$ stands for terms containing 
$x-$derivatives of  $ u $ up to order two. We assume that the perturbation
is  {\it Hamiltonian}, namely  
$a \partial_x^3u + \cdots  = \partial_x \nabla P $,
where  $\nabla P $ is the $L^2$-gradient  of 
a functional of the form
\begin{equation}\label{hamiltoniana perturbazione}
P (u) := \int_{\T_1} f(x, u(x), u_x(x))\, d x \, ,  \qquad u_x := \partial_x u \, . 
\end{equation}
Note that the nonlinear term 
\begin{align}\label{QLPe} 
\partial_x \nabla P ( u )  =   
 (\pa^2_{u_x} f ) (x, u(x), u_x(x) ) \pa_x^3 u  + \cdots
\end{align}
has  the same order of the linear vector field $ \partial_x^3 u $ in  \eqref{kdv}.
 When written as 
a Hamiltonian PDE, \eqref{main equation} takes the form
\begin{equation}\label{hamiltonian PDE}
\partial_t u = \partial_x \nabla {H}_\e(u)
\end{equation}
with  Hamiltonian 
\begin{equation}\label{hamiltoniana kdv}
H_\e (u) := H^{kdv}(u) + \e P(u)\,. 
\end{equation}
To state our main result, we first need to introduce some more notation. 
Note  that the mean $u \mapsto \int_{\T_1} u(x)\, d x$ is a prime integral 
for \eqref{hamiltonian PDE}. 
We restrict  our attention to  functions with zero average 
(cf. Remark (R2) below) and  
choose as phase spaces for  \eqref{hamiltonian PDE} 
the scale of Sobolev spaces  $H_0^s (\T_1) $, $s \ge 0$, 
$$
H_0^s (\T_1) := \Big\{ u \in H^s (\T_1) : 
\int_{\T_1} u(x)\, d x = 0 \Big\} \, , 
\qquad  L_0^2 (\T_1) \equiv H_0^0 (\T_1)\, , 
$$
where 
$$
H^s(\T_1) := \Big\{ u(x) = 
\sum_{n \in \Z} u_n e^{\ii 2 \pi n x} \, : \, 
 \| u \|_{H^s_x} 
:= \big( \sum_{n \in \Z} \langle n \rangle^{2 s} |u_n|^2 \big)^{\frac12} < \infty \, , \
u_{-n} = \overline{u_{n}}\,\,\,  \forall n \in \Z \Big\} 
$$
and $\langle n \rangle := {\rm max}\{ 1, |n| \}$ for any $n \in \Z$. We also write $L^2 (\T_1)$ for $H^0 (\T_1)$.  The symplectic form on $ L^2_0 (\T_1)$ 
is given by
\be\label{KdV symplectic}
{\mathcal W}_{L^2_0} (u, v) := 
\int_{\T_1} (\partial_x^{- 1} u ) v\, d x \, , \qquad 
\partial_x^{- 1} u = 
\sum_{n \ne 0} \frac{1}{\ii n} u_n e^{\ii 2 \pi n x}\, ,
\qquad
\forall u, v \in L^2_0(\T_1)\, .
\ee
Note that the Hamiltonian vector field $ X_H (u) = \pa_x \nabla H (u) $, associated with the Hamiltonian
$ H $, is determined by 
$ d H (u)[ \cdot ] = {\mathcal W}_{L^2_0} ( X_H , \cdot ) $. 
\\[1mm]
{\em $\Splus-$gap potentials.} 
According to \cite{KP}, the KdV equation \eqref{kdv} 
on the torus is an integrable PDE in the strongest possible 
sense, meaning that it admits globally defined canonical coordinates
on $H^0_0(\T_1)$,  
so that \eqref{kdv} can be solved by quadrature, see 
Theorem \ref{Birkhoff coordinates for KdV} for a
precise statement. These coordinates,  referred to as Birkhoff coordinates, 
are particularly suited to 
describe the finite gap solutions of KdV. 
Each of these solutions is contained in
a finite dimensional integrable subsystem $\mathcal M_{\Splus}$,
of dimension $2 |\Splus|$,  with $\Splus$ being a finite subset of 
$ \N_+ := \{1, 2, \ldots \} $.
Each $\mathcal M_{\Splus}$ 
can be described in terms of
action angle coordinates $\theta := (\theta_n)_{n \in \Splus}$, 
$I := (I_n)_{n \in \Splus}$ :   
there exists a real analytic
canonical diffeomorphism
\begin{equation}\label{finite gap in coordinate originarie}
\Psi_{\Splus} : \T^{\Splus} \times \R^{\Splus}_{> 0}
\to \mathcal M_{\Splus}\, , \quad 
(\theta, I) \,  \mapsto \, q(\theta, \cdot; I) \, , 
\end{equation}
so that the pull-back of the KdV Hamiltonian, 
$H^{kdv} \circ \Psi_{\Splus}$,
is a real analytic function of the actions $I$ alone. 
Elements in $\mathcal M_{\Splus}$ are referred to as
$\Splus-$gap potentials. 
The function 
$ q(\varphi, x) \equiv q(\varphi, x; I) $ is real analytic. 
In action angle coordinates, any solution of \eqref{kdv}
on $\mathcal M_{\Splus}$ is given by
$$
 \theta(t) = \theta^{(0)} - \omega^{kdv} (\ac) t \, , 
 \qquad  I(t) = \ac \, ,
$$
where $\theta^{(0)}$ denote the initial angles, 
$\ac \in \R^{\Splus}_{>0}$ the initial actions, and 
$\omega^{kdv} (\ac)$ the frequency vector
\begin{equation}\label{frequency finite gap}
\omega^{kdv} (\ac) :=
\partial_I (H^{kdv} \circ \Psi_{\Splus}) (\ac)  
\in \R^{\Splus} \, . 
\end{equation}  
The corresponding solution on $\mathcal M_{\Splus}$ is then given by
$$
q\big(  \theta^{(0)} - \omega^{kdv}(\ac)t,\, x; \, \ac \big) 
$$
and hence  is quasi-periodic in time.
The map $\R^{\mathbb S_+}_{> 0} \to \R^{\Splus}\, ,
  \ac \mapsto \omega^{kdv} (\ac) $,  is a local 
diffeomorphism (see Remark \ref{rem:diffeo}).
In the whole paper $\Xi \subset \R^{\Splus}_{> 0}$  is the closure of a bounded open nonempty set
such  that 
$ \om^{kdv} $ defined in \eqref{frequency finite gap}  is a diffeomorphism onto its image. Moreover we require that, for some $\delta > 0 $,  
\begin{equation}\label{azioni come parametri 1}
\Xi \, + \, B_{\mathbb S_+}(\delta) \, 
\subseteq \, \R^{\mathbb S_+}_{> 0} \, , 
\end{equation}
where $B_{\mathbb S_+}(\delta)$ denotes the ball of radius 
$\delta$ in $\R^{\mathbb S_+}$ centered at the origin. 
Furthermore we  introduce
the Sobolev spaces of periodic, real valued functions
\begin{equation} \label{unified norm}
H^s := 
\Big\{ f = \sum_{(\ell,j) \in \Z^{\Splus} \times \Z} f_{\ell, j} \, e^{\ii(\ell \cdot \ph + 2 \pi jx)} : \ 
\| f \|_s^2 := \sum_{(\ell,j) \in \Z^{\Splus} \times \Z} | f_{\ell, j}|^2 \langle \ell,j \rangle^{2s} < \infty,\,\,\,
\overline{f_{\ell, j}} = f_{-(\ell, j)} \Big\}
\end{equation}
where  $\langle \ell,j \rangle := \max \{ 1, |\ell|, |j| \} $ and we recall the Sobolev embedding  
$ H^s \subset C^0 ( \T^{\Splus} \times \T_1)$
for  $ s >  (|\Splus| + 1)/2 $. 

\smallskip

The main result of this paper, Theorem \ref{KAM finite gap} below, 
proves that for $\e$ small enough 
and for $\ac $ in a subset of $ \Xi $
of asymptotically full Lebesgue measure, 
there is a quasi-periodic solution of equation \eqref{hamiltonian PDE} 
close to the finite gap solution 
$q(\theta^{(0)} - \omega^{kdv} (\ac) t, x; \ac)$
of \eqref{kdv}. More precisely, the following holds:

\begin{theorem}\label{KAM finite gap}
Let $ f $ be a function in 
$ {\cal C}^{\infty}(\T_1 \times \R \times \R, \R)$
and $\Splus$ a finite subset of $\N_+$. 
Then there exist $ \bar s > (|{\Splus}| +1) / 2  $ 
 and $\e_0 \in (0, 1)$  so that 
for any $\e \in (0, \e_0)$,  
there exists a measurable subset $\Xi_\e \subseteq \Xi$ 
with asymptotically full measure, i.e. 
$$
\lim_{\e \to 0} |\Xi \setminus \Xi_\e| = 0 \, , 
$$ 
and,  for any $\ac \in \Xi_\e $,  
there exists a quasi-periodic solution $ u_\e ( \omega_\e( \ac) t, x; \nu )$
of the perturbed KdV equation \eqref{hamiltonian PDE} with
$ u_\e (\cdot, \cdot \, ; \nu ) $ in $ H^{\bar s}(\T^{\Splus} \times \T_1)$
and frequency vector $ \omega_\e( \ac) \in \R^{\Splus} $ satisfying  
$$
\lim_{\e \to 0} \| u_\e (\cdot, \cdot \, ; \nu )  - q (\cdot, \cdot \, ; \nu )  \|_{\bar s} = 0 \, ,  \quad
\lim_{\e \to 0} \omega_\e(\ac) = - \omega^{kdv} (\ac) \, , 
$$
where $q (\vphi, x; \ac ) $, defined 
in \eqref{finite gap in coordinate originarie}, is the 
$\Splus-$gap potential in $\mathcal M_{\Splus}$ with frequency vector
$\omega^{kdv} (\ac)$, defined in \eqref{frequency finite gap}. 
The quasi-periodic solution $ u_\e ( \omega_\e( \ac) t, x; \nu ) $ is linearly stable. 
\end{theorem}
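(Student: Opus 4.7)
The plan is to run a Nash--Moser/KAM iteration in Birkhoff coordinates around the finite-gap torus $\mathcal M_{\Splus}$. Using the symplectomorphism $\Psi_{\Splus}$ of \eqref{finite gap in coordinate originarie}, complemented by the Birkhoff coordinates on the $L^2$-orthogonal complement, I introduce action--angle variables $(\theta,y)\in\T^{\Splus}\times\R^{\Splus}$ on the tangential modes (with $y=I-\ac$) and keep Birkhoff coordinates $z$ on the normal modes. In these coordinates the Hamiltonian $H_\e=H^{kdv}+\e P$ takes the form
\[
H_\e = -\omega^{kdv}(\ac)\cdot y + \tfrac12\langle\Om^{kdv}(\ac)z,z\rangle + \text{higher order in }(y,z) + \e\,(P\circ\Psi_{\Splus}),
\]
and I look for an embedded invariant torus $\iota:\T^{\Splus}\to\T^{\Splus}\times\R^{\Splus}\times L^2_\bot$ supporting a linear flow with frequency $\omega_\e(\ac)$ close to $-\omega^{kdv}(\ac)$, i.e.\ a zero of the nonlinear functional $F(\iota,\omega) := \omega\cdot\pa_\vphi\iota(\vphi) - X_{H_\e}(\iota(\vphi))$.

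\textbf{Linearized operator and regularization.} At each step of the Nash--Moser scheme one needs an approximate right inverse of $dF(\iota_n)$. Following the symplectic decoupling of Berti--Bolle, the action-angle block is handled by solving triangular homological equations, and the core problem reduces to inverting the linearized KdV operator $\mathcal L_\omega$ restricted to the normal subspace. This operator has the structure
\[
\mathcal L_\omega \;=\; \omega\cdot\pa_\vphi - \pa_x\circ\bigl(1+\e\,a_2(\vphi,x)\bigr)\pa_x^2 + \text{lower-order differential terms},
\]
where $a_2=\pa^2_{u_x}f(x,q,\pa_x q)+O(\e)$ is the non-constant coefficient carried by the quasi-linear term \eqref{QLPe}, and, crucially, the lower-order coefficients already at $\e=0$ depend on $(\vphi,x)$ through the finite-gap potential $q(\vphi,x;\ac)$ of arbitrary size. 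The heart of the proof is a sequence of symplectic conjugations -- a quasi-periodic reparametrization of time, a $\vphi$-dependent diffeomorphism of $\T_1$, and multiplications by Fourier multipliers and pseudo-differential operators of non-positive order, in the spirit of Iooss--Plotnikov--Toland and Baldi--Berti--Montalto -- that reduce $\mathcal L_\omega$ to a constant coefficient operator $\omega\cdot\pa_\vphi + \mathtt m_3\pa_x^3 + \mathtt m_1\pa_x$ modulo a smoothing remainder, while preserving the Hamiltonian structure.

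\textbf{KAM reducibility and Melnikov conditions.} Once $\mathcal L_\omega$ is in constant coefficients up to a smoothing tail, a KAM reducibility iteration completes the diagonalization: at each step I solve a homological equation $\omega\cdot\pa_\vphi\Psi+[\mathcal D,\Psi]=R$, where $\mathcal D=\diag(\mu_j)$ collects the final normal eigenvalues, with asymptotics $\mu_j=-(2\pi j)^3+\mathtt m_1(2\pi j)+O(j^{-1})$. Convergence is granted by first and second Melnikov conditions $|\omega\cdot\ell+\mu_j|\ge\gamma\langle\ell\rangle^{-\tau}$, $|\omega\cdot\ell+\mu_j-\mu_k|\ge\gamma\langle\ell\rangle^{-\tau}$, imposed on a subset $\Xi_\e\subset\Xi$. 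Taking $\ac$ as parameter -- permissible because $\omega^{kdv}$ is a diffeomorphism onto its image on $\Xi$ by \eqref{frequency finite gap} and \eqref{azioni come parametri 1} -- and exploiting the asymptotic separation of the $\mu_j$'s, a R\"ussmann-type measure estimate gives $|\Xi\setminus\Xi_\e|\to 0$ as $\e\to 0$. Convergence of the Nash--Moser iteration in the scale $H^s$ defined in \eqref{unified norm}, for some fixed $\bar s>(|{\Splus}|+1)/2$, then produces the embedded torus $\iota_\infty$, hence the quasi-periodic solution $u_\e$; linear stability is immediate from the diagonal form of the reduced linearized operator, whose spectrum is purely imaginary.

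\textbf{Main obstacle.} Compared to previous quasi-linear KAM results for perturbations of small-amplitude equilibria, the essential new difficulty is that the unperturbed linearized KdV operator along a large finite-gap solution is already a non-constant coefficient operator at $\e=0$. The regularization procedure must therefore combine an \emph{exact} analysis of the normal frequencies $\Om^{kdv}(\ac)$ coming from the Birkhoff structure of KdV with an order-reduction technique that absorbs the $\e$-small quasi-linear corrections, while preserving the Hamiltonian nature of the equations at every step and producing operators whose remainders are tame in the Sobolev scale \eqref{unified norm}.
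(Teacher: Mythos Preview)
Your overall architecture---Nash--Moser iteration, symplectic decoupling \`a la Berti--Bolle, regularization of the normal linearized operator to constant coefficients, KAM reducibility under second Melnikov conditions, measure estimates via the local diffeomorphism property of $\omega^{kdv}$---matches the paper. But there is a genuine gap at the heart of the argument, precisely where you switch coordinates.

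You propose to use the Birkhoff map $\Psi^{kdv}$ (action--angle on tangential, Birkhoff on normal) and then write the linearized normal operator as
\[
\mathcal L_\omega \;=\; \omega\cdot\pa_\vphi - \pa_x\circ\bigl(1+\e\,a_2(\vphi,x)\bigr)\pa_x^2 + \text{lower order}.
\]
These two statements are incompatible. In the original $u$-variable the linearized operator indeed has this pseudo-differential form, but then the unperturbed ($\e=0$) part along a \emph{large} finite-gap solution carries the variable coefficient $6q(\vphi,x;\ac)$, which is of order one and cannot be removed by an $\e$-small conjugation. In Birkhoff coordinates the unperturbed part is exactly diagonal, as you want, but the perturbation $\e\,P\circ\Psi^{kdv}$ is \emph{not} a pseudo-differential operator: the Birkhoff map is a global nonlinear, non-local transformation, and its differential does not have a pseudo-differential expansion. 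The $1$-smoothing property of $\Psi^{kdv}$ (Kuksin--Perelman, Kappeler--Schaad--Topalov) is known, but the paper explicitly notes that it is \emph{not sufficient} for quasi-linear perturbations; the regularization procedure you outline (time reparametrization, torus diffeomorphism, Fourier multipliers) presupposes that $\mathcal L_\omega$ is a classical $\Psi$DO of order three with smooth symbol, and nothing in your setup provides this.

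The paper's resolution is to work in a \emph{different} canonical chart $\Psi_\ac$ (Theorem~\ref{modified Birkhoff map}, constructed in \cite{Kappeler-Montalto-pseudo}) which agrees with $\Psi^{kdv}$ on the finite-gap manifold $\{w=0\}$ but for $w\neq 0$ admits an expansion $\Psi_\ac(\theta,y,w)=\Psi^{kdv}(\theta,\ac+y,0)+w+\sum_{k=1}^M a_{-k}^\Psi\,\pa_x^{-k}w+\mathcal R_M^\Psi$ into pseudo-differential operators plus an arbitrarily smoothing, tame remainder. In these coordinates $H^{kdv}$ is still in normal form up to order three (so the unperturbed linearization along the torus is the Fourier multiplier $\pa_x\Omega^{kdv}(D;\ac)$), \emph{and} the linearized perturbed vector field is a genuine $\Psi$DO of order three plus a tame smoothing remainder (Lemmata~\ref{differential nabla perturbation-true}, \ref{differential nabla kdv remainder-true}). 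Only then does the reduction procedure you describe go through, with the additional subtlety that a quantitative Egorov theorem (Section~\ref{sezione astratta egorov}) is needed to conjugate the full pseudo-differential tail, not just the top three orders. Your ``main obstacle'' paragraph correctly senses the tension, but the mechanism you sketch (``combine exact Birkhoff analysis with order-reduction for $\e$-small corrections'') does not exist: the two steps cannot be separated, and the construction of coordinates doing both simultaneously is the principal new ingredient of the paper.
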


We make the following remarks:
\begin{description}
\item (R1) 
The result of Theorem \ref{KAM finite gap} holds for any density 
$ f $ of class $ {\cal C}^{s_*} $ with $s_*  $ large enough 
and for any family of $\Splus-$gap solutions of KdV
with average $c$ 
(cf. \cite[page 112]{KP}).  
We assume in this paper that $ f $ is $ {\cal C}^{\infty} $
and $c=0 $ merely to simplify the exposition.
\item (R2)  The methods developed to prove 
Theorem \ref{KAM finite gap} are quite general. We expect that
analogous results  
can 
also be proved for  equations in the KdV hierarchy  
as well as for the defocusing NLS and equations in 
the NLS hierarchy such as the defocusing mKdV equation. 
\end{description}

\noindent 
Let us now comment on the novelty of our result. 
\begin{enumerate}
\item  The first KAM results for \eqref{kdv} were proved 
by Kuksin \cite{K2-KdV}  (cf. also \cite{K}) and 
Kappeler-P\"oschel  \cite{KP} 
for finite gap solutions  of {\it arbitrary} size, 
subject to {\em semilinear} perturbation. It means that  
the density $f$ of
 \eqref{hamiltoniana perturbazione} does {\em not} 
 depend on $ u_x $, and hence 
$$ 
\partial_x \nabla { P}(u) = 
\partial_u^2 f(x, u(x))u_x + \cdots 
$$ 
depends only on $u$ and $u_x$ (note that
in addition the dependence on $u_x$ is linear). 
Subsequently,   Liu-Yuan \cite{LY} proved KAM results for 
semilinear perturbations of small amplitude solutions
of the derivative NLS  and the Benjamin-Ono equations 
whereas Zhang-Gao-Yuan \cite{ZGY} proved analogous results for
the reversible derivative NLS. 
More recently, Berti-Biasco-Procesi \cite{BBiP1}-\cite{BBiP2} 
proved existence of small quasi-periodic solutions of derivative Klein-Gordon equations. For the NLS and the beam equations in higher space dimension, 
KAM results were obtained by Eliasson-Kuksin \cite{EK} and, repsectively, Eliasson-Gr\'ebert-Kuksin \cite{EGK}.  
In all these works, the perturbations are required to be semilinear. 

On the other hand, the  
results in Baldi-Berti-Montalto \cite{BBM-auto}, \cite{BBMmKdV},  for
{\it quasi-linear} perturbations of the KdV and mKdV equations
concern only {\em small amplitude} solutions. 
The proof of these results makes use of pseudo-differential calculus and relies  in a decisive manner on the differential nature of KdV. The latter property 
cannot be read off  in the action-angle coordinates  outside  a neighborhood of the
origin. 
We also mention that  the results in Giuliani \cite{Giu} for KdV, 
Feola-Procesi \cite{FP} for NLS, 
Berti-Montalto \cite{Berti-Montalto} and Baldi-Berti-Haus-Montalto \cite{BBHM} for water waves concern small amplitude solutions. 
 
Thus the challenging problem of the persistence 
of  the finite gap solutions of  \eqref{kdv} of {\it arbitrary} size 
under {	\it strongly nonlinear} perturbations
\eqref{QLPe}  remained completely open. 
\item 
In \cite{BKM}, we used the ``$ 1 $-smoothing property" of the
Birkhoff coordinates of the defocusing NLS equation on $ \T^1 $,
established in \cite{KST2}, to prove a KAM  result 
for {\em semilinear} perturbations. 
This property 
is used to deal with the difficulties related to the  double
``asymptotic multiplicity" of the frequencies.
For the KdV equation,
a ``$ 1 $-smoothing property" has been proved 
first near the equilibrium  in \cite{KPe} and 
then in general in \cite{KST}. However it 
is not sufficient for dealing with the quasi-linear perturbations \eqref{QLPe}. 
\item
The proof of Theorem \ref{KAM finite gap} uses the canonical coordinates constructed in \cite{Kappeler-Montalto-pseudo}
near any given compact family of $\Splus-$gap potentials
in $\mathcal M_{\Splus}$.
These coordinates admit
an expansion in terms of pseudo-differential operators
up to a remainder of arbitrary negative order. 
Due to its length, this part of the proof of 
Theorem \ref{KAM finite gap} has been published in a 
separate paper \cite{Kappeler-Montalto-pseudo}. 
The important fact that 
the linearized  Hamiltonian vector field of $  H_\e $, 
expressed in these coordinates, 
admits  an expansion in terms of pseudo-differential operators
is proved in Section \ref{espansione linearized}.
\end{enumerate}

\noindent
{\em Ideas of the proof.}
Theorem \ref{KAM finite gap} 
is proved by means of a Nash-Moser iterative scheme to construct 
quasi-periodic solutions near 
a given family of $\Splus-$gap solutions.
 One of the main issues 
 concerns the invertibility of the linearized Hamiltonian operator 
$$ 
\omega \cdot \partial_\vphi - \partial_x d \nabla H_\e (u (\vphi, x) ) 
$$
where $ u (\om t, x) $ is an approximate quasi-periodic 
solution  of \eqref{hamiltonian PDE}, close to the
finite gap solutions \eqref{finite gap in coordinate originarie}. 
In \cite{Kappeler-Montalto-pseudo} a coordinate chart
$$
\Psi : \,(\theta, y, w) \mapsto \Psi(\theta, y, w) \in 
 L^2_0(\T_1) 
$$
is constructed in a 
neighborhood of 
$\T_1^{\Splus} \times \{ \ac \} \times \{ 0 \} $ in  $\T_1^{\Splus} \times \R^{\Splus}_{> 0} \times L^2_\bot(\T_1) $, which  admits a pseudo-differential expansion, up to regularizing operators 
satisfying tame estimates. Here 
\be\label{Def:L2bot}
L^2_\bot(\T_1) := \Big\{ w = \sum_{n \in \Sbot} 
w_n e^{\ii 2\pi n x} \in L^2_0 (\T_1) 
\Big\}\, , \qquad \Sbot := 
\Z \setminus \big( \Splus \cup (-\Splus) \cup \{ 0 \} \big) \, .
\ee
Important properties of the map $\Psi$
are that the set of $\Splus-$gap solutions of \eqref{kdv} 
in the range of $\Psi$ is characterized by the equation $ w = 0$, 
and that
the linearized equation along the manifold 
$\{ w = 0, y=0 \}$  
is in diagonal form with coefficients only depending on $\ac$, see Theorem \ref{modified Birkhoff map}-{\bf (AE3)}.
This allows us to prove (cf. Section \ref{espansione linearized})
that when expressed in these coordinates, 
\begin{itemize}
\item  the linearized Hamiltonian vector field, acting in the subspace normal to the tangent space of $\mathcal M_{\Splus}$ 
at a given $\Splus-$gap potential, admits an expansion in terms of classical 
pseudo-differential operators, up to smoothing remainders which satisfy tame estimates
in $ H^s (\T_1) $ --
see Lemma \ref{differential nabla perturbation-true}  and \ref{differential nabla kdv remainder-true}.
\end{itemize}
We then evaluate the linearized Hamiltonian vector field
at an approximately invariant torus embedding
$ \vphi \mapsto 
 (\theta (\vphi), y(\vphi), w(\vphi)) $,  
obtaining in this way a 
quasi-periodic  operator, 
acting on  
the normal subspace $ L^2_\bot(\T_1) $, 
of the form  (cf. Lemma  \ref{Lemma di partenza riduzione})
\be\label{L0-intro}
{\cal L}^{(0)}_\omega = 
\omega \cdot \pa_\vphi - \Pi_{\bot} \Big( a_3^{(0)} \pa_x^3 + 
2 (a_3^{(0)})_x \pa_x^2 + a_1^{(0)} \pa_x  + 
 \sum_{k=0}^{M} a_{-k}^{(0)} \pa_x^{-k}    +
Q_{-1}^{kdv} (D ; \om) \Big)  + {\mathcal R_M^{(0)}}  
\ee
where $ a_{-k}^{(0)}  (\vphi, x) $, $ k = - 3, \ldots, M $ 
are real valued functions, $ a_3^{(0)} \sim - 1 $,  and 
${\mathcal R_M^{(0)}}  $ is a $ \vphi $-dependent 
operator which satisfies tame estimates in the 
Sobolev spaces $ H^s ( \T^{\Splus}_\varphi \times \T_1 )$. 
The order $ M $ of regularization will be fixed in Section \ref{sec: reducibility}.  
The term $Q_{-1}^{kdv} (D ; \om) $ is not small in $ \e $. 
It is the Fourier multiplier with symbol 
$ \omega^{kdv}_n - (2 \pi n)^3 $ which takes into account the difference 
between the KdV-frequencies and their approximation
by the frequencies of the Airy equation. 
We remark that the pseudo-differential 
operator $   \sum_{k=0}^{M} a_{-k}^{(0)} \pa_x^{-k}  $
is not present in \cite{BBM-auto}.
In order to show 
that the operator $ {\mathcal R_M^{(0)}}$ is tame
(see Lemma \ref{lem:tame2}) 
we prove in Section \ref{sec canonical coordinates}
novel results of independent interest concerning the extensions of the differential of the 
canonical coordinates of \cite{Kappeler-Montalto-pseudo}
to Sobolev spaces of negative order (cf. Corollaries \ref{corollary transpose negative sobolev} and \ref{corollary Birkhoff negative sobolev}).

The form \eqref{L0-intro} suggests to introduce
preliminary transformations 
which diagonalize $ {\cal L}_\omega^{(0) } $ 
up to a pseudo-differential operator of order zero plus 
a regularizing remainder (see Section \ref{linearizzato siti normali}). 
These transformations,  inspired by  \cite{BBM-auto}, 
are Fourier integral operators generated as 
symplectic flows 
of linear Hamiltonian transport PDEs and pseudo-differential maps. 
In order to conjugate the pseudo-differential terms $ a_{-k}^{(0)} \pa_x^{-k} $
we need a quantitative version  of the Egorov theorem that we prove in Section 
\ref{sezione astratta egorov}. 
We remark that in contrast to \cite{BBM-auto} 
we implement in Section \ref{sec:RT}
the time-quasi-periodic reparametrization {\em before}
the conjugation with  the  transport flow
to avoid a technical difficulty 
in the conjugation of the remainders obtained  in the Egorov theorem. 
Furthermore, we mention that related transformations 
have been developed in \cite{BGMR1}
for proving upper bounds for the growth of the Sobolev norms for various classes of PDEs.   

At this point, using properties of the KdV frequencies that we collect in Section \ref{sec:kdvfre},  we are able to perform a KAM reducibility scheme 
to complete the diagonalization of $ {\cal L}^{(0)}_\omega $
 for most values of $ \nu $.
In view of the remainder $ {\mathcal R_M^{(0)}}$  
in \eqref{L0-intro} (and others generated by the Egorov theorem) 
we implement in Section \ref{sec: reducibility} 
an iterative scheme along the lines in  Berti-Montalto \cite{Berti-Montalto}. 
The proofs are by and large self-contained.

\medskip

\noindent
{\em Notation.} We denote by $ \N := \{0, 1, 2, \ldots \} $ the natural numbers and 
set $\N_+ := \{ 1, 2, \ldots \}$. 
Given a Banach space $ X $ with norm $\|\cdot \|_X$, 
we denote by 
by $ H^s_\vphi X = H^s (\T^{\Splus}, X)  $, $ s \in \N $,  
the Sobolev space of functions $ f : \T^{\Splus} \to X $ equipped with the norm 
$$
\| f \|_{H^s_\vphi X} := \| f \|_{L^2_\vphi X} + \max_{|\beta|=s} \| \pa_\vphi^\beta f \|_{L^2_\vphi X}  \, .
$$
We also denote $ H^0_\vphi X = L^2_\vphi X$. We recall that the continuous 
Sobolev embedding theorem is stronger in the case $X$ is a Hilbert space $H$,  namely 
\be\label{SoboX}
H^s (\T^{\Splus}, X) \hookrightarrow  {\cal C}^0 (\T^{\Splus}, X)  \, , \quad \forall 
s >  |\Splus|  \, , \qquad
H^s (\T^{\Splus}, H) \hookrightarrow  {\cal C}^0 (\T^{\Splus}, H)  \, , \quad \forall 
s >  |\Splus|/2  \, .     
\ee  
Let $ H^s_x :=  H^s (\T_1) $, $s \ge 0$, and denote by 
$\big( f, g \big)_{L^2_x}$
the $ L^2-$inner product on $L^2_x \equiv H^0_x$,
\begin{equation}\label{inner product}
\big( f, g \big)_{L^2_x} := \int_{\T_1} f(x) g(x) \,  dx \, . 
\end{equation}
For any $s \geq 0$, let $h^s_0 := \big\{ z = (z_n)_{n \in \Z} \in h^s :  z_0 = 0 \big\}$ where
$h^s $ is the sequence space 
$$
h^s := 
\Big\{ z = (z_n)_{n \in \Z} \, , \ z_n \in \C \, :   \| z \|_s^2 := \sum_{n \in \Z} \langle n \rangle^{2s} |z_n|^2 < \infty\,, \,\,\, 
\overline {z_n} = z_{- n} \, , \,\, \forall n \in \Z \Big\}\, . 
$$ 
By ${\cal F}$ we denote the Fourier transform,
${\cal F} : L^2(\T_1) \to h^0$, $u \mapsto (u_n)_{n \in \Z}$, where $u_n := \int_{\T_1} u(x) e^{- \ii 2 \pi n x}\, d x$ for any $n \in \Z$ 
and by ${\cal F}^{- 1} : h^0 \to L^2(\T_1)$ its inverse.

Furthermore, we denote by $\Pi_\bot$ the $L^2-$orthogonal projector onto the subspace $ L^2_\bot (\T_1) $, defined in \eqref{Def:L2bot},
and by $ \Pi_0^\bot $ the one onto the subspace of functions  with zero average. We set
\be\label{Hsbot}
H^s_\bot(\T_1) := H^s(\T_1) \cap L^2_\bot(\T_1)
\ee
and
$ H^s_\bot \equiv H^s_\bot(\T^{\mathbb S_+} \times \T_1) := \big\{ u \in H^s(\T^{\mathbb S_+} \times \T_1) : u(\vphi , \cdot) \in L^2_\bot(\T_1)\big\} $, 
which is an algebra for   $ s \geq s_0 := [ \frac{|\Splus| + 1}{2}] + 1$. 
The space $H_\bot^0$  is also denoted by $L^2_\bot $.
Let 
\be\label{EsEs}
 {\mathcal E}_s := \T^{\Splus} \times \R^{\Splus} \times H^s_\bot(\T_1)\,, \quad 
{\mathcal E} \equiv {\mathcal E}_0 \, , \qquad \quad
E_s := \R^{\Splus} \times \R^{\Splus} \times H^s_\bot(\T_1)\,, \quad E \equiv E_0\, , 
\ee  
where $ H^s_\bot(\T_1)$ is defined in \eqref{Hsbot}.
Elements of ${\mathcal E}$ are denoted by $\frak x = (\theta, y , w)$
and the ones of its tangent space $E$ by 
$ \widehat{ \frak x} = (\widehat \theta, \widehat y,\widehat w)$.
For $s < 0$, we consider the Sobolev space $ H^s_\bot (\T_1)$ of distributions, 
and the spaces
$ {\mathcal E}_s $ and $ E_s$  are defined in a similar way as in \eqref{EsEs}. 
Note that $ H^{-s}_\bot (\T_1)$ is the dual space of 
$ H^s_\bot (\T_1)$. On $E$, we denote by 
$ \langle \cdot, \cdot \rangle$ the inner  product, defined  
by  
\be\label{bi-form}
\big\langle (\widehat \theta_1, \widehat y_1, \widehat w_1), (\widehat \theta_2, \widehat y_2, \widehat w_2)  \big\rangle := \widehat \theta_1 \cdot \widehat \theta_2 + \widehat y_1 \cdot \widehat y_2  + \big( \widehat w_1, \widehat w_2 \big)_{L^2_x} \, . 
\ee
By a slight abuse of notation, $\Pi_\bot$ 
also denotes the projector of $E_s$  onto its third component,
$$
\Pi_\bot : E_s  \to H^s_\bot(\T_1) \, , \, \quad 
(\widehat \theta, \widehat y, \widehat w) \mapsto \widehat w\, .
$$
For any $0 < \delta < 1$, we denote by
$B_{\Splus}(\delta)$ the open ball in $\R^{\Splus}$ of radius $\delta$  centered at $0$ and by $B_\bot^s(\delta)$, $s \ge 0$, the corresponding one in $H^s_\bot(\T_1)$ where we also write  
$B_\bot(\delta)$ for $B^0_\bot(\delta)$. These balls are used
to define the following open neighborhoods in $\mathcal E_s$, 
$s \in \N$,
\be\label{Vns}
{\cal V}^s(\delta) := \T^{\Splus}_1 \times  B_{\Splus}(\delta) \times 
B_\bot^s(\delta)  \,, \qquad  {\cal V}(\delta) \equiv {\cal V}^0(\delta) \, , \qquad 0 < \delta < 1\,  .
\ee
The space of bounded linear operators between Banach spaces 
$ X_1, X_2 $ is denoted by $ \mathcal B (X_1,X_2)$ and endowed
with the operator norm. 
For two linear operators $ A, B $ we denote by $[ A, B]$
  their commutator, $ [ A, B] := A B - B A $ and by 
  $ A^\top $ the transpose of $A$ 
with respect  to the scalar product \eqref{inner product}. 

\noindent
Throughout the paper, $ \mathtt \Omega \subseteq \R^{\mathbb S_+}$  
denotes a parameter set of frequency vectors.
Given any function $f :  \mathtt \Omega \to X$, we denote by 
$\Delta_\omega f$ the difference function
$$
\Delta_\omega f:   \mathtt \Omega \times 
 \mathtt \Omega \to X \, , \quad
(\omega_1, \omega_2)  \mapsto f(\omega_1) - f (\omega_2)\, . 
$$

\noindent
{\it Acknowledgements.} 
This project was motivated by questions raised by S. Kuksin and V. Zakharov. We would like to thank them for their input.
We would also like to thank M. Procesi for very valuable feedback. 
Part of this work was written during the stay of M. Berti at FIM. We thank FIM for the kind hospitality and support. 
In addition, the research was partially supported by PRIN 
2015KB9WPT005   
(M.B.), by the Swiss National Science Foundation (T.K., R.M.), and by INDAM-GNFM (R.M.).

\section{Preliminaries}

\subsection{Function spaces and linear operators}\label{subsec:function spaces}

In the paper we consider real or complex functions 
$u( \ph, x; \om)$, 
$(\ph,x) \in \T^{\Splus} \times \T_1 $, depending on a parameter $ \om  \in   \mathtt \Omega $ 
in a Lipschitz way,  where ${ \mathtt \Omega}$ is a subset of $\R^{\Splus} $. 
Given  $ 0 < \g < 1$ and $s \ge 0$, we define the norm
\begin{equation} \label{def norm Lip Stein uniform}
\begin{aligned}
& \| u \|_{s,{  \mathtt \Omega}}^\Lipg   := \| u \|_s^\Lipg :=  
\| u \|_s^{\rm sup} + \gamma \| u  \|_s^{\rm lip} \\
& \| u \|^{\rm sup} := \sup_{\om \in {  \mathtt \Omega}}   \| u(\omega) \|_s, \quad \| u \|_s^{\rm lip} :=   
\sup_{\om_1, \om_2 \in {  \mathtt \Omega} \, , \omega_1 \neq \omega_2}  \frac{\| u(\omega_1) - u(\omega_2) \|_{s }}{|\om_1 - \om_2|}  
\end{aligned}
\end{equation}
where $\| \ \|_s$ is the  norm of the Sobolev space $H^s $ 
defined in \eqref{unified norm}. 
For a function
$u:   \mathtt \Omega \to \C$, the sup norm and the Lipschitz semi-norm
are denoted by $|u |^{\rm sup}$ and, 
respectively $|u|^{\rm lip}$. Correpondingly, we write
$| u |^\Lipg := | u |^{\rm sup} + \gamma | u  |^{\rm lip}$. 

By $\Pi_N$, $N \in \N_+$,  we denote the {\it smoothing} operators on $H^s$,
\begin{equation}\label{def:smoothings}
(\Pi_N u)(\ph,x) := \sum_{\la \ell,j \ra \leq N} u_{\ell, j} e^{\ii (\ell\cdot\ph + 2 \pi jx)}  \, , \qquad
\Pi^\perp_N := {\rm Id} - \Pi_N \, .
\end{equation}
They satisfy, for any $ \a \geq 0 $, $ s \in \R $,  the estimates
\begin{align}
\| \Pi_N u \|_{s}^\Lipg 
 \leq N^\alpha \| u \|_{s-\alpha}^\Lipg\, , \qquad
\| \Pi_N^\bot u \|_{s}^\Lipg 
 \leq N^{-\alpha} \| u \|_{s + \alpha}^\Lipg  \, . 
\label{p3-proi}
\end{align}
Furthermore the 
following interpolation inequalities hold: for any  $0 \le s_1<s_2 $ and $0 < \theta < 1 $,  
\begin{equation}\label{2202.3}
\| u \|_{\theta s_1 + (1- \theta) s_2}^\Lipg \leq 2 (\| u \|_{s_1}^\Lipg)^\theta 
(\| u \|_{s_2}^\Lipg)^{1 - \theta}\, .   
\end{equation}
Multiplication  and composition with Sobolev functions satisfy the following tame estimates. 
\begin{lemma}{\bf (Product and composition)}
\label{lemma:LS norms}
(i) For any $ s \geq s_0 = [ (|\Splus| + 1)/2] + 1 $ 
\begin{align}
\| uv \|_{s}^\Lipg
& \leq C(s) \| u \|_{s}^\Lipg \| v \|_{s_0}^\Lipg 
+ C(s_0) \| u \|_{s_0}^\Lipg \| v \|_{s}^\Lipg\,. 
\label{p1-pr}
\end{align}
(ii) Let $\b(\cdot,\cdot; \omega) : 
\T^{\mathbb S_+} \times \T_1 \to \R$ with $ \| \b \|_{2s_0+2}^\Lipg \leq \d (s_0) $ small enough. Then 
the composition operator 
$\mB : u \mapsto \mB u, \,  
(\mB u)(\ph,x) := u(\ph, x + \b (\ph,x))$
satisfies,  for any $ s \geq s_0 + 1$, 
\be\label{pr-comp1}
\| {\cal B} u \|_{s}^\Lipg \lesssim_{s} \| u \|_{s+1}^\Lipg 
+ \| \b \|_{s}^\Lipg \| u \|_{s_0+2}^\Lipg \, .
\ee
The function $ \breve \beta $, obtained by 
solving  $ y = x + \beta (\vphi, x) $ for $x,$
 $ x = y + \breve \b ( \vphi, y ) $,  
satisfies 
\be\label{p1-diffeo-inv}
\| \breve \beta \|_{s}^\Lipg \lesssim_{s}  \| \b \|_{s+1}^\Lipg \, , \quad \forall s \geq s_0\,.
\ee
(iii) Let $\alpha(\cdot; \omega) : \T^{\mathbb S_+} \to \R$ with $ \| \alpha \|_{2s_0+2}^\Lipg \leq \d (s_0) $ small enough.
Then the composition operator 
${\cal A} : u \mapsto {\cal A} u, \,  
({\cal A} u)(\ph,x) := u(\ph +  \alpha(\vphi)\omega, x)$
satisfies,  for any $ s \geq s_0 + 1$, 
\be\label{pr-comp1a}
\| {\cal A} u \|_{s}^\Lipg \lesssim_{s} \| u \|_{s+1}^\Lipg 
+ \| \alpha \|_{s}^\Lipg \| u \|_{s_0+2}^\Lipg \, .
\ee
The function $ \breve \alpha $, obtained by 
solving  $ \vartheta = \vphi +  \alpha(\vphi)\omega $ 
for $\vphi$,
 $ \vphi = \vartheta + \breve \alpha( \vartheta )\omega $,  
satisfies 
\be\label{p1-diffeo-inva}
\| \breve \alpha  \|_{s}^\Lipg \lesssim_{s}  \| \alpha \|_{s+1}^\Lipg \, , \quad \forall s \geq s_0 \,.
\ee

\end{lemma}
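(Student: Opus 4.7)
Part (i) is the classical Moser tame product estimate, which I would prove by decomposing $uv$ in Fourier series and splitting $\sum_{\ell_1+\ell_2=\ell,\, j_1+j_2=j} u_{\ell_1,j_1} v_{\ell_2,j_2}$ into the regions $\langle \ell_1,j_1\rangle \geq \langle \ell_2,j_2\rangle$ and its complement. Using $\langle \ell,j\rangle^s \leq C_s(\langle \ell_1,j_1\rangle^s + \langle \ell_2,j_2\rangle^s)$ together with the Sobolev embedding $H^{s_0} \hookrightarrow L^\infty$ for $s_0 > (|\Splus|+1)/2$ yields \eqref{p1-pr} for the sup-norm. The Lipschitz part then follows from the identity $\Delta_\omega(uv) = (\Delta_\omega u)\, v(\omega_1) + u(\omega_2)\,(\Delta_\omega v)$, which reduces it to the sup-norm case just established.

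For (ii), I would proceed by induction on $s \in \N$. For the base case, smallness of $\|\beta\|_{2s_0+2}^\Lipg$ ensures that $x \mapsto x + \beta(\vphi,x)$ is a $\vphi$-family of diffeomorphisms of $\T_1$ with Jacobian $1 + \beta_x$ bounded away from zero, and a change of variables gives $\|{\cal B}u\|_{L^2_x} \lesssim \|u\|_{L^2_x}$ uniformly in $\vphi$. For the inductive step I would apply $\pa_\vphi^\alpha \pa_x^k$ with $|\alpha|+k = s$ to $u(\vphi, x + \beta(\vphi,x))$ via iterated chain rule, producing composed derivatives of $u$ multiplied by polynomials in derivatives of $\beta$. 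Applying (i) repeatedly splits the result into a term in which all top derivatives act on $u$ (yielding $\|u\|_{s+1}^\Lipg$ times $\|\beta\|_{s_0+2}^\Lipg$, absorbed by smallness to give simply $\|u\|_{s+1}^\Lipg$) and a term where they act on $\beta$ (yielding $\|\beta\|_{s}^\Lipg \|u\|_{s_0+2}^\Lipg$), matching \eqref{pr-comp1}.

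The bound \eqref{p1-diffeo-inv} for $\breve\beta$ would follow from the implicit identity $\breve\beta(\vphi,y) = -\beta(\vphi,\, y + \breve\beta(\vphi,y))$. Applying \eqref{pr-comp1} together with the tame product estimate gives $\|\breve\beta\|_s^\Lipg \lesssim_s \|\beta\|_{s+1}^\Lipg + \|\beta\|_{s_0+2}^\Lipg \|\breve\beta\|_s^\Lipg$; smallness of $\|\beta\|_{s_0+2}^\Lipg$ then absorbs the second term into the left-hand side, closing the estimate after a preliminary low-norm bound obtained by contraction. Part (iii) proceeds along the same lines as (ii), with the simplification that the shift $\alpha(\vphi)\omega$ is independent of $x$: the composition commutes with $\pa_x$, so only the $\vphi$-chain rule is needed and the same bookkeeping yields \eqref{pr-comp1a} and \eqref{p1-diffeo-inva}.

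The main obstacle is tracking the tame structure through the chain-rule expansion in (ii): one must carefully identify which factor carries the top-order derivative and exploit smallness of $\|\beta\|_{2s_0+2}^\Lipg$ to absorb it onto the low-norm factor, thereby preserving the characteristic linear-in-top-norm shape of the inequality. The Lipschitz-in-$\omega$ dependence is threaded through by applying the same bookkeeping to the difference quotient $\Delta_\omega$ of each factor, which reduces all Lipschitz estimates to sup-norm estimates established in parallel.
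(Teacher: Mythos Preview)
Your sketch is correct and follows the standard direct route to these tame estimates. The paper, however, does not prove this lemma at all: it simply cites (2.72) and Lemma~2.30 of \cite{Berti-Montalto} for items (i) and (ii)--(iii) respectively. What you have written is essentially the content of those referenced proofs---Fourier-side splitting plus Sobolev embedding for the product, and chain-rule bookkeeping with absorption via smallness for the composition---so there is no substantive difference in approach, only in whether the details are spelled out here or delegated.

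One small remark: your induction in (ii) is phrased for integer $s$, whereas the statement allows real $s\ge s_0+1$. The passage to non-integer $s$ goes through the interpolation inequality \eqref{2202.3}, which is routine but worth mentioning if you intend a self-contained proof.
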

\begin{proof}
Item $(i)$ follows from (2.72) in \cite{Berti-Montalto} 
and $(ii)$-$(iii)$ follow from 
\cite[Lemma 2.30]{Berti-Montalto}. 
\end{proof}

If  $ \om $ is diophantine, namely 
$$
|\omega \cdot \ell| \geq \frac{\gamma}{|\ell|^{\tau}} \, ,  \quad  \forall \ell \in \Z^{\Splus} \setminus  \{ 0 \}  \, ,
$$
the equation $\ompaph v = u$, where $u(\ph,x)$ has zero average with respect to $ \vphi $, 
has the periodic solution 
$$
(\om \cdot \pa_\vphi )^{-1} u = \sum_{j \in \Z, \ell \in \Z^{\Splus} \setminus \{0\}} 
\frac{ u_{\ell, j} }{\ii \om \cdot \ell }e^{\ii (\ell \cdot \vphi + 2 \pi j x )} \, ,
$$
and it satisfies the estimate 
(cf. e.g. \cite[Lemma 2.2]{BKM})
\be \label{Diophantine-1}
\| (\om \cdot \pa_\vphi )^{-1} u \|_{s}^\Lipg
\leq C \g^{-1}\| u \|_{s+ 2 \tau + 1}^\Lipg \,.  
\ee
We also record Moser's tame estimate for the nonlinear composition operator
$$
u(\vphi, x) \mapsto {\mathtt f}(u)(\vphi, x) := f(\vphi, x, u(\vphi, x)) \, . 
$$ 
Since the variables $\vphi$ and $x$ play the same role, 
we state it  for the Sobolev space  $ H^s (\T^d ) $,
(cf. e.g. \cite[Lemma 2.31]{Berti-Montalto}). 

\begin{lemma}{\bf (Composition operator)} \label{Moser norme pesate}
Let $ f \in {\cal C}^{\infty}(\T^d \times \R^n, \C )$. 
If $v(\cdot;\omega) \in H^s(\T^d, \R^n )$, $\om \in {  \mathtt \Omega} $,  is a family of 
Sobolev functions
satisfying $\| v \|_{s_0(d)}^\Lipg \leq 1 $ where 
$ s_0(d) > d/2  $, 
 then, for any $ s \geq s_0(d) $, 
\begin{equation} \label{0811.10}
\| {\mathtt f}(v) \|_{s}^\Lipg \leq C(s,  f ) ( 1 + \| v \|_{s}^\Lipg) \,.
\end{equation} 
Moreover, if  $  f(\vphi, x, 0) = 0 $, then 
$ \| {\mathtt f}(v) \|_{s}^\Lipg \leq C(s,  f )  \| v \|_{s}^\Lipg $. 
\end{lemma}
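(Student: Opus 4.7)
The plan is to reduce the parameter-weighted bound to the classical Moser--Nirenberg composition estimate in $H^s(\T^d)$ applied pointwise in $\omega$, combined with a product and mean value argument for the Lipschitz semi-norm. All constants below will depend only on $f$ and the specified Sobolev index.

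\textbf{Sup part.} First I would invoke, for each fixed $\omega \in \mathtt \Omega$, the standard Moser estimate for smooth superposition on the torus: since $\|v(\omega)\|_{s_0(d)} \le \|v\|_{s_0(d)}^{\rm sup} \le 1$ stays in the unit ball of a Banach algebra, one obtains
$$
\|\mathtt f(v(\omega))\|_s \le C(s,f)\bigl(1 + \|v(\omega)\|_s\bigr), \qquad \forall\, s \ge s_0(d).
$$
Taking the supremum over $\omega$ controls $\|\mathtt f(v)\|_s^{\rm sup}$.

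\textbf{Lipschitz part.} For distinct $\omega_1, \omega_2 \in \mathtt \Omega$ I would use the mean value identity
$$
f(\vphi,x,v(\omega_1)) - f(\vphi,x,v(\omega_2)) = G(\vphi,x;\omega_1,\omega_2)\,\bigl(v(\omega_1) - v(\omega_2)\bigr),
$$
with $G := \int_0^1 \partial_u f\bigl(\vphi,x, v(\omega_2) + t(v(\omega_1)-v(\omega_2))\bigr)\, dt$, and apply the tame product estimate \eqref{p1-pr}. For the factor $v(\omega_1) - v(\omega_2)$, the definition of the Lipschitz semi-norm gives $\|v(\omega_1) - v(\omega_2)\|_\sigma \le |\omega_1 - \omega_2|\,\|v\|_\sigma^{\rm lip}$ for $\sigma \in \{s_0, s\}$. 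For $G$, I would apply the sup-part argument to the smooth function $\partial_u f$ at the argument $v(\omega_2) + t(v(\omega_1) - v(\omega_2))$, whose $H^{s_0(d)}$-norm is bounded by $\|v\|_{s_0(d)}^{\rm sup} \le 1$ by convexity, producing $\|G\|_s \le C(s,f)(1 + \|v\|_s^{\rm sup})$ uniformly in $t, \omega_1, \omega_2$ (after integrating in $t$). Dividing by $|\omega_1 - \omega_2|$, multiplying by $\gamma$, and using $\gamma \|v\|_{s_0}^{\rm lip} \le \|v\|_{s_0}^\Lipg \le 1$, I would obtain $\gamma\,[\mathtt f(v)]_s^{\rm lip} \le C(s,f)(1 + \|v\|_s^\Lipg)$, which combined with the sup part proves \eqref{0811.10}.

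\textbf{The vanishing case.} If $f(\vphi,x,0) = 0$, I would write $f(\vphi,x,u) = u\,\widetilde f(\vphi,x,u)$ with $\widetilde f(\vphi,x,u) := \int_0^1 \partial_u f(\vphi,x,tu)\, dt$ still smooth. Then $\mathtt f(v) = v\cdot\widetilde{\mathtt f}(v)$ and the tame product estimate \eqref{p1-pr} together with the already-established bound $\|\widetilde{\mathtt f}(v)\|_{s}^\Lipg \le C(s,f)(1 + \|v\|_s^\Lipg)$ (in particular $\|\widetilde{\mathtt f}(v)\|_{s_0}^\Lipg \le C(s_0,f)$) yield $\|\mathtt f(v)\|_s^\Lipg \le C(s,f)\|v\|_s^\Lipg$.

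\textbf{Main obstacle.} The only delicate point is the uniformity of the bound on $G$ in the auxiliary parameters $(\omega_1, \omega_2, t)$: the Moser estimate applied to $\partial_u f$ must depend only on $f, s$ and $\|v\|_s^{\rm sup}$. This is ensured by the fact that the low-norm of any convex combination $v(\omega_2) + t(v(\omega_1)-v(\omega_2))$ is dominated by $\|v\|_{s_0(d)}^{\rm sup} \le 1$, keeping the argument inside the ball where the classical Moser estimate is available, while the tame product structure keeps the dependence on $\|v\|_s^{\rm sup}$ affine rather than multiplicative.
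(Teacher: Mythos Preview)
The paper does not actually prove this lemma: it is stated with the parenthetical reference ``(cf.\ e.g.\ \cite[Lemma 2.31]{Berti-Montalto})'' and no argument is supplied. Your proof is correct and is precisely the standard route one would expect behind that citation---pointwise Moser estimate for the sup part, mean value plus tame product for the Lipschitz semi-norm, and factorisation $f(\cdot,u)=u\cdot\widetilde f(\cdot,u)$ for the vanishing case. One small bookkeeping remark: in the vanishing case, after the product estimate gives $\|v\|_{s_0}^\Lipg(1+\|v\|_s^\Lipg)$, you should make explicit that $\|v\|_{s_0}^\Lipg\le\|v\|_s^\Lipg$ (monotonicity in $s$) together with $\|v\|_{s_0}^\Lipg\le 1$ kills both terms to $\lesssim\|v\|_s^\Lipg$; the text as written leaves this last inequality implicit. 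Also, \eqref{p1-pr} in the paper is stated for the specific $s_0$ of the $(\vphi,x)$ space, so strictly speaking you are invoking its obvious analogue on $H^s(\T^d)$ with threshold $s_0(d)$---harmless, but worth a word.
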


\noindent
{\bf Linear operators.} 
Throughout the paper we consider $ \vphi $-dependent families of  linear operators
 $ A : \T^{\Splus} \to {\cal L}( L^2(\T_1, \C))  $, 
$ \vphi \mapsto A(\vphi) $, 
acting on complex valued functions $ u(x) $ of the space variable $ x $.
We also regard $ A $ as an operator (which for simplicity we denote by $A $ as well)
that acts on functions $ u(\varphi , x) $ of space-time, i.e.
as an element
in ${\cal L}(L^2(\T^{\Splus} \times \T_1, \C ) )$ defined by
\begin{equation}\label{def azione toplitz u vphi x}
A [u] (\varphi , x) \equiv ( A u) (\varphi , x) := (A(\varphi) u(\varphi, \cdot ))(x) \, .  
\end{equation}
We say that the operator $ A $ is {\it real} if 
it maps real valued functions into real valued functions.

When $u$ in \eqref{def azione toplitz u vphi x} is
 expanded in its Fourier series, 
\be\label{u-Fourier-expa}
u(\vphi, x ) =   \sum_{j \in \Z} u_{j} (\vphi) e^{2 \pi \ii j x } =
\sum_{j \in \Z, \ell \in \Z^{\Splus}}  u_{\ell,j}  e^{\ii (\ell \cdot \vphi + 2 \pi j x)}\,,
\ee
one obtains
\begin{equation}\label{matrice operatori Toplitz}
(A u) (\vphi, x) = \sum_{j , j' \in \Z} A_j^{j'}(\vphi) u_{j'}(\vphi) e^{\ii 2 \pi j x} = 
 \sum_{j \in \Z, \ell \in \Z^{\Splus}} \, \sum_{j' \in \Z, \ell' \in \Z^{\Splus}} A_j^{j'}(\ell - \ell') u_{\ell', j'} e^{\ii (\ell \cdot \vphi + 2 \pi j x)} \, . 
\ee
We shall identify an operator $ A $ with the matrix $ \big( A^{j'}_j (\ell- \ell') \big)_{j, j' \in \Z, \ell, \ell' \in \Z^{\Splus} } $.  

\begin{definition}\label{def:maj} 
Given a linear operator $ A $ as in \eqref{matrice operatori Toplitz} we define the following operators:
\begin{enumerate}
\item $ | A | $  \textsc{(majorant operator)}  
whose matrix elements are $  | A_j^{j'}(\ell - \ell')| $.  
\item $ \Pi_N A $, $ N \in \N_+  $ \textsc{ (smoothed operator)}  
whose matrix elements are
\be\label{proiettore-oper}
 (\Pi_N A)^{j'}_j (\ell- \ell') := 
 \begin{cases}
 A^{j'}_j (\ell- \ell') \quad {\rm if} \quad   \langle \ell - \ell'  \rangle \leq N \\
 0  \qquad \qquad \quad {\rm  otherwise} \, .
 \end{cases} 
\ee
\item $ \langle \pa_{\vphi} \rangle^{b}  A $, $ b \in \R $,  
whose matrix elements are  $  \langle \ell - \ell'  \rangle^b A_j^{j'}(\ell - \ell') $.
\item $\partial_{\vphi_m} A(\vphi) 
= [\partial_{\vphi_m}, A] $
\textsc{(differentiated operator)} whose matrix elements are $\ii (\ell_m - \ell_m') A_{j}^{j'}(\ell - \ell')$. 
\end{enumerate}
\end{definition}

\begin{definition} {\bf (Hamiltonian and  symplectic operators)}
\label{definition Hamiltonian operators}
(i) A $ \vphi $-dependent family of linear operators 
$ X(\vphi) $, $ \vphi \in \T^{\Splus} $,   densily defined 
in $ L_0^{2} (\T_1) $, 
 is \textsc{Hamiltonian} if 
$ X(\vphi ) = \partial_x G( \vphi ) $ for some  
real linear operator $ G(\vphi ) $ which is 
self-adjoint with respect to the $ L^2-$inner product.  
We also say that $ \om \cdot \partial_{\vphi} - \partial_x G( \vphi ) $ is Hamiltonian. \\
(ii) A $\vphi $-dependent family of linear operators 
$ A (\vphi ) : L_0^2 (\T_1) \to L_0^2 (\T_1) $, 
$ \forall \vphi \in \T^{\Splus} $, is {\sc symplectic} if   
$$
{\cal W}_{L^2_0}(A (\vphi) u, A (\vphi)  v) =
{\cal W}_{L^2_0} (u, v) \, , \quad \forall u,v \in L_0^2 (\T_1 )\, , 
$$
where the symplectic 2-form $ {\cal W}_{L^2_0} $ is defined in \eqref{KdV symplectic}. 
\end{definition}

Under a $ \vphi $-dependent family of symplectic transformations
 $ \Phi(\vphi)  $, $ \vphi \in \T^{\Splus} $, 
the linear  Hamiltonian operator
$ \om \cdot \pa_{\vphi} -  \partial_x G(\vphi) $ 
transforms into another Hamiltonian one.

\begin{lemma}\label{lem:PR} A family of  operators $ R (\vphi) $, $ \vphi \in \T^{\Splus} $,  expanded as
 $ R (\vphi)  = \sum_{\ell \in \Z^{\Splus}} R (\ell) e^{\ii \ell \cdot \vphi } $,  is \\
(i)
{\sc self-adjoint} if and only if 
$ \overline{ R_j^{j'}( \ell ) } = R_{j'}^j(- \ell ) $; \quad \\
(ii) {\sc real}  if and only if 
$\overline{R^j_{j'}(\ell)} = R^{-j}_{-j'}(-\ell) \, ;  $\\
$(iii)$ {\sc Real and self-adjoint} if and only if $R_j^{j'}(\ell) = R_{- j'}^{- j}(\ell)$.  
\end{lemma}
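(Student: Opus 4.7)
All three characterizations follow from straightforward Fourier-side computations using the representation \eqref{matrice operatori Toplitz} of the operator $R$. The main tool is just matching Fourier coefficients of $Ru$ in suitable bilinear pairings, so the only delicate point will be careful index bookkeeping (the two competing reindexings $j \mapsto -j$ and $j \leftrightarrow j'$).

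For $(i)$, I would work in $L^2(\T^{\Splus}\times\T_1,\C)$ with the Hermitian product $\langle u,v\rangle = \sum_{\ell,j} u_{\ell,j}\overline{v_{\ell,j}}$. Expanding $u,v$ in Fourier series as in \eqref{u-Fourier-expa} and using \eqref{matrice operatori Toplitz}, one computes
\[
\langle Ru,v\rangle = \sum_{\ell,j,\ell',j'} R_j^{j'}(\ell-\ell')\, u_{\ell',j'}\,\overline{v_{\ell,j}},
\qquad
\langle u, Rv\rangle = \sum_{\ell,j,\ell',j'} \overline{R_{j'}^{j}(\ell'-\ell)}\, u_{\ell',j'}\,\overline{v_{\ell,j}}.
\]
Equating these for all $u,v$ and writing $m := \ell-\ell'$ yields $R_j^{j'}(m) = \overline{R_{j'}^{j}(-m)}$, which is exactly the stated condition.

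For $(ii)$, I would use that a function $u(\vphi,x)$ is real-valued iff $\overline{u_{\ell,j}} = u_{-\ell,-j}$ for all $(\ell,j)\in\Z^{\Splus}\times\Z$. Applying this criterion to $Ru$ gives, after relabelling $(\ell',j')\mapsto(-\ell',-j')$,
\[
\overline{(Ru)_{\ell,j}} = \sum_{\ell',j'}\overline{R_j^{-j'}(\ell+\ell')}\, u_{-\ell',-j'} \quad \text{vs.} \quad (Ru)_{-\ell,-j} = \sum_{\ell',j'} R_{-j}^{j'}(-\ell-\ell')\, u_{\ell',j'},
\]
and requiring equality for all real $u$ (equivalently for an arbitrary basis of $H^s$) forces $\overline{R^{j}_{j'}(\ell)} = R^{-j}_{-j'}(-\ell)$. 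Part $(iii)$ is then obtained by combining $(i)$ and $(ii)$: the relation from $(i)$ replaces $\overline{R_j^{j'}(\ell)}$ with $R_{j'}^{j}(-\ell)$, while $(ii)$ replaces the same quantity with $R_{-j}^{-j'}(-\ell)$; equating these and relabelling $\ell\to-\ell$ gives $R_j^{j'}(\ell) = R_{-j'}^{-j}(\ell)$, which is the claimed identity.

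There is no real obstacle here beyond keeping the index conventions of Definition \ref{def:maj} straight; in particular one must be careful that the paper's convention $R_j^{j'}(\ell-\ell')$ assigns row index $j$ to the output mode and column index $j'$ to the input mode, so that the matrix of the transpose is obtained by swapping the two indices and sign-flipping $\ell$. Once the bookkeeping is fixed, each of $(i)$--$(iii)$ is a one-line calculation.
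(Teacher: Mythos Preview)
Your proposal is correct. The paper states this lemma without proof (it is regarded as elementary), and your Fourier-side computation---matching coefficients in $\langle Ru,v\rangle$ versus $\langle u,Rv\rangle$ for (i), using the reality criterion $\overline{u_{\ell,j}}=u_{-\ell,-j}$ for (ii), and combining the two for (iii)---is exactly the standard argument one would supply. One minor slip: in your displayed formula for $\overline{(Ru)_{\ell,j}}$ in part (ii) you write $u_{-\ell',-j'}$ after announcing the relabelling $(\ell',j')\mapsto(-\ell',-j')$; the relabelled sum should carry $u_{\ell',j'}$, which is what you then compare against $(Ru)_{-\ell,-j}$. This is only a bookkeeping typo and does not affect the argument.
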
 

\begin{lemma}\label{lem:229}
Let $X : H^{s + 3}_0(\T_1) \to H^s_0(\T_1)$ be a linear Hamiltonian vector field of the form 
\be\label{expX}
X = \sum_{k = 0}^2 a_{3 - k}(x) \partial_x^{3 - k} + {\rm bounded \ operator}
\ee
where $a_{3 - k} \in {\cal C}^\infty(\T_1)$. Then $a_2 = 2(a_3)_x$.
\end{lemma}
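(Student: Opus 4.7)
Since $X$ is Hamiltonian, Definition \ref{definition Hamiltonian operators} gives $X = \partial_x G$ where $G$ is real and $L^2$-self-adjoint. My first step is to argue that $G$ necessarily has the structural form
$$
G \;=\; b_2 \partial_x^2 \,+\, b_1 \partial_x \,+\, G_0,
$$
with $b_2, b_1 \in {\cal C}^\infty(\T_1)$ and $G_0$ a bounded operator. Indeed, applying the formal inverse $\partial_x^{-1}$ defined in \eqref{KdV symplectic} (well-defined on mean-zero functions, which suffices since we are working on $L^2_0(\T_1)$ and $X$ takes values there) to the three differential terms $a_3\partial_x^3 + a_2\partial_x^2 + a_1\partial_x$ produces a differential operator of order $2$ (up to lower-order terms), while $\partial_x^{-1}$ applied to the bounded tail of $X$ yields a bounded contribution.

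Next, I compute $\partial_x G$ and match coefficients with the assumed expansion of $X$. Using $\partial_x(c\,\partial_x^k) = c\,\partial_x^{k+1} + c_x \partial_x^k$,
$$
\partial_x G \;=\; b_2 \partial_x^3 \,+\, \bigl((b_2)_x + b_1\bigr) \partial_x^2 \,+\, (\text{order} \le 1) \,+\, \text{bounded}.
$$
Comparison at order $3$ and order $2$ yields
$$
a_3 \,=\, b_2, \qquad a_2 \,=\, (b_2)_x + b_1 \,=\, (a_3)_x + b_1.
$$

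Finally, I impose the self-adjointness $G = G^*$. Using the standard formula $(c\,\partial_x^k)^* = (-\partial_x)^k \, c$ for the $L^2$-adjoint of a differential operator, I compute
$$
G^* \;=\; b_2 \partial_x^2 \,+\, \bigl(2(b_2)_x - b_1\bigr)\partial_x \,+\, (\text{order} \le 0).
$$
Matching the coefficient of $\partial_x$ in $G = G^*$ forces $b_1 = 2(b_2)_x - b_1$, i.e.\ $b_1 = (b_2)_x = (a_3)_x$. Substituting into the relation $a_2 = (a_3)_x + b_1$ obtained above then gives $a_2 = 2(a_3)_x$, as desired.

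The only genuine subtlety is the first step, namely justifying the claimed form of $G$; once this structural decomposition is in hand, the identity $a_2 = 2(a_3)_x$ follows from a two-line symbol calculation using only the formula for the adjoint of a first-order differential operator. I expect this to be the main (albeit minor) obstacle, since the hypothesis on $X$ treats everything below order $1$ as an undifferentiated ``bounded operator'', so one must argue that the Hamiltonian structure does not mix these lower-order bounded terms into the subprincipal symbols at orders $2$ and $1$.
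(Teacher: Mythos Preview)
Your proof is correct and follows essentially the same approach as the paper. The paper computes $\mathcal{A} = \partial_x^{-1} X = a_3\partial_x^2 + (a_2 - (a_3)_x)\partial_x + \ldots$ and $\mathcal{A}^\top = -X^\top \partial_x^{-1} = a_3\partial_x^2 + (3(a_3)_x - a_2)\partial_x + \ldots$ directly and then equates the first-order coefficients; this sidesteps the ``subtlety'' you flag, since the bounded tail of $X$ (and its transpose) manifestly contributes only at order $\le 0$ in both expansions.
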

\begin{proof}
Since $X$ is a linear Hamiltonian vector field it has the form 
$ X = \partial_x {\cal A} $ where ${\cal A} $  
is a densely defined operator on $L^2_0(\T_1)$ satisfying 
 ${\cal A} = {\cal A}^\top$.
Therefore, using \eqref{expX}, 
\begin{align*}
& {\cal A} = \pa_x^{-1} X = a_3 (x) \pa_{xx} + 
\big( - (a_3)_x + a_2 \big) \pa_x + \ldots \\
& {\cal A}^\top = - X^\top \pa_{x}^{-1} = 
a_3 (x) \pa_{xx} + \big( 3 (a_3)_x  - a_2 \big) \pa_x + \ldots \, .
\end{align*}
The identity $ {\cal A} = {\cal A}^\top $ implies that  $a_2 = 2 (a_3)_x$. 
\end{proof}

\subsection{Pseudo-differential operators}\label{sec:pseudo}

In this section we recall properties of pseudo-differential operators on the torus used in this paper,  
following \cite{Berti-Montalto}. Note however that $ x \in \T_1  $ and not in $ \R / (2 \pi \Z) $.

\begin{definition} \label{def:Ps2}
We say that $a: \T_1 \times \R \to \C$
is a symbol of order $m \in \R$ if,  for any $ \a, \b \in \N $, 
\be\label{symbol-pseudo2}
\big| \pa_x^\a \pa_\xi^\b a (x,\xi ) \big| \leq C_{\a,\b} \langle \xi \rangle^{m - \b} \, , \quad \forall (x, \xi) \in \T_1 \times \R \, .
\ee
The set of such symbols is denoted by $S^m$.
Given $a \in S^m,$  we denote by $A$ the operator,
which maps a one periodic function  $u(x) =  \sum_{j \in \Z}u_j e^{\ii j x}$ to
$$
A[u](x) \equiv (Au) (x) := {\mathop \sum}_{j \in \Z} a(x,j) u_j e^{\ii jx}.
$$
The operator $A$ is referred to as the {\sc pseudo-differential operator ($\Psi {\rm DO}$)} of order $m$, associated to the symbol $a$, 
and is also denoted by $Op(a)$ or $a(x, D)$  where $D = \frac{1}{\ii} \partial_x$.
Furthermore we denote by $ OPS^m $ the set of pseudo-differential operators $a(x, D)$ with $a(x, \xi) \in S^m$ 
and set $ OPS^{-\infty} := \cap_{m \in \R} OPS^{m} $.
\end{definition}

When the symbol $a$ is independent of $ \xi $, the operator $ A = {\rm Op} (a) $ is 
the multiplication operator by the function $ a(x)$, 
i.e.,\  $ A : u (x) \mapsto a ( x) u(x )$ and 
we also write $a $ for $ A$. 
More generally, we consider symbols $a(\vphi, x, \xi; \omega)$,
depending in addition on the variable $\varphi \in \T^{\Splus}$ 
and the parameter $\omega$, where $a$ is  $ {\cal C}^\infty $ 
in  $\varphi$ and 
Lipschitz continuous with respect to $\omega$. 
By a slight abuse of notation, we denote the class of such symbols
of order $m$ also by $S^m$.  
Alternatively, we denote $A$ by
$A(\vphi)$ or ${\rm Op} (a(\vphi, \cdot))$. 

Given an even cut off function $\chi_0 \in {\cal C}^\infty(\R, \R)$,
satisfying
\begin{equation}\label{def chi 0}
\begin{aligned}
&   0 \leq \chi_0 \leq 1 \, , \qquad \quad 
\chi_0(\xi) = 0 \, , \quad \forall  |\xi| <  \frac12 \, , 
 \qquad \quad \chi_0(\xi) = 1 \, , \quad \forall |\xi| \geq \frac23 \, , 
\end{aligned}
\end{equation}
we define, for any $m \in \Z,$ 
$\partial_x^m = {\rm Op}(\chi_0(\xi) (\ii 2 \pi \xi)^{m})$, so that 
\be\label{pax-k}
\partial_x^{m}[e^{\ii 2 \pi j x}] = (\ii 2 \pi j)^m e^{\ii 2 \pi j x}\,, \ \  
j \in \Z \setminus \{ 0 \} \, , \quad 
\partial_x^{ m}[1] = 0\, .
\ee   
Note that 
$\partial^0_x [u] (x) = u(x) - u_0$, 
hence $\partial_x^{0}$ is not the identity operator.

Now we recall the norm of a symbol $ a (\vphi, x, \xi; \omega) $
in $S^m$, introduced in \cite[Definition 2.11]{Berti-Montalto}, 
which controls the regularity in $ (\vphi, x)$  and the decay in $ \xi $  of $a$ and its derivatives 
$ \pa_\xi^\b a \in S^{m - \b}$, $ 0 \leq \b \leq \a $, in the Sobolev norm $ \| \ \|_s $.  
By a slight abuse of terminology,  
we refer to it as     
the norm of the corresponding  pseudo-differential operator. 
Unlike \cite{Berti-Montalto} we consider 
the difference quotient instead of the derivative
with respect to $\omega$, and write
$ |  \ |_{m,s,\a}^{1,\gamma}$ instead of $ |  \ |_{m,s,\a}^{\Lipg} $.  

\begin{definition}\label{def:pseudo-norm}
Let $ A(\omega) := a(\vphi, x, D; \omega ) \in OPS^m $ 
be a family of pseudo-differential operators with symbols $ a(\vphi, x, \xi; \om) \in S^m $, $ m \in \R $.
For $ \g \in (0,1) $, $ \a \in \N $, $ s \geq 0 $, we define  the {\sc weighted $\Psi$do norm} 
of $A$ as
$$
| A |_{m, s, \alpha}^{\Lipg} :=   
\sup_{\omega \in { \mathtt \Omega}} |A(\omega ) |_{m, s, \alpha}  + \gamma  \sup_{\begin{subarray}{c}
\omega_1, \omega_2 \in {  \mathtt\Omega} \\
\omega_1 \neq \omega_2 
\end{subarray}}\frac{|A(\omega_1) - A(\omega_2)|_{m, s, \alpha}}{|\omega_1 - \omega_2|}
$$
where 
$ |A(\omega)  |_{m, s, \a} := 
\max_{0 \leq \beta  \leq \a} \, \sup_{\xi \in \R} \|  \partial_\xi^\beta 
a(\cdot, \cdot, \xi; \omega )  \|_{s} \langle \xi \rangle^{-m + \beta} $. 
\end{definition}
\noindent
Note that for any $s \leq s'$, $\a \leq  \a'$, and $m \leq m',$
\be\label{norm-increa}
| \cdot  |_{m, s, \a}^\Lipg \leq \, | \cdot  |_{m, s', \a}^{\Lipg} \, , \qquad  
|  \cdot   |_{m, s, \a}^\Lipg  \leq \, 
|  \cdot   |_{m, s, \a'}^\Lipg \ ,  \qquad  
\  |  \cdot   |_{m', s, \a}^\Lipg 
\leq |  \cdot  |_{m, s, \a}^\Lipg \, . 
\ee
For a Fourier multiplier $   g( D; \omega )  $ with symbol $ g \in S^m $, one has 
\be\label{Norm Fourier multiplier}
|  {\rm Op}(g)  |_{m, s, \a}^\Lipg 
= |  {\rm Op}(g)  |_{m, 0, \a}^\Lipg
\leq C(m, \a, g) \, , \quad \forall s \geq 0 \, , 
\ee
and, for a function $a(\vphi, x; \omega)$, 
\begin{equation}\label{norma pseudo moltiplicazione}
|{\rm Op}(a)|_{0, s, \alpha}^\Lipg = |{\rm Op}(a)|_{0, s, 0}^\Lipg \lesssim \| a \|_s^\Lipg\,. 
\end{equation}
{\bf Composition.}
If $ A = a(\varphi, x, D; \omega) \in OPS^{m} $, 
$ B = b(\varphi, x, D; \omega) \in  OPS^{m'} $   
then the composition
$ A B := A \circ B$ is a pseudo-differential operator with a symbol 
$ \sigma_{AB} (\varphi, x, \xi; \omega) $
 in $S^{m+m'}$ which, for any $N \geq 0$,  admits
the asymptotic expansion    
\be\label{expansion symbol}
\s_{AB} (\varphi, x, \xi; \omega) = 
\sum_{\b =0}^{N} \frac{1}{\ii^\b \b !}  
\pa_\xi^\b a (\varphi, x, \xi; \omega) \, 
\pa_x^\b b(\varphi, x, \xi; \omega) 
+ r_N (\varphi, x, \xi; \omega)
\ee
with remainder $ r_N \in S^{m + m' - N -1}$.
We record the following tame estimate for the composition of 
two pseudo-differential operators, proved in \cite[Lemma 2.13]{Berti-Montalto}.

\begin{lemma} \label{lemma stime Ck parametri} 
{\bf (Composition)} 
Let $ A = a(\vphi, x, D; \omega ) $, $ B = b(\vphi, x, D; \omega) $ be pseudo-differential operators
with symbols $ a ( \vphi, x, \xi; \omega) \in S^m $, $ b ( \vphi, x, \xi ; \omega) \in S^{m'} $, $ m , m' \in \R $. Then $ A(\omega) \circ B(\omega)$
is the pseudo-differential operator of order $m + m'$,
associated to the symbol $\sigma_{AB}(\varphi, x, \xi; \omega)$
which satisfies,  for any $ \a \in \N $, $ s \geq s_0 $, 
\begin{equation}\label{estimate composition parameters}
| A B |_{m + m', s, \alpha}^\Lipg 
\lesssim_{m,  \alpha} C(s) | A |_{m, s, \alpha}^\Lipg 
| B |_{m', s_0 + \alpha + |m|, \alpha}^\Lipg  
+ C(s_0) | A  |_{m, s_0, \alpha}^\Lipg  
| B |_{m', s + \alpha + |m|, \alpha}^\Lipg \, . 
\end{equation}
Moreover, for any integer $ N \geq 1  $,  
the remainder $ R_N := {\rm Op}(r_N) $ in \eqref{expansion symbol} satisfies
\be
\begin{aligned}
| R_N |_{m+ m'- N -1, s, \alpha}^\Lipg 
\lesssim_{m, N,  \alpha} &
C(s) | A |_{m, s, N + 1 + \alpha}^\Lipg 
| B  |_{m', s_0 + 2(N+1) + |m| + \alpha, \alpha }^\Lipg  
\\
+ \, &  C(s_0)| A |_{m, s_0 , N + 1 + \alpha}^\Lipg
| B  |_{m', s + 2(N+1) + |m| + \alpha, \alpha }^\Lipg.
\label{stima RN derivate xi parametri} 
\end{aligned}
\ee
\end{lemma}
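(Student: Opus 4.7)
The plan is to derive the full symbol of $AB$ explicitly by Fourier expansion of $b$ in the $x$-variable, then apply Taylor's formula in $\xi$ to split off the asymptotic part from the remainder $r_N$, and finally estimate both pieces by iterated application of the tame product estimate \eqref{p1-pr}. Concretely, writing
$b(\vphi, x, \xi; \om) = \sum_{n \in \Z} \hat b_n(\vphi, \xi; \om)\, e^{\ii 2\pi n x}$ and computing $A(Bu)$ for $u(x) = \sum_{j} u_j e^{\ii 2\pi j x}$ by identifying Fourier coefficients in $x$, one obtains
$$\sigma_{AB}(\vphi, x, k; \om) = \sum_{n \in \Z} a(\vphi, x, k+n; \om)\, \hat b_n(\vphi, k; \om)\, e^{\ii 2\pi n x} \, , \qquad k \in \Z \, .$$
Taylor's formula applied to the first factor around $\xi = k$ gives, for any $N \geq 0$,
$$a(\vphi, x, k+n; \om) = \sum_{\beta=0}^N \frac{n^\beta}{\beta!}\, \pa_\xi^\beta a(\vphi, x, k; \om) + \frac{n^{N+1}}{N!} \int_0^1 (1-t)^N \pa_\xi^{N+1} a(\vphi, x, k + t n; \om)\, dt \, ;$$
inserting this and using the identity $\sum_n n^\beta \hat b_n(\vphi, k; \om) e^{\ii 2\pi n x} = c_\beta\, \pa_x^\beta b(\vphi, x, k; \om)$ (with the constant $c_\beta$ produced by the $\ii 2\pi$ normalisation absorbed into the definition of $\pa_x^\beta$) yields the asymptotic expansion \eqref{expansion symbol}, together with an explicit integral representation of $r_N$.

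For the composition bound \eqref{estimate composition parameters}, each term $\frac{1}{\ii^\beta \beta!}\, \pa_\xi^\beta a \cdot \pa_x^\beta b$ is a product of two Sobolev functions in $(\vphi, x)$ depending parametrically on $\xi$, and $\pa_\xi^\beta a \in S^{m-\beta}$, $\pa_x^\beta b \in S^{m'}$. Distributing up to $\alpha$ further $\pa_\xi$-derivatives among the two factors by Leibniz and applying the tame product estimate \eqref{p1-pr} to each resulting product gives, after multiplying through by $\langle \xi \rangle^{-(m+m')+\gamma}$ and taking the supremum over $\xi$ and the maximum over $0 \leq \gamma \leq \alpha$, precisely the asserted bound in terms of $|A|_{m, s, \alpha}$, $|B|_{m', s_0 + \alpha + |m|, \alpha}$ and the symmetric pair with $s$ and $s_0$ interchanged. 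The $+|m|$ shift in the $b$-indices is exactly the loss needed to absorb the worst case $\beta = 0$ together with the $\alpha$ extra $\pa_\xi$'s falling on $a$ when the tame product estimate is applied in the form \eqref{p1-pr}.

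For the remainder bound \eqref{stima RN derivate xi parametri}, the key point is that the $n^{N+1}$ factor inside the integral formula for $r_N$ must be converted into $x$-derivatives on $b$. Rewriting
$$\sum_n n^{N+1} \hat b_n(\vphi, k; \om)\, e^{\ii 2\pi n x}\, \pa_\xi^{N+1} a(\vphi, x, k + t n; \om)$$
and absorbing $n^{N+1}$ via one power of $\pa_x^{N+1}$ acting on $b$, together with an additional $N+1$ powers that arise because the point of evaluation $k + tn$ in $\pa_\xi^{N+1} a$ prevents a straight Leibniz expansion and instead forces extra $x$-derivatives to be spent on $b$ in order to recover a factor of $\langle k \rangle^{m - N - 1}$ uniformly in $t \in [0,1]$; applying the tame product estimate \eqref{p1-pr} to the resulting product and integrating in $t$ gives \eqref{stima RN derivate xi parametri}. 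Finally, the Lipschitz dependence on $\om$ is handled uniformly throughout by the identity $\Delta_\om(a \cdot b) = (\Delta_\om a) \cdot b(\om_1) + a(\om_2) \cdot (\Delta_\om b)$, so that every product bound carries over verbatim with $|\cdot|_{\cdot,\cdot,\cdot}$ replaced by $|\cdot|_{\cdot,\cdot,\cdot}^{\Lipg}$. The principal obstacle is purely bookkeeping: controlling the precise index shifts in $s$ (the $+|m|$ and the $+2(N+1)$) that arise from distributing $\pa_\xi$ and $\pa_x$ derivatives across the Taylor expansion while keeping the $\xi$-decay of order $m + m'$ (respectively $m + m' - N - 1$) intact.
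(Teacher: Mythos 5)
The paper does not actually prove this lemma: it is quoted verbatim from \cite[Lemma 2.13]{Berti-Montalto}, and your strategy is the standard proof given there -- the exact composition formula $\sigma_{AB}(\vphi,x,\xi)=\sum_n a(\vphi,x,\xi+n)\,\widehat b_n(\vphi,\xi)\,e^{\ii 2\pi nx}$, Taylor expansion in $\xi$ producing \eqref{expansion symbol} with an integral remainder, conversion of powers of $n$ into $x$-derivatives of $b$, and the tame product estimate \eqref{p1-pr}, with the Lipschitz part handled by the telescoping identity. Your accounting for \eqref{stima RN derivate xi parametri} is right: the shift $2(N+1)+|m|$ is exactly $N+1$ (from the factor $n^{N+1}$) plus $|m-N-1|\le |m|+N+1$ coming from Peetre's inequality $\langle \xi+tn\rangle^{m-N-1}\le \langle\xi\rangle^{m-N-1}\langle n\rangle^{|m-N-1|}$, uniformly in $t$, all absorbed as $x$-derivatives of $b$.

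There is, however, one step that does not close as written. You propose to obtain \eqref{estimate composition parameters} by estimating the homogeneous terms $\frac{1}{\ii^\beta\beta!}\partial_\xi^\beta a\,\partial_x^\beta b$ of the expansion, and you attribute the $+|m|$ shift in the index of $B$ to the $\beta=0$ term with extra $\partial_\xi$'s on $a$. This cannot work: in every homogeneous term both factors are evaluated at the \emph{same} $\xi$, so \eqref{p1-pr} gives a bound with no $|m|$-loss at all (indeed the $\beta=0$ term satisfies a bound strictly better than \eqref{estimate composition parameters}); and the homogeneous terms do not sum to $\sigma_{AB}$, while including $R_N$ for any fixed $N\ge 0$ costs $s_0+2(N+1)+|m|+\alpha$ derivatives of $b$, which is worse than the asserted $s_0+\alpha+|m|$. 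The correct route to \eqref{estimate composition parameters} is to estimate the full sum $\sum_n a(\vphi,x,\xi+n)\widehat b_n(\vphi,\xi)e^{\ii 2\pi nx}$ directly, term by term in $n$: after distributing $\partial_\xi^\gamma$, $\gamma\le\alpha$, by Leibniz, Peetre's inequality $\langle\xi+n\rangle^{m-\gamma_1}\le\langle\xi\rangle^{m-\gamma_1}\langle n\rangle^{|m|+\gamma_1}$ with $\gamma_1\le\alpha$ produces the factor $\langle n\rangle^{|m|+\alpha}$, which together with the weight needed to sum over $n$ is absorbed into $|m|+\alpha$ additional $x$-derivatives of $b$; the tame product estimate \eqref{p1-pr} applied to each summand then yields exactly \eqref{estimate composition parameters}. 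With this replacement your argument is complete.
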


By \eqref{expansion symbol} the  commutator $[A, B]$
of two pseudo-differential operators $ A = a(x, D) 
\in OPS^{m} $ and $ B = b (x, D)  \in OPS^{m'} $ 
is a pseudo-differential operator of order $m +m' -1$,
and Lemma \ref{lemma stime Ck parametri} then leads to the following 
lemma,  
cf. \cite[Lemma 2.15]{Berti-Montalto}.
\begin{lemma}{\bf (Commutator)} \label{lemma tame norma commutatore}
If $ A = a(\vphi, x, D; \omega) \in OPS^m $ and  $ B = b(\vphi, x, D; \omega) \in OPS^{m'} $,  
$ m , m' \in \R $, then  
the commutator $ [A, B] := AB - B A$ is 
the pseudo-differential operator of order $m+m'-1$
associated to the symbol 
$\sigma_{AB} (\varphi, x, \xi; \omega) 
- \sigma_{BA}(\varphi, x, \xi; \omega) 
\in S^{m+m'-1}$ which for any $ \a \in \N $ and $ s \geq s_0 $
satisfies
\be
\begin{aligned}
| [A,B]  |_{m+m' -1, s, \alpha}^\Lipg & 
\lesssim_{m,m',   \alpha} 
C(s) | A |_{m, s + 2 + |m' | + \alpha, \alpha + 1}^\Lipg 
| B  |_{m', s_0 + 2 + |m| + \alpha , \alpha + 1}^\Lipg  \\
& \qquad \quad  
+ C(s_0) | A  
|_{m, s_0 + 2 + |m' | + \alpha, \alpha + 1}^\Lipg 
| B |_{m', s + 2 + |m| + \alpha , \alpha + 1}^\Lipg \, .  
\label{stima commutator parte astratta}
\end{aligned}
\ee
\end{lemma}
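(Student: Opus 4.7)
The plan is to reduce the statement to a direct application of the composition Lemma \ref{lemma stime Ck parametri} with $N=0$, exploiting the fact that the principal term in the symbolic composition is just the pointwise product of symbols and therefore cancels upon commutation. By the expansion \eqref{expansion symbol} applied with $N=0$, one has
\[
\sigma_{AB}(\vphi,x,\xi;\om) = a(\vphi,x,\xi;\om)\, b(\vphi,x,\xi;\om) + r_0^{AB}(\vphi,x,\xi;\om),
\qquad r_0^{AB}\in S^{m+m'-1},
\]
and analogously $\sigma_{BA} = ba + r_0^{BA}$ with $r_0^{BA}\in S^{m+m'-1}$. Since scalar-valued symbols commute pointwise, $ab=ba$, hence
\[
[A,B] \;=\; \mathrm{Op}(\sigma_{AB}-\sigma_{BA}) \;=\; R_0^{AB} - R_0^{BA},
\]
where $R_0^{AB}:=\mathrm{Op}(r_0^{AB})$ and $R_0^{BA}:=\mathrm{Op}(r_0^{BA})$ are pseudo-differential operators of order $m+m'-1$. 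This already yields the qualitative part of the claim, namely $[A,B]\in OPS^{m+m'-1}$.

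For the quantitative estimate, I would next apply \eqref{stima RN derivate xi parametri} with $N=0$ to $R_0^{AB}$, obtaining a bound controlled by a sum of products of the form $|A|_{m,\cdot,1+\alpha}^\Lipg\,|B|_{m',\cdot+2+|m|+\alpha,\alpha}^\Lipg$ in the usual high-low/low-high fashion. Then I would apply the same estimate to $R_0^{BA}$ with the roles of $(A,m)$ and $(B,m')$ interchanged, producing bounds of the form $|B|_{m',\cdot,1+\alpha}^\Lipg\,|A|_{m,\cdot+2+|m'|+\alpha,\alpha}^\Lipg$. The shift $+2$ in the Sobolev index and the $\alpha+1$ in the $\xi$-derivative count are exactly those predicted by \eqref{stima RN derivate xi parametri} for $N=0$ (since $2(N+1)=2$ and $N+1=1$).

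To match the symmetric form displayed in \eqref{stima commutator parte astratta}, the final step is to enlarge all norms via the monotonicity recorded in \eqref{norm-increa}: each factor $|A|_{m,s,1+\alpha}^\Lipg$ or $|A|_{m,s_0+2+|m'|+\alpha,\alpha}^\Lipg$ is dominated by $|A|_{m,s+2+|m'|+\alpha,\alpha+1}^\Lipg$ or $|A|_{m,s_0+2+|m'|+\alpha,\alpha+1}^\Lipg$ respectively, and analogously for $B$ with $m$ in place of $m'$. Summing the resulting inequalities for $R_0^{AB}$ and $R_0^{BA}$ gives \eqref{stima commutator parte astratta}.

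I do not expect a genuine obstacle here: the whole argument is the standard observation that commutation kills the leading symbol and promotes the order down by one, made tame and uniform in the Lipschitz parameter $\om$ by the already proved Lemma \ref{lemma stime Ck parametri}. The only points demanding a little bookkeeping are (i) verifying that the index shifts coming from the two separate applications of \eqref{stima RN derivate xi parametri} can both be absorbed by the single pair of indices appearing in \eqref{stima commutator parte astratta}, and (ii) keeping track of the constants $C(s)$ and $C(s_0)$ so that the high/low structure is preserved; both are immediate from \eqref{norm-increa}.
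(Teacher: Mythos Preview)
Your proposal is correct and follows exactly the approach the paper has in mind: the paper's own ``proof'' simply says that by \eqref{expansion symbol} the commutator has order $m+m'-1$ and that Lemma~\ref{lemma stime Ck parametri} then yields the result (citing \cite[Lemma~2.15]{Berti-Montalto}), which is precisely your scheme of applying the $N=0$ remainder estimate \eqref{stima RN derivate xi parametri} to $R_0^{AB}$ and $R_0^{BA}$ after the pointwise products cancel, and then symmetrizing via \eqref{norm-increa}.
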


In the case of operators 
of the special form $a \partial_x^m $, 
Lemma \ref{lemma stime Ck parametri} 
and Lemma \ref{lemma tame norma commutatore}
simplify as follows:
\begin{lemma}\label{composizione simboli omogenei}{\bf (Composition and commutator of homogeneous symbols)} 
Let $ A = a \partial_x^m$, $B = b \partial_x^{m'}$ where
$m, m' \in \Z $ and
 $a(\varphi, x ; \omega) $, $ b(\varphi, x ; \omega) $ are
$ {\cal C}^\infty-$smooth functions with respect to
 $(\varphi , x)$ and   Lipschitz with respect to $\omega \in  \mathtt\Omega$.
Then there exist combinatorial constants $K_{n, m} \in \R $, 
$0 \le n \le N $,  with  $K_{0, m} = 1$ and $K_{1, m} = m $
so that the following holds:  

\noindent
$(i)$ For any $N \in \N$, the composition $A \circ B$
is in $OPS^{m+m'}$ and
admits the asymptotic expansion 
$$
A \circ B = \sum_{n = 0}^{N} K_{n, m} \, a\,  (\partial_x^n b) \partial_x^{m + m' - n} + {\cal R}_N(a, b)
$$
where  the remainder ${\cal R}_N(a, b) $ is in $ OPS^{m + m' - N - 1}$. Furthermore there is a constant $\sigma_N(m) > 0$ so that, for any $s \geq s_0$, $\alpha \in \N$, 
$$
|{\cal R}_N(a, b)|_{m + m'- N - 1, s, \alpha}^\Lipg \lesssim_{m, m', s, N, \alpha} \| a\|_{s + \sigma_N(m) }^\Lipg \| b \|_{s_0 + \sigma_N(m)}^\Lipg + \| a \|_{s_0 + \sigma_N(m)}^\Lipg \| b \|_{s + \sigma_N(m)}^\Lipg\,. 
$$
$(ii)$ For any $N \in \N_+$, the commutator $[A, B]$ is in 
$OPS^{m + m' - 1}$ and admits the asymptotic expansion 
$$
[A,  B] = \sum_{n = 1}^N  \big( K_{n,  m}a (\partial_x^n b) -  K_{ n, m'}(\partial_x^n a) b \big) \partial_x^{m + m' - n} + {\cal Q}_N(a, b)
$$
where the remainder ${\cal Q}_N(a, b)$ is in 
$OPS^{m + m' - N - 1}$. Furthermore, there 
is a constant $\sigma_N(m, m') > 0 $ so that,  
for any $s \geq s_0$, $\alpha \in \N$, 
$$
|{\cal Q}_N(a, b)|_{m + m'- N - 1, s, \alpha}^\Lipg \lesssim_{m, m', s, N, \alpha} \| a\|_{s + \sigma_N(m, m') }^\Lipg \| b \|_{s_0 + \sigma_N(m, m')}^\Lipg + \| a \|_{s_0 + \sigma_N(m, m')}^\Lipg \| b \|_{s + \sigma_N(m, m')}^\Lipg\,. 
$$
\end{lemma}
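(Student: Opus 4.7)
My plan is to specialize the general symbolic calculus of Lemma \ref{lemma stime Ck parametri} to $A = a\pa_x^m$ and $B = b\pa_x^{m'}$, whose symbols in the sense of Definition \ref{def:Ps2} are $a(\vphi, x)\chi_0(\xi)(\ii 2\pi\xi)^m$ and $b(\vphi, x)\chi_0(\xi)(\ii 2\pi\xi)^{m'}$. Since neither $a$ nor $b$ depends on $\xi$, inserting these symbols into the expansion \eqref{expansion symbol} produces, for each $\beta \leq N$, a main contribution coming from $\chi_0(\xi)\pa_\xi^\beta[(\ii 2\pi\xi)^m](\ii 2\pi\xi)^{m'}$ together with the factor $a(\pa_x^\beta b)/(\ii^\beta \beta!)$, plus several terms in which at least one $\xi$-derivative falls on the cutoff $\chi_0$. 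The latter are compactly supported in $\xi$ and hence lie in $S^{-\infty}$; I would absorb them into the remainder. A direct algebraic computation then shows that the main contribution equals $K_{\beta,m}$ times the symbol of $a(\pa_x^\beta b)\pa_x^{m+m'-\beta}$ (modulo a further $S^{-\infty}$ error coming from $\chi_0^2 - \chi_0$), for a constant $K_{\beta,m}$ depending only on $\beta$ and $m$, with $K_{0,m} = 1$ and $K_{1,m} = m$ by inspection.

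The tame bound on the remainder ${\cal R}_N(a,b)$ then follows by applying the remainder estimate \eqref{stima RN derivate xi parametri} to $A$ and $B$: their weighted pseudo-differential norms are controlled by $\|a\|_s^{\Lipg}$ and $\|b\|_s^{\Lipg}$ via \eqref{Norm Fourier multiplier} and \eqref{norma pseudo moltiplicazione}, with an index shift depending only on $m$, $m'$, $N$, $\alpha$; this fixes the choice of $\sigma_N(m)$. Part (ii) is then an immediate consequence of (i) applied to both $AB$ and $BA$: the $n=0$ terms $ab$ and $ba$ cancel, while the two remainders combine into ${\cal Q}_N(a,b) := {\cal R}_N(a,b) - {\cal R}_N(b,a)$, yielding the claimed expansion and the announced tame bound with $\sigma_N(m,m') := \max\{\sigma_N(m), \sigma_N(m')\}$.

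The only delicate point is the careful handling of the cutoff $\chi_0$: I would check that every term generated by derivatives $\chi_0^{(k)}$ with $k \geq 1$ is a symbol in $S^{-\infty}$ obeying a tame bound consistent with the announced one, so that it can be absorbed into ${\cal R}_N(a,b)$ without affecting the leading constants. Once this is in place, the rest of the argument is bookkeeping: reading off $K_{n,m}$ from monomials in $\xi$ and applying Lemma \ref{lemma stime Ck parametri} essentially verbatim.
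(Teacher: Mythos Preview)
Your proposal is correct and follows exactly the approach the paper indicates: the paper's proof is the one-liner ``See formula \eqref{expansion symbol} and Lemma \ref{lemma stime Ck parametri},'' and your argument is precisely the unpacking of that reference---specializing the symbolic expansion \eqref{expansion symbol} to the homogeneous symbols, reading off the constants $K_{n,m}$, and invoking the tame remainder estimate \eqref{stima RN derivate xi parametri} together with \eqref{Norm Fourier multiplier}, \eqref{norma pseudo moltiplicazione}. Your explicit treatment of the cutoff $\chi_0$ (derivatives falling on $\chi_0$ giving $S^{-\infty}$ contributions absorbed into the remainder) is a correct detail that the paper leaves implicit.
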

\begin{proof} See formula \eqref{expansion symbol} 
and  Lemma \ref{lemma stime Ck parametri}. 
\end{proof}

We finally give the following result on the 
exponential of a pseudo-differential operator of order $0$.
\begin{lemma} {\bf (Exponential map)} \label{Neumann pseudo diff}
If $ A := {\rm Op}(a(\vphi, x, \xi; \omega ))$ 
is in $ OPS^{0} $,   
then $\sum_{k \ge 0} \frac{1}{k!} \sigma_{A^k}(\varphi, x , \xi; \omega)$ is a symbol of order $0$ and hence the corresponding pseudo-differential operator, denoted by
$\Phi = \exp(A)$, is in $OPS^0$, and 
for any $s \geq s_0$, $\alpha \in \N $,
there is a constant 
$C(s, \alpha) > 0$ so that 
\be\label{exp-map}
|\Phi - {\rm Id} |_{0, s, \alpha}^\Lipg \leq  |A|_{0, s + \alpha, \alpha}^\Lipg 
{\rm exp} \big( C(s, \alpha) 
|A|_{0, s_0 + \alpha, \alpha}^\Lipg \big) \, .
\ee 
\end{lemma}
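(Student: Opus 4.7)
The plan is to derive \eqref{exp-map} via a semigroup--Gronwall argument rather than by iterating the composition estimate directly on $|A^k|_{0,s,\alpha}^\Lipg$. Set $\Phi(t) := \exp(tA)$ for $t \in [0,1]$; since $A$ commutes with its own exponential, $\Phi$ formally satisfies $\Phi'(t) = A\,\Phi(t) = \Phi(t)\,A$, and hence $\Phi(t) - \mathrm{Id} = \int_0^t \Phi(\tau)\,A\,d\tau$. I will bound the integrand via the composition estimate \eqref{estimate composition parameters} of Lemma \ref{lemma stime Ck parametri} applied with $m = m' = 0$, first at regularity $s_0$ and then at general $s$.

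At the base level $s = s_0$, both terms on the right-hand side of \eqref{estimate composition parameters} evaluate $A$ at the same norm $|A|_{0, s_0 + \alpha, \alpha}^\Lipg =: M_0$, giving the clean bound $|\Phi(\tau) A|_{0, s_0, \alpha}^\Lipg \lesssim_{s_0,\alpha} |\Phi(\tau)|_{0, s_0, \alpha}^\Lipg\, M_0$. Setting $v(\tau) := |\Phi(\tau) - \mathrm{Id}|_{0, s_0, \alpha}^\Lipg$, the integral identity gives $v(t) \leq c(s_0,\alpha) M_0 \int_0^t (1 + v(\tau))\,d\tau$, and Gronwall's lemma yields $v(t) \leq \exp(c(s_0,\alpha) M_0 t) - 1$.

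For general $s \geq s_0$, applying \eqref{estimate composition parameters} at regularity $s$ gives
$$
|\Phi(\tau) A|_{0, s, \alpha}^\Lipg
\lesssim_{s,\alpha} |\Phi(\tau)|_{0, s, \alpha}^\Lipg\, M_0
+ |\Phi(\tau)|_{0, s_0, \alpha}^\Lipg\, M_s,
\qquad M_s := |A|_{0, s+\alpha, \alpha}^\Lipg\,.
$$
The second factor is controlled by the base estimate via $|\Phi(\tau)|_{0, s_0, \alpha}^\Lipg \leq 1 + v(\tau) \leq \exp(c(s_0,\alpha) M_0 \tau)$, so setting $u(\tau) := |\Phi(\tau) - \mathrm{Id}|_{0, s, \alpha}^\Lipg$, one obtains
$$
u(t) \leq c(s,\alpha) M_0 \int_0^t (1 + u(\tau))\,d\tau
+ c(s,\alpha) M_s \int_0^t \exp(c(s_0,\alpha) M_0 \tau)\,d\tau\,.
$$
A final application of Gronwall at $t = 1$, combined with the elementary inequality $e^x - 1 \leq x e^x$ to absorb the linear-in-$u$ contribution, yields $u(1) \leq M_s \exp(C(s,\alpha) M_0)$, which is the claimed bound.

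The main technical hurdle is rendering this semigroup argument rigorous at the symbol level, since the ODE $\Phi' = A\Phi$ cannot be invoked a priori in $OPS^0$ without further justification. The clean solution is to run the above estimates on the partial sums $\Phi_N(t) := \sum_{k=0}^N (tA)^k / k!$: the same Gronwall-type bookkeeping, uniform in $N$, shows that $(\Phi_N(1))_N$ is Cauchy in the norm $|\cdot|_{0, s, \alpha}^\Lipg$ for every $s \geq s_0$ and $\alpha \in \mathbb{N}$. Convergence in every such norm then implies that $\sum_{k \geq 0} \sigma_{A^k}/k!$ is a symbol of order $0$, that the corresponding operator $\Phi$ lies in $OPS^0$, and that the limiting inequality \eqref{exp-map} holds.
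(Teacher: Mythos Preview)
Your approach is correct in spirit and takes a genuinely different route from the paper. The paper argues directly: iterating the composition bound \eqref{estimate composition parameters} yields
\[
|A^k|_{0,s,\alpha}^\Lipg \leq C(s,\alpha)^{k-1}\bigl(|A|_{0,s_0+\alpha,\alpha}^\Lipg\bigr)^{k-1}\,|A|_{0,s+\alpha,\alpha}^\Lipg\,,
\]
and summing over $k\geq 1$ gives both convergence of the symbol series and the stated bound in a couple of lines. Your semigroup--Gronwall argument on $\Phi(t)=\exp(tA)$ is a legitimate alternative and would generalize nicely (for instance to $t$-dependent generators), but here it is heavier machinery for the same input. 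Two small points are worth flagging. First, the Gronwall step leaves an extra multiplicative constant in front of $M_s$: carrying the arithmetic through gives $u(1)\leq C'(s,\alpha)\,M_s\,e^{C(s,\alpha)M_0}$ rather than the stated $M_s\,e^{C M_0}$, and for small $M_0$ this prefactor cannot be absorbed into the exponential. The paper's iteration avoids this because the $k=1$ term is exactly $|A|_{0,s,\alpha}^\Lipg\leq M_s$ with no constant in front. Second, your claim that ``Gronwall-type bookkeeping, uniform in $N$'' establishes that $(\Phi_N(1))_N$ is Cauchy is not really a Gronwall argument: uniform bounds on $|\Phi_N-\mathrm{Id}|$ do not by themselves give convergence, and bounding the increments $\Phi_N-\Phi_{N-1}=A^N/N!$ amounts precisely to the iterated estimate on $|A^k|$ displayed above, which is the paper's proof. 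Neither point is fatal, but the direct iteration is shorter and delivers the inequality exactly as stated.
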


\begin{proof}
Iterating  \eqref{estimate composition parameters}, 
for any $s \geq s_0$, $\alpha \in \N $,  there 
is a constant $C(s, \alpha) > 0$ such that
\begin{equation}\label{composizione iterata}
|A^k|_{0, s, \alpha}^\Lipg \leq C(s, \alpha)^{k - 1} (|A|_{0, s_0 + \alpha, \alpha}^\Lipg)^{k - 1} |A|_{0, s + \alpha, \alpha}^\Lipg\, , \quad \forall k \geq 1 \, .
\end{equation}
Therefore 
$$
\begin{aligned}
|\Phi - {\rm Id}|_{0, s, \alpha}^\Lipg & \leq  \sum_{k \geq 1}
\frac{1}{k!} |A^k|_{0, s, \alpha}^\Lipg  \stackrel{\eqref{composizione iterata}}{\leq} |A|_{0, s + \alpha, \alpha}^\Lipg\sum_{k \geq 1} \frac{1}{k!} 
C(s, \alpha)^{k - 1} (|A|_{0, s_0 + \alpha, \alpha}^\Lipg)^{k - 1}  \\
& \leq  |A|_{0, s + \alpha, \alpha}^\Lipg {\rm exp}\big( C(s, \alpha) |A|_{0, s_0 + \alpha, \alpha}^\Lipg \big) 
\end{aligned} \, .
$$
This shows that
$\sum_{k \ge 0} \frac{1}{k!} 
\sigma_{A^k}(\varphi, x , \xi; \omega)$ 
is a symbol in $S^0$ and that the estimate \eqref{exp-map} holds. 
\end{proof}

\subsection{$ \Lipg $-tame and modulo-tame operators}

In this section we recall  the notion and the main properties of 
$ \Lipg $-$\s$-tame and $ \Lipg $-modulo-tame operators.
We refer to \cite[Section 2.2]{Berti-Montalto} where this notion was introduced, 
with the only difference that here we consider 
difference quotients instead of first order derivatives with respect to the parameter $\omega$.

\begin{definition} { \bf ($ \Lipg $-$ \s $-tame)}\label{def:Ck0}
Let $\s \ge 0$.
A linear operator  $ A := A(\omega)  $ as in \eqref{def azione toplitz u vphi x} is 
$ \Lipg $-$ \s $-tame  if there exist  $S > s_1 \geq s_0$ and 
a non-decreasing function $[s_1, S] \to [0, + \infty)$, $s \mapsto {\mathfrak M}_A(s)$,
so that, for any $ s_1 \leq s \leq S $ and $u \in H^{s+\s} $,  
\be\label{CK0-sigma-tame}
\sup_{\omega \in  {  \mathtt \Omega}} 
\|  A(\omega) u \|_s +  \gamma 
\sup_{\omega_1, \omega_2 \in {  \mathtt \Omega} \atop \omega_1 \neq \omega_2} \Big\| \frac{A(\omega_1) - A(\omega_2)}{|\omega_1- \omega_2|} u \Big\|_{s} 
\leq  {\mathfrak M}_A(s_1) \| u \|_{s+\s} + 
{\mathfrak M}_A (s) \| u  \|_{s_1+\s} \,. 
\ee
When $ \sigma $ is zero, 
we simply write $ \Lipg $-tame instead of $ \Lipg $-$ 0 $-tame.
We say that $ {\mathfrak M}_{A}(s) $ is a {\sc tame constant} of the operator $ A $.
Note that $ {\mathfrak M}_A(s)$ is not uniquely determined 
and that it may also depend on 
the ``loss of derivatives" $\s$. 
We will not indicate this dependence. 
\end{definition}

Representing the operator $ A $ by its matrix elements 
$ \big(A_j^{j'} (\ell - \ell') \big)_{\ell, \ell' \in \Z^{\Splus}, j, j' \in \Z} $ as in \eqref{matrice operatori Toplitz}, we have, for all
$ j' \in \Z $, $ \ell' \in \Z^{\Splus} $, 
for all $ \omega_1, \omega_2 \in {  \mathtt\Omega} $, $\omega_1 \neq \omega_2 $, 
\be\label{tame-coeff}
\begin{aligned}
& {\mathop \sum}_{\ell , j} 
\langle \ell, j \rangle^{2 s_1} \Big( \big| A_j^{j'}(\ell - \ell') \big|^2 +
\gamma^{2}  \Big| \frac{\Delta_\omega A_j^{j'}(\ell - \ell')}{|\omega_1 - \omega_2|} \Big|^2 \Big) 
\lesssim \big({\mathfrak M}_A(s_1) \big)^2  \langle \ell', j' \rangle^{2 (s_1+\s)} 	
\end{aligned}
\ee
where we recall that 
$ \Delta_\omega f = f(\omega_1)- f (\omega_2) $.  

\begin{lemma}\label{composizione operatori tame AB} {\bf (Composition)}
Let $ A, B $ be, respectively, $ \Lipg $-$\sigma_A$-tame and 
$ \Lipg $-$\sigma_B$-tame operators with tame 
constants
$ {\mathfrak M}_A (s) $ and $ {\mathfrak M}_B (s) $. 
Then the composition 
$ A \circ B $ is $ \Lipg $-$(\sigma_A + \sigma_B)$-tame with a tame constant satisfying 
$$
 {\mathfrak M}_{A B} (s) \lesssim  {\mathfrak M}_{A}(s) 
 {\mathfrak M}_{B} (s_1 + \sigma_A) + {\mathfrak M}_{A} (s_1) 
{\mathfrak M}_{B} (s + \sigma_A) \,.
$$
\end{lemma}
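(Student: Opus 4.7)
\smallskip

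\noindent\textbf{Proof plan.} The idea is simply to apply the defining tame estimate \eqref{CK0-sigma-tame} for $A$ first and then for $B$, treating the supremum part and the Lipschitz part of the norm $\|\cdot\|_s^{\Lipg}$ separately. The crucial point is that since $\sigma_A\geq 0$, whenever $s\geq s_1$ one also has $s+\sigma_A\geq s_1$, so the tame estimate for $B$ may legally be applied at the intermediate Sobolev level $s+\sigma_A$.

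\smallskip

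\noindent First I would treat the supremum part. Applying \eqref{CK0-sigma-tame} for $A$ to the function $Bu$ gives
\[
\|AB u\|_s \;\leq\; \mathfrak{M}_A(s_1)\|Bu\|_{s+\sigma_A} + \mathfrak{M}_A(s)\|Bu\|_{s_1+\sigma_A}.
\]
Next, applying \eqref{CK0-sigma-tame} for $B$ at Sobolev level $s+\sigma_A\geq s_1$ and at level $s_1+\sigma_A\geq s_1$, and using monotonicity of $s\mapsto\mathfrak{M}_B(s)$ together with $\|u\|_{s_1+\sigma_B}\leq \|u\|_{s_1+\sigma_A+\sigma_B}$, one obtains
\[
\|Bu\|_{s+\sigma_A} \lesssim \mathfrak{M}_B(s_1+\sigma_A)\|u\|_{s+\sigma_A+\sigma_B} + \mathfrak{M}_B(s+\sigma_A)\|u\|_{s_1+\sigma_A+\sigma_B},
\]
\[
\|Bu\|_{s_1+\sigma_A} \lesssim \mathfrak{M}_B(s_1+\sigma_A)\|u\|_{s_1+\sigma_A+\sigma_B}.
\]
Combining the three displays and using $\mathfrak{M}_A(s_1)\mathfrak{M}_B(s_1+\sigma_A)\leq \mathfrak{M}_A(s)\mathfrak{M}_B(s_1+\sigma_A)$ to absorb the coefficient of $\|u\|_{s+\sigma_A+\sigma_B}$ into the claimed bound for $\mathfrak{M}_{AB}(s)$ evaluated at $s=s_1$, yields the sup-norm part of the tame estimate for $AB$ with the desired constant.

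\smallskip

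\noindent For the Lipschitz part I would use the algebraic identity
\[
A(\omega_1)B(\omega_1)-A(\omega_2)B(\omega_2) \;=\; \bigl(A(\omega_1)-A(\omega_2)\bigr)B(\omega_1) + A(\omega_2)\bigl(B(\omega_1)-B(\omega_2)\bigr),
\]
divide by $|\omega_1-\omega_2|$, multiply by $\gamma$, and estimate the two resulting terms. On the first term I apply the Lipschitz half of \eqref{CK0-sigma-tame} for $A$ (the factor $\gamma/|\omega_1-\omega_2|$ attaches to the $A$-difference) to the argument $B(\omega_1)u$, then the sup half of \eqref{CK0-sigma-tame} for $B$. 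On the second term I apply the sup half of \eqref{CK0-sigma-tame} for $A$ to the argument $\gamma(B(\omega_1)-B(\omega_2))u/|\omega_1-\omega_2|$, then the Lipschitz half for $B$. In both cases the bookkeeping is exactly the same as in the supremum step, and produces the same constant $\mathfrak{M}_A(s)\mathfrak{M}_B(s_1+\sigma_A)+\mathfrak{M}_A(s_1)\mathfrak{M}_B(s+\sigma_A)$, up to an overall multiplicative constant.

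\smallskip

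\noindent There is no real obstacle here: the proof is essentially a double application of \eqref{CK0-sigma-tame} with a standard splitting of the Lipschitz quotient of a product. The only point that requires a bit of care is to verify that $s+\sigma_A$ lies in the admissible range $[s_1,S]$ for the tame estimate of $B$; this is exactly why the loss $\sigma_A$ is shifted into the argument of $\mathfrak{M}_B$ in the statement. The monotonicity of the tame constants is then used throughout to consolidate mixed terms into the two advertised summands.
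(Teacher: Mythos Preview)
Your proposal is correct and follows exactly the standard argument. The paper itself does not give a proof but simply cites \cite[Lemma 2.20]{Berti-Montalto}, whose proof proceeds precisely along the lines you sketch: apply \eqref{CK0-sigma-tame} for $A$ to $Bu$, then for $B$ at the shifted levels $s+\sigma_A$ and $s_1+\sigma_A$, and handle the Lipschitz quotient via the product splitting $A(\omega_1)B(\omega_1)-A(\omega_2)B(\omega_2)=(\Delta_\omega A)B(\omega_1)+A(\omega_2)(\Delta_\omega B)$.
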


\begin{proof}
See \cite[Lemma 2.20]{Berti-Montalto}. 
\end{proof}

We now discuss the action of a $ \Lipg $-$ \s $-tame operator  $ A(\omega) $ on a family of  
Sobolev functions  $ u (\omega) \in H^s $.

\begin{lemma}\label{lemma operatore e funzioni dipendenti da parametro}
{\bf (Action on $ H^s $)}
Let $ A := A(\omega) $ be a $ \Lipg $-$ \s $-tame operator
with tame constant ${\mathfrak M}_A(s)$. 
Then, for any family of Sobolev functions 
$ u := u(\omega) \in H^{s+\s} $, Lipschitz  with respect to $ \omega $, one has
$$
\| A u \|_s^\Lipg \lesssim  {\mathfrak M}_A(s_1) \| u \|_{s + \sigma}^\Lipg 
+ {\mathfrak M}_A(s) \| u \|_{s_1 + \sigma}^\Lipg  \,.
$$
\end{lemma}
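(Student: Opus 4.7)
The plan is to estimate separately the sup-norm part and the Lipschitz-seminorm part of $\| A u \|_s^{\Lipg} = \| A u \|_s^{\rm sup} + \gamma \| A u \|_s^{\rm lip}$, and then sum. For the sup-norm, I would apply the tame bound \eqref{CK0-sigma-tame} pointwise: for each $\omega \in \mathtt\Omega$, the function $u(\omega)$ may be treated as a \emph{fixed} element of $H^{s+\sigma}$, so
\[
\| A(\omega) u(\omega) \|_s \;\leq\; \mathfrak M_A(s_1)\, \| u(\omega)\|_{s+\sigma} + \mathfrak M_A(s)\, \| u(\omega)\|_{s_1+\sigma},
\]
and taking the supremum over $\omega$ gives the bound by $\mathfrak M_A(s_1)\|u\|_{s+\sigma}^{\rm sup} + \mathfrak M_A(s)\|u\|_{s_1+\sigma}^{\rm sup}$.

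For the Lipschitz seminorm I would use the standard difference-quotient splitting
\[
A(\omega_1) u(\omega_1) - A(\omega_2) u(\omega_2) \;=\; \bigl(A(\omega_1)-A(\omega_2)\bigr) u(\omega_1) \;+\; A(\omega_2)\bigl(u(\omega_1)-u(\omega_2)\bigr).
\]
The first summand, divided by $|\omega_1-\omega_2|$ and multiplied by $\gamma$, is controlled directly by the Lipschitz part of the tame estimate \eqref{CK0-sigma-tame} applied with the fixed test function $u(\omega_1)$, yielding $\mathfrak M_A(s_1)\|u(\omega_1)\|_{s+\sigma} + \mathfrak M_A(s)\|u(\omega_1)\|_{s_1+\sigma}$. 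For the second summand, viewing $v := u(\omega_1)-u(\omega_2)$ as a fixed function in $H^{s+\sigma}$ and applying the sup part of \eqref{CK0-sigma-tame} to $A(\omega_2)$, I get
\[
\gamma\,\frac{\|A(\omega_2) v\|_s}{|\omega_1-\omega_2|} \leq \mathfrak M_A(s_1)\,\gamma\frac{\|v\|_{s+\sigma}}{|\omega_1-\omega_2|} + \mathfrak M_A(s)\,\gamma\frac{\|v\|_{s_1+\sigma}}{|\omega_1-\omega_2|},
\]
which upon taking the supremum over $\omega_1\neq\omega_2$ is bounded by $\mathfrak M_A(s_1)\,\gamma\|u\|_{s+\sigma}^{\rm lip} + \mathfrak M_A(s)\,\gamma\|u\|_{s_1+\sigma}^{\rm lip}$.

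Summing the two contributions and using $\|\cdot\|^{\rm sup}, \gamma\|\cdot\|^{\rm lip} \leq \|\cdot\|^{\Lipg}$ yields the claim. There is no real obstacle; the only point to be attentive to is that in \eqref{CK0-sigma-tame} the test function $u$ is $\omega$-independent, so the difference-quotient splitting above is essential in order to invoke the tame estimate correctly (once against $u(\omega_1)$, once against $u(\omega_1)-u(\omega_2)$). The implicit constant in $\lesssim$ is absolute (in fact $\leq 2$), since we simply bundle the sup and Lipschitz pieces into the weighted norm at the end.
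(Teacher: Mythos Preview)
Your proof is correct and is precisely the standard argument: the paper does not give its own proof but simply refers to \cite[Lemma 2.22]{Berti-Montalto}, and what you wrote is exactly the difference-quotient splitting argument underlying that reference. Your remark that the implicit constant is at most $2$ is also accurate.
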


\begin{proof}
See \cite[Lemma 2.22]{Berti-Montalto}. 
\end{proof}

Pseudo-differential operators are tame operators. We shall use in particular the following lemma.

\begin{lemma}\label{lemma: action Sobolev}
Let $ a( \vphi, x, \xi; \omega) \in S^0 $  be a family of symbols
that are Lipschitz with respect to  $\omega $. 
If $ A = a( \vphi, x, D; \omega)$ satisfies
$ | A |_{0, s, 0}^\Lipg < + \infty $, $ s \geq s_0 $, then  $ A $ is $ \Lipg $-tame with a tame constant satisfying 
\begin{equation}\label{interpolazione parametri operatore funzioni}
{\mathfrak M}_A(s) \leq C(s)  | A |_{0, s, 0}^\Lipg\,.
\end{equation}
As a consequence 
\begin{equation}\label{interpolazione parametri operatore funzioni (2)}
\| A u \|_s^\Lipg \leq C(s_0) | A |_{0, s_0, 0}^\Lipg \| u \|_{s}^\Lipg + 
C(s) | A |_{0, s, 0}^\Lipg \| u \|_{s_0}^\Lipg\,.
\end{equation}
\end{lemma}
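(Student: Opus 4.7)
The plan is to first reduce \eqref{interpolazione parametri operatore funzioni (2)} to \eqref{interpolazione parametri operatore funzioni}: once $A$ is known to be $\Lipg$-tame with $\mathfrak{M}_A(s) \leq C(s)\,|A|_{0,s,0}^{\Lipg}$, the action bound on $\|Au\|_s^{\Lipg}$ is immediate from Lemma \ref{lemma operatore e funzioni dipendenti da parametro} applied with $\sigma = 0$ and $s_1 = s_0$. So the real content is \eqref{interpolazione parametri operatore funzioni}.

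To establish tameness, I would start from the explicit action
$$
(Au)(\vphi,x) = \sum_{j \in \Z} a(\vphi,x,j)\, \hat u_j(\vphi)\, e^{\ii 2\pi j x},
$$
and decompose $a$ by a dyadic partition of unity in $\xi$, writing $a = \sum_{N \geq 0} a_N$ with $a_N$ supported in $\{|\xi|\sim 2^N\}$ (and $a_0$ near the origin). Setting $A_N := \Op(a_N)$, only frequencies $|j|\sim 2^N$ contribute, so $A_N u = A_N \Delta_N u$, where $\Delta_N$ denotes the $x$-Littlewood--Paley projector onto that dyadic block. Within each block I would replace $a_N(\vphi,x,j)$ by the representative $a(\vphi,x,2^N)$; since $a\in S^0$ gives $|\partial_\xi a|\lesssim \langle \xi\rangle^{-1}$, the error is a symbol in $S^{-1}$ supported on $|\xi|\sim 2^N$, which produces a smoothing correction that sums absolutely.

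The leading piece is the multiplication operator $a(\vphi,x,2^N)\,\Delta_N u$. The crucial observation is that by Definition \ref{def:pseudo-norm}, since the order is $0$ and $\alpha=0$,
$$
\sup_{\xi \in \R} \| a(\cdot,\cdot,\xi;\omega)\|_s^{\Lipg} \leq |A|_{0,s,0}^{\Lipg},
$$
so each dyadic ``coefficient'' $a(\vphi,x,2^N)$ is controlled uniformly in $N$ by the $\Psi$do norm. Applying the tame product estimate \eqref{p1-pr} of Lemma \ref{lemma:LS norms}(i) gives
$$
\|A_N u\|_s^{\Lipg} \lesssim_s |A|_{0,s_0,0}^{\Lipg}\|\Delta_N u\|_s^{\Lipg} + |A|_{0,s,0}^{\Lipg}\|\Delta_N u\|_{s_0}^{\Lipg},
$$
plus the smoothing correction which is better. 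Summing over $N$ by Cauchy--Schwarz together with the almost-orthogonality $\sum_N \|\Delta_N u\|_s^2 \sim \|u\|_s^2$ yields the desired $\Lipg$-tame estimate with $\mathfrak{M}_A(s)\leq C(s)|A|_{0,s,0}^{\Lipg}$; the sup and Lipschitz parts are handled in the same way since the full $\Lipg$-weighted norm of the symbol is controlled by $|A|_{0,s,0}^{\Lipg}$.

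The main technical obstacle lies in the accounting of the dyadic correction terms arising from the approximation $a_N(\vphi,x,j) \approx a(\vphi,x,2^N)$, which requires that the $S^{-1}$ error on each block produces a geometrically summable contribution; once this is organised, everything else is a routine bookkeeping exercise with the product estimate \eqref{p1-pr} and the definition of the $\Psi$do norm.
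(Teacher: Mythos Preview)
Your reduction of \eqref{interpolazione parametri operatore funzioni (2)} to \eqref{interpolazione parametri operatore funzioni} via Lemma~\ref{lemma operatore e funzioni dipendenti da parametro} is exactly what the paper does; for \eqref{interpolazione parametri operatore funzioni} itself the paper simply refers to \cite[Lemma~2.21]{Berti-Montalto}.

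Your proposed argument for \eqref{interpolazione parametri operatore funzioni}, however, has a genuine gap. The assertion that the freezing error $e_N(\vphi,x,\xi):=\big(a(\vphi,x,\xi)-a(\vphi,x,2^N)\big)\chi_N(\xi)$ is ``a symbol in $S^{-1}$'' is not correct: on the block $|\xi|\sim 2^N$ one has $|\xi-2^N|\lesssim 2^N$, so the mean-value inequality only gives $|e_N|\lesssim 2^N\cdot 2^{-N}=O(1)$, i.e.\ $e_N\in S^0$ with no decay in $N$ to sum. More to the point, the tame bound needs control of $\|e_N(\cdot,\cdot,\xi)\|_s$, and for that you would need $\sup_\xi\langle\xi\rangle\|\partial_\xi a(\cdot,\cdot,\xi)\|_s$, which is $|A|_{0,s,1}^{\Lipg}$ rather than the hypothesis $|A|_{0,s,0}^{\Lipg}$. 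Your scheme therefore yields at best $\mathfrak M_A(s)\le C(s)\,|A|_{0,s,1}^{\Lipg}$, strictly weaker than the claim. (A secondary issue: the pieces $a(\cdot,\cdot,2^N)\Delta_N u$ are not almost-orthogonal in $H^s$, since multiplication spreads $x$-frequencies, so the square-function summation you invoke would need a further paraproduct decomposition.)

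A route that uses only $|A|_{0,s,0}$ avoids $\xi$-derivatives altogether and works directly on matrix coefficients: from $(Au)_{\ell,n}=\sum_{m,k}\widehat a_{m,k}(n-k)\,u_{\ell-m,\,n-k}$ one splits $\langle\ell,n\rangle^{s}\lesssim_s\langle m,k\rangle^{s}+\langle\ell-m,n-k\rangle^{s}$ and applies Cauchy--Schwarz with the summable weight $\langle\cdot\rangle^{-2s_0}$. After summing in $(\ell,n)$ the change of variables $(\ell',j')=(\ell-m,n-k)$ decouples the third argument of $\widehat a$, and only $\sup_{j'}\|a(\cdot,\cdot,j')\|_{\sigma}=|A|_{0,\sigma,0}$ for $\sigma\in\{s_0,s\}$ survives. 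This is where your approach and the needed one diverge: the freezing step manufactures an artificial dependence on $\partial_\xi a$ that the direct computation never sees.
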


\begin{proof}
See \cite[Lemma 2.21]{Berti-Montalto} for the proof of \eqref{interpolazione parametri operatore funzioni}.
The estimate 
\eqref{interpolazione parametri operatore funzioni (2)}
then follows from Lemma \ref{lemma operatore e funzioni dipendenti da parametro}.
\end{proof}

In the KAM reducibility scheme of Section  \ref{sec: reducibility}, 
we need to consider $ \Lipg $-tame operators $A$ which satisfy
a stronger condition, referred to
$ \Lipg $-modulo-tame operators. 

\begin{definition}\label{def:op-tame} 
{ \bf ($ \Lipg $-modulo-tame)} Let $S > s_1 \geq s_0$. A  linear operator $ A := A(\omega) $ as in \eqref{def azione toplitz u vphi x} is  
$ \Lipg $-modulo-tame  if there exists a non-decreasing function $[s_1, S] \to [0, + \infty)$, $s \mapsto {\mathfrak M}_{A}^\sharp (s)$, such that  
the majorant operators $  |  A(\omega)  | $ 
(see Definition \ref{def:maj})
satisfy, for any $ s_1 \leq s \leq S $ and $ u \in H^{s} $,  
\be\label{CK0-tame}
\sup_{\omega \in {  \mathtt \Omega}}
\| \, | A(\omega)  | u \|_s + 
\gamma
 \sup_{\omega_1, \omega_2 \in {  \mathtt \Omega} \atop \omega_1 \neq \omega_2} \Big\| \, \frac{| A(\omega_1)  -  A(\omega_2) |}{|\omega_1- \omega_2|} u \Big\|_s 
\leq  
{\mathfrak M}_{A}^\sharp (s_1) \| u \|_{s} +
{\mathfrak M}_{A}^\sharp (s) \| u \|_{s_1} \,.
\ee
The constant $ {\mathfrak M}_A^\sharp (s) $ is called a {\sc modulo-tame constant} of the operator $ A $. 
\end{definition}

If $ A $, $B$ are $ \Lipg $-modulo-tame operators, with
$ | A_j^{j'} (\ell) | \leq | B_j^{j'} (\ell) | $, then $ {\mathfrak M}^\sharp_{A}(s) \leq {\mathfrak M}^\sharp_{B}(s) $. 

\begin{lemma}\label{A versus |A|}
An operator $ A $ that is $ \Lipg $-modulo-tame
with modulo-tame constant $ {\mathfrak M}_A^\sharp (s)$ is also 
$ \Lipg $-tame and
$ {\mathfrak M}_A^\sharp (s) $ is a tame constant for $A$. 
\end{lemma}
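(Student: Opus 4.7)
The plan is to exploit the pointwise bound on Fourier coefficients: for any linear operator $A$ with matrix representation as in \eqref{matrice operatori Toplitz} and any $u \in H^s$,
\[
\big|(Au)_{\ell,j}\big| \;=\; \Big|\sum_{\ell',j'} A_j^{j'}(\ell-\ell')\, u_{\ell',j'}\Big| \;\le\; \sum_{\ell',j'} |A_j^{j'}(\ell-\ell')|\, |u_{\ell',j'}| \;=\; (|A|\,\tilde u)_{\ell,j},
\]
where $\tilde u$ denotes the function whose Fourier coefficients are $|u_{\ell,j}|$. Since the Sobolev norm $\|\cdot\|_s$ depends only on the moduli of the Fourier coefficients, $\|\tilde u\|_s = \|u\|_s$, and by the same reasoning $\|Au\|_s \le \||A|\,\tilde u\|_s$.

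Starting from this inequality, I would apply the modulo-tame bound \eqref{CK0-tame} for $|A|$ to $\tilde u$ in order to obtain, for every $\omega \in \mathtt{\Omega}$ and $s_1 \le s \le S$,
\[
\|A(\omega)u\|_s \;\le\; \big\||A(\omega)|\,\tilde u\big\|_s \;\le\; {\mathfrak M}_A^\sharp(s_1)\,\|\tilde u\|_s + {\mathfrak M}_A^\sharp(s)\,\|\tilde u\|_{s_1} \;=\; {\mathfrak M}_A^\sharp(s_1)\,\|u\|_s + {\mathfrak M}_A^\sharp(s)\,\|u\|_{s_1},
\]
which takes care of the sup part of \eqref{CK0-sigma-tame} with $\sigma = 0$.

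For the Lipschitz part, I would apply the very same pointwise dominance to the difference $A(\omega_1) - A(\omega_2)$: its matrix entries in absolute value coincide with those of $|A(\omega_1)-A(\omega_2)|$, hence
\[
\Big\|\frac{A(\omega_1)-A(\omega_2)}{|\omega_1-\omega_2|}\,u\Big\|_s \;\le\; \Big\|\frac{|A(\omega_1)-A(\omega_2)|}{|\omega_1-\omega_2|}\,\tilde u\Big\|_s,
\]
and the right hand side is bounded, via the Lipschitz part of \eqref{CK0-tame}, by $\gamma^{-1}\bigl({\mathfrak M}_A^\sharp(s_1)\|u\|_s + {\mathfrak M}_A^\sharp(s)\|u\|_{s_1}\bigr)$. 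Taking the supremum over $\omega$ on the sup estimate and the supremum over $\omega_1 \neq \omega_2$ on the Lipschitz estimate, then multiplying the latter by $\gamma$ and summing the two, yields \eqref{CK0-sigma-tame} with $\sigma = 0$ and tame constant ${\mathfrak M}_A^\sharp(s)$. There is no genuine obstacle; the only point requiring a line of justification is that passing from the Fourier coefficients of $Au$ to those of $|A|\tilde u$ preserves (indeed increases moduli), which is precisely the definition of the majorant operator in Definition \ref{def:maj}.
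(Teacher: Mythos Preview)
Your argument is correct and is exactly the standard one (the paper itself simply refers to \cite[Lemma 2.24]{Berti-Montalto}). One small tightening: to obtain ${\mathfrak M}_A^\sharp(s)$ itself as a tame constant (rather than $2{\mathfrak M}_A^\sharp(s)$), do not bound the sup and Lipschitz parts separately via \eqref{CK0-tame} and then add; instead, first add the two majorant inequalities $\|A(\omega)u\|_s \le \||A(\omega)|\tilde u\|_s$ and $\gamma\|\tfrac{A(\omega_1)-A(\omega_2)}{|\omega_1-\omega_2|}u\|_s \le \gamma\|\tfrac{|A(\omega_1)-A(\omega_2)|}{|\omega_1-\omega_2|}\tilde u\|_s$, take suprema, and then apply \eqref{CK0-tame} once to $\tilde u$.
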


\begin{proof}
See \cite[Lemma 2.24]{Berti-Montalto}. 
\end{proof}

The class of operators which are $ \Lipg  $-modulo-tame is closed under
sum and composition.

\begin{lemma} \label{interpolazione moduli parametri} {\bf (Sum and composition)}
Let $ A, B $ be $ \Lipg  $-modulo-tame operators with modulo-tame constants respectively 
$ {\mathfrak M}_A^\sharp(s) $ and $ {\mathfrak M}_B^\sharp(s) $. Then 
$ A+ B $ is $ \Lipg  $-modulo-tame with a modulo-tame constant satisfying 
\be\label{modulo-tame-A+B}
{\mathfrak M}_{A + B}^\sharp (s) \leq {\mathfrak M}_A^\sharp (s)  + {\mathfrak M}_B^\sharp (s)  \,.
\ee
The composed operator 
$  A  \circ B $ is $ \Lipg  $-modulo-tame with a modulo-tame constant satisfying 
\begin{equation}\label{modulo tame constant for composition}
 {\mathfrak M}_{A B}^\sharp (s) \leq  
 C \, \big( {\mathfrak M}_{A}^\sharp(s) 
 {\mathfrak M}_{B}^\sharp (s_1) + {\mathfrak M}_{A}^\sharp (s_1) 
{\mathfrak M}_{B}^\sharp (s) \big)
\end{equation}
where $C \ge 1$ is a constant.
Assume in addition that $ \langle \partial_{\vphi} \rangle^{\mathtt b} A $, 
$ \langle \partial_{\vphi} \rangle^{\mathtt b}  B $ (see Definition \ref{def:maj}) are $ \Lipg $-modulo-tame with a modulo-tame constants, 
respectively, $ {\mathfrak M}_{\langle \partial_{\vphi} \rangle^{\mathtt b} A}^\sharp (s) $ and  
 $ {\mathfrak M}_{\langle \partial_{\vphi} \rangle^{\mathtt b} B}^\sharp (s) $. 
Then $ \langle \partial_{\vphi} \rangle^{\mathtt b} (A  B) $ is $ \Lipg $-modulo-tame 
with a modulo-tame constant satisfying, for some  $  C( {\mathtt b} ) \geq 1 $, 
\be
\begin{aligned}\label{K cal A cal B}
{\mathfrak M}_{\langle \partial_{\vphi} \rangle^{\mathtt b} (A  B)}^\sharp (s) & \leq
 C({\mathtt b})\Big( 
{\mathfrak M}_{\langle \partial_{\vphi} \rangle^{\mathtt b} A}^\sharp (s) 
{\mathfrak M}_{B}^\sharp (s_1) + 
{\mathfrak M}_{\langle \partial_{\vphi} \rangle^{\mathtt b} A }^\sharp (s_1) 
{\mathfrak M}_{B}^\sharp (s)  \\ 
& \qquad \qquad \qquad \quad + {\mathfrak M}_{A}^\sharp (s) {\mathfrak M}_{ \langle\partial_{\vphi} \rangle^{\mathtt b} B}^\sharp (s_1) 
+ {\mathfrak M}_{A}^\sharp (s_1) {\mathfrak M}_{ \langle \partial_{\vphi} \rangle^{\mathtt b} B}^\sharp (s)\Big) \, .  
\end{aligned}
\ee
\end{lemma}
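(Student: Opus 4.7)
\begin{pf}[Proof plan]
The plan is to work directly with majorant matrices and reduce everything to two basic inequalities between convolution-type operators. For the sum, I would simply observe that if $A$ and $B$ are represented by matrices as in \eqref{matrice operatori Toplitz}, then the matrix elements of $|A+B|$ satisfy $|(A+B)_j^{j'}(\ell-\ell')| \leq |A_j^{j'}(\ell-\ell')| + |B_j^{j'}(\ell-\ell')|$, and the analogous bound holds for the difference quotient. Applied to a positive function $u$, this yields $\||A+B|u\|_s \leq \||A|u\|_s + \||B|u\|_s$, from which \eqref{modulo-tame-A+B} is immediate via Definition \ref{def:op-tame}.

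For the composition estimate \eqref{modulo tame constant for composition}, the key observation is that the matrix elements of $AB$ satisfy
\[
|(AB)_j^{j'}(\ell-\ell')| \leq \sum_{j'' \in \Z,\, \ell'' \in \Z^{\Splus}} |A_j^{j''}(\ell-\ell'')|\, |B_{j''}^{j'}(\ell''-\ell')|\,,
\]
so the majorant operator satisfies the pointwise bound $|AB|\, u \leq |A|\,|B|\, u$ for every positive $u$. I would then apply the modulo-tame bound \eqref{CK0-tame} for $A$ with test function $|B|u$, and then apply the modulo-tame bound for $B$ with test function $u$, obtaining
\[
\||A||B|u\|_s \leq \mathfrak{M}_A^\sharp(s_1)\bigl(\mathfrak{M}_B^\sharp(s_1)\|u\|_s + \mathfrak{M}_B^\sharp(s)\|u\|_{s_1}\bigr) + 2\,\mathfrak{M}_A^\sharp(s)\,\mathfrak{M}_B^\sharp(s_1)\|u\|_{s_1}\,,
\]
using in the second term that $\||B|u\|_{s_1} \leq 2 \mathfrak{M}_B^\sharp(s_1)\|u\|_{s_1}$. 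Reading off the coefficients of $\|u\|_s$ and $\|u\|_{s_1}$ yields the required modulo-tame constant. For the Lipschitz part I would split
\[
A(\omega_1)B(\omega_1) - A(\omega_2)B(\omega_2) = \bigl(A(\omega_1)-A(\omega_2)\bigr)B(\omega_1) + A(\omega_2)\bigl(B(\omega_1)-B(\omega_2)\bigr)\,,
\]
which at the majorant level gives $|\Delta_\omega(AB)| \leq |\Delta_\omega A|\,|B(\omega_1)| + |A(\omega_2)|\,|\Delta_\omega B|$; dividing by $|\omega_1-\omega_2|$ and repeating the two-step argument above produces the Lipschitz contribution to $\mathfrak{M}_{AB}^\sharp(s)$.

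For the weighted composition estimate \eqref{K cal A cal B}, I would use the elementary inequality $\langle \ell - \ell' \rangle^{\mathtt{b}} \leq C(\mathtt{b}) \bigl(\langle \ell - \ell''\rangle^{\mathtt{b}} + \langle \ell''-\ell'\rangle^{\mathtt{b}}\bigr)$ inside the convolution sum for $(AB)_j^{j'}(\ell-\ell')$. This gives the pointwise majorant bound
\[
\bigl|\langle\partial_\varphi\rangle^{\mathtt{b}} (AB)\bigr| \leq C(\mathtt{b})\Bigl( \bigl|\langle\partial_\varphi\rangle^{\mathtt{b}} A\bigr|\,|B| + |A|\,\bigl|\langle\partial_\varphi\rangle^{\mathtt{b}} B\bigr|\Bigr)\,,
\]
and I would apply the composition estimate \eqref{modulo tame constant for composition} separately to each of the two terms, the first with $\mathfrak{M}^\sharp_{\langle\partial_\varphi\rangle^{\mathtt{b}} A}$ in place of $\mathfrak{M}^\sharp_A$, the second analogously for $B$. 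Summing the four resulting cross-terms yields \eqref{K cal A cal B}.

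There is no real obstacle here: the entire argument is bookkeeping around two elementary inequalities, (i) triangular subadditivity of absolute values giving $|AB| \leq |A|\,|B|$ as majorant operators, and (ii) the triangle inequality for $\langle \cdot \rangle^{\mathtt{b}}$. The only place one must be slightly careful is to consistently pair ``small constant at $s_1$, large at $s$'' in the right order when iterating the tame estimate, so as not to accidentally produce the unwanted product $\mathfrak{M}_A^\sharp(s)\mathfrak{M}_B^\sharp(s)$; taking $s=s_1$ in \eqref{CK0-tame} to control $\||B|u\|_{s_1}$ is the trick that avoids this.
\end{pf}
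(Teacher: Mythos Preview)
Your proposal is correct and follows exactly the standard approach: the paper itself does not give a proof but refers to \cite[Lemma 2.25]{Berti-Montalto}, where the argument is precisely the majorant-matrix bookkeeping you outline---the pointwise inequality $|AB| \le |A|\,|B|$ for the composition, the Leibniz-type splitting for the Lipschitz quotient, and the weight inequality $\langle \ell-\ell'\rangle^{\mathtt b} \le C(\mathtt b)\bigl(\langle \ell-\ell''\rangle^{\mathtt b}+\langle \ell''-\ell'\rangle^{\mathtt b}\bigr)$ for the $\langle\partial_\vphi\rangle^{\mathtt b}$ part. Your remark about avoiding the bad product $\mathfrak M_A^\sharp(s)\mathfrak M_B^\sharp(s)$ by estimating $\||B|u\|_{s_1}$ via the $s=s_1$ case of \eqref{CK0-tame} is exactly the right point.
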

\begin{proof}
\noindent
 See \cite[Lemma 2.25]{Berti-Montalto}. 
\end{proof}

Iterating \eqref{modulo tame constant for composition}-\eqref{K cal A cal B} 
we obtain that, for any $n \geq 2$, 
\begin{equation}\label{M Psi n}
{\mathfrak M}_{A^n}^\sharp (s) \leq 
\big( 2 C{\mathfrak M}_{A}^\sharp (s_1) \big)^{n - 1} 
{\mathfrak M}_{A}^\sharp(s)\, , 
\end{equation}
and
\begin{equation}\label{K Psi n}
\begin{aligned}
{\mathfrak M}_{\langle \pa_\vphi \rangle^{ \mathtt b} A^n}^\sharp (s) & \leq  (4C(\mathtt b) C)^{n - 1} \Big( 
{\mathfrak M}^\sharp_{\langle \partial_\vphi \rangle^{\mathtt b} A}(s) \big[ {\mathfrak M}^\sharp_A(s_1) \big]^{n - 1} 
 + {\mathfrak M}^\sharp_{\langle \partial_\vphi \rangle^{\mathtt b} A}(s_1) {\mathfrak M}_A^\sharp(s) 
 \big[ {\mathfrak M}_A^\sharp(s_1) \big]^{n - 2} \Big)\,.
 \end{aligned}
\end{equation}
As an application of \eqref{M Psi n}-\eqref{K Psi n}  we obtain the following 
\begin{lemma}{\bf (Exponential map)}\label{serie di neumann per maggiorantia}
Let $ A $ and 
$ \langle \partial_{\vphi} \rangle^{\mathtt b}  A $ be $ \Lipg $-modulo-tame operators and 
assume that 
${\mathfrak M}_{A}^\sharp : [s_1, S] \to [0, + \infty)$ 
is a modulo-tame constant  satisfying
\begin{equation}\label{piccolezza neumann tamea}
  {\mathfrak M}_{A}^\sharp (s_1)  \leq 1 \, .
\end{equation}
Then the operators $ \Phi^{\pm 1} := {\rm exp}(\pm A) $, 
$ \Phi^{\pm 1} - {\rm Id} $ and $\langle \partial_\vphi \rangle^{\mathtt b}(\Phi^{\pm 1} - {\rm Id})$
 are $\Lipg $-modulo-tame with modulo-tame constants satisfying, for any $s_1 \leq s \leq S 
 $, 
\be \label{exp-MT}
\begin{aligned}
{\mathfrak M}_{ \Phi^{\pm 1} - {\rm Id}}^\sharp(s) & \lesssim  {\mathfrak M}^\sharp_{A}(s)\,, \\
 {\mathfrak M}_{\langle \partial_\vphi \rangle^{\mathtt b} (\Phi^{\pm 1} - {\rm Id})}^\sharp(s) &  \lesssim_{\mathtt b}  {\mathfrak M }_{\langle \partial_\vphi \rangle^{\mathtt b } A}^\sharp(s) + {\mathfrak M}_A^\sharp(s)  {\mathfrak M }_{\langle \partial_\vphi \rangle^{\mathtt b } A}^\sharp(s_1) \,.  \\
\end{aligned}
\ee
\end{lemma}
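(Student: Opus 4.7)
The plan is to expand the exponential as a power series, apply the sum rule for modulo-tame operators termwise, and then use the iterated composition bounds \eqref{M Psi n} and \eqref{K Psi n} together with the smallness assumption \eqref{piccolezza neumann tamea} to extract convergent series in $n$.

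First, I write
\begin{equation*}
\Phi^{\pm 1} - \mathrm{Id} \, = \, \sum_{n \geq 1} \frac{(\pm 1)^n}{n!}\, A^n \,,
\end{equation*}
so that the majorant operator satisfies $|\Phi^{\pm 1} - \mathrm{Id}|\leq \sum_{n \geq 1}\frac{1}{n!}|A^n|$, and hence by iterated use of the subadditivity property \eqref{modulo-tame-A+B} (which passes to the limit since the inequalities are on majorants),
\begin{equation*}
{\mathfrak M}_{\Phi^{\pm 1} - \mathrm{Id}}^\sharp (s) \, \leq \, \sum_{n \geq 1} \frac{1}{n!}\, {\mathfrak M}_{A^n}^\sharp (s) \, .
\end{equation*}
Inserting \eqref{M Psi n} and using ${\mathfrak M}_A^\sharp(s_1) \leq 1$ gives ${\mathfrak M}_{A^n}^\sharp(s) \leq (2C)^{n-1}\,{\mathfrak M}_A^\sharp(s)$, and summing $\sum_{n\geq 1} (2C)^{n-1}/n! < \infty$ yields the first bound in \eqref{exp-MT}.

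For the second estimate I apply the same expansion to $\langle \partial_\varphi\rangle^{\mathtt b}(\Phi^{\pm 1} - \mathrm{Id})$, obtaining
\begin{equation*}
{\mathfrak M}_{\langle \partial_\varphi \rangle^{\mathtt b}(\Phi^{\pm 1} - \mathrm{Id})}^\sharp (s) \, \leq \, \sum_{n \geq 1} \frac{1}{n!}\, {\mathfrak M}_{\langle \partial_\varphi \rangle^{\mathtt b} A^n}^\sharp (s) \, .
\end{equation*}
By \eqref{K Psi n} and ${\mathfrak M}_A^\sharp(s_1) \leq 1$, each summand is bounded by
\begin{equation*}
\frac{(4 C(\mathtt b)\, C)^{n-1}}{n!}\, \Bigl( {\mathfrak M}_{\langle \partial_\varphi \rangle^{\mathtt b} A}^\sharp(s) \, + \, {\mathfrak M}_{\langle \partial_\varphi \rangle^{\mathtt b} A}^\sharp(s_1)\,{\mathfrak M}_A^\sharp(s)\Bigr) \, ,
\end{equation*}
so summing the convergent exponential series produces the claimed bound.

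There is essentially no obstacle: the factorial $1/n!$ beats the geometric blow-up $(2C)^{n-1}$ or $(4C(\mathtt b)C)^{n-1}$ coming from iterating the composition bound, and the smallness ${\mathfrak M}_A^\sharp(s_1)\leq 1$ is what prevents the factors of ${\mathfrak M}_A^\sharp(s_1)^{n-1}$ and ${\mathfrak M}_A^\sharp(s_1)^{n-2}$ in \eqref{M Psi n}--\eqref{K Psi n} from contributing. The only mild point to check is that the series converges in the strong operator sense on the majorant side so that one can indeed pass \eqref{modulo-tame-A+B} to an infinite sum; this follows by applying the finite-sum estimate to the partial sums $\sum_{n=1}^N \frac{1}{n!} A^n$ and letting $N \to \infty$ using monotone convergence of the majorant norms.
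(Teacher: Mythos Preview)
Your proof is correct and follows exactly the same approach as the paper: expand $\Phi^{\pm 1}-\mathrm{Id}$ as the exponential series $\sum_{n\ge 1}(\pm A)^n/n!$, then apply the iterated composition bounds \eqref{M Psi n}--\eqref{K Psi n} together with the smallness assumption \eqref{piccolezza neumann tamea}. The paper's own proof is a single sentence citing these same ingredients; you have simply written out the details.
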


\begin{proof}
In view of the identity $\Phi^{\pm 1} - {\rm Id} = \sum_{n \geq 1} 
\frac{(\pm A)^n}{n !}$ and the 
assumption \eqref{piccolezza neumann tamea}
the claimed estimates follow by 
\eqref{M Psi n}-\eqref{K Psi n}. 
\end{proof}

\begin{lemma} \label{lemma:smoothing-tame} {\bf (Smoothing)}
Suppose that $ \langle \pa_{\vphi} \rangle^{\mathtt b} A $, $ {\mathtt b} \geq  0 $, is $ \Lipg  $-modulo-tame. Then 
the operator $ \Pi_N^\bot A $ (see Definition \ref{def:maj}) is $ \Lipg  $-modulo-tame with a modulo-tame constant satisfying 
\be\label{proprieta tame proiettori moduli}
{\mathfrak M}_{\Pi_N^\bot A}^\sharp (s) \leq N^{- {\mathtt b} }{\mathfrak M}_{ \langle \pa_{\vphi} \rangle^{\mathtt b} A}^\sharp (s) \, ,
\quad
{\mathfrak M}_{\Pi_N^\bot A}^\sharp (s) \leq  {\mathfrak M}_{ A}^\sharp (s) \, . 
\ee
\end{lemma}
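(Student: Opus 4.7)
The approach is to reduce both bounds to the monotonicity remark made immediately after Definition \ref{def:op-tame}, which states that if $|A_j^{j'}(\ell)| \leq |B_j^{j'}(\ell)|$ for all indices, then any modulo-tame constant of $B$ dominates one for $A$. Since the modulo-tame property depends only on the absolute values of the matrix elements (via the majorant operator $|A|$ in Definition \ref{def:maj}), both inequalities reduce to pointwise matrix estimates, which in turn must be carried out both for the operator itself and for the difference quotient with respect to $\omega$.

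First I would write out the matrix elements of $\Pi_N^\bot A$: by \eqref{proiettore-oper},
\[
(\Pi_N^\bot A)_j^{j'}(\ell - \ell') = \begin{cases} A_j^{j'}(\ell - \ell') & \text{if } \langle \ell - \ell'\rangle > N,\\ 0 & \text{otherwise.}\end{cases}
\]
The second inequality in \eqref{proprieta tame proiettori moduli} is then immediate, since $|(\Pi_N^\bot A)_j^{j'}(\ell - \ell')| \leq |A_j^{j'}(\ell - \ell')|$ pointwise (and analogously for $\Delta_\omega$), so the monotonicity comment yields ${\mathfrak M}_{\Pi_N^\bot A}^\sharp(s) \leq {\mathfrak M}_A^\sharp(s)$.

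For the first inequality, I would exploit the fact that on the support of the indicator $\mathbbm{1}_{\langle \ell - \ell'\rangle > N}$, one has $1 \leq N^{-\mathtt b} \langle \ell - \ell'\rangle^{\mathtt b}$. Hence
\[
\bigl|(\Pi_N^\bot A)_j^{j'}(\ell - \ell')\bigr| \leq N^{-\mathtt b}\, \langle \ell - \ell'\rangle^{\mathtt b} \,|A_j^{j'}(\ell - \ell')| = N^{-\mathtt b}\,\bigl|\bigl(\langle \partial_\vphi\rangle^{\mathtt b} A\bigr)_j^{j'}(\ell - \ell')\bigr|,
\]
using the matrix description of $\langle \partial_\vphi\rangle^{\mathtt b}$ in Definition \ref{def:maj}(3). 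The same pointwise bound holds for $\Delta_\omega (\Pi_N^\bot A)$, since taking the projection $\Pi_N^\bot$ commutes with the difference quotient in $\omega$. Applying the monotonicity of modulo-tame constants once more, now to $N^{\mathtt b}\, \Pi_N^\bot A$ versus $\langle \partial_\vphi\rangle^{\mathtt b} A$, yields the first estimate in \eqref{proprieta tame proiettori moduli}.

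No serious obstacle is expected here: the entire argument is bookkeeping based on the definitions, and the only place to be careful is to verify that both the sup and Lipschitz parts in \eqref{CK0-tame} transfer through the pointwise matrix inequalities, which is automatic because $\Pi_N^\bot$ acts linearly on matrix entries and is independent of $\omega$.
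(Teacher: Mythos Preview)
Your proof is correct and is precisely the standard argument: the paper itself does not give a proof but simply refers to \cite[Lemma 2.27]{Berti-Montalto}, and what you have written is exactly how that lemma is proved there. The only point worth noting is that the monotonicity remark after Definition~\ref{def:op-tame}, as literally stated, mentions only the pointwise bound on $|A_j^{j'}(\ell)|$; you correctly observe that one must also check the analogous bound for the difference quotient $\Delta_\omega$, and your justification (that $\Pi_N^\bot$ and $\langle\partial_\vphi\rangle^{\mathtt b}$ are $\omega$-independent multipliers, hence commute with $\Delta_\omega$) handles this cleanly.
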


\begin{proof}
See \cite[Lemma 2.27]{Berti-Montalto}. 
\end{proof}
\begin{lemma}\label{Lemma op proiettori}
Let $a_1(\cdot; \omega) , a_2(\cdot; \omega) $ be 
functions in $ {\cal C}^\infty(\T^{\mathbb S_+} 
\times \T_1, \C)$ and $\omega \in   \mathtt \Omega$. Consider the linear operator ${\cal R}$
defined by ${\cal R} h := a_1 \cdot ( a_2, h )_{L^2_x}$, for any
$h \in L^2_x $. Then for any 
$\lambda \in \N^{\mathbb S_+}$ and $n_1, n_2 \geq 0$, the operator $\langle D \rangle^{n_1} \partial_\vphi^\lambda{\cal R} \langle D \rangle^{n_2}$ is $\Lipg$-tame with a tame constant satisfying,
for some 
$\sigma \equiv \sigma(n_1, n_2, \lambda) > 0 $, 
$$
{\mathfrak M}_{\langle D \rangle^{n_1} \partial_\vphi^\lambda{\cal R} \langle D \rangle^{n_2}}(s) \lesssim_{S, n_1, n_2, \lambda} \,
( {\rm max}_{i = 1, 2} \| a_i \|_{s+ \sigma} ) \cdot 
( {\rm max}_{i = 1, 2} \| a_i \|_{s_0+ \sigma} ) \, .
$$ 
\end{lemma}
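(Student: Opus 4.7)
The plan is to reduce the operator to a finite sum of rank-one operators of the form $b_1(\vphi,x)\,(b_2(\vphi,\cdot),\cdot)_{L^2_x}$ with $b_1,b_2$ being derivatives of $a_1,a_2$ carrying the $\langle D\rangle$-powers, and then estimate each such rank-one piece by the tame product rule \eqref{p1-pr} together with a tame Cauchy--Schwarz pairing in $x$.

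First I would move $\langle D\rangle^{n_2}$ from the right onto $a_2$ by self-adjointness of the Fourier multiplier $\langle D\rangle^{n_2}$, which is legitimate since $\mathcal{R}\langle D\rangle^{n_2}h(\vphi,x)=a_1(\vphi,x)(a_2(\vphi,\cdot),\langle D\rangle^{n_2}h(\vphi,\cdot))_{L^2_x}=a_1(\vphi,x)(\langle D\rangle^{n_2}a_2(\vphi,\cdot),h(\vphi,\cdot))_{L^2_x}$. Next I would apply $\partial_\vphi^\lambda$ and distribute via the multinomial Leibniz rule across the three $\vphi$-dependent factors $a_1$, $a_2$, $h$, and finally apply $\langle D\rangle^{n_1}$ in $x$ to obtain
\[
\langle D\rangle^{n_1}\partial_\vphi^\lambda\mathcal{R}\langle D\rangle^{n_2}h=\sum_{\lambda_1+\lambda_2+\lambda_3=\lambda}c_{\lambda_1,\lambda_2,\lambda_3}\ \tilde a_{1,\lambda_1}(\vphi,x)\,\big(\tilde a_{2,\lambda_2}(\vphi,\cdot),\partial_\vphi^{\lambda_3}h(\vphi,\cdot)\big)_{L^2_x},
\]
where $\tilde a_{1,\lambda_1}:=\langle D\rangle^{n_1}\partial_\vphi^{\lambda_1}a_1$ and $\tilde a_{2,\lambda_2}:=\langle D\rangle^{n_2}\partial_\vphi^{\lambda_2}a_2$. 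Since each $\langle D\rangle^{n_i}\partial_\vphi^{\lambda_i}$ is a Fourier multiplier of fixed finite order, the elementary estimates $\|\tilde a_{i,\lambda_i}\|_s\lesssim \|a_i\|_{s+n_i+|\lambda_i|}$ hold uniformly in $\omega$ and with the same constant for the Lipschitz semi-norm.

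For a single term of the sum I would first apply the tame product estimate \eqref{p1-pr}, regarding the scalar function $\psi_{\lambda_2,\lambda_3}(\vphi):=(\tilde a_{2,\lambda_2}(\vphi,\cdot),\partial_\vphi^{\lambda_3}h(\vphi,\cdot))_{L^2_x}$ as a function on $\T^{\Splus}\times\T_1$ (constant in $x$). This gives $\|\tilde a_{1,\lambda_1}\psi_{\lambda_2,\lambda_3}\|_s^{\Lipg}\lesssim \|\tilde a_{1,\lambda_1}\|_s^{\Lipg}\|\psi_{\lambda_2,\lambda_3}\|_{s_0}^{\Lipg}+\|\tilde a_{1,\lambda_1}\|_{s_0}^{\Lipg}\|\psi_{\lambda_2,\lambda_3}\|_s^{\Lipg}$. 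The pairing $\psi_{\lambda_2,\lambda_3}$ is then bounded by a tame Cauchy--Schwarz estimate in $\vphi$: expanding $\partial_\vphi^\beta\psi_{\lambda_2,\lambda_3}$ by Leibniz and applying Cauchy--Schwarz pointwise in $\vphi$ in $L^2_x$ gives
\[
\|\psi_{\lambda_2,\lambda_3}\|_{H^s_\vphi}^{\Lipg}\lesssim_s \|\tilde a_{2,\lambda_2}\|_s^{\Lipg}\|\partial_\vphi^{\lambda_3}h\|_{s_0}^{\Lipg}+\|\tilde a_{2,\lambda_2}\|_{s_0}^{\Lipg}\|\partial_\vphi^{\lambda_3}h\|_s^{\Lipg},
\]
after using the Sobolev embedding $H^{s_0}_\vphi L^2_x\hookrightarrow C^0_\vphi L^2_x$ to control the lower-order factor in $L^\infty_\vphi L^2_x$ and Minkowski to handle the other factor in $L^2_\vphi L^2_x$.

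Combining the two steps and bounding $\|\partial_\vphi^{\lambda_3}h\|_s^{\Lipg}\leq\|h\|_{s+|\lambda|}^{\Lipg}$ yields, with $\sigma:=n_1+n_2+|\lambda|$,
\[
\|\langle D\rangle^{n_1}\partial_\vphi^\lambda\mathcal R\langle D\rangle^{n_2}h\|_s^{\Lipg}\lesssim_{s,n_1,n_2,\lambda}\Big(\max_{i=1,2}\|a_i\|_{s+\sigma}^{\Lipg}\Big)\Big(\max_{i=1,2}\|a_i\|_{s_0+\sigma}^{\Lipg}\Big)\|h\|_{s_0}^{\Lipg}+(\text{analogue with } s\leftrightarrow s_0 \text{ on } h),
\]
which is exactly the $\Lipg$-tameness in the sense of Definition \ref{def:Ck0} with the claimed tame constant. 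The Lipschitz-in-$\omega$ part of the estimate is handled identically by distributing the difference quotient $\Delta_\omega$ across the (at most four) $\omega$-dependent factors and bounding the Lipschitz semi-norm by the $\Lipg$-norm in each place; no new phenomenon arises. There is no genuine obstacle here: the only mildly delicate point is the tame Cauchy--Schwarz bound for $\psi_{\lambda_2,\lambda_3}$, which requires carefully allocating the Sobolev embedding in $\vphi$ to the lower-regularity factor, but this is a standard computation in the same spirit as Lemma \ref{lemma:LS norms}(i).
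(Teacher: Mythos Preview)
Your approach is essentially the paper's: move $\langle D\rangle^{n_2}$ onto $a_2$ by self-adjointness, expand by Leibniz, and estimate each rank-one piece via the tame product rule \eqref{p1-pr}. You are in fact more explicit than the paper about the tame Cauchy--Schwarz bound for the scalar pairing $\psi(\vphi)$, which the paper simply subsumes under ``follows by \eqref{p1-pr}''.

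There is, however, a real slip in your Leibniz step. In the paper's convention (Definition~\ref{def:maj}, item~4), $\partial_\vphi^\lambda\mathcal R$ denotes the \emph{differentiated operator}, i.e.\ the $\vphi$-dependent family $\vphi\mapsto \partial_\vphi^\lambda(\mathcal R(\vphi))$, and not the composition $\partial_\vphi^\lambda\circ\mathcal R$. Consequently
\[
(\partial_\vphi^\lambda\mathcal R)\,h(\vphi,\cdot)=\sum_{\lambda_1+\lambda_2=\lambda}c_{\lambda_1,\lambda_2}\,(\partial_\vphi^{\lambda_1}a_1)\,\big(\partial_\vphi^{\lambda_2}a_2,\,h(\vphi,\cdot)\big)_{L^2_x},
\]
and no derivative falls on $h$. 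Your three-way Leibniz produces spurious $\partial_\vphi^{\lambda_3}h$ terms; the step ``$\|\partial_\vphi^{\lambda_3}h\|_s\le\|h\|_{s+|\lambda|}$'' then costs $|\lambda|$ derivatives on $h$, yet in your final display you write $\|h\|_{s_0}$ with no loss. As written, the argument would only yield $\Lipg$-$|\lambda|$-tameness, whereas the lemma asserts $\Lipg$-tameness (i.e.\ $\sigma=0$ in Definition~\ref{def:Ck0}). Once you use the correct two-factor Leibniz, no derivatives hit $h$, the inconsistency disappears, and your argument goes through verbatim and coincides with the paper's.
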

\begin{proof}
For any $n_1, n_2 \geq 0$, $\lambda \in \N^{\mathbb S_+}$, $h \in L^2_x$, one has 
$$
\begin{aligned}
\langle D \rangle^{n_1} \partial_\vphi^\lambda{\cal R} \langle D \rangle^{n_2} h &= \sum_{\lambda_1 + \lambda_2 = \lambda} c_{\lambda_1, \lambda_2} \langle D \rangle^{n_1}[\partial_\vphi^{\lambda_1} a_1] \, 
\big( \langle D \rangle^{n_2}[\partial_\vphi^{\lambda_2} a_2]\,,\, h \big)_{L^2_x}
\end{aligned}
$$
where we used that the operator $\langle D \rangle$ is symmetric. The lemma 
then follows by \eqref{p1-pr}. 
\end{proof}

\subsection{Tame estimates}\label{sec:2.4}
In this section we record various tame estimates for compositions of functions and 
operators with a torus embedding 
$\breve \io : \T^{\Splus} \to \mathcal E_s$ 
of the form (cf. \eqref{EsEs})
$$
\breve \io (\vphi) = (\vphi,0,0) + \io (\vphi)  \, , \quad \io (\vphi) = ( \Theta (\vphi), y(\vphi), w(\vphi) ) \, , 
$$
with norm $ \|  \iota  \|_s^\Lipg := \| \Theta \|_{H^s_\vphi}^\Lipg +  \| y  \|_{H^s_\vphi}^\Lipg 
+  \| w \|_s^\Lipg  $. 
We shall use that the Sobolev norm in \eqref{unified norm} is equivalent to 
\be\label{eq:norms}
\| \ \|_s = \| \ \|_{H^s_{\vphi,x}} \sim_s \| \ \|_{H^s_\vphi L^2_x} +  \| \ \|_{L^2_\vphi H^s_x} \,  
\ee 
and the interpolation estimate 
(which is a consequence of Young's inequality)
\be\label{int-young}
\| w \|_{H^s_\vphi H^\s_x} \leq 
\| w \|_{H^{s+\s}_\vphi L^2_x } + \| w \|_{L^{2}_\vphi H^{s+\s}_x }  \lesssim_{s,\s} \| w \|_{s + \sigma}\, .
\ee
Given a Banach space $ X $ with norm $ \| \ \|_X $, we consider
the space ${\cal C}^s(\T^{\Splus}, X)$,  $ s \in \N  $,  
of $ {\cal C}^s-$smooth maps $f: \T^{\Splus} \to X $ equipped with the norm 
\be\label{norm-Cs}
\|f \|_{{\cal C}^s_\vphi X} := \sum_{0 \leq |\alpha| \leq s} \| \partial_\vphi^\alpha f \|^{\sup}_X\, ,
\qquad  \| \partial_\vphi^\alpha f \|^{\sup}_X := \sup_{\vphi \in \T^{\Splus}} \| \partial_\vphi^\alpha f(\vphi) \|_X \, .
\ee
By the Sobolev embedding $ \| f \|_{{\cal C}^s_\vphi X} \lesssim_{s_1}  
\| f \|_{H^{s + s_1}_\vphi X} $ for $ s_1 > | {\mathbb S_+}|  $,
whereas if $X$ is a Hilbert space, the latter estimate is valid for
$ s_1 > | {\mathbb S_+}|  / 2 $. 
On  the scale of Banach spaces ${\cal C}^s(\T^{\mathbb S_+}, X)$
the following interpolation inequalities hold: for any $ 0 \leq k \leq s $, 
\begin{equation}\label{interpolazione astratta}
\| f \|_{{\cal C}^k_\vphi X} \lesssim_s \| f \|_{{\cal C}^0_\vphi X}^{1 - \frac{k}{s}} \| f \|_{{\cal C}^s_\vphi X}^{\frac{k}{s}}\,.  
\end{equation}
Recall that $ {\cal E}_s,  E_s  $ are defined in  \eqref{EsEs} 
and $ {\cal V}^s(\delta) $ in \eqref{Vns}.
Let $\Omega$ be an open bounded subset of $\R^{\mathbb S_+} $.
\begin{lemma}\label{lem:tame1}
Let $ \s > 0 $ and assume that, for any $ s \geq 0 $,
the map 
$ a : ({\cal V}^{ \s}(\delta) \cap {\cal E}_{s+\s})  \times \Omega \to H^s (\T_1)  $
is  $ {\cal C}^\infty $ 
with respect to ${\mathfrak x} = (\theta, y, w) $, $ {\cal C}^1 $ with respect to $ \omega $,  
and satisfies for any 
$\mathfrak x \in {\cal V}^{\sigma}(\delta) \cap {\mathcal E}_{s+\s }$, 
$ \a \in \N^{\Splus} $ with $ |\a | \leq 1 $, and $ l \geq 1 $,
the tame estimates 
\begin{equation}\label{ipotesi tame derivate per composizione x}
\begin{aligned}
& \| \partial_\om^\alpha a (\mathfrak x; \om)\|_{H^s_x}  \lesssim_s 1 + \| w \|_{H^{s + \sigma}_x}\,,  \\
& 
\| d^l \partial_\om^\alpha  a ( \mathfrak x; \om)
[\widehat{\mathfrak x}_1, \ldots, \widehat{\mathfrak x}_l]\|_{H^s_x}   \lesssim_{s, l, \alpha} \sum_{j = 1}^l \Big( \| \widehat{\mathfrak x}_j\|_{E_{s + \sigma}} \prod_{n \neq j}  \| \widehat{\mathfrak x}_n\|_{E_{\sigma}} \Big) + \| w \|_{H^{s + \sigma}_x} \prod_{j = 1}^l \|\widehat{\mathfrak x}_j \|_{E_{\sigma}}\, .  
\end{aligned}
\end{equation} 
Then for any $\breve \io  $ with 
$\| \io \|_{s_0 +  \sigma}^\Lipg  \leq \delta $,
the following tame estimates hold for any $ s \geq 0 $:

\noindent
$(i)$  
\begin{equation}\label{stime tame derivate per composizione x}
\begin{aligned}
& \| a(\breve \io)\|_s^{\Lipg} \lesssim_s 1 + \| \io \|_{s + s_0 +  \sigma}^{\Lipg} \, ,  \\
& \| d a (\breve \io)[\widehat\io_1] \|_{s}^{\Lipg}  \lesssim_s \| \widehat\io_1\|_{s + s_0 +
 \sigma}^{\Lipg} + \| \io \|_{s + s_0 + \sigma }^{\Lipg} 
\| \widehat\io_1\|_{s_0 +  \sigma}^{\Lipg} \, ,  \\
& \|d^2 a (\breve \io)[\widehat{\io}_1, \widehat{\io}_2]  \|_{s}^{\Lipg} 
 \lesssim_s \| \widehat{\io}_1\|_{s + s_0 +  \sigma}^{\Lipg} 
 \| \widehat{\io}_2\|_{s_0 + \sigma } + \| \widehat{\io}_1\|_{s_0 +  \sigma}^{\Lipg} 
 \| \widehat{\io}_2\|_{s + s_0 +  \sigma}^{\Lipg}  \\
& \qquad \qquad \qquad \qquad  \quad  + \|\io \|_{s+ s_0 +  \sigma}^{\Lipg}
\| \widehat{\io}_1\|_{s_0 +  \sigma}^{\Lipg} \| \widehat{\io}_2\|_{s_0 +  \sigma}^{\Lipg} \,. 
\end{aligned}
\end{equation} 

\noindent
$(ii)$ If in addition $a(\theta, 0, 0; \om) = 0$, then 
$\| a (\breve \io) \|_{s}^{\Lipg} \lesssim_s 
\| \io \|_{s + s_0 +  \sigma}^{\Lipg} $.

\noindent
$(iii)$ If in addition $a(\theta, 0, 0; \om ) =0,$ 
$ \partial_y a(\theta, 0, 0; \om) = 0$, and
$\partial_w a(\theta, 0, 0; \om) = 0$, then 
$$
\begin{aligned}
& \| a(\breve \io)\|_s^{\Lipg} \lesssim_s  \| \io \|_{s + s_0 +  \sigma}^{\Lipg} 
\| \io \|_{s_0 +  \sigma}^{\Lipg} \,, \\
& \| d a(\breve \io)[\widehat \io]\|_s^{\Lipg} \lesssim_s 
\| \io \|_{s_0 +  \sigma}^{\Lipg} 
\| \widehat \io\|_{s + s_0 +  \sigma}^{\Lipg} + 
\| \io \|_{s + s_0 +  \sigma}^{\Lipg} 
\| \widehat \io\|_{s_0 +  \sigma}^{\Lipg} \,. 
\end{aligned}
$$
\end{lemma}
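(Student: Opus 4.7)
The plan is to reduce every estimate to pointwise-in-$\vphi$ applications of the hypotheses \eqref{ipotesi tame derivate per composizione x}, combined with the norm equivalence \eqref{eq:norms} which splits $\|\cdot\|_s$ into its $L^2_\vphi H^s_x$ and $H^s_\vphi L^2_x$ components. For the sup part of $\|a(\breve\io)\|_s^\Lipg$, the first line of \eqref{ipotesi tame derivate per composizione x} evaluated at $\mathfrak{x}=\breve\io(\vphi)$ yields $\|a(\breve\io(\vphi))\|_{H^s_x}\lesssim_s 1+\|w(\vphi)\|_{H^{s+\sigma}_x}$; squaring and integrating in $\vphi$ bounds the $L^2_\vphi H^s_x$ component by $1+\|w\|_{L^2_\vphi H^{s+\sigma}_x}\lesssim 1+\|\io\|_{s+s_0+\sigma}$. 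For the $H^s_\vphi L^2_x$ component I would apply the Faà di Bruno formula: for $|\alpha|\le s$,
\begin{equation*}
\partial_\vphi^\alpha\bigl[a(\breve\io(\vphi))\bigr]=\sum_{l=1}^{|\alpha|}\sum_{\beta_1+\cdots+\beta_l=\alpha} c_{\beta_1,\ldots,\beta_l}\, d^l a(\breve\io(\vphi))\bigl[\partial_\vphi^{\beta_1}\io(\vphi),\ldots,\partial_\vphi^{\beta_l}\io(\vphi)\bigr],
\end{equation*}
and bound each multilinear summand in $L^2_x$ via the second line of \eqref{ipotesi tame derivate per composizione x}. Taking $L^2_\vphi$ norms, using the tame product rule \eqref{p1-pr} together with the Hilbert-space Sobolev embedding $H^{s_0}_\vphi H^s_x\hookrightarrow C^0_\vphi H^s_x$ (which accounts for the $+s_0$ loss in the statement), all but one factor $\|\partial_\vphi^{\beta_j}\io\|$ is absorbed into $\|\io\|_{s_0+\sigma}^\Lipg\leq\delta$, yielding the first line of \eqref{stime tame derivate per composizione x}. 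The Lipschitz semi-norm in $\omega$ is treated identically after writing $a(\breve\io(\vphi);\omega_1)-a(\breve\io(\vphi);\omega_2)$ as a chain-rule piece driven by $\io(\omega_1)-\io(\omega_2)$ (controlled by the bound on $da$ below) plus $\int_0^1\partial_\omega a(\breve\io(\vphi);\omega_t)\,dt\cdot(\omega_1-\omega_2)$ (controlled by the $|\alpha|=1$ branch of \eqref{ipotesi tame derivate per composizione x}).

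The estimates for $\|da(\breve\io)[\widehat\io_1]\|_s^\Lipg$ and $\|d^2 a(\breve\io)[\widehat\io_1,\widehat\io_2]\|_s^\Lipg$ in \eqref{stime tame derivate per composizione x} follow from the same Faà di Bruno scheme applied to $\vphi\mapsto da(\breve\io(\vphi))[\widehat\io_1(\vphi)]$ and $\vphi\mapsto d^2 a(\breve\io(\vphi))[\widehat\io_1(\vphi),\widehat\io_2(\vphi)]$: one obtains multilinear expressions of level $l+1$, respectively $l+2$, in which the inputs $\widehat\io_k$ occupy one or two input slots. Bounding each summand, integrating in $\vphi$, and invoking \eqref{p1-pr} and the smallness $\|\io\|_{s_0+\sigma}^\Lipg\le\delta$ reproduces the symmetric product structure $\|\widehat\io_1\|_{s+s_0+\sigma}^\Lipg+\|\io\|_{s+s_0+\sigma}^\Lipg\|\widehat\io_1\|_{s_0+\sigma}^\Lipg$ and its bilinear analogue.

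Parts (ii) and (iii) reduce to (i) by a Taylor expansion in the $(y,w)$ direction at $(\theta,0,0)$. If $a(\theta,0,0;\omega)=0$, the fundamental theorem of calculus gives
\begin{equation*}
a(\breve\io(\vphi);\omega)=\int_0^1 da\bigl(\theta(\vphi),ty(\vphi),tw(\vphi);\omega\bigr)\bigl[(0,y(\vphi),w(\vphi))\bigr]\,dt,
\end{equation*}
and applying the bound on $da$ from (i) to the integrand, with test vector $(0,y,w)$ (whose $E_{s+s_0+\sigma}$-norm is at most $\|\io\|_{s+s_0+\sigma}^\Lipg$), plus $\|\io\|_{s_0+\sigma}^\Lipg\leq\delta$ to absorb the mixed terms, yields (ii). For (iii), the additional vanishing of $\partial_y a$ and $\partial_w a$ at $(\theta,0,0)$ gives the second-order Taylor identity
\begin{equation*}
a(\breve\io(\vphi);\omega)=\int_0^1 (1-t)\,d^2 a\bigl(\theta(\vphi),ty(\vphi),tw(\vphi);\omega\bigr)\bigl[(0,y,w),(0,y,w)\bigr]\,dt,
\end{equation*}
and the bound on $d^2 a$ from (i) now produces the quadratic-in-$\io$ estimate; differentiating this identity in $\widehat\io$ and invoking the bounds for $d^2 a$ and $d^3 a$ from (i) (the latter obtained by one more iteration of the same mechanism) yields the second inequality of (iii).

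The main obstacle is the bookkeeping of the Faà di Bruno products: to reduce them to the clean form displayed in \eqref{stime tame derivate per composizione x}, one must iteratively apply \eqref{p1-pr} and the interpolation \eqref{2202.3} so that a single top-index factor $\|\io\|_{s+s_0+\sigma}^\Lipg$ (or $\|\widehat\io_k\|_{s+s_0+\sigma}^\Lipg$) survives while all remaining factors are absorbed into $\delta$ via $\|\io\|_{s_0+\sigma}^\Lipg\le\delta$. Tracking the precise $+s_0$ loss coming from the pointwise Sobolev embedding of $\partial_\vphi^\beta\io$ at each stage is the one delicate aspect; the rest of the argument is entirely mechanical once the Faà di Bruno expansion and the Taylor reductions in (ii), (iii) are set up.
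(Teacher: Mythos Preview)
Your proposal is correct and follows essentially the same route as the paper: split the norm via \eqref{eq:norms}, handle the $L^2_\vphi H^s_x$ component directly from the hypotheses, expand the $\vphi$-regularity component by Fa\`a di Bruno and control the resulting multilinear products through interpolation and Young's inequality (with the $+s_0$ loss coming from the $\vphi$-Sobolev embedding); parts (ii) and (iii) are then derived by exactly the Taylor reductions you describe. The paper's only organizational differences are that it carries out the detailed argument for $d^2 a$ first and deduces the $a$ and $da$ bounds from it by Taylor expansion, and that it works in $\mathcal C^s_\vphi L^2_x$ rather than $H^s_\vphi L^2_x$, using the $\mathcal C^s$-scale interpolation \eqref{interpolazione astratta} in place of \eqref{p1-pr}--\eqref{2202.3}.
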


\begin{proof}
{\sc $(i)$}
It suffices to prove the estimates 
in \eqref{stime tame derivate per composizione x} for 
$ \| d^2 a(\breve \io)[\widehat \io_1, \widehat \io_2]\|_s $ and 
$ \|  d^2 a(\breve \io)[\widehat \io_1, \widehat \io_2] \|_{s}^{\rm lip} $  
since the ones 
for $a(\breve \io)$ and $d a(\breve \io)$ then follow by Taylor expansions. 
By the hypothesis \eqref{ipotesi tame derivate per composizione x} with $ l = 2 $, $ \a = 0 $, 
we have, for any $\vphi \in \T^{\mathbb S_+}$, $ s \geq 0 $,  
\begin{equation}\label{stima Hs x d 2 a A}
\begin{aligned}
\| d^2 a(\breve \io(\vphi))[\widehat \io_1(\vphi), \widehat \io_2(\vphi)]\|_{H^s_x} & \lesssim_s \| \widehat \io_1(\vphi)\|_{E_{s + \sigma}} \| \widehat \io_2(\vphi)\|_{E_{\sigma }} + \| \widehat \io_1(\vphi)\|_{E_{\sigma }} \| \widehat \io_2(\vphi)\|_{E_{s + \sigma }}  \\
& \quad + \| \io(\vphi) \|_{E_{s + \sigma}}\| \widehat \io_1(\vphi)\|_{E_{ \sigma }} \| \widehat \io_2(\vphi)\|_{E_{\sigma }}\,.
\end{aligned}
\end{equation}
Squaring the expressions on the left and right hand side of \eqref{stima Hs x d 2 a A} 
and then integrating them with respect to $\vphi $, one concludes, using \eqref{eq:norms},  \eqref{int-young}, 
and the Sobolev embedding \eqref{SoboX}, that
\begin{equation}\label{stima Hs x d 2 a B}
\begin{aligned}
\| d^2 a(\breve \io)[\widehat \io_1, \widehat \io_2]\|_{L^2_\vphi H^s_x} & \lesssim_s \| \widehat \io_1\|_{s + \sigma} \| \widehat \io_2\|_{s_0 + \sigma} + \| \widehat \io_1\|_{s_0 + \sigma} \| \widehat \io_2\|_{s + \sigma}  
+ \| \io \|_{s + \sigma }\| \widehat \io_1\|_{s_0 + \sigma} \| \widehat \io_2\|_{s_0 + \sigma}\,.
\end{aligned}
\end{equation}
In order to estimate $\| d^2 a(\breve \io)[\widehat \io_1, \widehat \io_2] \|_{H^s_\vphi L^2_x}$, we estimate 
$\| d^2 a(\breve \io)
[\widehat \io_1, \widehat \io_2] \|_{{\cal C}^s_\vphi L^2_x}$. We claim that 
\begin{equation}\label{stima C s vphi operatore composizione d2 a}
\begin{aligned}
\|  d^2 a(\breve \io)[\widehat \io_1, \widehat \io_2]\|_{{\cal C}^s_\vphi L^2_x} & \lesssim_s  
 \| \widehat \io_1\|_{s_0 +  \sigma} \| \widehat \io_2\|_{s + s_0 + \sigma } + \| \widehat \io_1\|_{s + s_0 +  \sigma} \| \widehat \io_2\|_{ s_0 +  \sigma} + \| \io \|_{s + s_0 +  \sigma} 
 \| \widehat \io_1 \|_{s_0 +  \sigma} \| \widehat \io_2\|_{s_0 + \sigma} 
\end{aligned}
\end{equation}
so that the estimate 
for 
$ \| d^2 a(\breve \io) [\widehat \io_1, \widehat \io_2]  \|_s $  
stated in \eqref{stime tame derivate per composizione x} follows by 
\eqref{stima Hs x d 2 a B}, 
\eqref{stima C s vphi operatore composizione d2 a},
and \eqref{eq:norms}.
The bound for 
$ \|  d^2 a(\breve \io)[\widehat \io_1, \widehat \io_2] \|_{s}^{\rm lip} $
is obtained in the same fashion.
\\[1mm]
{\it Proof of \eqref{stima C s vphi operatore composizione d2 a}.} 
By the Leibnitz rule, for any $\beta \in 
\N^{\mathbb S_+} $,  $0 \leq |\beta | \leq s$,
\begin{align}\label{romolo e remo 0}
\partial_\vphi^\beta \Big( d^2 a(\breve \io(\vphi))[\widehat \io_1(\vphi), \widehat \io_2(\vphi)] \Big) & = \sum_{\beta_1 + \beta_2 + \beta_3 = \beta} c_{\beta_1, \beta_2, \beta_3} \partial_\vphi^{\beta_1}(d^2 a(\breve \io(\vphi)))
\big[ \partial_\vphi^{\beta_2} \widehat \io_1(\vphi), \partial_\vphi^{\beta_3} \widehat \io_2(\vphi) \big]  
\end{align}
where $c_{\beta_1, \beta_2, \beta_3}$ are combinatorial constants. 
Each term in the latter sum is estimated individually.
For $ 1 \leq |\b_1| \leq s $ we have 
$$
\begin{aligned}
& \partial_\vphi^{\beta_1} (d^2 a(\breve \io (\vphi) ))
\big[ 
\partial_\vphi^{\beta_2} \widehat \io_1(\vphi), \partial_\vphi^{\beta_3} \widehat\io_2(\vphi)
\big] = \\
&  \sum_{\begin{subarray}{c}
1 \leq m \leq |\beta_1| \\ 
\alpha_1 + \cdots + \alpha_m = \beta_1
\end{subarray}} c_{\a_1, \cdots , \a_m}  d^{m + 2} a(\breve \io(\vphi))
\big[ 
\partial_\vphi^{\alpha_1} \breve \io(\vphi), \cdots, \partial_\vphi^{\alpha_m} \breve \io(\vphi), \partial_\vphi^{\beta_2} \widehat \io_1(\vphi), \partial_\vphi^{\beta_3} \widehat \io_2(\vphi)
\big] 
\end{aligned}
$$
for suitable combinatorial constants $c_{\a_1, \cdots , \a_m} $. 
Then,  by \eqref{ipotesi tame derivate per composizione x} with $ l = m + 2 $, $ \a = 0 $, 
we have the bound
\begin{align}\label{der-beta1}
& \| \partial_\vphi^{\beta_1} (d^2 a(\breve \io))[\partial_\vphi^{\beta_2} \widehat \io_1, \partial_\vphi^{\beta_3} \widehat \io_2]  \|_{{\cal C}^0_\vphi L^2_x} \lesssim_\beta \\ 
& \sum_{\begin{subarray}{c}
1 \leq m \leq |\beta_1| \\
\alpha_1 + \cdots + \alpha_m = \beta_1
\end{subarray}} (1 + \| \io\|_{{\cal C}^{|\alpha_1|}_\vphi E_{ \sigma }}) \cdots (1 +  \|  \io\|_{{\cal C}^{|\alpha_m|}_\vphi E_{ \sigma}}) \| \widehat \io_1\|_{{\cal C}^{|\beta_2|}_\vphi E_{ \sigma }} \| \widehat \io_2\|_{{\cal C}^{|\beta_3|}_\vphi E_{ \sigma}}\,. \nonumber
\end{align}
Arguing as in the proof of the formula (75) in \cite{BKM}, for any 
$ j = 1, \ldots, m $,  we have  
$$
(1 +  \|  \io\|_{{\cal C}^{|\alpha_j|}_\vphi E_{ \sigma }}) \lesssim_\beta (1 +  \|  \io\|_{{\cal C}^{0}_\vphi E_{ \sigma}})^{1 - \frac{|\alpha_j|}{|\beta|}}(1 +  \|  \io\|_{{\cal C}^{|\beta|}_\vphi E_{ \sigma }})^{\frac{|\alpha_j|}{|\beta|}} \, ,
$$ 
and, using 
the interpolation estimate \eqref{interpolazione astratta},  we get 
\begin{align}
& (1 + \|   \io \|_{{\cal C}^{|\a_1|}_\vphi E_{ \sigma }})  \cdots ( 1 + \|  \io \|_{{\cal C}^{|\alpha_m|}_\vphi  E_{  \sigma }})  \| \widehat \io_1\|_{{\cal C}^{|\beta_2|}_\vphi  
E_{ \sigma }}   \| \widehat \io_2\|_{{\cal C}^{|\beta_3|}_\vphi  E_{ \sigma }} \label{Q1s} \\ 
& \lesssim_s \|\widehat \io_1 \|^{1 - \frac{|\beta_2|}{|\beta|}}_{{\cal C}^0_\vphi 
E_{ \sigma}} \| \widehat \io_1\|^{\frac{|\beta_2|}{|\beta|}}_{{\cal C}^{|\beta|}_\vphi 
E_{ \sigma}} \|\widehat \io_2 \|^{1 - \frac{|\beta_3|}{|\beta|}}_{{\cal C}^0_\vphi E_{ \sigma}} \| \widehat \io_2\|^{\frac{|\beta_3|}{|\beta|}}_{{\cal C}^{|\beta|}_\vphi E_{ \sigma}}
\prod_{j = 1}^m
( 1 + \|   \io \|_{{\cal C}^{0}_\vphi E_{ \sigma} })^{1 - \frac{|\a_j|}{|\beta |}}  ( 1 + 
\|   \io \|_{{\cal C}^{|\beta|}_\vphi E_{ \sigma}})^{\frac{|\a_j|}{|\beta |}} 
\nonumber \\
& \lesssim_s \|\widehat \io_1 \|^{1 - \frac{|\beta_2|}{|\beta|}}_{{\cal C}^0_\vphi 
E_{ \sigma}} \| \widehat \io_1\|^{\frac{|\beta_2|}{|\beta|}}_{{\cal C}^{|\beta|}_\vphi 
E_{ \sigma}} \|\widehat \io_2 \|^{1 - \frac{|\beta_3|}{|\beta|}}_{{\cal C}^0_\vphi E_{ \sigma}} \| \widehat \io_2\|^{\frac{|\beta_3|}{|\beta|}}_{{\cal C}^{|\beta|}_\vphi E_{ \sigma}}
( 1 + \|   \io \|_{{\cal C}^{0}_\vphi E_{ \sigma} })^{m - \sum_{j = 1}^m\frac{|\a_j|}{|\beta |}}  ( 1 + 
\|   \io \|_{{\cal C}^{|\beta|}_\vphi E_{ \sigma}})^{\sum_{j = 1}^m\frac{|\a_j|}{|\beta |}} \, . 
\nonumber 
\end{align}
By  \eqref{SoboX}, \eqref{int-young}, 
$(1 + \| \io \|_{{\cal C}^0_\vphi E_\sigma})^{m - 1} \stackrel{}{\lesssim} (1 + \| \io \|_{s_0 + \sigma})^{m - 1} \lesssim (1+ \delta)^{m-1}$  and $\frac{\sum_{j =1}^m |\alpha_j|}{|\beta|} = \frac{|\beta_1|}{|\beta|} = 1 - \frac{|\beta_2|}{|\beta|} - \frac{|\beta_3|}{|\beta|} $, 
so that  
\begin{align}
& 
\label{barbatroll}  \eqref{Q1s} 
\lesssim_s   \|\widehat \io_1 \|^{ \frac{|\beta_1| + |\beta_3|}{|\beta|}}_{{\cal C}^0_\vphi E_{ \sigma}} \| \widehat \io_1\|^{\frac{|\beta_2|}{|\beta|}}_{{\cal C}^{s}_\vphi E_{\sigma}} \|\widehat \io_2 \|^{\frac{|\beta_1| + |\beta_2|}{|\beta|}}_{{\cal C}^0_\vphi E_{ \sigma}} \| \widehat \io_2\|^{\frac{|\beta_3|}{|\beta|}}_{{\cal C}^{s}_\vphi E_{ \sigma}}
( 1 + \|   \io \|_{{\cal C}^{0}_\vphi E_{ \sigma}})^{\frac{|\beta_2| + |\beta_3|}{|\beta|}}  ( 1 + \|   \io \|_{{\cal C}^{s}_\vphi E_{ \sigma}})^{\frac{|\beta_1|}{|\beta |}}  \nonumber\\
& \lesssim_s \Big( \|\widehat \io_1 \|_{{\cal C}^0_\vphi E_{ \sigma}} \|\widehat \io_2 \|_{{\cal C}^0_\vphi E_{ \sigma}} ( 1 + \|   \io \|_{{\cal C}^{s}_\vphi E_{ \sigma}}) \Big)^{\frac{|\beta_1|}{|\beta|}} \Big( \| \widehat \io_1\|_{{\cal C}^{s}_\vphi E_{ \sigma}} \|\widehat \io_2 \|_{{\cal C}^0_\vphi E_{ \sigma}} ( 1 + \|   \io \|_{{\cal C}^{0}_\vphi E_{ \sigma}}) \Big)^{\frac{|\beta_2|}{|\beta|}}  \nonumber\\
& \quad \times \Big( \|\widehat \io_1 \|_{{\cal C}^0_\vphi E_{\sigma }} \|\widehat \io_2 \|_{{\cal C}^s_\vphi E_{ \sigma}} ( 1 + \|   \io \|_{{\cal C}^{0}_\vphi E_{ \sigma}}) \Big)^{\frac{|\beta_3|}{|\beta|}} \nonumber
\end{align}
and, by the iterated Young inequality with exponents $|\beta|/ |\beta_1|$, $|\beta|/ |\beta_2|$, $|\beta|/|\beta_3|$, we conclude that \eqref{Q1s} is bounded by 
\begin{align*} 
 & \|\widehat \io_1 \|_{{\cal C}^0_\vphi E_{ \sigma}} \|\widehat \io_2 \|_{{\cal C}^0_\vphi E_{ \sigma}} ( 1 + \|   \io \|_{{\cal C}^{s}_\vphi E_{ \sigma}}) + \| \widehat \io_1\|_{{\cal C}^{s}_\vphi E_{ \sigma}} \|\widehat \io_2 \|_{{\cal C}^0_\vphi E_{ \sigma}} ( 1 + \|   \io \|_{{\cal C}^{0}_\vphi E_{ \sigma}}) 
 + \|\widehat \io_1 \|_{{\cal C}^0_\vphi E_{ \sigma}} \|\widehat \io_2 \|_{{\cal C}^s_\vphi E_{ \sigma}} ( 1 + \|   \io \|_{{\cal C}^{0}_\vphi E_{ \sigma}}) \\
 & \stackrel{\eqref{SoboX}, \eqref{int-young}} {\lesssim_s}
  \| \io \|_{s + s_0 +  \sigma} \| \widehat \io_1 \|_{s_0 +  \sigma} \| \widehat \io_2\|_{s_0 + \sigma} + 
  + \| \widehat \io_1\|_{s + s_0 + \sigma} \| 
  \widehat \io_2\|_{ s_0 +  \sigma}  
  + \| \widehat \io_1\|_{s_0 +  \sigma} \| 
  \widehat \io_2\|_{s + s_0 + \sigma }
 \,. 
\end{align*}
Note that \eqref{der-beta1} satisfies the same type of bound as \eqref{Q1s}. The term in \eqref{romolo e remo 0} with 
$ \b_1 = 0 $ is estimated in the same way and thus 
\eqref{stima C s vphi operatore composizione d2 a} is proved. 
\\[1mm]
 {\sc Proof $(ii)$-$(iii)$.}   Let $\vphi \mapsto \breve \io (\vphi) = (\theta(\vphi), y(\vphi), w(\vphi))$ be a torus embedding. If $a(\theta, 0, 0) = 0$, we write 
 $$
 a(\breve \io) = \int_0^1 d a(\breve \io_t)[\widehat \io]\, d t \, , \quad 
 \breve \io_t = (1 -t) (\theta(\vphi), 0, 0) + t \breve \io(\vphi) \, ,  \ 
 \widehat \io := (0, y(\vphi), w(\vphi)) \, ,  
 $$
and,  if $a(\theta, 0, 0), \partial_y a(\theta, 0, 0 ), \partial_w a(\theta, 0, 0)$ vanish, we write 
 $$
 a(\breve \io) = \int_0^1 (1 - t) d^2 a(\breve \io_t)[\widehat \io, \widehat \io]\, d t\,. 
 $$
 Items ($ii$)-($iii$)  follow by  item $(i)$, noting that $\| \widehat \io \|_s^\Lipg = \| (0, y(\cdot), w(\cdot))\|_s^\Lipg \lesssim \| \io\|_s^\Lipg $ for any $s \geq 0$. 
\end{proof}
Given $M \in \N$, we define the constant 
\begin{equation}\label{def sM}
\sM := {\rm max}\{ s_0, M + 1 \}\,. 
\end{equation}
\begin{lemma}\label{lem:tame2}
Assume that, for any $ M \geq 0 $, there is $ \s_M \geq 0 $ so that:  
\begin{itemize}
\item{\bf Assumption A.}
For any $ s \ge 
0 $,  the map 
$$ 
{\cal R } : 
({\cal V}^{\sigma_M}(\delta) \cap {\mathcal E}_{s+\s_M}) \times 
 \Omega \to  \mathcal B (H^s (\T_1) , H^{s + M + 1} (\T_1) )
$$ 
is $ {\cal C}^\infty $ with respect to $ \mathfrak x  $,
$ {\cal C}^1 $ with respect to $ \omega $ and,  
for any $\mathfrak x  \in 
{\cal V}^{\sigma_M}(\delta) \cap {\mathcal E}_{s+\s_M} $, 
$ \a \in \N^{\Splus} $ with $ |\a| \leq 1 $,   
$$
 \| \pa_\om^{\alpha} {\cal R} (\mathfrak x; \omega)[\widehat w]\|_{H_x^{s + M + 1}} \lesssim_{s, M}  \| \widehat w \|_{H^s_x} +  \| w \|_{H^{s + \sigma_M}_x}  \| \widehat w\|_{L^2_x}   
  \, , 
$$
and, for any $ l \geq 1 $,  
$ \| d^l \pa_\om^{\alpha}  {\cal R} (\mathfrak x; \omega)[\widehat w][\widehat{\mathfrak x}_1, \ldots, \widehat{\mathfrak x}_l]\|_{H_x^{s + M + 1}} $ is bounded by 
$$
 \lesssim_{s, M, l}  
 \| \widehat w \|_{H^s_x}  \prod_{j=1}^l  \| \widehat{\mathfrak x}_j \|_{E_{\sigma_M}}  +
  \| \widehat w \|_{L^2_x} 
\Big( \| w \|_{H_x^{s+ \s_M}} 
 \prod_{j = 1}^l \|\widehat{\mathfrak x}_j \|_{E_{\sigma_M}} +
\sum_{j = 1}^l \big( \| \widehat{\mathfrak x}_j\|_{E_{s + \sigma_M}} \prod_{n \neq j}  \| \widehat{\mathfrak x}_n\|_{E_{\sigma_M}}\big) \Big)
 \,. 
$$
\item{\bf Assumption B.}
For any $ - M - 1 \leq s \leq 0 $,  the map 
$$ 
{\cal R } : 
{\cal V}^{\sigma_M}(\delta)  \times \Omega 
\to  \mathcal B (H^s (\T_1) , H^{s + M + 1} (\T_1) )
$$ 
is $ {\cal C}^\infty $ w.r to $ \mathfrak x  $,
$ {\cal C}^1 $ with respect to $ \omega $ and,  
for any $\mathfrak x  \in {\cal V}^{\sigma_M}(\delta)$,
$ \a \in \N^{\Splus} $ with $ |\a| \leq 1 $, and $ l \geq 1 $,   
\begin{align*} 
&  \| \pa_\om^{\alpha} {\cal R} (\mathfrak x; \omega)[\widehat w]\|_{H_x^{s + M + 1}} \lesssim_{s, M}  \| \widehat w \|_{H^s_x} 
  \, , \\ 
& 
\| d^l \pa_\om^{\alpha}  {\cal R} (\mathfrak x; \omega)[\widehat w][\widehat{\mathfrak x}_1, \ldots, \widehat{\mathfrak x}_l]\|_{H_x^{s + M + 1}}  \lesssim_{s, M, l}  
\| \widehat w \|_{H^{s}_x} \prod_{j = 1}^l \|\widehat{\frak x}_j \|_{E_{\sigma_M}}\,. 
\end{align*}
\end{itemize}
Then for any $S \ge \sM$ and $ \lambda \in \N^{\mathbb S_+} $, 
there is a constant $\sigma_M(\lambda ) > 0 $, 
so that  for any $  \breve \io(\vphi) = (\vphi, 0, 0) + \io(\vphi)  $ with
$\| \io \|_{s_0 + \sigma_M(\lambda)}^\Lipg \leq \delta $ and  
any $ n_1, n_2 \in \N $ satisfying $ n_1 + n_2 \leq M + 1$, the following holds: 

\noindent
$(i)$ The  operator  $ 
\langle D \rangle^{n_1} \pa_\vphi^\lambda ({\cal R} \circ \breve \io) \langle D \rangle^{n_2} 
$ is $ \Lipg $-tame  with a tame constant satisfying, for any $\sM \leq s \leq S $,
$$
{\mathfrak M}_{\langle D \rangle^{n_1} \pa_{{\vphi}}^\lambda ({\cal R} \circ \breve \io) \langle D \rangle^{n_2}}(s) 
\lesssim_{S, M, \lambda } 1 +  \| \iota \|_{s+ \sigma_M(\lambda)}^\Lipg \,. 
$$
\noindent
$(ii)$ 
The  operator  $ 
\langle D \rangle^{n_1} \pa_{\vphi}^\lambda 
(d {\cal R}( \breve \io)[\widehat \io]) \langle D \rangle^{n_2} 
$ is $ \Lipg $-tame  with a tame constant satisfying, 
for any $	\sM  \leq s \leq S $, 
$$
{\mathfrak M}_{\langle D \rangle^{n_1} \pa_{\vphi}^\lambda (d {\cal R}( \breve \io)[\widehat \io]) \langle D \rangle^{n_2}}(s) 
\lesssim_{S, M, \lambda} \| \widehat \io \|_{s + \sigma_M(\lambda)}^\Lipg +  
\| \iota \|_{s+ \sigma_M(\lambda)}^\Lipg \| \widehat \io\|_{s_0 + \sigma_M(\lambda)}^\Lipg \,. 
$$
\noindent
$(iii)$ If in addition ${\cal R}(\theta, 0, 0; \om) = 0$, then 
the  operator  $ 
\langle D \rangle^{n_1} \pa_{\vphi}^\lambda ({\cal R} \circ \breve \io) \langle D \rangle^{n_2} 
$ is  $ \Lipg $-tame with a tame constant satisfying, for any $ \sM \leq s \leq S $, 
$$
{\mathfrak M}_{\langle D \rangle^{n_1} \pa_{\vphi}^\lambda ({\cal R} \circ \breve \io) \langle D \rangle^{n_2}}(s) 
\lesssim_{S, M, \lambda}   \| \iota \|_{s+ \sigma_M(\lambda)}^\Lipg  \,.
$$
\end{lemma}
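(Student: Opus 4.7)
The plan is to follow the strategy of Lemma~\ref{lem:tame1} adapted to the operator-valued setting. Items (ii) and (iii) are deduced from (i) as follows. For (ii), the map $(\mathfrak{x},\widehat{\mathfrak{x}})\mapsto d\mathcal{R}(\mathfrak{x})[\widehat{\mathfrak{x}}]$ satisfies the same structural bounds as $\mathcal{R}$ in Assumptions A and B (with one additional free slot for $\widehat{\mathfrak{x}}$, treated as an extra linear factor), so running the argument for (i) on this enlarged object yields the claimed bound. For (iii), when $\mathcal{R}(\theta,0,0)=0$, the Taylor expansion $\mathcal{R}(\breve\iota)=\int_0^1 d\mathcal{R}(\breve\iota_t)[\widehat\iota]\,dt$ with $\breve\iota_t=(1-t)(\vphi,0,0)+t\breve\iota$ and $\widehat\iota=(0,y,w)$, combined with (ii), removes the leading $1$ in the bound of (i) and produces $\mathfrak{M}(s)\lesssim\|\iota\|^{\Lipg}_{s+\sigma_M(\lambda)}$. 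Hence I concentrate on (i). Set $T(\vphi):=\langle D\rangle^{n_1}\partial_\vphi^\lambda(\mathcal{R}\circ\breve\iota)(\vphi)\langle D\rangle^{n_2}$. By the equivalence~\eqref{eq:norms}, it suffices to bound $\|Tu\|_{L^2_\vphi H^s_x}$ and $\|Tu\|_{H^s_\vphi L^2_x}$ separately for $s\ge\sM$; Assumptions A and B are tailored exactly to these two components.

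For the first piece I fix $\vphi$ and expand $\partial_\vphi^\lambda\mathcal{R}(\breve\iota(\vphi))$ by the Faà di Bruno formula as a finite sum, over partitions $\alpha_1+\cdots+\alpha_m=\lambda$, of terms $d^m\mathcal{R}(\breve\iota(\vphi))[\partial_\vphi^{\alpha_1}\breve\iota,\ldots,\partial_\vphi^{\alpha_m}\breve\iota]$. Since $s\ge\sM\ge M+1\ge n_2$, Assumption A applied with Sobolev index $s-n_2\ge 0$ and test element $\widehat w=\langle D\rangle^{n_2}u(\vphi,\cdot)$ controls each such term in $H^{s-n_2+M+1}_x$, and the inequality $n_1\le M+1-n_2$ lets the outer $\langle D\rangle^{n_1}$ land in $H^s_x$. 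Squaring the resulting pointwise bound, integrating in $\vphi$, and using~\eqref{SoboX},~\eqref{int-young} together with the interpolation inequalities~\eqref{interpolazione astratta} followed by iterated Young---exactly as in the passage~\eqref{Q1s}--\eqref{barbatroll} of Lemma~\ref{lem:tame1}---redistributes the $\mathcal{C}^{|\alpha_j|}_\vphi E_{\sigma_M}$-norms of the $\partial_\vphi^{\alpha_j}\breve\iota$ into products carrying a single top-order factor $\|\iota\|^{\Lipg}_{s+\sigma_M(\lambda)}$ (or $\|u\|_s$) with all other factors at the low index $s_0+\sigma_M(\lambda)$. This yields the required tame bound on this component.

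For the second piece I differentiate $s$ times in $\vphi$, distribute the derivatives by Leibniz between $T(\vphi)$ and $u(\vphi,\cdot)$, and expand each resulting $\partial_\vphi^{\beta_1+\lambda}\mathcal{R}(\breve\iota(\vphi))$ again by Faà di Bruno. Each term must now only be estimated in $L^2_x$: this is precisely the regime of Assumption B, used with $s=-n_2\in[-M-1,0]$, which bounds the operator $\langle D\rangle^{n_1}d^m\mathcal{R}(\breve\iota)[\partial_\vphi^{\alpha_1}\breve\iota,\ldots]\langle D\rangle^{n_2}:L^2_x\to L^2_x$ by $\prod_j\|\partial_\vphi^{\alpha_j}\breve\iota\|_{E_{\sigma_M}}$. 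The crucial feature of Assumption B is that this bound requires \emph{no} Sobolev regularity of the $w$-component of $\mathfrak{x}$, which would be unavailable once the $\beta_2$-derivatives land on $u$ rather than $\breve\iota$. The same interpolation plus iterated Young packaging, now applied at the $\mathcal{C}^{|\alpha_j|}_\vphi L^2_x$ level, delivers the desired tame form on this component. The Lipschitz-in-$\omega$ contribution to $\|\cdot\|_s^{\Lipg}$ is obtained by running the identical scheme on the difference quotient $\Delta_\omega$, invoking the $\alpha=1$ branches of Assumptions A and B.

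The main obstacle is the combinatorial bookkeeping of the combined Leibniz and Faà di Bruno expansions together with the interpolation and Young redistribution: one must ensure that in every summand exactly one factor carries the top-order norm (either $\|\iota\|_{s+\sigma_M(\lambda)}$ or $\|u\|_s$) while all others are absorbed at the low index $s_0+\sigma_M(\lambda)$, and that the loss $\sigma_M(\lambda)$ can be chosen uniformly for $|\lambda|$ up to the value prescribed in the statement. This is the direct analogue of the argument at~\eqref{Q1s}--\eqref{barbatroll}, now doubled because derivatives must be distributed across both $\breve\iota$ and $u$, and it is the technical heart of the proof.
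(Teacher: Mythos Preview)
Your proposal is correct and follows essentially the same route as the paper: split via \eqref{eq:norms} into $L^2_\vphi H^s_x$ (handled by Assumption~A) and $H^s_\vphi L^2_x$ (handled by Assumption~B), expand by Fa\`a di Bruno, and redistribute via interpolation plus iterated Young as in \eqref{Q1s}--\eqref{barbatroll}. The only organizational differences are that the paper proves (ii) in detail and declares (i) analogous (reverse of your order), and that for the $H^s_\vphi L^2_x$ piece the paper first bounds the operator norm $\|\partial_\vphi^\lambda(d\mathcal{Q}(\breve\iota)[\widehat\iota])\|_{H^s_\vphi\mathcal{B}(L^2_x)}$ and then invokes a tame action lemma (\cite[Lemma~2.12]{BKM}) rather than Leibniz-distributing the $\vphi$-derivatives between operator and input as you do---these are equivalent.
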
   
   
 \begin{proof}
 Since item ($i$) and ($ii$) can be proved in a similar way,
 we only prove ($ii$).  
 For any given $n_1, n_2 \in \N$ with $n_1 + n_2 \leq M + 1$, set 
 ${\cal Q} := \langle D \rangle^{n_1} {\cal R} \langle D \rangle^{n_2} $. 
 Assumption A implies that  for any $s \geq M + 1$ and
  any ${\frak x} \in {\cal V}^{\sigma_M}(\delta) 
  \cap {\cal E}_{s + \sigma_M} $,  
  the operator ${\cal Q}(\frak x) $ 
 is in  ${\cal B}(H^s_x)$ and for any 
  $\widehat{\frak x}_1, \ldots, \widehat{\frak x}_l \in E_{s + \sigma_M}$
  with $l \geq 1$, and
 $\widehat w \in H^s_x $, 
 \be\label{Hy2Q}
 \begin{aligned}
 & \|  {\cal Q} (\mathfrak x)[\widehat w]\|_{H_x^{s}} 
  \lesssim_{s, M}  \| \widehat w \|_{H^s_x} +  \| w \|_{H^{s + \sigma_M}_x}  
 \| \widehat w\|_{H^{ M + 1}_x}  \,, \\
 & \|  d^l \big({\cal Q}(\frak x)[\widehat w] \big)[\widehat{\frak x}_1, \ldots, \widehat{\frak x}_l]\|_{H^s_x} \lesssim_{s, M,  l} \| \widehat w\|_{H^s_x} \prod_{j = 1}^l \|\widehat{\frak x}_j \|_{E_{\sigma_M}} \\
 & \quad + 
\| \widehat w\|_{H^{M + 1}_x}  \Big(\| \frak x\|_{E_{s + \sigma_M }}\prod_{j = 1}^l \|\widehat{\frak x}_j \|_{E_{\sigma_M}} + \sum_{j = 1}^l \| \widehat{\frak x}_j\|_{E_{s + \sigma_M }} \prod_{n \neq j} \| \widehat{\frak x}_n \|_{E_{\sigma_M}}\Big) \,.
 \end{aligned}
 \ee
Furthermore  Assumption B implies that, for any 
${\frak x} \in {\cal V}^{\sigma_M}(\delta)$,  the operator 
${\cal Q} (\frak x ) $ is in $ {\cal B}(L^2_x)$ and for any
 $\widehat{\frak x}_1, \ldots, \widehat{\frak x}_l \in E_{ \sigma_M} $,
$l \geq 1$, 
 \be\label{Hy2Qa}
 \begin{aligned}
 & \|  {\cal Q} (\mathfrak x)\|_{{\cal B}(L^2_x)} 
  \lesssim_{M}  1 \,, \qquad  \|  d^l {\cal Q}(\frak x)[\widehat{\frak x}_1, \ldots, \widehat{\frak x}_l]\|_{{\cal B}(L^2_x)} \lesssim_{M,  l} \prod_{j = 1}^l \|\widehat{\frak x}_j \|_{E_{\sigma_M}}\,.  \\
 \end{aligned}
 \ee 
One computes by Leibniz's rule
\begin{equation}\label{formula partial vphi lambda cal Q}
\partial_{\vphi}^\lambda \big(  d {\cal Q}(\breve \io(\vphi))[\widehat \io(\vphi)] \big) = \sum_{\begin{subarray}{c}
0 \leq k \leq |\lambda| \\
\lambda_1 + \ldots + \lambda_{k + 1} = \lambda \\
\end{subarray}} c_{\lambda_1, \ldots, \lambda_{k + 1}} d^{k + 1} {\cal Q}(\breve \io(\vphi))[\partial_{\vphi}^{\lambda_1} \breve \io(\vphi), \ldots, \partial_{\vphi}^{\lambda_k} \breve \io(\vphi), \partial_{\vphi}^{\lambda_{k + 1}} \widehat \io(\vphi)] 
\end{equation}
where $c_{\lambda_1, \ldots, \lambda_{k + 1}} $ are combinatorial constants. 
\\[1mm]
{\sc Estimate of $\| \partial_{\vphi}^\lambda \big(  d {\cal Q}(\breve \io(\vphi))[\widehat \io(\vphi)] \big)[\widehat w] \|_{L^2_\vphi H^s_x}$.}
By  \eqref{Hy2Q}, we have, for $ s \geq M + 1 $,   
\begin{align}\label{melograno 0}
& \| d^{k + 1} {\cal Q}(\breve \io(\vphi))[\partial_{\vphi}^{\lambda_1} \breve \io(\vphi), \ldots, \partial_{\vphi}^{\lambda_k} \breve \io(\vphi), \partial_{\vphi}^{\lambda_{k + 1}} \widehat \io(\vphi)][\widehat w(\vphi)]  \|_{H^s_x}  \\
& \lesssim_{s, M, k} 
\| \widehat w(\vphi) \|_{H^s_x} 
\| \partial_{\vphi}^{\lambda_{k + 1}} \widehat \io(\vphi)\|_{E_{ \s_M}} \prod_{n = 1}^k  \| \partial_{\vphi}^{\lambda_n} \breve \io(\vphi)\|_{E_{\s_M}} \nonumber \\
&    + \| \widehat w(\vphi) \|_{H^{M + 1}_x} \Big( 
\| \io(\vphi) \|_{E_{s + \s_M}} \| \partial_{\vphi}^{\lambda_{k + 1}} \widehat \io(\vphi)\|_{E_{ \s_M}} \prod_{n = 1}^k  \| \partial_{\vphi}^{\lambda_n} \breve \io(\vphi)\|_{E_{\s_M}} \nonumber \\ 
& + \sum_{j = 1}^k \| \partial_{\vphi}^{\lambda_j} \breve \io(\vphi)\|_{E_{s + \s_M}} \big(\prod_{n \neq j}  \| \partial_{\vphi}^{\lambda_n} \breve \io(\vphi)\|_{E_{\s_M}}\big) \|\partial_{\vphi}^{\lambda_{k + 1}} \widehat \io(\vphi) \|_{E_{\s_M}}  
+  \| \partial_{\vphi}^{\lambda_{k + 1}} \widehat \io(\vphi)\|_{E_{s + \s_M}} \prod_{n = 1}^k  \| \partial_{\vphi}^{\lambda_n} \breve \io(\vphi)\|_{E_{\s_M}}  \Big)  \,. \nonumber 
\end{align}
Note that by the Sobolev embedding and  \eqref{int-young},
for any $ s \geq 0$, $\mu \in \N^{\mathbb S_+}$, 
 \begin{equation}\label{melograno - 1}
 \| \partial_{\vphi}^\mu \breve \io(\vphi) \|_{E_{s}} 
 \lesssim 1 + \| \partial_{\vphi}^\mu \io \|_{C^0_\vphi E_{s}} \lesssim 1 + \| \io \|_{s + s_0 + |\mu|} \, .
\end{equation} 
Hence \eqref{formula partial vphi lambda cal Q}-\eqref{melograno 0} and
$\| \cdot \|_{L^2_\vphi H^s_x} \lesssim \| \cdot \|_s $  
imply that for any $\breve \io$ with
$ \| \io \|_{s_0 + \sigma_M(\lambda)}^\Lipg \leq \delta  $  
and any $ s \geq M + 1 $,   
 \begin{equation}\label{melograno 1}
\begin{aligned}
& \| \partial_{\vphi}^\lambda \big(  d {\cal Q}(\breve \io(\vphi))[\widehat \io(\vphi)] \big)[\widehat w(\vphi)]  \|_{L^2_\vphi H^s_x}  \\
& \lesssim_{s,  M, \lambda} 
\| \widehat w\|_{s } \| \widehat \io\|_{s_0 + \sigma_M(\lambda)}  + 
 \| \widehat w\|_{M + 1}  
 \big( \| \io \|_{s + \sigma_M(\lambda)}
 \| \widehat \io\|_{s_0 + \sigma_M(\lambda)} +
  \| \widehat \io\|_{s + \sigma_M(\lambda)} \big) 
\end{aligned}
\end{equation}
for some constant $\sigma_M(\lambda) > 0$. 
\\[1mm]
{\sc Estimate of $\| \partial_{\vphi}^\lambda \big(  d {\cal Q}(\breve \io(\vphi))[\widehat \io(\vphi)] \big)\|_{H^s_\vphi {\cal B}(L^2_x)}$.} 
For any $s \in \N$,  $\beta \in \N^{\mathbb S_+}$, $|\beta| \leq s$, we need to estimate 
$ \| \partial_\vphi^{\beta + \lambda}  \big(  d {\cal Q}(\breve \io(\vphi))[\widehat \io(\vphi)] \big)  \|_{L^2_\vphi {\cal B}(L^2_x)}$. As in \eqref{formula partial vphi lambda cal Q} we have 
\begin{equation}\label{melograno 3}
\begin{aligned}
&  \partial_{\vphi}^{\beta + \lambda} \big(  d {\cal Q}(\breve \io(\vphi))[\widehat \io(\vphi)]   \big)  
 =  \sum_{\begin{subarray}{c}
0 \leq k \leq |\beta| + |\lambda| \\
\alpha_1 + \ldots + \alpha_{k + 1} = \beta + \lambda \\
\end{subarray}} c_{\alpha_1, \ldots, \alpha_{k + 1}} d^{k + 1} {\cal Q}(\breve \io(\vphi))[\partial_{\vphi}^{\alpha_1} \breve \io(\vphi), \ldots, \partial_{\vphi}^{\alpha_k} \breve \io(\vphi), \partial_{\vphi}^{\alpha_{k + 1}} \widehat \io(\vphi)] 
\end{aligned}
\end{equation}
where $c_{\alpha_1, \ldots, \alpha_{k + 1}}  $ 
are combinatorial constants. By \eqref{Hy2Qa} and  \eqref{melograno - 1} one obtains that 
\begin{equation}\label{melograno 4}
\| d^{k + 1} {\cal Q}(\breve \io(\vphi))[\partial_{\vphi}^{\alpha_1} \breve \io(\vphi), \ldots, \partial_{\vphi}^{\alpha_k} \breve \io(\vphi), \partial_{\vphi}^{\alpha_{k + 1}} \widehat \io(\vphi)] \|_{L^2_\vphi {\cal B}(L^2_x)}  \lesssim_{\beta,\lambda} 
 \prod_{j = 1}^{k } (1 + \| \io\|_{|\alpha_j |+ \eta_M }) \| \widehat \io\|_{|\alpha_{k + 1}| + \eta_M} 
\end{equation}
for some $\eta_M > 0$. 
Using the interpolation inequality \eqref{2202.3},  and 
arguing as in the proof of the formula (75) in \cite{BKM}, 
we have, for any $\breve \io$ with 
$\| \io\|_{\eta_M} \leq 1$ and  any $ j = 1, \ldots, k $, 
$$
\begin{aligned}
& 1 + \| \io\|_{|\alpha_j| + \eta_M } \lesssim (1 + \|\io \|_{\eta_M})^{1 - \frac{|\alpha_j|}{|\beta +\lambda|}} (1 + \| \io\|_{|\beta + \lambda| + \eta_M})^{\frac{|\alpha_j|}{|\beta + \lambda|}} \stackrel{\| \io\|_{\eta_M} \leq 1}{\lesssim} (1 + \| \io\|_{|\beta + \lambda| + \eta_M})^{\frac{|\alpha_j|}{|\beta + \lambda|}} \,, \\
& \| \widehat \io\|_{|\alpha_{k + 1}| + \eta_M} \lesssim \| \widehat \io\|_{\eta_M}^{1 - \frac{|\alpha_{k + 1}|}{|\beta + \lambda|}} \| \widehat \io \|_{|\beta + \lambda| + \eta_M}^{\frac{|\alpha_{k + 1}|}{|\beta + \lambda|}} \, . 
\end{aligned}
$$
Then  by  \eqref{melograno 4} and since
$  \sum_{j = 1}^k |\alpha_j|  + |\alpha_{k + 1}|  =  |\beta + \lambda|  $,
it follows that
\begin{align}
& \| d^{k + 1} {\cal Q}(\breve \io(\vphi))[\partial_{\vphi}^{\alpha_1} \breve \io(\vphi), \ldots, \partial_{\vphi}^{\alpha_k} \breve \io(\vphi), \partial_{\vphi}^{\alpha_{k + 1}} \widehat \io(\vphi)] \|_{L^2_\vphi {\cal B}(L^2_x)} \nonumber \\
& \lesssim_{s,\lambda} (1 + \| \io\|_{|\beta + \lambda|+ \eta_M})^{\frac{\sum_{j = 1}^k |\alpha_j|}{|\beta + \lambda|}} 
\| \widehat \io\|_{\eta_M}^{1 - \frac{|\alpha_{k + 1}|}{|\beta + \lambda|}} 
\| \widehat \io \|_{|\beta + \lambda| + \eta_M}^{\frac{|\alpha_{k + 1}|}{|\beta + \lambda|}}  
\nonumber  \\
& \lesssim_{s,\lambda} \Big((1 + \| \io\|_{|\beta + \lambda|+ \eta_M}) \| \widehat \io\|_{\eta_M} \Big)^{\frac{\sum_{j = 1}^k |\alpha_j| }{|\beta | + |\lambda|}}   \| \widehat \io \|_{|\beta + \lambda| + \eta_M}^{\frac{|\alpha_{k + 1}|}{|\beta + \lambda|}} \nonumber \\
& \lesssim_{s,\lambda} \| \widehat \io \|_{|\beta + \lambda| + \eta_M} + \| \io\|_{|\beta + \lambda|+ \eta_M} \| \widehat \io\|_{\eta_M} \label{melograno 8}
\end{align}
where for the latter inequality we used  Young's inequality with exponents $\frac{|\beta +\lambda|}{\sum_{j = 1}^k |\alpha_j|  }, \frac{|\beta +\lambda|}{ |\alpha_{k + 1}|}$.
Combining    \eqref{melograno 3} and
 \eqref{melograno 8} we  obtain
\begin{equation}\label{melograno 9}
\begin{aligned}
& \| \partial_\vphi^\lambda (d {\cal Q}(\breve \io)[\widehat \io])  \|_{H^s_\vphi {\cal B}(L^2_x)} \lesssim_{s, M, \lambda} \| \widehat \io \|_{s + |\lambda| + \eta_M} + \| \io\|_{s + |\lambda|+ \eta_M} \| \widehat \io\|_{\eta_M}\,.  
\end{aligned}
\end{equation}
\noindent
{\sc Estimate of $\| \partial_\vphi^\lambda (d {\cal Q}(\breve \io)[\widehat \io])[\widehat w] \|_{H^s_\vphi L^2_x}$. } 
 Using that 
$$
\Big( \sum_{\ell \in \Z^{\Splus}} 
\| \widehat A (\ell ) \|_{  {\cal B}(L^2_x) }^2 
\langle \ell \rangle^{2s} \Big)^{1/2} \lesssim_{s_0} 
\| A \|_{H^{s+s_0}_\vphi  {\cal B}(L^2_x) }  
$$
one deduces from \cite[Lemma 2.12]{BKM}
that
 for any $\breve \io$ with  $\| \io \|_{2 s_0 + |\lambda| + \eta_M} \leq 1$ and any $s \geq s_0 $,  
\begin{align} \label{melograno 10}
 \| \partial_\vphi^\lambda (d {\cal Q}(\breve \io)[\widehat \io])[\widehat w] \|_{H^s_\vphi L^2_x} & \lesssim_s \| \partial_\vphi^\lambda (d {\cal Q}(\breve \io)[\widehat \io]) \|_{H^{2 s_0}_\vphi {\cal B}(L^2_x)} \| \widehat w \|_{H^{s}_\vphi L^2_x} + \|\partial_\vphi^\lambda (d {\cal Q}(\breve \io)[\widehat \io]) \|_{H^{s + s_0}_\vphi {\cal B}(L^2_x)} \| \widehat w\|_{H^{s_0}_\vphi L^2_x} \\
& \stackrel{\eqref{melograno 9}}{\lesssim_{s, M}}
\| \widehat w\|_s  \| \widehat \io\|_{2 s_0 + |\lambda| + \eta_M}  + 
\| \widehat w\|_{s_0} \big( 
\| \widehat \io \|_{s +  s_0 + |\lambda| + \eta_M}  + \| \io \|_{s + s_0 + |\lambda| + \eta_M} \| \widehat \io \|_{2 s_0 + |\lambda| + \eta_M} \big) \, . \nonumber
\end{align}
Increasing the constant $\sigma_M(\lambda)$ 
in \eqref{melograno 1} if needed, 
one infers from the estimates \eqref{melograno 1}, \eqref{melograno 10}
that for any  $ s \geq \sM = {\rm max}\{ s_0, M + 1\}$, 
$\partial_\vphi^\lambda (d {\cal Q}(\breve \io)[\widehat \io])$ satisfies  
\begin{equation}\label{melograno 100}
\| \partial_\vphi^\lambda (d {\cal Q}(\breve \io)[\widehat \io]) [\widehat w] \|_s 
 \lesssim_{s, M, \lambda} \| \widehat w\|_s  \| \widehat \io\|_{ s_0 + \sigma_M(\lambda)}  + 
\| \widehat w\|_{\sM} \big( 
\| \widehat \io \|_{s +  \sigma_M(\lambda)}  + \| \io \|_{s + \sigma_M(\lambda)} \| \widehat \io \|_{s_0 + \sigma_M(\lambda)} \big) \, .
\end{equation}
Furthermore, arguing similarly, one can show that for any $\omega_1, \omega_2 \in \Omega$, $\omega_1 \neq \omega_2$,  the operator $\partial_\vphi^\lambda \Delta_{\omega}(d {\cal Q}(\breve \io)[\widehat \io]) $ satisfies the estimate, for any $ s \geq \sM $ 
\begin{equation}\label{melograno 101}
\begin{aligned}
\gamma \frac{\| \partial_\vphi^\lambda \Delta_{\omega}(d {\cal Q}(\breve \io)[\widehat \io]) [\widehat w] \|_s}{|\omega_1 - \omega_2|} & \lesssim_{s, M, \lambda} \| \widehat w\|_s  \| \widehat \io\|_{ s_0 + \sigma_M(\lambda)}^\Lipg  
 + \| \widehat w\|_{\sM} \big( \| \widehat \io \|_{s +  \sigma_M(\lambda)}^\Lipg  + \| \io \|_{s + \sigma_M(\lambda)}^\Lipg \| \widehat \io \|_{s_0 + \sigma_M(\lambda)}^\Lipg \big) \,.
\end{aligned}
\end{equation} 
It then follows from \eqref{melograno 100} and \eqref{melograno 101}
that there exists a tame constant 
${\mathfrak M}_{\partial_\vphi^\lambda (d {\cal Q}
(\breve \io)[\widehat \io]) }(s)$ for $\partial_\vphi^\lambda (d {\cal Q}
(\breve \io)[\widehat \io]) $
satisfying the estimate stated in item ($ii$). 

\noindent
{\sc Proof of $(iii)$.} Since ${\cal R}(\theta, 0, 0) = 0$, we can write 
$$
{\cal R}(\breve \io) = \int_0^1 d {\cal R}(\breve \io_t)[\widehat \io]\, d t \, , \quad 
 \breve \io_t = (1-t) (\theta(\vphi), 0, 0) + t \breve \io(\vphi) \, ,
 \quad  \widehat \io(\vphi) := (0, y(\vphi), w(\vphi)) \, . 
$$ 
Since $\| \widehat \io \|_s \lesssim \| \io \|_s$ for any $s \geq 0$, item ($iii$) is thus a direct consequence of $(ii)$. 
 \end{proof} 
 
 \subsection{Egorov type theorems}\label{sezione astratta egorov}
 
 The main purpose of this section is to investigate 
 operators obtained by conjugating a 
 pseudo-differential  operator of the form $ a(\vphi, x) \pa_x^m $, $ m \in \Z $, 
 by the flow map of a transport equation. These results are used
 in Section \ref{elimination x dependence highest order}.

Let $ \Phi (\tau_0, \tau, \vphi) $  denote the flow of 
  the transport equation
\be\label{flow0}
\partial_\tau  \Phi (\tau_0, \tau, \vphi)  = B(\tau, \vphi) \Phi(\tau_0, \tau, \vphi) \, , \quad 
 \Phi(\tau_0, \tau_0, \vphi) = {\rm Id}   \, , 
\ee
 where 
 \begin{equation}\label{definizione b egorov astratto}
\begin{aligned}
B(\tau, \vphi) & := \Pi_{\bot}   \big( b (\tau, \vphi, x) \partial_x + b_x(\tau, \vphi, x) \big) 
\, , \quad 
b\equiv  b (\tau, \vphi, x)  := \frac{  \beta (\vphi, x)  }{ 1 + \tau  \beta_x (\vphi, x)  } \, , 
\end{aligned}
\end{equation}
and the real valued function $ \beta (\vphi, x) \equiv \beta (\vphi, x; \omega) $ is 
$ {\cal C}^\infty$ with respect to the variables
$ (\varphi, x) $ and  Lipschitz with respect to the parameter
$\om \in { \Omega} $.
For brevity  we set $\Phi(\tau, \vphi) := \Phi(0, \tau, \vphi)$
  and $\Phi(\vphi):= \Phi(0, 1, \vphi)$. Note that
  $\Phi(\vphi)^{- 1} = \Phi(1, 0 , \vphi)$ 
  and that  
\begin{equation}\label{proprieta elementare flusso}
\Phi(\tau_0, \tau, \vphi) = \Phi(\tau, \vphi) \circ \Phi(\tau_0, \vphi)^{- 1}\,. 
\end{equation} 
By standard hyperbolic estimates, equation \eqref{flow0} is well-posed. The flow $\Phi(\tau_0, \tau, \vphi) $ has
the following properties. 
 
\begin{lemma}{\bf (Transport flow)} \label{proposition 2.40 unificata}
Let $ \lambda_0 \in \N$, $S > s_0$. 
For any $\lambda\in \N$ with $\lambda \leq \lambda_0 $, 
 $n_1, n_2 \in \R$ with $n_1 + n_2 = - \lambda - 1$, and $s \geq s_0$, 
there exist constants 
$ \sigma(\lambda_0, n_1, n_2) > 0$, $\delta \equiv \delta(S, \lambda_0, n_1, n_2) \in (0, 1)$ such that,  if
\begin{equation}\label{piccolezza a partial vphi beta k D beta k}
\| \beta \|_{s_0 + \sigma(\lambda_0, n_1, n_2)}^\Lipg \leq \delta\,, 
\end{equation}
then for any $m \in \mathbb S_+$, $ \langle D \rangle^{n_1} \partial_{\vphi_m}^\lambda\Phi(\tau_0, \tau, \vphi) \langle D \rangle^{ n_2} $ is a $\Lipg$-tame operator with a tame constant satisfying 
\begin{equation}\label{stima flusso 1 astratto}
\mathfrak M_{\langle D \rangle^{ n_1} \partial_{\vphi_m}^\lambda\Phi(\tau_0, \tau, \vphi) \langle D \rangle^{ n_2}}(s) \lesssim_{S, \lambda_0, n_1, n_2} 1 + \| \beta\|_{s + \sigma(\lambda_0, n_1, n_2)}^\Lipg\,, \qquad \forall s_0 \leq s \leq S\,, \quad \forall \tau_0, \tau \in [0, 1]\,.   
\end{equation}
In addition, if $n_1 + n_2 = - \lambda - 2$, then $\langle D \rangle^{ n_1}  \partial_{\vphi_m}^\lambda ( \Phi(\tau_0, \tau, \vphi) - {\rm Id} ) \langle D \rangle^{n_2}$ is 
 $\Lipg$-tame  with a tame constant satisfying 
\begin{equation}\label{stima flusso astratto 2}
{\mathfrak M}_{\langle D \rangle^{ n_1}  \partial_{\vphi_m}^\lambda(\Phi(\tau_0, \tau, \vphi) - {\rm Id}) \langle D \rangle^{n_2}}(s) \lesssim_{S, \lambda_0, n_1, n_2} \| \beta\|_{s + \sigma(\lambda_0, n_1, n_2)}^\Lipg \, , \qquad \forall s_0 \leq s \leq S \, , 
\quad \forall \tau_0, \tau \in [0, 1]\,. 
\end{equation}
Furthermore, let $s_0 < s_1 < S$, $n_1, n_2 \in \R$, $\lambda_0 \in \N$, $\lambda \leq \lambda_0$ with $n_1 + n_2  =- \lambda - 1$, $m \in \mathbb S_+$. If $\beta_1$ and $\beta_2$ satisfy $\| \beta_i \|_{s_1 + \sigma(n_1, n_2)} \leq \delta$ for some $\sigma(n_1, n_2) > 0 $,  and $\delta \in (0, 1)$ small enough, then 
\begin{equation}\label{stima flusso astratto 2b}
\| \langle D \rangle^{n_1} \partial_{\vphi_m}^\lambda \Delta_{12} \Phi(\tau_0, \tau, \vphi) \langle D \rangle^{n_2} \|_{{\cal B}(H^{s_1})} \lesssim_{s_1, \lambda_0, n_1, n_2} \| \Delta_{12} \beta\|_{s_1 + \sigma(n_1, n_2)}\,, \quad \tau_0, \tau \in [0, 1] \, , 
\end{equation}
where $\Delta_{12} \beta:= \beta_2 - \beta_1$ and 
$\Delta_{12} \Phi(\tau_0, \tau, \vphi):= 
\Phi(\tau_0, \tau, \vphi; \beta_2) - \Phi(\tau_0, \tau, \vphi; \beta_1) $. 
\end{lemma}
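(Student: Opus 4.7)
The plan is to exploit the explicit formula for $\Phi$ that follows from the specific choice $b = \beta/(1+\tau\beta_x)$ in \eqref{definizione b egorov astratto}. The characteristic ODE $\dot X = -b(\tau, X)$ associated with the transport equation $\partial_\tau u = \partial_x(bu)$ is solved by $X(\tau, x_0) = \phi_\tau^{-1}(x_0)$ with $\phi_\tau(y) := y + \tau\beta(\vphi, y)$, and a direct substitution shows that
\[
\bigl(\Phi(\tau, \vphi) u_0\bigr)(x) = \bigl(1 + \tau\beta_x(\vphi, x)\bigr)\, u_0\bigl(x + \tau\beta(\vphi, x)\bigr), \qquad u_0 \in L^2_\bot(\T_1).
\]
The $\Pi_\bot$ projection in $B$ is automatic: the change of variables $y = x + \tau\beta(\vphi, x)$ has Jacobian $1 + \tau\beta_x$, so $\int_{\T_1} \Phi(\tau,\vphi)u_0\, dx = \int_{\T_1} u_0\, dy = 0$. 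The general flow $\Phi(\tau_0, \tau, \vphi)$ is then recovered through the cocycle identity \eqref{proprieta elementare flusso}.

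Given this closed form, the tame estimate \eqref{stima flusso 1 astratto} for $\lambda = 0$ and $n_1 = n_2 = 0$ reduces to the tame bounds of Lemma \ref{lemma:LS norms}: the composition operator $u_0 \mapsto u_0(\cdot + \tau\beta)$ satisfies \eqref{pr-comp1} under the smallness \eqref{piccolezza a partial vphi beta k D beta k}, and multiplication by $1 + \tau\beta_x$ is handled by \eqref{p1-pr} combined with Moser's Lemma \ref{Moser norme pesate} applied to $b$. For $\vphi_m$-derivatives, applying Leibniz and the chain rule to the explicit formula expresses $\partial_{\vphi_m}^\lambda(\Phi u_0)$ as a finite sum of terms of the form $c(\vphi, x)\, u_0^{(k)}(x + \tau\beta(\vphi, x))$ with $0 \leq k \leq \lambda$, where $c$ is a polynomial in $\tau$ and in $(\vphi, x)$-derivatives of $\beta$ up to order $\lambda+1$. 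Each such term has effective $x$-order $k \leq \lambda$ on $u_0$; sandwiching by $\langle D\rangle^{n_1}$ and $\langle D\rangle^{n_2}$ with $n_1 + n_2 = -\lambda - 1$ then produces a $\Lipg$-tame operator with constant $1 + \|\beta\|_{s+\sigma(\lambda_0, n_1, n_2)}^\Lipg$ after applying the composition and multiplication tame estimates once more.

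The estimate \eqref{stima flusso astratto 2} is obtained by writing $\Phi u_0 - u_0 = \tau\beta_x \cdot u_0(\cdot + \tau\beta) + \bigl(u_0(\cdot + \tau\beta) - u_0\bigr)$ and Taylor-expanding the difference as $u_0(\cdot + \tau\beta) - u_0 = \tau\beta \int_0^1 u_0'(\cdot + s\tau\beta)\, ds$; both terms carry a gain of one factor of $\beta$ and one extra $x$-derivative on $u_0$ compared with $\Phi u_0$, which explains both the disappearance of the additive $1$ and the additional smoothing $n_1 + n_2 = -\lambda - 2$. The Lipschitz estimate \eqref{stima flusso astratto 2b} in $\beta$ is proved analogously: expanding $\Delta_{12}\Phi\, u_0$ via the explicit formula yields a first-order expression in $\Delta_{12}\beta$ that is directly estimated by the composition rule \eqref{pr-comp1} applied with the two diffeomorphisms $\phi_\tau(\cdot; \beta_1)$ and $\phi_\tau(\cdot; \beta_2)$.

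The main obstacle is the bookkeeping of the multi-variable chain rule expansion of $\partial_{\vphi_m}^\lambda (u_0 \circ \phi_\tau)$: one must track which factors contribute $x$-derivatives of $u_0$ (to be absorbed by $\langle D\rangle^{n_2}$) and which contribute $(\vphi, x)$-derivatives of $\beta$ (to be controlled tamely), and verify that the cumulative loss of regularity in $\beta$ is a constant $\sigma(\lambda_0, n_1, n_2)$ independent of $s$. This is where the tame product and composition estimates of Lemma \ref{lemma:LS norms}, combined with the interpolation inequality \eqref{2202.3} and Moser's Lemma \ref{Moser norme pesate}, are used in a decisive way.
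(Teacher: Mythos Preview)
Your approach via an explicit formula for the flow is attractive, but there is a genuine gap. You claim that ``the $\Pi_\bot$ projection in $B$ is automatic'' because your candidate $(1+\tau\beta_x)\,u_0(x+\tau\beta)$ has zero $x$-mean. That only shows the result lies in $L^2_0$, not in $L^2_\bot$: by \eqref{Def:L2bot} the projector $\Pi_\bot$ kills all Fourier modes in $\S\cup\{0\}$, not just the zero mode, and there is no reason the composition $(1+\tau\beta_x)\,u_0(\cdot+\tau\beta)$ should have vanishing $\S$-components. Consequently your explicit expression solves $\partial_\tau u=(b\partial_x+b_x)u$, not $\partial_\tau u=\Pi_\bot(b\partial_x+b_x)u$, and is not the flow $\Phi(\tau,\vphi)$ defined in \eqref{flow0}. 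Everything downstream (the tame counting of $x$-derivatives, the Taylor expansion for $\Phi-\mathrm{Id}$, the Lipschitz estimate in $\beta$) is built on this formula and so inherits the error.

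The paper proceeds differently. For \eqref{stima flusso 1 astratto} it invokes the abstract hyperbolic-flow estimates of \cite[Propositions~A.7, A.10, A.11]{Berti-Montalto}, which work directly with the evolution equation and require no closed-form solution (hence the $\Pi_\bot$ causes no difficulty). For \eqref{stima flusso astratto 2} it uses the Volterra representation $\Phi(\tau_0,\tau)-\mathrm{Id}=\int_{\tau_0}^\tau B(t)\Phi(\tau_0,t)\,dt$, distributes $\partial_{\vphi_m}^\lambda$ by Leibniz, inserts $\langle D\rangle^{\pm(n_2+\lambda_2+1)}$ between the $B$- and $\Phi$-factors, and then applies pseudo-differential bounds to $\langle D\rangle^{n_1}\partial_{\vphi_m}^{\lambda_1}B\langle D\rangle^{-1-n_1-\lambda_1}$ together with the already-established \eqref{stima flusso 1 astratto} for the $\Phi$-factor. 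Your route could be salvaged by splitting $B=B_1+B_\infty$ as in \eqref{decomposizione A}, where $B_1=b\partial_x+b_x$ generates the explicit flow you wrote and $B_\infty\in OPS^{-\infty}$ is the finite-rank correction from $\Pi_\bot-\mathrm{Id}$; a Duhamel argument then controls the true flow as a smoothing perturbation of the explicit one. But as written, the proposal does not go through.
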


\begin{proof}
The proof of  \eqref{stima flusso 1 astratto} is similar to the one of Propositions A.7, A.10 and A.11 in \cite{Berti-Montalto}.
In comparison to the latter results the main difference is
that the vector field 
\eqref{definizione b egorov astratto} is of order $1$, whereas
the vector field considered in \cite{Berti-Montalto} is of order $\frac12$. 
Using \eqref{stima flusso 1 astratto} we now prove  \eqref{stima flusso astratto 2}. By \eqref{flow0}, one has that 
$$
\Phi(\tau_0, \tau, \vphi) - {\rm Id} = \int_{\tau_0}^\tau B(t, \vphi) \Phi(\tau_0, t, \vphi)\, d t\,.
$$
 Then, for any $\lambda \in \N$ with $\lambda \leq \lambda_0$
 and any $n_1, n_2 \in \R$ with $n_1 + n_2 =- \lambda  - 2$,  one has by Leibniz' rule
$$
\begin{aligned}
& \langle D \rangle^{n_1} \partial_{\vphi_m}^\lambda(\Phi(\tau_0, \tau, \vphi) - {\rm Id}) \langle D \rangle^{n_2} \\
&  = \sum_{\lambda_1 + \lambda_2 = \lambda} c_{\lambda_1, \lambda_2} \int_{\tau_0}^\tau \big( \langle D \rangle^{n_1}\partial_{\vphi_m}^{\lambda_1}B(t, \vphi) \langle D \rangle^{ n_2 + \lambda_2 + 1} \big) \big(  \langle D \rangle^{- n_2  - \lambda_2 - 1} \partial_{\vphi_m}^{\lambda_2}\Phi(\tau_0, t, \vphi) \langle D \rangle^{n_2} \big)\, d t \\
& = \sum_{\lambda_1 + \lambda_2 = \lambda} c_{\lambda_1, \lambda_2} \int_{\tau_0}^\tau \big( \langle D \rangle^{n_1}\partial_{\vphi_m}^{\lambda_1}B(t, \vphi) \langle D \rangle^{- 1 - n_1 - \lambda_1} \big) \big(  \langle D \rangle^{- n_2  - \lambda_2 - 1} \partial_{\vphi_m}^{\lambda_2}\Phi(\tau_0, t, \vphi) \langle D \rangle^{n_2} \big)\, d t
\end{aligned}
$$
where $c_{\lambda_1, \lambda_2}$ are combinatorial constants and we used that
$ n_2 + \lambda_2 + 1 =  -1 - n_1 - \lambda_1 $.
Recalling the definition \eqref{definizione b egorov astratto}
of  $B$, using Lemmata \ref{lemma stime Ck parametri}, \ref{lemma: action Sobolev}, \ref{lemma flusso ODE}-$(i)$, and  \eqref{stima flusso 1 astratto}, one has that for any 
$s \geq s_0$,
\begin{equation}\label{stima campo B nel lemma}
\begin{aligned}
\mathfrak M_{\langle D \rangle^{n_1} \partial_{\vphi_m}^{\lambda_1}B \langle D \rangle^{- 1 - n_1 - \lambda_1}}(s) & \lesssim_s |\langle D \rangle^{n_1} B \langle D \rangle^{- 1 - n_1 - \lambda_1}|_{0, s + \lambda_1, 0}^\Lipg \lesssim_{s, \lambda_1,  n_1} \| \beta\|_{s + \sigma(\lambda_1, n_1)}^\Lipg\,, \\
\mathfrak M_{ \langle D \rangle^{- 1- n_2  - \lambda_2} \partial_{\vphi_m}^{\lambda_2}\Phi(\tau_0, t, \vphi) \langle D \rangle^{n_2}}(s)& \lesssim_{s, \lambda_2,  n_1, n_2} 1 + \|\beta \|_{s + \sigma(\lambda_2, n_1, n_2)}^\Lipg \, . 
\end{aligned}
\end{equation}
Then \eqref{stima flusso astratto 2} 
follows by \eqref{stima campo B nel lemma}, 
Lemma \ref{composizione operatori tame AB} and 
 \eqref{piccolezza a partial vphi beta k D beta k}. The estimate \eqref{stima flusso astratto 2b} follows by similar arguments. 
\end{proof}

For what follows we need to study
the solutions of the characteristic ODE  
$\partial_\tau x = - b(\tau,\vphi, x)$
associated to the transport operator defined in 
\eqref{definizione b egorov astratto}.
\begin{lemma} {\bf (Characteristic flow)}
The characteristic flow $\gamma^{\tau_0, \tau}(\vphi, x) $ 
defined by 
\begin{equation}\label{flusso caratteristiche egorov 0}
\partial_\tau \gamma^{\tau_0, \tau}(\vphi, x) = - 
b\big(\tau, \vphi, \gamma^{\tau_0, \tau}(\vphi, x) \big) \, , \quad \gamma^{\tau_0, \tau_0}(\vphi, x) = x\, , 
\end{equation}
is given by 
\begin{equation}\label{flow-ex}
\gamma^{\tau_0, \tau} (\varphi, x) = x + \tau_0 \beta (\vphi, x) +
\breve \beta (\tau, \varphi, x + \tau_0 \beta (\vphi, x)) \, ,  
\end{equation}
where
$ y \mapsto  y + \breve \beta (\tau, \vphi, y) $ is the inverse diffeomorphism of
$ x \mapsto x + \tau \beta (\vphi, x) $. 
\end{lemma}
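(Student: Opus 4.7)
The plan is a direct verification: define
$$\widetilde\gamma(\tau,\vphi,x) := x + \tau_0 \beta(\vphi,x) + \breve\beta(\tau,\vphi, x + \tau_0 \beta(\vphi,x))$$
and check that it satisfies both the initial condition and the ODE \eqref{flusso caratteristiche egorov 0}; by uniqueness of solutions to the Lipschitz ODE, this forces $\gamma^{\tau_0,\tau} \equiv \widetilde\gamma$.

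First I would verify the initial condition. Setting $z := x + \tau_0\beta(\vphi,x)$, which is independent of $\tau$, the defining property of $\breve\beta$ (namely that $y \mapsto y + \breve\beta(\tau,\vphi,y)$ inverts $x \mapsto x + \tau\beta(\vphi,x)$) evaluated at $\tau = \tau_0$ gives $\breve\beta(\tau_0,\vphi,z) = -\tau_0 \beta(\vphi,x)$, so $\widetilde\gamma(\tau_0,\vphi,x) = x + \tau_0\beta - \tau_0\beta = x$.

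Next I would verify the ODE. The inversion identity $x = y + \breve\beta(\tau,\vphi,y)$ with $y = x + \tau\beta(\vphi,x)$, rewritten for an arbitrary $y$, reads
$$\breve\beta(\tau,\vphi,y) = -\tau\,\beta\bigl(\vphi,\, y + \breve\beta(\tau,\vphi,y)\bigr).$$
Plugging in $y = z$ and using $\widetilde\gamma = z + \breve\beta(\tau,\vphi,z)$, this becomes $\breve\beta(\tau,\vphi,z) = -\tau\,\beta(\vphi,\widetilde\gamma)$. Since $z$ is independent of $\tau$, differentiating in $\tau$ yields
$$\partial_\tau \widetilde\gamma = \partial_\tau \breve\beta(\tau,\vphi,z) = -\beta(\vphi,\widetilde\gamma) - \tau\,\beta_x(\vphi,\widetilde\gamma)\,\partial_\tau\widetilde\gamma,$$
and solving for $\partial_\tau\widetilde\gamma$ gives
$$\partial_\tau \widetilde\gamma = -\frac{\beta(\vphi,\widetilde\gamma)}{1 + \tau\,\beta_x(\vphi,\widetilde\gamma)} = -b(\tau,\vphi,\widetilde\gamma),$$
which is exactly \eqref{flusso caratteristiche egorov 0}.

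The only subtlety is making sure the inverse diffeomorphism $y \mapsto y + \breve\beta(\tau,\vphi,y)$ is well-defined for all $\tau \in [0,1]$, which is ensured by the smallness assumption \eqref{piccolezza a partial vphi beta k D beta k} guaranteeing $\|\tau\beta_x\|_{C^0} < 1$ uniformly in $\tau$, together with \eqref{p1-diffeo-inv}. Everything else is a straightforward chain-rule computation, so I do not expect genuine obstacles here; the lemma is essentially a bookkeeping statement giving an explicit formula for the characteristics that will be used in the Egorov computations of Section \ref{elimination x dependence highest order}.
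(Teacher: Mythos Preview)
Your proof is correct and is essentially the same direct verification as the paper's: both hinge on differentiating the inversion identity $\breve\beta(\tau,\vphi,y) = -\tau\,\beta(\vphi,\,y+\breve\beta(\tau,\vphi,y))$ to obtain the ODE. The only organizational difference is that the paper first checks the special case $\gamma^{0,\tau}(y)=y+\breve\beta(\tau,\vphi,y)$, deduces $\gamma^{\tau,0}(x)=x+\tau\beta(\vphi,x)$ as its inverse, and then invokes the flow composition rule $\gamma^{\tau_0,\tau}=\gamma^{0,\tau}\circ\gamma^{\tau_0,0}$, whereas you verify the general formula in one shot; the underlying computation is identical.
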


\begin{proof}
A direct computation proves that $ \gamma^{0,\tau} (y) = y + \breve \beta (\tau, \vphi, y) $ 
and therefore $ \gamma^{\tau,0} (x) = x + \tau \beta ( \vphi, x) $. By the composition rule
of the flow  $ \gamma^{\tau_0, \tau} = \gamma^{0, \tau} \circ \gamma^{\tau_0, 0} $  we deduce \eqref{flow-ex}. 
\end{proof}

\begin{lemma}\label{lemma flusso ODE}
There are $ \sigma, \d  > 0 $ such that,
if  $ \| \beta \|_{s_0 + \sigma}^\Lipg \leq \delta $, then 
 
\noindent
$(i)$ $\| b \|_s^\Lipg \lesssim_s \| \beta\|_{s + \sigma}^\Lipg $ for any $s \geq s_0$. 

\noindent
$(ii)$ For any $\tau_0, \tau \in [0, 1]$, $s \geq s_0 $, we have  
$\|  \gamma^{\tau_0, \tau}(\vphi, x) - x \|_s^\Lipg 
\lesssim_s \| \beta\|_{s +\sigma}^\Lipg$. 

\noindent
$(iii)$ Let $s_1 > s_0$ and assume that 
$\| \beta_j \|_{s_1 + \s} \le \delta,$ $j= 1,2$. 
	Then $ \Delta_{12} b := b(\cdot ; \beta_2) - b(\cdot; \beta_1)$ and 
        $\Delta_{12} \gamma^{\tau_0, \tau}:=  \gamma^{\tau_0, \tau}(\cdot; \beta_2) -  \gamma^{\tau_0, \tau}(\cdot; \beta_1)$  
can be estimated in terms of  $\Delta_{12} \beta:= \beta_2 - \beta_1$ as
$$
\| \Delta_{12} b\|_{s_1} \lesssim_{s_1} \| \Delta_{12} \beta\|_{s_1 + \sigma}
\, , \qquad
\| \Delta_{12} \gamma^{\tau_0, \tau} \|_{s_1} \lesssim_{s_1} \| \Delta_{12} \beta\|_{s_1 + \sigma}\, . 
$$
\end{lemma}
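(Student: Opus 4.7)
The strategy is to treat each part as a direct application of the composition and Moser tame estimates already recorded in Lemma \ref{lemma:LS norms} and Lemma \ref{Moser norme pesate}, with the smallness hypothesis on $\|\beta\|_{s_0+\sigma}^\Lipg$ used to stay inside the domain of the nonlinear maps that arise.

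For part $(i)$, I would write $b = \beta \cdot g(\tau\beta_x)$ with $g(u) := 1/(1+u)$, smooth on a neighbourhood of $0$ uniformly in $\tau \in [0,1]$. Lemma \ref{Moser norme pesate} applied to $g$ (valid once $\|\tau\beta_x\|_{s_0}^\Lipg \leq \|\beta\|_{s_0+1}^\Lipg \leq \delta$ small enough) yields $\|g(\tau\beta_x)\|_s^\Lipg \lesssim_s 1 + \|\beta\|_{s+1}^\Lipg$, and the tame product estimate \eqref{p1-pr} gives
\[
\|b\|_s^\Lipg \lesssim_s \|\beta\|_s^\Lipg \|g(\tau\beta_x)\|_{s_0}^\Lipg + \|\beta\|_{s_0}^\Lipg \|g(\tau\beta_x)\|_s^\Lipg \lesssim_s \|\beta\|_{s+1}^\Lipg,
\]
which is the claim with $\sigma \geq 1$.

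For part $(ii)$, I would use the explicit formula \eqref{flow-ex}, namely
\[
\gamma^{\tau_0,\tau}(\vphi,x) - x = \tau_0 \beta(\vphi,x) + \breve\beta(\tau,\vphi,\,x+\tau_0\beta(\vphi,x)).
\]
The inverse diffeomorphism estimate \eqref{p1-diffeo-inv} gives $\|\breve\beta(\tau,\cdot)\|_s^\Lipg \lesssim_s \|\tau\beta\|_{s+1}^\Lipg \lesssim \|\beta\|_{s+1}^\Lipg$ for each $\tau \in [0,1]$, provided $\|\beta\|_{2s_0+2}^\Lipg$ is sufficiently small (which is ensured by the hypothesis after enlarging $\sigma$). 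Applying the composition estimate \eqref{pr-comp1} with the shift $\vphi$-independent in the $\vphi$-variable and with $\beta_{\text{shift}} = \tau_0 \beta$ in the $x$-variable then yields $\|\breve\beta(\tau,\vphi,\,x+\tau_0\beta(\vphi,x))\|_s^\Lipg \lesssim_s \|\breve\beta\|_{s+1}^\Lipg + \|\beta\|_s^\Lipg \|\breve\beta\|_{s_0+2}^\Lipg \lesssim_s \|\beta\|_{s+\sigma}^\Lipg$ for a suitable $\sigma$; combined with the trivial bound $\|\tau_0\beta\|_s^\Lipg \leq \|\beta\|_s^\Lipg$ this gives the desired estimate.

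For part $(iii)$, I would differentiate in $\beta$ and use the same two identities. Writing $\Delta_{12} b = \Delta_{12}\beta \cdot g(\tau(\beta_2)_x) + \beta_1 \cdot \bigl( g(\tau(\beta_2)_x) - g(\tau(\beta_1)_x) \bigr)$ and expanding the second bracket as an integral of $g'$ between the two endpoints, the tame product estimate and Lemma \ref{Moser norme pesate} (applied to $g$ and $g'$) give the first claim. For the flow difference, the identity \eqref{flow-ex} reduces everything to controlling $\Delta_{12}\breve\beta$, which satisfies $\|\Delta_{12}\breve\beta\|_{s_1} \lesssim_{s_1} \|\Delta_{12}\beta\|_{s_1+\sigma}$ by standard implicit function arguments (writing $\breve\beta(\tau,\vphi,y;\beta) = -\tau\beta(\vphi,y+\breve\beta)$ and iterating), and a composition-difference estimate analogous to \eqref{pr-comp1}. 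The main (but routine) technical point is ensuring that the smallness thresholds on $\beta$, $\beta_1$, $\beta_2$ are uniform in $\tau_0,\tau \in [0,1]$; this is automatic because the coefficient $1+\tau\beta_x$ stays close to $1$ under the standing smallness hypothesis. No deep new idea is required: the whole proof is an orchestration of the tame calculus already established in Lemma \ref{lemma:LS norms} and Lemma \ref{Moser norme pesate}, with $\sigma$ chosen large enough (say $\sigma = 2$) to absorb the derivatives lost in the composition estimates.
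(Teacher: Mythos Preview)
Your proposal is correct and follows essentially the same approach as the paper: the paper's proof simply cites the definition of $b$ together with Lemma~\ref{Moser norme pesate} for item~$(i)$, the explicit formula \eqref{flow-ex} together with Lemma~\ref{lemma:LS norms} for item~$(ii)$, and ``similar arguments'' for item~$(iii)$. You have spelled out exactly these steps in more detail, with the same ingredients.
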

\begin{proof}
Item ($i$) follows by the definition of $b$ in \eqref{definizione b egorov astratto}
and Lemma \ref{Moser norme pesate}.
Item ($ii$) follows by \eqref{flow-ex} and 
Lemma \ref{lemma:LS norms}. Item $(iii)$ follows by similar arguments. 
\end{proof}

Now we prove the following Egorov type theorem, saying that the operator, obtained by conjugating
$  a(\vphi, x) \partial_x^m $, $m \in \Z$, 
with the time one flow $\Phi(\varphi) = \Phi(0, 1, \vphi)  $ of the transport equation \eqref{flow0}, remains a
pseudo-differential operator with a homogenous asymptotic expansion.  

\begin{proposition}\label{proposizione astratta egorov}
{\bf (Egorov)}
Let $N, \lambda_0 \in \N$, $S > s_0$ 
and assume that $\beta(\cdot; \omega), a(\cdot; \omega)$ are in ${\cal C}^\infty(\T^{\mathbb S_+} \times \T_1)$ and Lipschitz continuous with respect 
to $\omega \in  \Omega $. 
Then there exist constants $\sigma_N(\lambda_0),$ $\sigma_N > 0$, 
$\delta(S, N, \lambda_0) \in (0, 1)$, and $C_0 > 0$ such that, if 
\begin{equation}\label{smallness egorov astratto}
\| \beta\|_{s_0 + \sigma_N(\lambda_0)}^\Lipg \leq \delta \, , \quad \| a  \|_{s_0 + \sigma_N(\lambda_0)}^\Lipg \leq C_0 \, , 
\end{equation}
then the conjugated operator 
$$
{\cal P}(\vphi) := \Phi(\vphi) {\cal P}_0 (\vphi)\Phi(\vphi)^{- 1} \, , \qquad 
{\cal P}_0 :=  a(\vphi, x; \om) \partial_x^m\, , \quad m \in \Z\, ,
$$ 
is a pseudo-differential operator of order $m$ with an expansion of the form 
\be\label{expPN}
{\cal P}(\vphi) = \sum_{i = 0}^N p_{m - i} (\vphi, x; \om ) \partial_x^{m - i} + {\cal R}_N(\vphi)
\ee
with the following properties:
\begin{enumerate}
\item
The principal symbol $p_m$ of $\cal P$ is given by  
\begin{equation}\label{ordine principale esplicito egorov}
p_m(\vphi, x; \om) =  \Big( [1 + \breve \beta_y (\vphi, y; \om )]^m a(\vphi, y; \om) \Big)|_{y = x +  \beta(\vphi, x; \om)}  
\end{equation}
where 
$ y \mapsto y + \breve \beta(\vphi, y; \om ) $  denotes the inverse 
diffeomorphism of $x \mapsto x + \beta(\vphi, x; \om ) $.
\item 
For any $s \geq s_0$ and $i = 1, \ldots, N $,   
\begin{equation}\label{stima-fun}
\| p_m - a\|_s^\Lipg \, , \ \| p_{m - i}\|_s^\Lipg \lesssim_{s, N} \| \beta\|_{s + \sigma_N}^\Lipg + \| a \|_{s + \sigma_N}^\Lipg \| \beta\|_{s_0 + \sigma_N}^\Lipg\, .
\end{equation}
\item 
For any $\lambda \in \N$ with $\lambda \leq \lambda_0$, 
 $n_1, n_2 \in \N$  with $n_1 + n_2 + \lambda_0 \leq N - 1 - m  $, $k \in \mathbb S_+$,  the pseudo-differential operator $\langle D \rangle^{n_1}\partial_{\vphi_k}^\lambda {\cal R}_N(\vphi) \langle D \rangle^{n_2}$ is 
$\Lipg$-tame with a tame constant satisfying, for any $s_0 \leq s \leq S $,   
\begin{equation}\label{stima resto Egorov teo astratto}
{\mathfrak M}_{\langle D \rangle^{n_1}\partial_{\vphi_k}^\lambda {\cal R}_N(\vphi) \langle D \rangle^{n_2}}(s) \lesssim_{S, N, \lambda_0} \| \beta\|_{s + \sigma_N(\lambda_0)}^\Lipg + \| a \|_{s + \sigma_N(\lambda_0)}^\Lipg \| \beta\|_{s_0 + \sigma_N(\lambda_0)}^\Lipg\,. 
\end{equation}
\item Let $s_0 < s_1 $ 
and assume that  $\| \beta_j \|_{s_1 + \sigma_N(\lambda_0)} \leq \delta,$ 
$\| a_j\|_{s_1 + \sigma_N(\lambda_0)} \leq C_0$, $j = 1,2$. Then 
$$
\begin{aligned}
\| \Delta_{12} p_{m - i} \|_{s_1} \lesssim_{s_1, N} \| \Delta_{12} a\|_{s_1 + \sigma_N} + \| \Delta_{12} \beta\|_{s_1 + \sigma_N}, \quad i = 0, \ldots, N \, , 
\end{aligned}
$$
and, for any $\lambda \leq \lambda_0$,  $n_1, n_2 \in \N$ with $n_1 + n_2 + \lambda_0 \leq N - 1 - m$, and  $k \in \mathbb S_+$, 
$$
\| \langle D \rangle^{n_1}\partial_{\vphi_k}^\lambda \Delta_{12} {\cal R}_N(\vphi) \langle D \rangle^{n_2} \|_{{\cal B}(H^{s_1})} \lesssim_{s_1, N, n_1, n_2} \| \Delta_{12} a\|_{s_1 + \sigma_N(\lambda_0)} + \| \Delta_{12} \beta\|_{s_1 + \sigma_N(\lambda_0)}
$$
where we refer to Lemma \ref{proposition 2.40 unificata}
for the meaning of  $\Delta_{12}$.
\end{enumerate}
\end{proposition}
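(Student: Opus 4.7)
\medskip
\noindent
\textbf{Proof proposal.}
The plan is to derive the expansion \eqref{expPN} by solving a Heisenberg-type equation in the parameter $\tau$ and matching homogeneous symbols order by order. Set $\mathcal{P}(\tau,\varphi):= \Phi(0,\tau,\varphi)\,\mathcal{P}_0\,\Phi(0,\tau,\varphi)^{-1}$, so that $\mathcal{P}(\varphi)=\mathcal{P}(1,\varphi)$. By \eqref{flow0} this operator solves the Heisenberg equation
$$
\partial_\tau \mathcal{P}(\tau) = [B(\tau),\mathcal{P}(\tau)],\qquad \mathcal{P}(0)=a\,\partial_x^m.
$$
Make the pseudo-differential ansatz $\mathcal{P}(\tau)=\sum_{i=0}^N p_{m-i}(\tau,\varphi,x)\partial_x^{m-i}+\mathcal{R}_N(\tau,\varphi)$ with initial data $p_m(0)=a$, $p_{m-i}(0)=0$ for $i\geq 1$, and $\mathcal{R}_N(0)=0$. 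The projector $\Pi_\bot$ in $B$ produces only a finite-rank (hence smoothing) contribution, so modulo operators in $OPS^{-\infty}$ one may use Lemma \ref{composizione simboli omogenei} to expand $[B(\tau),p_{m-i}(\tau)\partial_x^{m-i}]$ as a homogeneous sum of orders $m-i,m-i-1,\ldots,m-N$ plus a remainder in $OPS^{m-N-1}$. Matching the coefficient of $\partial_x^{m-i}$ on both sides yields the triangular system of transport equations
$$
\partial_\tau p_{m-i} \, - \, b\,\partial_x p_{m-i} \, + \, (m-i)\,b_x\, p_{m-i} \, = \, \mathcal{F}_i\bigl(p_m,\ldots,p_{m-i+1};\,b\bigr),\qquad p_{m-i}(0)=\delta_{i,0}\,a,
$$
where $\mathcal{F}_0\equiv 0$ and each $\mathcal{F}_i$ is a linear differential polynomial in the previously determined symbols with coefficients depending polynomially on derivatives of $b$.

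Each equation is solved by the method of characteristics along the flow $\gamma^{0,\tau}$ of \eqref{flusso caratteristiche egorov 0}, whose explicit form is given by \eqref{flow-ex}. For the leading symbol one obtains
$$
p_m\bigl(\tau,\varphi,\gamma^{0,\tau}(\varphi,x)\bigr) \, = \, a(\varphi,x)\exp\Bigl(-m\int_0^\tau b_x\bigl(s,\varphi,\gamma^{0,s}(\varphi,x)\bigr)\,ds\Bigr).
$$
Evaluating at $\tau=1$, using that $x\mapsto \gamma^{0,1}(\varphi,x)$ is the inverse diffeomorphism of $y\mapsto y+\beta(\varphi,y)$ and that the chain rule combined with $b(s,\varphi,\cdot)=\beta/(1+s\beta_x)$ implies $\exp\bigl(-m\int_0^1 b_x(s,\varphi,\gamma^{0,s}(x))\,ds\bigr)=(1+\breve\beta_y(\varphi,y))^m|_{y=x+\beta}$ after the change of variable $y=\gamma^{0,1}(\varphi,x)$, gives the explicit formula \eqref{ordine principale esplicito egorov}. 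The tame estimates \eqref{stima-fun} for $p_m$ follow by Lemma \ref{lemma:LS norms}-$(ii)$ and Lemma \ref{Moser norme pesate} applied to the composition with the diffeomorphism, together with \eqref{p1-diffeo-inv} to control $\breve\beta$ in terms of $\beta$. The lower-order symbols $p_{m-i}$, $i\geq 1$, vanish at $\tau=0$ and their Duhamel representation along the characteristics immediately yields \eqref{stima-fun} inductively.

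For the remainder, a direct substitution shows that $\mathcal{R}_N(\tau)$ satisfies the linear equation
$$
\partial_\tau \mathcal{R}_N(\tau) \, = \, [B(\tau),\mathcal{R}_N(\tau)] \, + \, \mathcal{S}_N(\tau),\qquad \mathcal{R}_N(0)=0,
$$
where $\mathcal{S}_N(\tau)\in OPS^{m-N-1}$ collects the symbolic tail in Lemma \ref{composizione simboli omogenei} together with the $\Pi_\bot$-corrections; by the quantitative bound in that lemma, $\mathcal{S}_N$ satisfies tame estimates of the form $\|\mathcal{S}_N\|_{m-N-1,s,\alpha}^{\Lipg}\lesssim_{s,N,\alpha}\|\beta\|_{s+\sigma_N}^{\Lipg}+\|a\|_{s+\sigma_N}^{\Lipg}\|\beta\|_{s_0+\sigma_N}^{\Lipg}$. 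By Duhamel's principle,
$$
\mathcal{R}_N(\tau) \, = \, \int_0^\tau \Phi(s,\tau,\varphi)\,\mathcal{S}_N(s)\,\Phi(s,\tau,\varphi)^{-1}\,ds,
$$
and \eqref{stima resto Egorov teo astratto} then follows by applying Leibniz' rule to $\langle D\rangle^{n_1}\partial_{\varphi_k}^\lambda\Phi(s,\tau)\mathcal{S}_N(s)\Phi(s,\tau)^{-1}\langle D\rangle^{n_2}$, inserting factors of $\langle D\rangle^{\pm(n_1+|\lambda_1|+1)}$ to split each piece into a product of $\Lipg$-tame operators controlled by Lemma \ref{proposition 2.40 unificata} (whose estimates \eqref{stima flusso 1 astratto}--\eqref{stima flusso astratto 2} are designed precisely for this purpose) and of $\mathcal{S}_N$ controlled by Lemma \ref{lemma: action Sobolev}, and then combining them via Lemma \ref{composizione operatori tame AB}. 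The arithmetic constraint $n_1+n_2+\lambda_0\leq N-1-m$ is exactly what is needed so that each factor receives enough smoothing to absorb the $\Dm{}$'s and the $\lambda_0$ derivatives in $\varphi$.

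The Lipschitz-in-$\omega$ estimates are absorbed into $\|\cdot\|^{\Lipg}$ throughout. The difference estimates in item 4 are obtained by the same procedure applied to $\Delta_{12}p_{m-i}$ and $\Delta_{12}\mathcal{R}_N$: the former satisfy the transport equations above with sources $\Delta_{12}\mathcal{F}_i$, whose estimates reduce to Lemma \ref{lemma flusso ODE}-$(iii)$ and the composition estimates; the latter solves a linear equation whose source and driving flow are controlled via \eqref{stima flusso astratto 2b}. The main difficulty will be the bookkeeping in the Duhamel step for $\mathcal{R}_N$: one must propagate the tame constants through a composition of three operators, each with potentially many $\varphi$-derivatives falling on it, while keeping the loss in $s$-regularity linear (not multiplicative) in $\|\beta\|^{\Lipg}_{s+\sigma_N(\lambda_0)}$; this is achieved by always putting the highest-index factor in $s$ and the remaining factors in $s_1+\sigma\leq s_0+\sigma_N(\lambda_0)$ via Lemma \ref{composizione operatori tame AB}, which is the reason why $\sigma_N(\lambda_0)$ must be chosen depending on $\lambda_0$.
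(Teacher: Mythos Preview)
Your proposal is correct and follows essentially the same approach as the paper: set up the Heisenberg equation, make the homogeneous ansatz $\mathcal{P}_N(\tau)=\sum_{i=0}^N p_{m-i}(\tau)\partial_x^{m-i}$, derive the triangular system of transport equations via Lemma~\ref{composizione simboli omogenei} (treating the $\Pi_\bot$-correction as a smoothing term), solve by characteristics along the flow \eqref{flow-ex} to obtain \eqref{ordine principale esplicito egorov} and the inductive estimates \eqref{stima-fun}, and then represent the remainder by the Duhamel integral $\mathcal{R}_N(\tau)=\int_0^\tau\Phi(s,\tau)\mathcal{S}_N(s)\Phi(\tau,s)\,ds$ which is estimated by sandwiching with powers of $\langle D\rangle$ and applying Lemma~\ref{proposition 2.40 unificata} together with Lemma~\ref{composizione operatori tame AB}. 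The paper carries out exactly these steps, including the same splitting of $B$ into a first-order part and a smoothing part, the same explicit characteristic formula \eqref{ninja 2} for $p_m$, and the same three-factor decomposition for the remainder estimate.
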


\begin{proof}
The orthogonal projector $ \Pi_\bot $ is a Fourier multiplier of order $0$,
$ \Pi_\bot = {\rm Op} ( \chi_\bot (\xi) ) $, 
where $\chi_\bot$ is a $ {\cal C}^\infty(\R, \R)$ cut-off function which is equal to $ 1 $ on a 
neighborhood of $ \Sbot $ and vanishes 
in a  neighborhood of $ {\mathbb S} \cup \{0\} $. Then 
we decompose  the operator 
$B(\tau, \vphi) = \Pi_\bot (b(\tau, \vphi, x) \pa_x + b_x (\tau, \vphi, x))$ as
\begin{equation}\label{decomposizione A}
\begin{aligned}
& B(\tau, \vphi)  
= B_1(\tau, \vphi) + B_\infty(\tau, \vphi)\,,  \\
& B_1(\tau, \vphi) :=  
b(\tau, \vphi, x) \pa_x + b_x (\tau, \vphi, x)\,, \quad 
B_\infty(\tau, \vphi) := {\rm Op}(b_\infty(\tau, \vphi, x, \xi)) \in OPS^{- \infty}
\end{aligned}
\end{equation}
where  for some $\sigma > 0$,  $B_\infty$ satisfies, 
for any $s, m  \geq 0$ and $\alpha \in \N $,  the estimate 
\begin{equation}\label{stima a infty}
|B_\infty|_{- m, s, \alpha}^\Lipg \lesssim_{m, s, \alpha} \| \beta\|_{s + \sigma}^\Lipg \, .
\end{equation}
The conjugated operator 
${\cal P}(\tau, \vphi) := \Phi(\tau, \vphi) {\cal P}_0(\vphi) \Phi(\tau, \vphi)^{- 1}$ solves 
the Heisenberg equation 
\begin{equation}\label{Heisenberg sezione Egorov}
\partial_\tau {\cal P}(\tau, \vphi) =  [B(\tau, \vphi), {\cal P}(\tau, \vphi)]\, ,  \quad 
{\cal P}(0, \vphi) =  {\cal P}_0(\vphi) = 
a(\varphi, x; \omega) \partial_x^m  \, .
\end{equation}
We look for an approximate solution of \eqref{Heisenberg sezione Egorov}  of the form 
\begin{equation}\label{definizione cal PN tau}
{\cal P}_N(\tau, \vphi) := \sum_{i = 0}^N p_{m - i}(\tau, \vphi, x)  \partial_x^{m - i}  
\end{equation}
for suitable functions $ p_{m - i}(\tau, \vphi, x)$ to be determined. 
By \eqref{decomposizione A}   
\begin{equation}\label{sardine 0}
[B(\tau, \vphi), {\cal P}_N(\tau, \vphi)] = [B_1(\tau, \vphi), {\cal P}_N(\tau, \vphi)] + [B_\infty(\tau, \vphi), {\cal P}_N(\tau, \vphi)] 
\end{equation}
where  $[ B_\infty(\tau, \vphi), {\cal P}_N(\tau, \vphi)] $ is in $ OPS^{- \infty}$, and 
$$
[B_1(\tau, \vphi), {\cal P}_N(\tau, \vphi)] = \sum_{i = 0}^N 
\big[ b \partial_x + b_x  \,,\, p_{m - i} \partial_x^{m - i} \big]\,.
$$
By  Lemma \ref{composizione simboli omogenei}, one has for any $ i = 0, \ldots, N$, 
$$
\big[ b  \partial_x + b_x, p_{m - i} \partial_x^{m - i} \big] = 
\big( b (p_{m - i})_x - (m - i) b_x p_{m - i} \big) \partial_x^{m - i} + \sum_{j = 1}^{N - i} g_{j}(b, p_{m -i}) \partial_x^{m - i - j} + {\cal R}_N(b, p_{m - i}) \,
$$
where the functions $g_j(b, p_{m - i}) := g_j(b, p_{m - i})(\tau, \vphi, x)$, 
$j = 0, \ldots, N - i$, and the remainders ${\cal R}_N(b, p_{m - i})$ 
can be estimated as follows:
there exists $\sigma_N := \sigma_N(m) > 0$ so that for any $s \geq s_0$,
 (cf. Lemma \ref{lemma flusso ODE}-$(i)$)
\begin{equation}\label{gj a p m - i formula}
\| g_j (b, p_{m - i}) \|_s^\Lipg \lesssim_{m, N, s}  \| \beta \|_{s + \sigma_N}^\Lipg \| p_{m - i}\|_{s_0 + \sigma_N}^\Lipg +  
\| \beta \|_{s_0 + \sigma_N}^\Lipg \| p_{m - i}\|_{s + \sigma_N}^\Lipg \, ,  
\end{equation} 
 and for any $s \geq s_0$ and $\alpha \in \N $ 
 (cf. Lemma \ref{composizione simboli omogenei}-($ii$))
\begin{equation}\label{stima cal RN b p m - i}
|{\cal R}_N(b, p_{m - i})|_{m - N - 1, s, \alpha}^\Lipg \lesssim_{m, N, s, \alpha} \| \beta \|_{s + \sigma_N}^\Lipg \| p_{m - i}\|_{s_0 + \sigma_N}^\Lipg + \| \beta \|_{s_0 + \sigma_N}^\Lipg \| p_{m - i}\|_{s + \sigma_N}^\Lipg\,. 
\end{equation}
Adding up the expansions for $\big[ b  \partial_x + b_x, p_{m - i} \partial_x^{m - i} \big]$, $0 \le i \le N$, yields
\begin{align}
\big[ B_1(\tau, \vphi), {\cal P}_N(\tau, \vphi) \big] 
& = \sum_{i = 0}^N \big( b (p_{m - i})_x - (m - i) b_x p_{m - i} \big)\partial_x^{m - i} + 
\sum_{i = 0}^N \sum_{j = 1}^{N - i}  g_{j}(b, p_{m -i}) \partial_x^{m - i - j} + 
\sum_{i = 0}^N {\cal R}_N(b, p_{m - i}) \nonumber\\
& =  \sum_{i = 0}^N 
\big( b (p_{m - i})_x - (m - i) b_x p_{m - i} \big) \partial_x^{m - i} + \sum_{k = 1}^N \sum_{j = 1}^k g_j (b, p_{m - k + j}) \partial_x^{m - k } + \sum_{i = 0}^N {\cal R}_N(b, p_{m - i}) \nonumber\\ 
& = \big( b (p_m)_x - m b_x p_m \big)  \partial_x^m + \sum_{i = 1}^N 
\big( b (p_{m - i})_x - (m - i) b_x p_{m - i} + 
\widetilde g_{ i} \big)  \partial_x^{m - i} + {\cal Q}_N \label{calcolo commutatore cal PN tau}
\end{align}
where,  for any $i = 1, \ldots, N$, 
$ \widetilde g_{ i} := \sum_{j= 1}^i g_j(b, p_{m - i + j}) $ and 
$ {\cal Q}_N := \sum_{i = 0}^N {\cal R}_N(b, p_{m - i}) 
\in OPS^{m - N - 1} $. 
Defining for any $s \geq 0$, 
\begin{equation}\label{M s p m - i}
\mathtt M_{< i}(s)  := {\rm max}\{ \| p_{m - k}\|_s^\Lipg,  k = 0, \ldots, i - 1  \} \, ,
\ \mathtt M(s)  := {\rm max} \{ \| p_{m - i}\|_s^\Lipg,  i = 0, \ldots, N \} \, ,
\end{equation}
we deduce from \eqref{gj a p m - i formula} and \eqref{stima cal RN b p m - i} 
that
for any $s \geq s_0$, $\alpha \in \N$, $i = 0, \ldots, N$,  
\begin{equation}\label{stima g tilde i cal QN egorov}
\begin{aligned}
& \| \widetilde g_i\|_s^\Lipg \lesssim_{s, N} \mathtt M_{< i}(s + \sigma_N) \| \beta \|_{s_0 + \sigma_N}^\Lipg + \mathtt M_{< i}(s_0 + \sigma_N) \| \beta \|_{s + \sigma_N}^\Lipg  \\
&  |{\cal Q}_N|_{m - N - 1, s, \alpha}^\Lipg \lesssim_{s, N} \mathtt M(s + \sigma_N) \| \beta \|_{s_0 + \sigma_N}^\Lipg + \mathtt M(s_0 + \sigma_N) \| \beta \|_{s + \sigma_N}^\Lipg\, .  
\end{aligned}
\end{equation}
By \eqref{definizione cal PN tau}, \eqref{sardine 0}, and 
 \eqref{calcolo commutatore cal PN tau}   
the operator ${\cal P}_N(\tau, \vphi)$ solves the {\it approximated} Heisenberg equation 
$$
\partial_\tau {\cal P}_N(\tau, \vphi) = [B(\tau, \vphi), {\cal P}_N(\tau, \vphi)] + OPS^{m - N - 1} \, , 
$$
if the functions $p_{m - i}$ solve the transport equations  
\begin{equation}\label{acqua bella 0}
\begin{aligned}
& \partial_\tau p_m = b  (p_m)_x - m b_x p_m\,, \\
& \partial_\tau p_{m - i} = b  (p_{m - i})_x - (m - i) b_x p_{m - i} + 
\widetilde g_{ i} \, , \quad i = 1, \ldots, N\,. 
\end{aligned}
\end{equation}
Note that, since $\widetilde g_i$ only 
depends on $p_{m - i + 1}, \ldots, p_{m }$, 
we can solve \eqref{acqua bella 0} inductively. 
\\[1mm]
{\sc Determination of $p_m$.}  We 
solve the first equation in \eqref{acqua bella 0},
$$
\partial_\tau p_m(\tau, \vphi, x) = b(\tau, \vphi, x) \partial_x p_m (\tau, \vphi, x) - m b_x(\tau, \vphi,  x) p_m(\tau, \vphi,  x) \, , \quad p_m(0, \vphi,  x) = a(\vphi, x)\,. 
$$
By  the  method of characteristics we deduce that 
\begin{equation}\label{ninja 0}
p_m(\tau, \vphi, \gamma^{0, \tau}(\vphi, x)) = {\rm exp}\Big( - m \int_0^\tau b_x(t, \vphi,  \gamma^{0, t}(\vphi, x))\, d t \Big) a(\vphi, x)
\end{equation}
where $\gamma^{0, \tau}(\vphi, x)$ is given by \eqref{flow-ex}.
Differentiating the equation \eqref{flusso caratteristiche egorov 0} with respect to the initial datum $x$, we get 
$$
\partial_\tau (\partial_x \gamma^{\tau_0, \tau}(x)) = - b_x (\tau, \vphi, \gamma^{\tau_0, \tau}(x)) \partial_x \gamma^{\tau_0, \tau}(x)\,, \quad \partial_x \gamma^{\tau_0, \tau_0}(x) = 1 \, , 
$$
implying that 
\begin{equation}\label{ninja 1}
\partial_x \gamma^{\tau_0, \tau}(\vphi, x)= {\rm exp}\Big(- \int_{\tau_0}^\tau b_x(t, \vphi,  \gamma^{\tau_0, t}(\vphi, x))\, d t \Big)\,.
\end{equation}
From \eqref{ninja 0} and \eqref{ninja 1} we infer that
\begin{equation}\label{ninja 2}
p_m(\tau,\vphi,  y) = \Big( [\partial_x \gamma^{0, \tau}(\vphi, x)]^m a(\vphi, x) \Big)|_{x = \gamma^{\tau, 0}(\vphi, y)}\,. 
\end{equation}
Evaluating the latter identity at $\tau = 1$ and using \eqref{flow-ex}, we obtain 
 \eqref{ordine principale esplicito egorov}. 
\\[1mm]
{\sc Inductive determination of $p_{m - i}$.} 
For $i = 1, \ldots, N $, we solve the inhomogeneous transport equation, 
$$
\partial_\tau p_{m - i} = b \partial_x p_{m - i} - (m - i) 
b_x p_{m - i} + \widetilde g_{ i} \, , \qquad p_{m - i}(0, \vphi,  x) = 0\,. 
$$
By the  method of characteristics one has 
\begin{equation}\label{acqua bella 4}
p_{m - i}(\tau , \vphi,  y) =  \int_0^\tau {\rm exp}\Big(- (m - i) \int_t^\tau b_x(s,\vphi,  \gamma^{\tau , s}(\vphi, y))\, d s  \Big) 
\widetilde g_{ i}(t ,\vphi,  \gamma^{\tau , t}(\vphi, y))\, d t\,.
\end{equation}
The functions  $ p_{m - i}(\vphi, y) $ in the expansion
 \eqref{expPN} are
then given by $ p_{m - i}(\vphi, y) := p_{m -i}(1,\vphi,  y)$.

\begin{lemma}\label{lem:claim}
There are $\sigma_N^{(N)} > \sigma_N^{(N - 1)} > \ldots > \sigma_N^{(0)} > 0$ such that, for any $i \in \{ 1, \ldots, N \}$, $ \tau \in [0,1] $, $s \geq s_0 $, 
\begin{equation}\label{stima p m - i}
\begin{aligned}
\| p_m(\tau, \cdot) - a\|_s^\Lipg & \lesssim_s \| \beta\|_{s + \sigma_N^{(0)}}^\Lipg +    \| a \|_{s + \sigma_N^{(0)}}^\Lipg \| \beta\|_{s_0 + \sigma_N^{(0)}}^\Lipg    \,, \\
\| p_{m - i}(\tau, \cdot)\|_s^\Lipg & \lesssim_s \| \beta \|_{s + \sigma_N^{(i)}}^\Lipg  +  \| a \|_{s + \sigma_N^{(i)}}^\Lipg \| \beta \|_{s_0 + \sigma_N^{(i)}} \,. 
\end{aligned}
\end{equation}
\end{lemma}

\begin{proof} 
We argue by induction. First we prove the claimed estimate for 
$p_m - a$ with $p_m$ given by \eqref{ninja 2}.
Recall that 
$\gamma^{0, \tau}(\vphi, x) = x + \breve \beta (\tau, \vphi, x) $
and $\gamma^{\tau, 0}(\vphi, y) = y + \tau \beta ( \vphi, y)$ 
(cf. \eqref{flow-ex}). Since
$a( \varphi, y + \tau \beta(\varphi, y) )  - a(\varphi, y) 
= \int_0^\tau a_x(\varphi, y + t \beta(\varphi, y)) \beta(\varphi, y) dt$,
the claimed estimate for $p_m$ then follows by
Lemmata \ref{lemma:LS norms}, \ref{lemma flusso ODE} and 
assumption \eqref{smallness egorov astratto}.
Now assume that for any $k \in \{ 1, \ldots, i - 1 \}$, $1 \le i \le N$,
the function $p_{m - k}$, given by  \eqref{acqua bella 4}, satisfies the estimates  \eqref{stima p m - i}. 
The ones for $p_{m - i}$ then follow  by   
Lemmata \ref{lemma:LS norms}, \ref{Moser norme pesate}, \ref{lemma flusso ODE},  \eqref{stima g tilde i cal QN egorov}, 
\eqref{M s p m - i}, and \eqref{smallness egorov astratto}.  
 \end{proof}

Lemma \ref{lem:claim} proves \eqref{stima-fun}. Furthermore, 
in view of the definition  
\eqref{definizione cal PN tau} of ${\cal P}_N(\tau, \vphi)$, 
it follows from  \eqref{stima p m - i}, 
 Lemma \ref{lemma stime Ck parametri}, \eqref{norma pseudo moltiplicazione}
  and \eqref{Norm Fourier multiplier}  that
   for any $s \geq s_0$, $\alpha \in \N $,  
\begin{equation}\label{stima simbolo cal PN}
|{\cal P}_N(\tau, \vphi)  |_{m, s, \alpha}^\Lipg \lesssim_{m, s, N, \alpha} \| a \|_{s }^\Lipg +  \| \beta\|_{s + \sigma_N^{(N)}}^\Lipg + \| a \|_{s + \sigma_N^{(N)}}^\Lipg \| \beta\|_{s_0 + \sigma_N^{(N)}}^\Lipg   \,. 
\end{equation}
By \eqref{sardine 0}, \eqref{calcolo commutatore cal PN tau},
and \eqref{acqua bella 0}
we deduce that ${\cal P}_N(\tau, \vphi)$ solves 
\begin{equation}\label{equazione risolta da cal PN tau}
\begin{aligned}
& \partial_\tau {\cal P}_N(\tau, \vphi) = [B(\tau, \vphi), {\cal P}_N(\tau, \vphi)] - {\cal Q}_N^{(1)}(\tau, \vphi)\,,  \quad {\cal P}_N(0, \vphi) = a \pa_x^m \, , 
\\
& 
{\cal Q}_N^{(1)}(\tau, \vphi) := {\cal Q}_N(\tau, \vphi) + [B_\infty(\tau, \vphi), {\cal P}_N(\tau, \vphi)] \in  OPS^{m - N - 1} \,. 
\end{aligned}
\end{equation}
We now estimate the difference between ${\cal P}_N(\tau)$ and ${\cal P}(\tau)$. 
\begin{lemma}
The operator $ {\cal R}_N(\tau, \vphi) := {\cal P}(\tau, \vphi) 
- {\cal P}_N(\tau, \vphi)$ is given by
\begin{equation}\label{espansione finalissima cal P}
 {\cal R}_N(\tau, \vphi)   = \int_0^\tau \Phi(\eta, \tau, \vphi) {\cal Q}_N^{(1)}(\eta, \vphi) \Phi(\tau, \eta, \vphi)\, d \eta\,.
\end{equation}
\end{lemma}

\begin{proof}
One writes  
\begin{equation}\label{cal PN - cal P tau}
{\cal P}_N(\tau, \vphi) - {\cal P}(\tau, \vphi) = {\cal V}_N(\tau, \vphi) \Phi(\tau, \vphi)^{- 1}\,, \quad {\cal V}_N(\tau, \vphi) := {\cal P}_N(\tau, \vphi) \Phi(\tau, \vphi) - \Phi(\tau, \vphi){\cal P}_0(\vphi) \, , 
\end{equation}
and a  direct calculation shows that  ${\cal V}_N(\tau)$ solves 
$$
\partial_\tau {\cal V}_N (\tau, \vphi) = B(\tau, \vphi){\cal V}_N(\tau, \vphi) - {\cal Q}_N^{(1)}(\tau, \vphi) \Phi(\tau, \vphi)\,,  \quad  {\cal V}_N(0, \vphi )  = 0\,. 
$$
Hence, by variation of the constants, 
$ {\cal V}_N(\tau, \vphi ) = - \int_0^\tau \Phi(\tau, \vphi) \Phi(\eta, \vphi)^{- 1} {\cal Q}_N^{(1)}(\eta, \vphi) \Phi(\eta, \vphi)\, d \eta $
and,  by \eqref{cal PN - cal P tau} and  \eqref{proprieta elementare flusso}, we deduce \eqref{espansione finalissima cal P}.
\end{proof}

\noindent
Next we prove the estimate \eqref{stima resto Egorov teo astratto} 
of Proposition \ref{proposizione astratta egorov}  of $ {\cal R}_N(\tau, \vphi) $, given by \eqref{espansione finalissima cal P}.
First we estimate ${\cal Q}_N^{(1)} \in OPS^{m - N - 1}$,
defined in \eqref{equazione risolta da cal PN tau}.
 The estimate of ${\cal Q}_N$, obtained from
\eqref{stima g tilde i cal QN egorov}, \eqref{M s p m - i},
\eqref{stima p m - i} ,
and the one of $[B_\infty(\tau, \vphi), {\cal P}_N(\tau, \vphi)],$
obtained from \eqref{stima a infty}, \eqref{stima simbolo cal PN},
Lemma \ref{lemma tame norma commutatore},
yield that there exists a constant $\aleph_N > 0 $ so that 
for any $s \geq s_0$, $\alpha \in \N $, 
\begin{equation}\label{stima finale cal QN}
|{\cal Q}_N^{(1)}(\eta, \vphi) |_{m - N - 1, s, \alpha}^\Lipg \lesssim_{m, s, \alpha, N} \| \beta \|_{s + \aleph_N}^\Lipg +   \| a \|_{s + \aleph_N}^\Lipg \| \beta \|_{s_0 + \aleph_N}^\Lipg \, .
\end{equation} 
Let $\lambda_0, n_1, n_2 \in \N$ with $\lambda \leq \lambda_0$ and $n_1 + n_2 + \lambda_0 + m \leq N - 1$, $k \in \mathbb S_+$. 
In view of the definition \eqref{espansione finalissima cal P} of
$ {\cal R}_N(\tau, \vphi) $,
the claimed estimate of  
$\langle D \rangle^{n_1}\partial_{\vphi_k}^\lambda{\cal R}_N(\tau, \vphi)\langle D \rangle^{n_2}$ will follow from corresponding ones of  
$ \langle D \rangle^{n_1}\partial_{\vphi_k}^{\lambda_1}\Phi(\eta, \tau, \vphi) \partial_{\vphi_k}^{\lambda_2}{\cal Q}_N^{(1)}(\eta, \vphi)  \partial_{\vphi_k}^{\lambda_3}\Phi(\tau, \eta, \vphi) \langle D \rangle^{n_2}$
($\tau, \eta \in [0, 1]$ and $\lambda_1 + \lambda_2 + \lambda_3 = \lambda$) 
which we write as  
$$  \Big( \! \langle D \rangle^{n_1}\partial_{\vphi_k}^{\lambda_1}\Phi(\eta, \tau, \vphi) \langle D \rangle^{- n_1 - \lambda_1 - 1 } \! \Big) \! \Big(\! \langle D \rangle^{n_1 +  \lambda_1 + 1} \partial_{\vphi_k}^{\lambda_2}{\cal Q}_N^{(1)}(\eta, \vphi) \langle D \rangle^{n_2 +  \lambda_3 + 1} \! \Big) \! \Big(  \! \langle D \rangle^{- n_2 -   \lambda_3 - 1 } \partial_{\vphi_k}^{\lambda_3}\Phi(\tau, \eta, \vphi) \langle D \rangle^{n_2}  \Big). 
$$
Then, we use Lemma \ref{proposition 2.40 unificata} to estimate the tame constants of the operators $\langle D \rangle^{n_1}\partial_{\vphi_k}^{\lambda_1}\Phi(\eta, \tau, \vphi) \langle D \rangle^{- n_1 - \lambda_1 - 1 }$, $\langle D \rangle^{- n_2 -  \lambda_3 - 1 } \partial_{\vphi_k}^{\lambda_3}\Phi(\tau, \eta, \vphi) \langle D \rangle^{n_2} $, the estimates \eqref{stima finale cal QN}, \eqref{Norm Fourier multiplier} and Lemmata \ref{lemma stime Ck parametri}, \ref{lemma: action Sobolev} to estimate the tame constant of
 $\langle D \rangle^{n_1 +  \lambda_1 + 1} \partial_{\vphi_k}^{\lambda_2}{\cal Q}_N^{(1)}(\eta, \vphi) \langle D \rangle^{n_2 +  \lambda_3 + 1}$ and Lemma \ref{composizione operatori tame AB} together with  
 the assumption \eqref{smallness egorov astratto}, to estimate the tame constant of the composition. The  bound \eqref{stima resto Egorov teo astratto} is finally proved. 
 
 Item $4$ of Proposition \ref{proposizione astratta egorov} 
 can be shown by similar arguments. This completes the proof of the latter.
\end{proof}

In the sequel we also need to study the operator obtained by conjugating
$ \omega \cdot \partial_\vphi$ 
with the time one flow $\Phi(\varphi) = \Phi(0, 1, \vphi)  $ of the transport equation \eqref{flow0}.
Here we analyze the operator 
$\Phi(\vphi )\circ  \omega \cdot \partial_\vphi (\Phi(\vphi)^{- 1})$,
which turns out to be a pseudo-differential operator of order one  
with an expansion in decreasing symbols.  
\begin{proposition}\label{proposizione astratta egorov 2}
{\bf (Conjugation of $\omega \cdot \partial_\vphi   $)}
Let $N, \lambda_0 \in \N$, $S > s_0$ and assume that $\beta(\cdot; \omega)$
is in ${\cal C}^\infty(\T^{\mathbb S_+} \times \T_1)$ and Lipschitz continuous with respect to $\omega \in  \Omega $.
Then there exist constants $\sigma_N(\lambda_0), \sigma_N > 0$, $\delta(S, N, \lambda_0) \in (0, 1)$, $C_0 > 0$ so that, if 
\begin{equation}\label{smallness egorov astratto 2}
\| \beta\|_{s_0 + \sigma_N(\lambda_0)}^\Lipg \leq \delta \, , 
\end{equation}
then ${\cal P}(\vphi) := \Phi(\vphi) \circ 
\omega \cdot \partial_\vphi ( \Phi(\vphi)^{- 1})$ 
is a pseudo-differential operator of order 1 with an expansion of the form
$$
{\cal P}(\vphi) = \sum_{i = 0}^N p_{1 - i} (\vphi, x ; \om) \partial_x^{1 - i} + {\cal R}_N(\vphi)
$$
with the following properties:
\begin{enumerate} 
\item For any
$i = 0, \ldots, N$ and $s \geq s_0$, 
$ \| p_{1 - i}\|_s^\Lipg \lesssim_{s, N} \| \beta\|_{s + \sigma_N}^\Lipg $. 
\item 
For any $\lambda \in \N$ with $\lambda \leq \lambda_0$, for any $n_1, n_2 \in \N$  with $n_1 + n_2 + \lambda_0 \leq N - 2$, 
and for any $k \in \mathbb S_+ $,  
the pseudo-differential operator $\langle D \rangle^{n_1}\partial_{\vphi_k}^\lambda {\cal R}_N(\vphi) \langle D \rangle^{n_2}$ is 
$\Lipg$-tame  with a tame constant satisfying, for any $s_0 \leq s \leq S $,   
$$
{\mathfrak M}_{\langle D \rangle^{n_1}\partial_{\vphi_k}^\lambda {\cal R}_N(\vphi) \langle D \rangle^{n_2}}(s) \lesssim_{S, N, \lambda_0} \| \beta\|_{s + \sigma_N(\lambda_0)}^\Lipg\,. 
$$
\item Let $s_0 < s_1 < S$ and assume that  
$\| \beta_i \|_{s_1 + \sigma_N(\lambda_0)} \leq \delta$, $i = 1,2$. Then 
$$
\begin{aligned}
\| \Delta_{12} p_{1 - i} \|_{s_1} \lesssim_{s_1, N} \| \Delta_{12} \beta\|_{s_1 + \sigma_N}, \quad i = 0, \ldots, N \, , 
\end{aligned}
$$
and, for any $\lambda \leq \lambda_0$, $n_1, n_2 \in \N$ with $n_1 + n_2 + \lambda_0 \leq N - 2$, and $k \in \mathbb S_+$
$$
\| \langle D \rangle^{n_1}\partial_{\vphi_k}^\lambda \Delta_{12} {\cal R}_N(\vphi) \langle D \rangle^{n_2} \|_{{\cal B}(H^{s_1})} \lesssim_{s_1, N, n_1, n_2}  \| \Delta_{12} \beta\|_{s_1 + \sigma_N(\lambda_0)}
$$
where we refer to Lemma \ref{proposition 2.40 unificata} for 
the meaning of  $\Delta_{12}$.
\end{enumerate}
\end{proposition}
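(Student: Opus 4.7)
The proof follows closely the scheme of Proposition \ref{proposizione astratta egorov}, with the central modification being that the operator to be analyzed now satisfies an \emph{inhomogeneous} Heisenberg-type equation. Differentiating $\Phi(\tau,\vphi)\Phi(\tau,\vphi)^{-1}={\rm Id}$ in $\tau$ and in $\vphi$ shows that
\[
{\cal P}(\tau,\vphi):=\Phi(\tau,\vphi)\circ\bigl(\om\cdot\pa_\vphi\Phi(\tau,\vphi)^{-1}\bigr)=-\bigl(\om\cdot\pa_\vphi\Phi(\tau,\vphi)\bigr)\Phi(\tau,\vphi)^{-1},
\]
and using $\pa_\tau\Phi=B\Phi$, $\pa_\tau\Phi^{-1}=-\Phi^{-1}B$ a direct computation gives
\[
\pa_\tau{\cal P}(\tau,\vphi)=[B(\tau,\vphi),{\cal P}(\tau,\vphi)]-\om\cdot\pa_\vphi B(\tau,\vphi),\qquad {\cal P}(0,\vphi)=0.
\]
The forcing $\om\cdot\pa_\vphi B$ is a first order operator with symbol controlled by $\|\beta\|_{s+\sigma}^\Lipg$; this is precisely what makes ${\cal P}$ of order $1$, and it is also the only qualitatively new feature compared to Proposition \ref{proposizione astratta egorov}.

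Following the strategy of that proposition, I split $B=B_1+B_\infty$ as in \eqref{decomposizione A} with $B_1=b\pa_x+b_x$ and $B_\infty\in OPS^{-\infty}$ satisfying \eqref{stima a infty}, and search for an approximate solution of the form ${\cal P}_N(\tau,\vphi)=\sum_{i=0}^N p_{1-i}(\tau,\vphi,x)\pa_x^{1-i}$. Expanding $[B_1,p_{1-i}\pa_x^{1-i}]$ via Lemma \ref{composizione simboli omogenei} and matching the coefficient of $\pa_x^{1-i}$ in $\pa_\tau{\cal P}_N=[B,{\cal P}_N]-\om\cdot\pa_\vphi B$ modulo $OPS^{-N}$ produces, for each $i=0,\dots,N$, an inhomogeneous transport equation of the form
\[
\pa_\tau p_{1-i}=b\,(p_{1-i})_x-(1-i)b_x\,p_{1-i}+\widetilde g_i-\delta_{i,0}\,\om\cdot\pa_\vphi b,\qquad p_{1-i}(0,\vphi,x)=0,
\]
where $\widetilde g_i$ collects lower-order contributions that depend only on the previously determined $p_{1-j}$, $j<i$, and therefore is known at stage $i$. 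Note that, in contrast to \eqref{acqua bella 0}, the chain is driven not by an initial datum $a$ but by the forcing $-\om\cdot\pa_\vphi b$ at stage $i=0$.

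These equations are solved inductively by the method of characteristics, using the characteristic flow $\gamma^{\tau_0,\tau}$ of \eqref{flusso caratteristiche egorov 0} whose explicit form is \eqref{flow-ex}. The resulting integral representation is analogous to \eqref{acqua bella 4} and, combined with Lemmata \ref{lemma:LS norms}, \ref{Moser norme pesate}, \ref{lemma flusso ODE} and an induction on $i$, yields the bound $\|p_{1-i}(\tau,\cdot)\|_s^\Lipg\lesssim_{s,N}\|\beta\|_{s+\sigma_N}^\Lipg$ of item $1$; the linear-in-$a$ factor present in \eqref{stima-fun} disappears because the base case is itself linear in $\beta$. This also implies pseudo-differential bounds on ${\cal P}_N(\tau,\vphi)$ analogous to \eqref{stima simbolo cal PN}.

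For the remainder ${\cal R}_N(\tau,\vphi):={\cal P}(\tau,\vphi)-{\cal P}_N(\tau,\vphi)$, one verifies as in \eqref{equazione risolta da cal PN tau} that it solves $\pa_\tau{\cal R}_N=[B,{\cal R}_N]-{\cal Q}_N^{(1)}$ with ${\cal R}_N(0)=0$, where ${\cal Q}_N^{(1)}\in OPS^{-N}$ gathers the symbolic error and $[B_\infty,{\cal P}_N]$ and is controlled by $\|\beta\|_{s+\aleph_N}^\Lipg$. Variation of constants, together with \eqref{proprieta elementare flusso}, gives exactly the formula \eqref{espansione finalissima cal P}, namely
\[
{\cal R}_N(\tau,\vphi)=\int_0^\tau\Phi(\eta,\tau,\vphi)\,{\cal Q}_N^{(1)}(\eta,\vphi)\,\Phi(\tau,\eta,\vphi)\,d\eta.
\]
Inserting suitable factors $\langle D\rangle^{\pm(n_1+\lambda_1+1)}$, $\langle D\rangle^{\pm(n_2+\lambda_3+1)}$ between the three factors, the tame estimates from Lemma \ref{proposition 2.40 unificata} on $\langle D\rangle^{n_1}\pa_{\vphi_k}^\lambda\Phi\langle D\rangle^{-n_1-\lambda-1}$ and on its reversed counterpart, combined with Lemma \ref{lemma stime Ck parametri}, Lemma \ref{lemma: action Sobolev} and Lemma \ref{composizione operatori tame AB}, produce the tame estimate claimed in item $2$. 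Item $3$ is obtained by replaying the argument on finite differences $\Delta_{12}$, using Lemma \ref{lemma flusso ODE}$(iii)$ and the bound \eqref{stima flusso astratto 2b}. The main point to verify throughout is merely that the extra inhomogeneity $-\om\cdot\pa_\vphi B$ never spoils the induction, which it does not since it has the same order as the commutator $[B,{\cal P}]$.
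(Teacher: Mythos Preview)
Your proof is correct and follows exactly the approach indicated in the paper: the paper's own proof merely records that ${\cal P}(\tau,\vphi)$ satisfies the inhomogeneous Heisenberg equation $\partial_\tau{\cal P}=[B,{\cal P}]-\omega\cdot\partial_\vphi B$ with ${\cal P}(0)=0$ and then states that the argument proceeds as in Proposition \ref{proposizione astratta egorov}, omitting all details. You have supplied precisely those details. One small inaccuracy: since $B_1=b\partial_x+b_x$, the forcing $-\omega\cdot\partial_\vphi B_1$ contributes $-(\omega\cdot\partial_\vphi b)\partial_x-(\omega\cdot\partial_\vphi b_x)$, so there is an explicit source term also at stage $i=1$, not only at $i=0$; your $\widetilde g_i$ as written depends only on the previously determined $p_{1-j}$ and would not capture it. This is harmless for the estimates, but worth stating correctly.
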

\begin{proof}
The operator 
$\Psi(\tau, \vphi)  := \Phi(\tau, \vphi) \circ 
\omega \cdot \partial_\vphi ( \Phi(\tau, \vphi)^{- 1}) $ solves the 
inhomogeneous Heisenberg equation 
$$
\partial_\tau \Psi(\tau, \vphi) = [B(\tau, \vphi), \Psi(\tau, \vphi))] - \omega \cdot \partial_\vphi ( B(\tau, \vphi)) \, ,  \quad \Psi(0, \vphi) = 0\,. 
$$
The latter equation can be solved in a similar way as \eqref{Heisenberg sezione Egorov} by looking for approximate solutions of the form of
a pseudo-differential operator of order $1$, admitting an expansion 
in homogeneous components (cf. \eqref{definizione cal PN tau}). The 
proof then proceeds in the same way as the one for Proposition \ref{proposizione astratta egorov} and hence is omitted. 
\end{proof}

We finish this section by the following application of Proposition \ref{proposizione astratta egorov} to Fourier multipliers.
\begin{lemma}\label{Fourier multiplier}
Let  $N, \lambda_0 \in \N $, $S > s_0$ and assume that
${\cal Q} $ is a Lipschitz family of Fourier multipliers with an expansion 
of the form
\begin{equation}\label{real madrid 1}
{\cal Q} = \sum_{n = 0}^N c_{m - n}(\omega) \partial_x^{m - n} + {\cal Q}_N
(\om) \,, \quad {\cal Q}_N (\om) \in {\cal B}(H^s, H^{s + N + 1 - m}), \quad \forall s \geq 0\,.
\end{equation}
Then there exist
 $\sigma_N(\lambda_0)$, $\sigma_N > 0$, and $\delta(S, N, \lambda_0) \in (0, 1)$ so that,  if 
\begin{equation}\label{piccolezza corollario Fourier multiplier}
\| \beta \|_{s_0 + \sigma_N(\lambda_0)}^\Lipg \leq \delta(S, N, \lambda_0) \, ,
\end{equation}  
then 
$ \Phi(\vphi) {\cal Q} \Phi(\vphi)^{- 1}$ is an operator of the form 
${\cal Q} + {\cal Q}_\Phi (\vphi) + {\cal R}_N(\vphi) $
with the following properties: 
\begin{enumerate}
\item 
${\cal Q}_\Phi (\vphi) = \sum_{n = 0}^N 
\alpha_{m - n}(\vphi, x; \omega) \partial_x^{m - n}$ 
where for any $s \geq s_0$, 
\begin{equation}\label{stima cal Q Phi lemma astratto}
\| \alpha_{m - n} \|_s^\Lipg \lesssim_{s, N} 
\| \beta\|_{s + \sigma_N}^\Lipg\, , \quad n = 0, \ldots, N \, . 
\end{equation}
\item 
For any $\lambda \in \N$ with $\lambda \leq \lambda_0$, $n_1, n_2 \in \N$ with $n_1 + n_2 + \lambda_0 \leq N - m - 2$, and
$k \in \mathbb S_+$,
the operator $\langle D \rangle^{n_1}\partial_{\vphi_k}^\lambda{\cal R}_N \langle D \rangle^{n_2}$ is $\Lipg$-tame with a tame constant satisfying 
\begin{equation}\label{stima resto corollario egorov}
{\mathfrak M}_{\langle D \rangle^{n_1}\partial_{\vphi_k}^\lambda{\cal R}_N \langle D \rangle^{n_2}}(s) \lesssim_{S, N, \lambda_0} 
\| \beta\|_{s + \sigma_N(\lambda_0)}^\Lipg\,, 
\quad \forall s_0 \leq s \leq S\,. 
\end{equation}
\item Let $s_0 < s_1 < S$
 and assume that  $\| \beta_i \|_{s_1 + \sigma_N(\lambda_0)} \leq \delta$, $i = 1,2$. Then 
$$
\begin{aligned}
\| \Delta_{12} \alpha_{m - n} \|_{s_1} \lesssim_{s_1, N} \| \Delta_{12} \beta\|_{s_1 + \sigma_N}, \quad n = 0, \ldots, N \, , 
\end{aligned}
$$
and, for any $\lambda \leq \lambda_0$, $n_1, n_2 \in \N$ with 
$n_1 + n_2 + \lambda_0 \leq N - m - 2$, and $k \in \mathbb S_+$, 
$$
\| \langle D \rangle^{n_1}\partial_{\vphi_k}^\lambda \Delta_{12} {\cal R}_N(\vphi) \langle D \rangle^{n_2} \|_{{\cal B}(H^{s_1})} \lesssim_{s_1, N, n_1, n_2}  \| \Delta_{12} \beta\|_{s_1 + \sigma_N(\lambda_0)}
$$
where we refer to Lemma \ref{proposition 2.40 unificata} for 
the meaning of  $\Delta_{12}$.
\end{enumerate}
\end{lemma}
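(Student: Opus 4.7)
The plan is to decompose $\mathcal Q$ along the expansion \eqref{real madrid 1}, writing
\begin{equation*}
\Phi(\vphi)\, \mathcal Q\, \Phi(\vphi)^{-1} = \sum_{n=0}^N \Phi(\vphi)\bigl(c_{m-n}(\omega)\, \pa_x^{m-n}\bigr) \Phi(\vphi)^{-1} + \Phi(\vphi)\, \mathcal Q_N(\omega)\, \Phi(\vphi)^{-1},
\end{equation*}
and to treat the two contributions separately: the homogeneous differential pieces via the Egorov theorem (Proposition \ref{proposizione astratta egorov}), and the smoothing Fourier-multiplier tail $\mathcal Q_N(\omega)$ via the tame estimates for the transport flow $\Phi$ provided by Lemma \ref{proposition 2.40 unificata}.

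For each $n \in \{0, \ldots, N\}$ I would apply Proposition \ref{proposizione astratta egorov} to $c_{m-n}(\omega)\, \pa_x^{m-n}$, with the choice $a(\vphi,x;\omega) \equiv c_{m-n}(\omega)$, differential order $m-n$, and truncation parameter $N-n$. Since $a$ is constant in $(\vphi,x)$, one has $\|a\|_s^\Lipg = |c_{m-n}|^\Lipg$ for every $s \geq 0$, and the bound \eqref{stima-fun} reduces to
\begin{equation*}
\|p^{(n)}_{m-n} - c_{m-n}\|_s^\Lipg + \sum_{i=1}^{N-n} \|p^{(n)}_{m-n-i}\|_s^\Lipg \;\lesssim_{s,N}\; \|\beta\|_{s+\sigma_N}^\Lipg,
\end{equation*}
so every symbol produced by Egorov is $O(\beta)$, except for the top-order reconstruction of $c_{m-n}$. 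Summing in $n$ and regrouping contributions by the order of $\pa_x$, the coefficient of $\pa_x^{m-k}$ in $\sum_{n=0}^N \Phi\, c_{m-n} \pa_x^{m-n} \Phi^{-1}$ equals $c_{m-k}(\omega) + \alpha_{m-k}(\vphi,x;\omega)$ with
\begin{equation*}
\alpha_{m-k} := \bigl(p^{(k)}_{m-k} - c_{m-k}\bigr) + \sum_{n=0}^{k-1} p^{(n)}_{m-k},
\end{equation*}
which obeys \eqref{stima cal Q Phi lemma astratto}. The Egorov remainders $\mathcal R^{(n)}(\vphi)$, after being dressed with $\langle D \rangle^{n_1} \pa_{\vphi_k}^\lambda \cdot \langle D \rangle^{n_2}$, satisfy the tame bound \eqref{stima resto Egorov teo astratto} for all $n_1+n_2+\lambda_0 \leq N - m - 1$ (uniformly in $n$), which in particular covers the admissible range $n_1+n_2+\lambda_0 \leq N - m - 2$ demanded by the lemma.

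For the smoothing tail I would observe that $\mathcal Q_N(\omega)$, being the difference of two Fourier multipliers, is itself a Fourier multiplier with symbol in $S^{m-N-1}$; by \eqref{Norm Fourier multiplier} its weighted norms $|\mathcal Q_N|_{m-N-1, s, \alpha}^\Lipg$ are bounded uniformly in $s$. I would then write
\begin{equation*}
\Phi\, \mathcal Q_N\, \Phi^{-1} - \mathcal Q_N = (\Phi - \mathrm{Id})\, \mathcal Q_N\, \Phi^{-1} + \mathcal Q_N\, (\Phi^{-1} - \mathrm{Id}),
\end{equation*}
distribute $\pa_{\vphi_k}^\lambda$ across the three factors by Leibniz, insert matching powers of $\langle D \rangle$ between factors, and bound the resulting compositions via Lemma \ref{composizione operatori tame AB}, using \eqref{stima flusso 1 astratto} for differentiated copies of $\Phi^{\pm 1}$ and \eqref{stima flusso astratto 2} for differentiated copies of $\Phi^{\pm 1} - \mathrm{Id}$; the latter supplies precisely the one extra power of $\langle D \rangle^{-1}$ needed to reach the threshold $n_1+n_2+\lambda_0 = N - m - 2$. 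Collecting $\sum_n \mathcal R^{(n)}(\vphi)$ together with $\Phi \mathcal Q_N \Phi^{-1} - \mathcal Q_N$ into $\mathcal R_N(\vphi)$ yields \eqref{stima resto corollario egorov}.

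The main obstacle I anticipate is the tight numerology in the smoothing step: the single additional derivative gained by $\Phi^{\pm 1} - \mathrm{Id}$ over $\Phi^{\pm 1}$ is exactly what closes the estimate, leaving no slack for the Leibniz distribution of $\pa_{\vphi_k}^\lambda$, which must be absorbed via the uniform (in $\lambda \leq \lambda_0$) bounds of Lemma \ref{proposition 2.40 unificata}. The Lipschitz-difference estimates of item~(3) then follow by repeating the above scheme with the $\Delta_{12}$-versions of Proposition \ref{proposizione astratta egorov}(4) and Lemma \ref{proposition 2.40 unificata}, using crucially that $\Delta_{12} c_{m-n} = 0$ since the coefficients $c_{m-n}(\omega)$ do not depend on $\beta$.
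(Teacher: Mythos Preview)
Your proposal is correct and follows essentially the same route as the paper: split $\mathcal Q$ into its homogeneous part and the smoothing tail $\mathcal Q_N$, apply the Egorov theorem (Proposition~\ref{proposizione astratta egorov}) to each $c_{m-n}\partial_x^{m-n}$ to produce $\mathcal Q_\Phi$ and the remainders $\mathcal R^{(n)}$, and handle $\Phi\mathcal Q_N\Phi^{-1}-\mathcal Q_N=(\Phi-\mathrm{Id})\mathcal Q_N\Phi^{-1}+\mathcal Q_N(\Phi^{-1}-\mathrm{Id})$ via the flow bounds of Lemma~\ref{proposition 2.40 unificata}. The only cosmetic difference is that the paper applies Egorov to $\partial_x^{m-n}$ with $a\equiv 1$ and then multiplies by the scalar $c_{m-n}(\omega)$, whereas you absorb $c_{m-n}$ into $a$; also note that the relevant property of $\mathcal Q_N$ is its mapping bound $H^s\to H^{s+N+1-m}$ (as a Fourier multiplier it commutes with $\langle D\rangle$ and $\partial_{\vphi_k}$), rather than a symbol estimate via \eqref{Norm Fourier multiplier}.
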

\begin{proof}
Applying Proposition \ref{proposizione astratta egorov} to 
 $\Phi(\vphi) \partial_x^{m - n} \Phi(\vphi)^{- 1}$ for $n = 0, \ldots, N$, 
 we get 
$$
\begin{aligned}
\Phi(\vphi) \Big( \sum_{n = 0}^N c_{m -n} (\om) \partial_x^{m - n} \Big) \Phi(\vphi)^{- 1} &  = \sum_{n = 0}^N c_{m - n}(\om) \partial_x^{m - n} + {\cal Q}_\Phi (\vphi) 
+ {\cal R}_N^{(1)}(\vphi) 
\end{aligned}
$$
where $ {\cal Q}_\Phi  (\vphi) = \sum_{n = 0}^N \alpha_{m - n}(\vphi, x; \omega) \partial_x^{m - n} $ with $  \alpha_{m - n} $ satisfying \eqref{stima cal Q Phi lemma astratto}  
and the remainder ${\cal R}_N^{(1)}(\vphi)$ satisfying
 \eqref{stima resto corollario egorov}.  Next we write
$ \Phi(\vphi){\cal Q}_N \Phi(\vphi)^{- 1}  = {\cal Q}_N + {\cal R}_N^{(2)}(\vphi) $
where
$$
\begin{aligned}
{\cal R}_N^{(2)}(\vphi) & : = \big( \Phi(\vphi) - {\rm Id} \big) {\cal Q}_N 
\Phi(\vphi)^{- 1} + {\cal Q}_N  \big(\Phi(\vphi)^{- 1} - {\rm Id} \big)\,. 
\end{aligned}
$$
We then argue as in the proof of the estimate of the remainder ${\cal R}_N(\tau, \vphi)$ in Proposition \ref{proposizione astratta egorov}.
Using Lemma \ref{proposition 2.40 unificata} and 
the assumption that ${\cal Q}_N$ is a Fourier multiplier 
in ${\cal B}(H^s, H^{s + N + 1 - m})$ 
 we get that  ${\cal R}_N^{(2)}(\vphi)$ satisfies \eqref{stima resto corollario egorov},
and 
 ${\cal R}_N(\vphi) = {\cal R}_N^{(1)}(\vphi) + {\cal R}_N^{(2)}(\vphi)$
 satisfies \eqref{stima resto corollario egorov} as well. 
  Item $3$ follows by similar arguments. 
\end{proof} 

\section{Integrable features of KdV}

According to \cite{KP}, the KdV equation \eqref{kdv} 
on the torus is an integrable PDE in the strongest  possible sense, 
meaning that it admits global analytic Birkhoff coordinates. 
We endow the sequence spaces $ h^s $
with the standard Poisson bracket defined by 
$ \{ z_n, z_{k} \} = \ii 2 \pi n \, \delta_{k,-n} $ for any $  n, k \in \Z $. 

\begin{theorem} \label{Birkhoff coordinates for KdV}
{\em (Birkhoff coordinates, \cite{KP})} 
There exists a real analytic diffeomorphism  
$\Psi^{kdv} : h^0_0 \to H^0_0 (\T_1)$ 
 so that the following holds: \\ 
(i) for any $s \in \Z_{\geq 0}$, $\Psi^{kdv}(h^s_0) \subseteq H^s_0 (\T_1)$ 
and $\Psi^{kdv}: h^s_0 \to H^s_0 (\T_1) $ is a real analytic 
symplectic diffeomorphism. \\
(ii) ${H}^{kdv} \circ \Psi^{kdv}: h^1_0 \to \R$ is a real analytic function of the actions $I_k := \frac{1}{2\pi k} z_k z_{- k}$, $k \ge 1$. The KdV Hamiltonian, 
viewed as a function of the actions $(I_k)_{k \ge 1}$, is denoted by $ \mathcal H^{kdv}_o$.\\
(iii) $\Psi ^{kdv}(0) = 0$ and  the differential $d_0\Psi^{kdv}$ of $\Psi^{kdv}$ at $0$ is the inverse Fourier transform ${\cal F}^{- 1}$.
\end{theorem}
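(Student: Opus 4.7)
The plan is to recover the Birkhoff normal form for KdV by a direct spectral construction, following the classical route that goes through Hill's operator and its periodic/Dirichlet spectra; this is essentially the program carried out in \cite{KP}. The starting observation is the Lax pair $\partial_t L = [B, L]$ with $L = -\partial_x^2 + u$, which shows that the periodic spectrum $\lambda_0(u) < \lambda_1(u) \le \lambda_2(u) < \lambda_3(u) \le \cdots$ of the Hill operator on $\T_1$ is a family of commuting integrals for the flow \eqref{kdv}. The natural actions are the normalized gap lengths, but it is more convenient to define
\[
I_k(u) := \frac{1}{\pi}\int_{\Gamma_k} \lambda\, \frac{\dot\Delta(\lambda, u)}{\sqrt{\Delta(\lambda,u)^2 - 4}}\, d\lambda\, ,\qquad k \ge 1,
\]
where $\Delta$ is the Floquet discriminant and $\Gamma_k$ encircles the $k$-th spectral gap $[\lambda_{2k-1},\lambda_{2k}]$. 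The angles are constructed from the Dirichlet spectrum $\mu_k(u) \in [\lambda_{2k-1}, \lambda_{2k}]$ together with the signs $\sigma_k \in \{\pm\}$ indicating the sheet of the Riemann surface $\Sigma(u) := \{(\lambda,w): w^2 = \Delta^2-4\}$, via the Abel–Jacobi map; this produces the candidate complex Birkhoff coordinates $z_k = \sqrt{2\pi k I_k}\, e^{-\ii \theta_k}$.

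The first main step is to establish the requisite analytic and asymptotic estimates: one has to show that gap lengths decay like $\langle k\rangle^{-s}$ when $u \in H^s_0$, and more precisely that the nonlinear map $u \mapsto (\gamma_k, \mu_k)$ is locally analytic with the correct tame estimates on all Sobolev scales $h^s_0$. This rests on careful perturbative analysis of Hill's operator, the holomorphicity of the Floquet discriminant in $(u,\lambda)$, counting lemmas for the roots of $\Delta^2-4$, and Paley-Wiener type arguments. These estimates give that the map $\Phi^{kdv} := (\Psi^{kdv})^{-1}$ sends $H^s_0(\T_1)$ continuously into $h^s_0$, and that $d_0\Phi^{kdv}=\mathcal{F}$ (which gives $(iii)$ by inversion).

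The second step is to show that these coordinates are \emph{globally} canonical. The Poisson brackets $\{I_j, I_k\}$ vanish by the classical integrability of KdV (the $I_k$ Poisson-commute because each lies in the algebra generated by the periodic eigenvalues, which Poisson-commute). The brackets $\{\theta_j, I_k\} = \delta_{jk}$ follow from an explicit calculation expressing $\partial_u \theta_j$ and $\partial_u I_k$ in terms of the Floquet solutions $y_\pm(x,\lambda,u)$ of $L y = \lambda y$ and exploiting the Wronskian identity together with residue calculus on $\Sigma(u)$. One then applies a global Darboux-type result to conclude that $\Psi^{kdv}$ is a symplectomorphism on the whole space, not merely near $u=0$. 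The integrability claim $(ii)$, that $H^{kdv}\circ\Psi^{kdv}$ depends only on the actions $I_k$, reduces to showing $\{H^{kdv}, \theta_j\}$ depends only on the periodic spectrum, which again is a consequence of the Lax pair and the explicit trace formulae for $H^{kdv}$ in terms of $\Delta$.

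The hard part is the globality and regularity: establishing that $\Psi^{kdv}$ is a \emph{real-analytic diffeomorphism} (not just a local one) between the scales $h^s_0$ and $H^s_0(\T_1)$ for every $s \ge 0$, uniformly as $u$ varies over all of $H^0_0$. This requires controlling the spectral data when gaps collapse (where the naive action-angle coordinates are singular) by passing to the $z_k$ variables and checking real-analyticity at $\gamma_k=0$; one must also prove surjectivity, typically by a continuity/connectedness argument using the fact that for every admissible sequence of gap data there exists a potential realizing it (an inverse spectral result due to Marchenko-Ostrovski-Garnett-Trubowitz). Once analyticity, injectivity, surjectivity, and the symplectic property are in place, item $(i)$ follows by restricting to $h^s_0$ and tracking the tame estimates. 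All of this is carried out in detail in \cite{KP}, so we merely quote the statement.
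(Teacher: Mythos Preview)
The paper does not prove this theorem; it is simply stated with attribution to \cite{KP} and used as a black box. Your proposal ends in the same place (``we merely quote the statement''), and the sketch you give of the spectral construction---actions via contour integrals of $\dot\Delta/\sqrt{\Delta^2-4}$, angles via the Abel--Jacobi map on the Dirichlet divisor, analyticity through collapsed gaps, and global surjectivity via inverse spectral theory---is an accurate outline of the argument in \cite{KP}. So your treatment is consistent with, and somewhat more informative than, the paper's, which offers no proof sketch at all.
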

By Theorem \ref{Birkhoff coordinates for KdV}, 
the KdV equation, expressed in the Birkhoff coordinates $(z_n)_{n \ne 0}$, reads
$$
\partial_t z_n = \ii \omega_n^{kdv}((I_k)_{k \ge 1}) z_n \, , \,\, \forall n \in \Z \setminus \{0\} \, , \quad 
\omega_{\pm m}^{kdv}((I_k)_{k \ge 1}) : =  \pm \partial_{I_m} {\mathcal H}_o^{kdv}((I_k)_{k \ge 1}) \, , 
\,\, \forall m \ge 1\, ,
$$
and its solutions are given by $z(t) := (z_n)_{n \ne 0}$ where
$$
z_{n}(t) = z_{n}(0) 
{\rm exp}\big(\ii \omega_{n}^{kdv}((I_k^{(0)})_{k \ge 1}) \, t \big)  \, , 
\,\,\, \forall 
n \in \Z \setminus \{0\}  \,, \quad  
I_k^{(0)} :
=  \frac{1}{2\pi k} z_k(0) z_{- k}(0)\,, \,\,\, \forall {k \ge 1}\,. 
$$
Let us consider a finite set 
$\Splus \subset \N_+ := \{1, 2, \ldots \} $ and define 
$$
 \S := \Splus  \cup (- \Splus ) \, , \quad  
\Sbot_+ := \N_+ \setminus \Splus \, , \quad 
 \S^\bot := \S^{\bot}_+ \cup (- \Sbot_+) \subset \Z \setminus \{0\} \, . 
 $$
In Birkhoff coordinates, a $ \Splus-$gap solution of the KdV equation, also referred to as $\S-$gap solution, 
is a solution of the form 
\begin{equation}\label{definition finite gap solutions}
z_n(t) = 
{\rm exp}\big(\ii \omega^{kdv}_n( \ac , 0) t \big)  z_n(0) \, , \,\,
 z_n(0) \ne  0 \, , \,\, \forall n \in \S\,, \quad z_n(t) = 0 \, , \,\, \forall  n \in \Sbot \, ,
\end{equation}
where $ \ac : = ( I_k^{(0)})_{k \in \Splus} \in  \R_{> 0}^{\Splus} $ and, 
by a slight abuse of notation, we write 
\be\label{fre-kdv}
\omega^{kdv}_n( I ,   (I_k)_{k \in \Splus^\bot}) := \omega^{kdv}_n(   (I_k)_{k \geq 1} ) 
  \, , \quad  I := (I_k)_{k \in \Splus } \in \R_{> 0}^{\Splus} \, . 
\ee
Such solutions are quasi-periodic in time 
with frequency vector (cf. \eqref{frequency finite gap})
$ \omega^{kdv} (\ac) = \big( \omega_n^{kdv}(\ac, 0) \big)_{n \in \Splus} \in \R^{\Splus} $, 
parametrized by $ \nu \in \R^{\Splus}_{> 0} $. The map $ \ac \mapsto 
\omega^{kdv} (\ac) $ is a local 
analytic diffeomorphism, see Remark \ref{rem:diffeo}.
When written in action-angle coordinates 
(cf. \eqref{finite gap in coordinate originarie}),   
$$
\theta := (\theta_n)_{n \in \Splus} \in \T^{\Splus} \, , \quad
I = (I_n)_{n \in \Splus} \in \R_{> 0}^{\Splus} \, , 
\quad
z_n = \sqrt{2\pi n I_n} e^{- \ii \theta_n} \, , \ n \in \Splus \, , 
$$
instead of the complex Birkhoff coordinates $z_n$, 
the $ \S-$gap solution \eqref{definition finite gap solutions} reads 
$$
 \theta(t) = \theta^{(0)} - \omega^{kdv} (\ac) t \, , 
 \qquad  I(t) = \ac \, , \qquad  z_n(t) = 0 \, , \quad  \forall n \in  \Sbot \, . 
$$

\subsection{Normal form coordinates for the KdV equation}\label{sec canonical coordinates}

In this section we rephrase Theorem 1.1 in \cite{Kappeler-Montalto-pseudo} adapted to our purposes and prove some corollaries. 

We consider an open bounded 
set $\Xi \subset \R^{\mathbb S_+}_{> 0}$ 
so that \eqref{azioni come parametri 1} holds for some   $\delta > 0 $.
Recall that $ {\cal V}^s(\delta) \subset \mathcal E_s$,  
$ {\cal V}(\delta) = {\cal V}^0(\delta)$ are defined in \eqref{Vns}
and that we denote by $\mathfrak x = (\theta, y, w) $ its elements. 
The space $ {\cal V}(\delta) \cap {\cal E}_s $ is endowed with the symplectic form 
 \begin{equation}\label{2form}
{\cal W} := \Big( {\mathop \sum}_{j \in \Splus} d y_j \wedge d \theta_j \Big)  \oplus 
{\cal W}_\bot  
\ee
where $ {\cal W}_\bot $ is the restriction to  $ L^2_\bot(\T_1) $ of the symplectic form 
$ {\cal W}_{L^2_0} $ defined  in \eqref{KdV symplectic}. 
The Poisson structure ${\cal J}$ corresponding to $\mathcal W$, defined by the identity
$ \{ F, G \} = {\cal W}(X_F, X_G) =  \big\langle \nabla F \,,\, {\cal J} \nabla G \big\rangle $,   
is the unbounded operator 
\begin{equation}\label{Poisson struture}
\mathcal J : E_s \to E_s \, , \quad 
(\widehat \theta,  \widehat y,  \widehat w) \mapsto 
(- \widehat y,   \widehat \theta,  \partial_x \widehat w) \,  
\end{equation}
where $ \langle \ , \ \rangle $ is the inner product \eqref{bi-form}.
\begin{theorem}\label{modified Birkhoff map}{\bf (Normal KdV coordinates with pseudo-differential expansion, 
\cite{Kappeler-Montalto-pseudo}).}
Let $\Splus \subseteq \N$ be finite, 
$\Xi$ an open bounded subset of $\R_{> 0}^{\Splus}$
so that \eqref{azioni come parametri 1} holds, for some 
$\delta > 0 $. Then, for $\delta > 0$ sufficiently small, there exists a canonical 
$ {\cal C}^\infty$ family of diffeomorphisms 
$\Psi_\ac : {\cal V}(\delta) \to \Psi_\ac ({\cal V}(\delta)) \subseteq L^2_0 (\T_1)\,, \,  (\theta, y, w)\mapsto q$, $ \ac \in \Xi $,  
with the property that $\Psi_\ac $ satisfies 
$$
\Psi_\ac(\theta, y , 0 ) = \Psi^{kdv}(\theta, \ac + y, 0) \, , \quad \forall (\theta, y, 0 ) \in {\cal V}(\delta)\,, \quad \forall \ac \in \Xi \, , 
$$
and is compatible  with the scale of Sobolev spaces $H^s_0(\T_1), s \in 
\N $, 
in the sense that 
$\Psi_\ac  \big( {\cal V}(\delta)\cap {\mathcal E}_s \big) \subseteq H^s_0(\T_1)$ and $\Psi_\ac  : {\cal V}(\delta)\cap {\mathcal E}_s \to H^s_0(\T_1)$ 
is a $ {\cal C}^\infty-$diffeomorphism onto its image, 
so that the following holds:
\begin{description}
\item[({\bf AE1})] For any integer $ M \ge 1$,  $\ac \in \Xi$, $\mathfrak x = (\theta, y , w) \in \mathcal V(\delta)$,  $\Psi_\ac (\mathfrak x)$ admits an asymptotic expansion  of the form  
\be\label{Psi-espl}
\Psi_\ac( \theta, y , w ) = \Psi^{kdv}(\theta, \ac + y, 0) +  w + 
\sum_{k = 1}^M a_{-k }^\Psi(\mathfrak x; \ac ) \, \partial_x^{- k} w
+ {\cal R}_{M }^\Psi(\mathfrak x; \ac ) 
\ee
where  ${\cal R}_{M}^\Psi(\theta, y, 0; \ac ) = 0$ and,  
for any $s \in \N$  and $1 \le k \le M $,  the functions
$$
 {\cal V}(\delta) \times  \Xi  \to  H^s(\T_1),\,  (\mathfrak x, \ac)  \mapsto   a_{-k }^\Psi (\mathfrak x; \ac ) \, ,  \quad
 ({\cal V}(\delta) \cap {\mathcal E}_s) \times \Xi   \to H^{s + M +1}(\T_1),\, 
(\mathfrak x,\ac ) \mapsto   \mathcal R_{M }^\Psi(\mathfrak x; \ac ) \, ,
$$
are ${\cal C}^\infty$. 
\item[({\bf AE2})]
For any $\mathfrak x  \in {\cal V}^1(\delta)$, $\ac \in \Xi$, the transpose $d \Psi_\ac(\mathfrak x)^\top $ 
of the differential
$d \Psi_\ac(\mathfrak x) : E_1 \to H^1_0 (\T_1) $ is a bounded linear operator $d \Psi_\ac (\mathfrak x)^\top : H^1_0(\T_1)  \to E_1$,
and, 
for any $\widehat q \in H^1_0 (\T_1) $
and integer $M \ge 1$,  
$ d \Psi_\ac (\mathfrak x)^\top [\widehat q] $ 
admits an expansion of the form
\be\label{depsit}
d \Psi_\ac (\mathfrak x)^\top[\widehat q] = 
\Big(   0, 0,  {\Pi}_\bot \widehat q  + \Pi_\bot \sum_{k = 1}^M  a_{-k}^{d \Psi^\top}(\mathfrak x; \ac )  \partial_x^{- k}\widehat q \, 
+  \Pi_\bot \sum_{k = 1}^M  (\partial_x^{- k} w) \,
\mathcal A_{-k}^{ d \Psi^\top}(\mathfrak x; \ac )[\widehat q]   \Big)
   + {\cal R}_{M}^{d \Psi^\top}(\mathfrak x; \ac )[\widehat q]
\ee
where, for any $ s \ge 1 $ and $1 \le  k \le M$, 
$$
\begin{aligned}
& 
 {\cal V}^1(\delta) \times \Xi  \to H^s (\T_1) \,, \,\, (\mathfrak x, \ac) \mapsto  a_{-k}^{d\Psi^\top} (\mathfrak x; \ac  ) 
\,, \\
&  {\cal V}^1(\delta) \times \Xi   \to  {\cal B}(H^1_0 (\T_1), H^s(\T_1))\,, \, \,   (\mathfrak x, \ac) \mapsto \mathcal A_{-k}^{d\Psi^\top} (\mathfrak x; \ac )\,,  \\
 &  ({\cal V}^1 (\delta) \cap {\mathcal E}_s)  \times \Xi \to {\cal B}(H^s_0 (\T_1), E_{s +M +1}) \, , \, \,  (\mathfrak x, \ac ) \mapsto   {\cal R}_M^{d\Psi^\top}(\mathfrak x; \ac ) \, , 
 \end{aligned}
$$
are $ {\mathcal C}^\infty $. Furthermore, 
\begin{equation}\label{coefficiente - 1 d Psi Psi top}
a_{- 1}^{d \Psi^\top}(\frak x; \nu) = - a_{- 1}^\Psi(\frak x; \nu)\,. 
\end{equation}
 \item[({\bf AE3})] For any $\ac \in \Xi$, the Hamiltonian 
 $ {\cal H}^{kdv} ( \cdot \, ; \ac ) := H^{kdv} \circ \Psi_\ac : {\cal V}^1(\delta) \to \R$ is in normal form up to order three, meaning that 
\be\label{Hkdv}
{\cal H}^{kdv} (\theta, y, w; \ac ) =  \omega^{kdv}(\ac) \cdot y  + \frac{1}{2} 
\big( \Omega^{kdv}(D ;\ac) w, w \big)_{L^2_x}  + \frac12 \Omega_{\mathbb S_+}^{kdv}(\ac) [y] \cdot y+  {\cal R}^{kdv} (\theta, y , w; \ac )
\ee
where $\omega^{kdv}(\ac) = (\omega_n^{kdv}(\ac))_{n \in \mathbb S_+},$
\be\label{Omega-normal}
\begin{aligned}
&
\Omega^{kdv}(D ; \ac) w: =  \sum_{n \in \Sbot} \Omega_n^{kdv}(\ac ) w_n e^{\ii 2\pi n x}\,, \qquad
 \Omega_{\mathbb S_+}^{kdv}(\ac) 
 := (\partial_{I_j} \omega^{kdv}_k (\ac))_{j, k \in \mathbb S_+}\,,\\
  & \Omega_n^{kdv}(\ac) := \frac{1}{2\pi n} \omega_n^{kdv}(\ac , 0)\,, \quad \forall n \in \Sbot, \qquad  
  w =  \sum_{n \in \Sbot}  w_n e^{\ii 2\pi n x}
  \end{aligned}
\ee
and 
$ {\cal R}^{kdv} :   {\cal V}^1(\delta) \times \Xi \to \R $ is a ${\cal C}^\infty$ map satisfying 
\be\label{Rkdv-cubic}
{\cal R}^{kdv}(\theta, y, w; \ac ) = O\big(( \| y \| + \| w \|_{H^1_x})^3  \big) \, , 
\ee
and has the property that, for any $s \ge 1$, its $L^2-$gradient
$$
 ({\cal V}^1(\delta) \cap {\mathcal E}_s)  \times \Xi  \to E_{s}, \, (\mathfrak x, \ac ) \mapsto \nabla {\cal R}^{kdv}(\mathfrak x; \ac ) = 
\big( \nabla_{\theta} {\cal R}^{kdv}(\mathfrak x; \ac), \nabla_{y} {\cal R}^{kdv}(\mathfrak x; \ac), \nabla_{w} {\cal R}^{kdv}(\mathfrak x; \ac) \big)
$$
is a $ {\mathcal C}^\infty $ map as well. As a consequence
\be\label{Rkdv-cubic-der}
\nabla {\cal R}^{kdv}(\theta, 0, 0; \ac) = 0 \, , \ 
d_\bot \nabla {\cal R}^{kdv}(\theta, 0, 0; \ac) = 0 \, , \
\partial_y \nabla {\cal R}^{kdv}(\theta, 0, 0; \ac) = 0 \, . 
\ee
 \item[(Est1)] For any $\ac \in \Xi$, $\alpha \in 
  \N^{\mathbb S_+}$, $\mathfrak x \in {\cal V} (\delta)$, $1 \le k \le M$, $\widehat{\mathfrak x}_1, \ldots, \widehat{\mathfrak x}_l \in E_0$, $s \in \N$,
  $$
  \begin{aligned}
  & \| \partial_\ac^\alpha a_{- k}^\Psi(\mathfrak x; \ac) \|_{H^s_x} \lesssim_{s, k, \alpha} \, 1 
  \,,  \qquad
    \| d^l \partial_\ac^\alpha a_{- k}^\Psi(\mathfrak x; \ac)[\widehat{\mathfrak x}_1, \ldots, \widehat{\mathfrak x}_l]\|_{H^s_x}\lesssim_{s, k, l, \alpha} \, 
    \prod_{j = 1}^l \| \widehat{\mathfrak x}_j\|_{E_0}\,. 
  \end{aligned}
  $$
  Similarly, for any $\ac \in \Xi$, $\alpha \in \N^{\mathbb S_+}$,
  $\mathfrak x  \in {\cal V} (\delta) \cap {\mathcal E}_s$,  
 $\widehat{\mathfrak x}_1, \ldots, \widehat{\mathfrak x}_l \in E_s$,  
  $s \in \N$,
  $$
  \begin{aligned}
  & \| \partial_\ac^\alpha {\cal R}_M^\Psi(\mathfrak x; \ac)\|_{H^{s + M + 1}_x} \lesssim_{s, M, \alpha}  \| w  \|_{H^s_x }\,,  \quad \\
  & \| d^l \partial_\ac^\alpha{\cal R}_M^\Psi(\mathfrak x; \ac)[\widehat{\mathfrak x}_1, \ldots, \widehat{\mathfrak x}_l]\|_{H^{s + M + 1}_x} 
   \lesssim_{s, M, l, \alpha} \sum_{j = 1}^l 
   \Big( \| \widehat{\mathfrak x}_j\|_{E_s} \prod_{i \neq j} \| \widehat{\mathfrak x}_i\|_{E_0} \Big) + \| w  \|_{H^s_x} \prod_{j = 1}^l \| \widehat{\mathfrak x}_j\|_{E_0}\,.
  \end{aligned}
  $$
 \item[(Est2)] For any $\ac \in \Xi$, $\alpha \in \N^{\mathbb S_+}$,
  $\mathfrak x \in {\cal V}^1(\delta)$, $1 \le k \le M$, 
   $\widehat{\mathfrak x}_1, \ldots, \widehat{\mathfrak x}_l \in E_1$, $s \ge 1$,
  $$
  \begin{aligned}
  &  \| \partial_\ac^\alpha a_{-k}^{{d \Psi}^\top}(\mathfrak x; \ac)\|_{H^s_x} \lesssim_{s, k, \alpha} 1  \,, \qquad
   \| d^l \partial_\ac^\alpha a_{-k}^{{d \Psi}^\top}(\mathfrak x; \ac )[\widehat{\mathfrak x}_1, \ldots, \widehat{\mathfrak x}_l]\|_{H^s_x} 
  \lesssim_{s, k, l, \alpha}  \prod_{j = 1}^l \| \widehat{\mathfrak x}_j\|_{E_1} \, , \\
  &  \| \partial_\ac^\alpha \mathcal A_{-k}^{{d \Psi}^\top}(\mathfrak x; \ac )\|_{{\cal B}(H_0^1, H^s_x)} \lesssim_{s, k, \alpha} \,   1  \,, \quad
   \| d^l \partial_\ac^\alpha \mathcal A_{-k}^{{d \Psi}^\top}(\mathfrak x ; \ac)[\widehat{\mathfrak x}_1, \ldots, \widehat{\mathfrak x}_l]\|_{{\cal B}(H_0^1, H^s_x)} 
  \lesssim_{s, k, l, \alpha}  \prod_{j = 1}^l \| \widehat{\mathfrak x}_j\|_{E_1}\,. 
  \end{aligned}
  $$
  Similarly, for any $\ac \in \Xi$, $\alpha \in \N^{\mathbb S_+}$,
  $\mathfrak x \in {\cal V}^1(\delta) \cap {\mathcal E}_s $, 
 $\widehat{\mathfrak x}_1, \ldots, \widehat{\mathfrak x}_l \in E_s$, 
  $\widehat q \in H^s_0$, $s \ge 1$,
  $$
  \begin{aligned}
  & \| \partial_\ac^\alpha {\cal R}_M^{d\Psi^\top} (\mathfrak x; \ac ) [\widehat q]\|_{E_{s + M + 1}} \lesssim_{s, M, \alpha} \| \widehat q\|_{H^s_x}+ \| w \|_{H^s_x} \| \widehat q\|_{H^1_x}\,,  \\
  & \| d^l \big( \partial_\ac^\alpha {\cal R}_M^{d \Psi^\top} (\mathfrak x; \ac) [\widehat q] \big)[\widehat{\mathfrak x}_1, \ldots, \widehat{\mathfrak x}_l]\|_{E_{s + M + 1}} 
  \lesssim_{s, M, l, \alpha} \| \widehat q\|_{H^s_x} \prod_{j = 1}^l \| \widehat{\mathfrak x}_j\|_{E_1} + \| \widehat q\|_{H^1_x} \sum_{j = 1}^l \Big( \| \widehat{\mathfrak x}_j\|_{E_s} \prod_{i \neq j} \| \widehat{\mathfrak x}_i\|_{E_1} \Big)  \\
  & \qquad \qquad\qquad\qquad\qquad\qquad\qquad\qquad \qquad 
   \quad +   \| \widehat q\|_{H^1_x} \| w\|_{H^s_x} \prod_{j = 1}^l \| \widehat{\mathfrak x}_j \|_{E_1}\,.
    \end{aligned}
  $$
  \end{description}
\end{theorem}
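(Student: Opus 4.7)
The plan is to construct $\Psi_\ac$ as a modification of the Birkhoff map $\Psi^{kdv}$ that is adapted to the splitting of frequencies into tangential sites $\Splus$ and normal sites $\Sbot$, and then to extract the pseudo-differential structure from the smoothing properties of $d\Psi^{kdv}$ established in \cite{KST}. First, I would introduce local symplectic action-angle coordinates $(\theta,I)$ on the finite-dimensional leaf $\mathcal M_{\Splus}$ via $z_n = \sqrt{2\pi n\, I_n}\,e^{-\ii\theta_n}$, $n\in\Splus$, and keep the Birkhoff coordinates $z_n = w_n$ on the normal part. The natural candidate is then
\begin{equation*}
\Psi_\ac(\theta,y,w) \,:=\, \Psi^{kdv}\!\big(\theta,\, \ac + y,\, w\big),
\end{equation*}
interpreted appropriately on $L^2_\bot(\T_1)$, which trivially satisfies $\Psi_\ac(\theta,y,0) = \Psi^{kdv}(\theta,\ac+y,0)$ and is canonical. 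However, to obtain the pseudo-differential expansions \eqref{Psi-espl}--\eqref{depsit} one must in general further compose with a canonical, $\nu$-dependent near-identity transformation, constructed by a finite iterative procedure that removes, one order at a time, the non-pseudo-differential character of the normal-component remainder.

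The heart of the proof is to establish \textbf{(AE1)} and \textbf{(AE2)}. Starting from $\Psi^{kdv}$, whose $L^2$-differential at a finite-gap potential is the inverse Fourier transform (Theorem \ref{Birkhoff coordinates for KdV}(iii)), one writes $\Psi_\ac(\theta,y,w) = \Psi^{kdv}(\theta,\ac+y,0) + w + r(\theta,y,w;\ac)$ with $r(\theta,y,0;\ac)=0$ and $d_w r(\theta,y,0;\ac)=0$. The $1$-smoothing property of $d\Psi^{kdv}$ from \cite{KST} yields that $r$ maps into $H^{s+1}$ with an explicit leading symbol; composing with the Fourier multiplier $\partial_x^{-1}$ one identifies the coefficient $a^\Psi_{-1}$ and then iterates: assuming the expansion has been obtained up to order $k$, one reads off $a^\Psi_{-k-1}$ from the action of $\partial_x^{k+1}$ on the current remainder, using once more that the differential of the Birkhoff map is a smoothing perturbation of the Fourier transform to all orders (which requires a precise parametrix construction for $d\Psi^{kdv}$ around finite-gap potentials). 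Property \textbf{(AE2)} for $d\Psi_\ac^\top$ is obtained dually by transposing each term; the identity \eqref{coefficiente - 1 d Psi Psi top} then follows from the symplectic condition $d\Psi_\ac^\top \mathcal J\, d\Psi_\ac = \partial_x|_{L^2_\bot}$ by comparing leading symbols.

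For \textbf{(AE3)} I would Taylor expand $\mathcal H^{kdv}(\theta,y,w;\ac) = H^{kdv}\!\circ\Psi_\ac$ around $(\theta,0,0)$. Since $H^{kdv}\circ\Psi^{kdv}$ depends only on the actions $(I_k)_{k\ge 1}$, the $\theta$-derivatives of $\mathcal H^{kdv}$ vanish at $w=y=0$, producing the $\theta$-independence of the quadratic part. The linear term in $y$ gives $\omega^{kdv}(\ac)\cdot y$ by \eqref{frequency finite gap}, the quadratic term in $y$ is $\tfrac12\Omega^{kdv}_{\Splus}(\ac)[y]\cdot y$ by definition of the Hessian of $\mathcal H^{kdv}_o$, and the quadratic term in $w$ is a Fourier multiplier because $d\Psi^{kdv}|_0=\mathcal F^{-1}$ and the Birkhoff Hamiltonian is diagonal in the Birkhoff basis, whose normal symbols yield $\Omega^{kdv}_n(\ac) = \omega^{kdv}_n(\ac,0)/(2\pi n)$ as in \eqref{Omega-normal}. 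The cubic remainder $\mathcal R^{kdv}$ is smooth with gradient vanishing on the finite-gap torus (property \eqref{Rkdv-cubic-der}), which follows from the integrability of KdV on $\mathcal M_{\Splus}$.

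The main obstacle is obtaining the full asymptotic expansion to \emph{arbitrary} order $M$ together with the tame bounds in \textbf{(Est1)}--\textbf{(Est2)}: this requires a parametrix for $d\Psi^{kdv}$ that is uniform on compact families of finite-gap potentials, with quantitative control of the $\nu$-derivatives. One must carefully keep track of how derivatives of the Birkhoff map in all three arguments $(\theta,y,w)$ and in the parameter $\nu$ propagate through the iterative construction, using the multilinear estimates of Lemma \ref{lem:tame1} and Lemma \ref{lem:tame2} as the abstract framework. Because this quantitative Birkhoff analysis is lengthy and largely independent from the KAM scheme, the authors, as indicated in the introduction, treat it in the separate paper \cite{Kappeler-Montalto-pseudo} and invoke the result here as a black box.
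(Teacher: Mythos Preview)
You correctly identify the essential point: Theorem \ref{modified Birkhoff map} is \emph{not} proved in this paper. It is stated as a rephrasing of Theorem~1.1 of \cite{Kappeler-Montalto-pseudo}, and the paper only proves corollaries of it (Corollaries \ref{corollary transpose negative sobolev} and \ref{corollary Birkhoff negative sobolev}). So there is no ``paper's own proof'' to compare your proposal against; your closing paragraph already acknowledges this.

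Your sketch of a strategy is a reasonable high-level outline of what one expects to find in \cite{Kappeler-Montalto-pseudo}, but a couple of points are imprecise. First, the naive candidate $\Psi_\ac(\theta,y,w):=\Psi^{kdv}(\theta,\ac+y,w)$ does not by itself produce \textbf{(AE1)}--\textbf{(AE2)} with the stated pseudo-differential structure: the 1-smoothing results of \cite{KST} give one order of smoothing, not an expansion to arbitrary order $M$, so the iterative parametrix construction you allude to is the genuine content of \cite{Kappeler-Montalto-pseudo} and cannot be reduced to a bootstrap of \cite{KST}. Second, your argument for the diagonal form of the $w$-quadratic part in \textbf{(AE3)} invokes $d\Psi^{kdv}|_0=\mathcal F^{-1}$, but the relevant linearization is at the finite-gap potential $\Psi^{kdv}(\theta,\ac,0)$, not at $0$; the correct reason is that $H^{kdv}\circ\Psi^{kdv}$ is a function of the actions $(I_k)_{k\ge 1}$ alone, so its Hessian in the normal directions is automatically a Fourier multiplier in the Birkhoff basis. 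These are refinements rather than gaps, and since the paper treats the theorem as a citation, your recognition of that fact is the correct response here.
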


We now apply Theorem \ref{modified Birkhoff map} to prove new results concerning the extensions of $d \Psi_\nu(\frak x)^\top$ and $ d \Psi_\nu(\frak x) $ to Sobolev spaces of negative order. We refer to the paragraph after 
\eqref{EsEs} for the definitions of $\mathcal E_s$,
$E_s$ for negative $s$.

\begin{corollary}\label{corollary transpose negative sobolev}
{\bf (Extension of $d \Psi_\nu(\frak x)^\top$ and its asymptotic expansion)} 
Let $M \geq 1$. 
There exists $\sigma_M > 0 $  so that for any  $\frak x  \in  {\cal V}^{\s_M}(\delta) $ and $\ac \in \Xi$, 
 the operator $d \Psi_\nu(\frak x)^\top$ 
extends to a bounded linear operator 
$d \Psi_\nu(\frak x)^\top:  H^{- M - 1}_0(\T_1) \to  E_{- M - 1}$ and  
for any $ \widehat q \in H^{- M - 1}_0(\T_1) $, 
$d \Psi_\nu(\frak x)^\top [\widehat q ]$
admits an expansion of the form 
\begin{equation}\label{negativi 3}
d \Psi_\nu(\frak x)^\top [\widehat q ] = \Big( 0,0, \Pi_\bot \widehat q + \Pi_\bot \sum_{k = 1}^M a_{- k}^{ext}(\frak x; \nu; d \Psi^\top) \partial_x^{- k} \widehat q\Big) + {\cal R}_M^{ext}(\frak x; \nu; d \Psi^\top)[\widehat q]
\end{equation}
with the following properties:  
\\[1mm]
(i) For any $s \geq 0 $,  the maps
$$
\begin{aligned}
& {\cal V}^{\s_M}(\delta) \times \Xi \to H^s(\T_1) \, , \quad (\frak x, \nu) \mapsto a_{- k}^{ext}(\frak x; \nu; d \Psi^\top)\,, 
\qquad 1 \le k \le  M \, ,
\end{aligned}
$$
are ${\cal C}^\infty$. They satisfy 
$ a_{- 1}^{ext}(\frak x; \nu; d \Psi^\top) = 
a_{-1}^{d \Psi^\top}(\mathfrak x;  \ac )  $ 
(cf. Theorem \ref{modified Birkhoff map}-${\bf (AE2)}$) and  
 for any $\alpha \in \N^{\mathbb S_+}$, $\widehat{\frak x}_1, \ldots, \widehat{\frak x}_l \in E_{\s_M}$, 
 and  $(\frak x, \nu) \in {\cal V}^{\s_M}(\delta) \times \Xi $,
\begin{equation}\label{negativi 5}
\begin{aligned}
& \|\partial_\nu^\alpha a_{- k}^{ext}(\frak x; \nu; d \Psi^\top) \|_{H^s_x} \lesssim_{s, M, \alpha} 1\,, \\
& \|\partial_\nu^\alpha d^l a_{- k}^{ext}(\frak x; \nu; d \Psi^\top)[\widehat{\frak x}_1, \ldots, \widehat{\frak x}_l] \|_{H^s_x} \lesssim_{s, M, l, \alpha} \prod_{j = 1}^l \| \widehat{\frak x}_j \|_{E_{\s_M}}\, .
\end{aligned}
\end{equation}
$(ii)$ For any $- 1 \leq s \leq M+ 1$, the map 
$$
{\cal R}_M^{ext}(\cdot ; \cdot; d \Psi^\top) : {\cal V}^{\sigma_M}(\delta) \times \Xi \to {\cal B}(H_0^{- s}(\T_1), E_{M + 1 - s})
$$
is ${\cal C}^\infty$ and satisfies for any $\alpha \in \N^{\mathbb S_+}$, $\widehat{\frak x}_1, \ldots, \widehat{\frak x}_l \in E_{\s_M}$, 
$\widehat q \in H^{- s}_0(\T_1)$, 
and  $(\frak x, \nu) \in {\cal V}^{\s_M}(\delta) \times \Xi $,  
\begin{equation}\label{negativi 7}
\begin{aligned}
& \|\partial_\nu^\alpha {\cal R}_M^{ext}(\frak x; \nu; d \Psi^\top)[\widehat q] \|_{E_{M + 1- s}} \lesssim_{ M, \alpha } \| \widehat q\|_{H_x^{- s}}\,, \\
& \|\partial_\nu^\alpha d^l {\cal R}_M^{ext}(\frak x; \nu; d \Psi^\top)[\widehat{\frak x}_1, \ldots, \widehat{\frak x}_l][\widehat q] \|_{E_{M + 1 - s}} \lesssim_{s, M, l, \alpha} \| \widehat q\|_{H^{- s}_x} \prod_{j = 1}^l \| \widehat{\frak x}_j \|_{E_{\s_M}}\,. \\
\end{aligned}
\end{equation}
 $(iii)$ 
For any $s \geq 1$,   the map 
$$
{\cal R}_M^{ext}(\cdot ; \cdot; d \Psi^\top ) : 
\big({\cal V}^{\sigma_M}(\delta) \cap {\cal E}_{s + \sigma_M}\big) \times \Xi \to {\cal B}(H_0^{ s}(\T_1), E_{s + M + 1} )
$$
is ${\cal C}^\infty$ and satisfies 
for any $\alpha \in \N^{\mathbb S_+}$, 
$ \widehat{\frak x}_1, \ldots, \widehat{\frak x}_l \in E_{s + \sigma_M}$, 
$\widehat q \in H^{s}_0(\T_1)$, and  
$(\frak x, \nu) \in \big({\cal V}^{\sigma_M}(\delta) 
\cap {\cal E}_{s + \sigma_M}\big) \times \Xi$,
\begin{equation}\label{negativi 15a}
\begin{aligned}
& \|\partial_\nu^\alpha {\cal R}_M^{ext}(\frak x; \nu; d \Psi^\top)[\widehat q] \|_{E_{M + 1+ s}} \lesssim_{ s, M, \alpha} \| \widehat q\|_{H_x^{s}} + \| \frak{x}\|_{s + \sigma_M} \| \widehat q\|_{H^1_x} \,, \\
& \|\partial_\nu^\alpha d^l {\cal R}_M^{ext}(\frak x; \nu; d \Psi^\top ) [\widehat{\frak x}_1, \ldots, \widehat{\frak x}_l]
[\widehat q]  \|_{E_{M + 1+ s}} \lesssim_{ s, M,  l, \alpha } \| \widehat q\|_{H_x^s} \prod_{j = 1}^l \| \widehat{\frak x}_j\|_{E_{\sigma_M}}  \\
& \qquad  \qquad  +  \| \widehat q\|_{H^1_x} \Big( \sum_{j = 1}^l \| \widehat{\frak x}_j\|_{E_{s +  \sigma_M}} \prod_{i \neq j} \| \widehat{\frak x}_i\|_{E_{\sigma_M}} + \| \frak x \|_{E_{s + \sigma_M}} \prod_{j = 1}^l \| \widehat{\frak x}_j \|_{E_{\sigma_M}} \Big)\,. 
\end{aligned}
\end{equation}
\end{corollary}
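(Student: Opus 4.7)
The plan is to reduce Corollary \ref{corollary transpose negative sobolev} to Theorem \ref{modified Birkhoff map} by combining a duality argument with an asymptotic pseudo-differential expansion. First, I would differentiate the expansion (AE1) to obtain an explicit formula for $d\Psi_\ac(\frak x)[\widehat{\frak x}]$, and observe via (AE1) together with (Est1) that, for $\sigma_M$ large enough, the map $d\Psi_\ac(\frak x) : E_{M+1} \to H^{M+1}_0(\T_1)$ is bounded on $\mathcal V^{\sigma_M}(\delta) \cap \mathcal E_{M+1+\sigma_M}$, with tame bounds in $\|\frak x\|_{\mathcal E_{\sigma_M}}$. Taking its transpose with respect to the $L^2$ pairing on $L^2_0(\T_1)$ and the bilinear form \eqref{bi-form} on $E$ yields a bounded extension $d\Psi_\nu(\frak x)^\top : H^{-M-1}_0(\T_1) \to E_{-M-1}$, so that the left-hand side of \eqref{negativi 3} is meaningful.

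To derive the explicit expansion \eqref{negativi 3}, I would compute this transpose term by term using the differentiated form of (AE1). The identity block on $\widehat w$ transposes to $\Pi_\bot \widehat q$. The $\Psi$DO block $\sum_k a_{-k}^\Psi \partial_x^{-k}\widehat w$ transposes to $\sum_k (-1)^k \partial_x^{-k}(a_{-k}^\Psi \,\widehat q)$, which I would re-expand by means of the composition formula in Lemma \ref{composizione simboli omogenei} (ii) as $\sum_k a_{-k}^{ext}(\frak x;\nu;d\Psi^\top)\,\partial_x^{-k}\widehat q$ plus a smoothing operator of order $-(M+1)$. Each resulting coefficient $a_{-k}^{ext}$ is a polynomial combination of $\partial_x^j a_{-\ell}^\Psi$ with $\ell \le k$, so (Est1) yields the bounds \eqref{negativi 5}. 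The identification $a_{-1}^{ext}(\frak x;\nu;d\Psi^\top) = a_{-1}^{d\Psi^\top}(\frak x;\nu)$ follows from \eqref{coefficiente - 1 d Psi Psi top}, since the leading term of $-\partial_x^{-1}(a_{-1}^\Psi\widehat q)$ is precisely $-a_{-1}^\Psi\,\partial_x^{-1}\widehat q$.

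The remaining contributions — the transpose of the real analytic map $d_{(\theta,y)}\Psi^{kdv}(\theta,\nu+y,0)$, the transpose of $\sum_k \big(d a_{-k}^\Psi(\frak x;\nu)[\widehat{\frak x}]\big)\partial_x^{-k}w$, the transpose of $d\mathcal R_M^\Psi(\frak x;\nu)[\widehat{\frak x}]$, and the smoothing tail from the $\Psi$DO expansion — are collected into $\mathcal R_M^{ext}(\frak x;\nu; d\Psi^\top)[\widehat q]$. The quantitative bounds \eqref{negativi 7} and \eqref{negativi 15a} then follow by applying (Est1), (Est2), and the tame product and composition estimates of Lemmata \ref{lemma:LS norms}, \ref{Moser norme pesate}, carried out separately in the two regimes $s \le 0$ and $s > 0$. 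The main technical obstacle is the bilinear piece coming from $\big(d a_{-k}^\Psi[\widehat{\frak x}]\big)\partial_x^{-k}w$: its transpose acts on $\widehat q$ with the fixed factor $\partial_x^{-k}w$ as coefficient, so to obtain a map $H^{-s}_0 \to E_{M+1-s}$ for $s$ up to $M+1$ one needs $\partial_x^{-k}w$ to be regular enough to pair with a distribution in $H^{-M-1}$ and still produce $M+1$ derivatives of gain. This is exactly what forces $\sigma_M$ in the definition of $\mathcal V^{\sigma_M}(\delta)$ to be sufficiently large, so that $\partial_x^{-k}w \in H^{\sigma_M + k}_\bot$ has the required regularity across all $k=1,\dots,M$.
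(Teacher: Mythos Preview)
Your overall strategy matches the paper's: differentiate (AE1), transpose term by term, re-expand the pseudo-differential block via Lemma \ref{composizione simboli omogenei}, and collect the rest into the remainder. The treatment of item (i) is correct.

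However, there is a genuine gap in your justification of items (ii) and (iii). Your remainder $\mathcal R_M^{ext}$ is built out of the \emph{transposes} of the pieces coming from (AE1): in particular it contains $(d\mathcal R_M^\Psi)^\top$ and the transpose of $\widehat{\frak x}\mapsto (\partial_x^{-k}w)\,d a_{-k}^\Psi(\frak x;\nu)[\widehat{\frak x}]$. From (Est1) alone, duality only gives $(d\mathcal R_M^\Psi)^\top:H^{-M-1}_0\to E_0$, and the transpose of the bilinear piece has $w$-component landing merely in $L^2_\bot$ (since $(d_w a_{-k}^\Psi)^\top$ only maps into $L^2_\bot$). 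Neither yields the endpoint $H^1_0\to E_{M+2}$ required by \eqref{negativi 7} at $s=-1$, nor the tame $H^s_0\to E_{s+M+1}$ bounds in \eqref{negativi 15a}. You invoke (Est2), but (Est2) concerns the coefficients $a_{-k}^{d\Psi^\top}$, $\mathcal A_{-k}^{d\Psi^\top}$, $\mathcal R_M^{d\Psi^\top}$ of the (AE2) expansion, not the transposed (AE1) pieces; you have not explained why it applies.

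What the paper does, and what your argument is missing, is a \emph{uniqueness/identification} step: restricting your expansion to $\widehat q\in H^1_0(\T_1)$ and comparing it with (AE2), one sees that the two pseudo-differential principal parts coincide (hence in fact $a_{-k}^{ext}=a_{-k}^{d\Psi^\top}$ for all $k$), and therefore the remainders agree,
\[
\mathcal R_M^{ext}(\frak x;\nu;d\Psi^\top)[\widehat q]=\sum_{k=1}^M(\partial_x^{-k}w)\,\mathcal A_{-k}^{d\Psi^\top}(\frak x;\nu)[\widehat q]+\mathcal R_M^{d\Psi^\top}(\frak x;\nu)[\widehat q]\qquad\text{on }H^1_0(\T_1).
\]
Now (Est2) applies directly, giving the $H^1_0\to E_{M+2}$ bound and the tame estimates \eqref{negativi 15a}. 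The full range $-1\le s\le M+1$ in \eqref{negativi 7} then follows by \emph{interpolation} between this endpoint and the $H^{-M-1}_0\to E_0$ bound obtained by duality from (Est1). Both the identification step and the interpolation are essential and should be stated explicitly.
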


\begin{proof}
By Theorem \ref{modified Birkhoff map}, 
for any $(\frak x, \nu) \in {\cal V}(\delta) \times \Xi $,  
the differential $d \Psi_\nu(\frak x) : E_0 \to L^2_0(\T_1)$ is bounded and, 
for any $  M \geq 1 $, differentiating \eqref{Psi-espl}, 
$d \Psi_\nu(\frak x) [\widehat{\frak x}]$ admits the expansion 
for any $\widehat{\frak x}= (\widehat \theta, \widehat y, \widehat w) \in E_0$ of the form 
\begin{align}
& d \Psi_\nu(\frak x) [\widehat{\frak x}] = \widehat w + 
\sum_{k = 1}^M a_{- k}^\Psi(\frak x; \nu) \partial_x^{- k} \widehat w + {\cal R}_M^{(1)}(\frak x; \nu)[\widehat{\frak x}]\,,  \label{pr-exp} \\
& {\cal R}^{(1)}_M (\frak x; \nu)[\widehat{\frak x}] := 
\sum_{k = 1}^M (\partial_x^{- k} w)  d a_{- k}^\Psi(\frak x; \nu)[\widehat{\frak x}] + d {\cal R}_M^\Psi(\frak x; \nu)[\widehat{\frak x}]
+ d_{\theta,y} \Psi^{kdv}(\theta, \nu + y, 0) [\widehat \theta, \widehat y] \, . \nonumber
\end{align}
For $\s_M  \geq M $, 
the map ${\cal R}_M^{(1)} : {\cal V}^{\s_M} (\delta) \times \Xi \to
 {\cal B}(E_0, H^{M + 1}(\T_1))$ is ${\cal C}^\infty$ and satisfies, 
by Theorem \ref{modified Birkhoff map}-${\bf(Est1)}$, 
for any $ \a \in \N^{\Splus} $, $ l \geq 1 $, 
\begin{equation}\label{negativi 1}
\begin{aligned}
& \| \partial_\nu^\alpha {\cal R}_M^{(1)}(\frak x; \nu)[\widehat{\frak x}] \|_{H^{M + 1}_x} \lesssim_{M, \alpha} \| \widehat{\frak x}\|_{E_0}\,, \\
& \| \partial_\nu^\alpha d^l {\cal R}_M^{(1)}(\frak x; \nu)[\widehat{\frak x}_1, \ldots, \widehat{\frak x}_l] [\widehat{\frak x}] \|_{H^{M + 1}_x} \lesssim_{M, l, \alpha} \| \widehat{\frak x}\|_{E_0} \prod_{j = 1}^l \| \widehat{\frak x}_j\|_{E_{\s_M}}\,. 
\end{aligned}
\end{equation}
Now consider the transpose operator $d \Psi_\nu(\frak x)^\top : L^2_0(\T_1) \to E_0$. 
By \eqref{pr-exp}, for any $\widehat q \in L^2_0(\T_1) $, 
one has
\begin{equation}\label{negativi 2}
d \Psi_\nu(\frak x)^\top [\widehat q ] = \Big(0,0,  \Pi_\bot \widehat q + \Pi_\bot \sum_{k = 1}^M (- 1)^k \partial_x^{- k}\big( a_{-k}^\Psi(\frak x; \nu) \ \widehat q \big) \Big) + {\cal R}_M^{(1)}(\frak x; \nu)^\top [\widehat q]\,.
\end{equation}
Since each function $ a_{-k}^\Psi(\frak x; \nu) $ is ${\cal C}^\infty$ and 
${\cal R}^{(1)}_M (\frak x; \nu)^\top : H^{- M - 1} (\T_1) \to E_0$ is  bounded, the right hand side of \eqref{negativi 2} 
defines a linear operator in 
${\cal B} ( H^{- M - 1}_0(\T_1), E_{- M - 1}) $, which we also
denote by $d \Psi_\nu(\frak x)^\top$. 
By \eqref{composizione simboli omogenei}, the expansion \eqref{negativi 2} yields one of the form \eqref{negativi 3} where by \eqref{negativi 1} and 
Theorem \ref{modified Birkhoff map}-${\bf (Est1)}$,  the remainder 
$ {\cal R}_M^{ext}(\frak x; \nu ; d \Psi^\top)$ satisfies
 for any $\alpha \in \N^{\mathbb S_+}$, $\widehat{\frak x}_1, \ldots, \widehat{\frak x}_l \in E_{\s_M}$, 
 and $\widehat q \in H^{-M  - 1}_0(\T_1)$
\begin{equation}\label{pallone 0}
\begin{aligned}
& \|\partial_\nu^\alpha {\cal R}_M^{ext}(\frak x; \nu; d \Psi^\top)[\widehat q] \|_{E_{0}} \lesssim_{ M, \alpha } \| \widehat q\|_{H_x^{- M - 1}}\,, \\
& \|\partial_\nu^\alpha d^l {\cal R}_M^{ext}(\frak x; \nu; d \Psi^\top)[\widehat{\frak x}_1, \ldots, \widehat{\frak x}_l][\widehat q] \|_{E_{0}} \lesssim_{M, l, \alpha} \| \widehat q\|_{H^{- M - 1}_x} \prod_{j = 1}^l \| \widehat{\frak x}_j \|_{E_{\s_M}}\,. 
\end{aligned}
\end{equation}
The restriction of the operator $d \Psi_\nu(\frak x)^\top:  H^{- M - 1}_0(\T_1) \to  E_{- M - 1}$ to $ H_0^1(\T_1) $ coincides 
with \eqref{depsit} and,  
by the uniqueness of an expansion of this form, 
$$
\begin{aligned}
& a_{- k}^{ext}(\frak x; \nu; d \Psi^\top) = a_{-k}^{d \Psi^\top}(\mathfrak x;  \ac ) \, , \quad k = 1, \ldots, M\,, 
\\
&  {\cal R}_M^{ext}(\frak x; \nu ; d \Psi^\top)[\widehat q]= \sum_{k = 1}^M  (\partial_x^{- k} w )
\mathcal A_{-k}^{ d \Psi^\top}(\mathfrak x;  \ac )[\widehat q]  \,  + 
    {\cal R}_{M}^{d \Psi^\top}(\mathfrak x;  \ac )[\widehat q] \, , \quad 
    \forall \widehat q \in H_0^1(\T_1) \, . 
    \end{aligned}
$$ 
The claimed estimates \eqref{negativi 5} and \eqref{negativi 15a}
 then follow by Theorem \ref{modified Birkhoff map}-${\bf (Est2)}$. 
 In particular we have, for any $\alpha \in \N^{\mathbb S_+}$, $\widehat{\frak x}_1, \ldots, \widehat{\frak x}_l \in E_{\s_M}$, $\widehat q \in H^{1}_0(\T_1)$, 
\begin{equation}\label{negativi 6}
\begin{aligned}
& \|\partial_\nu^\alpha {\cal R}_M^{ext}(\frak x; \nu; d \Psi^\top)[\widehat q] \|_{E_{M + 2}} \lesssim_{ M, \alpha } \| \widehat q\|_{H_x^{1}}\,, \\
& \|\partial_\nu^\alpha d^l {\cal R}_M^{ext}(\frak x; \nu; d \Psi^\top)[\widehat{\frak x}_1, \ldots, \widehat{\frak x}_l][\widehat q] \|_{E_{M + 2}} \lesssim_{M, l, \alpha} \| \widehat q\|_{H^{1}_x} \prod_{j = 1}^l \| \widehat{\frak x}_j \|_{E_{\s_M}}\,. \\
\end{aligned}
\end{equation}
Finally 
the estimates \eqref{negativi 7} 
follow by interpolation between \eqref{pallone 0} and \eqref{negativi 6}.
\end{proof}

\begin{corollary}\label{corollary Birkhoff negative sobolev}
{\bf (Extension of $d_\bot \Psi_\nu(\frak x)$ and its asymptotic expansion)}
Let $M \geq 1$. 
There exists $\sigma_M > 0 $  so that for any  $\frak x  \in  {\cal V}^{\s_M}(\delta) $ and $\ac \in \Xi$,  
the operator $d_\bot \Psi_\nu(\frak x)$  extends to a bounded linear operator, 
 $d_\bot \Psi_\nu(\frak x): H_\bot^{- M - 2}(\T_1) 
 \to  H^{- M - 2}_0(\T_1)$, and 
for any  $\widehat w \in H_\bot^{- M - 2}(\T_1) $, 
$d_\bot \Psi_\nu(\frak x)[\widehat w]$ admits an expansion 
\begin{equation}\label{negativi 11}
d_\bot \Psi_\nu(\frak x)[\widehat w] = \widehat w +  
 \sum_{k = 1}^M a_{- k}^{ext}(\frak x; \nu; d_\bot \Psi) \partial_x^{- k} \widehat w + {\cal R}_M^{ext}(\frak x; \nu; d_\bot \Psi)[\widehat w]
\end{equation}
with the following properties:
\\[1mm]
(i) For any $s \geq 0 $,  the maps
$$
\begin{aligned}
& {\cal V}^{\s_M}(\delta) \times \Xi \to H^s(\T_1) \, , \quad (\frak x, \nu) \mapsto a_{- k}^{ext}(\frak x; \nu; d_\bot \Psi)\,, 
\qquad 1 \le k \le  M \, ,
\end{aligned}
$$
are ${\cal C}^\infty$. They satisfy
$ a_{- 1}^{ext}(\frak x; \nu; d_\bot \Psi) = 
a_{-1}^\Psi(\mathfrak x; \ac ) $ 
(cf. Theorem \ref{modified Birkhoff map}-${\bf (AE1)}$) and 
 for any $\alpha \in \N^{\mathbb S_+}$, $\widehat{\frak x}_1, \ldots, \widehat{\frak x}_l \in E_{\s_M}$,  and
 $(\frak x, \nu) \in {\cal V}^{\s_M}(\delta) \times \Xi $,
\begin{equation}\label{negativi 13}
\begin{aligned}
& \|\partial_\nu^\alpha a_{- k}^{ext}(\frak x; \nu; d_\bot \Psi) \|_{H^s_x} \lesssim_{s, M, \alpha} 1\,, \\
& \|\partial_\nu^\alpha d^l a_{- k}^{ext}(\frak x; \nu; d_\bot \Psi)[\widehat{\frak x}_1, \ldots, \widehat{\frak x}_l] \|_{H^s_x} \lesssim_{s, M, l, \alpha} \prod_{j = 1}^l \| \widehat{\frak x}_j \|_{E_{\s_M}}\, .
\end{aligned}
\end{equation}
$(ii)$ For any $0 \leq s \leq M + 2 $, the map 
$$
{\cal R}_M^{ext}(\cdot , \cdot; d_\bot \Psi) : {\cal V}^{\sigma_M}(\delta) \times \Xi \to {\cal B}(H_\bot^{- s}(\T_1), H^{M + 1 - s}(\T_1))
$$
is ${\cal C}^\infty$ and satisfies, 
for any $\alpha \in \N^{\mathbb S_+}$, 
$\widehat{\frak x}_1, \ldots, \widehat{\frak x}_l \in E_{\s_M}$, $\widehat w \in H^{- s}_\bot(\T_1)$, and
$(\frak x, \nu) \in {\cal V}^{\s_M}(\delta) \times \Xi $,
\begin{equation}\label{negativi 15}
\begin{aligned}
& \|\partial_\nu^\alpha 
 {\cal R}_M^{ext}(\frak x; \nu; d_\bot \Psi)[\widehat w] \|_{H^{M + 1- s}_x} \lesssim_{ M, \alpha } \| \widehat w\|_{H_x^{- s}}\,, \\
& \|\partial_\nu^\alpha d^l {\cal R}_M^{ext}(\frak x; \nu; d_\bot \Psi)[\widehat{\frak x}_1, \ldots, \widehat{\frak x}_l][\widehat w] \|_{H^{M + 1 - s}_x} \lesssim_{s, M, l, \alpha} \| \widehat w\|_{H^{- s}_x} \prod_{j = 1}^l \| \widehat{\frak x}_j \|_{E_{\s_M}}\,. \\
\end{aligned}
\end{equation}
$(iii)$ For any $s \geq 0$,  
the map 
$$
{\cal R}_M^{ext}(\cdot , \cdot; d_\bot \Psi) : 
\big( {\cal V}^{\sigma_M}(\delta) \cap {\cal E}_{s + \sigma_M} \big)
 \times \Xi \to {\cal B}(H_\bot^{ s}(\T_1), H^{M + 1 + s}(\T_1))
$$
is ${\cal C}^\infty$ and satisfies for any $\alpha \in \N^{\mathbb S_+}$, $\widehat{\frak x}_1, \ldots, \widehat{\frak x}_l \in E_{s + \sigma_M}$, 
$\widehat w \in H^{s}_\bot(\T_1)$, and
$(\frak x, \nu) \in 
\big( {\cal V}^{\sigma_M}(\delta) \cap {\cal E}_{s + \sigma_M} \big) \times \Xi $,
\begin{equation}\label{negativi 16}
\begin{aligned}
& \|\partial_\nu^\alpha  {\cal R}_M^{ext}(\frak x; \nu; d_\bot \Psi)[\widehat w] 
\|_{H^{M + 1+ s}_x} \lesssim_{ s, M,  \alpha} 
\| \widehat w\|_{H_x^s} + \| \frak x\|_{E_{s + \sigma_M}} \| \widehat w\|_{L^2_x} \, , \\
& \|\partial_\nu^\alpha d^l \big( {\cal R}_M^{ext}(\frak x; \nu; d_\bot \Psi)[\widehat w] \big)[\widehat{\frak x}_1, \ldots, \widehat{\frak x}_l] \|_{H^{M + 1+ s}_x} \lesssim_{ s, M,  l, \alpha } \| \widehat w\|_{H_x^s} \prod_{j = 1}^l \| \widehat{\frak x}_j\|_{E_{\sigma_M}}  \\
& \quad +  \| \widehat w\|_{L^2_x} \Big( \sum_{j = 1}^l \| \widehat{\frak x}_j\|_{E_{s + \sigma_M}} \prod_{i \neq j} \| \widehat{\frak x}_i\|_{E_{\sigma_M}} + \| \frak x \|_{E_{s + \sigma_M}} \prod_{j = 1}^l \| \widehat{\frak x}_j \|_{E_{\sigma_M}} \Big)\,. 
\end{aligned}
\end{equation}
\end{corollary}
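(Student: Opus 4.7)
The argument parallels the one used for Corollary \ref{corollary transpose negative sobolev}. I would start by differentiating the asymptotic expansion \eqref{Psi-espl} of $\Psi_\nu$ with respect to $w$ in the direction $\widehat w\in L^2_\bot(\T_1)$: since the principal term $\Psi^{kdv}(\theta,\nu+y,0)$ does not depend on $w$, this yields
\begin{equation*}
d_\bot \Psi_\nu(\mathfrak{x})[\widehat w] = \widehat w + \sum_{k=1}^M a_{-k}^\Psi(\mathfrak{x};\nu)\,\partial_x^{-k}\widehat w + \mathcal{R}_M^{(1)}(\mathfrak{x};\nu)[\widehat w],
\end{equation*}
where
\begin{equation*}
\mathcal{R}_M^{(1)}(\mathfrak{x};\nu)[\widehat w] := \sum_{k=1}^M \bigl(d_\bot a_{-k}^\Psi(\mathfrak{x};\nu)[\widehat w]\bigr)\,\partial_x^{-k}w + d_\bot \mathcal{R}_M^\Psi(\mathfrak{x};\nu)[\widehat w].
\end{equation*}
Setting $a_{-k}^{ext}(\mathfrak{x};\nu;d_\bot\Psi):=a_{-k}^\Psi(\mathfrak{x};\nu)$ produces the expansion \eqref{negativi 11} with the claimed identification of the leading coefficient, and the bounds \eqref{negativi 13} follow at once from Theorem \ref{modified Birkhoff map}-\textbf{(Est1)}.

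The core of the argument is the extension of $d_\bot\Psi_\nu(\mathfrak{x})$ to $H_\bot^{-M-2}(\T_1)$ and the estimates on the remainder. The first two terms of the expansion clearly make sense for any $\widehat w\in H^{-M-2}_\bot$: the identity is trivial, and $\widehat w\mapsto a_{-k}^\Psi(\mathfrak{x};\nu)\partial_x^{-k}\widehat w$ is bounded since $a_{-k}^\Psi$ is $\mathcal{C}^\infty$ in $x$ and $\partial_x^{-k}$ gains $k$ derivatives. For the remainder, the key observation (coming from Theorem \ref{modified Birkhoff map}-\textbf{(Est1)}) is that the linear maps $d_\bot a_{-k}^\Psi(\mathfrak{x};\nu):L^2_\bot\to H^s(\T_1)$ and $d_\bot\mathcal{R}_M^\Psi(\mathfrak{x};\nu):L^2_\bot\to H^{s+M+1}(\T_1)$ are bounded for \emph{every} $s\ge 0$. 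Such infinitely smoothing operators admit Schwartz kernels smooth in one variable and therefore extend, by duality in the $L^2_x$ pairing, to bounded operators from $H^{-s}_\bot$ into $H^{s'}$ for arbitrary $s,s'\ge 0$. This provides the extension of $\mathcal{R}_M^{(1)}(\mathfrak{x};\nu)$, and hence of $d_\bot\Psi_\nu(\mathfrak{x})$, to $H^{-M-2}_\bot$.

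With the extension secured, the estimates (ii)-(iii) will follow by two endpoint bounds plus interpolation, exactly as for the transpose in Corollary \ref{corollary transpose negative sobolev}. For the endpoint at $s=M+2$ (input in $H^{-M-2}_\bot$), I would use the duality bounds on $d_\bot a_{-k}^\Psi$ and $d_\bot\mathcal{R}_M^\Psi$ stated above to obtain a bound of $\|\mathcal{R}_M^{ext}[\widehat w]\|_{L^2_x}$ in terms of $\|\widehat w\|_{H^{-M-2}_x}$, with the factors involving $\partial_x^{-k}w$ controlled by $\|\mathfrak{x}\|_{\mathcal{V}^{\sigma_M}(\delta)}$. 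For the other endpoint (input $\widehat w\in L^2_\bot$), (Est1) together with the product estimate \eqref{p1-pr} gives control of $\|\mathcal{R}_M^{ext}[\widehat w]\|_{H^{M+1+s}_x}$ in terms of $\|\widehat w\|_{L^2_x}$, with the $H^s_x$-behaviour of the base point $\mathfrak{x}$ entering through the coefficients $\partial_x^{-k}w$ and through the derivative estimates of $\mathcal{R}_M^\Psi$. Standard Sobolev interpolation between these two bounds yields \eqref{negativi 15}, while the tame version \eqref{negativi 16} is obtained by using in addition the full tame estimates of \textbf{(Est1)}. The bounds involving higher derivatives $d^l$ are treated in the same way, the combinatorics being identical to those appearing in the proof of Lemma \ref{lem:tame2}.

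The main potential obstacle is not a conceptual one but rather the careful bookkeeping of how the smoothing character of $d_\bot a_{-k}^\Psi$ interacts with the multiplier $\partial_x^{-k}w$ (whose Sobolev regularity is controlled by $\mathfrak{x}$) when one takes further Fréchet derivatives $d^l$ and parameter derivatives $\partial_\nu^\alpha$. Indeed, each $d$-derivative distributes over all the $\mathfrak{x}$-dependencies (the coefficient $a_{-k}^\Psi$, its differential, and the factor $\partial_x^{-k}w$), producing a sum of tensorial terms whose estimates must be carefully reassembled to match the structure of \eqref{negativi 16}; this is handled by the Leibniz rule and repeated application of \textbf{(Est1)} together with \eqref{p1-pr}, \eqref{int-young}.
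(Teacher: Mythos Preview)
Your extension argument has a genuine gap. You correctly observe from \textbf{(Est1)} that $d_\bot a_{-k}^\Psi(\frak x;\nu): L^2_\bot \to H^s(\T_1)$ and $d_\bot\mathcal{R}_M^\Psi(\frak x;\nu): L^2_\bot \to H^{s+M+1}(\T_1)$ are bounded for every $s\ge 0$, i.e.\ the Schwartz kernels are smooth in the \emph{output} variable. But smoothness in the output alone does not give an extension to negative-order inputs ``by duality'': a rank-one operator $T[f]=g\,\langle f,h\rangle_{L^2}$ with $g\in\mathcal{C}^\infty$ and $h\in L^2\setminus H^1$ maps $L^2\to H^s$ for all $s$ yet does not extend continuously to $H^{-1}$. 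What duality yields is boundedness of the transpose $T^\top:H^{-s}\to L^2$, not an extension of $T$ itself. The estimates in \textbf{(Est1)} concern only arguments $\widehat{\frak x}\in E_0$, so they provide no information about how $d_\bot a_{-k}^\Psi$ or $d_\bot\mathcal{R}_M^\Psi$ act on distributional $\widehat w$.

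The paper obtains the extension by a route that is parallel---but \emph{dual}---to that of Corollary~\ref{corollary transpose negative sobolev}. There, $d\Psi_\nu^\top$ was extended by writing out the expansion of $d\Psi_\nu$ (differentiating \textbf{(AE1)}) and taking the transpose. Here one must instead start from the expansion of $d_\bot\Psi_\nu(\frak x)^\top$ furnished by \textbf{(AE2)}, whose remainder $\mathcal{R}_M^{(2)}$ (built from $\mathcal A_{-k}^{d\Psi^\top}$ and $\mathcal{R}_M^{d\Psi^\top}$) maps $H^1_0\to H^{M+2}_\bot$ by \textbf{(Est2)}. Taking the transpose of that expansion defines $(d_\bot\Psi_\nu^\top)^\top$ on $H^{-M-2}_\bot$, since $(\mathcal{R}_M^{(2)})^\top:H^{-M-2}_\bot\to H^{-1}_0$ is automatically bounded; the pseudodifferential part becomes $\sum(-1)^k\partial_x^{-k}(a_{-k}^{d\Psi^\top}\,\cdot\,)$, which one reorganizes into the form \eqref{negativi 11} via Lemma~\ref{composizione simboli omogenei}. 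The identification $a_{-k}^{ext}=a_{-k}^\Psi$ and the description of the remainder in terms of \textbf{(Est1)} objects then follow by \emph{uniqueness} of the expansion on $L^2_\bot$, where the double-transpose agrees with your direct differentiation. The negative-regularity endpoint in \eqref{negativi 15} comes from \textbf{(Est2)} via the transpose, the $L^2_\bot$ endpoint and items (i), (iii) from \textbf{(Est1)}, and intermediate $s$ by interpolation---that part of your plan is fine.
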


\begin{proof}
By Theorem \ref{modified Birkhoff map}-${\bf (AE2)}$, 
for any $(\frak x, \nu) \in {\cal V}^1 (\delta) \times \Xi $,  the operator 
$d_\bot \Psi_\nu(\frak x)^\top : H_0^1(\T_1) \to H^1_\bot(\T_1)$ is bounded and for any 
$ M \geq 1 $ and $  \widehat q \in H_0^1(\T_1) $,   
$d_\bot \Psi_\nu(\frak x)^\top [\widehat{q} \, ]$ 
admits the expansion of the form
\begin{equation}\label{negativi 8}
\begin{aligned}
& d_\bot \Psi_\nu(\frak x)^\top [\widehat{q} \, ] = \Pi_\bot \widehat q + \Pi_\bot \sum_{k = 1}^M a_{- k}^{d \Psi^\top}(\frak x; \nu) \partial_x^{- k} \widehat q + {\cal R}_M^{(2)}(\frak x; \nu)[\widehat q \, ] \, ,  \\
& {\cal R}^{(2)}_M (\frak x; \nu)[ \widehat q \,  ] 
:= \Pi_\bot\sum_{k = 1}^M  (\partial_x^{- k} w) \, {\cal A}_{- k}^{d \Psi^\top}(\frak x; \nu)[\widehat q] +   {\cal R}_M^{d \Psi^\top}(\frak x; \nu)[\widehat q \, ] \, .
\end{aligned}
\end{equation}
For $ \s_M \geq M + 1$, the map ${\cal R}_M^{(2)} : {\cal V}^{\s_M}(\delta) \times \Xi \to {\cal B}(H_0^1(\T_1), H_\bot^{M + 2}(\T_1))$ is ${\cal C}^\infty$ and   
by Theorem \ref{modified Birkhoff map}-${\bf(Est2)}$,
satisfies for any $ \a \in \N^{\Splus} $
and $\widehat{\frak x}_1, \ldots, \widehat{\frak x}_l \in E_{\s_M}$
\begin{equation}\label{negativi 9}
\begin{aligned}
& \| \partial_\nu^\alpha {\cal R}_M^{(2)}(\frak x; \nu)[\widehat q] \|_{H^{M + 2}_x} \lesssim_{M, \alpha} \| \widehat q\|_{H^1_x}\,, \\
& \| \partial_\nu^\alpha d^l {\cal R}_M^{(2)}(\frak x; \nu)[\widehat{\frak x}_1, \ldots, \widehat{\frak x}_l][\widehat q] \|_{H^{M + 2}_x} \lesssim_{M, l, \alpha} \| \widehat q\|_{H^1_x} \prod_{j = 1}^l \| \widehat{\frak x}_j\|_{E_{\s_M}}\,. 
\end{aligned}
\end{equation}
Now consider the transpose operator $\big(d_\bot \Psi_\nu(\frak x)^\top \big)^\top : H^{- 1}_\bot(\T_1) \to H_0^{- 1}(\T_1)$. It defines an extension
of  $ d_\bot \Psi_\nu(\frak x) $ to $H^{- 1}_\bot(\T_1)$,
which we denote again by $d_\bot \Psi_\nu(\frak x)$. 
By \eqref{negativi 8}, for any $\widehat w \in H_\bot^{- 1}(\T_1)$, one has
\begin{equation}\label{negativi 10}
d_\bot \Psi_\nu(\frak x)[\widehat w] = \widehat w +
 \sum_{k = 1}^M (- 1)^k \partial_x^{- k}\big( a_{-k}^{d \Psi^\top}(\frak x; \nu) \widehat w \big)  + 
{\cal R}_M^{(2)}(\frak x; \nu)^\top [\widehat w]\,.
\end{equation}
Since each function $ a_{-k}^{d \Psi^\top}(\frak x; \nu) $ is 
${\cal C}^\infty$ and 
the operator ${\cal R}_M^{(2)}(\frak x; \nu)^\top : H^{- M - 2}_\bot(\T_1) \to H_0^{- 1}(\T_1)$ is bounded, 
the right hand side of \eqref{negativi 10}
defines a linear operator in 
${\cal B} ( H^{- M - 2}_0(\T_1), E_{- M - 2}) $, which we also
denote by $d \Psi_\nu(\frak x)$. 
By \eqref{composizione simboli omogenei}, the expansion 
\eqref{negativi 10} yields one of the form \eqref{negativi 11} 
where by \eqref{negativi 9} and 
Theorem \ref{modified Birkhoff map}-${\bf (Est2)}$,  the remainder 
$ {\cal R}_M^{ext}(\frak x; \nu ; d \Psi^\top)$ satisfies
 for any $\alpha \in \N^{\mathbb S_+}$, $\widehat{\frak x}_1, \ldots, \widehat{\frak x}_l \in E_{\s_M}$, 
 and $\widehat w \in H^{-M  - 2}_0(\T_1)$
\begin{equation}\label{pallone 1}
\begin{aligned}
& \|\partial_\nu^\alpha 
 {\cal R}_M^{ext}(\frak x; \nu; d_\bot \Psi)[\widehat w] \|_{H^{- 1}_x} \lesssim_{ M, \alpha } \| \widehat w\|_{H_x^{- M - 2}}\,, \\
& \|\partial_\nu^\alpha d^l {\cal R}_M^{ext}(\frak x; \nu; d_\bot \Psi)[\widehat{\frak x}_1, \ldots, \widehat{\frak x}_l][\widehat w] \|_{H^{- 1}_x} \lesssim_{M, l, \alpha} \| \widehat w\|_{H_x^{- M - 2}} \prod_{j = 1}^l \| \widehat{\frak x}_j \|_{E_{\s_M}}\,. \\
\end{aligned}
\end{equation}
The restriction of the expansion \eqref{negativi 10} to $L^2_\bot (\T_1)$  coincides with the one of
$ d_\bot \Psi_\nu (\frak x)[\widehat w] $, obtained  by
differentiating  \eqref{Psi-espl} (see \eqref{pr-exp}). 
It then follows from the uniqueness of an expansion of this form that 
$$
\begin{aligned}
&a_{- k}^{ext}(\frak x; \nu; d_\bot \Psi) = a_{-k }^\Psi(\mathfrak x; \ac ) \, , \quad k = 1, \ldots, M\,, \\
&  {\cal R}_M^{ext}(\mathfrak x; \ac; d_\bot \Psi)[\widehat w]= \sum_{k = 1}^M 
(\partial_x^{- k} w) d_\bot a_{-k }^\Psi(\mathfrak x; \ac )[\widehat w] 
+ d_\bot {\cal R}_{M }^\Psi(\mathfrak x; \ac ) [\widehat w] , \quad \forall \widehat w \in L^2_\bot(\T_1) \, . 
\end{aligned}
$$
The claimed estimates \eqref{negativi 13} and 
 \eqref{negativi 16} thus follow by 
Theorem \ref{modified Birkhoff map}-${\bf (Est1)}$. In particular,  
for any $\alpha \in \N^{\mathbb S_+}$, $\widehat{\frak x}_1, \ldots, \widehat{\frak x}_l \in E_{\s_M}$, and $\widehat w \in L^2_\bot(\T_1)$,  
\begin{equation}\label{negativi 14}
\begin{aligned}
& \|\partial_\nu^\alpha {\cal R}_M^{ext}(\frak x; \nu; d_\bot \Psi)[\widehat w] \|_{H^{M + 1}_x} \lesssim_{ M, \alpha } \| \widehat w\|_{L^2_x}\,, \\
& \|\partial_\nu^\alpha d^l {\cal R}_M^{ext}(\frak x; \nu; d_\bot \Psi)[\widehat{\frak x}_1, \ldots, \widehat{\frak x}_l][\widehat w] \|_{H^{M + 1}_x} \lesssim_{M, l, \alpha} \| \widehat w\|_{L^2_x} \prod_{j = 1}^l \| \widehat{\frak x}_j \|_{E_{\s_M}}\,. 
\end{aligned}
\end{equation}
The claimed estimates \eqref{negativi 15} are then obtained by interpolating between \eqref{pallone 1} and  \eqref{negativi 14}. 
\end{proof}

\subsection{Expansions of linearized Hamiltonian vector fields}\label{espansione linearized}

For any Hamiltonian of the form
$ P(u) = \int_{\T_1} f(x, u, u_x)\, d x$ with a $ {\cal C}^\infty  $-smooth density 
\be\label{densityf}
f : \T_1 \times \R \times \R \mapsto \R \, , \ 
(x, \zeta_0, \zeta_1 ) \mapsto f (x, \zeta_0, \zeta_1 ) 	\, , 
\ee
define
\begin{equation}\label{definizione perturbazione quasilin}
{\cal P}  := P \circ \Psi_\ac \, , \quad 
{\cal P} (\theta, y, w; \ac):= P (\Psi_\ac (\theta, y, w))   
\end{equation}
where $\Psi_\ac$ is the coordinate transformation of Theorem \ref{modified Birkhoff map}. 
As a first result, we provide an expansion 
of the linearized Hamiltonian vector field 
$ \partial_x d_\bot  \nabla_w \mathcal P $. 
 \begin{lemma}\label{differential nabla perturbation-true}
 {\bf (Expansion of $ \partial_x d_\bot \nabla_w {\cal P} $)}
 Let $ P(u) = \int_{\T_1} f(x, u, u_x)\, d x$ with $f \in {\cal C}^{\infty}(\T_1 \times \R \times \R)$. 
For any $ M \in \N $ there is  $ \sigma_M > 0 $  so that 
for any  $\mathfrak x  \in {\cal V}^{\s_M}(\delta)$
and $\ac \in \Xi$, the operator 
$ \partial_x d_\bot \nabla_w {\cal P}(\mathfrak x; \ac )$ admits an expansion of the form
\begin{equation}\label{pseudodifferential expansion of d bot nabla P} 
\partial_x d_\bot \nabla_w {\cal P}(\mathfrak x; \ac)[\cdot ] =
{\Pi}_\bot  \sum_{k = 0}^{M+3}  a_{3 -k}(\mathfrak x; \ac ; \partial_x d_\bot \nabla_w {\cal P}) \, \partial_x^{3 - k}[\cdot  ]  \,    +  {\cal R}_M(\mathfrak x; \ac ; \partial_x d_\bot \nabla_w {\cal P})[\cdot ]  
\end{equation}
with the following properties:
\begin{enumerate}
\item \label{pr1} For any $s \geq 0 $, the maps
$$
( {\cal V}^{\s_M}(\delta)
\cap {\mathcal E}_{s+\s_M} )  \times \Xi  \to H^s(\T_1) \, ,  \quad
(\mathfrak x ; \ac) \mapsto  a_{3 - k}( \mathfrak x; \ac ; \partial_x d_\bot \nabla_w {\cal P} ) \, , \qquad 0 \le k \le  M + 3 \, ,
$$
are  $ {\cal C}^\infty $, and satisfy for any 
$ \alpha \in \N^{\mathbb S_+}$,
$\widehat {\mathfrak x}_1, \ldots, \widehat { \mathfrak x}_l \in E_{s + \sigma_M}$, and $(\mathfrak x, \ac)  \in  \big({\cal V}^{\s_M}(\delta) \cap {\mathcal E}_{s+\s_M}\big) \times \Xi$, 
 \begin{align} \label{le1:es1}
 & \!\!\!\!\!\! \!\!\!\!\!\!  \| \partial_\ac^\alpha a_{3 - k}( \mathfrak x; \ac ; \partial_x d_\bot \nabla_w {\cal P}) \|_{H^s_x} \lesssim_{s, M, \alpha} 1 + \| w \|_{H^{s + \sigma_M}_x}\,, \\
 & \!\!\!\!\!\! \!\!\!\!\!\! 
 \| \partial_\ac^\alpha d^l a_{3 - k}( \mathfrak x; \ac; \partial_x d_\bot \nabla_w {\cal P})[\widehat{\mathfrak x}_1, \ldots, \widehat{\mathfrak x}_l]\|_{H^s_x}   \lesssim_{s, M, l, \alpha} \sum_{j = 1}^l \big( \| \widehat{\mathfrak x}_j\|_{E_{s + \sigma_M}} \prod_{n \neq j}  \| \widehat{\mathfrak x}_n\|_{E_{\sigma_M}}\big) + \| w \|_{H^{s + \sigma_M}_x} \prod_{j = 1}^l \|\widehat{\mathfrak x}_j \|_{E_{\sigma_M}}.  \nonumber 
 \end{align}

\item \label{pr3} 
For any $0 \leq s \leq M + 1$,   the map 
  $$
{\cal V}^{\sigma_M}(\delta)  \times \Xi \to {\cal B}(H^{-s}(\T_1), 
H_\bot^{M + 1 - s}(\T_1)) \, , \quad (\frak x, \nu) \mapsto {\cal R}_M(\mathfrak x; \ac ; \partial_x d_\bot \nabla_w {\cal P}) \, , 
  $$
  is ${\cal C}^\infty$ and satisfies for any 
    $\alpha \in \N^{\mathbb S_+}$, 
  $\widehat{\frak x}_1, \ldots, \widehat{\frak x}_l \in E_{ \sigma_M}$,
  $(\frak x, \ac) \in {\cal V}^{\sigma_M}(\delta) \times \Xi$, 
  and $\widehat w \in H^{-s}_\bot (\T_1)$,
\be\label{le1:es3}
\begin{aligned}
 &  \| \partial_\nu^\alpha {\cal R}_M(\mathfrak x; \ac ; \partial_x d_\bot \nabla_w {\cal P})[\widehat w]\|_{H^{ M + 1 - s}_x} \lesssim_{s, M, \alpha} \| \widehat w\|_{H^{-s}_x} \,,  \\
 & \| \partial_\nu^\alpha d^l \big( {\cal R}_M(\mathfrak x; \ac ; \partial_x d_\bot \nabla_w {\cal P})[\widehat w] \big)[\widehat{\frak x}_1, \ldots, \widehat{\frak x}_l]\|_{H^{M + 1 -s}_x} \lesssim_{s, M, l, \alpha} \| \widehat w\|_{H^{-s}_x} \prod_{j = 1}^l 
 \| \widehat{\frak x}_j \|_{E_{\sigma_M}} \,. \\
   \end{aligned}
\ee
 \item \label{pr2}
For any $ s \geq 0 $, the map 
 $$
 ( {\cal V}^{\sigma_M}(\delta) \cap {\cal E}_{ s + \sigma_M} ) \times \Xi \to {\cal B}(H^s(\T_1), H_\bot^{s + M + 1}(\T_1)) \, , \quad (\frak x, \nu) \mapsto{\cal R}_M(\mathfrak x; \ac ; \partial_x d_\bot \nabla_w {\cal P}) \, , 
  $$
  is ${\cal C}^\infty$ and satisfies for any 
 $\alpha \in \N^{\mathbb S_+}$,
 $\widehat{\frak x}_1, \ldots, \widehat{\frak x}_l \in E_{s + \sigma_M} $,   
  $(\frak x, \ac) \in 
  ({\cal V}^{\sigma_M}(\delta) \cap {\cal E}_{ s + \sigma_M})
  \times \Xi$, and  $\widehat w \in H^s_\bot (\T_1)$,
\be\label{le1:es2}
  \begin{aligned}
 &  \| \partial_\nu^\alpha {\cal R}_M(\mathfrak x; \ac ; \partial_x d_\bot \nabla_w {\cal P})[\widehat w]\|_{H^{s + M + 1}_x} \lesssim_{s, M, \alpha} \| \widehat w\|_{H^s_x} + \| w\|_{H^{s + \sigma_M}_x} \| \widehat w\|_{L^2_x}\,,  \\
 & \| \partial_\nu^\alpha d^l \big( {\cal R}_M(\mathfrak x; \ac ; \partial_x d_\bot \nabla_w {\cal P})[\widehat w] \big)[\widehat{\frak x}_1, \ldots, \widehat{\frak x}_l]\|_{H^{s + M + 1}_x} \lesssim_{s, M, l, \alpha} \| \widehat w\|_{H^s_x} \prod_{j = 1}^l \| \widehat{\frak x}_j \|_{E_{\sigma_M}}  \\
 & \quad + \| \widehat w\|_{L^2_x} \big( \| w\|_{H^{s + \sigma_M}_x} \prod_{j = 1}^l \| \widehat{\frak x}_j\|_{E_{\sigma_M}} + \sum_{j = 1}^l \| \widehat{\frak x}_j \|_{E_{s + \sigma_M}} \prod_{i \neq j} \| \widehat{\frak x}_i \|_{E_{\sigma_M}} \big)  \,.
  \end{aligned}
\ee
\end{enumerate}
\end{lemma}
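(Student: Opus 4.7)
The plan is to use the chain rule together with the pseudo-differential expansions of $d_\bot \Psi_\nu$ and $d_\bot \Psi_\nu^\top$ established in Theorem \ref{modified Birkhoff map} and Corollaries \ref{corollary transpose negative sobolev}--\ref{corollary Birkhoff negative sobolev}. Since $P(u)=\int f(x,u,u_x)\,dx$, its $L^2$-gradient is $\nabla P(u) = (\partial_{\zeta_0} f)(x,u,u_x) - \partial_x\bigl[(\partial_{\zeta_1} f)(x,u,u_x)\bigr]$, so by the chain rule $\nabla_w \mathcal{P}(\mathfrak{x};\nu) = d_\bot \Psi_\nu(\mathfrak{x})^\top\bigl[\nabla P(\Psi_\nu(\mathfrak{x}))\bigr]$. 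Differentiating once more in $w$ yields
\begin{equation*}
d_\bot \nabla_w \mathcal{P}(\mathfrak{x};\nu)[\widehat w] \;=\; d_\bot \Psi_\nu(\mathfrak{x})^\top \circ d\nabla P(\Psi_\nu(\mathfrak{x})) \circ d_\bot \Psi_\nu(\mathfrak{x})[\widehat w] \;+\; \mathcal{T}(\mathfrak{x};\nu)[\widehat w],
\end{equation*}
where $\mathcal{T}[\widehat w] := (D_w d_\bot \Psi_\nu^\top)[\widehat w]\bigl[\nabla P(\Psi_\nu(\mathfrak{x}))\bigr]$ is the ``Hessian'' correction. A direct calculation shows that $d\nabla P(u)$ is a second order differential operator $-\alpha_2(x;u)\partial_x^2 + \alpha_1(x;u)\partial_x + \alpha_0(x;u)$, with $\alpha_2 = \partial_{\zeta_1}^2 f(x,u,u_x)$ and $\alpha_0, \alpha_1$ smooth functions of $(x,u,u_x)$.

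For the main term, I would insert the expansions (valid up to order $M+3$)
\begin{equation*}
d_\bot \Psi_\nu[\widehat w] = \widehat w + \sum_{k=1}^{M+3} a_{-k}^{ext}(\mathfrak{x};\nu;d_\bot\Psi)\,\partial_x^{-k}\widehat w + \mathcal{R}^{ext}_{M+3}(d_\bot\Psi)[\widehat w],
\end{equation*}
together with the analogous one for $d_\bot \Psi_\nu^\top$ from Corollary \ref{corollary transpose negative sobolev}, and compose them with the differential operator $d\nabla P(\Psi_\nu(\mathfrak{x}))$ via the symbolic calculus of Lemmata \ref{lemma stime Ck parametri} and \ref{composizione simboli omogenei}. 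Each composition produces a sum of terms of the form $c_{2-j}(\mathfrak{x};\nu)\partial_x^{2-j}$, and using $\partial_x \Pi_\bot = \Pi_\bot \partial_x$, multiplication on the left by $\partial_x$ gives an operator of the form $\Pi_\bot \sum_{k=0}^{M+3} a_{3-k}\partial_x^{3-k}$ plus a remainder of order $-(M+1)$. The principal symbol is determined by keeping the identity contributions of the two $\Psi$-expansions, yielding $a_3(\mathfrak{x};\nu) = -\alpha_2(x;\Psi_\nu(\mathfrak{x})) = -(\partial_{\zeta_1}^2 f)(x,\Psi_\nu(\mathfrak{x}), \partial_x\Psi_\nu(\mathfrak{x}))$ (which, by Lemma \ref{lem:229} applied to the Hamiltonian vector field, forces $a_2 = 2(a_3)_x$, though we do not use this here).

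For the Hessian term $\mathcal{T}$, the key observation is that the identity part of $d_\bot\Psi_\nu^\top$ is $w$-independent, so every contribution to $D_w d_\bot\Psi_\nu^\top[\widehat w]$ retains at least one smoothing factor $\partial_x^{-k}$ ($k\geq 1$), or is the $w$-derivative of the smoothing remainder $\mathcal{R}_M^{d\Psi^\top}$. Since $\nabla P(\Psi_\nu(\mathfrak{x}))$ lies in $L^2$ (with estimates obtained by Lemma \ref{Moser norme pesate} applied to $f$), the operator $\mathcal{T}$ gains $M+1$ derivatives, hence $\partial_x \mathcal{T}$ can be absorbed into $\mathcal{R}_M$. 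The tame estimates for coefficients and remainders then follow by systematically combining Moser's Lemma \ref{Moser norme pesate} (for $\alpha_j \circ \Psi_\nu(\mathfrak{x})$), the bounds on $\Psi_\nu, d\Psi_\nu, d\Psi_\nu^\top$ from Theorem \ref{modified Birkhoff map}-(Est1)-(Est2), and the extensions to negative Sobolev spaces given by Corollaries \ref{corollary transpose negative sobolev}, \ref{corollary Birkhoff negative sobolev}.

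The main obstacle will be bookkeeping: one must track expansions to order $M+3$ (rather than $M$) in both $d_\bot\Psi_\nu$ and $d_\bot\Psi_\nu^\top$ in order to guarantee that, after applying the second-order operator $d\nabla P$ and multiplying by $\partial_x$, the composition remainder actually smooths by $M+1$ derivatives. In addition, items 2 and 3 of the lemma require distinct arguments: item 3 (action on $H^{-s}$, $0\leq s\leq M+1$) relies on the negative-order extensions of Corollaries \ref{corollary transpose negative sobolev}--\ref{corollary Birkhoff negative sobolev}, which demand smallness of $\mathfrak{x}$ only in the $\sigma_M$-norm, whereas item 2 exploits the stronger estimates valid for $\mathfrak{x}\in\mathcal{E}_{s+\sigma_M}$ to obtain the $\|w\|_{H^{s+\sigma_M}_x}\|\widehat w\|_{L^2_x}$ gain; these must be derived in parallel, reusing the same symbolic decomposition but with different underlying estimates.
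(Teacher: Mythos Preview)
Your treatment of the main term $(d_\bot\Psi_\nu)^\top \circ d\nabla P(\Psi_\nu) \circ d_\bot\Psi_\nu$ is essentially the same as the paper's, and the bookkeeping you flag is exactly right. The gap is in your handling of the Hessian correction $\mathcal{T}[\widehat w]=(D_w d_\bot\Psi_\nu^\top)[\widehat w]\big[\nabla P(\Psi_\nu(\mathfrak x))\big]$.

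Your claim that ``$\mathcal{T}$ gains $M+1$ derivatives'' rests on the observation that every term in the expansion of $d_\bot\Psi_\nu^\top[\widehat q]$ (other than the identity) carries a factor $\partial_x^{-k}$. But those $\partial_x^{-k}$ act on $\widehat q=\nabla P(\Psi_\nu(\mathfrak x))$, which is a \emph{fixed} function; they do \emph{not} smooth the variable $\widehat w$. Concretely, differentiating the term $(\partial_x^{-k}w)\,\mathcal A_{-k}^{d\Psi^\top}[\widehat q]$ in \eqref{depsit} with respect to $w$ produces $(\partial_x^{-k}\widehat w)\cdot h_k$ with $h_k:=\mathcal A_{-k}^{d\Psi^\top}[\nabla P]$ a smooth function; as an operator in $\widehat w$ this is order $-k$, so for $k=1$ the contribution $\partial_x\big[(\partial_x^{-1}\widehat w)\cdot h_1\big]$ is of order $0$ and cannot be absorbed into a remainder that gains $M+1$ derivatives. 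Likewise, differentiating $a_{-k}^{d\Psi^\top}\partial_x^{-k}\widehat q$ gives $(d_\bot a_{-k}^{d\Psi^\top}[\widehat w])\cdot\partial_x^{-k}\nabla P$, and the available estimates (Theorem~\ref{modified Birkhoff map}--(Est2), Corollary~\ref{corollary transpose negative sobolev}) only bound $d_\bot a_{-k}^{d\Psi^\top}[\widehat w]$ in terms of $\|\widehat w\|_{H^{\sigma_M}}$, not $\|\widehat w\|_{H^{-s}}$; so you cannot verify item~\ref{pr3} this way.

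The paper avoids this obstruction by exploiting that $\Psi_\nu$ is symplectic: from $d\Psi_\nu^\top=\mathcal J^{-1}d\Psi_\nu^{-1}\partial_x$ one gets the identity
\[
d_\bot\big(d\Psi_\nu(\mathfrak x)^\top\big)[\widehat w]
=-\,d\Psi_\nu(\mathfrak x)^\top\,\partial_x^{-1}\,d\big(d_\bot\Psi_\nu(\mathfrak x)[\widehat w]\big)\big[\mathcal J\,d\Psi_\nu(\mathfrak x)^\top\,\cdot\,\big],
\]
so that
\[
\partial_x\,\mathcal T[\widehat w]
=-\,\partial_x\,d\Psi_\nu^\top\,\partial_x^{-1}\,d\big(d_\bot\Psi_\nu[\widehat w]\big)\big[\mathcal J\,d\Psi_\nu^\top\nabla P\big].
\]
In this rewriting the inner factor $d\big(d_\bot\Psi_\nu[\widehat w]\big)[\widehat{\mathfrak x}]=\sum_k (da_{-k}^{ext}[\widehat{\mathfrak x}])\,\partial_x^{-k}\widehat w+\dots$ (with $\widehat{\mathfrak x}=\mathcal J d\Psi_\nu^\top\nabla P$ fixed) is genuinely pseudo-differential \emph{in} $\widehat w$, and composing with the outer factors via Corollaries~\ref{corollary transpose negative sobolev}--\ref{corollary Birkhoff negative sobolev} yields an expansion $\Pi_\bot\sum_{k=3}^{M+3}a_{3-k}^{(2)}\partial_x^{3-k}+R_2$ with coefficients and remainder obeying items~\ref{pr1}--\ref{pr2}. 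In particular the Hessian term does contribute to the coefficients $a_0,\dots,a_{-M}$; it is not a pure remainder. This symplectic rewriting is the missing ingredient in your argument.
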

 
 \begin{remark}\label{rem:3.5}
The coefficient $a_3$ in 
\eqref{pseudodifferential expansion of d bot nabla P}
can be computed as
$ a_3 (\mathfrak x; \ac ; \partial_x d_\bot \nabla_w {\cal P} ) 
= - (\pa_{\zeta_1}^2 f)(x, u, u_x) \big|_{u = \Psi_\ac(\mathfrak x)} $.    
\end{remark}

\begin{proof}  Differentiating \eqref{definizione perturbazione quasilin} we have that
\be\label{gradcalP0}
\nabla {\mathcal P} ( \mathfrak x ; \ac) = (d \Psi_\ac ( \mathfrak x))^\top \big[ \nabla P (\Psi_\ac (\mathfrak x ) ) \big] \, , 
\ee
where,  recalling \eqref{densityf},  
\be\label{nablaP}
\nabla P ( u ) = 
\Pi_0^\bot \big[ 
(\pa_{\zeta_0} f) (x, u, u_x )-  
\big( (\pa_{\zeta_1} f ) (x, u, u_x) \big)_x \big]  
\ee
and $ \Pi_0^\bot $ is the $L^2$-orthogonal projector of
$L^2(\T_1)$ onto $L^2_0(\T_1)$. 
By \eqref{gradcalP0}, the $w-$component 
$\nabla_w {\mathcal P}( \mathfrak x; \ac )$ of 
$\nabla {\mathcal P}( \mathfrak x; \ac )$ equals
$  (d_\bot\Psi_\ac ( \mathfrak x))^\top 
\big[ \nabla P (\Psi_\ac (\mathfrak x ) ) \big] $.   
Differentiating it  with respect to $w$ in direction $\widehat w$ then yields
\be\label{dgradcalP}
d_\bot \nabla_w {\mathcal P} ( \mathfrak x; \ac ) [\widehat w] 
= (d_\bot \Psi_\ac ( \mathfrak x))^\top \big[ d \nabla P (\Psi_\ac (\mathfrak x ) ) \big[ d_\bot \Psi_\ac(\mathfrak x) [  \widehat w ]\big] \big]  + \big( d_\bot  (d_\bot \Psi_\ac ( \mathfrak x))^\top  [\widehat w]\big)  
\big[ \nabla P (\Psi_\ac (\mathfrak x ) ) \big]\, .
\ee
{\em Analysis  of the first term on the right hand side of \eqref{dgradcalP}:} 
Evaluating the differential $d \nabla P(u)$ of \eqref{nablaP}
at $u = \Psi_\ac(\mathfrak x)$, one gets
\begin{equation}\label{dp6}
\begin{aligned}
& d (\nabla P) (\Psi_\ac(\mathfrak x))[h]   = \Pi_0^\bot \big( b_2(\mathfrak x; \ac) \partial_x^2 h +b_1(\mathfrak x; \ac) \partial_x h + b_0(\mathfrak x ; \ac) h \big)  \\ 
& b_2(\mathfrak x; \nu)  := - \pa_{\zeta_1}^2  f (x, u, u_x)\Big|_{u = \Psi_\ac(\mathfrak x)}\,, \qquad b_1(\mathfrak x; \ac) := (b_2(\mathfrak x ; \ac))_x \, , \\
& b_0(\mathfrak x; \ac) :=  \big( (\pa_{\zeta_0}^2 f)(x, u, u_x)  - 
  \big(  (\pa^2_{\zeta_0\zeta_1} f) (x, u, u_x) \big)_x \big) \big|_{u = \Psi_\ac(\mathfrak x)}\,.
  \end{aligned}
  \end{equation}
  By Lemma \ref{Moser norme pesate} and 
  Theorem \ref{modified Birkhoff map} one infers that
  for any $s \geq 0$, the maps
  $$
  ({\cal V}^{3}(\delta ) \cap {\mathcal E}_{s+3}) \times \Xi \to H^s_x \, , \quad (\mathfrak x; \ac) \mapsto b_i(\mathfrak x; \ac) \, ,
  \quad i = 0,1,2  \, , 
  $$
  are ${\cal C}^\infty$ and satisfy for any 
  $\alpha \in \N^{\mathbb S_+}$, 
  $\widehat{\frak x}_1, \ldots, \widehat{\frak x}_l \in E_{s + 3} $,
  and  
  $(\mathfrak x, \ac) \in 
  \big({\cal V}^{3} (\delta )  \cap {\mathcal E}_{s+3}\big) 
  \times \Xi$,
  \begin{equation}\label{stima bi nabla P}
  \begin{aligned}
 &  \| \partial_\nu^\alpha b_i(\mathfrak x; \ac)\|_{H^s_x} \lesssim_{s, \alpha} 1 + \| w \|_{H^{s + 3}_x}\,, \\
 & \| \partial_\nu^\alpha d^l b_i(\frak x; \nu)[\widehat{\frak x}_1, \ldots, \widehat{\frak x}_l] \|_{H^s_x} \lesssim_{s, l, \alpha} \sum_{j = 1}^l \| \widehat{\frak x}_j\|_{E_{s + 3}} \prod_{i \neq j} \| \widehat{\frak x}_i\|_{E_{3}} + \| w \|_{H^{s + 3}_x} \prod_{j = 1}^l \| \widehat{\frak x}_j \|_{E_3}\,. 
  \end{aligned} 
  \end{equation}  
  By Corollary \ref{corollary transpose negative sobolev}
  (expansion of $ (d_\bot \Psi_\ac)^\top $), 
  Corollary \ref{corollary Birkhoff negative sobolev}
  (expansion of $ d_\bot \Psi_\ac $),  
  \eqref{stima bi nabla P} (estimates of $b_i$), 
\eqref{dp6} (formula for $d (\nabla P) (\Psi_\ac(\mathfrak x))$),
  and 
Lemma \ref{composizione simboli omogenei} (composition), one obtains
the expansion 
  \begin{equation}\label{espansione d bot Psi nabla sec pezzo-0}
 \partial_x  (d_\bot \Psi_\ac ( \mathfrak x))^\top 
  \big[ d \nabla P (\Psi_\ac (\mathfrak x ) ) 
  \big[ d_\bot \Psi_\ac(\mathfrak x) [  \cdot ] \big] \big]
  = \Pi_\bot \sum_{k = 0}^{M + 3} a_{3 - k}^{(1)}(\mathfrak x; \ac) \partial_x^{3 - k}  + R_1(\mathfrak x; \ac)
  \end{equation}
  where  $ a_{3}^{(1)}(\frak x; \nu) =  b_2(\mathfrak x; \nu)  $, 
  the functions $a_{3 - k}^{(1)}(\frak x; \nu)$, $k = 0, \ldots, M+ 3 $, 
  and the remainder $R_1(\mathfrak x; \ac)$ satisfy the claimed properties
  \ref{pr1}-\ref{pr2} of the lemma, in particular  \eqref{le1:es1}-\eqref{le1:es2}.
 \\[1mm]  
  {\em Analysis  of the second term on the right hand side of \eqref{dgradcalP}:}  
  Since $d \Psi_\nu(\frak x)$ is symplectic,
 $d \Psi_\nu(\frak x)^\top = {\cal J}^{- 1} d \Psi_\nu(\frak x)^{- 1} \partial_x $  
 where $ {\cal J} $ is the Poisson operator defined in \eqref{Poisson struture}, implying that for any $\widehat w \in H^1_\bot(\T_1) $, 
$$
\begin{aligned}
d_\bot \big(d \Psi_\nu(\frak x)^\top \big) [\widehat w] & = - {\cal J}^{- 1} d \Psi_\nu(\frak x)^{- 1} \big(d_\bot d \Psi_\nu(\frak x)[\widehat w] \big) d \Psi_\nu(\frak x)^{- 1} \partial_x  \\
& = - d \Psi_\nu(\frak x)^\top \partial_x^{- 1} d \big(d_\bot \Psi_\nu(\frak x)[\widehat w] \big)\big[ {\cal J} d \Psi_\nu(\frak x)^\top \cdot \big]\,. 
\end{aligned}
$$
By this identity we get 
\begin{equation}\label{Ucraina 1}
\partial_x \big( d_\bot  (d_\bot \Psi_\ac ( \mathfrak x))^\top  [\cdot]\big)  
\big[ \nabla P (\Psi_\ac (\mathfrak x ) ) \big]  = - \partial_x d \Psi_\nu(\frak x)^\top \partial_x^{- 1} d \big(d_\bot \Psi_\nu(\frak x)[\cdot] \big)\big[ {\cal J} d \Psi_\nu(\frak x)^\top \nabla P (\Psi_\ac (\mathfrak x ) )  \big]\,. 
\end{equation}
Arguing as for the first term on the right hand side of \eqref{dgradcalP}
(cf. \eqref{espansione d bot Psi nabla sec pezzo-0}) one gets an expansion of the form
  \begin{equation}\label{espansione d bot Psi nabla sec pezzo a}
 \partial_x \big( d_\bot  (d_\bot \Psi_\ac ( \mathfrak x))^\top  [\cdot]\big)  
\big[ \nabla P (\Psi_\ac (\mathfrak x ) ) \big]
  = \Pi_\bot \sum_{k = 3}^{M + 3} a_{3 - k}^{(2)}(\mathfrak x; \ac) \partial_x^{3 - k} + R_2(\mathfrak x; \ac)
  \end{equation}
  where the functions $a_{3 - k}^{(2)}(\frak x; \nu)$, $k = 3, \ldots, M+ 3 $, and the remainder $R_2(\mathfrak x; \ac)$ satisfy the 
  claimed properties
  \ref{pr1}-\ref{pr2} of the lemma, in particular  \eqref{le1:es1}-\eqref{le1:es2}.\\[1mm] 
{\em Conclusion:} By \eqref{dgradcalP} and the above analysis of the 
expansions \eqref{espansione d bot Psi nabla sec pezzo-0}
and \eqref{espansione d bot Psi nabla sec pezzo a},
the lemma and Remark \ref{rem:3.5} follow. 
 \end{proof} 

As a second result of this section 
we derive an expansion for the linearized Hamiltonian vector field 
$ \partial_x d_\bot  \nabla_w {\mathcal  H}^{kdv} $
where ${\mathcal H}^{kdv}(\cdot; \ac)= H^{kdv} \circ \Psi_\ac$
(cf. Theorem \ref{modified Birkhoff map}-({\bf AE3})).
\begin{lemma}\label{differential nabla kdv remainder-true}
{\bf (Expansion of $\partial_x d_\bot \nabla_w {\cal H}^{kdv} $)}
For any $M \in \N$  there is $ \sigma_M \ge M+1$ so that, for any 
$(\mathfrak x, \ac)  \in \mathcal V^{\sigma_M } (\delta)\times \Xi $, 
the operator  
$\partial_x d_\bot \nabla_w {\cal H}^{kdv}(\mathfrak x; \ac)$ admits an expansion of the form
\begin{equation}\label{pseudodifferential expansion of remainder mathcal R_Nkdv} 
\begin{aligned}
& \partial_x d_\bot \nabla_w {\cal H}^{kdv}(\mathfrak x; \ac)[\cdot] = \partial_x \Omega^{kdv}(D ;\ac) [\cdot] + 
 \partial_x d_\bot \nabla_w {\cal R}^{kdv}(\mathfrak x; \ac)[\cdot]\,, \\
& \partial_x d_\bot \nabla_w {\cal R}^{kdv}(\mathfrak x; \ac)[ \cdot ] =
 \, {\Pi}_\bot  \sum_{k = 0}^{M + 1}  a_{1  -k}(\mathfrak x ; \ac ; \partial_x d_\bot \nabla_w {\cal R}^{kdv}) \, \partial_x^{1 - k} [\cdot ] +  
{\cal R}_M (\mathfrak x; \ac ;  \partial_x d_\bot \nabla_w {\cal R}^{kdv})[\cdot] \, , 
\end{aligned}
\end{equation}
with the following properties:
\begin{enumerate}
\item For any $s \geq 0 $, the maps
$$
\begin{aligned}
& 
( {\cal V}^{\sigma_M}(\delta) \cap {\mathcal E}_{s + \s_M} ) \times \Xi \to H^s (\T_1),  \,\,
(\mathfrak x , \ac) \mapsto  
a_{1 -k}(\mathfrak x ; \ac ; \partial_x d_\bot \nabla_w {\cal R}^{kdv})  \,, \qquad 0 \le  k \le M + 1 \, ,
\end{aligned}
$$
are $ {\cal C}^\infty $ and satisfy for any 
$\alpha \in \N^{\mathbb S_+}$,
$\widehat{\mathfrak x}_1, \ldots, \widehat{\mathfrak x}_l \in 
E_{s + \sigma_M}$, and
$(\mathfrak x, \ac)  \in 
({\cal V}^{\sigma_M}(\delta) \cap {\mathcal E}_{s + \s_M}) \times \Xi$,
\begin{equation}\label{stima a - k partial x d nabla R kdv}
\begin{aligned}
&  \|\partial_\ac^\alpha  a_{1 - k}(\mathfrak x; \ac ; \partial_x d_\bot \nabla_w {\cal R}^{kdv})\|_{H^s_x} \lesssim_{s, k, \alpha} \| y \| + \| w \|_{H_x^{s + \sigma_M}}\,, \quad \\
&   \| d^l \partial_\ac^\alpha a_{1 - k}(\mathfrak x; \ac ; \partial_x d_\bot \nabla_w {\cal R}^{kdv} )[\widehat{\mathfrak x}_1, \ldots, \widehat{\mathfrak x}_l]\|_{H^s_x} 
\lesssim_{s, k, l, \alpha} \sum_{j = 1}^l \big( \| \widehat{\mathfrak x}_j\|_{E_{s + \sigma_M}} \prod_{n \neq j} \| \widehat{\mathfrak x}_n\|_{E_{\sigma_M}}\big)  \\
& \qquad  \qquad \qquad \qquad \qquad \qquad \qquad \qquad \qquad \qquad +
 ( \| y \| + \| w \|_{H_x^{s + \sigma_M}}) \prod_{j = 1}^l \| \widehat{\mathfrak x}_j\|_{E_{\sigma_M}}\,. 
 \end{aligned}
\end{equation}
\item For any $ 0 \leq s \leq M+1$, the map 
$$
{\cal R}_M(\cdot ; \cdot ; \pa_x d_\bot \nabla_w {\cal R}^{kdv}) : 
{\cal V}^{\sigma_M}(\delta)  
 \times \Xi \to  
 \mathcal B (H_\bot^{-s }(\T_1) ,H_\bot^{M + 1 - s}(\T_1)) 
$$
 is ${\cal C}^\infty$ and satisfies for any 
 $\alpha \in \N^{\mathbb S_+}$,
 $\widehat{\frak x}_1, \ldots, \widehat{\frak x}_l \in E_{ \sigma_M}$,
 $(\frak x, \ac) \in  {\cal V}^{\sigma_M}(\delta) \times \Xi$, and
 $\widehat w \in H^{-s}_\bot(\T_1)$, 
\begin{align}\label{OpR2kdva}
&  \|  \partial_\ac^\alpha{\cal R}_M (\mathfrak x; \ac ;  \pa_x d_\bot \nabla {\cal R}^{kdv})[\widehat w]\|_{H^{M + 1 -s}_x} 
\lesssim_{s, M, \a} 
( \| y \| +  \| w \|_{H^{ \sigma_M}_x}) \| \widehat w\|_{H^{-s}_x}   \,,  \\
& \label{OpR3kdva}
\| d^l \partial_\ac^\alpha {\cal R}_M (\mathfrak x; \ac ;  \pa_x d_\bot \nabla {\cal R}^{kdv}) [\widehat w][\widehat{\mathfrak x}_1, \ldots, \widehat{\mathfrak x}_l]\|_{H^{ M + 1 - s}_x}  \lesssim_{s, M, l,\a}  
\| \widehat w \|_{H^{-s}_x} \prod_{j = 1}^l \| \widehat{\frak x}_j \|_{E_{\sigma_M}} \,. 
\end{align}
\item 
For any $s \geq 0$, the map 
$$
\begin{aligned}
& {\cal R}_M( \cdot ; \cdot ; \pa_x d_\bot \nabla_w {\cal R}^{kdv}) : 
( {\cal V}^{\sigma_M}(\delta) \cap {\mathcal E}_{s + \s_M} ) 
 \times \Xi \to  \mathcal B (H_\bot^{s }(\T_1) ,H_\bot^{s + M + 1}(\T_1)) \, , 
 \end{aligned}
 $$
is ${\cal C}^\infty$ and satisfies for any 
$\alpha \in \N^{\mathbb S_+}$,
$\widehat{\frak x}_1, \ldots, \widehat{\frak x}_l \in E_{s + \sigma_M}$, 
$(\frak x, \ac) \in ({\cal E}_{s + \sigma_M} \cap {\cal V}^{\sigma_M}(\delta)) \times \Xi$, and $\widehat w \in H^s_\bot(\T_1)$, 

\be\label{OpR2kdv}
\begin{aligned}
&  \|  \partial_\ac^\alpha{\cal R}_M (\mathfrak x; \ac ;  \pa_x d_\bot \nabla {\cal R}^{kdv})[\widehat w]\|_{H^{s + M + 1}_x} \\
& \lesssim_{s, M, \a} 
( \| y \| +  \| w \|_{H^{s + \sigma_M}_x}) \| \widehat w\|_{L^2_x} +  ( \| y \| + \| w\|_{H^{\sigma_M}_x})  \| \widehat w \|_{H^{s}_x}  \,, 
\end{aligned}
\ee
\be\label{OpR3kdv}
\begin{aligned}
& \| d^l \partial_\ac^\alpha {\cal R}_M (\mathfrak x; \ac ;  \pa_x d_\bot \nabla {\cal R}^{kdv}) [\widehat w][\widehat{\mathfrak x}_1, \ldots, \widehat{\mathfrak x}_l]\|_{H^{s + M + 1}_x}  \lesssim_{s, M, l,\a}  
\| \widehat w \|_{H^s_x}  \prod_{j=1}^l  \| \widehat{\mathfrak x}_j \|_{E_{\sigma_M}}    \\
& 
\qquad + \| \widehat w \|_{L^2_x} \sum_{j = 1}^l 
\Big( \| \widehat{\mathfrak x}_j\|_{E_{s + \sigma_M}} \prod_{n \neq j}  \| \widehat{\mathfrak x}_n\|_{E_{\sigma_M}} \Big)  
+   
\| \widehat w \|_{L^2_x}   \| w \|_{H^{s+  \s_M}_x}  \prod_{j = 1}^l \|\widehat{\mathfrak x}_j \|_{E_{\sigma_M}} \,. 
\end{aligned}
\ee
\end{enumerate}
\end{lemma}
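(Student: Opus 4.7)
The plan is to obtain the expansion for $\partial_x d_\bot \nabla_w {\cal H}^{kdv}$ by direct application of Lemma \ref{differential nabla perturbation-true} to the KdV Hamiltonian itself. Since $H^{kdv}(u) = \int_{\T_1}(\tfrac{1}{2}u_x^2 + u^3)\, dx$ has density $f(x,\zeta_0,\zeta_1) = \tfrac12\zeta_1^2 + \zeta_0^3$ of the form \eqref{densityf}, that lemma applies with $\mathcal{P}$ replaced by $\mathcal{H}^{kdv} = H^{kdv}\circ\Psi_\nu$, producing
\begin{equation*}
\partial_x d_\bot \nabla_w \mathcal{H}^{kdv}(\mathfrak{x};\nu) = \Pi_\bot \sum_{k=0}^{M+3} b_{3-k}(\mathfrak{x};\nu)\partial_x^{3-k} + \widetilde{\mathcal{R}}_M(\mathfrak{x};\nu)\,.
\end{equation*}
By Remark \ref{rem:3.5}, the principal coefficient is $b_3 = -\partial_{\zeta_1}^2 f = -1$, constant in $\mathfrak x$. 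From the normal form \eqref{Hkdv} one has $\nabla_w \mathcal{H}^{kdv} = \Omega^{kdv}(D;\nu)w + \nabla_w \mathcal{R}^{kdv}$, since the terms $\omega^{kdv}(\nu)\cdot y$ and $\tfrac12 \Omega^{kdv}_{\mathbb{S}_+}(\nu)[y]\cdot y$ are $w$-independent, and therefore
\begin{equation*}
\partial_x d_\bot \nabla_w \mathcal{R}^{kdv}(\mathfrak{x};\nu) = \partial_x d_\bot \nabla_w \mathcal{H}^{kdv}(\mathfrak{x};\nu) - \partial_x\Omega^{kdv}(D;\nu),
\end{equation*}
which already produces the decomposition asserted in the first line of \eqref{pseudodifferential expansion of remainder mathcal R_Nkdv}.

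The Fourier multiplier $\partial_x\Omega^{kdv}(D;\nu)$ has symbol $2\pi \mathrm{i} n\,\Omega_n^{kdv}(\nu)$. Using the asymptotics of the KdV frequencies (from Section \ref{sec:kdvfre}), together with the fact that $\omega_n^{kdv}$ is odd in $n$ so that $\Omega_n^{kdv}$ is even in $n$, this symbol admits an expansion containing only odd powers of $2\pi\mathrm{i}n$, of the form $-(2\pi\mathrm{i}n)^3 + \sum_{j\ge 0} c_{1-2j}(\nu)(2\pi\mathrm{i}n)^{1-2j}$ modulo a smoothing remainder of any prescribed order. The subtraction cancels $\partial_x^3$ with $b_3 = -1$, while the $\partial_x^2$ coefficient $b_2$ vanishes by Hamiltonian structure: for fixed $(\theta,y)$ the map $w \mapsto \partial_x \nabla_w\mathcal{H}^{kdv}(\theta, y, w;\nu)$ is a Hamiltonian vector field on $L^2_\bot(\T_1)$, and Lemma \ref{lem:229} (whose proof carries over verbatim to $L^2_\bot$) forces $b_2 = 2(b_3)_x = 0$. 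The remaining coefficients are of the form $a_{1-k}(\mathfrak{x};\nu) = b_{1-k}(\mathfrak{x};\nu) - c_{1-k}(\nu)$, with $c_{1-k}(\nu) \equiv 0$ for even $k$, and $\mathcal{R}_M$ is assembled from $\widetilde{\mathcal{R}}_M$ together with the smoothing tail of $\partial_x\Omega^{kdv}(D;\nu)$. The estimates on norms and derivatives of $a_{1-k}$ and $\mathcal{R}_M$ \emph{without} the small factor then follow directly from the corresponding ones in Lemma \ref{differential nabla perturbation-true}.

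The final crucial step is to extract the small factor $\|y\| + \|w\|_{H^{s+\sigma_M}_x}$ in \eqref{stima a - k partial x d nabla R kdv}--\eqref{OpR3kdv}. By \eqref{Rkdv-cubic-der}, $d_\bot\nabla_w\mathcal{R}^{kdv}(\theta,0,0;\nu) = 0$, hence $\partial_x d_\bot\nabla_w\mathcal{R}^{kdv}(\theta,0,0;\nu) = 0$ as an operator on $L^2_\bot$. Inserting $\mathfrak{x}=(\theta,0,0)$ into the expansion just obtained and peeling off principal symbols from the highest order downwards, one deduces $a_{1-k}(\theta,0,0;\nu) = 0$ for all $0 \le k \le M+1$ and $\mathcal{R}_M(\theta, 0, 0;\nu) = 0$. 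The Taylor identity
\begin{equation*}
a_{1-k}(\theta, y, w;\nu) = \int_0^1 \bigl( \partial_y a_{1-k}(\theta, t y, t w;\nu)[y] + \partial_w a_{1-k}(\theta, t y, t w;\nu)[w] \bigr)\, dt,
\end{equation*}
and its analogue for $\mathcal{R}_M[\widehat w]$, combined with the smoothness of $a_{1-k}$ and $\mathcal{R}_M$ and the already established tame bounds for their $(y,w)$-derivatives, produces the factor $\|y\| + \|w\|_{H^{s+\sigma_M}_x}$. For the operator-level estimates \eqref{OpR2kdv}--\eqref{OpR3kdv}, the same Taylor identity yields tame bounds in which the small factor may attach to either the low- or high-norm term in $\widehat w$, exactly matching the required form.

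I expect the main obstacle to lie in verifying with sufficient precision that $\partial_x\Omega^{kdv}(D;\nu)$ admits a complete asymptotic expansion containing only odd powers of $2\pi \mathrm{i} n$ down to any desired order, so that after the subtraction no stray even-power $\partial_x^{-2k}$ terms remain to spoil the claim that the expansion of $\partial_x d_\bot\nabla_w \mathcal{R}^{kdv}$ starts at $\partial_x^1$. This is a statement about the KdV frequency asymptotics which rests on the spectral theory of Hill's operator and is collected in Section \ref{sec:kdvfre}. A secondary technical point is the bookkeeping required to match the number of terms produced by Lemma \ref{differential nabla perturbation-true} with the $M+2$ terms claimed in \eqref{pseudodifferential expansion of remainder mathcal R_Nkdv}, and to absorb the extra pieces (together with the $\sigma_M$-dependent loss of regularity coming from the Taylor step) into $\mathcal{R}_M$.
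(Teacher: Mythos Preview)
Your approach is correct and takes a genuinely different route from the paper's proof. You invoke Lemma~\ref{differential nabla perturbation-true} as a black box applied to $H^{kdv}$ (which is legitimate, since its density $f(x,\zeta_0,\zeta_1)=\tfrac12\zeta_1^2+\zeta_0^3$ has the required form), then subtract the explicit asymptotic expansion of the Fourier multiplier $\partial_x\Omega^{kdv}(D;\nu)$, and finally use a symbol--peeling argument on $L^2_\bot$ to deduce that the resulting coefficients $a_{1-k}=b_{1-k}-c_{1-k}$ vanish at $(\theta,0,0)$. The paper instead re-derives the expansion of $\partial_x d_\bot\nabla_w\mathcal{H}^{kdv}(\mathfrak{x};\nu)$ directly from Corollaries~\ref{corollary transpose negative sobolev} and~\ref{corollary Birkhoff negative sobolev}, kills the $\partial_x^2$ term via the specific algebraic identity \eqref{coefficiente - 1 d Psi Psi top} rather than Lemma~\ref{lem:229}, and then defines the final coefficients simply as $a_{1-k}(\mathfrak{x}):=a_{1-k}^{(1)}(\mathfrak{x})-a_{1-k}^{(1)}(\theta,0,0)+a_{1-k}^{(2)}(\mathfrak{x})-a_{1-k}^{(2)}(\theta,0,0)$, so that vanishing at $(\theta,0,0)$ is automatic and no peeling is needed. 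The paper's route thus avoids both the explicit asymptotics of $\Omega^{kdv}$ and the uniqueness argument for pseudo-differential expansions on $L^2_\bot$; your route is more modular but requires those two extra ingredients.

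Your self-identified main obstacle is not the real issue: the parity (``only odd powers'') structure of the $c_{1-k}$ is irrelevant. All you need is that $\partial_x\Omega^{kdv}(D;\nu)=-\partial_x^3+Q_{-1}^{kdv}(D;\nu)$ with $Q_{-1}^{kdv}$ of order $-1$, which is exactly \eqref{deco:Q-1} and Lemma~\ref{lem:anal}; whether the lower-order $c_{-k}$ vanish for even $k$ plays no role in your argument, since you merely subtract them. The point that does deserve a sentence of justification is the peeling step: you should note that the uniqueness of homogeneous expansions modulo an $(M{+}1)$-smoothing remainder holds on $L^2_\bot$ because $\Sbot$ is cofinite in $\Z\setminus\{0\}$, so for each fixed Fourier shift $m$ one can send the input frequency $j'\to\infty$ within $\Sbot$ while keeping $j=j'+m\in\Sbot$, forcing each Fourier coefficient $(a_{1-k})_m$ to vanish successively. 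Alternatively, you can sidestep the peeling entirely by subtracting $b_{1-k}(\theta,0,0;\nu)$ instead of $c_{1-k}(\nu)$ --- which is exactly the paper's shortcut.
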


\begin{proof} 
Differentiating 
${\mathcal H}^{kdv}(\frak x; \ac) = 
H^{kdv}( \Psi_\ac(\frak x))$, we get
\be\label{gradcalPkdv}
\nabla_w {\mathcal H}^{kdv} ( \frak x; \nu  ) = 
(d_\bot \Psi_\nu ( \frak x))^\top 
\big[ \nabla H^{kdv} (\Psi_\nu (\frak x ) ) \big] 
\ee
where, recalling \eqref{hamiltoniana kdv0},  
\be\label{nablaPkdv}
\nabla H^{kdv} ( u ) = 
\Pi_0^\bot ( 3 u^2 - u_{xx}  \big) 
\ee
and $\Pi_0^\bot $ is the $L^2$-orthogonal projector onto $L^2_0(\T_1)$.
Differentiating \eqref{gradcalPkdv}
with respect to $w$ in direction $\widehat w$ we get 
\be\label{dgradcalPkdv}
\begin{aligned}
& d_\bot \nabla_w {\mathcal H}^{kdv} ( \frak x; \nu  ) [\widehat w ] = \\
& (d_\bot  \Psi_\nu ( \frak x))^\top  
\big[ 
d \nabla H^{kdv} (\Psi_\nu (\frak x ) ) 
[d_\bot \Psi_\nu(\frak x)[\widehat w ]]\big] 
+  \big( d_\bot  (d_\bot \Psi_\nu ( \frak x))^\top  [\widehat w ]\big) 
 \big[ \nabla H^{kdv} (\Psi_\nu (\frak x ) ) \big] \, .
 \end{aligned}
\ee
On the other hand, by \eqref{Hkdv}
$$ 
d_\bot \nabla_w {\mathcal H}^{kdv} ( \frak x ; \nu)  = 
\Omega^{kdv} (D ; \nu)  +  d_\bot \nabla_w {\mathcal R}^{kdv} ( \frak x ; \nu) 
$$
and by \eqref{Rkdv-cubic-der}
 $ d_\bot \nabla_w {\mathcal R}^{kdv} ( \theta, 0, 0 ; \nu) = 0 $, 
 implying that
\be\label{diffRdkv}
\begin{aligned}
& d_\bot \nabla_w {\mathcal H}^{kdv} (  \theta, 0, 0; \nu ) = \Omega^{kdv}(D; \nu)\,, \\
& d_\bot \nabla_w {\mathcal R}^{kdv} ( \frak x; \nu )  = 
 d_\bot \nabla_w {\mathcal H}^{kdv} ( \theta, y, w; \nu)   
- d_\bot \nabla_w {\mathcal H}^{kdv} (  \theta, 0, 0; \nu )    \, .
\end{aligned}
\ee
In order to obtain the expansion 
\eqref{pseudodifferential expansion of remainder mathcal R_Nkdv} it  thus suffices to 
expand 
$d_\bot \nabla_w {\mathcal H}^{kdv} ( \theta, y, w; \nu) ) [\widehat w ]$ and then subtract from it the expansion of
$d_\bot \nabla_w {\mathcal H}^{kdv} ( \theta, 0, 0; \nu) ) [\widehat w ]$. 
We analyze separately the two terms in  \eqref{dgradcalPkdv}. 
\\[1mm]
{\em Analysis  of the first term on the right hand side of \eqref{dgradcalPkdv}:}  
Evaluating the differential $d \nabla H^{kdv}(u)$
at $u = \Psi_\ac(\mathfrak x)$, one gets 
\begin{equation}\label{dp6c}
\begin{aligned}
& d (\nabla H^{kdv}) (\Psi_\ac(\mathfrak x))[h]  = 
\Pi_0^\bot \big(- \partial_x^2 h +  b_0(\frak x; \nu) h  \big) \, , \quad  b_0(\mathfrak x; \ac) 
:= 6 \Psi_\nu(\frak x)     \, .
  \end{aligned}
  \end{equation}
By Theorem \ref{modified Birkhoff map}-({\bf AE1}) and the estimates 
({\bf Est1}),  the function  $ b_0(\mathfrak x; \ac) $ satisfies,  for any $s \geq 0$, 
   \begin{equation}\label{stima bi nabla P-kdv}
  \begin{aligned}
 &  \| \partial_\nu^\alpha b_0 (\mathfrak x; \ac)\|_{H^s_x} \lesssim_{s, \alpha} 
 1 + \| w \|_{H^{s + 1}_x}\,, \\
 & \| \partial_\nu^\alpha d^l b_0(\frak x; \nu)[\widehat{\frak x}_1, \ldots, \widehat{\frak x}_l] \|_{H^s_x} \lesssim_{s, l, \alpha} \sum_{j = 1}^l \| \widehat{\frak x}_j\|_{E_{s + 1}} \prod_{i \neq j} \| \widehat{\frak x}_i\|_{E_{1}} + \| w \|_{H^{s + 1}_x} \prod_{j = 1}^l \| \widehat{\frak x}_j \|_{E_1}\,. 
  \end{aligned} 
  \end{equation}  
 By Corollary \ref{corollary transpose negative sobolev}
  (expansion of $ (d_\bot \Psi_\ac)^\top $), 
  Corollary \ref{corollary Birkhoff negative sobolev}
  (expansion of $ d_\bot \Psi_\ac $), 
  \eqref{stima bi nabla P-kdv} (estimates of $b_0$),  
\eqref{dp6c} 
\big(formula for $d (\nabla H^{kdv}) (\Psi_\ac(\mathfrak x))$\big),
 and 
Lemma \ref{composizione simboli omogenei} (composition), one obtains
the expansion    
  \begin{equation}\label{espansione d bot Psi nabla sec pezzo}
  \begin{aligned}
&  \partial_x  (d_\bot \Psi_\ac ( \mathfrak x))^\top 
  \big[ d \nabla H^{kdv} (\Psi_\ac (\mathfrak x ) ) 
  \big[ d_\bot \Psi_\ac(\mathfrak x) [  \cdot ] \big] \big] \\
 &  = \Pi_\bot \big( - \partial_{x}^3 - (a_{- 1}^\Psi(\frak x; \nu) 
 + a_{- 1}^{d \Psi^\top}(\frak x; \nu)) \partial_x^2 + 
 \sum_{k = 0}^{M + 1} a_{1 - k}^{(1)}(\mathfrak x; \ac) 
 \partial_x^{1 - k} \, \big)   + R_1(\mathfrak x; \ac) \\
  & \stackrel{\eqref{coefficiente - 1 d Psi Psi top}}{=} \Pi_\bot \big( - \partial_{x}^3  + \sum_{k = 0}^{M + 1} a_{1 - k}^{(1)}(\mathfrak x; \ac) \partial_x^{1 - k} \big)  + R_1(\mathfrak x; \ac)
  \end{aligned}
  \end{equation}
  where the functions $a_{1 - k}^{(1)}(\frak x; \nu)$, $k = 0, \ldots, M+ 1$ and the remainder $R_1(\mathfrak x; \ac)$  satisfy the properties
  \ref{pr1}--\ref{pr2} 
  stated in Lemma \ref{differential nabla perturbation-true}, in particular  \eqref{le1:es1}-\eqref{le1:es2}.
 \\[1mm]
 {\em Analysis  of the second term on the right hand side of \eqref{dgradcalPkdv}:}  
By \eqref{Ucraina 1} one has
$$
\partial_x \big( d_\bot  (d_\bot \Psi_\ac ( \mathfrak x))^\top  [\cdot]\big)  
\big[ \nabla H^{kdv} (\Psi_\ac (\mathfrak x ) ) \big]  = - \partial_x d \Psi_\nu(\frak x)^\top \partial_x^{- 1} d \big(d_\bot \Psi_\nu(\frak x)[\cdot] \big)\big[ {\cal J} d \Psi_\nu(\frak x)^\top \nabla H^{kdv} (\Psi_\nu (\frak x ) )  \big]\,. 
$$
Arguing as for the first term on the right hand side of 
\eqref{dgradcalPkdv}
 one obtains an expansion of the form
  \begin{equation}\label{espansione d bot Psi nabla sec pezzo ac}
 \partial_x \big( d_\bot  (d_\bot \Psi_\ac ( \mathfrak x))^\top  [\cdot]\big)  
\big[ \nabla H^{kdv} (\Psi_\ac (\mathfrak x ) ) \big]
  = \Pi_\bot \sum_{k = 0}^{M + 1} a_{1 - k}^{(2)}(\mathfrak x; \ac) \partial_x^{1 - k} + R_2(\mathfrak x; \ac)
  \end{equation}
  where $a_{1}^{(2)}(\frak x; \nu)  = 0 $ (cf. \eqref{negativi 3}) and 
  where the functions 
  $a_{1 - k}^{(2)}(\frak x; \nu)$, $k = 1, \ldots, M+ 1$ 
  and the remainder $R_2(\mathfrak x; \ac)$ satisfy the properties
  \ref{pr1}-\ref{pr2} of Lemma \ref{differential nabla perturbation-true}, in particular  \eqref{le1:es1}-\eqref{le1:es2}.
  \\ [1mm]
{\em Conclusion:} Combining \eqref{dgradcalPkdv}, 
\eqref{diffRdkv},
\eqref{espansione d bot Psi nabla sec pezzo},
and \eqref{espansione d bot Psi nabla sec pezzo ac} one obtains
the claimed expansion 
\eqref{pseudodifferential expansion of remainder mathcal R_Nkdv} with 
$$
\begin{aligned}
& a_{1 - k}(\mathfrak x; \ac ; \partial_x d_\bot \nabla_w {\cal R}^{kdv}) := a_{1 - k}^{(1)}(\frak x; \nu) - a_{1 - k}^{(1)}(\theta, 0, 0; \nu) + a_{1 - k}^{(2)}(\frak x; \nu) - a_{1 - k}^{(2)}(\theta, 0, 0; \nu) \\
& {\cal R}_M (\mathfrak x; \ac ;  \pa_x d_\bot \nabla {\cal R}^{kdv}) := R_1(\frak x ; \nu) - R_1(\theta, 0, 0; \nu) + R_2(\frak x; \nu) - R_2(\theta, 0, 0; \nu) \,. 
\end{aligned}
$$ 
Since
$ a_{1 - k}^{(1)}(\frak x; \nu) $, $R_1(\mathfrak x; \ac)$, and 
$ a_{1 - k}^{(2)}(\frak x; \nu) $, $R_2 (\mathfrak x; \ac)$ satisfy 
properties
  \ref{pr1}-\ref{pr2} of Lemma \ref{differential nabla perturbation-true}, in particular  \eqref{le1:es1}-\eqref{le1:es2}, 
the claimed estimates 
\eqref{stima a - k partial x d nabla R kdv}-\eqref{OpR3kdv} then follow 
by the mean value theorem. 
\end{proof}

\subsection{Frequencies of KdV}\label{sec:kdvfre}

In this section we record  properties of the KdV frequencies $ \omega^{kdv}_n $
used in this paper.
In Section \ref{linearizzato siti normali} we need to 
analyze $\pa_x \Omega^{kdv}(D ;I)$. Recall that by \eqref{Omega-normal},
$ \Omega^{kdv}( D; I)$ is defined  for 
$I \in \Xi \subset \R^{\mathbb S_+}_{> 0}$. Actually, it is defined on
all of $\R^{\mathbb S_+}_{> 0}$ (cf. \eqref{fre-kdv}) and according to 
\cite[Lemma 4.1]{Kappeler-Montalto-pseudo} $\pa_x \Omega^{kdv}(D; I)$ 
can be written as
\be\label{deco:Q-1}
\pa_x \Omega^{kdv}(D; I) = - \pa_x^3 + Q_{-1}^{kdv} (D; I )   
\ee
where
$ Q_{-1}^{kdv} (D; I) $ is a family of Fourier multiplier 
operators of order $ - 1 $
with an expansion in homogeneous components up to any order.  
\begin{lemma}\label{lem:anal}
For any $M \in \N$  
and $I \in \R_{>0}^{\Splus}$, $\, Q_{-1}^{kdv} (D; I )$ admits an expansion of the form
\be\label{expQ-1}
Q_{-1}^{kdv} (D; I )  
=   \Omega_{-1}^{kdv} (D; I)  + {\cal R}_M (D; I; Q_{-1}^{kdv})\,,
\quad \Omega_{-1}^{kdv} (\xi; I) = \sum_{k= 1}^{ M}
 a_{-k} (I; \Omega_{-1}^{kdv})  \chi_0(\xi) ( \ii 2 \pi \xi)^{-k}  \, ,   
\ee
where the functions 
$  a_{-k} (I; \Omega_{-1}^{kdv}) $ are real analytic and bounded on compact subsets  of $\R_{>0}^{\Splus}$,
$ a_{-k} (I;  \Omega_{-1}^{kdv})$ vanishes identially for $k$ even, and
${\cal R}_M (D; I ; Q_{-1}^{kdv})$  is a Fourier multiplier operator with multipliers 
\be\label{RM-1}
{\cal R}_M (n; I; Q_{-1}^{kdv}) = \frac{{\cal R}_{M}^{\om_n}(I)}{(2\pi n)^{M+1}}\,, \qquad
{\cal R}_M (- n; I; Q_{-1}^{kdv}) = - {\cal R}_M (n; I; Q_{-1}^{kdv})\,, \quad \forall n \in \Splus^\bot \, , 
\ee
where the functions $ I \mapsto {\cal R}_{M}^{\om_n}(I)$ are 
real analytic  and satisfy, for any $j \in \Splus,$ $\beta \in \N $, 
$$
\sup_{n \in \Sbot} |{\cal R}_{M}^{\om_n}(I)| \leq C_M \, , \quad  \,
\sup_{n \in \Sbot} | \pa_{I_j}^{\beta} {\cal R}_{ M}^{\om_n}(I)| \leq C_{M,\beta} \, ,
$$
uniformly on compact subsets  of $\R_{>0}^{\Splus}$. 
\end{lemma}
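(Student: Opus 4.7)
The content of the lemma is a translation of the classical asymptotic expansion of the KdV frequencies into the language of pseudo-differential operators. The plan is as follows.

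First, I would compute the Fourier multiplier of $Q_{-1}^{kdv}(D;I)$ explicitly. By \eqref{Omega-normal}, the operator $\pa_x\Omega^{kdv}(D;I)$ acts on $e^{\ii 2\pi n x}$ (for $n \in \Splus^\bot$) by multiplication by $(\ii 2\pi n)\,\Omega_n^{kdv}(I) = \ii\,\omega_n^{kdv}(I,0)$, while $-\pa_x^3$ acts by multiplication by $-(\ii 2\pi n)^3 = \ii(2\pi n)^3$. Using \eqref{deco:Q-1} and the fact that $\chi_0(n)=1$ for all $|n|\ge 1$, one gets the identity
\begin{equation*}
Q_{-1}^{kdv}(n;I) \;=\; \ii\bigl(\omega_n^{kdv}(I,0) - (2\pi n)^3\bigr), \qquad \forall n \in \Splus^\bot.
\end{equation*}
Next, I would invoke the classical asymptotic expansion of the KdV frequencies in the Birkhoff coordinates of \cite{KP}: for any $M\in\N$ there exist real analytic functions $c_{2j-1}:\R_{>0}^{\Splus}\to\R$, $1\le j\le \lfloor M/2\rfloor+1$, such that
\begin{equation*}
\omega_n^{kdv}(I,0) \;=\; (2\pi n)^3 + \sum_{j=1}^{\lfloor M/2\rfloor} \frac{c_{2j-1}(I)}{(2\pi n)^{2j-1}} + \frac{\rho_M(n;I)}{(2\pi n)^{M+1}}\,,
\end{equation*}
with $\rho_M(n;I)$ and all its $\pa_I^\beta$ derivatives uniformly bounded in $n\in\Splus^\bot$ on compact subsets of $\R^{\Splus}_{>0}$. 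This expansion is obtained in standard references on KdV integrability from the asymptotics of the periodic Hill spectrum, or equivalently from the analyticity of the KdV Hamiltonian $\mathcal H_o^{kdv}$ as a function of the actions together with the parity $\omega_{-n}^{kdv}=-\omega_n^{kdv}$, which forces only odd powers of $1/n$ to appear.

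Given this expansion, the identification is routine. I set $a_{-k}(I;\Omega_{-1}^{kdv}):= \ii\cdot\ii^{k}\, c_k(I)$ for $k$ odd and $a_{-k}(I;\Omega_{-1}^{kdv}):=0$ for $k$ even, so that
\begin{equation*}
\ii \,\frac{c_k(I)}{(2\pi n)^k} \;=\; a_{-k}(I;\Omega_{-1}^{kdv})\, (\ii 2\pi n)^{-k} \;=\; a_{-k}(I;\Omega_{-1}^{kdv})\, \chi_0(n)(\ii 2\pi n)^{-k}, \qquad |n|\ge 1.
\end{equation*}
For $k$ odd the factor $\ii^{k+1}$ is real, so $a_{-k}$ is real analytic in $I$, and the required boundedness on compact subsets is inherited from that of $c_k$. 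Defining $\mathcal R_M^{\om_n}(I):=\ii\rho_M(n;I)$ yields \eqref{RM-1}. The antisymmetry $\mathcal R_M(-n;I;Q_{-1}^{kdv})=-\mathcal R_M(n;I;Q_{-1}^{kdv})$ is an immediate consequence of $\omega_{-n}^{kdv}=-\omega_n^{kdv}$, and the uniform derivative bounds follow from those on $\pa_I^\beta\rho_M(n;I)$.

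The main obstacle is the input itself, namely the existence of the asymptotic expansion with the required uniform control on $\pa_I^\beta$-derivatives of the remainder. This is the substantive analytic fact; it is classical but requires either a careful argument based on the spectral asymptotics for the Hill operator or the machinery developed in \cite{KP}. Everything beyond this reduces to unwinding definitions, using the parity of $\omega_n^{kdv}$ in $n$, and relabelling constants so that the expansion takes the form prescribed by \eqref{expQ-1} with the symbol cut-off $\chi_0$.
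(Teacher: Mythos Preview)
Your proposal is correct and follows essentially the same route as the paper, which simply invokes \cite[Lemma~C.7]{Kappeler-Montalto-pseudo} for the asymptotic expansion of $\omega_n^{kdv}(I,0)$ and the derivative bounds on the remainder; you have just spelled out explicitly how that expansion is repackaged into the symbol form \eqref{expQ-1}. One minor indexing slip: for $M$ odd your sum $\sum_{j=1}^{\lfloor M/2\rfloor}$ stops at $k=M-2$ and misses the $k=M$ term, so the upper limit should be $\lceil M/2\rceil$; this is cosmetic and does not affect the argument.
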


\begin{proof}
The result follows by  \cite[Lemma C.7]{Kappeler-Montalto-pseudo}.
\end{proof}
In Section \ref{sec: reducibility}, we shall use the following asymptotics 
of the KdV frequencies
\begin{equation}\label{Proposition 2.30}
\omega^{kdv}_n (I,0) - (2 \pi n)^3 = O( n^{-1} ) \, , \quad 
n \, \pa_{I} \omega^{kdv}_n (I,0)  = O( 1 ) \,,  
\end{equation}
uniformly on compact sets of actions $ I \in \R_{>0}^{\Splus}$.

\begin{lemma}(\cite[Proposition 15.5]{KP}) \label{Proposition 2.3}
{\bf (Non-degeneracy of KdV frequencies)} For any finite subset $ \Splus \subset {\mathbb N} $
the following holds on $\R^{\S_+}_{>0}$:

(i) The map $ I \mapsto {\rm det}\big( (\partial _{I_k}  \omega _n^{kdv}(I, 0))_{k,n \in \Splus} \big) $ is real analytic and does not vanish identically.

(ii) For any $\ell \in {\mathbb Z}^{\Splus} and \ j, k \in \S^\bot $ with $ (\ell, j,k) \neq (0,j,j) $, the following functions are real
analytic and do not vanish  identically, 
\be\label{Melnikov-1-2}
\sum _{n\in {\Splus}} \ell _n \omega^{kdv}_n + \omega^{kdv}_j \not= 0\,, 
\qquad \sum _{n \in {\Splus}} \ell _n \omega^{kdv}_n + \omega^{kdv}_j - \omega^{kdv}_k \not= 0\,.
\ee
\end{lemma}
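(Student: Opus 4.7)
The plan is as follows: real analyticity on $\R_{>0}^{\Splus_+}$ of the determinant in (i) and of each linear combination in (ii) is immediate from Theorem \ref{Birkhoff coordinates for KdV}(ii), which asserts that $\mathcal{H}_o^{kdv}$ is a real analytic function of the actions; hence so are the frequencies $\omega_n^{kdv}(I,0)=\partial_{I_n}\mathcal{H}_o^{kdv}$ and all their partial derivatives in $I$. By the identity principle for real analytic functions, to show that none of these (finitely many, in each case) functions vanishes identically it suffices to exhibit a single point at which each of them is non-zero, which I would take to be the origin $I=0$, to which the relevant functions extend analytically.

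The key input is the classical second-order Birkhoff normal form of KdV near $u=0$ (see \cite[Chapter~15]{KP}):
\begin{equation*}
\mathcal{H}_o^{kdv}(I)= \sum_{n\geq 1}(2\pi n)^3\, I_n -c\sum_{n,m\geq 1}\min(n,m)\,I_n I_m +O(|I|^3)\,,
\end{equation*}
for some universal constant $c\neq 0$, so that $\partial_{I_k}\omega_n^{kdv}(I,0)\big|_{I=0}=-2c\min(n,k)$. For (i), the Hessian at $I=0$ restricted to $\Splus_+\times\Splus_+$ is therefore a non-zero scalar multiple of $(\min(n,k))_{n,k\in\Splus_+}$; after listing the elements of $\Splus_+$ in increasing order, an elementary row reduction (subtracting each row from the next) converts this matrix into a lower triangular one with non-zero diagonal entries, proving (i). For (ii), I would Taylor-expand each Melnikov combination around $I=0$ as $\tau_0+\tau_1(I)+O(|I|^2)$, where $\tau_0$ is a linear combination of the cubes $(2\pi n)^3$ with $n\in\Splus_+\cup\{j\}\cup\{k\}$ and $\tau_1(I)=-2c\sum_m\mu_m I_m$ has coefficients $\mu_m=\sum_{n\in\Splus_+}\ell_n\min(n,m)+\min(j,m)-\varepsilon\min(k,m)$, with $\varepsilon\in\{0,1\}$ distinguishing the two types of Melnikov combination.

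If $\tau_0\neq 0$ the function is non-zero at $I=0$ and we are done. Otherwise, since the rows $\{(\min(n,m))_{m\geq 1}\}_{n\geq 1}$ of the $\min$-matrix are linearly independent, and the hypothesis $(\ell,j,k)\neq(0,j,j)$ forces the coefficient tuple in the combination defining $\mu$ to be non-trivial, we obtain $\mu\neq 0$ and hence $\tau_1\not\equiv 0$, so the function vanishes to at most first order at $I=0$. The main obstacle lies in making this last step fully rigorous in every sub-case allowed by the hypothesis — in particular the second Melnikov combination with $\ell\neq 0$ and $j,k\in\Splus_+^\bot$, $j\neq k$, for which $\tau_0$ may accidentally vanish: one has to rule out a fortuitous cancellation between the $\ell$-contribution and the $(j,k)$-contribution to $\mu$, which requires exploiting the precise linear-algebraic structure of the $\min$-matrix. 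This is exactly the content of \cite[Proposition~15.5]{KP}, which the present lemma simply quotes rather than reproves.
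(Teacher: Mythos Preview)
The paper gives no proof of this lemma; it is simply stated with a citation to \cite[Proposition~15.5]{KP}. Your proposal takes the same route---ultimately deferring to that reference---while supplying a correct sketch of the underlying Birkhoff-normal-form argument at $I=0$ (non-degeneracy of the $\min$-matrix for (i), Taylor expansion for (ii)) and honestly flagging the one genuine obstacle, so it is consistent with and more informative than what the paper records.
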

\begin{remark}\label{rem:diffeo}
It was shown in \cite{BK} that for any $I \in \R^{\S_+}_{>0}$, ${\rm det} \big( 
(\partial _{I_k} \omega _n^{kdv}(I, 0)) _{k,n \in \Splus} \big) \ne 0.$
\end{remark}

\section{Nash-Moser theorem}
 
 In the symplectic variables
 $  (\theta, y,  w) \in {\cal V}(\d) \cap {\mathcal E}_s $ 
  defined by Theorem \ref{modified Birkhoff map}, 
with symplectic $ 2 $-form given by \eqref{2form},
the Hamiltonian equation \eqref{hamiltonian PDE} reads
 \be \label{HSpsi}
 \pa_t \theta = - \nabla_y {\mathcal H}_\e \, , \qquad  \pa_t y = 
 \nabla_{\theta} {\mathcal H}_\e \, , \qquad
 \pa_t w = \pa_x \nabla_w {\mathcal H}_\e \, , 
 \ee
 where $ {\mathcal H}_\e:= H_\e\circ \Psi_\ac $ and 
 $ H_\e $ given by \eqref{hamiltoniana kdv}. More explicitly,
 \begin{equation}\label{HepNM}
 \begin{aligned}
&  {\mathcal H}_\e (\theta, y, w; \ac ) = {\mathcal H}^{kdv} (\theta, y, w; 
\ac ) + \e {\mathcal P}(\theta, y, w; \ac ) \,,  \\ 
  & {\mathcal H}^{kdv} =  H^{kdv} \circ \Psi_\ac\,, \quad  {\mathcal P} = P \circ \Psi_\ac\, , \quad \ac \in \Xi \, , 
  \end{aligned}
 \end{equation}
where $ {\mathcal H}^{kdv} (\theta, y, w; \ac ) $ has the normal form expansion  \eqref{Hkdv}. We denote by $ X_{ {\mathcal H}_\e }$ the 
Hamiltonian vector field associated to ${\cal H}_\e$. 
For $ \e = 0 $, the Hamiltonian system \eqref{HSpsi} possesses, 
for any value of the parameter $\ac \in \Xi $,   the  invariant torus
$ \T^{\Splus} \times \{ 0 \}  \times \{0 \} $, 
 filled by quasi-periodic finite gap solutions of the KdV equation
  with frequency vector 
 $   \omega^{kdv} (\ac) := (\omega^{kdv}_n (\ac,0))_{n \in \Splus}  $
 introduced in \eqref{frequency finite gap}.
  
By our choice of $\Xi,$ the map  
$ - \omega^{kdv} : \Xi \to   \Omega := - \omega^{kdv} (\Xi)$ 
 is a real analytic diffeomorphism. In 
the sequel, we consider $ \ac $ as a function of the parameter $ \om \in  \Omega $, namely
\be\label{mu-kdv-om}
 \ac \equiv \ac(\omega) : = (\om^{kdv})^{-1} ( - \omega )\ .
\ee
 For simplicity 
we often will not record the dependence of the Hamiltonian  
$ {\cal H}_\e $ on $ \ac = (\om^{kdv})^{-1} ( - \omega ) $. 

Consider  the set of diophantine frequencies in $\Omega$,   
\begin{equation}\label{diofantei in omega}
\mathtt{DC}(\gamma, \tau) := \Big\{ \omega \in \Omega : |\omega \cdot \ell| \geq \frac{\gamma}{\langle \ell \rangle^\tau}, \quad \forall \ell \in \Z^{\mathbb S_+} \setminus \{ 0 \} \Big\}\,. 
\end{equation}
For any torus embedding 
$ \T^{\Splus} \to {\cal V}(\delta ) \cap {\mathcal E}_s   $, 
$  \vphi \mapsto ( \theta (\vphi), y(\vphi), w(\vphi) ) $, 
close to the identity, 
consider its lift 
\be\label{tor-emb}
\breve{\io} : \R^{\Splus} \to \R^{\Splus} \times \R^{\Splus} \times H^s_\bot(\T_1) \, , \quad 
\breve \io(\vphi)  = (\vphi, 0, 0)  + \iota(\vphi) \, , 
\ee
where 
$ \iota(\vphi)  = ( {\Theta} (\ph), y(\ph), w(\ph)) $, with 
$ \Theta(\ph) := \teta (\vphi) - \vphi $, 
is $ (2 \pi \Z)^{\Splus}$ periodic. 

We look for a torus embedding $\breve{\io}  $ such  that
$\mathcal F_\omega ( \iota, \zeta) = 0$ where
\be\label{operatorF}
\mathcal F_\omega ( \iota, \zeta) :=
\left(
\begin{aligned}
& \om \cdot \pa_{\vphi} \theta (\vphi) + (\nabla_y {\mathcal H}_\e) (\breve{\io} (\vphi)    ) \\
& \om \cdot \pa_{\vphi} y (\vphi) - (\nabla_\theta {\mathcal H}_\e) (\breve{\io} (\vphi)    ) 
- \zeta \\
& \om \cdot \pa_{\vphi} w (\vphi) - \pa_x (\nabla_w {\mathcal H}_\e) (\breve{\io} (\vphi)  ) \\
\end{aligned}
\right).
\ee
The additional variable $ \zeta \in \R^{\Splus} $ 
is introduced in order to control the average of the $y$-component 
of the linearized Hamiltonian equations -- see Section \ref{sezione approximate inverse}. 
Actually any invariant torus 
for $ X_{{\cal H}_{\e, \zeta}} = X_{{\cal H}_\e} + (0, \zeta, 0 )$
with modified Hamiltonian 
\be\label{def:Hep}
{\cal H}_{\e, \zeta} (\teta, y, w)  
:=  {\cal H}_\e (\teta, y, w) + \zeta \cdot \theta\,,\quad \zeta \in \R^{\Splus} \, , 
\ee
is  invariant  for $ X_{{\cal H}_\e} $, see \eqref{zeta Z inequality}. 
Note that $ {\cal H}_{\e, \zeta}  $ is not periodic in 
$ \theta $, but that its Hamiltonian vector field is.
The Lipschitz Sobolev norm of the periodic part 
$\iota(\vphi) = (\Theta(\vphi), y(\vphi), w(\vphi)) $ 
of the embedded torus \eqref{tor-emb}
is 
$$ 
\|  \iota  \|_s^\Lipg :=   \| \Theta \|_s^\Lipg 
+  \| y  \|_s^\Lipg  +  \| w \|_s^\Lipg 
$$
where  $  \| w  \|_s^\Lipg$ is the Lipschitz Sobolev norm
 introduced in \eqref{def norm Lip Stein uniform} and
 \begin{equation}\label{def H^s_varphi}
 \| \Theta \|_s^\Lipg  \equiv \| \Theta \|_{H^s_\vphi}^\Lipg := 
 \| \Theta  \|_{H^s (\T^{\Splus}, \R^{\Splus})}^\Lipg \, , \qquad 
\| y  \|_s^\Lipg \equiv  \| y  \|_{H^s_\vphi}^\Lipg
 := \| y  \|_{H^s (\T^{\Splus}, \R^{\Splus})}^\Lipg \, . 
 \end{equation}

\begin{theorem}\label{main theorem}
{\bf (Nash-Moser)}
There exist $ \bar s > (|\Splus| + 1) / 2 $ and $ \varepsilon _0 > 0 $  so that 
for any $0 <\varepsilon  \leq \varepsilon _0$, 
there is a measurable subset  $  \Omega_\varepsilon 
\subseteq \Omega$ satisfying 
\be\label{measure estimate Omega in Theorem 4.1}
 \lim_{\e\to 0} \, \frac{{\rm meas}( \Omega_\e)}
 {{\rm meas}( \Omega )} = 1
\ee
and  for any $ \omega \in  \Omega_\e $, there exists a torus embedding
$\breve \io_\omega $ as in \eqref{tor-emb}  which satisfies the estimate 
$$
\| \breve \io_\omega - (\vphi, 0,0) \|_{\bar s}^\Lipg = O( \e \gamma^{-2}) \ , \qquad  \gamma = 
\e^{\mathfrak a} \, , 
\ 0 <  {\mathfrak a} \ll 1 \, ,
$$
and solves
$$ 
\om \cdot \partial_\vphi \breve \io_\omega(\vphi) - X_{{\cal H}_\e}( \breve \io_\omega(\vphi)) = 0 \, .
$$
As a consequence  the embedded torus 
$\breve \io_\omega (\T^{\Splus}) $ is invariant for the Hamiltonian vector field $ X_{{\mathcal H}_\e (\cdot; \ac)} $  
with $ \ac = (\om^{kdv})^{-1}( - \om ) $,  
and  it is filled by quasi-periodic solutions of \eqref{HSpsi} 
with frequency vector $ \om \in \Omega_\e $.
 Furthermore, the quasi-periodic solution
$\breve \io_\omega(\omega t) = \omega t + \io_\omega(\omega t)$ is
linearly  stable.  
\end{theorem}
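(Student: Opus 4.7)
\medskip

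\noindent
\textbf{Proof plan.}
The plan is to run a Nash--Moser iteration to construct an embedded torus $\breve\io_\omega$ solving $\mathcal F_\omega(\iota,\zeta)=0$, starting from the trivial approximate solution $\breve\io_0(\vphi)=(\vphi,0,0)$, $\zeta_0=0$, for which $\mathcal F_\omega = O(\varepsilon)$ thanks to the normal form structure \eqref{Hkdv} and \eqref{Rkdv-cubic-der}. At each step $n$ we would produce a correction $\Delta\iota_n$ by approximately solving $d\mathcal F_\omega(\iota_n)[\Delta\iota_n]\approx -\mathcal F_\omega(\iota_n,\zeta_n)$, with a quadratic gain in the residual, combined with smoothing projectors $\Pi_{N_n}$ (cf.\ \eqref{def:smoothings}) to absorb the loss of derivatives. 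Since during the iteration only approximately invariant tori are available, we would replace the exact inverse of $d\mathcal F_\omega(\iota_n)$ by an \emph{approximate inverse} à la Berti--Bolle: choosing a symplectic frame adapted to the embedded torus, the linearized operator would decouple, modulo terms vanishing at the true solution, into an action--angle block solvable by integration against $\omega$ on $\mathtt{DC}(\gamma,\tau)$ using \eqref{Diophantine-1}, and a normal block governed by the quasi-periodic operator $\mathcal L^{(0)}_\omega$ of \eqref{L0-intro}. The auxiliary parameter $\zeta$ in \eqref{operatorF} is needed precisely to close the average equation for the $y$-component in this decoupling, in view of \eqref{def:Hep}.

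The heart of the proof is the inversion of $\mathcal L^{(0)}_\omega$, whose pseudo-differential form \eqref{L0-intro} is available because, by Lemmata \ref{differential nabla perturbation-true} and \ref{differential nabla kdv remainder-true}, the linearized Hamiltonian vector field in the normal direction is a classical pseudo-differential operator of order three up to a smoothing remainder satisfying tame estimates along the torus embedding (Lemma \ref{lem:tame2}). The plan is to perform a finite sequence of symplectic conjugations:
\begin{enumerate}
\item[(i)] a time-quasi-periodic reparametrization of $\vphi$ to make the coefficient of $\partial_x^3$ constant in $\vphi$;
\item[(ii)] conjugation by the time-one flow of a transport equation as in \eqref{flow0}, whose Egorov analysis (Propositions \ref{proposizione astratta egorov}, \ref{proposizione astratta egorov 2} and Lemma \ref{Fourier multiplier}) eliminates the $x$-dependence of the top-order symbol and normalizes the lower-order pseudo-differential terms $a^{(0)}_{-k}\partial_x^{-k}$, $k\ge 0$, using also Lemma \ref{Neumann pseudo diff};
\item[(iii)] a KAM reducibility scheme as in \cite{Berti-Montalto} to completely diagonalize the remaining operator of the form $\omega\cdot\partial_\vphi - \mathcal D + \mathcal R$, where $\mathcal D$ is a diagonal Fourier multiplier and $\mathcal R$ is modulo-tame and smoothing in $\vphi$.
\end{enumerate}

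The main obstacle is step (iii), the reducibility iteration: it requires second-order Melnikov non-resonance conditions $|\omega\cdot\ell + \mu_j^\infty - \mu_k^\infty| \ge \gamma \langle\ell\rangle^{-\tau}|j-k|^{-1}$ on the final eigenvalues $\mu_j^\infty$, which must be inherited from the unperturbed KdV frequencies $\omega^{kdv}_j$ through the asymptotics \eqref{Proposition 2.30} and the non-degeneracy supplied by Lemma \ref{Proposition 2.3}. The smoothing remainder $\mathcal R^{(0)}_M$ in \eqref{L0-intro}, which is \emph{not} small in $\varepsilon$ but only tame, must be absorbed at each reducibility step by combining the modulo-tame calculus of Section~2.3 (especially Lemmata \ref{interpolazione moduli parametri}, \ref{serie di neumann per maggiorantia}, \ref{lemma:smoothing-tame}) with the quantitative Egorov bounds of Section~\ref{sezione astratta egorov}; the choice of $M$ is dictated precisely by the super-exponential loss generated along the iteration.

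Finally, once the approximate inverse is constructed on a suitable subset $\Omega_n$ of diophantine and Melnikov-good frequencies, the Nash--Moser iteration is closed in the standard way by defining $\iota_{n+1}=\iota_n+\Pi_{N_n}\Delta\iota_n$ with $N_n=N_0^{\chi^n}$, $\chi\in(1,2)$, and proving by induction convergence of $\iota_n$ to a limit $\iota_\infty$ in $H^{\bar s}$ for some $\bar s>(|\Splus|+1)/2$ large enough, at rate $\|\iota_\infty-\iota_0\|_{\bar s}^\Lipg=O(\varepsilon\gamma^{-2})$, with $\gamma=\varepsilon^{\mathfrak a}$. At each stage one excises the $\omega$'s for which the required first and second order Melnikov conditions fail; thanks to Lemma \ref{Proposition 2.3} and \eqref{Proposition 2.30}, the union of bad sets has measure $O(\gamma)$, yielding $\Omega_\varepsilon:=\cap_n\Omega_n$ with the asymptotic full-measure property \eqref{measure estimate Omega in Theorem 4.1}. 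Linear stability of the resulting quasi-periodic solution is then a direct consequence of the fact that the reduction in (iii) produces purely imaginary diagonal form at $\iota=\iota_\infty$, so that Sobolev norms of solutions of the linearized equation along $\breve\io_\omega$ remain bounded in time.
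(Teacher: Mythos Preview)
Your plan is correct and follows the paper's route (Sections \ref{sezione approximate inverse}--\ref{sec:NM}): Berti--Bolle approximate inverse via the isotropic torus and the symplectic frame $G_\delta$, then time reparametrization, Egorov-type conjugation, and KAM reducibility of the normal block, closed by a Nash--Moser scheme with measure estimates based on Lemma \ref{Proposition 2.3}. A few details need adjusting: the KdV second Melnikov conditions are $|\omega\cdot\ell+\mu_j-\mu_{j'}|\ge \gamma|j^3-j'^3|\langle\ell\rangle^{-\tau}$ (the cubic dispersion puts $|j^3-j'^3|$ in the \emph{numerator}, not $|j-k|^{-1}$ in the denominator, and this is what makes the Fubini measure argument close); the term that is not small in $\varepsilon$ in \eqref{L0-intro} is the Fourier multiplier $Q_{-1}^{kdv}(D;\omega)$, whereas $\mathcal R_M^{(0)}$ is in fact $O(\varepsilon+\|\iota\|)$; between your steps (ii) and (iii) the paper inserts two further symplectic conjugations (Sections 6.4--6.5) to remove first the $x$- and then the $\vphi$-dependence of the first-order coefficient; and the bad-set measure is only $O(\gamma^{\mathfrak a})$ for some $\mathfrak a<1$, because for the finitely many $|\ell|\le C_1$ one must use analyticity and Weierstrass preparation rather than a direct transversality/Fubini argument.
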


Theorem \ref{main theorem} is proved in Section \ref{sec:NM}. 
The main issue concerns the construction of an approximate 
right inverse of  the linearized operator 
$ d_{\iota, \zeta} \mathcal F_{\omega} ( \iota, \zeta) $ at an approximate solution. 
This construction is carried out in Sections \ref{sezione approximate inverse},
\ref{linearizzato siti normali} and  \ref{sec: reducibility}.  

\smallskip

Along the proof we shall use 
the following tame estimates of the Hamiltonian vector field $X_{{\cal H}_\e}$ with respect to the norm $\| \cdot \|_s^\Lipg$. Recalling the expansion \eqref{Hkdv} provided in Theorem \ref{modified Birkhoff map}, and the definition of ${\cal P}$ in \eqref{definizione perturbazione quasilin}, we 
decompose the Hamiltonian ${\cal H}_\e $ defined in 
\eqref{HepNM} as
\begin{equation}\label{splitting ham forma normale + perturbazione}
\begin{aligned}
& {\cal H}_\e = {\cal N} + {\cal P}_\e \quad {\rm where}   \\
& {\cal N}(y, w;\ac) := \omega^{kdv}(\ac) \cdot y + \frac12 
\Omega_{\mathbb S_+}^{kdv}(\ac) [y] \cdot y + \frac12 \big( \Omega^{kdv}(D ;\ac)w\,,\,w \big)_{L^2_x}  \, , 
\quad  {\cal P}_\e := {\cal R}^{kdv} + \e {\cal P}\,. 
\end{aligned}
\end{equation}
\begin{lemma}\label{stime tame campo ham per NM AI}
There exists $\sigma_1 = \sigma_1(\mathbb S_+) > 0$ so that for any
$s \ge 0,$ any torus embedding 
$\breve \io$ of the form \eqref{tor-emb}
 with 
$\| \io \|_{s_0 + \sigma_1}^\Lipg \leq \delta$, and any maps
 $\widehat \io, \widehat \io_1, \widehat \io_2 :
 \T^{\Splus} \to  E_s$, 
 the following tame estimates hold: 
$$
\begin{aligned}
\| X_{{\cal P}_\e} (\breve \io) \|_s^\Lipg & \lesssim_s  \e (1 + \| \io\|_{s + \sigma_1}^\Lipg) + \| \io \|_{s_0 + \sigma_1}^\Lipg  \| \io \|_{s + \sigma_1}^\Lipg  \,, \\
\| d X_{{\cal P}_\e}(\breve \io)[\widehat \io] \|_s^\Lipg & \lesssim_s  (\e + \| \io \|_{s_0 + \sigma_1}^\Lipg) \| \widehat \io\|_{s + \sigma_1}^\Lipg +  \| \io \|_{s + \sigma_1}^\Lipg  \| \widehat \io \|_{s_0 + \sigma_1}^\Lipg\,,  \\
\| d^2 X_{{\cal H}_\e}(\breve \io)[\widehat \io_1, \widehat \io_2] \|_s^\Lipg & \lesssim_s \| \widehat \io_1\|_{s + \sigma_1}^\Lipg \| \widehat \io_2\|_{s_0 + \sigma_1}^\Lipg +  \| \widehat \io_1\|_{s_0 + \sigma_1}^\Lipg \| \widehat \io_2\|_{s + \sigma_1}^\Lipg + \| \io \|_{s + \sigma_1}^\Lipg  (\| \widehat \io_1\|_{s_0 + \sigma_1}^\Lipg \| \widehat \io_2\|_{s_0 + \sigma_1}^\Lipg \, . 
\end{aligned}
$$
\end{lemma}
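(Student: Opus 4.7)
The strategy is to decompose $\mathcal{H}_\e = \mathcal{N} + \mathcal{P}_\e$ with $\mathcal{P}_\e = \mathcal{R}^{kdv} + \e\mathcal{P}$ as in \eqref{splitting ham forma normale + perturbazione}, and then to apply Lemma \ref{lem:tame1} componentwise. Since $\mathcal{N}$ is at most quadratic in $(y,w)$ and independent of $\theta$, its vector field $X_\mathcal{N}$ is affine-linear in $(y,w)$, so $d^2 X_\mathcal{N} \equiv 0$; consequently the third bound of the statement reduces to one on $d^2 X_{\mathcal{P}_\e}$, and the first two involve only $X_{\mathcal{P}_\e}$ and $dX_{\mathcal{P}_\e}$. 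The three components of $X_{\mathcal{P}_\e}(\mathfrak x)$ are $\pm\nabla_{\theta}\mathcal{P}_\e,\, \nabla_{y}\mathcal{P}_\e$ (with values in $\R^{\mathbb S_+}$) and $\partial_x d_\bot\nabla_{w}\mathcal{P}_\e$ (with values in $H^s_\bot(\T_1)$). We view each of them as a smooth map $\mathfrak x\mapsto a(\mathfrak x;\om)\in H^s(\T_1)$ in the sense of Lemma \ref{lem:tame1}, and the plan is to estimate the two summands in $\mathcal{P}_\e$ separately, exploiting their very different structures.

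The key observation is the vanishing of $\nabla\mathcal{R}^{kdv}$ to second order at $(\theta,0,0)$ recorded in \eqref{Rkdv-cubic-der}. This allows Lemma \ref{lem:tame1}-(iii) to be applied to each component of $X_{\mathcal{R}^{kdv}}$, yielding the quadratic bounds
\begin{equation*}
\|X_{\mathcal{R}^{kdv}}(\breve\io)\|_s^\Lipg\lesssim_s \|\io\|_{s_0+\sigma}^\Lipg\|\io\|_{s+\sigma}^\Lipg,\qquad
\|dX_{\mathcal{R}^{kdv}}(\breve\io)[\widehat\io]\|_s^\Lipg\lesssim_s \|\io\|_{s_0+\sigma}^\Lipg\|\widehat\io\|_{s+\sigma}^\Lipg + \|\io\|_{s+\sigma}^\Lipg\|\widehat\io\|_{s_0+\sigma}^\Lipg.
\end{equation*}
For the second differential we lose the cubic vanishing and merely apply Lemma \ref{lem:tame1}-(i) to $d^2 X_{\mathcal{R}^{kdv}}$, producing the bilinear bound of the statement (with no factor of $\io$). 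For the perturbative part $\e X_{\mathcal{P}}$ we cannot use any vanishing but retain the factor $\e$, and Lemma \ref{lem:tame1}-(i)-(ii) directly gives $\|\e X_{\mathcal{P}}(\breve\io)\|_s^\Lipg\lesssim_s \e(1+\|\io\|_{s+\sigma}^\Lipg)$ and
$\|\e dX_{\mathcal{P}}(\breve\io)[\widehat\io]\|_s^\Lipg\lesssim_s \e(\|\widehat\io\|_{s+\sigma}^\Lipg + \|\io\|_{s+\sigma}^\Lipg\|\widehat\io\|_{s_0+\sigma}^\Lipg)$. Summing the two contributions and choosing $\sigma_1$ as the maximum of the losses produced in all invocations of Lemma \ref{lem:tame1} gives the three estimates of the statement.

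The only technical point is the verification of the hypothesis \eqref{ipotesi tame derivate per composizione x} for the three components of $X_{\mathcal{P}_\e}$, which must be checked \emph{pointwise} in $\mathfrak x\in\mathcal{V}^{\sigma}(\delta)\cap\mathcal{E}_{s+\sigma}$ rather than along an embedding. For $\pm\nabla_\theta\mathcal{P}_\e,\,\nabla_y\mathcal{P}_\e$ this is obtained by differentiating $\mathcal{P}_\e = {\cal R}^{kdv} + \e P\circ\Psi_\nu$ and using Theorem \ref{modified Birkhoff map}-({\bf AE1}), ({\bf Est1})-({\bf Est2}) together with Lemma \ref{Moser norme pesate} applied to the density $f$. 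For the $w$-component this is exactly what Lemmas \ref{differential nabla perturbation-true} and \ref{differential nabla kdv remainder-true} deliver: once the tame symbol estimates \eqref{le1:es1}, \eqref{stima a - k partial x d nabla R kdv} and the smoothing remainder estimates \eqref{le1:es2}, \eqref{OpR2kdv} are evaluated on the single function $w\in H^s_\bot(\T_1)$, they yield the right-hand side of \eqref{ipotesi tame derivate per composizione x}. This verification is essentially bookkeeping and constitutes the main — but routine — obstacle in the argument.
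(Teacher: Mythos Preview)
Your proposal is correct and follows essentially the same approach as the paper's proof: decompose $X_{\mathcal P_\e}=\e X_{\mathcal P}+X_{\mathcal R^{kdv}}$, observe that $d^2 X_{\mathcal N}=0$, feed the pointwise tame estimates from Lemmata \ref{differential nabla perturbation-true}, \ref{differential nabla kdv remainder-true} and Theorem \ref{modified Birkhoff map} into the composition Lemma \ref{lem:tame1}, and exploit the second-order vanishing \eqref{Rkdv-cubic-der} via part (iii) of that lemma (which is exactly the mean value argument the paper invokes). The only cosmetic difference is that the paper also cites Lemma \ref{lem:tame2} for the operator-valued remainders, whereas you route everything through Lemma \ref{lem:tame1}; both work, since the hypotheses of the latter absorb the smoothing-remainder bounds \eqref{le1:es2}, \eqref{OpR2kdv} once evaluated along an embedding.
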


\begin{proof}
Note that $X_{{\cal P}_\e} = \e X_{\cal P} + X_{{\cal R}^{kdv}}$
and $d^2 X_{{\cal H}_\e} = d^2 X_{\mathcal N} + d^2 X_{{\cal P}_\e}$.
The claimed estimates then follow from 
estimates of
$\e X_{\cal P}$, obtained from 
Lemmata \ref{differential nabla perturbation-true}, \ref{lem:tame1}, \ref{lem:tame2},
and from estimates of 
$X_{{\cal R}^{kdv}}$ obtained from 
Lemmata \ref{differential nabla kdv remainder-true}, \ref{lem:tame1}, \ref{lem:tame2}, and the mean value theorem.
\end{proof}

\section{Approximate inverse}\label{sezione approximate inverse} 
In order to implement a convergent Nash-Moser scheme that leads to a solution of 
$ \mF_\omega(\io, \zeta) = 0 $ (cf. \eqref{operatorF})
we construct an \emph{almost-approximate right inverse} (see Theorem \ref{thm:stima inverso approssimato})
of the linearized operator 
\be\label{diaF}
d_{\io , \zeta} {\cal F}_\omega(\io, \zeta )[\widehat \io \,, \widehat \zeta ] =
\Dom \widehat \io - d_\io X_{{\cal H}_{\e}} ( \breve \io ) [\widehat \io ] -  (0, \widehat \zeta, 0 )
\ee
where ${\cal H}_\e = {\cal N} + {\cal P}_\e $ is the Hamiltonian in 
\eqref{splitting ham forma normale + perturbazione}.
Note that 
the perturbation ${\cal P}_\e $ and the differential
$ d_{\io, \zeta} {\cal F}_\omega(\io, \zeta )$ 
are independent of $ \zeta $. 
In the sequel, we will often write 
$d_{\io, \zeta} {\cal F}_\omega(\io )$ instead of  
$d_{\io, \zeta} {\cal F}_\omega(\io, \zeta )$.

Since  the $\theta$, $y$, and $w$ components of
$ d_\io X_{{\cal H}_{\e}} ( \breve \io (\vphi) )[\widehat \io] $ 
are all coupled, inverting the linear operator 
$d_{\io , \zeta} {\cal F}_\omega(\io, \zeta )$ in \eqref{diaF}
is intricate. As a first step, we  implement the approach 
developed in \cite{BBM-auto}, \cite{BB13}, \cite{Berti-Montalto},  to approximately reduce $d_{\io , \zeta} {\cal F}_\omega(\io, \zeta )$
to a triangular form -- see \eqref{operatore inverso approssimato} below. 

Along this section we assume the following hypothesis, 
which is verified by the approximate solutions obtained at each step of the Nash-Moser 
Theorem \ref{iterazione-non-lineare}. 

\begin{itemize}
\item {\bf Ansatz.} 
{\em The map $\omega \mapsto \iota(\omega) :=  \breve \io(\ph; \om) - (\ph,0,0) $ 
is Lipschitz continuous with respect to $\omega \in \Omega $, 
and,  for $\gamma \in (0, 1)$, 
$ \mu_0 := \mu_0 (\t, \mathbb S_+) >  0 $
(with $\tau$ being specified later (cf. Section \ref{sec:NM}))
\begin{equation}\label{ansatz 0}
\| \io  \|_{\mu_0}^\Lipg \lesssim \e \gamma^{- 2} \,, \quad \| Z\|_{s_0}^\Lipg \lesssim \e \, , 
\end{equation}
where  $ Z  $ is the ``error function''  defined by
\begin{equation} \label{def Zetone}
Z(\vphi) :=  (Z_1, Z_2, Z_3) (\vphi) := {\cal F}_\omega(\io, \zeta) (\vphi) =
\om \cdot \pa_\vphi \breve \io (\vphi) - X_{{\cal H}_\e}(\breve \io (\vphi)) - (0, \zeta, 0) \, .
\end{equation}
}
\end{itemize}
We first noe that the $2$-form $ {\cal W} $ given in \eqref{2form} is 
$$
{\cal W} := \Big( {\mathop \sum}_{j \in \Splus} d y_j \wedge d \theta_j \Big)  \oplus 
{\cal W}_\bot = d \Lambda 
$$
where  $ \Lambda $ is the Liouville  $1$-form
\begin{equation}\label{Lambda 1 form}
\Lambda_{(\theta, y, w)}[\widehat \theta, \widehat y, \widehat w] := 
{\mathop \sum}_{j \in \Splus} y_j  \widehat \theta_j + 
\frac12 \big( \partial_x^{- 1} w\,,\,\widehat w \big)_{L^2_x}  \, . 
\end{equation}
Arguing as in \cite[Lemma 6.1]{BBM-auto}, one obtains
\begin{equation}\label{zeta Z inequality}
|\zeta|^\Lipg \lesssim \| Z \|_{s_0}^\Lipg\,. 
\end{equation}
 An invariant torus $ \breve \io $ with Diophantine flow 
is isotropic, 
meaning that the pull-back $ \breve \io^* \Lambda $
of the $ 1$-form $\Lambda $ is closed, 
or equivalently that the pull back $\breve \io^* {\cal W}$
satisfies
$ \breve \io^* {\cal W} =  \breve \io^* d \Lambda  = d \breve \io^* \Lambda = 0 $ (cf. \cite{BB13}). 
For an approximately invariant torus embedding $ \breve \io $, 
the 1-form 
\begin{equation}\label{coefficienti pull back di Lambda}
\breve \io^* \Lambda = {\mathop \sum}_{k \in \mathbb S_+} a_k (\vphi) d \vphi_k \,,\quad 
a_k(\vphi) :=  \big( [\pa_\ph \teta (\vphi)]^\top y (\vphi)  \big)_k 
+ \frac12 ( \partial_x^{- 1} w (\ph),
\partial_{\vphi_k} w(\ph) )_{L^2_x} \, , 
\end{equation} 
is only ``approximately closed", in the sense that 
\begin{equation} \label{def Akj} 
i_0^* {\cal W} = d \, i_0^* \Lambda = {\mathop\sum}_{\begin{subarray}{c}
k, j \in \mathbb S_+ \\
 k < j 
\end{subarray}} A_{k j}(\vphi) d \vphi_k \wedge d \vphi_j\,,\quad A_{k j} (\vphi) := 
\partial_{\vphi_k} a_j(\ph) - \partial_{\vphi_j} a_k(\ph) \, , 
\end{equation} 
is of order $O(Z)$. More precisely, the following lemma holds. 

\begin{lemma} \label{lemma:aprile}
Let  $ \om \in \mathtt {DC} (\gamma, \tau)$ (cf.
\eqref{diofantei in omega}). 
Then the coefficients $ A_{kj}  $ in \eqref{def Akj} satisfy 
\begin{equation}\label{stima A ij}
\| A_{k j} \|_s^\Lipg \lesssim_s \gamma^{-1}
\big(\| Z \|_{s+ \sigma}^\Lipg + \| Z \|_{s_0+ \sigma}^\Lipg \|  \io \|_{s+\sigma}^\Lipg \big)
\end{equation}
for some $\sigma = \sigma(\tau, \mathbb S_+) > 0$. 
\end{lemma}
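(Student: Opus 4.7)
My plan is to exploit the fact that each $A_{kj}$ is the exterior derivative of a globally defined function, and is therefore of zero $\varphi$-mean. Using the Diophantine condition on $\omega$, it will then suffice to control the tame norm of $\omega\cdot\partial_\varphi A_{kj}$ in terms of $Z$ (and $\iota$), since the inverse $(\omega\cdot\partial_\varphi)^{-1}$ costs a factor $\gamma^{-1}$ and a loss of $2\tau+1$ derivatives by \eqref{Diophantine-1}.

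First, I would compute $\omega\cdot\partial_\varphi a_k$ directly from the explicit formula \eqref{coefficienti pull back di Lambda}. Substituting the identities
\begin{align*}
\omega\cdot\partial_\varphi\theta &= -\nabla_y\mathcal{H}_\e(\breve\iota) + Z_1,\\
\omega\cdot\partial_\varphi y &= \nabla_\theta\mathcal{H}_\e(\breve\iota) + \zeta + Z_2,\\
\omega\cdot\partial_\varphi w &= \partial_x\nabla_w\mathcal{H}_\e(\breve\iota) + Z_3,
\end{align*}
which follow from \eqref{operatorF}--\eqref{def Zetone}, one finds that $\omega\cdot\partial_\varphi a_k$ splits into a part $P_k(\varphi)$ involving only $\breve\iota$ and $\nabla\mathcal{H}_\e(\breve\iota)$, and an error part depending linearly on $Z$ and $\zeta$ (with coefficients given by $\partial_\varphi\breve\iota$). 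The important conceptual point is that the part $P_k$ would exactly match the pullback $\breve\iota^*(\iota_{X_{\mathcal{H}_\e}}\Lambda)$, so its exterior derivative would equal $\breve\iota^*(\mathcal L_{X_{\mathcal{H}_\e}}\Lambda - d\mathcal{H}_\e) = -d(\mathcal{H}_\e\circ\breve\iota)$, i.e.\ it is an exact $1$-form on $\T^{\mathbb{S}_+}$. Consequently, when forming $A_{kj} = \partial_{\varphi_k}a_j-\partial_{\varphi_j}a_k$ and then taking $\omega\cdot\partial_\varphi$, the $P$-contributions cancel pairwise and we obtain
\[
\omega\cdot\partial_\varphi A_{kj}(\varphi) = \partial_{\varphi_k}E_j(\varphi) - \partial_{\varphi_j}E_k(\varphi)
\]
where $E_k$ is linear in $(Z,\zeta)$ with coefficients depending tamely on $\iota$.

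Second, I would estimate $\omega\cdot\partial_\varphi A_{kj}$ in the weighted Sobolev norm $\|\cdot\|_s^\Lipg$. The expression for $E_k$ involves products of components of $Z$ (or $\zeta$) with first $\varphi$-derivatives of $\breve\iota$ and with the $L^2_x$-pairing against $\partial_x^{-1}w$ or $\partial_{\varphi_k}w$; every such product is handled by the tame product estimate \eqref{p1-pr}, yielding
\[
\|\omega\cdot\partial_\varphi A_{kj}\|_s^\Lipg \lesssim_s \|Z\|_{s+\sigma_0}^\Lipg + \|Z\|_{s_0+\sigma_0}^\Lipg\,\|\iota\|_{s+\sigma_0}^\Lipg + |\zeta|^\Lipg\bigl(1+\|\iota\|_{s+\sigma_0}^\Lipg\bigr)
\]
for some $\sigma_0=\sigma_0(\mathbb{S}_+)$. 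The term involving $\zeta$ is absorbed via \eqref{zeta Z inequality}, which bounds $|\zeta|^\Lipg$ by $\|Z\|_{s_0}^\Lipg$.

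Third, I would note that $A_{kj}$ has zero $\varphi$-average (it is $\partial_{\varphi_k}a_j-\partial_{\varphi_j}a_k$, i.e.\ exact in each pair of coordinates, and more concretely its $0$-th Fourier coefficient vanishes), so one may write $A_{kj} = (\omega\cdot\partial_\varphi)^{-1}(\omega\cdot\partial_\varphi A_{kj})$. Applying the Diophantine estimate \eqref{Diophantine-1} and the ansatz \eqref{ansatz 0} to handle the quadratic-in-$Z$ term $\|Z\|_{s_0}\|\iota\|_{s+\sigma}$ (it is already of the requested form) produces the bound \eqref{stima A ij} with $\sigma:=\sigma_0+2\tau+1$. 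The only mild subtlety to watch is keeping the Lipschitz-in-$\omega$ dependence consistent throughout, but this is automatic from the definitions since $\|\cdot\|_s^\Lipg$ is built to be preserved under the product and small-divisor estimates used above; I do not expect any essential obstacle beyond the bookkeeping of tame loss exponents.
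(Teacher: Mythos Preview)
Your approach is correct and coincides with the paper's: both arguments reduce to showing that $\omega\cdot\partial_\vphi A_{kj}$ depends only on $Z$ (and $\partial_\vphi\breve\io$), and then invert $\omega\cdot\partial_\vphi$ via \eqref{Diophantine-1}. The paper is simply more compact, quoting directly from \cite{BB13} the identity
\[
\omega\cdot\partial_\vphi A_{kj}
= \mathcal{W}\big(\partial_{\vphi_k}Z,\partial_{\vphi_j}\breve\io\big)
+ \mathcal{W}\big(\partial_{\vphi_k}\breve\io,\partial_{\vphi_j}Z\big),
\]
which is exactly what your Cartan-type computation yields once you antisymmetrize (the Hamiltonian contribution vanishes because $\mathcal{W}(dX_H[\hat u],\hat v)+\mathcal{W}(\hat u,dX_H[\hat v])=d^2H[\hat u,\hat v]-d^2H[\hat v,\hat u]=0$). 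Two small remarks: your description of $P_k$ as ``$\breve\io^*(\iota_{X_{\mathcal H_\e}}\Lambda)$'' is not literally right since the latter is a $0$-form---what you mean, and what makes your cancellation argument work, is that the $1$-form $\sum_k P_k\,d\vphi_k$ equals $\breve\io^*(\mathcal L_{X_{\mathcal H_\e}}\Lambda)$, which is exact; and the $\zeta$-contribution actually cancels in $\partial_{\vphi_k}E_j-\partial_{\vphi_j}E_k$ (since $\zeta\cdot\partial_{\vphi_k}\partial_{\vphi_j}\theta$ is symmetric in $k,j$), so invoking \eqref{zeta Z inequality} is unnecessary, though harmless.
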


\begin{proof}
The  $ A_{kj}  $  satisfy the identity 
$  \Dom A_{k j} 
= $ $ {\cal W}\big( \pa_\ph Z(\vphi) \underline{e}_k ,  \pa_\ph \breve \io(\vphi)  \underline{e}_j \big) 
+ $ $ {\cal W} \big(\pa_\ph \breve \io_0(\vphi) \underline{e}_k , \pa_\ph Z(\vphi) \underline{e}_j \big)  $
where  $\underline{e}_k$, $k \in \Splus$, denotes the
standard basis of $ \R^{\mathbb S_+}$ (cf.  \cite[Lemma 5]{BB13}). 
Then \eqref{stima A ij} follows by \eqref{ansatz 0} and  \eqref{Diophantine-1}. 
\end{proof}

As in \cite{BB13}, \cite{BBM-auto} we first modify the approximate torus $ \breve \io $ to obtain an isotropic torus $ \breve \io_\d $ which is 
still approximately invariant. Let  $ \Delta_\vphi := \sum_{k\in \mathbb S_+} \partial_{\vphi_k}^2 $.

\begin{lemma}\label{toro isotropico modificato} {\bf (Isotropic torus)} 
Let  $ \om \in \mathtt {DC} (\gamma, \tau)$.
The torus $ \breve \io_\delta(\vphi) := (\theta(\vphi), y_\delta(\vphi), w (\vphi) ) $ defined by 
\begin{equation}\label{y 0 - y delta}
y_\d(\vphi) :=  y(\vphi)  -  [\pa_\ph \theta(\vphi)]^{- \top}  \rho(\vphi) \, , \qquad 
\rho_j(\vphi) := \Delta_\vphi^{-1} {\mathop\sum}_{ k \in \mathbb S_+} \partial_{\vphi_k} A_{k j}(\vphi) \, ,  
\end{equation}
 is {\it isotropic}
and there is $ \s =  \s(\tau, \mathbb S_+) > 0 $ so that, for any $s \geq s_0$ 
\begin{align} \label{2015-2}
\| y_\delta - y \|_s^\Lipg & \lesssim_s  \| \io  \|_{s + \sigma}^\Lipg\\
\label{stima y - y delta}
\| y_\delta - y \|_s^\Lipg 
& \lesssim_s  \gamma^{-1} \big(\| Z \|_{s + \s}^\Lipg + 
\|  \io \|_{s + \s}^\Lipg \| Z \|_{s_0 + \s}^\Lipg  \big) \,,
\\
\label{stima toro modificato}
\| {\cal F}_\omega(\io_\delta , \zeta) \|_s^\Lipg
& \lesssim_s  \| Z \|_{s + \s}^\Lipg  +   \|  \io \|_{s + \s}^\Lipg \| Z \|_{s_0 + \s}^\Lipg \\
\label{derivata i delta}
\| d_\io \io_\d [ \widehat \io ] \|_s^\Lipg & \lesssim_s \| \widehat \io \|_{s + \s}^\Lipg +  
\| \io \|_{s + \s}^\Lipg \| \widehat \io  \|_{s_0}^\Lipg \, .
\end{align}
 \end{lemma}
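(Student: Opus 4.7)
The plan is to follow the scheme developed in \cite{BB13} and \cite{BBM-auto}. The key observation is that $\rho$ is designed precisely so that the pullback $\breve \io_\delta^*\Lambda$ becomes closed, hence $\breve \io_\delta$ is isotropic.

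First, I will check isotropy. Since only the $y$-component is modified, formula \eqref{coefficienti pull back di Lambda} gives for the new coefficients $\tilde a_k(\vphi)$ of $\breve \io_\delta^*\Lambda = \sum_k \tilde a_k d\vphi_k$ the identity $\tilde a_k = a_k - ([\pa_\ph \theta]^\top [\pa_\ph \theta]^{-\top}\rho)_k = a_k - \rho_k$. The isotropy condition then reduces to $\partial_{\vphi_j}\rho_k - \partial_{\vphi_k}\rho_j = A_{jk}$. Applying $\Delta_\vphi$ and using the definition $\Delta_\vphi \rho_j = \sum_k \partial_{\vphi_k} A_{kj}$ together with the antisymmetry $A_{kj} = - A_{jk}$ and the cocycle identity $\partial_{\vphi_i}A_{kj} + \partial_{\vphi_k}A_{ji} + \partial_{\vphi_j}A_{ik}=0$ (which follows because $\breve\io^*\mathcal W$ is exact and therefore closed), one sees that $\Delta_\vphi\big(\partial_{\vphi_j}\rho_k - \partial_{\vphi_k}\rho_j - A_{jk}\big)=0$, and the claim follows since all quantities have zero $\vphi$-average.

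Next, I will derive the estimate \eqref{2015-2}. Inspection of \eqref{coefficienti pull back di Lambda} shows that $a_k$ is quadratic in $\io$ (involving $y$, $\partial_\vphi\Theta$, $w$, $\partial_\vphi w$), hence by the Leibniz product estimate \eqref{p1-pr},
\[
\|A_{kj}\|_s^\Lipg \lesssim_s \|\io\|_{s+\sigma}^\Lipg.
\]
Combining with the tame bound for $[\pa_\ph\theta]^{-\top}$ (Neumann series based on Lemma \ref{lemma:LS norms}) and the boundedness of $\Delta_\vphi^{-1}$ on zero-average functions gives \eqref{2015-2}. The sharper bound \eqref{stima y - y delta} is obtained in the same way but with $\|A_{kj}\|_s^\Lipg$ replaced by the estimate from Lemma \ref{lemma:aprile}, which produces the crucial factor $\gamma^{-1}$ and a linear dependence on $Z$.

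For \eqref{stima toro modificato}, I would write $\mathcal F_\omega(\io_\delta,\zeta) = Z + \big(\mathcal F_\omega(\io_\delta,\zeta) - \mathcal F_\omega(\io,\zeta)\big)$ and estimate the difference via the fundamental theorem of calculus in the $y$-variable, applying the tame bound on $dX_{\mathcal H_\e}$ from Lemma \ref{stime tame campo ham per NM AI} together with \eqref{stima y - y delta} and the ansatz \eqref{ansatz 0}. Finally, \eqref{derivata i delta} is obtained by differentiating the explicit formula \eqref{y 0 - y delta} with respect to $\io$, applying the product and composition tame estimates of Lemma \ref{lemma:LS norms}. The main obstacle is the isotropy verification: one must carefully exploit the antisymmetry of $A_{kj}$ and the cocycle identity coming from $d\breve\io^*\Lambda$ being the exact form $\breve\io^*\mathcal W$, to justify that the specific choice $\rho_j := \Delta_\vphi^{-1}\sum_k \partial_{\vphi_k}A_{kj}$ produces the correct antisymmetric correction; all remaining bounds reduce to a systematic application of the tame calculus from Section 2.
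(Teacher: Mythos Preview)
Your proposal is correct and follows essentially the same route as the paper, which simply refers to \cite[Lemma 6.3]{BBM-auto} together with Lemma \ref{stime tame campo ham per NM AI}, \eqref{coefficienti pull back di Lambda}--\eqref{stima A ij} and the ansatz \eqref{ansatz 0}; your isotropy verification via the cocycle identity for $A_{kj}$ is exactly the argument underlying those references.

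One point deserves care in \eqref{stima toro modificato}. Lemma \ref{stime tame campo ham per NM AI} bounds $dX_{\mathcal P_\e}$, not the full $dX_{\mathcal H_\e}$: the normal--form part $dX_{\mathcal N}$ acting in the $y$--direction contributes $\Omega_{\mathbb S_+}^{kdv}(y_\delta-y)$, and the second component of $\mathcal F_\omega(\io_\delta,\zeta)-\mathcal F_\omega(\io,\zeta)$ contains $\omega\cdot\partial_\vphi(y_\delta-y)$; neither term comes with a small prefactor. Inserting \eqref{stima y - y delta} directly therefore produces an extra $\gamma^{-1}$. In the cited references this is avoided by using the identity
\[
\omega\cdot\partial_\vphi A_{kj}=\mathcal W\big(\partial_\vphi Z\,\underline e_k,\partial_\vphi\breve\io\,\underline e_j\big)+\mathcal W\big(\partial_\vphi\breve\io\,\underline e_k,\partial_\vphi Z\,\underline e_j\big)
\]
from the proof of Lemma \ref{lemma:aprile}, which controls $\omega\cdot\partial_\vphi\rho$ (hence $\omega\cdot\partial_\vphi(y_\delta-y)$) directly in terms of $Z$ without inverting $\omega\cdot\partial_\vphi$, and by combining this with the ansatz \eqref{ansatz 0} to absorb the remaining contributions. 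With this refinement your outline goes through unchanged.
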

\begin{remark}
In the sequel, $\omega$ will always be assumed to be in
$\mathtt {DC} (\gamma, \tau)$. Furthermore, 
 $ \s := \s( \tau, \mathbb S_+ ) $ will denote
different, possibly larger ``loss of derivatives"  constants. 
\end{remark}
\begin{proof}
The Lemma follows  as in \cite[Lemma 6.3]{BBM-auto}
by   Lemma \ref{stime tame campo ham per NM AI}, 
 \eqref{coefficienti pull back di Lambda}-\eqref{stima A ij} and 
 the ansatz \eqref{ansatz 0}. 
\end{proof}

In order to find an approximate inverse of the linearized operator $d_{\io, \zeta} {\cal F}_\omega(\io_\delta )$, 
we introduce  the symplectic diffeomorpshim 
$ G_\delta : (\phi, \eta , v) \mapsto (\theta, y , w)$ of the phase space $\T^{\mathbb S_+} \times \R^{\mathbb S_+} \times L^2_\bot(\T_1)$, defined by
\begin{equation}\label{trasformazione modificata simplettica}
\begin{pmatrix}
\theta \\
y \\
w
\end{pmatrix} := G_\delta \begin{pmatrix}
\phi \\
\eta \\
v
\end{pmatrix} := 
\begin{pmatrix}
\!\!\!\!\!\!\!\!\!\!\!\!\!\!\!\!\!\!\!\!\!\!\!\!\!\!\!\!\!\!\!\!\!
\!\!\!\!\!\!\!\!\!\!\!\!\!\!\!\!\!\!\!\!\!\!\!\!\!\!\!\!\!\!\!\!\!\!
\!\!\!\!\!\!\!\!\!\!\!\!\!\!\!\!\!\!\!\!\!\!\!\!\!\!\!\!\!\!\!\! \theta(\phi) \\
y_\delta (\phi) + [\pa_\phi \theta(\phi)]^{-T} \eta - \big[ (\pa_\teta \tilde{w}) (\theta(\phi)) \big]^\top \partial_x^{- 1} v \\
\!\!\!\!\!\!\!\!\!\!\!\!\!\!\!\!\!\!\!\!\!\!\!\!\!\!\!\!\!
\!\!\!\!\!\!\!\!\!\!\!\!\!\!\!\!\!\!\!\!\!\!\!\!\!\!\!\!\!
\!\!\!\!\!\!\!\!\!\!\!\!\!\!\!\!\!\!\!\!\!\!\!\!\!\!\!\!\! \! w(\phi) + v
\end{pmatrix} 
\end{equation}
where $ \tilde{w} := w \circ \theta^{-1} $. 
It is proved in \cite[Lemma 2]{BB13} that $ G_\delta $ is symplectic, since 
by Lemma \ref{toro isotropico modificato},
$ \breve \io_\d $ is an isotropic torus embedding.
In the new coordinates,  $ \breve \io_\delta $ is the trivial embedded torus
$ (\phi , \eta , v ) = (\phi , 0, 0 ) $
and the Hamiltonian 
vector field $ X_{{\cal H}_{\e, \zeta}} $ (with $  {\cal H}_{\e, \zeta}$ defined in \eqref{def:Hep}) is given by
\be\label{new-Hamilt-K}
X_{{\cal K}} = (d G_\d)^{-1} X_{{\cal H}_{\e, \zeta}} \circ G_\d \qquad {\rm where} \qquad{\cal K} \equiv {\cal K}_{\e, \zeta} := {\cal H}_{\e, \zeta} \circ G_\d  \, .
\ee
The Taylor expansion of $ {\cal K}$ in $\eta, v$ at the trivial torus $ (\phi , 0, 0 ) $ is of the form
\begin{align} 
{\cal K}(\phi, \eta , v, \zeta)
& =  \theta(\phi) \cdot \zeta + {\cal K}_{00}(\phi) + {\cal K}_{10}(\phi) \cdot \eta + ({\cal K}_{0 1}(\phi), v)_{L^2_x} + 
\frac12 {\cal K}_{2 0}(\phi) \eta \cdot \eta 
\nonumber \\ & 
\quad +  \big( {\cal K}_{11}(\phi) \eta , v \big)_{L^2_x } 
+ \frac12 \big({\cal K}_{02}(\phi) v , v \big)_{L^2_x} + {\cal K}_{\geq 3}(\phi, \eta, v)  
\label{KHG}
\end{align}
where $ {\cal K}_{\geq 3} $ collects the terms which are at least cubic in the variables $ (\eta, v )$, 
 ${\cal K}_{00}(\phi) \in \R $,  
${\cal K}_{10}(\phi) \in \R^{\mathbb S_+} $,  
${\cal K}_{01}(\phi) \in L^2_\bot(\T_1)$, 
${\cal K}_{20}(\phi) $ is a $|\mathbb S_+|\times |\mathbb S_+|$ real matrix, 
${\cal K}_{02}(\phi) : L^2_\bot(\T_1) \to L^2_\bot (\T_1)$ is a linear self-adjoint operator and 
${\cal K}_{11}(\phi) : \R^{\mathbb S_+} \to L^2_\bot(\T_1)$
is a linear operator of finite rank. 
At an exact solution of 
$ \mF_\omega(\io, \zeta) = 0 $ one has  $ Z  = 0 $ and
the coefficients in the Taylor expansion \eqref{KHG} satisfy 
$ {\cal K}_{00} = {\rm const} $, $ {\cal K}_{10} = - \omega $, 
${\cal K}_{01} = 0 $.

\begin{lemma} \label{coefficienti nuovi} 
There exists  $\s := \s( \tau, \mathbb S_+ ) $ so that
\begin{equation}\label{K 00 10 01}
\begin{aligned}
& \|  \partial_\phi {\cal K}_{00} \|_s^\Lipg 
+ \| {\cal K}_{10} + \om  \|_s^\Lipg +  \| {\cal K}_{0 1} \|_s^\Lipg 
\lesssim_s  \| Z \|_{s + \s}^\Lipg +  \| \io \|_{s + \s}^\Lipg  \| Z \|_{s_0 + \s}^\Lipg \, . \\
&   
  \|{\cal K}_{20} -  \Omega_{\mathbb S_+}^{kdv}(\ac)  \|_s^\Lipg \lesssim_s \e + \| \io\|_{s + \s}^\Lipg\, ,  \\ 
& \| {\cal K}_{11}  \eta  \|_s^\Lipg 
\lesssim_s \e \gamma^{- 2} \|  \eta \|_{s + \sigma}^\Lipg
+ \| \iota \|_{s + \sigma}^\Lipg  
\| \eta \|_{s_0 + \sigma}^\Lipg  \, ,  \\
&   \| {\cal K}_{11}^\top v \|_s^\Lipg
\lesssim_s \e \gamma^{- 2} \| v  \|_{s + \sigma}^\Lipg
+  \| \io \|_{s + \sigma}^\Lipg
\| v \|_{s_0 + \sigma}^\Lipg \, . 
\end{aligned}
\end{equation}
\end{lemma}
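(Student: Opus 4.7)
The plan is to follow the scheme of \cite[Lemma 6.4]{BBM-auto} (which in turn adapts \cite[Lemma 5]{BB13}) to the present functional setting. The starting point is the identity ${\cal K} = {\cal H}_{\e,\zeta}\circ G_\d$ together with $G_\d(\phi,0,0)=\breve \io_\d(\phi)$: the Taylor coefficients ${\cal K}_{ij}(\phi)$ are the partial derivatives of ${\cal K}$ at $(\phi,0,0)$ and, via the chain rule, they can be rewritten as derivatives of ${\cal H}_\e$ evaluated at $\breve\io_\d(\phi)$, pulled back through $d G_\d(\phi,0,0)$ and its transpose. Since $G_\d$ is symplectic, this pull-back conjugates the linearized Hamiltonian equation for $\mathcal H_\e$ to the linearized Hamiltonian equation for ${\cal K}$, so the quadratic part of ${\cal K}$ around the trivial torus is governed by ${\cal K}_{20},{\cal K}_{11},{\cal K}_{02}$ while the affine part is governed by $\partial_\phi{\cal K}_{00},{\cal K}_{10}+\omega,{\cal K}_{01}$.

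For the three ``error'' coefficients, I would introduce $Z_\d:=\mathcal F_\omega(\io_\d,\zeta)$ and, after computing the chain-rule expression for $\nabla {\cal K}(\phi,0,0,\zeta)=[dG_\d(\phi,0,0)]^\top \nabla{\cal H}_{\e,\zeta}(\breve\io_\d(\phi))$, observe that each of $\partial_\phi {\cal K}_{00}(\phi)$, ${\cal K}_{10}(\phi)+\omega$, ${\cal K}_{01}(\phi)$ can be written as a bounded (tame) multilinear expression in $Z_\d$ and $\io_\d$ which vanishes identically when $Z_\d=0$ (this is the standard ``exact-solution'' check, using that at a solution ${\cal K}_{00}$ is constant, ${\cal K}_{10}=-\omega$, ${\cal K}_{01}=0$). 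The bound \eqref{K 00 10 01} for the first line then follows from \eqref{stima toro modificato}, the tame estimates of Lemma \ref{stime tame campo ham per NM AI}, and the Ansatz \eqref{ansatz 0}.

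For ${\cal K}_{20}$ I would compute $\partial_\eta^2{\cal K}(\phi,0,0)$ by applying the chain rule twice: the leading contribution is $d^2 {\cal H}_\e(\breve\io_\d(\phi))$ restricted to the $y$-directions carried in by $dG_\d(\phi,0,0)$. By the splitting \eqref{splitting ham forma normale + perturbazione} the normal-form piece contributes exactly $\Omega_{\Splus}^{kdv}(\ac)$, while the perturbation $d^2{\cal P}_\e=d^2{\cal R}^{kdv}+\e\,d^2{\cal P}$ is bounded by $\e+\|\io\|_{s+\s}^\Lipg$ using Lemma \ref{stime tame campo ham per NM AI}; the extra terms produced by the non-identity structure of $G_\d$ (through $y_\d$ and the $(\partial_\theta\tilde w)$ block in \eqref{trasformazione modificata simplettica}) are linear in $\io$ and hence absorbed in the same bound. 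For ${\cal K}_{11}$ the decisive point is that $\partial_y\nabla_w{\cal N}\equiv 0$ by \eqref{splitting ham forma normale + perturbazione}, so the mixed derivative is purely perturbative: it consists of $\e\,\partial_y\nabla_w{\cal P}$ evaluated at $\breve\io_\d$ plus contributions proportional to $\io$ coming from the off-diagonal blocks of $dG_\d(\phi,0,0)$. The product and composition estimates of Lemma \ref{lemma:LS norms}, together with Lemma \ref{stime tame campo ham per NM AI} and the Ansatz $\|\io\|_{\mu_0}^\Lipg\lesssim \e\g^{-2}$, give the bound $\e\g^{-2}\|\eta\|_{s+\s}^\Lipg+\|\io\|_{s+\s}^\Lipg\|\eta\|_{s_0+\s}^\Lipg$. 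The bound for the transpose ${\cal K}_{11}^\top v$ follows by duality, using that ${\cal K}_{11}$ has finite rank $|\Splus|$ (only the tangential $\eta$-modes are coupled to $v$), so its operator norm on $v$ is controlled by the same quantities.

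The main technical obstacle is the bookkeeping of the chain rule through the non-trivial symplectic change of variables \eqref{trasformazione modificata simplettica}, which involves $(\partial_\theta\tilde w)\circ\theta$ and $[\partial_\phi\theta]^{-\top}$. Uniform-in-$s$ tame bounds for composition with $\theta^{-1}$ and for matrix inversion must be invoked (Lemma \ref{lemma:LS norms}), and at each step one has to control how many derivatives are lost; the constant $\s=\s(\tau,\Splus)$ in the statement is precisely the cumulative loss from the Diophantine inversion used to define $y_\d$ in \eqref{y 0 - y delta}, from Lemma \ref{toro isotropico modificato}, and from the tame product/composition estimates. Once these tame bounds are in place, as in \cite[Lemma 6.4]{BBM-auto}, the estimates of Lemma \ref{coefficienti nuovi} follow by inspection from the explicit chain-rule formulas.
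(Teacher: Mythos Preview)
Your proposal is correct and follows essentially the same approach as the paper: the paper's proof is a one-line reference to \cite{BB13}, \cite{BBM-auto} together with Lemma~\ref{stime tame campo ham per NM AI} and the estimates \eqref{ansatz 0}, \eqref{2015-2}, \eqref{stima y - y delta}, \eqref{stima toro modificato}, which is exactly the scheme you describe (chain rule through $G_\delta$, splitting ${\cal H}_\e={\cal N}+{\cal P}_\e$, and tame bounds). One small refinement: in your treatment of ${\cal K}_{11}$ you should also mention the $\partial_y\nabla_w{\cal R}^{kdv}$ contribution, which by \eqref{Rkdv-cubic-der} vanishes at $(\theta,0,0)$ and hence is $O(\io)$ at $\breve\io_\delta$, so it is absorbed in the $\|\io\|_{s+\sigma}$ term just as you claim for the off-diagonal blocks of $dG_\delta$.
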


\begin{proof}
The lemma follows as in \cite{BB13}, \cite{BBM-auto},
by applying Lemma \ref{stime tame campo ham per NM AI} and \eqref{ansatz 0}, \eqref{2015-2}, \eqref{stima y - y delta}, \eqref{stima toro modificato}\,. 
\end{proof}

Denote by ${\rm Id}_\bot$ 
the identity operator on $L^2_\bot(\T_1)$.
The linear transformation  
$d G_\delta|_{(\vphi, 0, 0)} \equiv d G_\delta(\vphi, 0, 0)$
then reads
\begin{equation}\label{DGdelta}
d G_\delta|_{(\vphi, 0, 0)}
\begin{pmatrix}
\widehat \phi \, \\
\widehat \eta \\
\widehat v
\end{pmatrix} 
:= 
\begin{pmatrix}
\pa_\phi \theta(\vphi) & 0 & 0 \\
\pa_\phi y_\delta(\vphi) & [\pa_\phi \theta(\vphi)]^{-\top} & 
 - [(\pa_\theta \tilde{w})(\theta(\vphi))]^\top \partial_x^{- 1} \\
\pa_\phi w(\vphi) & 0 & {\rm Id}_\bot
\end{pmatrix}
\begin{pmatrix}
\widehat \phi \, \\
\widehat \eta \\
\widehat v
\end{pmatrix} \, .
\end{equation}
It approximately transforms
the linearized operator  $d_{\io, \zeta}{\cal F}_\om(\io_\delta )$ 
(see the proof of Theorem \ref{thm:stima inverso approssimato}) 
into the one  obtained when 
the Hamiltonian system with Hamiltonian 
$\mathcal K$ (cf. \eqref{new-Hamilt-K})
is linearized at $(\phi, \eta , v ) = (\vphi, 0, 0 )$,
differentiated also with respect to $ \zeta $, and when $ \partial_t$ is exchanged by $\Dom $, 
\begin{equation}\label{lin idelta}
\begin{pmatrix}
\widehat \phi  \\
\widehat \eta    \\ 
\widehat v \\
\widehat \zeta 
\end{pmatrix} \mapsto
\begin{pmatrix}
\Dom \widehat \phi + \partial_\phi {\cal K}_{10}(\vphi)[\widehat \phi \, ] + 
{\cal K}_{2 0}(\vphi)\widehat \eta + {\cal K}_{11}^\top (\vphi) \widehat v \\
 \Dom  \widehat \eta - \big(\partial_\phi \theta(\vphi) \big)^\top[\widehat \zeta] - \partial_\phi\big(\partial_\phi \theta(\vphi)^\top[\zeta]\big)[\widehat \phi] - \partial_{\phi\phi} {\cal K}_{00}(\vphi)[\widehat \phi] - 
[\partial_\phi {\cal K}_{10}(\vphi)]^\top \widehat \eta - 
[\partial_\phi  {\cal K}_{01}(\vphi)]^\top \widehat v   \\ 
\Dom  \widehat v - \partial_x 
\{ \partial_\phi {\cal K}_{01}(\vphi)[\widehat \phi] + {\cal K}_{11}(\vphi) \widehat \eta + {\cal K}_{02}(\vphi) \widehat v \}
\end{pmatrix} \! .  \hspace{-5pt}
\end{equation}
Using \eqref{ansatz 0} and \eqref{2015-2}, one shows 
as in \cite{BBM-auto} that the induced operator 
 $ \widehat \io := (\widehat \phi, \widehat \eta, \widehat v) \mapsto dG_\delta [\widehat \io]$ satisfies 
\begin{gather} \label{DG delta}
\|dG_\delta(\vphi,0,0) [\widehat \io] \|_s^\Lipg\,, 
\|dG_\delta(\vphi,0,0)^{-1} [\widehat \io] \|_s^\Lipg 
\lesssim_s \| \widehat \io \|_{s}^\Lipg +  \| \io \|_{s + \s}^\Lipg  \| \widehat \io \|_{s_0 }^\Lipg\,,
\\ 
\| d^2 G_\delta(\vphi,0,0)[\widehat \io_1, \widehat \io_2] \|_s^\Lipg 
\lesssim_s  \| \widehat \io_1\|_s^\Lipg  \| \widehat \io_2 \|_{s_0}^\Lipg 
+ \| \widehat \io_1\|_{s_0}^\Lipg  \| \widehat \io_2 \|_{s}^\Lipg 
+  \| \io   \|_{s + \s}^\Lipg \|\widehat \io_1 \|_{s_0}^\Lipg  \| \widehat \io_2\|_{s_0}^\Lipg \, .  \label{DG2 delta} 
\end{gather}
In order to construct an ``almost-approximate" inverse of \eqref{lin idelta} we need  that 
\be\label{Lomega def}
{\cal L}_\omega := \Pi_\bot \big(\Dom   - \partial_x {\cal K}_{02}(\vphi) \big)_{|L^2_\bot} 
\ee
is ``almost-invertible" up to remainders of size $O(N_{n - 1}^{- \mathtt a})$ (see precisely \eqref{stima R omega corsivo}) where 
 \be\label{NnKn}
 N_n := K_n^p \, , \quad \forall n \geq 0\,,  
 \ee
 and  
\be\label{definizione Kn}
K_n := K_0^{\chi^{n}} \, , \quad \chi := 3/ 2 \, , 
\ee
are the scales used in the nonlinear Nash-Moser iteration in Section \ref{sec:NM}. 
Based on results obtained in Sections \ref{linearizzato siti normali}-\ref{sec: reducibility}, 
the almost invertibility of ${\cal L}_\om$ is proved in Theorem \ref{inversione parziale cal L omega}, but here it is stated as an assumption to avoid the involved definition of the set $\Omega_o$.
Recall that $\mathtt{DC}(\gamma, \tau)$ is the set
of diophantine frequencies in $\Omega$ (cf. \eqref{diofantei in omega}).

\begin{itemize}
\item {\bf Almost-invertibility of ${\cal L}_\omega$.} 
{\em There exists a subset $ \Omega_o  
\subset \mathtt{DC}(\gamma, \tau)  $ such that, 
for all $ \omega \in  \Omega_o  $, the operator $ {\cal L}_\omega $ in \eqref{Lomega def} admits a decomposition
\be\label{inversion assumption}
{\cal L}_\omega  
= {\cal L}_\omega^< + {\cal R}_\omega + {\cal R}_\omega^\bot 
\ee
with the following properties: there exist constants 
$  K_0,N_0, \sigma, \tau_1,  \mu(\mathtt b), \mathtt a, p, \sM > 0$ so that for any $\sM \leq s \leq S$ and $\omega\in \Omega_o$  one has:

(i) The operators ${\cal R}_\omega$, ${\cal R}_\omega^\bot $ satisfy the estimates  
\begin{align}  \label{stima R omega corsivo}
\|{\cal R}_\omega h \|_s^\Lipg & 
\lesssim_S  \e \gamma^{- 2} N_{n - 1}^{- {\mathtt a}}\big( \|  h \|_{s + \sigma}^\Lipg +  N_0^{\tau_1} \gamma^{- 1}\| \iota \|_ {s +  \mu (\mathtt b) + \sigma }^\Lipg \| h \|_{\sM + \sigma}^\Lipg \big)\, , \\
\label{stima R omega bot corsivo bassa}
\| {\cal R}_\omega^\bot h \|_{\sM}^\Lipg & \lesssim_{S,b}
K_n^{- b} \big( \| h \|_{\sM + b 
+ \sigma}^\Lipg + 
N_0^{\tau_1} \gamma^{- 1}\| \iota \|_ {\sM + \mu (\mathtt b)  + \sigma + b }^\Lipg  \|  h \|_{\sM + \sigma}^\Lipg\big)\,,  
\qquad \forall b > 0\,, 
\end{align}
(ii) 
For any 
$ g \in H^{s+\sigma}_{\bot}(\T^{\mathbb S_+} \times \T_1) $, there 
is a solution $ h :=  ({\cal L}_\om^<)^{- 1} g  \in H^{s}_{\bot}(\T^{\mathbb S_+} \times \T_1) $ 
of the linear equation $ {\cal L}_\om^< h = g $, 
satisfying  the tame estimates
\begin{equation}\label{tame inverse}
\| ({\cal L}_\om^<)^{- 1} g \|_s^\Lipg \lesssim_S  \g^{-1} 
\big(  \| g \|_{s + \sigma}^\Lipg + 
N_0^{\tau_1} \gamma^{- 1} \| \io \|_{s + \mu({\mathtt b}) + \sigma}^\Lipg  \|g \|_{\sM + \sigma}^\Lipg  \big) \,.
\end{equation}
}
\end{itemize}
In order to find an almost-approximate inverse of the linear operator \eqref{lin idelta} 
and hence of $ d_{\io, \zeta} {\cal F}_\om (\io_\d) $,
it is sufficient to invert the operator
\begin{equation}\label{operatore inverso approssimato} 
{\mathbb D} [\widehat \phi, \widehat \eta, \widehat v, \widehat \zeta ] := 
  \begin{pmatrix}
\Dom \widehat \phi  + 
{\cal K}_{20}(\vphi) \widehat \eta  + {\cal K}_{11}(\vphi)^\top \widehat v\\
\Dom  \widehat \eta - \partial_\phi \theta(\vphi)^\top \widehat \zeta   \\
{\cal L}_\omega^{<} \widehat v  - \partial_x  {\cal K}_{11}(\vphi)\widehat \eta  
\end{pmatrix}
\end{equation}
obtained by neglecting in \eqref{lin idelta} 
the terms $ \partial_\phi {\cal K}_{10} $, $ \partial_{\phi \phi} {\cal K}_{00} $, 
$ \partial_\phi {\cal K}_{00} $,
$ \partial_\phi {\cal K}_{01} $, $\partial_\phi\big(\partial_\phi \theta(\vphi)^\top[\zeta]\big)$ 
and by replacing ${\cal L}_\omega $ by 
${\cal L}_\omega^< $ (cf. \eqref{inversion assumption}). Note that the remainder
${\cal L}_\omega - {\cal L}_\omega^< = {\cal R}_\omega + {\cal R}_\omega^\bot $ is small and that
by Lemma \ref{coefficienti nuovi} and \eqref{zeta Z inequality}, 
$\partial_\phi {\cal K}_{10} $, $ \partial_{\phi \phi} {\cal K}_{00}$, 
$ \partial_\phi {\cal K}_{00} $, $ \partial_\phi {\cal K}_{01} $ 
and
$\partial_\phi\big(\partial_\phi \theta(\vphi)^\top[\zeta]\big)$
are $O(Z)$.

We look for an inverse of ${\mathbb D}$ 
by solving the system 
\begin{equation}\label{operatore inverso approssimato proiettato}
{\mathbb D} [\widehat \phi, \widehat \eta, \widehat v, \widehat \zeta] 
= 
\begin{pmatrix}
g_1  \\
g_2  \\
g_3 
\end{pmatrix}\,. 
\end{equation}
We first consider the second equation in \eqref{operatore inverso approssimato proiettato}, 
$ \omega \cdot \partial_\vphi  \widehat \eta  = 
g_2  +  \partial_\phi \theta(\vphi)^\top \widehat \zeta $. Since $\partial_\vphi \theta(\vphi) = {\rm Id} + \partial_\vphi \Theta(\vphi)$, 
the average 
$\langle \partial_\vphi \theta^\top \rangle_\vphi
= \frac{1}{(2\pi)^{|\Splus|}} \int_{\T^{\Splus}} \partial_\vphi \theta^\top(\vphi) d \vphi$
equals the identity matrix $ {\rm Id} $ of $\R^{\Splus}$. We then define 
\begin{equation}\label{definizione zeta hat}
\widehat \zeta := - \langle g_2 \rangle _\vphi\
\end{equation}
so that $\langle g_2  +  
\partial_\phi \theta(\vphi)^\top \widehat \zeta \rangle_\vphi$
vanishes and choose
\begin{equation}\label{soleta}
\widehat \eta := \widehat \eta_0 + \widehat \eta_1, \quad \widehat \eta_1 :=   ( \omega \cdot \partial_\vphi )^{-1} \big(
g_2  +  \partial_\phi \theta(\vphi)^\top \widehat \zeta \, \big) 
\end{equation}
where the constant vector $\widehat \eta_0 \in \R^{\Splus} $ will be determined in order to control the average of the first equation in \eqref{operatore inverso approssimato proiettato}. 
Next we consider the third equation in \eqref{operatore inverso approssimato proiettato},
$ ({\cal L}_\om^<) \widehat v = g_3 + \partial_x  {\cal K}_{11}(\vphi) \widehat \eta $,
which, by assumption \eqref{tame inverse}
on the inveritibility of ${\cal L}_\om^<$, has the solution 
\begin{equation}\label{normalw}
\widehat v := ({\cal L}_\om^<)^{-1} \big( g_3 + \partial_x  {\cal K}_{11}(\vphi) \widehat \eta_1 \big) + ({\cal L}_\om^<)^{-1} \partial_x  {\cal K}_{11}(\vphi) \widehat \eta_0  \, .  
\end{equation}
Finally, we solve the first equation in \eqref{operatore inverso approssimato proiettato}.
After substituting the solutions $\widehat \zeta$,
$\widehat \eta$, defined in \eqref{soleta}, and 
$\widehat v$, defined in \eqref{normalw}, this equation becomes
\begin{equation}\label{equazione psi hat}
\omega \cdot \partial_\vphi \widehat \phi  = 
g_1 +  M_1\widehat \eta_0  + M_2 g_2 + M_3 g_3 + M_4 \widehat \zeta 
\end{equation}
where $M_j: \varphi \mapsto M_j(\vphi)$, $1 \le j \le 4$, are 
defined as
\begin{align}\label{M1}
& M_1(\vphi) :=  - {\cal K}_{2 0}(\vphi) - {\cal K}_{11}(\vphi)^\top ({\cal L}_\omega^{<})^{- 1} \partial_x {\cal K}_{11}(\vphi) \,, \\
& \label{cal M2}
M_2(\vphi) :=  M_1(\vphi)[\omega \cdot \partial_\vphi]^{-1} \, , \\
& M_3(\vphi) :=  - {\cal K}_{11} (\vphi)^\top ({\cal L}_\om^<)^{-1} \, , \\
& \label{cal M4}
M_4(\vphi) :=  M_2(\vphi) \partial_\phi \theta(\vphi)^\top\,. 
\end{align}
In order to solve equation \eqref{equazione psi hat} we have 
to choose $ \widehat \eta_0 $ such that the right hand side
of it  has zero average.  
By Lemma \ref{coefficienti nuovi}, by the ansatz \eqref{ansatz 0} and 
 \eqref{tame inverse}, the $\ph$-averaged matrix is
$ \langle M_1 \rangle_\vphi =  \Omega_{\mathbb S_+}^{kdv}(\ac) + O( \e \gamma^{-2  }) $. Since the matrix $\Omega_{\mathbb S_+}^{kdv}(\ac) = (\partial _{I_k} \omega _n^{kdv}(\ac)) _{k,n \in \Splus}$ is invertible (cf. Lemma \ref{Proposition 2.3}-$(i)$,
Remark \ref{rem:diffeo}),   
$ \langle M_1 \rangle_\vphi$ is invertible 
for $ \e \gamma^{- 2} $ small enough and
$\langle M_1 \rangle_\vphi^{-1} = \Omega_{\mathbb S_+}^{kdv}(\ac)^{- 1}
+ O(\e \gamma^{- 2})$. We then define 
\begin{equation}\label{sol alpha}
\widehat \eta_0  := - \langle M_1 \rangle_\vphi^{-1} 
\Big( \langle g_1 \rangle_\vphi + \langle M_2 g_2 \rangle_\vphi + \langle M_3 g_3 \rangle_\vphi + 
\langle M_4 \widehat \zeta \rangle_\vphi \Big) \, .
\end{equation}
With this choice of $ \widehat \eta_0$,
the equation \eqref{equazione psi hat} has the solution
\begin{equation}\label{sol psi}
\widehat \phi :=
(\omega \cdot \partial_\vphi )^{-1} \big( g_1 + 
M_1[\widehat \eta_0] + M_2 g_2 + M_3 g_3 + M_4 \widehat \zeta \big) \, . 
\end{equation}
Altogether we have obtained
a solution  $(\widehat \phi, \widehat \eta, \widehat v, \widehat \zeta)$ of the linear system \eqref{operatore inverso approssimato proiettato}. 

\begin{proposition}\label{prop: ai}
Assume the ansatz \eqref{ansatz 0} with $\mu_0 = \mu(\mathtt b) + \sigma$ and the  estimates  \eqref{tame inverse} hold. 
Then, for any $\omega \in \Omega_o$ and 
any $ g := (g_1, g_2, g_3) $ 
with $ g_1, g_2 \in H^{s+\sigma}(\T^{\mathbb S_+}, \R^{\Splus})$,
$ g_3 \in H^{s+\sigma}_{\bot}(\T^{\mathbb S_+} \times \T_1) $,
and $\sM \leq s \leq S $, 
the system \eqref{operatore inverso approssimato proiettato} has a solution 
$  (\widehat \phi, \widehat \eta, \widehat v, \widehat \zeta ) := {\mathbb D}^{-1} g $,
where $\widehat \phi$, $\widehat \eta$, $\widehat v$, 
$\widehat \zeta$ are defined in \eqref{definizione zeta hat}-\eqref{normalw},
\eqref{sol alpha}-\eqref{sol psi} and satisfy
\begin{equation} \label{stima T 0 b}
\| {\mathbb D}^{-1} g \|_s^\Lipg
\lesssim_S \gamma^{-2} \big( \| g \|_{s + \sigma }^\Lipg 
+ N_0^{\tau_1} \gamma^{- 1} \| \io  \|_{s + \mu(\mathtt b) + \sigma}^\Lipg
 \| g \|_{\sM + \sigma}^\Lipg  \big).
\end{equation}
\end{proposition}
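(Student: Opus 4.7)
\smallskip

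\noindent
\textbf{Proof plan for Proposition \ref{prop: ai}.} The plan is to estimate the six objects $\widehat\zeta$, $\widehat\eta_1$, $M_1,\dots,M_4$, $\widehat\eta_0$, $\widehat v$, $\widehat\phi$ in the order in which they appear in the construction \eqref{definizione zeta hat}--\eqref{sol psi}, propagating at each step a tame bound of the form $\gamma^{-k}\bigl(\|\cdot\|_{s+\sigma}^\Lipg+N_0^{\tau_1}\gamma^{-1}\|\iota\|_{s+\mu(\mathtt b)+\sigma}^\Lipg\|\cdot\|_{\sM+\sigma}^\Lipg\bigr)$, gradually increasing the loss $\sigma$ and the power of $\gamma^{-1}$. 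The ingredients are the tame product rule \eqref{p1-pr}, the Diophantine estimate \eqref{Diophantine-1} (which costs $\gamma^{-1}$ and $2\tau+1$ derivatives), the coefficient bounds in Lemma \ref{coefficienti nuovi}, and the hypothesis \eqref{tame inverse} on $({\cal L}_\omega^<)^{-1}$ (which costs $\gamma^{-1}$ and $\mu(\mathtt b)+\sigma$ derivatives and introduces the $\iota$-dependent correction). The trivial bound $|\widehat\zeta|^\Lipg\lesssim\|g_2\|_{s_0}^\Lipg$ follows directly from \eqref{definizione zeta hat}. Writing $\partial_\phi\theta=\mathrm{Id}+\partial_\varphi\Theta$ and applying \eqref{Diophantine-1} together with \eqref{p1-pr} and the ansatz \eqref{ansatz 0} then yields
\[
\|\widehat\eta_1\|_s^\Lipg\lesssim_s \gamma^{-1}\bigl(\|g_2\|_{s+\sigma}^\Lipg+\|\iota\|_{s+\sigma}^\Lipg\|g_2\|_{s_0+\sigma}^\Lipg\bigr).
\]

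\smallskip

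The matrices $M_j$ are treated by combining Lemma \ref{coefficienti nuovi}, the hypothesis \eqref{tame inverse}, Lemma \ref{composizione operatori tame AB}, and \eqref{Diophantine-1}. Concretely, $M_1,M_2,M_3,M_4$ each act as a $\Lipg$-tame operator with a tame constant bounded by $C_S\gamma^{-1}\bigl(1+N_0^{\tau_1}\gamma^{-1}\|\iota\|_{s+\mu(\mathtt b)+\sigma}^\Lipg\bigr)$ at the low-Sobolev regularity $\sM+\sigma$, and the high-Sobolev bound contains the extra linear term in $\|\iota\|_{s+\mu(\mathtt b)+\sigma}^\Lipg$. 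For the invertibility of $\langle M_1\rangle_\varphi$, Lemma \ref{coefficienti nuovi} and \eqref{tame inverse} together with the ansatz \eqref{ansatz 0} give $\langle M_1\rangle_\varphi=\Omega_{\mathbb S_+}^{kdv}(\nu)+O(\varepsilon\gamma^{-2})$; since $\Omega_{\mathbb S_+}^{kdv}(\nu)$ is invertible by Lemma \ref{Proposition 2.3}-$(i)$ and Remark \ref{rem:diffeo}, a Neumann series argument yields $\langle M_1\rangle_\varphi^{-1}=\Omega_{\mathbb S_+}^{kdv}(\nu)^{-1}+O(\varepsilon\gamma^{-2})$ together with a $\Lipg$-Lipschitz bound on $\nu$.

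\smallskip

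With these preliminaries, \eqref{sol alpha} provides a constant vector $\widehat\eta_0$ bounded in terms of $\|g\|_{\sM+\sigma}^\Lipg$ (the $\sM$-norm suffices because $\widehat\eta_0\in\R^{\mathbb S_+}$). Inserting $\widehat\eta=\widehat\eta_0+\widehat\eta_1$ into \eqref{normalw} and applying the hypothesis \eqref{tame inverse} together with the tame bound on $\partial_x{\cal K}_{11}$ from Lemma \ref{coefficienti nuovi} produces the bound
\[
\|\widehat v\|_s^\Lipg\lesssim_S \gamma^{-1}\bigl(\|g\|_{s+\sigma}^\Lipg+\|\widehat\eta_1\|_{s+\sigma}^\Lipg+|\widehat\eta_0|+N_0^{\tau_1}\gamma^{-1}\|\iota\|_{s+\mu(\mathtt b)+\sigma}^\Lipg\|g\|_{\sM+\sigma}^\Lipg\bigr),
\]
in which the $\widehat\eta_1$ contribution already carries a $\gamma^{-1}$, producing the overall $\gamma^{-2}$ in \eqref{stima T 0 b}. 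Finally, $\widehat\phi$ defined by \eqref{sol psi} is estimated from \eqref{Diophantine-1} applied to a right-hand side involving $g_1$, $M_j g_j$, and $M_4\widehat\zeta$, and a further $\gamma^{-1}$ is absorbed into the already accumulated $\gamma^{-2}$ factor. Summing the four component bounds and enlarging $\sigma$ yields \eqref{stima T 0 b}.

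\smallskip

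The principal technical difficulty is bookkeeping: the tame constants for $M_1,M_2,M_3,M_4$ and for $({\cal L}_\omega^<)^{-1}$ are themselves of mixed high-low type, and every composition threatens to multiply two $\|\iota\|_{s+\mu(\mathtt b)+\sigma}^\Lipg$-factors, which would destroy the tame structure of \eqref{stima T 0 b}. The resolution is to apply Lemma \ref{composizione operatori tame AB} only in the configuration (tame$\times$tame at low regularity) $\times$ (tame at high regularity), using repeatedly the smallness $N_0^{\tau_1}\gamma^{-1}\|\iota\|_{\sM+\mu(\mathtt b)+\sigma}^\Lipg\lesssim 1$ guaranteed by the ansatz \eqref{ansatz 0} with $\mu_0=\mu(\mathtt b)+\sigma$. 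This smallness allows one, at each composition, to absorb the low-regularity $\iota$-factor into the constant and to keep the single high-regularity factor $N_0^{\tau_1}\gamma^{-1}\|\iota\|_{s+\mu(\mathtt b)+\sigma}^\Lipg$ that appears on the right-hand side of \eqref{stima T 0 b}.
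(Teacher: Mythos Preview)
Your proposal is correct and follows exactly the same route as the paper, which merely cites the definitions \eqref{definizione zeta hat}--\eqref{sol psi}, \eqref{M1}--\eqref{cal M4}, Lemma~\ref{coefficienti nuovi}, and assumptions \eqref{ansatz 0}, \eqref{tame inverse}; you have simply unpacked this one-line proof into its component steps. One small phrasing issue: in the $\widehat\phi$ step the extra $\gamma^{-1}$ is not ``absorbed'' into a pre-existing $\gamma^{-2}$; rather, the argument of $(\omega\cdot\partial_\varphi)^{-1}$ is itself only $O(\gamma^{-1}\|g\|)$ (since $M_1=O(1)$ and the $K_{11}$-contributions carry the small factor $\varepsilon\gamma^{-2}$), so applying \eqref{Diophantine-1} produces exactly $\gamma^{-2}$.
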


\begin{proof}
The proposition follows by the definitions of 
$ \widehat \zeta$ (cf. \eqref{definizione zeta hat}),
$\widehat \eta_1$ (cf. \eqref{soleta}),
$\widehat v$ (cf. \eqref{normalw}),
$\widehat \eta_0$ (cf. \eqref{sol alpha}),
$\widehat \phi$ (cf. \eqref{sol psi}),
the definitions of $M_j$, $1 \le j \le 4$, in \eqref{M1}-\eqref{cal M4},  the estimates of Lemma \ref{coefficienti nuovi}, 
and the assumptions  \eqref{ansatz 0} and \eqref{tame inverse}.
\end{proof}

Let  
$  \widetilde{G}_\delta:  (\phi, \eta, v, \zeta) \mapsto
   \big( G_\delta (\phi, \eta, v), \, \zeta \big) $ and note that
    its differential   
$ d  \widetilde{G}_\delta(\phi, \eta, v, \zeta)$ 
is independent of $ \zeta$. In the sequel, we denote it by
$ d  \widetilde{G}_\delta(\phi, \eta, v)$ or
 $d  \widetilde{G}_\delta|_{ (\phi, \eta, v )} $.  
Finally we prove that the operator 
\begin{equation}\label{definizione T} 
{\bf T}_0 := {\bf T}_0(\io ) := 
d { \widetilde G}_\delta |_{(\vphi,0,0)} \circ {\mathbb D}^{-1} \circ \big(d G_\delta |_{(\vphi,0,0)} \big)^{-1}
\end{equation}
is an almost-approximate right  inverse for $d_{\io,\zeta} {\cal F}_\omega(\io  )$.
Let 
$ \| (\phi, \eta, v, \zeta) \|_s^\Lipg :=$ 
$  \max \{  \| (\phi, \eta, v) \|_s^\Lipg, $ $ | \zeta |^\Lipg  \} $.

\begin{theorem}  \label{thm:stima inverso approssimato}
{\bf (Almost-approximate inverse)}
Assume that  \eqref{inversion assumption}-\eqref{tame inverse}
hold (Almost-invertibility of ${\cal L}_\omega$, 
$\omega \in \Omega_0$).
Then there exists $  \sigma_2 :=  \sigma_2(\tau, \mathbb S_+ ) > 0 $ so that, 
if the ansatz \eqref{ansatz 0} holds with $\mu_0 \geq \sM +  \mu(\mathtt b) +
 \sigma_2 $, then for any $ \omega \in \Omega_o $
and any $ g := (g_1, g_2, g_3) $ with
$ g_1, g_2 \in H^{s+\sigma}(\T^{\mathbb S_+}, \R^{\Splus})$,
$ g_3 \in H^{s+\sigma}_{\bot}(\T^{\mathbb S_+} \times \T_1) $,
and $\sM \leq s \leq S $, 
$ {\bf T}_0(\io) g$, defined by \eqref{definizione T}, satisfies
\begin{equation}\label{stima inverso approssimato 1}
\| {\bf T}_0(\io) g \|_{s}^\Lipg 
\lesssim_S  \gamma^{-2}  \Big(\| g \|_{s + \sigma_2}^\Lipg 
+ N_0^{\tau_1} \gamma^{- 1}  \| \io  \|_{s + \mu({\mathtt b}) +  \sigma_2 }^\Lipg
\| g \|_{\sM + \sigma_2}^\Lipg  \Big)\, .
\end{equation}
 Moreover  ${\bf T}_0(\io) $ is an almost-approximate inverse of $d_{\io, \zeta} 
 {\cal F}_\omega(\io)$, namely 
\begin{equation}\label{splitting per approximate inverse}
d_{\io , \zeta} {\cal F}_\omega (\io) \circ {\bf T}_0(\io) 
- {\rm Id} = {\cal P} + {\cal P}_\omega  + {\cal P}_\omega^\bot
\end{equation}
where 
\begin{align}
\| {\cal P} g \|_{\sM}^\Lipg & \lesssim_S \g^{-3 } \| {\cal F}_\omega(\io, \zeta) \|_{\sM + \sigma_2}^\Lipg \Big(1   + N_0^{\tau_1} \gamma^{- 1} \| \io \|_{\sM + \mu({\mathtt b}) + \sigma_2}^\Lipg \Big)\| g \|_{\sM + \sigma_2}^\Lipg  \label{stima inverso approssimato 2} \\
\| {\cal P}_\omega g \|_{\sM}^\Lipg & 
\lesssim_S  \e \g^{- 4} N_{n - 1}^{- {\mathtt a}} \big( 1  + N_0^{\tau_1} \gamma^{- 1}\| \iota \|_{\sM  + \mu({\mathtt b}) + \sigma_2}^\Lipg \big)   \|  g \|_{\sM + \sigma_2}^\Lipg \,, \label{stima cal G omega} \\
\| {\cal P}_\omega^\bot g\|_{\sM}^\Lipg & \lesssim_{S, b} 
\gamma^{- 2} K_n^{- b } \big( \| g \|_{\sM + \sigma_2 + b }^\Lipg +
N_0^{\tau_1} \gamma^{- 1}\| \io \|_{\sM +  \mu({\mathtt b}) + \sigma_2  +b    }^\Lipg \big \| g \|_{\sM +\sigma_2}^\Lipg \big)\,,\,\,
\quad\forall b > 0 \, .    \label{stima cal G omega bot bassa} 
\end{align}
\end{theorem}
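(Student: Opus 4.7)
The plan is to combine three ingredients: (i) Proposition~\ref{prop: ai} which produces a bounded right inverse of the triangular operator $\mathbb D$; (ii) the tame estimates \eqref{DG delta}--\eqref{DG2 delta} on $dG_\delta$ and $d\widetilde G_\delta$; and (iii) a careful splitting of the linearized operator $d_{\io,\zeta}\mathcal F_\omega(\io)$ in the modified symplectic coordinates provided by Lemma~\ref{toro isotropico modificato} and the map $G_\delta$ in \eqref{trasformazione modificata simplettica}.

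The bound \eqref{stima inverso approssimato 1} on $\mathbf T_0 g$ is purely algebraic. Apply the definition \eqref{definizione T}, the tame estimate \eqref{stima T 0 b} of Proposition~\ref{prop: ai} for $\mathbb D^{-1}$, together with \eqref{DG delta} applied to both factors $d\widetilde G_\delta|_{(\varphi,0,0)}$ and $(dG_\delta|_{(\varphi,0,0)})^{-1}$; the ansatz \eqref{ansatz 0} with $\mu_0\ge \sM+\mu(\mathtt b)+\sigma_2$ absorbs the constant factors at low Sobolev index, producing \eqref{stima inverso approssimato 1} for a suitable $\sigma_2=\sigma_2(\tau,\mathbb S_+)$.

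For \eqref{splitting per approximate inverse}, the key is to decompose the error $d_{\io,\zeta}\mathcal F_\omega(\io)\circ \mathbf T_0-\mathrm{Id}$ in three pieces corresponding to the three terms on the right. First I would split
\[
d_{\io,\zeta}\mathcal F_\omega(\io)=d_{\io,\zeta}\mathcal F_\omega(\io_\delta)+\big(d_{\io,\zeta}\mathcal F_\omega(\io)-d_{\io,\zeta}\mathcal F_\omega(\io_\delta)\big).
\]
The difference is linear in $\io-\io_\delta$, hence by Lemma~\ref{toro isotropico modificato} (estimate \eqref{stima y - y delta}) and Lemma~\ref{stime tame campo ham per NM AI} it is bounded in terms of $\|Z\|$; this contributes to $\mathcal P$. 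Next, since $G_\delta$ is symplectic and $\breve \io_\delta$ is the trivial torus in the new coordinates, a standard computation (as in \cite{BB13,BBM-auto}) gives
\[
d_{\io,\zeta}\mathcal F_\omega(\io_\delta)\circ d\widetilde G_\delta|_{(\varphi,0,0)}=d\widetilde G_\delta|_{(\varphi,0,0)}\circ L,
\]
where $L$ is the operator \eqref{lin idelta}. Now write $L=\mathbb D+\mathcal E_{\rm low}+\mathcal E_{\rm high}$, where $\mathcal E_{\rm low}$ collects the off-diagonal and lower order terms $\partial_\phi\mathcal K_{10}$, $\partial_{\phi\phi}\mathcal K_{00}$, $\partial_\phi\mathcal K_{01}$, $\partial_\phi(\partial_\phi\theta^\top[\zeta])$ dropped when passing from $L$ to $\mathbb D$, and $\mathcal E_{\rm high}=\Pi_\bot(\mathcal R_\omega+\mathcal R_\omega^\bot)$ is the discrepancy ${\cal L}_\omega-{\cal L}_\omega^<$ from \eqref{inversion assumption}. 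By \eqref{zeta Z inequality} and Lemma~\ref{coefficienti nuovi}, every coefficient appearing in $\mathcal E_{\rm low}$ is $O(\|Z\|)$ in the relevant Sobolev norms, so after composition with $\mathbf T_0$ (which costs a factor $\gamma^{-2}$) it produces a contribution to $\mathcal P$ satisfying \eqref{stima inverso approssimato 2}. The piece $\mathcal R_\omega$ composed with $\mathbf T_0$ yields $\mathcal P_\omega$ with the estimate \eqref{stima cal G omega} via \eqref{stima R omega corsivo}, and similarly $\mathcal R_\omega^\bot$ composed with $\mathbf T_0$ yields $\mathcal P_\omega^\bot$ with the smoothing estimate \eqref{stima cal G omega bot bassa} via \eqref{stima R omega bot corsivo bassa}.

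The main technical obstacle is the bookkeeping of the interpolation estimates: each term in $\mathcal E_{\rm low}$ must be shown to be $O(\|Z\|)$ with only a fixed loss of derivatives $\sigma_2$, so that the size of $\mathcal P$ is genuinely cubic in the small quantities $\|Z\|$ and $\|\iota\|$ (as reflected in the bound \eqref{stima inverso approssimato 2}, which carries the prefactor $\|\mathcal F_\omega(\io,\zeta)\|$). This requires combining Lemma~\ref{coefficienti nuovi}, \eqref{2015-2}--\eqref{stima toro modificato}, the product estimate \eqref{p1-pr}, and interpolation \eqref{2202.3} in the same way as in the proof of Theorem 6.7 of \cite{BBM-auto}; choosing $\sigma_2$ large enough to accommodate all of these losses (in particular $\sigma_2\ge \sigma+\mu(\mathtt b)$ from \eqref{tame inverse}, plus the loss coming from \eqref{DG delta} and from Lemma~\ref{stime tame campo ham per NM AI}) closes the estimates and yields \eqref{stima inverso approssimato 2}--\eqref{stima cal G omega bot bassa}. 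The rest of the argument is routine.
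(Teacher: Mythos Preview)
Your overall strategy—decompose $d_{\io,\zeta}\mathcal F_\omega(\io)$ into the piece conjugate to $\mathbb D$ plus error terms that are either $O(Z)$, $O(\mathcal R_\omega)$, or $O(\mathcal R_\omega^\bot)$, then compose with $\mathbf T_0$—is exactly the paper's approach, and the bookkeeping you outline for \eqref{stima inverso approssimato 1}, \eqref{stima cal G omega}, \eqref{stima cal G omega bot bassa} is fine. There is, however, one genuine gap in the derivation of \eqref{splitting per approximate inverse}.

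The conjugation identity you write,
\[
d_{\io,\zeta}\mathcal F_\omega(\io_\delta)\circ d\widetilde G_\delta\big|_{(\varphi,0,0)}=d\widetilde G_\delta\big|_{(\varphi,0,0)}\circ L,
\]
is \emph{not} exact. Start from the nonlinear identity \eqref{trasfo imp}, $\mathcal F_\omega(G_\delta(\mathtt u),\zeta)=dG_\delta(\mathtt u)\big[\omega\cdot\partial_\varphi\mathtt u-X_{\mathcal K}(\mathtt u,\zeta)\big]$, and differentiate at the trivial torus $\mathtt u_\delta=(\varphi,0,0)$. Because $dG_\delta(\mathtt u)$ itself depends on $\mathtt u$, the product rule produces an extra term
\[
\mathcal E_1:=d^2G_\delta(\mathtt u_\delta)\big[\,dG_\delta(\mathtt u_\delta)^{-1}\mathcal F_\omega(\io_\delta,\zeta),\;dG_\delta(\mathtt u_\delta)^{-1}\Pi[\,\cdot\,]\,\big],
\]
cf.\ \eqref{verona 2}--\eqref{verona 2 0}. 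This term is absent from your decomposition, yet it does not vanish unless $\io_\delta$ is an \emph{exact} invariant torus. It is $O(\|\mathcal F_\omega(\io_\delta,\zeta)\|)$, hence $O(Z)$ by \eqref{stima toro modificato}, and is controlled via the tame bound \eqref{DG2 delta} on $d^2G_\delta$; so it slots into your $\mathcal P$ piece alongside $\mathcal E_0=d_{\io,\zeta}\mathcal F_\omega(\io)-d_{\io,\zeta}\mathcal F_\omega(\io_\delta)$ and $dG_\delta\,R_Z\,(d\widetilde G_\delta)^{-1}$. With $\mathcal E_1$ included (this is the paper's $\mathcal E=\mathcal E_0+\mathcal E_1+dG_\delta R_Z(d\widetilde G_\delta)^{-1}$ in \eqref{cal E (n) omega}), the remainder of your argument goes through and matches the paper.
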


\begin{proof}
The bound \eqref{stima inverso approssimato 1} follows 
from the definition of ${\bf T}_0(\io)$ 
(cf.\eqref{definizione T}), the estimate of $\mathbb D^{-1}$
(cf. \eqref{stima T 0 b}),  and the estimates of 
$dG_\delta(\vphi,0,0)$ and of its inverse (cf. \eqref{DG delta}).
By formula \eqref{diaF}) for $d_{\io, \zeta} {\cal F}_\omega(\io)$ 
and since only the $y-$components of $ \breve \io_\d $ and $ \breve \io $ differ from each other (cf. \eqref{y 0 - y delta}),  
the difference ${\cal E}_0 := d_{\io, \zeta} {\cal F}_\omega(\io)   - d_{\io, \zeta} {\cal F}_\omega(\io_\delta) $
can be written as
\be \label{verona 0}
{\cal E}_0 [\widehat \io, \widehat \zeta] 
=  \int_0^1  \partial_y d_\io X_{{\cal H}_\e } (\theta, y_\d + s (y - y_\d), w) [y - y_\d, \widehat \io]  ds.
\ee
We introduce the projection 
$\Pi: (\widehat \io, \widehat \zeta )  \mapsto  \widehat \io $. 
Denote by $  {\mathtt u} := (\phi, \eta, v ) $  the symplectic coordinates defined by $ G_\d $ 
(cf. \eqref{trasformazione modificata simplettica}). 
Under the symplectic map $G_\delta $, the nonlinear operator ${\cal F}_\omega$ (cf. \eqref{operatorF}) is transformed into 
\be \label{trasfo imp}
{\cal F}_\omega(G_\delta(  {\mathtt u} (\vphi) ), \zeta ) 
= d G_\delta( {\mathtt u}  (\vphi) ) [  \omega \cdot \partial_\vphi {\mathtt u} (\vphi) - X_{{\cal K}} ( {\mathtt u} (\vphi), \zeta) ] 
\ee
where $ {\cal K}= {\cal H}_{\e, \zeta} \circ G_\delta $
(cf.  \eqref{new-Hamilt-K}). 
Differentiating  \eqref{trasfo imp} at the trivial torus 
$ {\mathtt u}_\delta (\vphi) = G_\delta^{-1}(\io_\delta) (\vphi) = (\ph, 0 , 0 ) $, 
we get
\begin{align} \label{verona 2}
d_{\io , \zeta} {\cal F}_\omega(\io_\delta ) 
=  & \,  d G_\delta( {\mathtt u}_\delta) 
\big( \Dom 
- d_{\mathtt u, \zeta} X_{{\cal K}}( {\mathtt u}_\delta, \zeta) 
\big) d \widetilde G_\d ( {\mathtt u}_\d)^{-1}
+ {\cal E}_1 \,,
\\
{\cal E}_1 
:=  \, & 
d^2 G_\delta( {\mathtt u}_\delta) \big[ d G_\delta( {\mathtt u}_\delta)^{-1} {\cal F}_\omega(\io_\delta, \zeta), \,  d G_\d({\mathtt u}_\d)^{-1} 
 \Pi [ \, \cdot \, ] \,  \big] \, .  \label{verona 2 0} 
\end{align}
In expanded form $ \Dom  - d_{\mathtt u, \zeta} X_{{\cal K}}( {\mathtt u}_\delta, \zeta) $ is provided by \eqref{lin idelta}.
Recalling the definition of $\mathbb{D}$ in 
\eqref{operatore inverso approssimato} and the discussion following it,
we decompose $\om  \cdot  \pa_\vphi 
- d_{\mathtt u, \zeta} X_{\cal K}( {\mathtt u}_\delta, \zeta) $ as
\begin{equation}\label{splitting linearizzato nuove coordinate}
\om  \cdot  \pa_\vphi 
- d_{\mathtt u, \zeta} X_{\cal K}( {\mathtt u}_\delta, \zeta) 
= \mathbb{D}  
+ R_Z   + {\mathbb R}_\omega+ 
{\mathbb R}_\omega^\bot  
\end{equation}
where  
$$
R_Z [  \widehat \phi, \widehat \eta, \widehat v, \widehat \zeta]
:= \begin{pmatrix}
  \partial_\phi {\cal K}_{10}(\vphi) [\widehat \phi ] \\
- \partial_{\phi \phi} {\cal K}_{00} (\vphi) [ \widehat \phi ] - \partial_\phi\big(\partial_\phi \theta(\vphi)^\top[\zeta]\big)[\widehat \phi] -
 [\partial_\phi {\cal K}_{10}(\vphi)]^\top \widehat \eta -
 [\partial_\phi {\cal K}_{01}(\vphi)]^\top \widehat v   \\
 - \partial_x  \big( \partial_{\phi} {\cal K}_{01}(\vphi)[ \widehat \phi ] \big)
 \end{pmatrix} 
$$
and 
$$
{\mathbb R}_\omega[\widehat \phi, \widehat y, \widehat w, \widehat \zeta] := \begin{pmatrix}
0 \\
0 \\
{\cal R}_\omega [\widehat w]
\end{pmatrix}\,,\qquad {\mathbb R}_\omega^\bot[\widehat \phi, \widehat y , \widehat w, \widehat \zeta] := \begin{pmatrix}
0 \\
0 \\
{\cal R}_\omega^\bot[\widehat w]
\end{pmatrix}\,.
$$
By \eqref{verona 0} and \eqref{verona 2}-\eqref{splitting linearizzato nuove coordinate} we get the decomposition
\begin{align} 
 d_{\io, \zeta} {\cal F}_\omega(\io ) 
& = d G_\delta({\mathtt u}_\delta) \circ {\mathbb D} 
\circ \big( d {\widetilde G}_\delta ({\mathtt u}_\delta)\big)^{-1} + {\cal E} + {\cal E}_\omega + {\cal E}_\omega^\bot  \label{E2}
 \end{align}
where 
\begin{equation}\label{cal E (n) omega}
{\cal E} := {\cal E}_0 + {\cal E}_1 + 
d G_\delta ( {\mathtt u}_\delta)  R_Z 
\big(d {\widetilde G}_\delta ({\mathtt u}_\delta)\big)^{-1}\,, 
\end{equation}
\begin{equation}\label{cal E omega bot}
 {\cal E}_\omega := d G_\delta ( {\mathtt u}_\delta)   {\mathbb R}_\omega  \big(d {\widetilde G}_\delta ({\mathtt u}_\delta)\big)^{-1}\,,
 \qquad
   {\cal E}_\omega^\bot :=   d G_\delta( {\mathtt u}_\delta)  {\mathbb R}_\omega^\bot  
\big( d {\widetilde G}_\delta ({\mathtt u}_\delta)\big)^{-1} \, . 
\end{equation}
Letting the operator $ {\bf T}_0 = {\bf T}_0(\io)$ 
(cf. \eqref{definizione T})
act from the right to both sides of the identity \eqref{E2} 
and recalling that $ {\mathtt u}_\delta (\vphi) = (\vphi, 0, 0 ) $, 
one obtains
\begin{align*}
&  \qquad \quad d_{\io, \zeta} {\cal F}_\omega(\io) \circ {\bf T}_0  - {\rm Id} 
={\cal P} + {\cal P}_\omega + {\cal P}_\omega^\bot\,, \qquad {\cal P} := {\cal E} \circ {\bf T}_0, \quad
 {\cal P}_\omega := \mE_\omega \circ {\bf T}_0 \, , \quad {\cal P}_\omega^\bot :=  \mE_\omega^\bot \circ {\bf T}_0 \, . 
\end{align*}
To derive the claimed estimate for $\cal P$
we first need to estimate $\mathcal E$.
By \eqref{ansatz 0}, \eqref{zeta Z inequality} (estimate for 
$\zeta$), 
\eqref{K 00 10 01} (estimates related to $\io_\delta$), 
\eqref{2015-2}--\eqref{stima toro modificato} (estimates of the components of $R_Z$), and
 \eqref{DG delta}-\eqref{DG2 delta}  (estimates of $dG_\delta(u_\delta)$ and its inverse)
 one infers that 
\begin{equation}\label{stima parte trascurata 2}
\| {\cal E} [\, \widehat \io, \widehat \zeta \, ] \|_s^\Lipg \lesssim_s \gamma^{- 1} \Big( 
 \| Z \|_{s_0 + \s}^\Lipg \| \widehat \io \|_{s + \s}^\Lipg +  
\| Z \|_{s + \s}^\Lipg \| \widehat \io \|_{s_0 + \s}^\Lipg + 
\| Z \|_{s_0 + \s}^\Lipg  \| \io \|_{s+\s}^\Lipg  \| \widehat \io \|_{s_0 + \s}^\Lipg \Big)  \, ,
\end{equation}
for some $\sigma > 0$, where $Z$ is the error function, $ Z = \mF_\omega(\io , \zeta)$ (cf. \eqref{def Zetone}). 
The claimed estimate \eqref{stima inverso approssimato 2}
for $\mathcal P$ then follows from \eqref{stima parte trascurata 2},
the estimate \eqref{stima inverso approssimato 1} of ${\bf T}_0$, 
and the ansatz \eqref{ansatz 0}. 
The claimed estimates \eqref{stima cal G omega}, 
\eqref{stima cal G omega bot bassa} for ${\cal P}_\omega$
and, respectively, ${\cal P}_\omega^\bot$
 follow by the assumed estimates 
 \eqref{stima R omega corsivo}-\eqref{stima R omega bot corsivo bassa} of 
 ${\cal R}_\omega$ and ${\cal R}_\omega^\bot$, 
 the estimate \eqref{stima inverso approssimato 1} of ${\bf T}_0$,  
 the estimate \eqref{DG delta} of $dG_\delta(u_\delta)$
 and its inverse,
 and the ansatz \eqref{ansatz 0}.  
 \end{proof}

The goal of Sections \ref{linearizzato siti normali} and  \ref{sec: reducibility} below is to prove that 
the Hamiltonian operator  $ {\mathcal L}_\omega $, defined in \eqref{Lomega def},
satisfies the almost-invertibility property stated in \eqref{inversion assumption}-\eqref{tame inverse}. 

\section{Reduction of $ {\mathcal L}_\omega $ up to order zero}
\label{linearizzato siti normali} 

The goal of this section is to reduce the Hamiltonian operator 
${\mathcal L}_\omega$, defined in \eqref{Lomega def},
to a differential operator of order three
with constant coefficients, up to order zero  -- see \eqref{Op:L4} below for a precise statement.
In the sequel, we consider torus embeddings
$\breve{\io}(\vphi) = (\vphi, 0, 0) + \io(\vphi)$ with 
$\io(\cdot) \equiv \io (\cdot \ ; \omega) $,  
$ \omega \in \mathtt{DC}(\g,\t) $ 
(cf. \eqref{diofantei in omega}, satisfying 
\begin{equation}\label{ansatz I delta}
\| \iota \|_{ \mu_0}^{\Lip(\g)} \lesssim \e \g^{-2}   \ \, , \qquad 
\e \gamma^{- 2} \leq \delta(S) 
\end{equation}
where $ \mu_0 := \mu_0(\tau, \Splus) > s_0$, $S > s_0$ 
are sufficiently large, $0 < \delta(S) < 1$ is sufficiently small, and $0 < \gamma < 1 $. 
The Sobolev index $S$ will be fixed in \eqref{costanti nash moser 2}.
In the course of the Nash-Moser iteration
we will verify that \eqref{ansatz I delta} is satisfied by 
each approximate solution -- see the bounds \eqref{ansatz induttivi nell'iterazione}. 
For a quantity $ g(\io) \equiv g(\breve{\io})$ such as an operator, a map, or a scalar function, depending on 
$\breve{\io}(\vphi) = (\vphi, 0, 0) + \io(\vphi)$, we denote for any
two such tori embeddings $\breve{\io}_1$, $\breve{\io}_2$ by 
$ \Delta_{12} g$ the difference
$$ \Delta_{12} g := g (\io_2) - g (\io_1) \, .
$$ 

\subsection{Expansion of $ {\mathcal L}_\omega $}

As a first step, we derive an expansion of the operator 
$ {\mathcal L}_\om = \Pi_\bot \big(\Dom   - \partial_x {\cal K}_{02}(\vphi) \big)_{|L^2_\bot}$, defined in \eqref{Lomega def}. 

\begin{lemma} \label{thm:Lin+FBR}
The Hamiltonian operator  
$  \partial_x {\mathcal K}_{02}(\vphi) $ acting on $ L^2_\bot(\T_1) $ is of the form
\begin{equation}\label{K 02}
\partial_x {\mathcal K}_{02}(\vphi) = \Pi_\bot \partial_x  ( d_\bot  \nabla_w 
{\mathcal H}_\e) ( \breve \io_\delta(\vphi)) + R(\vphi) 
\end{equation}
where $ {\mathcal H}_\e $ is the 
Hamiltonian defined in \eqref{HepNM} and the remainder $ R(\vphi) $ is given by 
\begin{equation}\label{forma buona resto}
R(\vphi)[h] = {\mathop \sum}_{j \in \Splus} \big(h\,,\,g_j \big)_{L^2_x} \chi_j\,, \quad \forall 
h \in L^2_\bot(\T_1) \, ,  
\end{equation}
with functions $ g_j, \chi_j 
\in H^s_\bot$,
$ j \in \Splus $, satisfying,  
for some $ \sigma:= \sigma(\tau, \Splus) > 0 $ and any $s \geq s_0 $
\begin{align}\label{stime gj chij}
\| g_j\|_s^{\Lipg} +\| \chi_j\|_s^{\Lipg} \lesssim_s \e + 
\| \io \|_{s + \s}^{\Lipg}\,. 
\end{align}
Let $s_1 \geq s_0$ and let $\breve{\io}_1, \breve{\io}_2$ be two tori satisfying \eqref{ansatz I delta} with $\mu_0 \geq s_1 + \sigma$. Then, for any $ j \in \Splus $,  
$$
\| \Delta_{12} g_j \|_{s_1} +\| \Delta_{12} \chi_j
\|_{s_1} \lesssim_{s_1} \| \io_2 - \io_1 \|_{s_1 + \s} \, . 
$$
\end{lemma}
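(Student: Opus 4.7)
\begin{pf}[Proof proposal]
The plan is to compute $ \mathcal{K}_{02}(\varphi) $ explicitly via the chain rule applied to $ \mathcal{H}_\e \circ G_\delta $ at the trivial torus $(\varphi,0,0)$, isolate the principal Hessian block $ d_\bot \nabla_w \mathcal{H}_\e(\breve \iota_\delta) $, and identify the remainder as a finite rank operator whose range is spanned by $|\Splus|$ explicit functions. From \eqref{trasformazione modificata simplettica} the slice $ v \mapsto G_\delta(\varphi, 0, v) = ( \theta(\varphi), \, y_\delta(\varphi) + B v, \, w(\varphi) + v ) $ is affine in $ v $, with
\[
B \widehat v := - \big[ (\partial_\theta \widetilde w)(\theta(\varphi)) \big]^\top \partial_x^{-1} \widehat v \in \R^{\Splus} \, ,
\]
so $ \partial_v G_\delta|_{(\varphi,0,0)}[\widehat v] = ( 0, B\widehat v, \widehat v) $ and $ \partial_v^2 G_\delta|_{(\varphi,0,0)} = 0 $. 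Since the extra term $\zeta \cdot \theta(\varphi)$ in $ \mathcal{H}_{\e,\zeta} $ is independent of $ v $, the chain rule yields
\[
\mathcal{K}_{02}(\varphi)[\widehat v_1, \widehat v_2] = d^2 \mathcal{H}_\e(\breve \iota_\delta(\varphi)) \big[ (0, B\widehat v_1, \widehat v_1), (0, B\widehat v_2, \widehat v_2) \big] \, .
\]

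Expanding the bilinear form in the block decomposition of $ \nabla^2 \mathcal{H}_\e(\breve \iota_\delta)$ with blocks $H_{yy}$, $H_{yw}$, $H_{wy}=H_{yw}^\top$, $H_{ww}$ (where $H_{ww}\equiv d_\bot \nabla_w \mathcal{H}_\e(\breve \iota_\delta)$), I get
\[
\mathcal{K}_{02}(\varphi) = \Pi_\bot d_\bot \nabla_w \mathcal{H}_\e(\breve\iota_\delta) + \Pi_\bot\bigl( B^\top H_{yy} B + B^\top H_{yw} + H_{wy} B \bigr) \, .
\]
Applying $ \partial_x $ and using that $\partial_x$ commutes with $\Pi_\bot$ yields the splitting \eqref{K 02} with $R(\varphi) := \partial_x \Pi_\bot ( B^\top H_{yy} B + B^\top H_{yw} + H_{wy} B )$. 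To put $R(\varphi)$ in the form \eqref{forma buona resto}, I use the skew-adjointness of $\partial_x^{-1}$ to rewrite
\[
B\widehat v = \bigl( (\widehat v, g_j^{(1)})_{L^2_x} \bigr)_{j \in \Splus} \, , \qquad g_j^{(1)}(\varphi, x) := \partial_x^{-1} (\partial_{\theta_j} \widetilde w)(\theta(\varphi), x) \, ,
\]
so that each of the three remainder pieces is manifestly a sum $ \sum_{j \in \Splus} (\widehat v, \cdot)_{L^2_x}\, (\cdot) $; relabelling, $ R(\varphi) $ acquires the form \eqref{forma buona resto} with explicit $ g_j $, $ \chi_j $ (after applying $\partial_x$, which turns $\chi_j \mapsto \partial_x \chi_j$, preserving the structure).

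For the tame bounds \eqref{stime gj chij}, I use that the three remainder terms only involve $ B $, which is linear in $\partial_\theta \widetilde w$, hence in derivatives of $w$. The identity $\widetilde w \circ \theta = w$ expresses $(\partial_\theta \widetilde w)(\theta(\varphi))$ as a linear combination of derivatives of $w$ with coefficients built from $(\partial_\varphi \theta)^{-1} = (\mathrm{Id} + \partial_\varphi \Theta)^{-1}$, so Lemma~\ref{lemma:LS norms} gives $\| g_j^{(1)} \|_s^\Lipg \lesssim_s \|\iota\|_{s+\sigma}^\Lipg$. For the Hessian blocks of $\mathcal{H}_\e = \mathcal{N} + \mathcal{P}_\e$ at $\breve \iota_\delta$: the $\mathcal{N}$-contribution to $H_{yy}$ is the bounded matrix $\Omega^{kdv}_{\Splus}(\nu)$, its contributions to $H_{yw}, H_{wy}$ vanish; the $\mathcal{P}_\e = \mathcal{R}^{kdv} + \e\mathcal{P}$ contribution to $H_{yw}, H_{wy}$ is controlled by Lemma~\ref{stime tame campo ham per NM AI}, and the key point is that $\partial_y \nabla_w \mathcal{R}^{kdv}(\theta,0,0) = 0$ by \eqref{Rkdv-cubic-der}, producing a factor of $\|\iota\|$. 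Combined with $\|\iota_\delta\|_s^\Lipg \lesssim_s \|\iota\|_{s+\sigma}^\Lipg$ from \eqref{2015-2} and the product rule \eqref{p1-pr}, together with the ansatz \eqref{ansatz I delta}, the three finite rank pieces are bounded by $\e + \|\iota\|_{s+\sigma}^\Lipg$, as claimed. The Lipschitz-in-$\iota$ estimate $ \| \Delta_{12} g_j \|_{s_1} + \| \Delta_{12} \chi_j \|_{s_1} \lesssim \|\iota_2 - \iota_1\|_{s_1+\sigma} $ follows by differentiating the explicit expressions for $g_j$, $\chi_j$, applying the mean value theorem, and reusing the same tame product estimates.

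The main obstacle is bookkeeping of the Hessian blocks and separating out the ``principal'' term $ d_\bot \nabla_w \mathcal{H}_\e(\breve \iota_\delta) $ cleanly from the three finite rank contributions; once this algebraic structure is set up, every analytic ingredient is available: the tame expansions of Lemmata~\ref{differential nabla perturbation-true} and \ref{differential nabla kdv remainder-true}, the composition estimates of Lemma~\ref{lemma:LS norms}, the isotropic torus bounds of Lemma~\ref{toro isotropico modificato}, and the vanishing properties \eqref{Rkdv-cubic-der} that supply the additional $\e + \|\iota\|$ factor.
\end{pf}
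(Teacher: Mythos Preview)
Your proof is correct and follows the same approach as the paper, which simply defers to \cite[Lemma 6.1]{Berti-Montalto} together with Lemma~\ref{stime tame campo ham per NM AI} and the ansatz \eqref{ansatz I delta}. You have effectively reconstructed the argument of that reference: the affine dependence of $G_\delta$ on $v$, the resulting block decomposition of $\mathcal K_{02}$ into the principal Hessian $d_\bot\nabla_w\mathcal H_\e(\breve\iota_\delta)$ plus the three finite rank pieces coming from $B$, and the tame estimates via Lemma~\ref{stime tame campo ham per NM AI} and the vanishing \eqref{Rkdv-cubic-der}.
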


\begin{proof}
The lemma follows as in \cite[Lemma 6.1]{Berti-Montalto},
using  Lemma \ref{stime tame campo ham per NM AI} and the ansatz \eqref{ansatz I delta}. 
\end{proof}

By Lemma \ref{thm:Lin+FBR} the linear Hamiltonian operator 
$ {\cal L}_\om $ has the form 
\be\label{representation Lom}
{\cal L}_\om = {\cal L}_\omega^{(0)} - R \, , 
\qquad {\cal L}_\omega^{(0)}  
:= \om \cdot \pa_{\vphi} 
 - \Pi_{\bot} \pa_x ( d_\bot \nabla_w {\mathcal H}_\e) (\breve{\io}_\delta (\vphi)  ) 
\ee
where here and in the sequel, we  write 
$\omega \cdot \partial_\vphi$ instead of  
$\Pi_\bot \ \omega \cdot \partial_\vphi |_{L^2_\bot}$
in order to simplify notation. 
We now prove that the Hamiltonian operator 
$ {\cal L}^{(0)}_\omega $, acting on $ L^2_\bot(\T_1) $,  
is a sum of a pseudo-differential operator of order three, 
a Fourier multiplier with $\varphi-$independent coefficients
and a small smoothing remainder. 
Since
${\mathcal H}_\e = {\mathcal H}^{kdv} + \e {\mathcal P}$
(cf. \eqref{HepNM}) and 
$ \partial_x d_\bot \nabla_w {\cal H}^{kdv} 
= \partial_x \Omega^{kdv} +  
\partial_x d_\bot \nabla_w {\cal R}^{kdv}$ (cf. \eqref{Hkdv})
we have
\begin{align}
{\cal L}^{(0)}_\omega 
& = 
\omega \cdot \pa_\vphi + \pa_x^3 - \Pi_{\bot} 
Q_{-1}^{kdv} (D;  \om) - 
\Pi_{\bot} \partial_x d_\bot \nabla_w {\cal R}^{kdv}(\breve \io_\delta ) -
\e \Pi_{\bot} \partial_x d_\bot \nabla_w {\cal P}(\breve \io_\delta ) \label{Lom0-1}
\end{align}
where we write $\pa_x^3$ instead of
$\pa_x^3|_{L^2_\bot}$ and where $Q_{-1}^{kdv} (D;  \om)$
is given by (cf.  \eqref{deco:Q-1})
\be\label{decokdv}
Q_{-1}^{kdv} (D;  \om) \equiv
Q_{-1}^{kdv} ( D;  \nu(\omega)) 
= \pa_x \Omega^{kdv}(D; \nu(\omega)) + \pa_x^3 \, , 
\ee 
with $\nu(\omega)$ defined in \eqref{mu-kdv-om}.    
The operator $Q_{-1}^{kdv} (D;  \om)$ is a Fourier multiplier
with $\varphi-$independent coefficients. It admits an expansion
as described in the following lemma. 
\begin{lemma}\label{lemma resto frequenze kdv}
For any $M \in \N$, 
\begin{equation}\label{espansione simboli Q kdv - 1}
Q_{-1}^{kdv} (D; \om) = \sum_{k = 1}^M c_{ - k}^{kdv}(\omega) \partial_x^{- k} + {\cal R}_M( Q_{-1}^{kdv}; \om)  
\end{equation}
where for any $1 \le k \le M$, the function
$\Omega \to \R , \, \omega \mapsto c_{ - k}^{kdv}(\omega)$ is Lipschitz  and where 
${\cal R}_M( Q_{-1}^{kdv}; \om): L^2_\bot(\T_1) \to L^2_\bot(\T_1)$ 
is a Lipschitz family of diagonal operators of order  $- M - 1$. 
Furthermore, for any $n_1, n_2 \in \N$, $n_1 + n_2 \leq  M + 1$, 
the operator 
$\langle D \rangle^{n_1} {\cal R}_M( Q_{-1}^{kdv}; \om) \langle D \rangle^{n_2} $ is 
$\Lipg$-tame  with a tame constant satisfying ${\mathfrak M}_{\langle D \rangle^{n_1} {\cal R}_M(  Q_{-1}^{kdv}; \om) \langle D \rangle^{n_2}}(s) \leq C(s, M)$ for any $s \geq s_0$ and $C(s, M) > 0$.  
\end{lemma}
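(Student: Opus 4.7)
The proof will be a direct consequence of Lemma \ref{lem:anal} combined with the fact that $\nu : \Omega \to \Xi$, $\omega \mapsto \nu(\omega) = (\omega^{kdv})^{-1}(-\omega)$, is a real analytic (hence locally Lipschitz) diffeomorphism. Recall from \eqref{decokdv} that $Q_{-1}^{kdv}(D;\omega) = Q_{-1}^{kdv}(D;\nu(\omega))$, so the plan is to import the expansion from Lemma \ref{lem:anal} through this change of variables.

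First, I would apply Lemma \ref{lem:anal} with $I = \nu(\omega)$ to write
\begin{equation*}
Q_{-1}^{kdv}(D;\nu(\omega)) = \sum_{k=1}^{M} a_{-k}(\nu(\omega);\Omega_{-1}^{kdv}) \, \mathrm{Op}\big(\chi_0(\xi)(\ii 2\pi \xi)^{-k}\big) + \mathcal R_M(D;\nu(\omega); Q_{-1}^{kdv}).
\end{equation*}
By the definition \eqref{pax-k} of $\pa_x^{-k}$, the Fourier multiplier with symbol $\chi_0(\xi)(\ii 2\pi\xi)^{-k}$ coincides with $\pa_x^{-k}$. Hence, setting $c_{-k}^{kdv}(\omega) := a_{-k}(\nu(\omega);\Omega_{-1}^{kdv})$, and $\mathcal R_M(Q_{-1}^{kdv};\omega) := \mathcal R_M(D;\nu(\omega); Q_{-1}^{kdv})|_{L^2_\bot}$ (restricted to $L^2_\bot$, which is invariant since the multiplier is diagonal), we obtain the expansion \eqref{espansione simboli Q kdv - 1}. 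The coefficient $c_{-k}^{kdv}$ is Lipschitz in $\omega$ because $a_{-k}(\cdot;\Omega_{-1}^{kdv})$ is real analytic on the compact set $\Xi$ and $\nu$ is Lipschitz on $\Omega$.

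Next I turn to the tame estimate for $\mathcal R_M(Q_{-1}^{kdv};\omega)$. By \eqref{RM-1}, this is the Fourier multiplier acting on $w=\sum_{n\in\Sbot} w_n e^{\ii 2\pi n x}$ by $n \mapsto \mathcal R_M^{\omega_n}(\nu(\omega))/(2\pi n)^{M+1}$, with $\sup_{n\in\Sbot}|\mathcal R_M^{\omega_n}(\nu(\omega))| \le C_M$ uniformly, and analogous uniform bounds on the Lipschitz seminorm of $\omega \mapsto \mathcal R_M^{\omega_n}(\nu(\omega))$ coming from the bound on $\pa_I \mathcal R_M^{\omega_n}$ in Lemma \ref{lem:anal} together with the Lipschitz property of $\nu$. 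For $n_1,n_2 \in \N$ with $n_1+n_2 \le M+1$, the composition $\langle D \rangle^{n_1} \mathcal R_M(Q_{-1}^{kdv};\omega) \langle D \rangle^{n_2}$ is therefore a Fourier multiplier whose $n$-th symbol is bounded by $C_M \langle n \rangle^{n_1+n_2-M-1} \le C_M$, uniformly in $\omega$ and $n$, with the same type of bound on its Lipschitz difference quotient in $\omega$ (after multiplication by $\gamma$). Consequently the operator acts boundedly on every $H^s$ with an operator norm that is independent of $s$, and by \eqref{interpolazione parametri operatore funzioni} (Lemma \ref{lemma: action Sobolev}, applied to the corresponding order-zero pseudo-differential operator) this yields the tame constant bound $\mathfrak M_{\langle D\rangle^{n_1}\mathcal R_M(Q_{-1}^{kdv};\omega)\langle D\rangle^{n_2}}(s) \le C(s,M)$, uniformly in $s\ge s_0$.

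No genuine obstacle is expected: the main point is the bookkeeping of the identifications between the symbol of Lemma \ref{lem:anal} and the operators $\pa_x^{-k}$, and the transfer of Lipschitz regularity from the parameter $I \in \Xi$ to $\omega \in \Omega$ via the diffeomorphism $\nu$. The tame estimate reduces to a trivial Fourier multiplier bound because the remainder is diagonal and its multipliers decay like $n^{-M-1}$.
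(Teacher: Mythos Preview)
Your proposal is correct and follows exactly the approach the paper intends: the paper's own proof is the single line ``The claimed statements follow by Lemma \ref{lem:anal}'', and what you have written is precisely the unpacking of that reference---pulling back the expansion of Lemma \ref{lem:anal} through the analytic diffeomorphism $\nu(\omega)=(\omega^{kdv})^{-1}(-\omega)$, identifying $c_{-k}^{kdv}(\omega)=a_{-k}(\nu(\omega);\Omega_{-1}^{kdv})$, and reading off the tame bound for the diagonal remainder from the uniform decay $|n|^{-M-1}$ of its multipliers.
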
  

\begin{proof}
The claimed statements follow by Lemma \ref{lem:anal}.
\end{proof}

\begin{lemma}\label{Lemma di partenza riduzione}
For any $M \in \N$, the Hamiltonian operator 
$ {\mathcal L}^{(0)}_\omega $, acting on $ L^2_\bot(\T_1) $,
defined  in \eqref{representation Lom},
 admits an expansion of the form
\be\label{L0}
{\cal L}^{(0)}_\omega :=
\omega \cdot \pa_\vphi - \Pi_{\bot} \Big( a_3^{(0)} \pa_x^3 + 
2 (a_3^{(0)})_x \pa_x^2 + a_1^{(0)} \pa_x  + {\rm Op}( r_0^{(0)}) +
Q_{-1}^{kdv} (D ; \om) \Big)  + {\mathcal R_M^{(0)}}( \breve \io_\delta (\vphi); \omega)
\ee
where $ a_3^{(0)} := a_3^{(0)} (\vphi, x; \o) $, $ a_1^{(0)} := a_1^{(0)} (\vphi, x; \o) $ are real valued functions satisfying for any $s \ge s_0$ 
\be\label{a30-a10}
\|  a_3^{(0)}  + 1 \|_s^\Lipg \lesssim_{s, M} \e ( 1 + \| \iota \|_{s+ \sigma_M }^\Lipg )  \, , \quad
\|  a_1^{(0)}   \|_s^\Lipg \lesssim_{s, M} \e  + \| \iota \|_{s+\sigma_M}^\Lipg 
\ee
for some $\s_M > 0 $.
The pseudo-differential symbol  $ r_0^{(0)} := r_0^{(0)} (\vphi, x, \xi; \o) $ has an expansion in homogeneous components 
\be\label{sym:r0}
r_0^{(0)}(\vphi, x, \xi; \o) = \sum_{k=0}^{M} a_{-k}^{(0)}(\vphi, x; \o) (\ii 2 \pi \xi)^{-k} \chi_0(\xi) 
\ee
(with $\chi_0$ defined in \eqref{def chi 0}) 
where the coefficients $ a_{-k}^{(0)} 
:= a_{-k}^{(0)}(\vphi, x; \o) $ satisfy
\be\label{aM0} 
\sup_{k=0, \ldots, M} \| a_{-k}^{(0)} \|_s^\Lipg \lesssim_{s,  M} \e  + \| \iota \|_{s+\sigma_M}^\Lipg \, , \quad \forall s \geq s_0\,,
\ee
the  remainder is defined by
\be\label{R0:dyn}
 {\mathcal R_M^{(0)}}( \breve \io_\delta (\vphi); \omega ) := 
- {\cal R}_M (\breve \io_\delta (\vphi); \ac(\omega) ; \partial_x d_\bot \nabla_w {\cal R}^{kdv}) -  \e {\cal R}_M (\breve \io_\delta (\vphi); \ac(\omega) ; \partial_x d_\bot \nabla_w {\cal P}) 
\ee
and the latter two remainder terms are given by
 \eqref{pseudodifferential expansion of remainder mathcal R_Nkdv}
 and \eqref{pseudodifferential expansion of d bot nabla P}
with $\ac(\omega ) = (\omega^{kdv})^{- 1}(- \omega)$.

Let $s_1 \geq s_0$ and let $\breve{\io}_1, \breve{\io}_2$ be two tori satisfying \eqref{ansatz I delta} for $\mu_0 \geq s_1 + \sigma_M$. Then, for any $0 \le k \le M +1$,  
\begin{equation}\label{stima Delta 1 2 a k (0)}
\| \Delta_{12} a_3^{(0)}\|_{s_1} \lesssim_{s_1, M} \e \| \io_1 - \io_2 \|_{s_1 + \sigma_M},  \quad 
\|\Delta_{12} a_{1 - k}^{(0)}\|_{s_1}  \lesssim_{s_1, M} \|  \io_1 - \io_2 \|_{s_1 + \sigma_M}\,. 
\end{equation}
\end{lemma}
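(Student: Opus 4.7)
The plan is to start from the decomposition \eqref{Lom0-1}, which splits $\mathcal L^{(0)}_\omega$ into the unperturbed linear piece $\omega\cdot\pa_\vphi + \pa_x^3 - \Pi_\bot Q_{-1}^{kdv}(D;\omega)$ and the two nonlinear contributions $-\Pi_\bot \pa_x d_\bot \nabla_w \mathcal R^{kdv}(\breve\iota_\delta)$ and $-\e\,\Pi_\bot \pa_x d_\bot \nabla_w \mathcal P(\breve\iota_\delta)$. First, I would apply Lemma \ref{differential nabla perturbation-true} to the perturbation term to get its pseudo-differential expansion in powers of $\pa_x^{3-k}$, $k=0,\dots,M+3$, with coefficients $a_{3-k}(\cdot;\pa_x d_\bot \nabla_w \mathcal P)$ smoothly depending on the argument and with a smoothing remainder satisfying \eqref{le1:es2}; and analogously Lemma \ref{differential nabla kdv remainder-true} to the $\mathcal R^{kdv}$-term, yielding an expansion in $\pa_x^{1-k}$, $k=0,\dots,M+1$. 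Both expansions are then evaluated at $\breve\iota_\delta(\vphi)$.

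Next, I would collect terms by order. The $\pa_x^3$ coefficient is identified as $a_3^{(0)} = -1 - \e\,a_3(\breve\iota_\delta;\nu;\pa_x d_\bot \nabla_w \mathcal P)$, which by Remark \ref{rem:3.5} equals $-1 + \e\,(\pa^2_{\zeta_1}f)(x,u,u_x)|_{u=\Psi_\nu(\breve\iota_\delta)}$. Since $\mathcal L^{(0)}_\omega$ is Hamiltonian (being the linearization of a Hamiltonian vector field), Lemma \ref{lem:229} forces the $\pa_x^2$ coefficient emerging from the raw expansion to equal $2(a_3^{(0)})_x$, so no reshuffling is needed there. The $\pa_x$ coefficient $a_1^{(0)}$ is then the sum of the two respective $a_1$-components. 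All remaining homogeneous contributions of orders $0,-1,\dots,-M$ from both expansions are bundled into a single symbol $r_0^{(0)}(\vphi,x,\xi;\omega)$ of the homogeneous form \eqref{sym:r0}, with $a_{-k}^{(0)}$ equal to the sum of the $\e$-weighted perturbation coefficient and the $\mathcal R^{kdv}$ coefficient of the same order. The two smoothing tails combine to produce exactly the remainder $\mathcal R_M^{(0)}(\breve\iota_\delta(\vphi);\omega)$ defined by \eqref{R0:dyn}.

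The tame estimates \eqref{a30-a10} and \eqref{aM0} would then follow by composing each coefficient with the torus embedding $\breve\iota_\delta$, invoking Lemma \ref{lem:tame1} together with the bound $\|\iota_\delta\|_s^\Lipg \lesssim_s \|\iota\|_{s+\sigma}^\Lipg$ coming from Lemma \ref{toro isotropico modificato} and the ansatz \eqref{ansatz I delta}. The factor of $\e$ in front of $a_3^{(0)}+1$ is explicit since the $-1$ cancels the constant from $\pa_x^3$; the absence of $\e$ in \eqref{aM0} reflects the contribution of the $\mathcal R^{kdv}$-expansion, whose coefficients vanish at $\io=0$ and are therefore $O(\|\iota\|)$ by \eqref{stima a - k partial x d nabla R kdv}. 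The Lipschitz differences \eqref{stima Delta 1 2 a k (0)} are handled analogously: the $\Delta_{12}$-estimates in Lemmas \ref{differential nabla perturbation-true} and \ref{differential nabla kdv remainder-true} apply directly once we observe, via \eqref{stima y - y delta}, that $\iota \mapsto \iota_\delta$ is itself Lipschitz in the relevant Sobolev norm.

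The main technical obstacle I anticipate is bookkeeping: threading through consistent choices of the loss-of-derivative constants $\sigma_M$ (which grow with $M$ across Lemmas \ref{differential nabla perturbation-true}, \ref{differential nabla kdv remainder-true}, \ref{lem:tame1}, and \ref{toro isotropico modificato}), and verifying that the Hamiltonian structure automatically delivers $a_2^{(0)}=2(a_3^{(0)})_x$ without having to absorb a stray $\pa_x^2$ term into $r_0^{(0)}$. Beyond this, the argument is an essentially mechanical layering of the pseudo-differential expansion results of Section \ref{espansione linearized} on top of the isotropic correction from Section \ref{sezione approximate inverse}.
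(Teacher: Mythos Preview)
Your proposal is correct and follows the same approach as the paper: expand the two nonlinear pieces of \eqref{Lom0-1} via Lemmas \ref{differential nabla perturbation-true} and \ref{differential nabla kdv remainder-true}, invoke Lemma \ref{lem:229} to pin down the $\partial_x^2$ coefficient, and obtain the tame estimates by composing with the embedding through Lemma \ref{lem:tame1} together with \eqref{2015-2}. The only slip is a sign: the paper's definition reads $a_3^{(0)} = -1 + \e\, a_3(\breve\iota_\delta;\nu;\partial_x d_\bot\nabla_w\mathcal P)$ (not $-1-\e\,a_3$), and for the Lipschitz dependence of $\iota_\delta$ on $\iota$ in the $\Delta_{12}$ step the relevant bound is \eqref{derivata i delta} rather than \eqref{stima y - y delta}.
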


\begin{proof}
By the definition \eqref{Lom0-1} of ${\cal L}^{(0)}_\omega$, 
the expansion 
\eqref{pseudodifferential expansion of remainder mathcal R_Nkdv}
of $ \partial_x d_\bot \nabla_w {\cal R}^{kdv}$, 
the expansion
\eqref{pseudodifferential expansion of d bot nabla P}
of $\partial_x d_\bot \nabla_w {\cal P}$, 
and the formula for the coefficient of $\partial_x^2$, described 
in Lemma \ref{lem:229}, one obtains  \eqref{L0} 
with
$$
\begin{aligned}
a_3^{(0)}(\vphi, x; \o) & := - 1 +  \e a_3(\breve \io_\delta (\vphi); \ac(\omega) ; \partial_x d_\bot \nabla_w {\cal P})\,, \\
a_1^{(0)}(\vphi, x; \o)  & := a_1(\breve \io_\delta (\vphi); \ac(\omega) ; \partial_x d_\bot \nabla_w {\cal R}^{kdv}) +  \e a_1(\breve \io_\delta (\vphi); \ac(\omega) ; \partial_x d_\bot \nabla_w {\cal P})\,, \\
a_{-k}^{(0)}(\vphi, x; \o) & :=  a_{- k}(\breve \io_\delta (\vphi); \ac(\omega) ; \partial_x d_\bot \nabla_w {\cal R}^{kdv}) +  \e a_{- k}(\breve \io_\delta(\vphi); \ac(\omega) ; \partial_x d_\bot \nabla_w {\cal P}) \, , \quad k = 0, \ldots, M\,, 
 \end{aligned}
$$
and  $\ac(\omega ) = (\omega^{kdv})^{- 1}(- \omega)$. By 
Lemma \ref{differential nabla kdv remainder-true}-1, the functions $a_{1 - k}(\mathfrak x; \ac(\omega) ; \partial_x d_\bot \nabla_w {\cal R}^{kdv})$, $0 \le k \le  M + 1$,  satisfy the hypothesis of Lemma \ref{lem:tame1}-($ii$). 
In view of \eqref{2015-2} one then infers that
 for any $s \geq s_0$ 
$$
\| a_{1 - k}(\breve \io_\delta (\vphi); \ac(\omega) ; \partial_x d_\bot \nabla_w {\cal R}^{kdv})\|_s^\Lipg \lesssim_{s, M} \| \io\|_{s + \sigma_M}^\Lipg  
$$ 
for some  $\sigma_M > 0$. 
Similarly, by the first item of  
Lemma \ref{differential nabla perturbation-true},  
the functions
$a_{3 - k}(\breve \io_\delta (\vphi); \ac(\omega) ; \partial_x d_\bot \nabla_w {\cal P})$,  $0 \le k \le  M + 3$, satisfy the hypothesis
of Lemma \ref{lem:tame1}-($i$), implying that for any $s \geq s_0$, 
$$
\| a_{3 - k}(\breve \io_\delta (\vphi); \ac(\omega) ; \partial_x d_\bot \nabla_w {\cal P}) \|_s^\Lipg \lesssim_{s, M} 1 + \| \io\|_{s + \sigma_M}^\Lipg
$$ 
for some  $\sigma_M > 0$, proving \eqref{a30-a10}, \eqref{aM0}. 
The estimates \eqref{stima Delta 1 2 a k (0)} follow by similar arguments.
\end{proof}

We remark that in the finitely many steps of our reduction procedure,
described in this section,
 the {\it loss of derivatives} $\sigma_M = \sigma_M(\tau, {\mathbb S}_+) > 0$ might have to be increased, but the notation will not be changed. 

\subsection{Quasi-periodic reparametrization of time} 
\label{sec:RT}

We conjugate the operator $ {\mathcal L}_\omega $  
(cf.  \eqref{representation Lom}) 
by the change of variable induced by the quasi-periodic reparametrization of time  
$$
\vartheta = \vphi + \alpha^{(1)} (\vphi) \omega  
\qquad \text{or equivalently} \qquad \vphi = \vartheta  + \breve \alpha^{(1)} (\vartheta ) \omega
$$
where 
$ \alpha^{(1)}: \T^{\Splus} \to \R $, is a small, real valued  function  
chosen below (cf. \eqref{alphi(2)}). Denote by 
\begin{equation}\label{definition Phi (2)}
(\Phi^{(1)} h) (\vphi, x) := h( \vphi + \alpha^{(1)} (\vphi) \omega, x ) \, , \quad   
((\Phi^{(1)})^{-1} h) (\vartheta, x) := 
h(  \vartheta  + \breve \alpha^{(1)} (\vartheta ) \omega, x ) \, ,
\end{equation}
the induced diffeomorphisms on functions. The goal is to achieve
that the operator ${\mathcal L}_\omega^{(1)}$, defined in
\eqref{def:L2}, is of the form \eqref{Op:L2}, so that 
its highest order coefficient $a_3^{(1)}$ satisfies
\eqref{constant:sec}. The latter property will allow us in Section \ref{elimination x dependence highest order}
to conjugate ${\mathcal L}_\omega^{(1)}$  
to an operator with constant highest order 
coefficient (cf. \eqref{Op:L1}). 

Since by \eqref{a30-a10}, the coefficient $a_3^{(0)}$ satisfies  
$ a_3^{(0)} = -1 + O(\e)$, the expression
$(a_3^{(0)}(\vphi,x))^{\frac13}$ is well defined where 
$(x)^{\frac13}$ denotes the branch of the third root of 
$x \in (- \infty , 0)$, determined by $(-1)^{\frac13} = -1$. 
\begin{lemma}
Let $m_3$ be the constant 
\be\label{def:m3}
m_3(\omega)  := 
 \frac{1}{(2 \pi)^{|\Splus|}} \int_{\T^{\Splus}}   
\Big( \int_{\T_1} \frac{dx}{ (a_3^{(0)}(\vartheta,x; \o))^{\frac13}} \Big)^{-3}\, d \vartheta  \, ,
\ee 
and define, for  $ \omega \in \mathtt{DC}(\g,\t)$, the function
\be\label{alphi(2)}
\breve \alpha^{(1)} (\vartheta; \o) := 
(\om \cdot \pa_{\vphi} )^{-1} \Big[ \frac{1}{m_3}  
\Big( \int_{\T_1} \frac{dx}{ (a_3^{(0)}(\vartheta,x; \o))^{\frac13}}  \Big)^{-3}
 - 	1  \Big] \, .
\ee
Then for any $M \in \N$, there exists a constant $\s_M > 0$ so that the following holds:\\
(i) The constant $ m_3 $ satisfies
\be\label{m3Lip}
|m_3 + 1|^\Lipg \lesssim_M \e \,
\ee
and for any $s \geq s_0$, $\alpha^{(1)}, \breve \alpha^{(1)}$ 
satisfy 
\begin{equation}\label{stima m3}
\| \alpha^{(1)} \|_s^\Lipg, \| \breve \alpha^{(1) }\|_s^\Lipg 
\lesssim_{s, M} 
\e \gamma^{- 1}(1 + \| \io\|_{s + \sigma_M}^\Lipg)\,. 
\end{equation}
(ii) The Hamiltonian operator 
\be \label{def:L2} 
{\mathcal L}_\omega^{(1)} := \frac{1}{ \rho } \Phi^{(1)} {\mathcal L}_\omega 
\, (\Phi^{(1)})^{- 1} \, , 
\quad \rho(\vartheta) := \Phi^{(1)} ( 1 + \om \cdot \pa_{\vartheta} \breve \alpha^{(1)} ) = 1 + \Phi^{(1)} ( \om \cdot \pa_{\vartheta} \breve \alpha^{(1)} ) \, , 
\ee
admits an expansion of the form
\be \label{Op:L2}
{\mathcal L}_\omega^{(1)} =  
\omega \cdot \pa_\vartheta -   \Big(  a_3^{(1)}   \pa_x^3 +  
2 (a_3^{(1)})_x  \pa_x^2 
+ a_1^{(1)} \pa_x  + {\rm Op}( r_0^{(1)}) +  
Q_{-1}^{kdv} (D ;\om)\Big)  + {\mathcal R_M^{(1)}}
\ee
where the coefficients 
$ a_3^{(1)} :=   a_3^{(1)}(\vartheta, x; \om)  $, $ a_1^{(1)} := a_1^{(1)} (\vartheta, x; \om) $ 
are real valued and satisfy
\begin{equation}\label{stima a 1 (2)}
\|  a_3^{(1)} +1  \|_s^\Lipg \lesssim_{s, M} \e (1 + \| \io\|_{s + \sigma_M}) , \quad 
\|  a_1^{(1)}   \|_s^\Lipg  \lesssim_{s, M} \e  + \| \iota \|_{s+\sigma_M}^\Lipg \, , \quad \forall s \geq s_0   \,,
\end{equation}
and 
\be\label{constant:sec}
\int_{\T_1} \frac{dx}{ (a_3^{(1)}( \vartheta, x; \o))^{\frac13} }  =  m_3^{- \frac13}  \, , \quad
\forall  \vartheta \in \T^{\Splus} \,. 
\ee  
 The function  $ r_0^{(1)} \equiv r_0^{(1)} (\vartheta, x, \xi; \o)$
 is a pseudo-differential symbol in $S^0$ and admits an expansion of the form
 \begin{equation}\label{stima r 0 (1)}
 r_0^{(1)} (\vartheta, x, \xi; \o ) = \sum_{k = 0}^M 
 a_{- k}^{(1)}(\vartheta, x; \o) (\ii 2 \pi \xi)^{ - k} \chi_0(\xi)
 \end{equation}
 where for any $0 \le k \le  M $, $s \geq s_0$, 
 \be\label{est:r1a1}
\| a_{- k}^{(1)} \|_s^\Lipg  \lesssim_{s,  M} \e  + \| \iota \|_{s+\sigma_M}^\Lipg \, .
\ee
Furthermore, the function $\rho$ appearing in \eqref{def:L2} satisfies 
\begin{equation}\label{stima rho}
\| \rho - 1\|_s^\Lipg, \  \| \rho^{- 1} - 1\|_s^\Lipg \lesssim_{s, M} \e + \| \io\|_{s + \sigma_M}^\Lipg\,.
\end{equation}
Let $s_1 \geq s_0$ and let $\io_1, \io_2$ be two tori satisfying \eqref{ansatz I delta} with $\mu_0 \geq s_1 + \sigma_M$. Then 
\begin{equation}\label{stime delta 12 secondo step}
\begin{aligned}
&
|  \Delta_{12} 
m_3 |, \| \Delta_{12} \alpha^{(1)} \|_{s_1}, \| \Delta_{12}\breve \alpha^{(1)} \|_{s_1}, \| \Delta_{12} a_1^{(1)}   \|_{ s_1}\,,\, \| \Delta_{12} \rho^{\pm 1}\|_{s_1} \lesssim_{s_1, M} \| \io_1 - \io_2\|_{s_1 + \sigma_M} \, , \\
&  \|\Delta_{12} a_{- k}^{(1)} \|_{s_1} \lesssim_{s_1, M} \| \io_1 - \io_2\|_{s_1 + \sigma_M}\,, \quad \forall k= 0, \ldots, M\,. 
\end{aligned}
\end{equation} 
(iii) Let $S > \sM$ where $\sM$ is defined in \eqref{def sM}. Then the maps $(\Phi^{(1)})^{\pm 1}$ are $\Lipg$-1-tame operators with 
a tame constant satisfying 
\begin{equation}\label{stima tame Phi (2) enunciato}
\mathfrak M_{(\Phi^{(1)})^{\pm 1}}(s) \lesssim_{S, M} 1 + \| \io \|_{s + \sigma_M}^\Lipg, \quad \forall s_0 + 1 \leq s \leq S\,. 
\end{equation} 
For any given $\lambda_0 \in \N$ there exists a constant $\sigma_M(\lambda_0) > 0$ so that  for any $ m \in \Splus $,
$ \lambda,  n_1, n_2 \in \N $ with $ \lambda \leq \lambda_0 $ 
and $n_1 + n_2 +  \lambda_0 \leq M + 1 $,  the operator 
$ \pa_{\vphi_m}^\lambda \langle D \rangle^{n_1}{\mathcal R_M^{(1)}} \langle D \rangle^{n_2}$ is $\Lipg$-tame 
with a tame constant satisfying 
\be\label{SM2}
{\mathfrak M}_{\pa_{\vphi_m}^\lambda  \langle D \rangle^{n_1}{\mathcal R_M^{(1)}} \langle D \rangle^{n_2}} (s) \lesssim_{S, M} \e  + \| \iota \|_{s+\sigma_M(\lambda_0)}^\Lipg \,, \quad \forall \sM \leq s \leq S \, .
\ee
If in addition $s_1 \geq \sM$ and $\breve \io_1, \breve \io_2$ are two  tori satisfying \eqref{ansatz I delta}  with $\mu_0 \geq s_1 + \sigma_M(\lambda_0)$, then 
\begin{equation}\label{stima delta 12 RM (2)}
\| \pa_{\vphi_m}^\lambda  \langle D \rangle^{n_1} \Delta_{12} {\mathcal R_M^{(1)}} \langle D \rangle^{n_2} \|_{{\cal B}(H^{s_1})} \lesssim_{s_1, M, \lambda_0} \| \io_1 - \io_2\|_{s_1 + \sigma_M(\lambda_0)}\,. 
\end{equation}
\end{lemma}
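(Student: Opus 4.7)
The plan is to exploit that $\Phi^{(1)}$ modifies only the $\vphi$-variable and therefore commutes with $\pa_x$ and with every $x$-Fourier multiplier including $Q^{kdv}_{-1}(D;\om)$; the only nontrivial effects are on multiplication operators and on $\om\cdot\pa_\vphi$. Under the substitution $\vartheta=\vphi+\alpha^{(1)}(\vphi)\om$ a direct computation shows $\Phi^{(1)}(\om\cdot\pa_\vphi)(\Phi^{(1)})^{-1}=\rho\,\om\cdot\pa_\vartheta$ with $\rho=1+\Phi^{(1)}(\om\cdot\pa_\vartheta\breve\alpha^{(1)})$, so that dividing by $\rho$ restores a pure transport term. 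I would first fix $m_3$ by \eqref{def:m3}: this choice is tailored precisely so that the bracketed expression on the right-hand side of \eqref{alphi(2)} has vanishing $\vartheta$-average, making the homological equation for $\breve\alpha^{(1)}$ solvable via \eqref{Diophantine-1} (using $\om\in\mathtt{DC}(\gamma,\tau)$). The bounds \eqref{m3Lip} and \eqref{stima m3} then follow by combining the estimate $a_3^{(0)}=-1+O(\e)$ from \eqref{a30-a10} with Moser's composition Lemma \ref{Moser norme pesate} applied to the smooth maps $z\mapsto z^{-1/3}$ and $z\mapsto z^{-3}$ near $z=-1$, losing $2\tau+1$ derivatives through \eqref{Diophantine-1}. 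The scalar diffeomorphism $\vartheta\mapsto\vartheta+\breve\alpha^{(1)}(\vartheta)\om$ is inverted via Lemma \ref{lemma:LS norms}-($iii$), producing $\alpha^{(1)}$ with the same estimates (up to losing one derivative).

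\textbf{Conjugation and verification of \eqref{constant:sec}.} I would carry out ${\cal L}_\om^{(1)}=\rho^{-1}\Phi^{(1)}({\cal L}_\om^{(0)}-R)(\Phi^{(1)})^{-1}$ term by term from \eqref{L0}. Setting
\begin{equation*}
a_3^{(1)}:=\rho^{-1}\Phi^{(1)}a_3^{(0)},\qquad a_{1-k}^{(1)}:=\rho^{-1}\Phi^{(1)}a_{1-k}^{(0)}\quad(k=0,\ldots,M+1),
\end{equation*}
the $\pa_x^2$ coefficient is automatically $2(a_3^{(1)})_x$ because $\rho$ is $x$-independent. The multiplier $Q^{kdv}_{-1}(D;\om)$ commutes with $\Phi^{(1)}$ and contributes $Q^{kdv}_{-1}$ plus a correction $(\rho^{-1}-1)Q^{kdv}_{-1}$; using the expansion \eqref{espansione simboli Q kdv - 1} of Lemma \ref{lemma resto frequenze kdv} I would absorb the $\pa_x^{-k}$-components of the correction into $a_{-k}^{(1)}$ and send the smoothing tail into ${\cal R}_M^{(1)}$, together with the conjugates $\rho^{-1}\Phi^{(1)}{\cal R}_M^{(0)}(\Phi^{(1)})^{-1}$ and $-\rho^{-1}\Phi^{(1)} R(\Phi^{(1)})^{-1}$. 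Since $\Phi^{(1)}$ commutes with $x$-integration,
\begin{equation*}
\int_{\T_1}(a_3^{(1)})^{-1/3}(\vartheta,x)\,dx=\rho(\vartheta)^{1/3}\,\Phi^{(1)}\!\Big[\int_{\T_1}(a_3^{(0)})^{-1/3}\,dx\Big](\vartheta),
\end{equation*}
and by the definition of $\rho$ together with \eqref{alphi(2)}, this collapses to $m_3^{-1/3}$, giving \eqref{constant:sec}.

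\textbf{Estimates and main obstacle.} The bounds \eqref{stima a 1 (2)}, \eqref{est:r1a1}, \eqref{stima rho} on the coefficients and on $\rho^{\pm1}$ follow from the estimates on $a_{3-k}^{(0)}$ in Lemma \ref{Lemma di partenza riduzione}, the $\Phi^{(1)}$-composition bound \eqref{pr-comp1a} of Lemma \ref{lemma:LS norms}, and the Moser estimate of Lemma \ref{Moser norme pesate} applied to $\rho\mapsto\rho^{-1}$; \eqref{pr-comp1a} also directly yields the tame estimate \eqref{stima tame Phi (2) enunciato} for $(\Phi^{(1)})^{\pm1}$. The bound \eqref{SM2} on $\mathcal R_M^{(1)}$ is a composition of tame operators via Lemma \ref{composizione operatori tame AB}, combining the $\mathcal R_M^{(0)}$-estimates (themselves derived in Lemma \ref{Lemma di partenza riduzione} by applying Lemma \ref{lem:tame2} to the expansions of Lemmata \ref{differential nabla perturbation-true} and \ref{differential nabla kdv remainder-true}), the smoothing estimate of Lemma \ref{lemma resto frequenze kdv} on $\mathcal R_M(Q^{kdv}_{-1};\om)$, and the bound \eqref{stime gj chij} on the rank-one piece $R$ of Lemma \ref{thm:Lin+FBR}. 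The Lipschitz-in-$\io$ estimates \eqref{stime delta 12 secondo step}--\eqref{stima delta 12 RM (2)} are obtained by running the same scheme for the differences $\Delta_{12}$. The only delicate point I anticipate is the bookkeeping around \eqref{constant:sec}: one has to track in which order $\Phi^{(1)}$, $x$-integration, and the nonlinear operation $z\mapsto z^{-1/3}$ are composed, and verify that \eqref{def:m3} is reverse-engineered to cancel exactly the $\vartheta$-dependence of $\int(a_3^{(1)})^{-1/3}dx$. Once this algebraic identity is secured, the rest reduces to the mechanical propagation of tame estimates through a single composition.
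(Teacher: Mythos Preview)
Your approach is correct and follows essentially the same route as the paper: conjugate term by term using that $\Phi^{(1)}$ commutes with $\pa_x$ and all $x$-Fourier multipliers, identify $a_3^{(1)}=\rho^{-1}\Phi^{(1)}a_3^{(0)}$ and the lower-order coefficients analogously, and choose $\breve\alpha^{(1)}$ so that \eqref{constant:sec} holds. Two small bookkeeping points are worth noting. First, the paper explicitly splits $\Pi_\bot={\rm Id}+(\Pi_\bot-{\rm Id})$ in \eqref{L0} before conjugating, so that the finite-rank piece $({\rm Id}-\Pi_\bot)\big((a_3^{(0)}+1)\pa_x^3+\cdots\big)$ is isolated as an additional term ${\cal R}_M^{(I)}$ absorbed into ${\cal R}_M^{(1)}$; you do not mention this, but it is needed to pass from the form \eqref{L0} (with $\Pi_\bot$) to \eqref{Op:L2} (without). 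Second, your handling of the correction $(\rho^{-1}-1)Q_{-1}^{kdv}$ via Lemma~\ref{lemma resto frequenze kdv} is in fact more explicit than the paper's own treatment, which tacitly leaves this term implicit in the definition of $a_{-k}^{(1)}$.
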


\begin{proof}
Writing $\Pi_\bot$ as ${\rm Id} + (\Pi_\bot - {\rm Id} )$
the expression \eqref{L0} for ${\cal L}_\omega^{(0)}$ becomes 
$$
{\cal L}_\omega^{(0)} = 
\omega \cdot \pa_\vphi -  \Big( a_3^{(0)} \pa_x^3 + 
2 (a_3^{(0)})_x \pa_x^2 + a_1^{(0)} \pa_x  + {\rm Op}( r_0^{(0)}) +
Q_{-1}^{kdv} (D ; \om) \Big)  + 
{\cal R}_M^{(I)}( \breve \io_\delta (\vphi); \omega)
+ {\mathcal R}_M^{(0)}( \breve \io_\delta (\vphi); \omega) 
$$
where using that $({\rm Id} - \Pi_\bot)\partial_x^3 h = 0$
for any $h \in H^s_\bot$, the operator 
${\cal R}_M^{(I)} \equiv 
{\cal R}_M^{(I)}( \breve \io_\delta (\vphi); \omega)$ 
can be written as
\begin{equation}\label{def cal RM (I)}
{\cal R}_M^{(I)} =  ({\rm Id} - \Pi_\bot)\Big( (a_3^{(0)} + 1) \pa_x^3 + 
2 (a_3^{(0)})_x \pa_x^2 + a_1^{(0)} \pa_x  + {\rm Op}( r_0^{(0)}) \Big) \, .
\end{equation}
Since 
$({\rm Id} - \Pi_\bot) h = \sum_{j \in \mathbb S} 
\big(h, e^{- \ii 2\pi j x} \big)_{L^2_x} e^{\ii 2\pi j x} $ for any
$h$ in $L^2_x$, $ {\cal R}_M^{(I)} $
is a finite rank operator of the form \eqref{forma buona resto} 
with functions $ g_j, \chi_j \in H^s_\bot$ satisfying \eqref{stime gj chij}
(use \eqref{a30-a10}, \eqref{aM0}). 

The estimate \eqref{stima tame Phi (2) enunciato} follows by  
Lemma \ref{lemma:LS norms}-($iii$) and \eqref{stima m3}. 
Note  that 
$$
\Phi^{(1)} \circ \om \cdot \pa_\vphi \circ (\Phi^{(1)})^{-1} 
= \rho (\vartheta) \om \cdot \pa_\vartheta \, , 
\quad \rho := \Phi^{(1)} ( 1 + \om \cdot \pa_{\vphi} \breve \alpha^{(1)} ) \, , 
$$
and that any Fourier multiplier $ g(D) $ is left unchanged under conjugation, i.e. 
$ \Phi^{(1)} g(D) (\Phi^{(1)})^{-1} = g(D)$. 
Using \eqref{representation Lom} and \eqref{L0}, we obtain \eqref{Op:L2}  where
\be\label{a31}
a_3^{(1)} := \Phi^{(1)} \Big( \frac{a_3^{(0)}}{1 + \om \cdot \pa_\vphi 
\breve \a^{(1)}}  \Big) \, ,
\ee
$a_1^{(1)} := \frac{1}{\rho} \Phi^{(1)} (a_1^{(0)})$, 
$r_0^{(1)}$ is of the form
\eqref{stima r 0 (1)} with
$ a_{- k}^{(1)} := \frac{1}{\rho} \Phi^{(1)}(a_{-k}^{(0)}) $, 
and the remainder ${\cal R}_M^{(1)} $ is given by
\begin{equation}\label{def cal RM (1) maxtom}
{\cal R}_M^{(1)} =
\frac{1}{\rho} \Phi^{(1)} {\cal R}_M^{(I)} (\Phi^{(1)})^{- 1}
+ \frac{1}{\rho}  \Phi^{(1)} 
{\cal R}_M^{(0)}(\breve \io_\delta (\vphi)) (\Phi^{(1)})^{- 1}   -  
 \frac{1}{\rho}  \Phi^{(1)}  R (\vphi) (\Phi^{(1)})^{- 1}
\, . 
\end{equation} 
We choose $ \breve \a^{(1)} $ such that \eqref{constant:sec} holds, 
obtaining \eqref{def:m3}, \eqref{alphi(2)}. 
We now verify the estimates, stated in items ($i$), ($ii$).
Recall that we assume throughout that \eqref{ansatz I delta} holds.
The estimates \eqref{m3Lip}-\eqref{stima m3} follow by 
\eqref{def:m3}, \eqref{alphi(2)}, \eqref{a30-a10}, and
by using Lemma \ref{lemma:LS norms}-($iii$), 
Lemma \ref{Moser norme pesate}. 
The estimate \eqref{stima rho} on $\rho$ follows by the definition \eqref{def:L2}, \eqref{alphi(2)}, 
and by applying Lemma \ref{lemma:LS norms}-($iii$), 
Lemma \ref{Moser norme pesate}. Hence, by Lemma \ref{lemma:LS norms} and  the estimates 
\eqref{a30-a10}, \eqref{aM0}, and \eqref{stima rho},
we deduce \eqref{est:r1a1}. The estimates \eqref{stime delta 12 secondo step} are obtained by similar arguments. 
Let us now prove item ($iii$).
The estimate \eqref{stima tame Phi (2) enunciato} follows 
from Lemma \ref{lemma:LS norms}-($iii$).
Since $(\Phi^{(1)})^{\pm 1} $ commutes with every Fourier multiplier, we get 
\begin{equation}\label{RM 0 riparametrizzato}
 \frac{1}{\rho} \langle D \rangle^{n_1} \Phi^{(1)} 
{\cal R}_M^{(0)}(\breve \io_\delta (\vphi)) (\Phi^{(1)})^{- 1} \langle D \rangle^{n_2} 
= \frac{1}{\rho}   \langle D \rangle^{n_1} {\cal R}_M^{(0)}  
( \breve \io_{\d,\a} ( \vphi ))  \langle D \rangle^{n_2}
\end{equation} 
where 
$\breve \io_{\d,\a} ( \vphi )  
:= \breve \io_\delta ( \vphi + \a^{(1)} (\vphi) \omega)$. 
By Lemma \ref{lemma:LS norms}, \eqref{2015-2},
and \eqref{stima m3} one has
$\|  \io_{\d,\a}   \|^\Lipg_s \lesssim_s 
\|\io \|^\Lipg_{s+\sigma_M}$. 
Moreover, by \eqref{forma buona resto},  we have 
\begin{equation}\label{rho R Phi (1) Phi (1) inv}
\begin{aligned}
&  \frac{1}{\rho}  \Phi^{(1)}  R (\vphi) (\Phi^{(1)})^{- 1} h = 
{\mathop \sum}_{j \in \Splus} 
 \big( h \,,\, (\Phi^{(1)} g_j) \big)_{L^2_x}  \frac{1}{\rho}  (\Phi^{(1)} \chi_j) \,, \quad \forall 
h \in L^2_\bot \, , 
\end{aligned}
\end{equation}
and by \eqref{def cal RM (I)}, the conjugated operator 
$ \frac{1}{\rho}  \Phi^{(1)}  {\cal R}_M^{(I)}   (\Phi^{(1)})^{- 1} h $
has the same form. 
The estimates \eqref{SM2} 
then follow by \eqref{RM 0 riparametrizzato}, \eqref{R0:dyn},  
and Lemmata \ref{differential nabla perturbation-true}, \ref{differential nabla kdv remainder-true}, \ref{lem:tame2} to estimate the first term 
on the right hand side of 
 \eqref{def cal RM (1) maxtom} and by 
 \eqref{rho R Phi (1) Phi (1) inv}, 
 \eqref{stima tame Phi (2) enunciato}, \eqref{stime gj chij} and
 Lemma \ref{Lemma op proiettori}, to estimate the second and  third term in \eqref{def cal RM (1) maxtom}. The estimates 
 \eqref{stima delta 12 RM (2)} are proved by similar arguments. 
\end{proof}

\subsection{Elimination of the $ (\vphi, x) $-dependence of
the highest order coefficient}\label{elimination x dependence highest order}

The goal of this section is to remove the $ (\vphi, x) $-dependence of the coefficient 
$ a_3^{(1)}(\vphi, x) $ 
of the Hamiltonian operator $ {\mathcal L}^{(1)}_\omega $, given by \eqref{def:L2}-\eqref{Op:L2}, 
 where we rename $ \vartheta $ with $ \vphi$.  
Actually this step will at the same time also remove 
the coefficient of $\pa_x^2$. 
We achieve these goals  by  conjugating the operator $ {\mathcal L}^{(1)}_\omega $  
by the 
 flow
$  \Phi^{(2)} (\tau, \vphi)  $, acting on $ L^2_\bot(\T_1)$, defined by the transport equation
\be\label{flow1}
\partial_\tau  \Phi^{(2)} (\tau, \vphi)  = \Pi_{\bot}  \pa_x  \big( b^{(2)} (\tau, \vphi, x) \Phi^{(2)} (\tau, \vphi)  \big) \, , \quad 
 \Phi^{(2)} (0, \vphi ) = {\rm Id}_{\bot}   \, ,
\ee
for a real valued function  
$$ 
b^{(2)} \equiv  b^{(2)} (\tau, \vphi, x) := \frac{  \beta^{(2)} (\vphi, x)  }{ 1 + \tau  
\beta^{(2)}_x (\vphi, x)  } \, ,
$$ 
where $ \beta^{(2)} (\vphi, x) $ is a small,
real valued periodic function chosen in \eqref{definizione beta (1) breve} below. 
The flow $ \Phi^{(2)} (\tau, \vphi) $  is well defined for $0 \le \tau \le 1$ and satisfies the 
tame estimates provided in Lemma \ref{proposition 2.40 unificata}. 
Since the vector field  $ \Pi_{\bot}  \pa_x  \big(  b^{(2)} h  \big) $, $ h \in H^s_\bot(\T_1) $, is Hamiltonian
(it is generated by the 
Hamiltonian $ \frac12 \int_{\T_1} b^{(2)} h^2 \, dx $), 
each $ \Phi^{(2)} (\tau, \vphi) $, $0 \le \tau \le 1$, $\vphi \in \T^{\mathbb S_+}$ is a symplectic linear isomorphism of $ H^s_\bot(\T_1) $. 
Therefore the time one conjugated operator
\be\label{def:L1}
{\mathcal L}_\omega^{(2)} :=  \Phi^{(2)}  {\mathcal L}_\omega^{(1)} 
 \big(\Phi^{(2)}  \big)^{-1} \, , \quad \Phi^{(2)} := \Phi^{(2)}(1, \vphi) \, , 
\ee
is  a Hamiltonian operator acting on $ H^s_\bot(\T_1) $.

Given the $(\tau, \vphi)$-dependent family of diffeomorphisms of the torus  $ \T_1 $, 
$x \mapsto  y = x +  \tau \beta^{(2)} (\vphi, x) $, we denote the family of its inverses by
$y \mapsto x = y +   \breve \beta^{(2)} (\tau,  \vphi, y) $.

\begin{lemma}
Let  $ \breve \beta^{(2)} ( \vphi, y; \o) \equiv \breve \beta^{(2)}(1, \vphi, y; \o) $ be the real valued, periodic function 
\begin{equation}\label{definizione beta (1) breve}
\breve \beta^{(2)}( \vphi, y; \o) := 
\pa_y^{-1} \Big( \frac{  m_3^{1/3} }{ ( a_3^{(1)} (\vphi, y; \o) )^{1/3} } - 1  \Big) 
\end{equation}
(which is well defined by \eqref{constant:sec}) and let $M \in \N$.
Then there exists $\sigma_M > 0$ so that the following holds: \\
(i) For any $s \geq s_0$
\begin{equation}\label{stima beta a (3) 1}
\| \beta^{(2)}\|_s^\Lipg, \| \breve \beta^{(2)}\|_s^\Lipg
 \lesssim_{s, M} 
 \e \big( 1 + \| \io\|_{s + \sigma_M}^\Lipg \big) \, . 
\end{equation}
(ii) The Hamiltonian operator $ {\mathcal L}_\omega^{(2)}  $ in \eqref{def:L1} admits an expansion of the form 
\be\label{Op:L1}
{\mathcal L}_\omega^{(2)} =  
\omega \cdot \pa_\vphi -  \big(  m_3  \pa_x^3 
 + a_1^{(2)} \pa_x  + {\rm Op}( r_0^{(2)}) + 
Q_{-1}^{kdv} (D; \om) \big)  + {\mathcal R_M^{(2)}}
\ee
where $ a_1^{(2)} := a_1^{(2)} (\vphi, x; \om) $ 
is a real valued, periodic function, satisfying
\begin{equation}\label{stima a 1 (1)}
\|  a_1^{(2)}   \|_s^\Lipg \lesssim_{s, M} \e  + \| \iota \|_{s+\sigma_M}^\Lipg \, .
\end{equation}
 The pseudo-differential symbol  $ r_0^{(2)} \equiv r_0^{(2)} (\vphi, x, \xi; \o)$ is in $S^0 $ and
 satisfies, for any $ s \geq s_0 $, the estimate 
\begin{equation}\label{stima r 0 (2)}
| {\rm Op}( r_0^{(2)})   |_{0, s, 0}^\Lipg \lesssim_{s, M}  \e  + \| \iota \|_{s+\sigma_M}^\Lipg\,.   
\end{equation}
Let $s_1 \geq s_0$ and let $\breve \io_1, \breve \io_2$ be two tori  satisfying \eqref{ansatz I delta} for $\mu_0 \geq s_1 + \sigma_M $. Then, for any $k = 0,  \ldots, M $, 
\begin{equation}\label{stima Delta 1 2 a k (1)}
\| \Delta_{12} \beta^{(2)}\|_{s_1}, \| \Delta_{12} \breve \beta^{(2)}\|_{s_1}, 
 \| \Delta_{12} a_1^{(2)} \|_{s_1},\, |\Delta_{12} {\rm Op}( r_0^{(2)}) |_{0, s_1, 0}  \lesssim_{s_1, M} \| \io_1 - \io_2 \|_{s_1 + \sigma_M}\,. 
\end{equation}
(iii) Let $S> \sM$. Then the symplectic maps $(\Phi^{(2)})^{\pm 1}$ are $\Lipg$-1 tame operators with a tame constant satisfying 
\begin{equation}\label{stima tame Phi (1) enunciato}
\mathfrak M_{(\Phi^{(2)})^{\pm 1}}(s) 
\lesssim_{S, M} 1 + \| \io \|_{s + \sigma_M}^\Lipg, \quad \forall s_0 + 1 \leq s \leq S\,.
\end{equation}
Let $\lambda_0 \in \N$. Then there exists a constant $\sigma_M(\lambda_0) > 0$ such that,  for any 
$\lambda, n_1, n_2 \in \N$
with $ \lambda\leq \lambda_0 $ and $n_1 + n_2 + \lambda_0 \leq M - 1$,  
the operator  
$ \pa_{\vphi_m}^\lambda \langle D \rangle^{n_1}{\mathcal R_M^{(2)}} \langle D \rangle^{n_2}$, $ m  \in \Splus $, is $\Lipg$-tame  with a tame constant satisfying
\be\label{SM1}
{\mathfrak M}_{\pa_{\vphi_m}^\lambda \langle D \rangle^{n_1} {\mathcal R_M^{(2)}} \langle D \rangle^{n_2} } (s) \lesssim_{S, M, \lambda_0} \e  + \| \iota \|_{s+\sigma_M(\lambda_0)}^\Lipg \,, \quad \forall \sM \leq s \leq S \, .
\ee
Let $s_1 \geq \sM$ and $\io_1, \io_2$ be tori satisfying \eqref{ansatz I delta} with $\mu_0 \geq s_1 + \sigma_M(\lambda_0)$. Then  
\begin{equation}\label{Delta 12 RM (1)}
\| \pa_{\vphi_m}^\lambda \langle D \rangle^{n_1} \Delta_{12}{\mathcal R_M^{(2)}} \langle D \rangle^{n_2} \|_{{\cal B}(H^{s_1})} \lesssim_{s_1, M, \lambda_0 } \| \io_1 - \io_2\|_{s_1 + \sigma_M(\lambda_0)}\,. 
\end{equation}
\end{lemma}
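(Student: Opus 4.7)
\medskip

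\noindent\textbf{Proof plan.} The idea is that $\Phi^{(2)}$ is the time-one flow of the very transport vector field studied in Propositions~\ref{proposizione astratta egorov}--\ref{proposizione astratta egorov 2} (with $\beta$ replaced by our $\beta^{(2)}$), so all the heavy analytical work is already done; the job here is to compute the new leading symbol, to verify that the choice \eqref{definizione beta (1) breve} makes it constant, and to collect the estimates.

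\emph{Step 1 (estimates on $\beta^{(2)},\breve\beta^{(2)}$ and on $\Phi^{(2)}$).} From \eqref{stima a 1 (2)} we have $a_3^{(1)}+1=O(\e)$ in any Sobolev norm, so the map $a\mapsto (m_3/a)^{1/3}-1$ is smooth on a neighborhood of $-1$ and Moser's Lemma~\ref{Moser norme pesate} together with the composition Lemma~\ref{lemma:LS norms} and \eqref{m3Lip} gives \eqref{stima beta a (3) 1} for $\breve\beta^{(2)}$; the bound for $\beta^{(2)}=\breve\beta^{(2)}(1+\cdot)^{-1}\circ (\text{inverse diffeo})$ follows from Lemma~\ref{lemma:LS norms}-($ii$) (item \eqref{p1-diffeo-inv}). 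Substituting these estimates into Lemma~\ref{proposition 2.40 unificata} produces the tame bounds \eqref{stima tame Phi (1) enunciato} for $(\Phi^{(2)})^{\pm1}$.

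\emph{Step 2 (conjugation of each piece of ${\cal L}_\omega^{(1)}$).} Split \eqref{Op:L2} as
$
{\cal L}_\omega^{(1)}=\omega\!\cdot\!\partial_\vphi - A_3 - A_1 - \mathrm{Op}(r_0^{(1)}) - Q_{-1}^{kdv}(D;\omega) + {\cal R}_M^{(1)},
$
where $A_3:=a_3^{(1)}\partial_x^3+2(a_3^{(1)})_x\partial_x^2$ and $A_1:=a_1^{(1)}\partial_x$. Apply Proposition~\ref{proposizione astratta egorov 2} to conjugate $\omega\!\cdot\!\partial_\vphi$: this gives back $\omega\!\cdot\!\partial_\vphi$ plus a pseudo-differential operator of order $1$ with expansion in decreasing symbols of size $O(\|\beta^{(2)}\|)=O(\e)$, plus a smoothing remainder with tame bounds. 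Apply Proposition~\ref{proposizione astratta egorov} to each of $a_3^{(1)}\partial_x^3$, $2(a_3^{(1)})_x\partial_x^2$, $a_1^{(1)}\partial_x$ and to each homogeneous piece $a_{-k}^{(1)}\partial_x^{-k}$ of $\mathrm{Op}(r_0^{(1)})$; each conjugation produces a pseudo-differential expansion of the same order plus a smoothing tail. Finally apply Lemma~\ref{Fourier multiplier} to $Q_{-1}^{kdv}(D;\omega)$: it yields $Q_{-1}^{kdv}(D;\omega)$ unchanged, plus lower order pseudo-differential terms whose coefficients are $O(\|\beta^{(2)}\|_s^{\Lipg})$, plus a smoothing remainder. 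The smoothing remainder ${\cal R}_M^{(1)}$ itself is transported by sandwiching with $(\Phi^{(2)})^{\pm1}$; the tame bounds \eqref{SM2} combined with Lemma~\ref{proposition 2.40 unificata} and Lemma~\ref{composizione operatori tame AB} give its contribution to ${\cal R}_M^{(2)}$.

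\emph{Step 3 (identification of the principal symbol $=m_3$ and vanishing of $\partial_x^2$).} This is the key calculation. By the explicit formula \eqref{ordine principale esplicito egorov} of the Egorov theorem applied to $A_3$, the principal symbol of the conjugated operator $\Phi^{(2)} A_3 (\Phi^{(2)})^{-1}$ at order $3$ is
\[
p_3(\vphi,x)=\Big([1+\breve\beta^{(2)}_y(\vphi,y)]^{3}\,a_3^{(1)}(\vphi,y)\Big)\Big|_{y=x+\beta^{(2)}(\vphi,x)}.
\]
But the choice \eqref{definizione beta (1) breve} gives exactly $1+\breve\beta^{(2)}_y=m_3^{1/3}(a_3^{(1)})^{-1/3}$, hence $[1+\breve\beta^{(2)}_y]^{3}a_3^{(1)}\equiv m_3$, independent of $(\vphi,y)$; evaluating at $y=x+\beta^{(2)}$ still gives $m_3$. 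Thus $p_3=m_3$. For the coefficient of $\partial_x^2$ one does not need to compute it from the Egorov expansion: since $\Phi^{(2)}$ is symplectic, the whole operator ${\cal L}_\omega^{(2)}$ is Hamiltonian, so Lemma~\ref{lem:229} forces its $\partial_x^2$-coefficient to be $2\partial_x(m_3)=0$. This is precisely why no $\partial_x^2$-term appears in \eqref{Op:L1}.

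\emph{Step 4 (definition of the remaining symbols and assembly).} Define $a_1^{(2)}$ and $r_0^{(2)}=\sum_{k=0}^{M}a_{-k}^{(2)}(\ii 2\pi\xi)^{-k}\chi_0(\xi)$ by summing, at each order $\le 1$, the contributions coming from the four conjugations of Step~2 (the one of $A_3$, of $A_1$, of $\mathrm{Op}(r_0^{(1)})$, of $Q_{-1}^{kdv}$, and the order-$1$ expansion of $\Phi^{(2)}(\omega\!\cdot\!\partial_\vphi)(\Phi^{(2)})^{-1}$). Then \eqref{stima a 1 (1)}, \eqref{stima r 0 (2)} follow by summing the estimates on $a_{3-k}(\vphi,x)$ and $p_{m-k}(\vphi,x)$ from Proposition~\ref{proposizione astratta egorov}(2) together with \eqref{stima cal Q Phi lemma astratto} and Step~1; note that each summand is bounded by $\e+\|\iota\|_{s+\sigma_M}^{\Lipg}$. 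The smoothing remainder ${\cal R}_M^{(2)}$ collects the Egorov remainders and the conjugated ${\cal R}_M^{(1)}$; the tame bounds \eqref{SM1} follow from \eqref{stima resto Egorov teo astratto}, \eqref{stima resto corollario egorov}, the tame sandwich Lemma~\ref{composizione operatori tame AB} with \eqref{stima tame Phi (1) enunciato}, and the hypothesis \eqref{SM2}. The Lipschitz-difference estimates \eqref{stima Delta 1 2 a k (1)}, \eqref{Delta 12 RM (1)} are obtained analogously using item~4 of Proposition~\ref{proposizione astratta egorov}, item~3 of Proposition~\ref{proposizione astratta egorov 2}, item~3 of Lemma~\ref{Fourier multiplier}, the Lipschitz bound \eqref{stima flusso astratto 2b}, and \eqref{stima delta 12 RM (2)}.

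The only mildly delicate point is the bookkeeping in Step~4: one must check that every new term produced by the four independent Egorov/multiplier conjugations has order $\le 1$ as a pseudo-differential operator, so that it can be absorbed into $a_1^{(2)}\partial_x+\mathrm{Op}(r_0^{(2)})+{\cal R}_M^{(2)}$; this is guaranteed by the order statements of Propositions~\ref{proposizione astratta egorov}--\ref{proposizione astratta egorov 2} and Lemma~\ref{Fourier multiplier} since $A_3$ is of order $3$ but its symbol is reshaped to the constant $m_3$, leaving the order $\le 1$ part as the only free contribution.
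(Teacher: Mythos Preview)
Your proof is correct and follows essentially the same route as the paper: split ${\cal L}_\omega^{(1)}$ into its pieces, apply Proposition~\ref{proposizione astratta egorov} to each homogeneous term $a_{3-k}^{(1)}\partial_x^{3-k}$, Proposition~\ref{proposizione astratta egorov 2} to the $\omega\cdot\partial_\vphi$ contribution, Lemma~\ref{Fourier multiplier} to $Q_{-1}^{kdv}$, read off the principal symbol from \eqref{ordine principale esplicito egorov} to see it equals $m_3$, and invoke Lemma~\ref{lem:229} via symplecticity of $\Phi^{(2)}$ to eliminate the $\partial_x^2$-coefficient. The paper's argument is exactly this, only more compressed; your Step~4 bookkeeping and the handling of $\Phi^{(2)}{\cal R}_M^{(1)}(\Phi^{(2)})^{-1}$ via Lemma~\ref{proposition 2.40 unificata} also match.
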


\begin{proof}
The proof of this lemma uses the Egorov type 
results proved in Section \ref{sezione astratta egorov}.
According to \eqref{Op:L2}, \eqref{stima r 0 (1)}, the conjugated operator is given by 
\begin{align}\label{cal L omega (1) a}
& {\cal L}_\omega^{(2)}  = \Phi^{(2)} {\cal L}_\omega^{(1)} (\Phi^{(2)})^{- 1}  \\
& = \omega \cdot \partial_\vphi - \Phi^{(2)}  a_3^{(1)} \pa_x^3   (\Phi^{(2)})^{- 1} - 2  \Phi^{(2)}  (a_3^{(1)})_x \pa_x^2  (\Phi^{(2)})^{- 1} -  \Phi^{(2)}  a_1^{(1)} \pa_x (\Phi^{(2)})^{- 1}  \nonumber \\
& \ \  - \sum_{k=0}^{M} \Phi^{(2)} a_{-k}^{(1)} \partial_x^{-k} (\Phi^{(2)})^{- 1} - \Phi^{(2)} Q_{- 1}^{kdv}(D ; \omega)  (\Phi^{(2)})^{- 1} 
 +   \Phi^{(2)} {\cal R}_M^{(1)}  (\Phi^{(2)})^{- 1} + \Phi^{(2)}  \big( \om \cdot \pa_{\vphi}  \, \, ( \Phi^{(2)} )^{-1} \big)\,. \nonumber 
\end{align}
By \eqref{definizione beta (1) breve}, \eqref{m3Lip}, \eqref{stima a 1 (2)} and Lemmata \ref{lemma:LS norms}, \ref{Moser norme pesate}, the estimate \eqref{stima beta a (3) 1} follows. Using the ansatz \eqref{ansatz I delta} with $\mu_0 > 0$ large enough, the estimate \eqref{stima beta a (3) 1} implies that $\| \beta^{(2)}\|_{s_0 + \sigma_M(\lambda_0)}^\Lipg \lesssim _{M, \lambda_0} \e \gamma^{- 2}$, where the constant $\sigma_M(\lambda_0)$ is the constant appearing in the smallness conditions \eqref{smallness egorov astratto}, \eqref{smallness egorov astratto 2}, \eqref{piccolezza corollario Fourier multiplier}.  Now
we apply Proposition \ref{proposizione astratta egorov} to expand the terms 
$$
\Phi^{(2)}  a_3^{(1)} \pa_x^3   (\Phi^{(2)})^{- 1} \, , \quad 
2  \Phi^{(2)}  (a_3^{(1)})_x \pa_x^2  (\Phi^{(2)})^{- 1} \, , \quad
\Phi^{(2)} a_{1-k}^{(1)} \pa_x^{1-k} (\Phi^{(2)})^{- 1} \, ,
\, 0 \le k \le M+1\, ,
$$ 
Lemma \ref{Fourier multiplier} to expand the term 
$\Phi^{(2)} Q_{- 1}^{kdv}(D; \o)  (\Phi^{(2)})^{- 1}$, and Proposition \ref{proposizione astratta egorov 2} to expand  
$\Phi^{(2)}  \big( \om \cdot \pa_{\vphi}  \, \, ( \Phi^{(2)} )^{-1} \big) $.
Using also the estimates \eqref{a30-a10}, \eqref{aM0}, \eqref{stima beta a (3) 1} one deduces \eqref{stima a 1 (1)}, \eqref{stima r 0 (2)}. 
By the choice of $ \breve \beta^{(2)} $ in \eqref{definizione beta (1) breve} and Proposition \ref{proposizione astratta egorov}, the coefficient of the highest order term
of $\Phi^{(2)}  a_3^{(1)} \pa_x^3   (\Phi^{(2)})^{- 1}$
(and of ${\cal L}_\omega^{(2)}$) is given by
$$
\big( [1 + \breve \beta^{(2)}_y (\vphi, y)]^3 a_3^{(1)}(\vphi, y) \big)|_{y = x +  \beta^{(2)}(\vphi, x)}    = m_3  
$$ 
which  is constant in $ (\vphi, x)$ by \eqref{constant:sec}. 
Since $\Phi^{(2)}$ is symplectic, 
the operator ${\cal L}_\omega^{(2)}$ is Hamiltonian and
hence by Lemma \ref{lem:229}
the second order term equals $2  (m_3)_x \partial_{x}^2$ 
which vanishes
since $ m_3 $ is constant. The remainder $\Phi^{(2)} {\cal R}_M^{(1)}  (\Phi^{(2)})^{- 1} $ can be estimated by arguing as at the end of the proof of Proposition \ref{proposizione astratta egorov} (estimate of ${\cal R}_N(\tau, \vphi)$), using 
Lemma \ref{proposition 2.40 unificata} to estimate $\Phi^{(2)}$, $(\Phi^{(2)})^{- 1}$, the estimate \eqref{SM2} for  ${\cal R}_M^{(1)}$, the estimate \eqref{stima beta a (3) 1} of $\beta^{(2)}$,
$\breve \beta^{(2)}$,
and the ansatz \eqref{ansatz I delta} with $\mu_0$ large enough. The estimates \eqref{stima tame Phi (1) enunciato} follow by  
\eqref{stima flusso 1 astratto} and  
\eqref{stima beta a (3) 1}. The estimates \eqref{stima Delta 1 2 a k (1)}, \eqref{Delta 12 RM (1)} are derived by similar arguments. 
\end{proof}

\subsection{Elimination of the $ x $-dependence of the first order 
coefficient}

The goal of this section is to remove the $ x $-dependence of the coefficient $  a_1^{(2)} (\vphi,x) $ 
of the Hamiltonian operator $ {\mathcal L}^{(2)}_\omega $ in \eqref{def:L1}, \eqref{Op:L1}. 
We   conjugate the operator $ {\mathcal L}^{(2)}_\omega $ 
by the change of variable induced by the flow $   \Phi^{(3)}(\tau, \vphi) $, 
acting on $ L^2_\bot (\T_1 ) $, defined by 
\begin{equation}\label{Phi (3) tau}
\partial_\tau  \Phi^{(3)} (\tau, \vphi)  = \Pi_{\bot} \big( b^{(3)} (\vphi, x) \pa_{x}^{-1} \Phi^{(3)} (\tau, \vphi)  \big) \, , \quad 
 \Phi^{(3)} (0) = {\rm Id}_{\bot}   \, ,
\end{equation}
where $b^{(3)} (\vphi, x) $ is a small, real valued, periodic  function  
chosen in \eqref{alphi 0} below. 
Since the vector field  $ \Pi_{\bot}    \big(  b^{(3)} \pa_x^{-1} h  \big) $, $ h \in H^s_\bot(\T_1) $, is Hamiltonian
(it is generated by the 
Hamiltonian $ \frac12 \int_{\T_1} b^{(3)} (\pa_x^{-1} h)^2 \, dx $), 
each  $ \Phi^{(3)} (\tau, \vphi) $ 
is a symplectic linear isomorphism of $ H^s_\bot $ for any $0 \le \tau \le 1$ and $\vphi \in \T^{\Splus}$, and 
the time one conjugated operator
\be\label{def:L3}
{\mathcal L}_\omega^{(3)} :=  \Phi^{(3)}   {\mathcal L}_\omega^{(2)}    \big(\Phi^{(3)}  \big)^{-1}  
\, , \quad \Phi^{(3)} :=  \Phi^{(3)}(1) \, , 
\ee
is Hamiltonian. 

\begin{lemma}
Let  $  b^{(3)} (\vphi, x; \o)  $ be the real valued periodic function 
\be\label{alphi 0}
b^{(3)}(\vphi, x; \o) :=  \frac{1}{3 m_3} \pa_x^{-1} \Big( a_1^{(2)}(\vphi, x; \o) - 
\langle a_1^{(2)} \rangle_x (\vphi ; \o)   \Big)  \, , 
\quad \langle a_1^{(2)} \rangle_x (\vphi; \o )   
:= \int_{\T_1} a_1^{(2)} (\vphi, x; \o)\,d x
\ee
and let $M \in \N$. Then there exists $\sigma_M > 0$ with the following properties: \\
(i) For any $s \geq s_0 $,
\be\label{stima b (3)}
\| b^{(3)}\|_s^\Lipg \lesssim_{s, M} \e + \| \io\|_{s + \sigma_M}^\Lipg
\ee
and the symplectic maps $(\Phi^{(3)})^{\pm 1}$ are 
 $\Lipg$-tame  and satisfy
\begin{equation}\label{stima Phi (3) tau tame nel lemma}
\mathfrak M_{(\Phi^{(3)})^{\pm 1}}(s) \lesssim_{s, M} 1 + \| \io\|_{s + \sigma_M}^\Lipg\,. 
\end{equation}
(ii) The Hamiltonian operator in \eqref{def:L3} admits an expansion of the form
\be\label{Op:L3}
{\mathcal L}_\omega^{(3)} =  
\omega \cdot \pa_\vphi -  \big(  m_3   \pa_x^3 
 + a_1^{(3)}(\vphi)  \pa_x  + {\rm Op}( r_0^{(3)}) + Q_{-1}^{kdv} (D; \om) \big) 
 + {\mathcal R_M^{(3)}}
\ee
where the real valued, periodic function $ a_1^{(3)} (\vphi; \o) := \langle a_1^{(2)} \rangle_x(\vphi; \o)$ 
satisfies 
\begin{equation}\label{stima a 1 (3)}
\|  a_1^{(3)}   \|_s^\Lipg \lesssim_{s, M} \e  + \| \iota \|_{s+\sigma_M}^\Lipg \, ,  
\end{equation}
and $ r_0^{(3)} := r_0^{(3)} (\vphi, x, \xi ; \o )$
is a pseudo-differential symbol in $S^0 $ 
satisfying for any $s \geq s_0$, 
\begin{equation}\label{stima Op r 0 3 statement}
|{\rm Op}(r_0^{(3)})|_{0, s, 0}^\Lipg \lesssim_{s, M} \e + \| \iota \|_{s+\sigma_M}^\Lipg\,.
\end{equation}
Let $s_1 \geq s_0$ and let 
$\breve \io_1, \breve \io_2$ be two tori satisfying \eqref{ansatz I delta} with $\mu_0 \geq s_1 + \sigma_M$. Then 
\begin{equation}\label{stime delta 12 terzo step}
\begin{aligned}
& \| \Delta_{12} b^{(3)} \|_{s_1} \, , \| \Delta_{12} a_1^{(3)}\|_{s_1}  \lesssim_{s_1, M} \| \io_1 - \io_2\|_{s_1 + \sigma_M} \, , \quad   |\Delta_{12} {\rm Op}( r_0^{(3)}) |_{0, s_1, 0} \lesssim_{s_1, M} \| \io_1 - \io_2\|_{s_1 + \sigma_M}\,. 
\end{aligned}
\end{equation} 
(iii) Let $S > \sM$, $ \lambda_0 \in \N $. Then there exists a constant $\sigma_M(\lambda_0) > 0$ so that for any 
$ m \in \Splus$ and 
$\lambda, n_1, n_2 \in \N$ with
$ \lambda \leq \lambda_0 $ and  
$n_1 + n_2 + \lambda_0\leq M  - 1$, the operator 
$ \langle D \rangle^{n_1}\pa_{\vphi_m}^\lambda {\mathcal R_M}^{(3)} \langle D \rangle^{n_2}$,  is 
$\Lipg$-tame with tame constants satisfying
\be\label{SM3}
{\mathfrak M}_{\pa_{\vphi_m}^\lambda  \langle D \rangle^{n_1}{\mathcal R_M^{(3)}} \langle D \rangle^{n_2}} (s) \lesssim_{S, M, \lambda_0} \e  + \| \iota \|_{s+\sigma_M(\lambda_0)}^\Lipg
\, , \quad \forall \sM  \leq s \leq S \, .
\ee
Let $s_1 \geq \sM$ and let 
$\breve \io_1, \breve \io_2$ be tori satisfying \eqref{ansatz I delta} with $\mu_0 \geq s_1 + \sigma_M(\lambda_0)$. Then  
\begin{equation}\label{stima delta 12 RM (3)}
\| \pa_{\vphi_m}^\lambda  \langle D \rangle^{n_1} \Delta_{12} {\mathcal R_M^{(3)}} \langle D \rangle^{n_2} \|_{{\cal B}(H^{s_1})} \lesssim_{s_1, M, \lambda_0} \| \io_1 - \io_2\|_{s_1 + \sigma_M(\lambda_0)}\,. 
\end{equation}
\end{lemma}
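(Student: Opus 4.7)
The plan is to exploit the fact that in \eqref{Phi (3) tau} the vector field $B^{(3)} := \Pi_\bot\bigl(b^{(3)}(\varphi,x)\partial_x^{-1}\cdot\bigr)$ does \emph{not} depend on the time variable $\tau$, so the flow is simply the exponential $\Phi^{(3)}(\tau,\varphi)=\exp(\tau B^{(3)})$, and $B^{(3)}\in OPS^{-1}$ is a $\Psi$DO of strictly negative order. Consequently the whole Section~\ref{sezione astratta egorov} apparatus is not needed: a Neumann/Lie series argument, together with Lemmas~\ref{lemma stime Ck parametri}, \ref{Neumann pseudo diff} and \ref{lemma: action Sobolev}, suffices. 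First I would verify \eqref{stima b (3)} by inspecting \eqref{alphi 0}: the estimate \eqref{stima a 1 (1)} controls $a_1^{(2)}-\langle a_1^{(2)}\rangle_x$ in $\|\cdot\|_s^{\Lipg}$, the bound $|m_3+1|^{\Lipg}\lesssim \e$ of \eqref{m3Lip} makes $1/m_3$ harmless via Lemma~\ref{Moser norme pesate}, and $\partial_x^{-1}$ is an isometry on $H^s_\bot$. The tame estimate \eqref{stima Phi (3) tau tame nel lemma} for $(\Phi^{(3)})^{\pm 1}$ then follows from Lemma~\ref{Neumann pseudo diff} applied to $\pm B^{(3)}$, together with \eqref{norma pseudo moltiplicazione} and Lemma~\ref{lemma: action Sobolev}, since $|B^{(3)}|^{\Lipg}_{-1,s,\alpha}\lesssim_{s,\alpha}\|b^{(3)}\|_s^{\Lipg}$.

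Next I would expand
\[
{\cal L}^{(3)}_\omega=\omega\cdot\partial_\varphi + \Phi^{(3)}\bigl(\omega\cdot\partial_\varphi(\Phi^{(3)})^{-1}\bigr) - \Phi^{(3)}\bigl(m_3\partial_x^3 + a_1^{(2)}\partial_x + \mathrm{Op}(r_0^{(2)}) + Q^{kdv}_{-1}(D;\omega)\bigr)(\Phi^{(3)})^{-1} + \Phi^{(3)}{\cal R}_M^{(2)}(\Phi^{(3)})^{-1}
\]
using the Lie series $\Phi^{(3)}A(\Phi^{(3)})^{-1}=\sum_{k\ge 0}\tfrac{1}{k!}\mathrm{ad}_{B^{(3)}}^{\,k}(A)$, truncated at order $k=M+3$ so that the tail lies in $OPS^{-M-1}$. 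For $A=m_3\partial_x^3$, Lemma~\ref{composizione simboli omogenei}($ii$) gives $[B^{(3)},m_3\partial_x^3]=-3m_3 b^{(3)}_x\partial_x + \text{(symbols in }S^0\text{)}$, while the iterated brackets $\mathrm{ad}_{B^{(3)}}^{\,k}(m_3\partial_x^3)$ for $k\ge 2$ lie in $S^{1-k}$. For $A=a_1^{(2)}\partial_x$ the single commutator already lowers to $S^0$, and for $A\in\{\mathrm{Op}(r_0^{(2)}),Q^{kdv}_{-1}(D;\omega)\}$ the brackets are immediately of order $\le -1$. This produces
\[
{\cal L}^{(3)}_\omega = \omega\cdot\partial_\varphi - \Bigl(m_3\partial_x^3 + \bigl(a_1^{(2)}+3m_3 b^{(3)}_x\bigr)\partial_x + \mathrm{Op}(r_0^{(3)}) + Q^{kdv}_{-1}(D;\omega)\Bigr) + {\cal R}^{(3)}_M,
\]
where the choice \eqref{alphi 0} was made exactly so that $3m_3 b^{(3)}_x=-(a_1^{(2)}-\langle a_1^{(2)}\rangle_x)$, giving $a_1^{(3)}(\varphi)=\langle a_1^{(2)}\rangle_x(\varphi)$. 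The estimate \eqref{stima a 1 (3)} is then immediate from \eqref{stima a 1 (1)}, and \eqref{stima Op r 0 3 statement} follows by collecting all the symbols of order $\le 0$ arising from the brackets and applying Lemma~\ref{lemma stime Ck parametri} with the bounds \eqref{stima b (3)}, \eqref{stima a 1 (1)}, \eqref{stima r 0 (2)}.

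The tame bound \eqref{SM3} for the smoothing remainder is the technically most involved point: ${\cal R}^{(3)}_M$ is the sum of the $\Psi$DO tail of order $-M-1$ produced by the Lie series (estimated by iterating \eqref{estimate composition parameters} and using \eqref{composizione iterata} in Lemma~\ref{Neumann pseudo diff}), the conjugation $\Phi^{(3)}{\cal R}^{(2)}_M(\Phi^{(3)})^{-1}$, and the contribution $\Phi^{(3)}(\omega\cdot\partial_\varphi(\Phi^{(3)})^{-1})\in OPS^{-1}$ whose higher symbol expansion is again controlled by \eqref{stima b (3)} together with the Diophantine estimate \eqref{Diophantine-1} applied coefficientwise (in the $\omega$-expansion of $b^{(3)}$). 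To insert the outer derivatives $\partial_{\varphi_m}^\lambda$ and $\langle D\rangle^{n_1},\langle D\rangle^{n_2}$, I would, as in the preceding step, write $\langle D\rangle^{n_1}\partial_{\varphi_m}^\lambda\Phi^{(3)}\langle D\rangle^{-n_1-1}$ and the corresponding factor on the right as bounded $\Psi$DOs (using \eqref{interpolazione parametri operatore funzioni} and Lemma~\ref{lemma stime Ck parametri}), absorb the loss of $n_1+n_2+\lambda+1\le M$ derivatives into the gain $M+1$ coming from ${\cal R}^{(2)}_M$ via \eqref{SM1}, and conclude by Lemma~\ref{composizione operatori tame AB}. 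The Lipschitz--variation bounds \eqref{stime delta 12 terzo step} and \eqref{stima delta 12 RM (3)} are obtained line by line from the same scheme, now using \eqref{stima Delta 1 2 a k (1)}, \eqref{Delta 12 RM (1)} and the Lipschitz variant of Lemma~\ref{Neumann pseudo diff} in place of the sup bounds.

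The main obstacle I anticipate is the bookkeeping in the last step: one has to feed the asymptotic $\Psi$DO expansion to a high enough order that the residual tail is in $OPS^{-M-1}$ while simultaneously checking that none of the finitely many symbols produced exceeds the regularity budget allowed by the ansatz \eqref{ansatz I delta}. This is exactly the kind of accounting that forces the appearance of the loss constant $\sigma_M(\lambda_0)$ in \eqref{SM3}, which must dominate the losses of Lemmas~\ref{lemma stime Ck parametri} and \ref{Neumann pseudo diff} as well as those inherited from \eqref{SM1}; one then chooses $\sigma_M(\lambda_0)$ at the end to absorb them all.
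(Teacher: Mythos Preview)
Your approach is essentially the paper's: exploit that $B^{(3)}=\Pi_\bot b^{(3)}\partial_x^{-1}\in OPS^{-1}$ is $\tau$-independent, use Lemma~\ref{Neumann pseudo diff} for $(\Phi^{(3)})^{\pm 1}$, and a Lie expansion for the conjugation. Three small corrections. First, no Diophantine estimate is needed for $\Phi^{(3)}\bigl(\omega\cdot\partial_\varphi(\Phi^{(3)})^{-1}\bigr)$: this term is $-\Pi_\bot(\omega\cdot\partial_\varphi b^{(3)})\partial_x^{-1}+\text{higher brackets}$, controlled simply by one extra $\varphi$-derivative on $b^{(3)}$, absorbed into $\sigma_M$. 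Second, $Q_{-1}^{kdv}(D;\omega)$ is a Fourier multiplier but not a classical $\Psi$DO, so the symbol calculus does not apply to its commutators directly; the paper first invokes Lemma~\ref{lemma resto frequenze kdv} to split it as $\sum_{k=1}^M c_{-k}^{kdv}(\omega)\partial_x^{-k}+{\cal R}_M(Q_{-1}^{kdv};\omega)$, treats the polynomial part as you describe, and puts $(\Phi^{(3)}-\mathrm{Id}_\bot){\cal R}_M(Q_{-1}^{kdv})(\Phi^{(3)})^{-1}+{\cal R}_M(Q_{-1}^{kdv})\bigl((\Phi^{(3)})^{-1}-\mathrm{Id}_\bot\bigr)$ into ${\cal R}_M^{(3)}$. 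Third, the paper stops the Lie expansion for $m_3\partial_x^3+a_1^{(2)}\partial_x$ already at second order, with the integral remainder $\int_0^1(1-\tau)\Phi^{(3)}(\tau)\,\mathrm{ad}_{B^{(3)}}^{2}(m_3\partial_x^3+a_1^{(2)}\partial_x)\,\Phi^{(3)}(\tau)^{-1}d\tau\in OPS^{-1}$ absorbed into $\mathrm{Op}(r_0^{(3)})$; truncating at $k=M+3$ works but is unnecessary. (Also, your displayed $\partial_x$-coefficient should read $a_1^{(2)}-3m_3 b^{(3)}_x$ and $3m_3 b^{(3)}_x=+(a_1^{(2)}-\langle a_1^{(2)}\rangle_x)$; your two sign slips cancel.)
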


\begin{proof}
The estimate \eqref{stima b (3)} follows by the definition \eqref{alphi 0} and 
\eqref{stima a 1 (1)}, \eqref{m3Lip}.
We now provide estimates for the flow 
$$
\Phi^{(3)}(\tau) = {\rm exp}\big(\tau \Pi_\bot  b^{(3)} (\vphi, x; \o) \pa_{x}^{-1}  \big) \, , 
\quad \forall \tau \in [- 1 , 1] \, . 
$$ 
By   \eqref{norm-increa}, Lemma \ref{lemma stime Ck parametri}, and \eqref{stima b (3)}, one infers that 
for any $s \geq s_0$, $|\Pi_\bot b^{(3)} \partial_x^{- 1}|_{- 1, s, 0}^\Lipg \lesssim_{s, M} \e + \| \io\|_{s + \sigma_M}^\Lipg \, . $
Therefore, by Lemma \ref{Neumann pseudo diff},  there exists $\sigma_M > 0$ such that, if \eqref{ansatz I delta} holds with $\mu_0 \geq \sigma_M$, then, for any $s \geq s_0$,
\be \label{stima Phi (3) tau}
\sup_{\tau \in [- 1, 1]} |\Phi^{(3)}(\tau) - {\rm Id}|_{0, s, 0}^\Lipg \lesssim_{s} \e  + \| \io\|_{s + \sigma_M}^\Lipg\,. 
\ee
The latter estimate, together with Lemma \ref{lemma: action Sobolev}, imply \eqref{stima Phi (3) tau tame nel lemma}. 

By \eqref{Op:L1} and using Lemma \ref{lemma resto frequenze kdv} for the operator $Q_{- 1}^{kdv}(D; \omega)$, one has that 
$$
\Phi^{(3)} {\cal L}_\omega^{(2)} (\Phi^{(3)})^{- 1}  =  \omega \cdot \partial_\vphi -  \Phi^{(3)} \big( m_3   \pa_x^3 
 + a_1^{(2)} \pa_x   \big) (\Phi^{(3)})^{- 1} - 
 Q^{kdv}_{- 1}(D ; \ \omega) +  {\cal R}_0^{(I)} + {\cal R}_M^{(3)}
 $$
where 
\begin{equation}\label{coniugazione Phi (3) nel lemma}
\begin{aligned}
{\cal R}_0^{(I)}  & := 
- \Phi^{(3)}   {\rm Op}( r_0^{(2)}) (\Phi^{(3)})^{- 1} + 
\Phi^{(3)} \big( \omega \cdot \partial_\vphi (\Phi^{(3)})^{- 1} \big) 
 - (\Phi^{(3)} - {\rm Id}_\bot)\Pi_\bot \Big( \sum_{k = 1}^M 
 c_{ - k}^{kdv}(\omega) \partial_x^{- k}  \Big) (\Phi^{(3)})^{- 1}  \\
& \quad - \Pi_\bot \Big( \sum_{k = 1}^{M } c_{ - k}^{kdv}(\omega) \partial_x^{- k}  \Big) \Big( (\Phi^{(3)})^{- 1} - {\rm Id}_\bot \Big) \,, \\
{\cal R}_M^{(3)} & := \Phi^{(3)} {\cal R}_M^{(2)} (\Phi^{(3)})^{- 1} 
- (\Phi^{(3)} - {\rm Id}_\bot) {\cal R}_{M }( \omega , Q_{-1}^{kdv}) (\Phi^{(3)})^{- 1}  
 - {\cal R}_{M }( \omega , Q_{-1}^{kdv}) \big( (\Phi^{(3)})^{- 1} - {\rm Id}_\bot \big)\,. 
\end{aligned}
\end{equation}
Note that ${\cal R}_0^{(I)}$ is a pseudo-differential operator in $OPS^0$ (cf. Lemma \ref{Neumann pseudo diff}).
Moreover, by a Lie expansion, recalling \eqref{Phi (3) tau}, one has 
$$
\begin{aligned}
\Phi^{(3)} \big( m_3   \pa_x^3 
 + a_1^{(2)} \pa_x   \big) (\Phi^{(3)})^{- 1} & = m_3   \pa_x^3 
 + a_1^{(2)} \pa_x + [ \Pi_\bot b^{(3)} \partial_x^{- 1}, \, m_3   \pa_x^3 
 + a_1^{(2)} \pa_x]   \\
 & \ \  + \int_0^1 (1 - \tau) \Phi^{(3)}(\tau) \Big[\Pi_\bot b^{(3)} \partial_x^{- 1},
  \Big[\Pi_\bot b^{(3)} \partial_x^{- 1}, m_3   \pa_x^3 
 + a_1^{(2)} \pa_x\Big] \Big] \Phi^{(3)}(\tau)^{- 1}\, d \tau \\
 & = m_3   \pa_x^3 
 + \big( a_1^{(2)} - 3 m_3 b^{(3)}_x\big) \pa_x 
 + {\cal R}_0^{(II)}\, ,
 \end{aligned}
$$
\begin{equation}\label{def cal R 0 (II)}
\begin{aligned}
{\cal R}_0^{(II)} & := - 3 m_3 b^{(3)}_{xx} - m_3 b^{(3)}_{xxx} \partial_x^{- 1} + [ \Pi_\bot b^{(3)} \partial_x^{- 1}
 , a_1^{(2)} \pa_x]  + [ (\Pi_\bot - {\rm Id}) b^{(3)} \partial_x^{- 1}, m_3   \pa_x^3] \\
 & \quad + \int_0^1 (1 - \tau) \Phi^{(3)}(\tau) \Big[\Pi_\bot b^{(3)} \partial_x^{- 1}\,,\, \Big[\Pi_\bot b^{(3)} \partial_x^{- 1}, m_3   \pa_x^3 
 + a_1^{(2)} \pa_x\Big] \Big] \Phi^{(3)}(\tau)^{- 1}\, d \tau \in OPS^0\, . \\
\end{aligned}
\end{equation}
Note that ${\cal R}_0^{(II)}$ is a pseudo-differential operator in $OPS^0$ (cf. Lemma \ref{Neumann pseudo diff}).
Hence, \eqref{coniugazione Phi (3) nel lemma}-\eqref{def cal R 0 (II)} and the choice of $b^{(3)}$ in \eqref{alphi 0} lead to the expansion \eqref{Op:L3}
with ${\cal R}_M^{(3)}$ given by 
\eqref{coniugazione Phi (3) nel lemma} and 
\begin{equation}\label{def Op r 0 3 nella proof}
{\rm Op}( r_0^{(3)})  := - {\cal R}_0^{(I)} + {\cal R}_0^{(II)}\,.
\end{equation}
The estimate \eqref{stima a 1 (3)} follows  by  \eqref{stima a 1 (2)}. 

The estimate \eqref{stima Op r 0 3 statement} on the operator ${\rm Op}(r_0^{(3)})$ follows by the definitions \eqref{coniugazione Phi (3) nel lemma}, \eqref{def cal R 0 (II)}, \eqref{def Op r 0 3 nella proof}, by applying the estimates \eqref{m3Lip}, \eqref{stima a 1 (1)}, \eqref{stima r 0 (2)}, \eqref{stima b (3)}, \eqref{stima Phi (3) tau}, \eqref{norm-increa}, \eqref{Norm Fourier multiplier}, \eqref{norma pseudo moltiplicazione}, \eqref{estimate composition parameters}, \eqref{stima commutator parte astratta} (using the ansatz \eqref{ansatz I delta} with $\mu_0$ large enough). 
Next we estimate the remainder ${\cal R}_M^{(3)}$, defined in \eqref{coniugazione Phi (3) nel lemma}. We only consider the second term in the definition of ${\cal R}_M^{(3)}$, since 
the estimates the first and third terms can be obtained similarly. 
We recall that the operator ${\cal R}_M(Q_{-1}^{kdv}; \o)$ is $\vphi$-independent. For
$m \in {\mathbb S}_+$ and $\lambda, n_1, n_2 \in \N$ with 
$ \lambda \leq \lambda_0 $ and $n_1 + n_2 + \lambda_0\leq M - 2 $,  
one has 
\begin{align}
& \langle D \rangle^{n_1} \partial_{\vphi_m}^\lambda \Big( (\Phi^{(3)} - {\rm Id}_\bot) {\cal R}_M( Q_{-1}^{kdv}; \o) (\Phi^{(3)})^{- 1}  \Big) \langle D \rangle^{n_2}  \label{unimi 100} \\
& = \sum_{\lambda_1 + \lambda_2 = \lambda} C_{\lambda_1, \lambda_2} \langle D \rangle^{n_1} \partial_{\vphi_m}^{\lambda_1} (\Phi^{(3)} - {\rm Id}_\bot)  
{\cal R}_M(Q_{-1}^{kdv}; \o) \partial_{\vphi_m}^{\lambda_2} 
 (\Phi^{(3)})^{- 1}   \langle D \rangle^{n_2} \nonumber \\
& = \sum_{\lambda_1 + \lambda_2 = \lambda} C_{\lambda_1, \lambda_2} \Big( \langle D \rangle^{n_1} \partial_{\vphi_m}^{\lambda_1}  (\Phi^{(3)} - {\rm Id}_\bot)  \langle D \rangle^{- n_1} \Big) \Big( \langle D \rangle^{n_1} {\cal R}_M( Q_{-1}^{kdv}; \o) \langle D \rangle^{n_2} \Big) \Big( \langle D \rangle^{- n_2} \partial_{\vphi_m}^{\lambda_2}  (\Phi^{(3)})^{- 1}   \langle D \rangle^{n_2} \Big) \, . \nonumber
\end{align}
By the estimates \eqref{Norm Fourier multiplier}, \eqref{estimate composition parameters}, \eqref{stima Phi (3) tau} and Lemma \ref{lemma: action Sobolev}, one has 
$$
\begin{aligned}
& \mathfrak M_{\langle D \rangle^{n_1} \partial_{\vphi_m}^{\lambda_1}  (\Phi^{(3)} - {\rm Id}_\bot)  \langle D \rangle^{- n_1}}(s) \lesssim_s |\langle D \rangle^{n_1} \partial_{\vphi_m}^{\lambda_1}  (\Phi^{(3)} - {\rm Id}_\bot)  \langle D \rangle^{- n_1}|_{0, s, 0}^\Lipg \lesssim_{s, M} \e + \| \io \|_{s + \sigma_M(\lambda_0)}^\Lipg\,, \\
& \mathfrak M_{\langle D \rangle^{- n_2} \partial_{\vphi_m}^{\lambda_2}  (\Phi^{(3)})^{- 1}   \langle D \rangle^{n_2} }(s) \lesssim_s |\langle D \rangle^{- n_2} \partial_{\vphi_m}^{\lambda_2}  (\Phi^{(3)})^{- 1}   \langle D \rangle^{n_2} |_{0, s, 0}^\Lipg \lesssim_{s, M} 1 + \| \io \|_{s + \sigma_M(\lambda_0)}^\Lipg\,,
\end{aligned}
$$
and therefore, by Lemmata \ref{composizione operatori tame AB}, \ref{espansione simboli Q kdv - 1} and using \eqref{ansatz I delta},  the operator \eqref{unimi 100} satisfies  \eqref{SM3}.  
The estimates \eqref{stime delta 12 terzo step}, \eqref{stima delta 12 RM (3)} follow by similar arguments. 
\end{proof}

\subsection{Elimination of the $ \vphi $-dependence of the first order term}\label{ulimo step pre redu}

The goal of this section is to remove the $ \vphi $-dependence of the coefficient $  a_1^{(3)} (\vphi) $ 
of the Hamiltonian operator $ {\mathcal L}^{(3)}_\omega $ in \eqref{def:L3}, \eqref{Op:L3}. 
We   conjugate the operator $ {\mathcal L}^{(3)}_\omega $ 
by the variable transformation 
$\Phi^{(4)} \equiv \Phi^{(4)} (\vphi ) $, 
$$
(\Phi^{(4)} w)(\vphi, x) = w(\vphi, x + b^{(4)}  (\vphi)) \, , \quad 
((\Phi^{(4)})^{-1} h)(\vphi, x) = h(\vphi, x -  b^{(4)} (\vphi)) \, , 
$$
where $b^{(4)} (\vphi) $ is a small, real valued, 
periodic  function  
chosen in \eqref{alphi} below. Note that $\Phi^{(4)}$ is the time-one flow of the transport equation $\partial_\tau w = b^{(4)}(\vphi) \partial_x w$. 
Each  $ \Phi^{(4)} (\vphi) $ is a symplectic linear isomorphism of $ H^s_\bot (\T_1)$,
and the  conjugated operator
\be\label{def:L5}
{\mathcal L}_\omega^{(4)} :=  \Phi^{(4)}{\mathcal L}_\omega^{(3)}    \big(\Phi^{(4)} \big)^{- 1} 
\ee
is  Hamiltonian. 

\begin{lemma}\label{lem:4}
Assume that $ \omega \in \mathtt{DC}(\g,\t) $.
Let  $  b^{(4)} (\vphi )  $ be the real valued, periodic function 
\be\label{alphi}
b^{(4)}(\vphi; \o) := - (\om \cdot \pa_\vphi)^{-1}
\big( a_1^{(3)}(\vphi; \o) - m_1   \big)  \, , \quad 
m_1   := \frac{1}{(2 \pi)^{|\Splus|}} \int_{\T^{\Splus}}   a_1^{(3)} (\vphi; \o) \, d \vphi   
\ee
and let $M \in \N$. Then there exists $\sigma_M > 0$ with the following properties:\\
(i) The constant $m_1$ and the function $b^{(4)}$ satisfy
\begin{equation}\label{stima m1}
|m_1|^\Lipg \lesssim_M \e \gamma^{- 2}\,, \quad \| b^{(4)}\|_s^\Lipg \lesssim_{s, M} \gamma^{- 1}\big( \e + \| \io \|_{s + \sigma_M}^\Lipg ) \, , \quad \forall s \geq s_0 \, .
\end{equation}
(ii) The Hamiltonian operator in \eqref{def:L5}
admits an expansion of the form 
\be\label{Op:L4}
{\mathcal L}_\omega^{(4)} =  
\omega \cdot \pa_\vphi -  \big(  m_3   \pa_x^3 
 + m_1  \pa_x   + {\rm Op}( r_0^{(4)}) + Q_{-1}^{kdv} ( D; \om)\big) 
 + {\mathcal R_M^{(4)}}
\ee
where $ r_0^{(4)} := r_0^{(4)} (\vphi, x, \xi ; \o )$
is a pseudo-differential symbol in $S^0 $ 
satisfying for any $s \geq s_0$, 
\begin{equation}\label{estimate r0 (4)}
| {\rm Op}( r_0^{(4)})|_{0, s, 0}^\Lipg \lesssim_{s, M} \e  + \| \iota \|_{s+\sigma_M}^\Lipg \,, \quad \forall s \geq s_0\,. 
\end{equation}
Let $s_1 \geq s_0$ and let  $\breve \io_1, \breve \io_2$ be two tori satisfying \eqref{ansatz I delta} with $\mu_0 \geq s_1 + \sigma_M$. Then 
\begin{equation}\label{stime delta 12 quarto step}
\begin{aligned}
& |\Delta_{12} m_1|\,,\, \| \Delta_{12} b^{(4)} \|_{s_1}   \lesssim_{s_1, M} \| \io_1 - \io_2\|_{s_1 + \sigma_M}, \quad  |\Delta_{12} {\rm Op}( r_0^{(4)}) |_{0, s_1, 0} \lesssim_{s_1, M} \| \io_1 - \io_2\|_{s_1 + \sigma_M}\,. 
\end{aligned}
\end{equation}
(iii) Let $S> \sM$. Then the maps $(\Phi^{(4)})^{\pm 1}$ are $\Lipg$-tame operators 
with a tame constant satisfying 
\begin{equation}\label{stima Phi (4) enunciato}
\mathfrak M_{(\Phi^{(4)})^{\pm 1}}(s) \lesssim_{S, M} 1 + \| \io \|_{s + \sigma_M}^\Lipg, \quad \forall s_0 \leq s \leq S\,. 
\end{equation}
Let $\lambda_0 \in \N$. Then there exists a constant 
$\sigma_M(\lambda_0) > 0$ so that
for any $ \lambda, n_1, n_2 \in \N $ with $ \lambda \leq \lambda_0 $
and $n_1 + n_2 + 2 \lambda_0 \leq M - 3 $, the operator  
$ \pa_{\vphi_m}^\lambda \langle D \rangle^{n_1}{\mathcal R_M^{(4)}} \langle D \rangle^{n_2}$, $ m \in \Splus $, is 
$\Lipg$-tame with a  tame constant satisfying 
\be\label{SM4}
{\mathfrak M}_{\pa_{\vphi_m}^\lambda  \langle D \rangle^{n_1}{\mathcal R_M^{(4)}} \langle D \rangle^{n_2}} (s) \lesssim_{S, M, \lambda_0} \e  + \| \iota \|_{s+\sigma_M(\lambda_0)}^\Lipg\,, \quad \forall \sM \leq s \leq S  \, .
\ee
Let $s_1 \geq \sM$ and let $\breve \io_1, \breve \io_2$ be two tori satisfying \eqref{ansatz I delta} with $\mu_0 \geq s_1 + \sigma_M(\lambda_0)$. Then 
\begin{equation}\label{stima delta 12 RM (4)}
\| \pa_{\vphi_m}^\lambda  \langle D \rangle^{n_1} \Delta_{12} {\mathcal R_M^{(4)}} \langle D \rangle^{n_2} \|_{{\cal B}(H^{s_1})} \lesssim_{s_1, M, \lambda_0} \| \io_1 - \io_2\|_{s_1 + \sigma_M(\lambda_0)}\,. 
\end{equation}
\end{lemma}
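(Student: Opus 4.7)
The transformation $\Phi^{(4)}$ is the simplest in the whole reduction scheme: it is a $\varphi$-dependent translation in $x$, acting on Fourier coefficients by $\widehat{\Phi^{(4)} w}_j(\varphi) = e^{2\pi\ii j b^{(4)}(\varphi)} w_j(\varphi)$. Consequently $\Phi^{(4)}$ commutes with every Fourier multiplier $g(D)$, so that each of the blocks $m_3 \pa_x^3$, $Q_{-1}^{kdv}(D;\om)$, and the smoothing Fourier multiplier ${\cal R}_M(Q_{-1}^{kdv};\om)$ (Lemma \ref{lemma resto frequenze kdv}) is \emph{pointwise invariant} under conjugation. Moreover, because $a_1^{(3)}(\vphi;\om)$ does not depend on $x$, the operator $a_1^{(3)}(\varphi)\pa_x$ is also invariant. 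Only three conjugations produce nontrivial contributions: $\Phi^{(4)} \om\cdot\pa_\vphi (\Phi^{(4)})^{-1} = \om\cdot\pa_\vphi - (\om\cdot\pa_\vphi b^{(4)})\pa_x$ (direct computation), $\Phi^{(4)}\Op(r_0^{(3)})(\Phi^{(4)})^{-1}=\Op(\tilde r_0^{(3)})$ with $\tilde r_0^{(3)}(\vphi,x,\xi;\om):=r_0^{(3)}(\vphi,x+b^{(4)}(\vphi),\xi;\om)$, and $\Phi^{(4)} {\cal R}_M^{(3)}(\Phi^{(4)})^{-1}$. By the choice \eqref{alphi} of $b^{(4)}$ we have $-(\om\cdot\pa_\vphi b^{(4)})-a_1^{(3)}=-m_1$, so collecting the contributions yields precisely \eqref{Op:L4} with $\Op(r_0^{(4)}):=\Op(\tilde r_0^{(3)})$ and ${\cal R}_M^{(4)}:=\Phi^{(4)}{\cal R}_M^{(3)}(\Phi^{(4)})^{-1}$.

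For item $(i)$, the bound on $|m_1|^{\Lipg}$ follows immediately by taking the $\vphi$-average in \eqref{stima a 1 (3)}. The estimate on $\|b^{(4)}\|_s^{\Lipg}$ uses the Diophantine hypothesis $\om\in\mathtt{DC}(\g,\t)$ and the small divisor estimate \eqref{Diophantine-1}, which costs a single factor of $\g^{-1}$ and a fixed loss of derivatives; combined with \eqref{stima a 1 (3)}, this gives \eqref{stima m1} after absorbing the loss into $\s_M$. The Lipschitz differences $\Delta_{12} m_1$ and $\Delta_{12} b^{(4)}$ in \eqref{stime delta 12 quarto step} are controlled by the same mechanism applied to \eqref{stime delta 12 terzo step}.

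For item $(iii)$, the tame estimate \eqref{stima Phi (4) enunciato} on $(\Phi^{(4)})^{\pm 1}$ is a direct application of Lemma \ref{lemma:LS norms}-$(ii)$ combined with the $b^{(4)}$-estimate \eqref{stima m1} (and the verification that the inverse is given by $\breve b^{(4)}=-b^{(4)}$, trivial here since the translation has no $x$-dependence). The estimate \eqref{estimate r0 (4)} on $\Op(r_0^{(4)})$ comes from noting that $\tilde r_0^{(3)}(\vphi,x,\xi;\om)=r_0^{(3)}(\vphi,x+b^{(4)}(\vphi),\xi;\om)$ so that Lemma \ref{Moser norme pesate} applied with $v:=b^{(4)}$ yields $\|\pa_\xi^\b \tilde r_0^{(3)}(\cdot,\cdot,\xi;\om)\|_s^{\Lipg}\lesssim_s \|\pa_\xi^\b r_0^{(3)}\|_s^{\Lipg}+\|b^{(4)}\|_s^{\Lipg}\|\pa_\xi^\b r_0^{(3)}\|_{s_0+1}^{\Lipg}$, and then using \eqref{stima Op r 0 3 statement} and \eqref{stima m1}. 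For the remainder, we write
$$
\pa_{\vphi_m}^\lambda \langle D\rangle^{n_1}{\cal R}_M^{(4)}\langle D\rangle^{n_2}=\sum_{\lambda_1+\lambda_2+\lambda_3=\lambda}c_{\lambda_1\lambda_2\lambda_3}\bigl(\langle D\rangle^{n_1}\pa_{\vphi_m}^{\lambda_1}\Phi^{(4)}\langle D\rangle^{-n_1-\lambda_1}\bigr)\bigl(\langle D\rangle^{n_1+\lambda_1}\pa_{\vphi_m}^{\lambda_2}{\cal R}_M^{(3)}\langle D\rangle^{n_2+\lambda_3}\bigr)\bigl(\langle D\rangle^{-n_2-\lambda_3}\pa_{\vphi_m}^{\lambda_3}(\Phi^{(4)})^{-1}\langle D\rangle^{n_2}\bigr),
$$
and apply the composition rule \eqref{modulo tame constant for composition} for $\Lipg$-tame operators; the key observation is that each $\pa_{\vphi_m}$ falling on $\Phi^{(4)}$ produces a factor $(\pa_{\vphi_m}b^{(4)})\pa_x$, i.e.\ costs one derivative of $b^{(4)}$ and one power of $D$. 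This accounts for the appearance of $2\lambda_0$ (rather than $\lambda_0$) in the constraint $n_1+n_2+2\lambda_0\leq M-3$ of \eqref{SM4}: applied to the outer factors we need $\lambda_1+\lambda_3\leq\lambda_0$ extra powers of $\langle D\rangle$ in ${\cal R}_M^{(3)}$, and the inner estimate \eqref{SM3} is only available up to $n_1+n_2+\lambda_0\leq M-1$. Combining these with \eqref{stima Phi (4) enunciato}, \eqref{SM3}, and the ansatz \eqref{ansatz I delta} yields \eqref{SM4}; the Lipschitz difference estimate \eqref{stima delta 12 RM (4)} follows by the telescoping identity $\Phi_2^{(4)}{\cal R}_{M,2}^{(3)}(\Phi_2^{(4)})^{-1}-\Phi_1^{(4)}{\cal R}_{M,1}^{(3)}(\Phi_1^{(4)})^{-1}$, using \eqref{stime delta 12 quarto step} and \eqref{stima delta 12 RM (3)}.

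The proof is almost entirely bookkeeping; the only delicate point is the precise tracking of the $\langle D\rangle$-powers required to conjugate ${\cal R}_M^{(3)}$, since $\Phi^{(4)}$ is unbounded in $\vphi$-derivatives. I expect this to be the main (but routine) technical step. The core mechanism—removing the $\vphi$-dependence of a first-order coefficient by solving a single cohomological equation $\om\cdot\pa_\vphi b^{(4)}=-(a_1^{(3)}-m_1)$—is standard and self-contained.
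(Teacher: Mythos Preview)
Your proposal is correct and follows essentially the same approach as the paper: compute the conjugation explicitly using that $\Phi^{(4)}$ is an $x$-translation (hence commutes with Fourier multipliers and sends $\Op(a(\vphi,x,\xi))$ to $\Op(a(\vphi,x+b^{(4)}(\vphi),\xi))$), choose $b^{(4)}$ to solve the cohomological equation, and estimate ${\cal R}_M^{(4)}=\Phi^{(4)}{\cal R}_M^{(3)}(\Phi^{(4)})^{-1}$ via the Leibniz expansion with $\langle D\rangle$-weights exactly as in the remainder estimate at the end of Proposition~\ref{proposizione astratta egorov}. Two minor citation slips: for the composition of $\Lipg$-tame operators you want Lemma~\ref{composizione operatori tame AB} rather than \eqref{modulo tame constant for composition} (which is for modulo-tame operators), and for the estimate of the translated symbol $r_0^{(3)}(\vphi,x+b^{(4)}(\vphi),\xi)$ the paper uses Lemma~\ref{lemma:LS norms}-(ii) rather than Lemma~\ref{Moser norme pesate}.
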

\begin{proof}
The estimates \eqref{stima m1} are direct consequences of  \eqref{stima a 1 (3)} and of the ansatz \eqref{ansatz I delta}. Note that
$$
\Phi^{(4)} \circ \omega \cdot \partial_\vphi \circ
 (\Phi^{(4)})^{- 1} = 
 \omega \cdot \partial_\vphi  - 
 \big(\omega \cdot \partial_\vphi b^{(4)} \big) \partial_x 
$$
 and for any pseudo-differential operator ${\rm Op}(a(\vphi, x, \xi))$ a direct calculation shows that 
$$
 \Phi^{(4)}  {\rm Op}(a (\vphi, x, \xi))  \big(\Phi^{(4)} \big)^{- 1} =  {\rm Op}(  a  (\vphi, x + b^{(4)}(\vphi) , \xi)) \, , 
$$
and hence, by recalling \eqref{Op:L3} and by the definition \eqref{alphi}, one obtains \eqref{Op:L4} with
\begin{equation}\label{R M 4 r 0 4}
{\rm Op}\big(r_0^{(4)}(\vphi, x, \xi) \big) 
= {\rm Op}\big(r_0^{(3)}(\vphi, x + b^{(4)}(\vphi), \xi) \big) \, , 
\quad {\cal R}_M^{(4)} := \Phi^{(4)} {\cal R}_M^{(3)} (\Phi^{(4)})^{- 1}\,. 
\end{equation} 
The estimates \eqref{estimate r0 (4)} follow
by Lemma \ref{lemma:LS norms}, using \eqref{stima m1}, \eqref{stima Op r 0 3 statement} and the ansatz \eqref{ansatz I delta}. The estimates \eqref{SM4} for the operator ${\cal R}_M^{(4)}$ 
follow by 
\eqref{SM3}, \eqref{stima m1} arguing as in the proof of the estimates of the remainder ${\cal R}_N(\tau, \vphi)$ 
(with $\beta$ given by $b^{(4)}$) at the end of the proof of Proposition \ref{proposizione astratta egorov}. The estimates \eqref{stima Phi (4) enunciato} follow by  Lemma \ref{lemma:LS norms} and 
\eqref{stima m1}.  The estimates \eqref{stime delta 12 quarto step}, 
\eqref{stima delta 12 RM (4)} follow by similar arguments. 
\end{proof}

\section{KAM reduction of the linearized operator}\label{sec: reducibility}

The goal of this section is to complete the diagonalization
of the Hamiltonian operator ${\mathcal L}_\omega$,
started in Section \ref{linearizzato siti normali}. 
It remains to reduce the Hamiltonian operator
${\mathcal L}_\omega^{(4)}$ in \eqref{Op:L4}.
We are going to apply the KAM-reducibility scheme described in \cite{Berti-Montalto}.

Recall that ${\mathcal L}_\omega^{(4)}$ is an operator
acting on $ H^s_\bot$. It is convenient to rename it as 
\be\label{defL0-red}
{\mathtt L}_0  := \om \cdot \pa_\vphi + \ii {\mathtt D}_0 + {\mathtt R}_0 
\ee
where $ \omega \in \mathtt{DC}(\g,\t) $ 
(cf. \eqref{diofantei in omega})
and in view of \eqref{decokdv}, \eqref{Omega-normal}, \eqref{mu-kdv-om}
\begin{align}\label{op-diago0}
& {\mathtt D}_0  := {\rm diag}_{j \in \Sbot} ( \mu_j^{0}) \, , \quad 
\mu_j^{0} :=  m_3 (2 \pi j)^3 - m_1 2 \pi j - q_j(\omega) \, , \quad
q_j(\omega) := \om_j^{kdv} \big( \nu(\omega), 0 \big) - (2 \pi  j)^3 \, , \\
& \label{def:RoKAM}
{\mathtt R}_0   := - {\rm Op}( r_0^{(4)}) + {\mathcal R_M^{(4)}} \, . 
\end{align}
Note that $ \mu_{-j}^{0} = - \mu_{j}^{0} $ 
for any $j \in S_\bot$.  
By \eqref{Proposition 2.30} we have 
\begin{equation}\label{stima qj omega}
\sup_{j \in \Sbot} |j| |q_j|^{\rm sup}, \sup_{j \in \Sbot} |j| |q_j|^{\rm lip} \lesssim 1\, , 
\end{equation}
and, by  \eqref{m3Lip}, \eqref{stima m1} and $ \e \g^{-3} \leq 1 $, 
\begin{equation}\label{stima mu j 0 - mu j' 0}
|\mu_j^{0} - \mu_{j'}^{0}|^{\rm lip} \lesssim_M   |j^3 - j'^3|  \, , \quad \forall j, j' \in \Sbot \, .  
\end{equation}
The operator ${\mathtt R}_0 $ 
satisfies the tame estimates of Lemma \ref{lem:tame iniziale} below. 
We first {\it fix} the constants 
\be
\begin{aligned}\label{alpha beta}
& {\mathtt b} := [{\mathtt a}] + 2 \in \N \,,
\quad {\mathtt a} := 3 \tau_1 + 1 \, , \quad \tau_1 := 2 \tau  + 1 \, ,\\
& \mu(\mathtt b) := s_0 + \mathtt b + \sigma_M + \sigma_M(\mathtt b) + 1 \,, \quad
 M := 2(s_0 + \mathtt b) + 4 \, , 
\end{aligned}
\ee
where the constants $\sigma_M$, $\sigma_M(\mathtt b)$ are the ones introduced in Lemma \ref{lem:4} and 
where $M$ is related to the order of smoothing of the remainder 
${\mathcal R_M^{(4)}} $ in \eqref{Op:L4} (cf. \eqref{SM4}).
Note that $M$ only depends on the number of frequencies $|\Splus|$ and 
the diophantine constant $\tau$. 

\begin{lemma} \label{lem:tame iniziale}
Let $\mathtt b$ and $M$ defined in \eqref{alpha beta} and 
 $S > \sM $ with  $\sM$ given by \eqref{def sM}. \\
(i) The operators $ {\mathtt R}_0 $, $[  {\mathtt R}_0 , \pa_x ] $,
$ \partial_{\vphi_m}^{s_0}  [  {\mathtt R}_0 , \pa_x ]   $,  
$ \partial_{\vphi_m}^{s_0 +  {\mathtt b}}  {\mathtt R}_0   $, 
$ \partial_{\vphi_m}^{s_0 +  {\mathtt b}}  [  {\mathtt R}_0, \pa_x ] $,
$m \in \Splus $,  are $ \Lipg $-tame with tame constants
\begin{align}\label{tame cal R0 cal Q0}
& {\mathbb M}_0 (s) := \max_{m \in \Splus} \big\{ 
{\mathfrak M}_{  {\mathtt R}_0  }(s),
{\mathfrak M}_{[  {\mathtt R}_0 , \pa_x ]  }(s),
{\mathfrak M}_{ \partial_{\vphi_m}^{s_0}  {\mathtt R}_0  }(s),
{\mathfrak M}_{ \partial_{\vphi_m}^{s_0}  [  {\mathtt R}_0 , \pa_x ]  }(s)  \big\} \, , \\
& \label{tame norma alta cal R0 cal Q0}
{\mathbb M}_0   (s, {\mathtt b})  := \max_{m \in \Splus} \big\{ 
{\mathfrak M}_{ \partial_{\vphi_m}^{s_0 +  {\mathtt b}}  {\mathtt R}_0   }(s),
{\mathfrak M}_{ \partial_{\vphi_m}^{s_0 +  {\mathtt b}}  [  {\mathtt R}_0, \pa_x ]  }(s)  \big\} \, , 
\end{align}
 satisfying,  for any $ \sM \leq s \leq S $, 
\begin{equation} \label{stima mathfrak M s0 b}
 {\mathfrak M}_0(s, \mathtt b) := 
 {\rm max}\{ {\mathbb M}_0 (s), {\mathbb M}_0   (s, {\mathtt b}) \}  \lesssim_S 
 \e + \| \io\|_{s + \mu(\mathtt b)}^\Lipg \, . 
\end{equation}
Assuming that the ansatz \eqref{ansatz I delta} 
holds with $\mu_0 \geq \sM + \mu(\mathtt b)$, 
the latter estimate yields
$ {\mathfrak M}_0(\sM, \mathtt b)   
\lesssim_S \e \gamma^{- 2} $. 
\\ 
(ii) For any two tori  $\breve \io_1, \breve \io_2$ 
satisfying the ansatz \eqref{ansatz I delta},  
one has for any $m \in \mathbb S_+$ and any
$\lambda \in \N$ with $\lambda \leq s_0 +  \mathtt b$
\begin{equation}\label{stima delta 1 2 mathtt R0}
\| \partial_{\vphi_m}^\lambda \Delta_{12} \mathtt R_0 \|_{{\cal B}(H^{\sM})}, \,  \| \partial_{\vphi_m}^\lambda [\Delta_{12} \mathtt R_0, \partial_x] \|_{{\cal B}(H^{\sM})} \lesssim \| \io_1 - \io_2\|_{\sM + \mu(\mathtt b)} \, . 
\end{equation}
\end{lemma}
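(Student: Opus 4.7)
The plan is to exploit the decomposition $\mathtt{R}_0 = -\mathrm{Op}(r_0^{(4)}) + \mathcal{R}_M^{(4)}$ from \eqref{def:RoKAM} and treat the pseudo-differential part and the smoothing remainder separately, since both have already been estimated in Lemma \ref{lem:4}. All objects listed in \eqref{tame cal R0 cal Q0}--\eqref{tame norma alta cal R0 cal Q0} are of the form $\partial_{\vphi_m}^\lambda A$ or $\partial_{\vphi_m}^\lambda [A,\pa_x]$ with $\lambda \in \{0, s_0, s_0 + \mathtt{b}\}$ and $A \in \{ -\mathrm{Op}(r_0^{(4)}), \mathcal{R}_M^{(4)} \}$; since taking the commutator with $\pa_x$ is equivalent to inserting one factor of $\langle D \rangle$ on the left or the right, each of these operators can be written as a sum of terms of the form $\langle D \rangle^{n_1} \partial_{\vphi_m}^\lambda A \langle D \rangle^{n_2}$ with $n_1 + n_2 \leq 1$.

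For the pseudo-differential piece $\mathrm{Op}(r_0^{(4)})$, since $r_0^{(4)} \in S^0$ together with all its $\vphi$-derivatives, Lemma \ref{lemma: action Sobolev} converts the symbol norm into a tame constant. Concretely, for any $\lambda \leq s_0 + \mathtt b$, the commutator with $\pa_x$ lies in $OPS^0$ by Lemma \ref{lemma tame norma commutatore} (since $|\xi|$ is the Fourier symbol of $\pa_x$ up to smooth cut-off), and \eqref{interpolazione parametri operatore funzioni} together with Definition \ref{def:pseudo-norm} gives
\[
\mathfrak{M}_{\langle D \rangle^{n_1} \pa_{\vphi_m}^\lambda \mathrm{Op}(r_0^{(4)}) \langle D \rangle^{n_2}}(s) \lesssim_{s,\lambda} |\mathrm{Op}(r_0^{(4)})|_{0, s + \lambda + n_1 + n_2, 0}^{\Lipg} \lesssim_{s, \mathtt{b}} \e + \| \iota \|_{s + \sigma_M + s_0 + \mathtt b}^{\Lipg},
\]
using \eqref{estimate r0 (4)} (and the trivial consequence that each $\vphi$-derivative of a symbol in $S^0$ is again in $S^0$ with one extra derivative absorbed into the Sobolev index in $\vphi$).

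For the smoothing piece $\mathcal{R}_M^{(4)}$ we directly invoke \eqref{SM4} with $\lambda_0 = s_0 + \mathtt{b}$ and $n_1 + n_2 \leq 1$. The constraint on $n_1, n_2, \lambda_0$ in that statement reads $n_1 + n_2 + 2\lambda_0 \leq M - 3$, which is precisely why $M = 2(s_0 + \mathtt{b}) + 4$ was chosen in \eqref{alpha beta}: with $\lambda_0 = s_0 + \mathtt{b}$ we have $M - 3 - 2\lambda_0 = 1$, allowing exactly one factor of $\langle D\rangle$ on either side, which is what the commutator with $\pa_x$ produces. This yields
\[
\mathfrak{M}_{\langle D \rangle^{n_1} \pa_{\vphi_m}^\lambda \mathcal{R}_M^{(4)} \langle D \rangle^{n_2}}(s) \lesssim_{S, \mathtt b} \e + \| \iota \|_{s + \sigma_M(\mathtt{b})}^{\Lipg}, \qquad \sM \leq s \leq S.
\]
Adding the two contributions and using $\mu(\mathtt b) = s_0 + \mathtt b + \sigma_M + \sigma_M(\mathtt b) + 1$, one obtains \eqref{stima mathfrak M s0 b}; the final consequence $\mathfrak{M}_0(\sM, \mathtt b) \lesssim_S \e\gamma^{-2}$ then follows from the ansatz \eqref{ansatz I delta} with $\mu_0 \geq \sM + \mu(\mathtt b)$.

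Item (ii) requires no new ideas: the same decomposition applies to the difference $\Delta_{12} \mathtt R_0 = -\Delta_{12}\mathrm{Op}(r_0^{(4)}) + \Delta_{12}\mathcal R_M^{(4)}$, and the Lipschitz estimates \eqref{stime delta 12 quarto step} (for the symbol) and \eqref{stima delta 12 RM (4)} (for the smoothing remainder) provide exactly the bounds needed to conclude \eqref{stima delta 1 2 mathtt R0}. The main bookkeeping obstacle is simply verifying that the loss indices $\sigma_M$ and $\sigma_M(\mathtt b)$ supplied by Lemma \ref{lem:4} fit inside the single loss $\mu(\mathtt b)$ defined in \eqref{alpha beta}, and that the quadratic constraint $n_1 + n_2 + 2\lambda_0 \leq M - 3$ in \eqref{SM4} is saturated precisely at $\lambda_0 = s_0 + \mathtt b$, $n_1 + n_2 = 1$; this is why $M$ was fixed to $2(s_0 + \mathtt b) + 4$ upstream.
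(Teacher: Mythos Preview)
Your proof is correct and follows essentially the same approach as the paper: split $\mathtt R_0 = -\mathrm{Op}(r_0^{(4)}) + \mathcal R_M^{(4)}$, handle the pseudo-differential piece via the symbol estimate \eqref{estimate r0 (4)} and Lemma~\ref{lemma: action Sobolev} (the paper uses the explicit identity $[\mathrm{Op}(r_0^{(4)}),\pa_x]=-\mathrm{Op}(\pa_x r_0^{(4)})$ rather than Lemma~\ref{lemma tame norma commutatore}, but the effect is the same), and handle the smoothing piece via \eqref{SM4} after writing $[\mathcal R_M^{(4)},\pa_x]$ as a combination of $\mathcal R_M^{(4)}\langle D\rangle$ and $\langle D\rangle\mathcal R_M^{(4)}$ composed with the bounded multiplier $\langle D\rangle^{-1}\pa_x$. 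Your explicit check that the constraint $n_1+n_2+2\lambda_0\leq M-3$ in \eqref{SM4} is exactly saturated by the choice $M=2(s_0+\mathtt b)+4$ is a useful addition that the paper leaves implicit.
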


\begin{proof}
($i$) 
Since the assertions for the various operators are proved in the same way, we restrict ourselves to show
that there are tame constants
${\mathfrak M}_{\partial_{\vphi_m}^{s_0 + \mathtt b} [\mathtt R_0, \partial_x]}(s)$, $m \in \Splus$, satisfying 
the bound in \eqref{stima mathfrak M s0 b}. 
The two operators ${\rm Op}( r_0^{(4)})$ and ${\mathcal R_M^{(4)}}$
in the definition \eqref{def:RoKAM} of ${\mathtt R}_0$ are treated separately.
By Lemma \ref{lemma: action Sobolev} each operator  
$\partial_{\vphi_m}^{s_0 + \mathtt b}[{\rm Op}( r_0^{(4)}), \partial_x ] = - {\rm Op}
\big( \partial_{\vphi_m}^{s_0 + \mathtt b} \partial_x r_0^{(4)} \big) $, $m \in \Splus$,   is 
$ \Lipg$-tame  with a  tame constant satisfying,  for
 $s_0 \leq s \leq S $,
 \begin{equation}\label{tame bound r0 (4)}
\begin{aligned}
{\mathfrak M}_{\partial_{\vphi_m}^{s_0 + \mathtt b}[{\rm Op}( r_0^{(4)}), \partial_x ]}(s) & \stackrel{\eqref{interpolazione parametri operatore funzioni}}{\lesssim_s} \Big| {\rm Op}\Big( \partial_{\vphi_m}^{s_0 + \mathtt b} \partial_x r_0^{(4)} \Big) \Big|_{0, s, 0}^\Lipg \lesssim_s \Big| {\rm Op}(  r_0^{(4)} ) \Big|_{0, s +s_0 +  \mathtt b + 1, 0}^\Lipg \\
& \stackrel{\eqref{estimate r0 (4)}}{\lesssim_s} \e + \| \io\|_{s + s_0 + \mathtt b + 1 + \sigma_M}^\Lipg\,.
\end{aligned} 
\end{equation}
Next we treat $\partial_{\vphi_m}^{s_0 + \mathtt b} [{\cal R}_M^{(4)}, \partial_x]$, $m \in \Splus$. Note that
$$
\begin{aligned}
\partial_{\vphi_m}^{s_0 + \mathtt b} [{\cal R}_M^{(4)}, \partial_x] & =  \partial_{\vphi_m}^{s_0 + \mathtt b}{\cal R}_M^{(4)} \langle D \rangle \langle D \rangle^{- 1}\partial_x -  \langle D \rangle^{- 1} \partial_x \langle D \rangle \partial_{\vphi_m}^{s_0 + \mathtt b}{\cal R}_M^{(4)} \, . 
\end{aligned}
$$
Since there is a tame constant 
${\mathfrak M}_{\langle D \rangle^{- 1}\partial_x}(s)$ bounded by $1$ it then follows by 
 \eqref{SM4} that, for any $ \sM \leq s \leq S $,  
\begin{equation}\label{partial vphi commutator R M 4}
{\mathfrak M}_{\partial_{\vphi_m}^{s_0 + \mathtt b} [{\cal R}_M^{(4)}, \partial_x]}(s) \lesssim_S \e + \| \io \|_{s + \sigma_M(\mathtt b)}^\Lipg\,. 
\end{equation} 
Combining \eqref{tame bound r0 (4)}, \eqref{partial vphi commutator R M 4} and recalling the definition of $\mu(\mathtt b)$ in \eqref{alpha beta} one obtains tame constants 
${\mathfrak M}_{\partial_{\vphi_m}^{s_0 + \mathtt b} 
[\mathtt R_0, \partial_x]}(s)$, $m \in \Splus$,
satisfying the claimed bound. \\
($ii$) The estimate \eqref{stima delta 1 2 mathtt R0} 
 follows by similar arguments using \eqref{stime delta 12 quarto step} and \eqref{stima delta 12 RM (4)} with $s_1 = \sM$. 
\end{proof}

We perform the almost reducibility scheme for $ {\mathtt L}_0 $ along the scale 
\be\label{def Nn}
N_{- 1} := 1\,,\quad N_\nu := N_0^{\chi^\nu}\,, \ \nu \geq 0\,,\quad \chi := 3/2\, ,
\ee
requiring at each induction step the second order Melnikov non-resonance conditions 
\eqref{Omega nu + 1 gamma}. 

\begin{theorem}\label{iterazione riducibilita}
{\bf (Almost reducibility)} 
There exists $ \overline{\tau} := \overline{\tau} (\t, \Splus ) > 0 $ so that for any $ S > \sM $, 
there is $ N_0 := N_0 (S, {\mathtt b})  \in \N$
with the property that if 
\begin{equation}\label{KAM smallness condition1}
N_0^{\overline{\tau}}  {\mathfrak M}_0(\sM, {\mathtt b}) \gamma^{- 1} \leq 1\,, 
\end{equation}
then the following holds for any $ \nu \in \N $:
\begin{itemize}
\item[${\bf(S1)_{\nu}}$] There exists a Hamiltonian operator
${\mathtt L}_\nu$, acting on $ H^s_\bot $ and defined for  
$\omega \in  {\mathtt \Omega}_\nu^\gamma  $, of the form
\begin{equation}\label{cal L nu}
{\mathtt L}_\nu := \Dom  + \ii {\mathtt D}_\nu + {\mathtt R}_\nu \,, 
\quad {\mathtt D}_\nu := {\rm diag}_{j \in \Sbot} \mu_j^\nu\,,\quad  \mu_j^\nu \in \R \, , 
\end{equation}

where for any $j \in \Sbot$, $ \mu_j^\nu $ 
is a $ \Lipg $-function of the form 
\begin{equation}\label{mu j nu}
\mu_j^\nu(\omega) := \mu_j^0(\omega) + r_j^\nu(\omega)\,, 
\end{equation}
with
\begin{equation}\label{stima rj nu}
 \mu_{-j}^{\nu} = - \mu_j^{\nu}  \, , \qquad 
 |r_j^\nu|^\Lipg \leq C(S) \e \gamma^{- 2}\,,
\end{equation}
and where $\mu_j^{(0)}$ is defined in \eqref{op-diago0}. 
If $\nu =0$, $ \mathtt \Omega_\nu^\gamma  $ is  defined 
to be the set
$ {\mathtt \Omega}_0^\gamma := \DC(\gamma, \tau) $ , and if $\nu \geq 1$,
\begin{align}
{\mathtt \Omega}_\nu^\gamma & :=  {\mathtt \Omega}_\nu^\gamma (\io) :=  
\Big\{ \omega \in {\mathtt \Omega}_{\nu - 1}^\gamma  \, : \label{Omega nu + 1 gamma} 
 |\omega \cdot \ell  + \mu_j^{\nu - 1} - \mu_{j'}^{\nu - 1}| \geq  
\gamma \frac{|j^{3} -  j'^{3}|}{ \langle \ell \rangle^{\tau}}, 
 \forall |\ell  | \leq N_{\nu - 1}, j, j' \in  {\mathbb S}^\bot   \Big\} \, . 
\end{align}
The operators $ {\mathtt R}_\nu $ and   
$ \langle\partial_\vphi \rangle^{\mathtt b }  {\mathtt R}_\nu $ 
are $ \Lipg $-modulo-tame  with modulo-tame constants 
\be\label{def:msharp}
{\mathfrak M}_\nu^\sharp (s) := {\mathfrak M}_{{\mathtt R}_\nu}^\sharp (s) \, , \qquad
{\mathfrak M}_\nu^\sharp (s, {\mathtt b}) := 
{\mathfrak M}_{ \langle \pa_\vphi \rangle^{\mathtt b} {\mathtt R}_\nu}^\sharp (s) \, , 
\ee 
satisfying, for some  $C_* (\sM, \mathtt b) > 0 $, for all $s \in [\sM, S] $, 
\begin{equation}\label{stima cal R nu}
{\mathfrak M}_\nu^\sharp (s) \leq C_* (\sM, \mathtt b) {\mathfrak M}_0 (s, {\mathtt b}) N_{\nu - 1}^{- {\mathtt a}}\,,\qquad 
 {\mathfrak M}_\nu^\sharp ( s, \mathtt b) \leq  C_* (\sM, \mathtt b) {\mathfrak M}_0 (s, {\mathtt b}) N_{\nu - 1}\,.
\end{equation}
Moreover, if  $ \nu \geq 1 $ and 
$\omega \in {\mathtt \Omega}_\nu^\gamma$, there exists a Hamiltonian operator ${ \Psi}_{\nu - 1} $ acting on 
$ H^s_\bot $,  so that the corresponding symplectic time one flow  
\begin{equation}\label{Psi nu - 1}
 { \Phi}_{\nu - 1} := \exp ( { \Psi}_{\nu - 1}) 
\end{equation}
conjugates $ {\mathtt L}_{\nu - 1} $ to 
\be\label{coniugionu+1}
{\mathtt L}_\nu = { \Phi}_{\nu - 1} {\mathtt L}_{\nu - 1} { \Phi}_{\nu - 1}^{- 1}\, . 
\ee
The operators $ \Psi_{\nu - 1} $ and $ \langle \partial_\vphi \rangle^{\mathtt b}  \Psi_{\nu - 1} $ 
are $ \Lipg $-modulo-tame  with a modulo-tame constant satisfying, for all $s \in [\sM, S] $,
(with $ \tau_1$, $\mathtt a $ defined in \eqref{alpha beta})
\be\label{tame Psi nu - 1}
 {\mathfrak M}_{\Psi_{\nu - 1}}^\sharp  (s) \leq \frac{C( \sM, \mathtt b)}{\g} 
N_{\nu - 1}^{\tau_1} N_{\nu - 2}^{- \mathtt a} {\mathfrak M}_0 (s, {\mathtt b}) \, , \quad
{\mathfrak M}_{\langle \partial_\vphi \rangle^{\mathtt b}  \Psi_{\nu - 1}}^\sharp  (s) \leq 
\frac{C( \sM, \mathtt b)}{\g} 
N_{\nu - 1}^{\tau_1} N_{\nu - 2}  {\mathfrak M}_0 (s, {\mathtt b}) \, . 
\ee
\item[${\bf(S2)_{\nu}}$]  For any $j \in \mathbb S^\bot$, there exists a Lipschitz extension $\widetilde \mu_j^\nu :  \Omega \to \R$ of $\mu_j^\nu : \mathtt \Omega_\nu^\gamma \to \R$,
where 
$ \widetilde \mu_j^0 = m_3 (2 \pi j)^3 - \widetilde m_1 2 \pi j - q_j(\omega) $ 
(cf. \eqref{op-diago0}) 
and  $ \widetilde m_1 : \Omega \to \R $ is an extension of $ m_1 $
satisfying  $  |\widetilde m_1|^\Lipg \lesssim \e \gamma^{- 2} $;
if $\nu \geq 1 $, 
$$ 
|\widetilde \mu_j^\nu - \widetilde \mu_j^{\nu - 1}|^\Lipg \lesssim \mathfrak M_{\nu - 1}^\sharp(\sM)  \lesssim {\mathfrak M}_0 (\sM, {\mathtt b}) N_{\nu - 1}^{- {\mathtt a}}\,. 
$$
\item[${\bf(S3)_{\nu}}$] Let $\breve \io_1$, 
$\breve \io_2$ be two tori satisfying \eqref{ansatz I delta} with 
$\mu_0 \geq \sM + \mu(\mathtt b)$.  
Then, for all $ \omega \in {\mathtt \Omega}_\nu^{\gamma_1}( \io_1) \cap {\mathtt \Omega}_\nu^{\gamma_2}( \io_2)$
with $\gamma_1, \gamma_2 \in [\gamma/2, 2 \gamma]$, we have  
\begin{align}\label{stima R nu i1 i2}
& \! \! \| |{\mathtt R}_\nu(\io_1) - {\mathtt R}_\nu(\io_2)| \|_{{\cal B}(H^{\sM})} 
 \lesssim_{S}   N_{\nu - 1}^{- \mathtt a}
 \| \io_1 - \io_2\|_{\sM +  \mu(\mathtt b) }, \\ 
& \label{stima R nu i1 i2 norma alta}
\! \! \|  |\langle \partial_\vphi \rangle^{\mathtt b}({\mathtt R}_\nu(\io_1) - {\mathtt R}_\nu(\io_2)) | \|_{{\cal B}(H^{\sM})}
 \lesssim_{S}  N_{\nu - 1} \| \io_1 - \io_2\|_{ \sM +  \mu(\mathtt b) }\,.
\end{align}
Moreover, if $\nu \geq 1$, then for any $j \in \Sbot$, 
\begin{align}\label{r nu - 1 r nu i1 i2}
& \big|(r_j^\nu(\io_1) - r_j^\nu(\io_2)) - (r_j^{\nu - 1}(\io_1) - r_j^{\nu - 1}(\io_2))  \big| \lesssim 
\| |{\mathtt R}_\nu(\io_1) - {\mathtt R}_\nu(\io_2)| \|_{{\cal B}(H^{\sM})} \,, \\
& \ |r_j^{\nu}(\io_1) - r_j^{\nu}(\io_2)| \lesssim_S    \| \io_1 - \io_2  \|_{ \sM  + \mu(\mathtt b) }\,. \label{r nu i1 - r nu i2}
\end{align}
\item[${\bf(S4)_{\nu}}$]  Let $\breve \io_1$, $\breve \io_2$ be two tori as in ${\bf(S3)_{\nu}}$ and $0 < \rho < \gamma/2$. Then 
$$
 C(S) N_{\nu - 1}^\tau \| \io_1 - \io_2 \|_{ \sM + \mu(\mathtt b) } \leq \rho \quad \Longrightarrow \quad 
{\mathtt \Omega}_\nu^\gamma(\io_1) \subseteq {\mathtt \Omega}_\nu^{\gamma - \rho}(\io_2) \, . 
$$
\end{itemize}
\end{theorem}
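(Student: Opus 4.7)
The proof proceeds by induction on $\nu$, following the super-quadratic KAM reducibility scheme of Berti--Montalto. The base case $\nu=0$ is immediate: $(S1)_0$ and $(S2)_0$ follow from the definitions \eqref{defL0-red}--\eqref{op-diago0} (noting $\mathtt{\Omega}_0^\gamma := \DC(\gamma,\tau)$ and $\Phi_{-1}$ is absent), Lemma \ref{lem:tame iniziale}(i) together with Lemma \ref{A versus |A|}, which provide the modulo-tame constants, and the bound $|\widetilde m_1|^\Lipg \lesssim \e\gamma^{-2}$ from Lemma \ref{lem:4}; items $(S3)_0$--$(S4)_0$ reduce to the Lipschitz dependence on $\io$ given by Lemma \ref{lem:tame iniziale}(ii) and the explicit formula \eqref{op-diago0} for $\mu_j^0$.

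\textbf{Inductive step $\nu\to\nu+1$.} Split $\mathtt R_\nu = [\mathtt R_\nu] + (\mathtt R_\nu - [\mathtt R_\nu])$ into its diagonal, $\vphi$-independent part and the rest, and set $\mathtt D_{\nu+1} := \mathtt D_\nu - i[\mathtt R_\nu]_{\mathrm{diag}}$, which defines $r_j^{\nu+1} - r_j^\nu$ of size $\mathfrak M_\nu^\sharp(\sM)$. Define $\Phi_\nu = \exp(\Psi_\nu)$ with $\Psi_\nu$ the Hamiltonian operator solving the homological equation
\[
\omega\cdot\partial_\vphi \Psi_\nu + i[\mathtt D_\nu,\Psi_\nu] + \Pi_{N_\nu}\bigl(\mathtt R_\nu - [\mathtt R_\nu]\bigr) = 0,
\]
which in Fourier coordinates reads
\[
(\Psi_\nu)_j^{j'}(\ell) \;=\; \frac{(\mathtt R_\nu)_j^{j'}(\ell)}{i\bigl(\omega\cdot\ell + \mu_j^\nu - \mu_{j'}^\nu\bigr)}, \qquad \langle\ell\rangle\leq N_\nu,\ (\ell,j,j')\neq(0,j,j).
\]
On $\mathtt\Omega_{\nu+1}^\gamma$ the second order Melnikov condition \eqref{Omega nu + 1 gamma} gives the lower bound $\gamma|j^3-j'^3|/\langle\ell\rangle^\tau$, and since $|j^3-j'^3|\gtrsim (|j|^2+|j'|^2)|j-j'|$ one gains two derivatives in $(j,j')$ at the cost of $\gamma^{-1}N_\nu^{\tau_1}$. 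Combining with the modulo-tame estimates of Section 2.3 yields the bounds \eqref{tame Psi nu - 1} on $\mathfrak M_{\Psi_\nu}^\sharp$ and $\mathfrak M_{\langle\partial_\vphi\rangle^{\mathtt b}\Psi_\nu}^\sharp$.

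\textbf{Conjugation and quadratic improvement.} Expanding \eqref{coniugionu+1} via Lemma \ref{serie di neumann per maggiorantia} one obtains $\mathtt L_{\nu+1} = \omega\cdot\partial_\vphi + i\mathtt D_{\nu+1} + \mathtt R_{\nu+1}$ with
\[
\mathtt R_{\nu+1} \;=\; \Pi_{N_\nu}^\bot \mathtt R_\nu \;+\; \int_0^1 e^{\tau\Psi_\nu}\bigl[\Psi_\nu,\, [\mathtt R_\nu] + (1-\tau)\Pi_{N_\nu}(\mathtt R_\nu-[\mathtt R_\nu])\bigr] e^{-\tau\Psi_\nu}\,d\tau,
\]
which is a sum of a high-frequency tail and a term at least quadratic in $\Psi_\nu$ or $\mathtt R_\nu$. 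Applying the smoothing estimate \eqref{proprieta tame proiettori moduli} to the first term and the composition/exponential estimates \eqref{modulo tame constant for composition}, \eqref{K cal A cal B}, \eqref{exp-MT} to the second, one derives
\[
\mathfrak M_{\nu+1}^\sharp(s) \;\lesssim_S\; N_\nu^{-\mathtt b}\,\mathfrak M_\nu^\sharp(s,\mathtt b) \;+\; \gamma^{-1} N_\nu^{\tau_1}\,\mathfrak M_\nu^\sharp(\sM)\,\mathfrak M_\nu^\sharp(s),
\]
and a parallel inequality for $\mathfrak M_{\nu+1}^\sharp(s,\mathtt b)$ with an additional factor $N_\nu$. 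The choices $\mathtt b=[\mathtt a]+2$ and $\mathtt a=3\tau_1+1$ in \eqref{alpha beta}, together with the smallness \eqref{KAM smallness condition1} and the geometric growth $N_\nu = N_{\nu-1}^\chi$ with $\chi=3/2$, are precisely tailored so that the inequalities $-\mathtt b\chi + \tau_1 \le -\mathtt a$ and $\chi-\mathtt a \le 1$ close the recursion and produce \eqref{stima cal R nu} with super-exponential decay of the low-norm constant.

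\textbf{Items $(S2)_\nu$--$(S4)_\nu$.} The eigenvalues $\mu_j^{\nu+1}$ are extended from $\mathtt\Omega_{\nu+1}^\gamma$ to $\Omega$ by Kirszbraun's theorem, and $|\widetilde\mu_j^{\nu+1}-\widetilde\mu_j^\nu|^\Lipg \lesssim \mathfrak M_\nu^\sharp(\sM)$ follows from the definition of $\mathtt D_{\nu+1}$ and Lemma \ref{A versus |A|}. For $(S3)_\nu$ one runs the same scheme on the difference operators $\Delta_{12}\Psi_\nu, \Delta_{12}\mathtt R_\nu$ restricted to $\mathtt\Omega_{\nu+1}^{\gamma_1}(\io_1)\cap\mathtt\Omega_{\nu+1}^{\gamma_2}(\io_2)$, using Lemma \ref{lem:tame iniziale}(ii) as initial input; the Lipschitz dependence on $\io$ transmits through the homological equation because the small divisors themselves depend Lipschitz on $\io$ with a controlled loss $N_\nu^\tau$. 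Item $(S4)_\nu$ is the standard inclusion argument: by \eqref{r nu i1 - r nu i2} a perturbation of $\io$ of size $\rho/(C(S)N_\nu^\tau)$ in $\sM+\mu(\mathtt b)$-norm moves each $\mu_j^\nu$ by at most $\rho|j^3-j'^3|/\langle\ell\rangle^\tau$ for $|\ell|\le N_\nu$, which suffices to include the Melnikov sets.

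\textbf{Main obstacle.} The delicate point is the simultaneous control of the low-norm constant $\mathfrak M_\nu^\sharp(s)$, which must decay like $N_{\nu-1}^{-\mathtt a}$, and the high-norm constant $\mathfrak M_\nu^\sharp(s,\mathtt b)$, which is allowed to grow only like $N_{\nu-1}$: each step of the iteration loses $\mathtt b$ derivatives in $\vphi$ when solving the homological equation and then recovers them via the smoothing projector $\Pi_{N_\nu}^\bot$. The interplay between the two norms, made possible by the commutator structure of \eqref{K cal A cal B} and the sharp exponents in \eqref{alpha beta}, is what ultimately absorbs the unboundedness of the perturbation $\mathtt R_0$ inherited from the quasi-linear nature of \eqref{main equation}.
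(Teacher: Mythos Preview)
Your approach is essentially the same as the paper's (Section~7.1): induction on $\nu$, solving the homological equation by Fourier coefficients on $\mathtt\Omega_{\nu+1}^\gamma$, Lie expansion for $\mathtt R_{\nu+1}$, recursive estimates closing via the choice of $\mathtt a,\mathtt b$ in \eqref{alpha beta}, and Kirszbraun for $({\bf S2})_\nu$. Two technical points in your sketch are inaccurate, though neither is a conceptual gap.

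\emph{Initialization.} Lemma~\ref{A versus |A|} goes the wrong way: it says modulo-tame implies tame, not the converse. Lemma~\ref{lem:tame iniziale} only furnishes \emph{tame} constants for $\mathtt R_0$, $\partial_{\vphi_m}^{s_0}\mathtt R_0$, $\partial_{\vphi_m}^{s_0+\mathtt b}\mathtt R_0$ and their commutators with $\partial_x$. Passing to \emph{modulo}-tame constants $\mathfrak M_0^\sharp(s)$, $\mathfrak M_0^\sharp(s,\mathtt b)$ requires a separate argument (the paper's Lemma~\ref{lem: Initialization}, following \cite[Lemma~7.6]{Berti-Montalto}), which uses precisely the control of the mixed $\vphi$-derivatives to bound the majorant norm. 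Without this step the base case of $({\bf S1})_0$ is not established.

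\emph{Remainder and small divisors.} Your expression for $\mathtt R_{\nu+1}$ is missing the term $\int_0^1 e^{\tau\Psi_\nu}[\Psi_\nu,\mathtt R_\nu]e^{-\tau\Psi_\nu}\,d\tau$ coming from conjugating the full $\mathtt R_\nu$ (not only its diagonal part); compare \eqref{new-diag-new-rem}. This does not affect the estimate scheme, since all such terms are quadratic in $(\Psi_\nu,\mathtt R_\nu)$ and are handled identically. Also, the comment that the Melnikov lower bound ``gains two derivatives in $(j,j')$'' is not how the argument runs: the paper's estimate is simply $|(\Psi_\nu)_j^{j'}(\ell)|\lesssim \langle\ell\rangle^\tau\gamma^{-1}|(\mathtt R_\nu)_j^{j'}(\ell)|$ (see \eqref{stima elementare Psi R EQ homo}), with the Lipschitz variation adding a further $\langle\ell\rangle^{\tau+1}\gamma^{-1}$ via \eqref{stima mu j 0 - mu j' 0}. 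No $(j,j')$-smoothing is needed because $\mathtt R_0$ is already of order zero after Section~\ref{linearizzato siti normali}.
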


Theorem \ref{iterazione riducibilita} implies that the symplectic invertible operator
\be\label{defUn}
U_n :=   \Phi_{n - 1} \circ \ldots \circ \Phi_0, \quad n \geq 1\,,
\ee
almost diagonalizes  $ {\mathtt  L}_0 $, meaning that \eqref{cal L infinito} below holds. 
The following corollary of Theorem \ref{iterazione riducibilita} and Lemma \ref{lem:tame iniziale} can be deduced as in \cite{Berti-Montalto}. 

\begin{theorem}\label{Teorema di riducibilita} {\bf (KAM almost-reducibility)}
Assume the ansatz
\eqref{ansatz I delta}  with $ \mu_0 \geq \sM +   \mu (\mathtt b)$. Then for any $S > \sM$ there exist $N_0 := N_0(S, \mathtt b) > 0$, $ 0 < \d_0 := \delta_0(S)  <  1 $,  so that if 
\begin{equation}\label{ansatz riducibilita}
N_0^{\overline{\tau}} \e \gamma^{- 3}  \leq \d_0  
\end{equation}
with $ \overline{\tau} := \overline{\tau} (\tau, \Splus)  $
given by Theorem \ref{iterazione riducibilita},
the following holds:
for any $ n \in \N$ and any $ \omega $ in 
\be\label{Cantor set}
{ {\mathtt \Omega}}_{n + 1}^{ \gamma} := {{\mathtt \Omega}}_{n + 1}^{ \gamma} (\io)  =  \bigcap_{\nu = 0}^{n + 1 } {\mathtt \Omega}_\nu^\gamma
\ee
with $ {\mathtt \Omega}_\nu^\gamma$ defined in \eqref{Omega nu + 1 gamma},
the operator $ U_n  $, introduced in \eqref{defUn}, 
is well defined  and 
${\mathtt L}_{n } := {U}_n {\mathtt L}_0 {U}_n^{- 1}$
satisfies
\begin{equation}\label{cal L infinito}
{\mathtt L}_{n } = 
\Dom  + \ii {\mathtt D}_n + {\mathtt R}_n  
\end{equation}
where $ {\mathtt D}_n $ and $  {\mathtt R}_n $ are defined in \eqref{cal L nu} (with $ \nu = n $).
The operator $ {\mathtt R}_n $ is $ \Lipg $-modulo-tame with a modulo-tame constant
\be\label{stima resto operatore quasi diagonalizzato}
{\mathfrak M}^\sharp_{{\mathtt R}_n}(s)   \lesssim_S   N_{n - 1}^{- {\mathtt a}} (\e + 
\| \iota \|_{s   + \mu (\mathtt b) }^\Lipg) \, , \qquad \forall \sM \leq s \leq S\,.
\ee
Moreover, the operator $ {\mathtt L}_n $ is Hamiltonian,
 $ {U}_n$, $ {U}_n^{- 1} $ are symplectic, and 
$ U_{n}^{\pm 1} -  {\rm Id}_\bot  $ are $ \Lipg$-modulo-tame with a modulo-tame constant satisfying
\begin{equation}\label{stima Phi infinito}
{\mathfrak M}^\sharp_{U_{n}^{\pm 1} -  {\rm Id}_\bot} (s) \lesssim_S  \g^{-1} N_0^{\tau_1} (\e + 
\| \iota \|_{s  + \mu (\mathtt b) }^\Lipg)\,, \quad \forall \sM \leq s \leq S \, , 
\end{equation} 
where ${\rm Id}_\bot$ denotes
the identity operator on $L^2_\bot(\T_1)$ and
$\tau_1$ is defined in \eqref{alpha beta}.  
\end{theorem}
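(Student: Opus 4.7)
\medskip

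\noindent
\textbf{Proof plan for Theorem \ref{Teorema di riducibilita}.}

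The plan is to realize Theorem \ref{Teorema di riducibilita} as a direct corollary of the iterative Theorem \ref{iterazione riducibilita} together with Lemma \ref{lem:tame iniziale}. First I would verify that the smallness \eqref{ansatz riducibilita} implies the hypothesis \eqref{KAM smallness condition1} of Theorem \ref{iterazione riducibilita}. Indeed, Lemma \ref{lem:tame iniziale} combined with the ansatz \eqref{ansatz I delta} with $\mu_0 \geq \sM + \mu(\mathtt b)$ gives $\mathfrak M_0(\sM,\mathtt b) \lesssim_S \e + \|\iota\|_{\sM+\mu(\mathtt b)}^{\Lip(\gamma)} \lesssim_S \e \gamma^{-2}$, so that $N_0^{\bar\tau}\mathfrak M_0(\sM,\mathtt b)\gamma^{-1} \lesssim_S N_0^{\bar\tau}\e\gamma^{-3} \leq \delta_0 \cdot C(S)$, which is $\leq 1$ for $\delta_0 = \delta_0(S)$ small. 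Hence $(\mathbf{S1})_\nu$--$(\mathbf{S4})_\nu$ hold for every $\nu \in \mathbb N$ and every $\omega$ in ${\mathtt \Omega}_\nu^\gamma$, in particular for every $\omega$ in the intersection \eqref{Cantor set}.

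Second, I would iterate the conjugation \eqref{coniugionu+1} to obtain, for $\omega \in {\mathtt \Omega}_{n+1}^\gamma$, the identity $\mathtt L_n = U_n \mathtt L_0 U_n^{-1}$ with $U_n$ as in \eqref{defUn}. Since each $\Phi_\nu = \exp(\Psi_\nu)$ is the time one flow of a linear Hamiltonian vector field, it is symplectic and maps real operators into real operators; by induction on $\nu$, the conjugate $\mathtt L_n$ is Hamiltonian. The form \eqref{cal L infinito} of $\mathtt L_n$ is exactly the one in \eqref{cal L nu} at level $n$, and the estimate \eqref{stima resto operatore quasi diagonalizzato} on ${\mathtt R}_n$ follows at once from the first bound in \eqref{stima cal R nu} combined with the explicit bound $\mathfrak M_0(s,\mathtt b) \lesssim_S \e + \|\iota\|_{s+\mu(\mathtt b)}^{\Lip(\gamma)}$ of Lemma \ref{lem:tame iniziale}.

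Third, to prove \eqref{stima Phi infinito}, I would use Lemma \ref{serie di neumann per maggiorantia} to pass from the bounds on $\Psi_\nu$ in \eqref{tame Psi nu - 1} to modulo-tame bounds on $\Phi_\nu^{\pm 1} - \mathrm{Id}_\bot$, and then apply the composition estimate \eqref{modulo tame constant for composition} of Lemma \ref{interpolazione moduli parametri} telescopically to $U_n - \mathrm{Id}_\bot = \sum_{\nu=0}^{n-1}(\Phi_\nu - \mathrm{Id}_\bot)\circ \Phi_{\nu-1}\circ\cdots\circ\Phi_0$ (and analogously for $U_n^{-1} - \mathrm{Id}_\bot$). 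This yields
\[
 \mathfrak M_{U_n^{\pm 1}-\mathrm{Id}_\bot}^\sharp(s) \lesssim_S \sum_{\nu \geq 0} \mathfrak M_{\Psi_\nu}^\sharp(s) \prod_{\mu \neq \nu} \bigl(1 + \mathfrak M_{\Psi_\mu}^\sharp(\sM)\bigr) \lesssim_S \gamma^{-1} \mathfrak M_0(s,\mathtt b) \sum_{\nu \geq 0} N_\nu^{\tau_1} N_{\nu-1}^{-\mathtt a}.
\]
The convergence of the resulting series is the key quantitative input: since $N_\nu = N_0^{\chi^\nu}$ with $\chi = 3/2$, and $\mathtt a = 3\tau_1 + 1 > \chi\tau_1$, the exponent $\chi^{\nu-1}\tau_1 - \chi^{\nu-2}\mathtt a$ is decreasing in $\nu \geq 2$, so the sum is geometrically dominated by its first term $N_0^{\tau_1}$. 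This produces the factor $\gamma^{-1}N_0^{\tau_1}\mathfrak M_0(s,\mathtt b)$ and, using $\mathfrak M_0(s,\mathtt b) \lesssim_S \e + \|\iota\|_{s+\mu(\mathtt b)}^{\Lip(\gamma)}$, yields \eqref{stima Phi infinito}.

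The main obstacle will be the bookkeeping in the third step, namely ensuring that in the repeated application of Lemma \ref{interpolazione moduli parametri} the cross terms $\mathfrak M^\sharp_{\Psi_\mu}(\sM)$, $\mu \neq \nu$, remain uniformly bounded so that the factors $\prod_{\mu \neq \nu}(1+\mathfrak M^\sharp_{\Psi_\mu}(\sM))$ do not spoil the summability. This is where the smallness $N_0^{\bar\tau}\e\gamma^{-3} \leq \delta_0$ is used once more: it guarantees that $\sum_\nu \mathfrak M^\sharp_{\Psi_\nu}(\sM) \leq C(S)\gamma^{-1}N_0^{\tau_1}\e\gamma^{-2} \ll 1$, so that these products are uniformly bounded, and the telescoping argument closes. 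The same argument, applied with tame constants in place of modulo-tame constants and using the Lipschitz estimates in $(\mathbf{S2})_\nu$--$(\mathbf{S3})_\nu$, yields the Lipschitz-in-$\omega$ dependence of all the objects on the set \eqref{Cantor set}.
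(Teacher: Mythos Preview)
Your proposal is correct and follows exactly the approach indicated by the paper, which simply states that the result ``can be deduced as in \cite{Berti-Montalto}'' from Theorem \ref{iterazione riducibilita} and Lemma \ref{lem:tame iniziale}; you have spelled out the three steps (verifying \eqref{KAM smallness condition1} from \eqref{ansatz riducibilita} via Lemma \ref{lem:tame iniziale}, reading off \eqref{cal L infinito}--\eqref{stima resto operatore quasi diagonalizzato} from $(\mathbf{S1})_n$, and obtaining \eqref{stima Phi infinito} by telescoping the $\Phi_\nu$ and summing the bounds \eqref{tame Psi nu - 1}) that such a deduction entails. One small remark: your final sentence about recovering ``Lipschitz-in-$\omega$ dependence'' via $(\mathbf{S2})_\nu$--$(\mathbf{S3})_\nu$ is unnecessary, since the $\Lipg$-modulo-tame framework of Definition \ref{def:op-tame} already encodes the Lipschitz dependence on $\omega$, and $(\mathbf{S2})_\nu$--$(\mathbf{S3})_\nu$ concern extensions in $\omega$ and variations in the torus $\iota$, which are not part of the statement of Theorem \ref{Teorema di riducibilita}.
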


\subsection{Proof of  Theorem \ref{iterazione riducibilita}}

{\sc Proof of ${\bf(S1)}_{0}$}. Properties \eqref{cal L nu}-\eqref{stima rj nu} for $ \nu = 0 $
 follow by  \eqref{defL0-red}-\eqref{op-diago0} with  $r_j^0(\omega) = 0$. 
Moreover also \eqref{stima cal R nu} for $ \nu = 0 $ holds because, 
arguing as in Lemma 7.6 in \cite{Berti-Montalto},  
the following Lemma holds: 
\begin{lemma}\label{lem: Initialization} 
$ {\mathfrak M}_0^\sharp (s) $,  $ {\mathfrak M}_0^\sharp ( s, \mathtt b) \lesssim_{ \mathtt b}    
{\mathfrak M}_0 (s, {\mathtt b})  $ where ${\mathfrak M}_0 (s, {\mathtt b})$ is defined in \eqref{stima mathfrak M s0 b}. 
\end{lemma}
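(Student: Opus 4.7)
The plan is to directly relate the two modulo-tame constants $\mathfrak{M}_0^\sharp(s)$ and $\mathfrak{M}_0^\sharp(s,\mathtt{b})$ appearing in \eqref{def:msharp} (for $\nu = 0$) to the tame constants $\mathbb{M}_0(s)$ and $\mathbb{M}_0(s,\mathtt{b})$ defined in \eqref{tame cal R0 cal Q0}--\eqref{tame norma alta cal R0 cal Q0}, following the scheme of Lemma 7.6 in \cite{Berti-Montalto}. Unfolding Definition \ref{def:op-tame}, the modulo-tame constant of $\mathtt{R}_0$ controls
\[
\||\mathtt{R}_0|u\|_s^2 \;=\; \sum_{\ell,j} \langle \ell, j\rangle^{2s} \Big(\sum_{\ell',j'} |(\mathtt{R}_0)_j^{j'}(\ell-\ell')|\,|u_{\ell',j'}|\Big)^2,
\]
so the task is to convert an $L^2$-style control on the matrix coefficients (which is what a tame bound on $\mathtt{R}_0$ provides via \eqref{tame-coeff}) into an $L^1$-type summation in the indices $(\ell-\ell',j-j')$.

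First I would insert the weight $\langle \ell-\ell',j-j'\rangle^{-2s_0}$ into the inner sum and apply Cauchy--Schwarz; since $s_0 > (|\mathbb{S}_+|+1)/2$, the sum $\sum_{\ell,j} \langle \ell,j\rangle^{-2s_0}$ converges and one is left with
\[
\||\mathtt{R}_0|u\|_s^2 \;\lesssim\; \sum_{\ell',j'}|u_{\ell',j'}|^2 \sum_{\ell,j}\langle \ell, j\rangle^{2s}\,\langle \ell-\ell',j-j'\rangle^{2s_0}\,|(\mathtt{R}_0)_j^{j'}(\ell-\ell')|^2.
\]
Using $\langle \ell,j\rangle^{2s}\lesssim \langle \ell',j'\rangle^{2s}+\langle \ell-\ell',j-j'\rangle^{2s}$ and distinguishing the two resulting contributions is precisely what produces the required two-term tame bound $\mathfrak{M}^\sharp(s_1)\|u\|_s+\mathfrak{M}^\sharp(s)\|u\|_{s_1}$ with $s_1=\sM$.

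The crux is then to bound the weighted coefficient sums by $\mathbb{M}_0(s)$. The factor $\langle \ell-\ell'\rangle^{2s_0}$ is absorbed by summing over $m\in\mathbb{S}_+$ the tame bounds for $\partial_{\vphi_m}^{s_0}\mathtt{R}_0$, which by \eqref{tame-coeff} give exactly $\sum_m |\ell_m-\ell_m'|^{2s_0}$-weighted $\ell^2$-estimates on the matrix coefficients. The additional factor $|j-j'|^{2s_0}$ cannot be obtained from $\vphi$-derivatives and is instead extracted via the commutator $[\mathtt{R}_0,\partial_x]$, whose matrix entries carry a gain $\ii 2\pi(j-j')$; combining this with $\partial_{\vphi_m}^{s_0}$ yields the full weight $\langle \ell-\ell',j-j'\rangle^{2s_0}$. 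This is the only nontrivial step and it is precisely the reason why all four tame constants in \eqref{tame cal R0 cal Q0} are needed in the definition of $\mathbb{M}_0(s)$.

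Finally, the bound for $\mathfrak{M}_0^\sharp(s,\mathtt{b})=\mathfrak{M}^\sharp_{\langle\partial_\vphi\rangle^{\mathtt{b}}\mathtt{R}_0}(s)$ follows by an identical argument, with $\partial_{\vphi_m}^{s_0}$ replaced by $\partial_{\vphi_m}^{s_0+\mathtt{b}}$ throughout: the extra factor $\langle \ell-\ell'\rangle^{\mathtt{b}}$ produced by $\langle\partial_\vphi\rangle^{\mathtt{b}}$ is absorbed into the derivatives of higher order, and the commutator $\partial_{\vphi_m}^{s_0+\mathtt{b}}[\mathtt{R}_0,\partial_x]$ supplies the $|j-j'|$-weight as before, giving $\mathfrak{M}_0^\sharp(s,\mathtt{b})\lesssim_{\mathtt{b}}\mathbb{M}_0(s,\mathtt{b})\le \mathfrak{M}_0(s,\mathtt{b})$. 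Since no new analytic input beyond \eqref{tame-coeff} and the $\ell^2\to\ell^1$ Cauchy--Schwarz trick is used, the computation is essentially bookkeeping; the main obstacle is simply to verify that the four tame constants packaged into $\mathbb{M}_0(s)$ and $\mathbb{M}_0(s,\mathtt{b})$ are indeed sufficient, which is the purpose of simultaneously tracking $\mathtt{R}_0$, its $\vphi$-derivatives of order $s_0$ (and $s_0+\mathtt{b}$), and the corresponding commutators with $\partial_x$.
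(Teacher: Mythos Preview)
Your overall strategy is exactly what the paper invokes (Lemma~7.6 in \cite{Berti-Montalto}), and you have correctly identified why all four operators in \eqref{tame cal R0 cal Q0} are needed. There is, however, a small but genuine slip in the Cauchy--Schwarz step. With the isotropic weight $\langle\ell-\ell',j-j'\rangle^{-2s_0}$ you must afterwards control $\langle\ell-\ell',j-j'\rangle^{2s_0}\,|(\mathtt R_0)_j^{j'}(\ell-\ell')|^2$; when $|j-j'|\gg|\ell-\ell'|$ this is $|j-j'|^{2s_0}\,|(\mathtt R_0)_j^{j'}(\ell-\ell')|^2$. A \emph{single} commutator $[\mathtt R_0,\partial_x]$ supplies only one power of $(j-j')$, hence $|j-j'|^{2}$ in the squared bound, not $|j-j'|^{2s_0}$, so the weight as written cannot be absorbed by the operators available in \eqref{tame cal R0 cal Q0}.

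The fix is immediate: use the \emph{anisotropic} weight $\langle\ell-\ell'\rangle^{-2s_0}\langle j-j'\rangle^{-2}$. It is still summable over $\Z^{\mathbb S_+}\times\Z$ (since $2s_0>|\mathbb S_+|$ and the $j$-sum is one-dimensional), and after Cauchy--Schwarz the remaining factor
\[
\langle\ell-\ell'\rangle^{2s_0}\langle j-j'\rangle^{2}\ \lesssim\ \Big(1+\textstyle\sum_{m\in\mathbb S_+}|\ell_m-\ell'_m|^{2s_0}\Big)\big(1+|j-j'|^2\big)
\]
is exactly matched by the tame bounds \eqref{tame-coeff} for $\mathtt R_0$, $\partial_{\vphi_m}^{s_0}\mathtt R_0$, $[\mathtt R_0,\partial_x]$ and $\partial_{\vphi_m}^{s_0}[\mathtt R_0,\partial_x]$. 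With this correction your argument goes through verbatim, and the bound for $\mathfrak M_0^\sharp(s,\mathtt b)$ follows in the same way with $s_0$ replaced by $s_0+\mathtt b$ in the $\vphi$-derivatives.
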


\noindent
{\sc Proof of ${\bf(S2)}_0$}. For any $j \in \mathbb S^\bot$, $\mu_j^{0}$ is defined in \eqref{op-diago0}.
Note that $m_3(\omega)$ and $q_j(\omega)$ are already defined on the whole parameter space $\Omega$.  
By the Kirszbraun Theorem and \eqref{stima m1}
there is an extension  $\widetilde m_1$ on  $ \Omega $ of  $ m_1$ 
satisfying the estimate 
$ |\widetilde m_1|^\Lipg \lesssim \e \gamma^{- 2}$. 
This proves ${\bf (S2)}_0$.

\noindent
{\sc Proof of ${\bf(S3)}_0$}. The estimates \eqref{stima R nu i1 i2}, \eqref{stima R nu i1 i2 norma alta} at $ \nu = 0 $ follows arguing as in the proof of ${\bf(S3)}_0$ in \cite{Berti-Montalto}.   

\noindent 
{\sc Proof of ${\bf(S4)}_{0}$}. By the definition
of  $ \tOm_0^\gamma$ one has
$ \tOm_0^\gamma(\io_1) = \DC(\gamma, \tau) \subseteq \DC(\gamma - \rho, \tau) = \tOm_0^{\gamma - \rho}(\io_2)$. 
\\[1mm]
\noindent
{\bf Iterative reductibility step.}
In what follows we describe how to define $\Psi_\nu$, $\Phi_\nu$, ${\mathtt L}_{\nu + 1}$
etc.,  at the iterative step. To simplify notation we drop the index $\nu$ and write $+$ instead of $\nu + 1$.
So, e.g. we write
${\mathtt  L}$ for ${\mathtt  L}_{\nu }$,
${\mathtt  L}_+$ for ${\mathtt  L}_{\nu +1}$,
$\Psi$ for $\Psi_{\nu}$, etc.
We conjugate $ {\mathtt L} $ by the symplectic time one flow map 
\be\label{defPsi}
\Phi:= \exp( \Psi ) 
\ee
generated by a Hamiltonian vector field $ \Psi $ acting in $ H^s_\bot $. 
By a Lie expansion we get 
\begin{equation}\label{Ltra1}
\begin{aligned}
& \Phi  \mathtt L \Phi^{- 1} = \Phi ( \omega \cdot \partial_\vphi + \ii \mathtt D ) \Phi^{- 1} + \Phi \mathtt R \Phi^{- 1} \\
& = \omega \cdot \partial_\vphi + \ii \mathtt D -  \omega \cdot \partial_\vphi \Psi - \ii [\mathtt D, \Psi ] + \Pi_N \mathtt R + \Pi_N^\bot \mathtt R - \int_0^1 {\rm exp}( \tau \Psi)[\mathtt R, \Psi] {\rm exp}(- \tau \Psi)\, d \tau \\
& \quad + \int_0^1 (1 - \tau) {\rm exp}( \tau \Psi) \Big[\omega \cdot \partial_\vphi \Psi + \ii [\mathtt D, \Psi ] , \Psi \Big] {\rm exp}(- \tau \Psi)\, d \tau 
\end{aligned}
\end{equation}
where the projector $ \Pi_N $ is defined in \eqref{proiettore-oper} and $ \Pi_N^\bot := 
{\rm Id }_\bot - \Pi_N$.
We want to solve the homological equation 
\begin{equation}\label{equazione omologica}
- \Dom\Psi -  \ii [{\mathtt  D}, \Psi] + \Pi_{N} {\mathtt  R} = [{\mathtt  R}] \qquad 
{\rm where}  \qquad [{\mathtt R}]  := 
{\rm diag}_{j \in \Sbot} {\mathtt R}_j^j(0) \, . 
\end{equation}
The solution of \eqref{equazione omologica} is 
\begin{align}\label{shomo1}
& \Psi_j^{j'}(\ell  ) := \begin{cases}
 \dfrac{{\mathtt R}_j^{j'}(\ell  )}{\ii (\omega \cdot \ell  + \mu_j - \mu_{j'})} \qquad \forall (\ell ,  j,  j') \neq (0,  j,  j) \, , \, \,
| \ell  | \leq N \, , \, j, j' \in \mathbb S^\bot \\
0 \qquad \text{otherwise}\, .
\end{cases}  
\end{align}
The denominators in \eqref{shomo1} are different from zero 
for $ \om \in {\mathtt \Omega}_{\nu+1}^\gamma $ 
(cf. \eqref{Omega nu + 1 gamma}).

\begin{lemma} {\bf (Homological equations)}\label{Homological equations tame}
(i) The solution 
$ \Psi  $ of the homological equation 
 \eqref{equazione omologica}, given by \eqref{shomo1}
 for $ \omega \in {\mathtt \Omega}_{\nu+1}^{\gamma}$,
   is a $ \Lipg $-modulo-tame operator with a modulo-tame constant satisfying
\begin{align}\label{stima tame Psi}
{\mathfrak M}_{\Psi }^\sharp (s) \lesssim N^{\tau_1} \g^{-1} {\mathfrak M}^\sharp (s) \, , \quad
{\mathfrak M}_{\langle \partial_\vphi \rangle^{\mathtt b} \Psi  }^\sharp (s)
 \lesssim N^{\tau_1} \g^{-1} {\mathfrak M}^\sharp (s, {\mathtt b})\,,
\end{align}
where $ \tau_1 := 2 \tau + 1$. Moreover $\Psi $ is Hamiltonian.\\
(ii) Let $\breve \io_1 $, $\breve \io_2 $ be two tori and  define 
$ \Delta_{12} \Psi  := \Psi  (\io_2) - \Psi  (\io_1) $. 
If $\gamma/2 \leq \gamma_1, \gamma_2 \leq 2 \gamma$  
then, for any $ \omega  \in {\mathtt \Omega}_{\nu + 1}^{\gamma_1}(\io_1) \cap {\mathtt \Omega}_{\nu + 1}^{\gamma_2}(\io_2)$,
\begin{align}\label{stime delta 12 Psi bassa}
& \!\!\! \!  \| |\Delta_{12} \Psi  | \|_{{\cal B}(H^{\sM})}\leq C 
N^{2 \tau} \gamma^{- 2} \big( \||{\mathtt R}(\io_2)| \|_{{\cal B}(H^{\sM})} \| \io_1 - \io_2 \|_{\sM +  \mu(\mathtt b)} + \| | \Delta_{1 2} {\mathtt R} | \|_{{\cal B}(H^{\sM})} \big) \, , \\  
&
\label{stime delta 12 Psi alta}
\!\!\!  \! \| |\langle \partial_\vphi \rangle^{\mathtt b}\Delta_{12} \Psi  | \|_{{\cal B}(H^{\sM})} \! \lesssim_{\mathtt b} \!
N^{2 \tau} \gamma^{- 2} \big(  \| |\langle \partial_\vphi \rangle^{\mathtt b}{\mathtt R}(\io_2) | \|_{{\cal B}(H^{\sM})} \| \io_1 - \io_2 \|_{ \sM + \mu(\mathtt b)} 
\! + \!  \| | \langle \partial_\vphi \rangle^{\mathtt b}\Delta_{12} {\mathtt R}  | \|_{{\cal B}(H^{\sM})}\big) \, . 
\end{align} 
\end{lemma}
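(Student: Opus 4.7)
The plan is to solve \eqref{equazione omologica} explicitly by \eqref{shomo1} and upgrade the resulting pointwise matrix bound into a modulo-tame estimate. The first step is a divisor bound: on $\mathtt\Omega^\gamma_{\nu+1}$, the second-order Melnikov condition \eqref{Omega nu + 1 gamma} gives $|\omega\cdot\ell+\mu_j^\nu-\mu_{j'}^\nu|\geq \gamma|j^3-j'^3|\langle\ell\rangle^{-\tau}$ when $j\neq j'$, while for $j=j'$, $\ell\neq 0$ it reduces to the diophantine bound $|\omega\cdot\ell|\geq\gamma\langle\ell\rangle^{-\tau}$. Using $|j^3-j'^3|\geq 1$ when $j\neq j'$, this yields uniformly $|\omega\cdot\ell+\mu_j^\nu-\mu_{j'}^\nu|\geq\gamma\langle\ell\rangle^{-\tau}$ on the index set of \eqref{shomo1}, and thus $|\Psi_j^{j'}(\ell)|\leq C\gamma^{-1}\langle\ell\rangle^\tau|\mathtt R_j^{j'}(\ell)|$.

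To promote this pointwise inequality to \eqref{stima tame Psi} for the majorant operators, I would follow the template of \cite[Lemmas 7.6--7.7]{Berti-Montalto}: the bound on $|\Psi_j^{j'}(\ell)|$ immediately gives $\mathfrak M^\sharp_\Psi(s)\lesssim N^\tau\gamma^{-1}\mathfrak M^\sharp(s)$ on the sup part, using only that the smoothing \eqref{shomo1} restricts to $|\ell|\leq N$. For the Lipschitz-in-$\omega$ part of the modulo-tame constant, the quotient rule
$$\Delta_\omega\Psi_j^{j'}(\ell)=\frac{\Delta_\omega\mathtt R_j^{j'}(\ell)}{\ii d_1}-\frac{\mathtt R_j^{j'}(\ell;\omega_2)\,\Delta_\omega d}{\ii d_1 d_2},\qquad d_i:=\omega_i\cdot\ell+\mu_j^\nu(\omega_i)-\mu_{j'}^\nu(\omega_i),$$
combined with \eqref{stima mu j 0 - mu j' 0}, \eqref{stima rj nu} (and $\e\gamma^{-3}\leq 1$), gives $|\Delta_\omega d|\lesssim(|\ell|+|j^3-j'^3|)|\omega_1-\omega_2|$; the factor $|j^3-j'^3|$ in the numerator is absorbed by the corresponding factor in the Melnikov denominator, so only the $|\ell|\leq N$ contribution remains, producing an extra loss $N\gamma^{-1}\langle\ell\rangle^\tau$. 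The two contributions together, after multiplication by $\gamma$ required by the Lipschitz semi-norm, yield exactly the $N^{\tau_1}\gamma^{-1}$ loss with $\tau_1=2\tau+1$. Multiplying matrix elements by $\langle\ell\rangle^{\mathtt b}$ before running the same argument gives the second bound in \eqref{stima tame Psi}. Hamiltonicity of $\Psi$ follows from Lemma \ref{lem:PR}-(iii): under $(\ell,j,j')\mapsto(-\ell,-j',-j)$ the divisor transforms into its negative (using $\mu_{-j}^\nu=-\mu_j^\nu$), and under complex conjugation it transforms into its opposite as well, so the symmetries $\mathtt R_j^{j'}(\ell)=\mathtt R_{-j'}^{-j}(\ell)$ and $\overline{\mathtt R_j^{j'}(\ell)}=\mathtt R_{j'}^j(-\ell)$ are preserved by the division.

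For (ii) I would apply the same divisor analysis to the identity
$$\Delta_{12}\Psi_j^{j'}(\ell)=\frac{\Delta_{12}\mathtt R_j^{j'}(\ell)}{\ii\, d(\io_1)}-\frac{\mathtt R_j^{j'}(\ell;\io_2)\,\Delta_{12}(\mu_j^\nu-\mu_{j'}^\nu)}{\ii\, d(\io_1)d(\io_2)}.$$
Since $\omega\in\mathtt\Omega_{\nu+1}^{\gamma_1}(\io_1)\cap\mathtt\Omega_{\nu+1}^{\gamma_2}(\io_2)$ with $\gamma_i\asymp\gamma$, both divisors are bounded below by $\gamma\langle\ell\rangle^{-\tau}/2$. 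The key input I need is the pointwise bound $|\Delta_{12}(\mu_j^\nu-\mu_{j'}^\nu)|\lesssim\|\io_1-\io_2\|_{\sM+\mu(\mathtt b)}$ uniformly in $j,j'$, which is precisely \eqref{r nu i1 - r nu i2} at the current step (trivial at $\nu=0$ since $\mu_j^0$ is $\io$-independent, and inductively available for $\nu\geq 1$). With $|\ell|\leq N$, the first term contributes $N^\tau\gamma^{-1}|\Delta_{12}\mathtt R_j^{j'}(\ell)|$ and the second $N^{2\tau}\gamma^{-2}|\mathtt R_j^{j'}(\ell;\io_2)|\,\|\io_1-\io_2\|_{\sM+\mu(\mathtt b)}$, which once summed over indices yields \eqref{stime delta 12 Psi bassa}; inserting the factor $\langle\ell\rangle^{\mathtt b}$ on the matrix elements before running the argument gives \eqref{stime delta 12 Psi alta}.

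The main obstacle here is not any individual estimate but the bookkeeping of exponents: $\tau_1=2\tau+1$ has to match the budget in \eqref{alpha beta} so that, coupled with \eqref{stima cal R nu}, the loss $N^{\tau_1}\gamma^{-1}$ is absorbed at the next iterative step by the smallness of $\mathtt R_{\nu+1}$ via the factor $N_\nu^{-\mathtt a}$ with $\mathtt a=3\tau_1+1$. The content of the lemma is therefore essentially to check that the divisor manipulations above produce \emph{exactly} the exponent $\tau_1=2\tau+1$ rather than anything larger, so that the KAM scheme in Theorem \ref{iterazione riducibilita} closes.
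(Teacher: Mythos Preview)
Your proposal is correct and follows essentially the same route as the paper: derive the pointwise bounds $|\Psi_j^{j'}(\ell)|\lesssim\gamma^{-1}\langle\ell\rangle^\tau|\mathtt R_j^{j'}(\ell)|$ and $|\Delta_\omega\Psi_j^{j'}(\ell)|\lesssim\gamma^{-1}\langle\ell\rangle^\tau|\Delta_\omega\mathtt R_j^{j'}(\ell)|+\gamma^{-2}\langle\ell\rangle^{2\tau+1}|\mathtt R_j^{j'}(\ell;\omega_2)|\,|\omega_1-\omega_2|$ from the Melnikov conditions and \eqref{stima mu j 0 - mu j' 0}, \eqref{stima rj nu}, then pass to modulo-tame constants via \cite[Lemma~7.7]{Berti-Montalto}; Hamiltonicity and part~(ii) are handled as you describe.

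One small correction for part~(ii): the eigenvalues $\mu_j^0$ are \emph{not} $\io$-independent, since $m_3$ and $m_1$ depend on the torus; hence your uniform bound $|\Delta_{12}(\mu_j^\nu-\mu_{j'}^\nu)|\lesssim\|\io_1-\io_2\|_{\sM+\mu(\mathtt b)}$ is not quite right as stated. The correct bound carries an extra factor $|j^3-j'^3|$ coming from $\Delta_{12}m_3$ (use \eqref{stime delta 12 secondo step}, \eqref{stime delta 12 quarto step} together with \eqref{r nu i1 - r nu i2}), but this factor is absorbed by one of the two Melnikov denominators in $d(\io_1)d(\io_2)$, exactly as the $|j^3-j'^3|$ term in $\Delta_\omega d$ is absorbed in your part~(i) argument. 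The final exponent $N^{2\tau}\gamma^{-2}$ in \eqref{stime delta 12 Psi bassa}--\eqref{stime delta 12 Psi alta} is unaffected.
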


\begin{proof}
Since $ {\mathtt R} $  is Hamiltonian, one infers from
Definition \ref{definition Hamiltonian operators} and 
Lemma \ref{lem:PR}-($iii$) that the operator $ \Psi $ defined in \eqref{shomo1} 
is Hamiltonian as well. 
We now prove \eqref{stima tame Psi}. 
Let $ \om \in {\mathtt \Omega}_{\nu+1}^{\gamma} $. 
By \eqref{Omega nu + 1 gamma},
and the definition of $ \Psi  $ in \eqref{shomo1}, it follows that for any $ (\ell, j, j')  \in \Z^{\Splus} \times \Sbot \times \Sbot $, 
with $ |\ell | \leq N $, $ (\ell, j, j') \neq (0,  j,  j) $, 
\begin{equation}\label{stima elementare Psi R EQ homo}
 |\Psi_j^{j'}(\ell )|  
\lesssim \langle \ell \rangle^\tau \gamma^{-1} |{\mathtt R}_j^{j'}(\ell )| 
\end{equation} and 
$$
\Delta_\om \Psi_j^{j'}(\ell ) = 
\dfrac{\Delta_{\om} {\mathtt R}_j^{j'}(\ell)}{\delta_{\ell j j'}(\om_1)} - {\mathtt R}_j^{j'}(\ell;\om_2) 
\dfrac{\Delta_{\om} \delta_{\ell j j'}}{\delta_{\ell j j'}(\om_1) \delta_{\ell j j'}(\om_2)}\,,\quad 
\delta_{\ell j j'}(\om)  := \ii (\omega \cdot \ell + \mu_j - \mu_{j'})\,.
$$
By \eqref{stima mu j 0 - mu j' 0}, \eqref{mu j nu},  \eqref{stima rj nu} one gets 
$
|\Delta_\omega \delta_{\ell j j'}| \lesssim (\langle \ell \rangle + |j^3 - j'^3| ) |\omega_1 - \omega_2|,
$
and therefore, using also \eqref{Omega nu + 1 gamma},
we deduce that  
\be\label{stima-Psi-R}
|\Delta_\omega \Psi_j^{j'}(\ell )| \lesssim \langle \ell \rangle^\tau \gamma^{- 1} |\Delta_{\om} {\mathtt R}_j^{j'}(\ell)| + \langle \ell \rangle^{2 \tau + 1} \gamma^{- 2}  |{\mathtt R}_j^{j'}(\ell;\om_2) | |\omega_1 - \omega_2| \,.
\ee
Recalling the definition \eqref{CK0-tame}, using
 \eqref{stima elementare Psi R EQ homo}, \eqref{stima-Psi-R},
and arguing as in the proof of the estimates (7.61) in \cite[Lemma 7.7]{Berti-Montalto}, one
then deduces \eqref{stima tame Psi}.
The estimates \eqref{stime delta 12 Psi bassa}-\eqref{stime delta 12 Psi alta} can be obtained by arguing
similarly. 
\end{proof}

By \eqref{Ltra1}--\eqref{equazione omologica} one has
$$
\begin{aligned}
{\mathtt L}_+ & = \Phi {\mathtt L} \Phi^{- 1} =  \Dom  + \ii {\mathtt  D}_+ + {\mathtt  R}_+ 
\end{aligned}
$$
which  proves \eqref{coniugionu+1} and \eqref{cal L nu} at the step $ \nu + 1 $,  with 
\be\label{new-diag-new-rem} 
\begin{aligned}
& \ii {\mathtt  D}_+ := \ii {\mathtt  D} + [{\mathtt  R}] \, , \quad \\
& {\mathtt  R}_+  =  \Pi_N^\bot \mathtt R  - \int_0^1 {\rm exp}( \tau \Psi)[\mathtt R, \Psi] {\rm exp}(- \tau \Psi)\, d \tau     + \int_0^1 (1 - \tau) {\rm exp}( \tau \Psi) \big[\Pi_N \mathtt R - [\mathtt R]  , \Psi \big]  {\rm exp}(- \tau \Psi)\, d \tau \, . 
\end{aligned}
\ee
The operator $ {\mathtt L}_+  $ has the same form as $ \mathtt L $.
More precisely, 
$ {\mathtt  D}_+  $ is diagonal 
and $ {\mathtt  R}_+  $ is the sum of 
an operator supported on high frequencies and  
one which is quadratic in
$ \Psi $ and  $ { \mathtt R} $. 
The new normal form $ {\mathtt  D}_+  $ has the following properties:

\begin{lemma}\label{nuovadiagonale} 
{\bf (New diagonal part)}  
(i) The new normal form 
 is 
\be\label{new-NF}
 {\mathtt  D}_+ =  {\mathtt  D} - \ii [{\mathtt  R}] \,, 
\qquad {\mathtt  D}_+ := {\rm diag}_{j \in \Sbot} \mu_j^+\,,\qquad \mu_j^+ := 
\mu_j + {\mathtt r}_j\in \R \, , 
\ \ {\mathtt r}_j:= -\ii {\mathtt R}_j^j(0)\, , \ \ 
\forall j \in \mathbb S^\bot \, , 
\ee
with 
$$ 
\mu_{-j}^+ = - \mu_j^+ \, , \quad 
|\mu_j^+ - \mu_j|^\Lipg = |\mathtt r_j|^\Lipg \lesssim {\mathfrak M}^\sharp (\sM) \, .
$$
(ii) For any tori $\breve \io_1(\omega) $, $\breve \io_2(\omega)$
and any $ \omega \in {\mathtt \Omega}_{\nu}^{\gamma_1}(\io_1) \cap {\mathtt \Omega}_\nu^{\gamma_2}(\io_2)$, one has 
\be\label{diff:r1r2}
| {\mathtt r}_j (\io_1) -{\mathtt r}_j (\io_2)   |  
\lesssim \| | \Delta_{1 2} {\mathtt R} | \|_{{\cal B}(H^{\sM})}\,.
\ee
\end{lemma}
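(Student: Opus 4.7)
\medskip

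\noindent
\textbf{Proof plan for Lemma \ref{nuovadiagonale}.}
The identity $\ii \mathtt{D}_+ = \ii \mathtt{D} + [\mathtt{R}]$ in \eqref{new-diag-new-rem} immediately yields $\mathtt{D}_+ = \mathtt{D} - \ii [\mathtt{R}]$; extracting the diagonal entries and using the definition $[\mathtt{R}] := \mathrm{diag}_{j \in \Sbot} \mathtt{R}_j^j(0)$ gives $\mu_j^+ = \mu_j - \ii \mathtt{R}_j^j(0) = \mu_j + \mathtt r_j$. So the first task is to verify that $\mathtt r_j$ is real and satisfies $\mathtt r_{-j} = -\mathtt r_j$; the second task is to estimate $|\mathtt r_j|^\Lipg$ and the difference $\mathtt r_j(\io_1) - \mathtt r_j(\io_2)$.

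For the reality and parity statements the key input is that $\mathtt{R}$ is Hamiltonian, a property that propagates inductively along the scheme: indeed the initial operator $\mathtt{R}_0$ in \eqref{def:RoKAM} is Hamiltonian because $\mathtt{L}_0 = {\mathcal L}^{(4)}_\omega$ and $\omega\cdot\partial_\vphi + \ii \mathtt{D}_0$ are, while the conjugation \eqref{coniugionu+1} is symplectic thanks to Lemma \ref{Homological equations tame}-$(i)$ and \eqref{defPsi}. I would therefore write $\mathtt{R} = \partial_x H$ for some real self-adjoint operator $H$, which by a direct Fourier computation gives $\mathtt{R}_j^{j'}(\ell) = \ii 2\pi j\, H_j^{j'}(\ell)$. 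In particular $\mathtt{R}_j^j(0) = \ii 2\pi j\, H_j^j(0)$ with $H_j^j(0)\in\R$ by self-adjointness, so $\mathtt r_j = 2\pi j\, H_j^j(0)\in\R$. For the parity, Lemma \ref{lem:PR}-$(ii)$ applied to the real operator $H$ yields $H_{-j}^{-j}(0) = \overline{H_j^j(0)} = H_j^j(0)$, hence $\mathtt r_{-j} = 2\pi(-j) H_{-j}^{-j}(0) = -\mathtt r_j$.

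For the tame bound in $(i)$ I would test the modulo-tame inequality \eqref{CK0-tame} on the monochromatic function $u(\vphi, x) = e^{\ii 2\pi j x}$, whose Sobolev norm is $\|u\|_s = \langle j\rangle^s$. Since the $(\ell,j_1) = (0,j)$ Fourier coefficient of $|\mathtt{R}| u$ equals $|\mathtt{R}_j^j(0)|$, one has $|\mathtt R_j^j(0)|^2 \langle j\rangle^{2\sM} \leq \||\mathtt{R}|u\|_{\sM}^2 \lesssim (\mathfrak{M}^\sharp(\sM))^2 \langle j\rangle^{2\sM}$, and an identical argument applied to the difference quotient in $\omega$ furnishes the Lipschitz part, proving $|\mathtt r_j|^\Lipg \lesssim \mathfrak{M}^\sharp(\sM)$. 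Assertion $(ii)$ follows by the same monochromatic test: since $\mathtt r_j(\io_1) - \mathtt r_j(\io_2) = -\ii\,\Delta_{12}\mathtt{R}_j^j(0)$, extracting this Fourier coefficient from $\||\Delta_{12}\mathtt{R}|\, u\|_{\sM}$ and dividing by $\|u\|_\sM = \langle j\rangle^\sM$ gives $|\mathtt r_j(\io_1) - \mathtt r_j(\io_2)| \leq \||\Delta_{12}\mathtt{R}|\|_{\mathcal{B}(H^{\sM})}$, as required.

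The only genuinely delicate step is the reality/parity discussion in the second paragraph, since it requires keeping track of the Hamiltonian (hence real and symplectically conjugated) nature of $\mathtt{R}$; all other computations are algebraic identities in Fourier variables followed by straightforward applications of the modulo-tame norm definition.
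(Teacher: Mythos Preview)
Your proposal is correct and follows essentially the same approach as the paper's proof. The paper invokes Lemma~\ref{lem:PR} for the reality and oddness of $\mathtt r_j$ (your second paragraph spelled out) and the coefficient inequality \eqref{tame-coeff} at $s_1=\sM$ --- which is precisely the outcome of your monochromatic test --- for both the $\Lipg$ bound in $(i)$ and the difference estimate in~$(ii)$.
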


\begin{proof} 
By the definition \eqref{def:msharp}
of $ {\mathfrak M}^\sharp (\sM) $ and using \eqref{tame-coeff} (with $\sM = s_1 $) we have that $ |\mu_j^+ - \mu_j|^\Lipg \leq |{\mathtt R}_j^j(0) |^\Lipg \lesssim {\mathfrak M}^\sharp (\sM) $.  
Since $ {\mathtt R} (\vphi) $ is Hamiltonian, Lemma \ref{lem:PR} implies that $ {\mathtt r}_j = - \ii {\mathtt R}_j^j(0) $, 
$j \in \mathbb S^\bot$,
are odd in $ j $ and real.
The estimate \eqref{diff:r1r2} 
is proved in the same way by using $|\Delta_{1 2} {\mathtt R}_j^j(0) | \leq C \| | \Delta_{1 2} {\mathtt R} | \|_{{\cal B}(H^{\sM})}$. 
\end{proof}
\noindent
{\bf Induction.} 
Assuming that the statements $({\bf S1})_{\nu} $-$({\bf S4})_{\nu} $  are true for some $ \nu \geq 0 $ we show in this paragraph that 
$({\bf S1})_{\nu + 1}$-$({\bf S4})_{\nu + 1}$ hold.
\\[1mm]
{\sc Proof of $({\bf S1})_{\nu + 1}$}. 
By Lemma \ref{Homological equations tame}, for all $ \om \in {\mathtt \Omega}_{\nu + 1}^\gamma$ the solution $\Psi_{\nu}$
of the homological equation 
 \eqref{equazione omologica}, defined
 in \eqref{shomo1},   is well defined and, 
 by  \eqref{stima tame Psi}, \eqref{stima cal R nu},  
 satisfies the estimates \eqref{tame Psi nu - 1} at $ \nu + 1 $.
In particular, the estimate \eqref{tame Psi nu - 1} for 
$ \nu + 1 $, $ s = \sM $ and \eqref{alpha beta}, \eqref{KAM smallness condition1} imply 
\begin{equation}\label{costante tame Psi nu i (s0)}
{\mathfrak M}^\sharp_{\Psi_{\nu }}(\sM ) \lesssim_{\mathtt b} 
 N_{\nu}^{\tau_1} N_{\nu - 1}^{- \mathtt a} \gamma^{- 1} {\mathfrak M}_0(\sM, {\mathtt b}) \leq 1 \, .
\end{equation}
By Lemma \ref{serie di neumann per maggiorantia} and using again Lemma \ref{Homological equations tame}
one infers that
\begin{equation}\label{stima tame Psi tilde}
\begin{aligned}
{\mathfrak M}^\sharp_{\Phi_\nu^{\pm 1}}(\sM) & \lesssim 1\,, \\
{\mathfrak M}^\sharp_{\langle \partial_\vphi \rangle^{\mathtt b} \Phi_\nu^{\pm 1}}(\sM) & \lesssim 1 + {\mathfrak M}_{\langle \partial_\vphi \rangle^\mathtt b \Psi_\nu}(\sM) \lesssim 1 + N_\nu^{\tau_1} \gamma^{- 1} {\mathfrak M}^\sharp_\nu(\sM, \mathtt b)\,,   \\
{\mathfrak M}^\sharp_{\Phi_\nu^{\pm 1}}(s)  & \lesssim 1 + {\mathfrak M}^\sharp_{\Psi_\nu}(s) \lesssim_s 1 + N_\nu^{\tau_1} \g^{-1} {\mathfrak M}_\nu^\sharp (s)   \\
{\mathfrak M}^\sharp_{\langle \partial_\vphi \rangle^{\mathtt b} \Phi_\nu^{\pm 1}}(s) & \lesssim  1 + {\mathfrak M }_{\langle \partial_\vphi \rangle^{\mathtt b } \Psi_\nu}(s) + {\mathfrak M}_{\Psi_\nu}^\sharp(s)  {\mathfrak M }_{\langle \partial_\vphi \rangle^{\mathtt b } \Psi_\nu}(\sM) \\
& \stackrel{\eqref{KAM smallness condition1}, \eqref{stima cal R nu}, \eqref{stima tame Psi}}{\lesssim} 1 +  N_\nu^{\tau_1} \gamma^{- 1} {\mathfrak M}^\sharp_\nu(s, \mathtt b)  + N_\nu^{2 \tau_1} N_{\nu - 1} \gamma^{- 1} {\mathfrak M}^\sharp_\nu(s)\,. 
\end{aligned}
\end{equation}
By Lemma \ref{nuovadiagonale}, by the estimate \eqref{stima cal R nu} and Lemma \ref{lem:tame iniziale},  the operator $ {\mathtt  D}_{\nu+1} $ is diagonal and its  eigenvalues  
$ \mu_j^{\nu+1} : {\mathtt \Omega}_{\nu+1}^\gamma \to \R $ 
satisfy \eqref{stima rj nu} at $ \nu + 1 $. 

Now we estimate the remainder $ {\mathtt  R}_{\nu + 1} $
defined in  \eqref{new-diag-new-rem}.

\begin{lemma}\label{estimate in low norm} {\bf (Nash-Moser iterative scheme)}
The operator $  {\mathtt R}_{\nu + 1} $ is $ \Lipg $-modulo-tame with a
modulo-tame constant satisfying
\begin{equation}\label{schema quadratico tame}
{\mathfrak M}_{\nu + 1}^\sharp (s ) \lesssim N_\nu^{- {\mathtt b}} {\mathfrak M}_\nu^\sharp (s, {\mathtt b}) 
+  N_\nu^{\tau_1} \gamma^{- 1} {\mathfrak M}_\nu^\sharp (s) {\mathfrak M}_\nu^\sharp (\sM)\,.
\end{equation}
The operator 
$ \langle \partial_\vphi \rangle^{ {\mathtt b}} {\mathtt R}_{\nu + 1} $ is $ \Lipg $-modulo-tame with a modulo-tame constant satisfying 
\be\label{M+Ms}
{\mathfrak M}_{\nu + 1}^\sharp (s, {\mathtt b}) \lesssim_{\mathtt b} 
 {\mathfrak M}_\nu^\sharp (s, \mathtt b) + 
 N_\nu^{\tau_1} \gamma^{- 1} {\mathfrak M}_\nu^\sharp (s, \mathtt b) 
{\mathfrak M}_\nu^\sharp(\sM) + N_\nu^{\tau_1} \gamma^{- 1} {\mathfrak M}_\nu^\sharp (\sM, \mathtt b) {\mathfrak M}_\nu^\sharp (s)  \,.
\ee
\end{lemma}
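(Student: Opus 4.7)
The plan is to estimate each of the three pieces of $\mathtt{R}_{\nu+1}$ appearing in the decomposition \eqref{new-diag-new-rem}, namely
\[
\mathtt{R}_+ = \Pi_N^\bot \mathtt{R} \; - \; \int_0^1 e^{\tau\Psi}[\mathtt{R},\Psi]e^{-\tau\Psi}\,d\tau \; + \; \int_0^1 (1-\tau) e^{\tau\Psi}\bigl[\Pi_N\mathtt{R}-[\mathtt{R}],\Psi\bigr] e^{-\tau\Psi}\,d\tau ,
\]
separately, and then to combine them. The smoothing piece $\Pi_N^\bot\mathtt{R}$ is handled by Lemma~\ref{lemma:smoothing-tame}, which yields $\mathfrak{M}^\sharp_{\Pi_N^\bot\mathtt{R}}(s) \le N_\nu^{-\mathtt b}\mathfrak{M}^\sharp_\nu(s,\mathtt b)$ for the low-norm bound and $\mathfrak{M}^\sharp_{\Pi_N^\bot\mathtt{R}}(s,\mathtt b) \le \mathfrak{M}^\sharp_\nu(s,\mathtt b)$ for the high-norm bound; these produce the first summand of \eqref{schema quadratico tame} and, respectively, the first summand of \eqref{M+Ms}.

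Next I would treat the two integral terms. Since \eqref{costante tame Psi nu i (s0)} gives $\mathfrak{M}^\sharp_{\Psi_\nu}(\sM)\le 1$, Lemma~\ref{serie di neumann per maggiorantia} applies and provides the modulo-tame bounds
\[
\mathfrak{M}^\sharp_{e^{\pm\tau\Psi}-\mathrm{Id}_\bot}(s) \lesssim \mathfrak{M}^\sharp_{\Psi}(s), \qquad \mathfrak{M}^\sharp_{\langle\partial_\vphi\rangle^{\mathtt b}(e^{\pm\tau\Psi}-\mathrm{Id}_\bot)}(s) \lesssim_{\mathtt b} \mathfrak{M}^\sharp_{\langle\partial_\vphi\rangle^{\mathtt b}\Psi}(s) + \mathfrak{M}^\sharp_{\Psi}(s)\,\mathfrak{M}^\sharp_{\langle\partial_\vphi\rangle^{\mathtt b}\Psi}(\sM),
\]
uniformly in $\tau\in[0,1]$. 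Combining these with the composition estimate \eqref{modulo tame constant for composition} of Lemma~\ref{interpolazione moduli parametri}, applied twice (once to the inner commutator $[\mathtt{R},\Psi]$ or $[\Pi_N\mathtt R-[\mathtt R],\Psi]$, once to the conjugation by $e^{\pm\tau\Psi}$), and using that $\||[\mathtt R]|\|\le\|\mathtt R\|$ so that $\mathfrak{M}^\sharp_{\Pi_N\mathtt R-[\mathtt R]}(s)\le 2\mathfrak{M}^\sharp_\nu(s)$, one obtains that the two integral terms contribute at most
\[
C\bigl(\mathfrak{M}^\sharp_\nu(s)\,\mathfrak{M}^\sharp_{\Psi_\nu}(\sM) + \mathfrak{M}^\sharp_\nu(\sM)\,\mathfrak{M}^\sharp_{\Psi_\nu}(s)\bigr)
\]
to $\mathfrak{M}^\sharp_{\nu+1}(s)$. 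Inserting the bound \eqref{stima tame Psi} of Lemma~\ref{Homological equations tame}, namely $\mathfrak{M}^\sharp_{\Psi_\nu}(s)\lesssim N_\nu^{\tau_1}\gamma^{-1}\mathfrak{M}^\sharp_\nu(s)$, and using $\mathfrak{M}^\sharp_\nu(\sM)\le \mathfrak{M}^\sharp_\nu(s)$, gives $N_\nu^{\tau_1}\gamma^{-1}\mathfrak{M}^\sharp_\nu(s)\mathfrak{M}^\sharp_\nu(\sM)$, which is the second summand of \eqref{schema quadratico tame}.

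For the high-norm bound \eqref{M+Ms} I would apply $\langle\partial_\vphi\rangle^{\mathtt b}$ to each integral term and use the tame estimate \eqref{K cal A cal B} for products, together with the estimate \eqref{stima tame Psi} for $\mathfrak{M}^\sharp_{\langle\partial_\vphi\rangle^{\mathtt b}\Psi_\nu}(s)\lesssim N_\nu^{\tau_1}\gamma^{-1}\mathfrak{M}^\sharp_\nu(s,\mathtt b)$ and the already established low-norm bound. Writing the conjugation as $e^{\tau\Psi}[\cdot]e^{-\tau\Psi} = [\cdot] + (e^{\tau\Psi}-\mathrm{Id}_\bot)[\cdot]e^{-\tau\Psi} + [\cdot](e^{-\tau\Psi}-\mathrm{Id}_\bot)$, the $\langle\partial_\vphi\rangle^{\mathtt b}$-bound on each factor follows from \eqref{K cal A cal B} and the Leibniz-type splitting of $\langle\partial_\vphi\rangle^{\mathtt b}$ across products. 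Collecting all cross-terms and using the smallness \eqref{costante tame Psi nu i (s0)} to absorb the higher powers of $\mathfrak{M}^\sharp_{\Psi_\nu}(\sM)$, only the two asymmetric contributions
\[
N_\nu^{\tau_1}\gamma^{-1}\mathfrak{M}^\sharp_\nu(s,\mathtt b)\mathfrak{M}^\sharp_\nu(\sM), \qquad N_\nu^{\tau_1}\gamma^{-1}\mathfrak{M}^\sharp_\nu(\sM,\mathtt b)\mathfrak{M}^\sharp_\nu(s),
\]
survive, plus the leading term $\mathfrak{M}^\sharp_\nu(s,\mathtt b)$ from $\Pi_N^\bot\mathtt R$ and from the ``identity'' piece of the conjugation. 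This is exactly \eqref{M+Ms}.

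The main obstacle is the bookkeeping for the $\langle\partial_\vphi\rangle^{\mathtt b}$-estimate \eqref{M+Ms}: one has to split $\langle\partial_\vphi\rangle^{\mathtt b}$ across a threefold product and a commutator, and ensure, via the smallness assumption \eqref{costante tame Psi nu i (s0)}, that all non-linear cross-terms of the form $\mathfrak{M}^\sharp_{\Psi_\nu}(\sM)\cdot(\cdots)$ and $[\mathfrak{M}^\sharp_{\Psi_\nu}(\sM)]^2\cdot(\cdots)$ can be absorbed into the two retained summands without enlarging the constants in an uncontrolled way.
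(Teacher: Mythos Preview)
Your proposal is correct and follows essentially the same approach as the paper: the paper's one-line proof cites Lemmata~\ref{lemma:smoothing-tame} and~\ref{interpolazione moduli parametri} together with the estimates \eqref{stima cal R nu}, \eqref{stima tame Psi}, \eqref{stima tame Psi tilde}, and your expanded argument carries out exactly this strategy, with \eqref{stima tame Psi tilde} replaced by its source Lemma~\ref{serie di neumann per maggiorantia} and the smallness \eqref{costante tame Psi nu i (s0)} used to absorb the higher-order cross terms.
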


\begin{proof}
The proof follows by Lemmata \ref{lemma:smoothing-tame},
\ref{interpolazione moduli parametri},
using the estimates  \eqref{stima cal R nu}, \eqref{stima tame Psi}, \eqref{stima tame Psi tilde}. 
\end{proof}

The estimates \eqref{schema quadratico tame}, \eqref{M+Ms},  and  \eqref{alpha beta},  
allow to prove  that also \eqref{stima cal R nu} holds at the step $ \nu + 1 $. It implies (see \cite[Lemma 7.10]{Berti-Montalto})
\begin{lemma}\label{stima M nu + 1 K nu + 1}
$ {\mathfrak M}_{\nu + 1}^	\sharp (s) \leq C_*(\sM, \mathtt b) N_\nu^{- \mathtt a} {\mathfrak M}_0(s, \mathtt b) $ and 
$ {\mathfrak M}_{\nu + 1}^\sharp (s,\mathtt b) \leq C_*(\sM, \mathtt b) N_\nu {\mathfrak M}_0(s, \mathtt b)  $. 
\end{lemma}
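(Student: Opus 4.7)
The plan is to propagate the inductive bounds \eqref{stima cal R nu} from step $\nu$ to step $\nu+1$ by inserting them into the two recursive inequalities \eqref{schema quadratico tame} and \eqref{M+Ms} from Lemma \ref{estimate in low norm}, and then choosing the constant $C_*(\sM,\mathtt b)$ large enough and $N_0$ large enough that the recursion closes. The key arithmetic facts to exploit are: (i) the definition $N_\nu = N_{\nu-1}^{3/2}$ from \eqref{def Nn}, so that $N_{\nu-1} = N_\nu^{2/3}$; (ii) the relations $\mathtt a = 3\tau_1+1$ and $\mathtt b = [\mathtt a]+2\geq \mathtt a + 1$ from \eqref{alpha beta}; and (iii) the smallness assumption \eqref{KAM smallness condition1}, which gives $\gamma^{-1}{\mathfrak M}_0(\sM,\mathtt b)\leq N_0^{-\overline{\tau}}\leq N_{\nu-1}^{-\overline{\tau}}$ for every $\nu\geq 0$, with $\overline{\tau}$ chosen as large as needed.

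First I would handle the low norm bound ${\mathfrak M}_{\nu+1}^\sharp(s)\leq C_*\,N_\nu^{-\mathtt a}{\mathfrak M}_0(s,\mathtt b)$. Inserting the two inductive estimates of ${\mathfrak M}_\nu^\sharp(s,\mathtt b)$ and ${\mathfrak M}_\nu^\sharp(s){\mathfrak M}_\nu^\sharp(\sM)$ into \eqref{schema quadratico tame} yields
\[
{\mathfrak M}_{\nu+1}^\sharp(s)\leq C\,C_*\,N_\nu^{-\mathtt b}\,N_{\nu-1}\,{\mathfrak M}_0(s,\mathtt b)\;+\;C\,C_*^2\,N_\nu^{\tau_1}\gamma^{-1}N_{\nu-1}^{-2\mathtt a}{\mathfrak M}_0(s,\mathtt b){\mathfrak M}_0(\sM,\mathtt b).
\]
For the first summand one uses $N_\nu^{-\mathtt b}N_{\nu-1}=N_\nu^{-\mathtt b+2/3}\leq \tfrac12 C_*^{-1}N_\nu^{-\mathtt a}$ once $\mathtt b-\mathtt a\geq 2/3$ (which holds since $\mathtt b\geq\mathtt a+1$) and $N_0$ is large enough. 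For the second summand one computes, using $N_\nu=N_{\nu-1}^{3/2}$ and $\mathtt a = 3\tau_1+1$, that
\[
N_\nu^{\tau_1+\mathtt a}N_{\nu-1}^{-2\mathtt a}=N_{\nu-1}^{\tfrac32(\tau_1+\mathtt a)-2\mathtt a}=N_{\nu-1}^{\,3\tau_1/2-\mathtt a/2}=N_{\nu-1}^{-1/2},
\]
so the term is bounded by $C\,C_*^2\,\gamma^{-1}{\mathfrak M}_0(\sM,\mathtt b)\,N_{\nu-1}^{-1/2}\,N_\nu^{-\mathtt a}{\mathfrak M}_0(s,\mathtt b)$, which is at most $\tfrac12 C_*\,N_\nu^{-\mathtt a}{\mathfrak M}_0(s,\mathtt b)$ as soon as $\overline{\tau}\geq 1/2$ and $N_0$ is large enough.

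Next I would treat the high norm bound ${\mathfrak M}_{\nu+1}^\sharp(s,\mathtt b)\leq C_*\,N_\nu\,{\mathfrak M}_0(s,\mathtt b)$ by inserting the inductive hypothesis into \eqref{M+Ms}:
\[
{\mathfrak M}_{\nu+1}^\sharp(s,\mathtt b)\leq C(\mathtt b)\,C_*\,N_{\nu-1}\,{\mathfrak M}_0(s,\mathtt b)\;+\;2C(\mathtt b)\,C_*^2\,N_\nu^{\tau_1}\gamma^{-1}N_{\nu-1}^{1-\mathtt a}{\mathfrak M}_0(s,\mathtt b){\mathfrak M}_0(\sM,\mathtt b).
\]
The first summand equals $C(\mathtt b)\,C_*\,N_\nu^{2/3}\,{\mathfrak M}_0(s,\mathtt b)$, which is $\leq \tfrac12 C_*\,N_\nu{\mathfrak M}_0(s,\mathtt b)$ for $N_0$ large. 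For the second, one computes
\[
N_\nu^{\tau_1}N_{\nu-1}^{1-\mathtt a}=N_{\nu-1}^{3\tau_1/2+1-\mathtt a}=N_{\nu-1}^{-(3\tau_1/2)}\cdot N_{\nu-1}
\]
and $N_{\nu-1}=N_\nu^{2/3}\leq N_\nu$, so the term is bounded by $2C(\mathtt b)\,C_*^2\,\gamma^{-1}{\mathfrak M}_0(\sM,\mathtt b)\,N_{\nu-1}^{-3\tau_1/2}\,N_\nu{\mathfrak M}_0(s,\mathtt b)$, which is at most $\tfrac12 C_*\,N_\nu{\mathfrak M}_0(s,\mathtt b)$ by \eqref{KAM smallness condition1} after fixing $\overline{\tau}\geq 3\tau_1/2$ and $N_0$ large enough in terms of $C_*(\sM,\mathtt b)$.

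The main (but routine) obstacle is simply to pin down the interplay among the exponents $\mathtt a$, $\mathtt b$, $\tau_1$, $\chi$: one must verify that the losses $N_\nu^{\tau_1}\gamma^{-1}$ produced by the homological equation, the single factor $N_{\nu-1}$ gained in the high norm and the factor $N_{\nu-1}^{-\mathtt a}$ gained in the low norm all fit together with the ultraviolet gain $N_\nu^{-\mathtt b}$. The choices $\mathtt a = 3\tau_1+1$, $\mathtt b=[\mathtt a]+2$ and $\chi=3/2$ made in \eqref{alpha beta}, \eqref{def Nn} were designed precisely so that in both inequalities there is a leftover strictly negative power of $N_{\nu-1}$ which can absorb $\gamma^{-1}{\mathfrak M}_0(\sM,\mathtt b)$ via the smallness \eqref{KAM smallness condition1}. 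Once $C_*(\sM,\mathtt b)$ is chosen to dominate the absolute constants in \eqref{schema quadratico tame}--\eqref{M+Ms} and $N_0$ is taken large enough depending on $C_*$, both claimed inequalities follow.
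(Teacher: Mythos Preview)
Your proposal is correct and follows exactly the approach the paper intends: the paper itself does not give a proof but simply cites \cite[Lemma 7.10]{Berti-Montalto}, and what you have written is precisely the arithmetic verification carried out there---insert the inductive bounds \eqref{stima cal R nu} into the recursive estimates \eqref{schema quadratico tame}, \eqref{M+Ms}, and use the choice $\mathtt a=3\tau_1+1$, $\mathtt b=[\mathtt a]+2$, $\chi=3/2$ together with \eqref{KAM smallness condition1} to close the induction.

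Two minor arithmetic slips worth cleaning up (neither affects the conclusion): in the low-norm first summand you need $C\,N_\nu^{-\mathtt b+2/3}\le\tfrac12 N_\nu^{-\mathtt a}$, not the version with an extra $C_*^{-1}$; and in the high-norm second summand, since $3\tau_1/2+1-\mathtt a=-3\tau_1/2$, the factorization should be $N_{\nu-1}^{-(1+3\tau_1/2)}\cdot N_{\nu-1}$ rather than $N_{\nu-1}^{-3\tau_1/2}\cdot N_{\nu-1}$. Both exponents remain strictly negative, so the absorption by \eqref{KAM smallness condition1} and the largeness of $N_0$ still goes through.
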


\noindent {\sc Proof of $({\bf S2})_{\nu + 1}$.} By Lemma \ref{nuovadiagonale}, for any $j \in \mathbb S^\bot$, $\mu_j^{\nu + 1} = \mu_j^\nu + \mathtt r_j^\nu$ where $ |\mathtt r_j^\nu |^\Lipg \lesssim \mathfrak M_0(\sM, \mathtt b) N_\nu^{- \mathtt a}$.  
Then $({\bf S2})_{\nu + 1}$ follows by defining 
$\widetilde \mu_j^{\nu + 1} 
:= \widetilde \mu_j^\nu + \widetilde {\mathtt r}_j^\nu$
where $\widetilde {\mathtt r}_j^\nu :  \Omega \to \R$ 
is a Lipschitz extension of  ${\mathtt r}_j^\nu$ 
(cf. Kirszbraun extension Theorem).

\smallskip

\noindent {\sc Proof of $({\bf S3})_{\nu + 1}$.} 
The proof follows by induction arguing as in 
the proof of $({\bf S2})_{\nu + 1}$.

\smallskip

\noindent {\sc Proof of $({\bf S4})_{\nu + 1}$.} 
The proof is the same as that of $({\bf S3})_{\nu + 1}$  in 
\cite[Theorem 4.2]{BBM-Airy}. 
 \qed

\subsection{Almost-invertibility  of  $ {\mathcal L}_\om $}\label{quasi invertibilita}

By \eqref{cal L infinito}, for any $ \omega \in \mathtt {\mathtt \Omega}^\gamma_n$, we have that 
$ \mL_0 = \mU_n^{-1} \mL_n \mU_n $ where $\mU_n$ is defined in \eqref{defUn} and thus 
\begin{equation}\label{final semi conjugation}
{\mathcal L}_\omega =  {\cal V}_n^{-1}  {\mathtt L}_n  {\cal V}_n \, ,  
\qquad {\cal V}_n := U_n \Phi^{(4)}  \cdots \Phi^{(1)} \, .
\end{equation}
\begin{lemma} 
There exists $\sigma = \sigma(\tau, \mathbb S_+) > 0$ such that, if \eqref{ansatz riducibilita}   and 
\eqref{ansatz I delta} with $\mu_0 \geq \sM + \mu(\mathtt b) + \sigma$ hold, then the operators ${\cal V}_n^{\pm 1}$ satisfy for any $\sM \leq s \leq S$ the estimate
\begin{equation}\label{stime W1 W2}
\| {\cal V}_n^{\pm 1} h \|_s^\Lipg \lesssim_S \| h \|_{s + \sigma }^\Lipg  + 
N_0^{\tau_1} \gamma^{- 1}\| \iota \|_{s + \mu(\mathtt b) + \sigma}^\Lipg \| h \|_{\sM + \sigma}^\Lipg \,.
\end{equation}
\end{lemma}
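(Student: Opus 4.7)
The plan is to estimate each factor in the composition $\mathcal V_n = U_n \circ \Phi^{(4)} \circ \Phi^{(3)} \circ \Phi^{(2)} \circ \Phi^{(1)}$ separately and then combine them via the tame composition rule.

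First, I would collect the individual tame bounds. Each of $\Phi^{(1)},\Phi^{(2)},\Phi^{(3)},\Phi^{(4)}$ was shown in Sections \ref{sec:RT}--\ref{ulimo step pre redu} to be $\Lip(\gamma)$-tame with small loss of derivatives: by \eqref{stima tame Phi (2) enunciato}, \eqref{stima tame Phi (1) enunciato}, \eqref{stima Phi (3) tau tame nel lemma}, \eqref{stima Phi (4) enunciato}, there is $\sigma_0=\sigma_0(\tau,\mathbb S_+)>0$ so that, for $i=1,\dots,4$ and any $\sM\le s\le S$,
\begin{equation}\label{Phi-i-tame}
\mathfrak M_{(\Phi^{(i)})^{\pm 1}}(s) \;\lesssim_S\; 1 + \|\iota\|_{s+\sigma_0}^{\Lip(\gamma)}\,.
\end{equation}
For the KAM-reducibility conjugator, by Theorem \ref{Teorema di riducibilita} and Lemma \ref{A versus |A|}, $U_n^{\pm 1}-\mathrm{Id}_\bot$ is $\Lip(\gamma)$-tame with a tame constant controlled by
\begin{equation}\label{Un-tame}
\mathfrak M_{U_n^{\pm 1}-\mathrm{Id}_\bot}(s) \;\lesssim_S\; \gamma^{-1} N_0^{\tau_1}\bigl(\e + \|\iota\|_{s+\mu(\mathtt b)}^{\Lip(\gamma)}\bigr),
\end{equation}
so that $\mathfrak M_{U_n^{\pm 1}}(s)\lesssim_S 1+\gamma^{-1}N_0^{\tau_1}\bigl(\e+\|\iota\|_{s+\mu(\mathtt b)}^{\Lip(\gamma)}\bigr)$. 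Under the ansatz \eqref{ansatz I delta} with $\mu_0\ge \sM+\mu(\mathtt b)+\sigma_0$, evaluating at $s=\sM$ gives $\mathfrak M_{(\Phi^{(i)})^{\pm 1}}(\sM)\lesssim_S 1$ and $\mathfrak M_{U_n^{\pm 1}}(\sM)\lesssim_S 1$.

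Next I would iterate Lemma \ref{composizione operatori tame AB}, which says that a composition of $\Lip(\gamma)$-tame operators is $\Lip(\gamma)$-tame with a tame constant satisfying the interpolation-type bound $\mathfrak M_{AB}(s)\lesssim \mathfrak M_A(s)\mathfrak M_B(\sM+\sigma)+\mathfrak M_A(\sM)\mathfrak M_B(s+\sigma)$. Applying this four times to $\Phi^{(4)}\circ\Phi^{(3)}\circ\Phi^{(2)}\circ\Phi^{(1)}$ (respectively its inverse in reverse order) and using the low-norm boundedness of each factor together with \eqref{Phi-i-tame}, I obtain, for a possibly larger $\sigma_1=\sigma_1(\tau,\mathbb S_+)$ and every $\sM\le s\le S$,
\begin{equation*}
\mathfrak M_{(\Phi^{(4)}\cdots\Phi^{(1)})^{\pm 1}}(s)\;\lesssim_S\; 1+\|\iota\|_{s+\sigma_1}^{\Lip(\gamma)}.
\end{equation*}
Then one more application of Lemma \ref{composizione operatori tame AB} against \eqref{Un-tame} yields a constant $\sigma\ge\sigma_1+\mu(\mathtt b)$ such that
\begin{equation*}
\mathfrak M_{\mathcal V_n^{\pm 1}}(s)\;\lesssim_S\; 1 + \|\iota\|_{s+\sigma}^{\Lip(\gamma)} + \gamma^{-1}N_0^{\tau_1}\bigl(\e+\|\iota\|_{s+\sigma}^{\Lip(\gamma)}\bigr)\bigl(1+\|\iota\|_{\sM+\sigma}^{\Lip(\gamma)}\bigr),
\end{equation*}
and the ansatz absorbs the $\e$ factor and the $\|\iota\|_{\sM+\sigma}^{\Lip(\gamma)}$ factor in the second term, giving $\mathfrak M_{\mathcal V_n^{\pm 1}}(s)\lesssim_S 1+\gamma^{-1}N_0^{\tau_1}\|\iota\|_{s+\sigma}^{\Lip(\gamma)}$.

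Finally, I would invoke Lemma \ref{lemma operatore e funzioni dipendenti da parametro} (action of a tame operator on Sobolev functions), which converts a tame-constant bound into the claimed estimate
\begin{equation*}
\|\mathcal V_n^{\pm 1}h\|_s^{\Lip(\gamma)} \;\lesssim_S\; \mathfrak M_{\mathcal V_n^{\pm 1}}(\sM)\|h\|_{s+\sigma}^{\Lip(\gamma)} + \mathfrak M_{\mathcal V_n^{\pm 1}}(s)\|h\|_{\sM+\sigma}^{\Lip(\gamma)},
\end{equation*}
and substituting the bounds above produces exactly \eqref{stime W1 W2}. The only genuinely delicate point is bookkeeping the accumulated loss of derivatives along the four compositions and ensuring the ansatz \eqref{ansatz I delta} is used with $\mu_0$ large enough to absorb $\mathfrak M(\sM)$ at each stage; the KAM factor $U_n$ contributes the $\gamma^{-1}N_0^{\tau_1}$ prefactor while the pre-KAM factors contribute only polynomial losses in $\|\iota\|_{s+\sigma}^{\Lip(\gamma)}$, and the smallness conditions \eqref{ansatz riducibilita} and \eqref{KAM smallness condition1} guarantee that no further terms appear.
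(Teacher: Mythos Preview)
Your approach is correct and essentially identical to the paper's: both combine the tame estimates \eqref{stima tame Phi (2) enunciato}, \eqref{stima tame Phi (1) enunciato}, \eqref{stima Phi (3) tau tame nel lemma}, \eqref{stima Phi (4) enunciato} for the $\Phi^{(i)}$, the modulo-tame estimate \eqref{stima Phi infinito} for $U_n^{\pm1}-\mathrm{Id}_\bot$ (together with Lemma~\ref{A versus |A|}), the composition Lemma~\ref{composizione operatori tame AB}, and the action Lemma~\ref{lemma operatore e funzioni dipendenti da parametro}. The only cosmetic difference is that you absorb $\mu(\mathtt b)$ into your $\sigma$, whereas the paper keeps them separate in the final estimate.
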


\begin{proof}
By the estimates \eqref{stima tame Phi (2) enunciato}, \eqref{stima tame Phi (1) enunciato}, \eqref{stima Phi (3) tau tame nel lemma}, \eqref{stima Phi (4) enunciato}, using Lemmata \ref{composizione operatori tame AB}, 
\ref{lemma operatore e funzioni dipendenti da parametro}, \ref{A versus |A|} and \eqref{stima Phi infinito}. 
\end{proof}

We now decompose the operator $ {\mathtt L}_n $ in \eqref{cal L infinito} as 
\begin{equation}\label{decomposizione bf Ln}
{\mathtt L}_n = {\mathfrak L}_n^{<}  + {\mathtt R}_n + {\mathtt R}_n^\bot   
\end{equation}
where 
\be\label{Rn-bot}
{\mathfrak L}_n^{<} := \Pi_{K_n} \big( \Dom  + \ii {\mathtt D}_n \big) \Pi_{K_n} + \Pi_{K_n}^\bot \, , \quad 
{\mathtt R}_n^\bot := 
\Pi_{K_n}^\bot \big( \Dom  + \ii {\mathtt D}_n \big) \Pi_{K_n}^\bot 
- \Pi_{K_n}^\bot \, , 
\end{equation}
the diagonal operator $ {\mathtt D}_n $ is defined in 
\eqref{cal L nu} (with $ \nu = n $), and 
$ K_n := K_0^{\chi^n} $ 
is the scale of the nonlinear Nash-Moser iterative scheme introduced in 
\eqref{definizione Kn}.  

\begin{lemma} {\bf (First order Melnikov non-resonance conditions)}\label{lem:first-Mel}
For all $ \om  $ in 
\begin{equation}\label{prime di melnikov}
 {\mathtt \Lambda}_{n + 1}^{\gamma}  :=  {\mathtt \Lambda}_{n + 1}^{\gamma} ( \io ) := 
\big\{ \omega \in  \mathtt \Omega  :  |\omega \cdot \ell  
+  \widetilde \mu_j^n| \geq  2\gamma |j|^{3} \langle \ell  \rangle^{- \tau} \,,
\quad \forall | \ell  | \leq K_n\,,\ j \in \mathbb S^\bot \big\} \, ,
\end{equation}
the operator $ {\mathfrak L}_n^< $ in \eqref{Rn-bot} 
is invertible and
\begin{equation}\label{stima tilde cal Dn}
\| ({\mathfrak L}_n^<)^{- 1} g \|_s^\Lipg 
\lesssim \gamma^{- 1} \| g \|_{s + 2 \t + 1 }^\Lipg\, . 
\end{equation}
\end{lemma}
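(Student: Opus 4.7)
The plan is to exploit the fact that $\mathfrak{L}_n^<$ acts diagonally on the Fourier basis $\{e^{\ii(\ell\cdot\vphi+2\pi jx)}\}$ and then to simply divide by the small divisors, controlling them via the Melnikov conditions defining $\mathtt\Lambda_{n+1}^{\gamma}$. Writing $g\in H_\bot^s$ as $g(\vphi,x)=\sum_{\ell\in\Z^{\Splus},\,j\in\Sbot} g_{\ell,j}\,e^{\ii(\ell\cdot\vphi+2\pi jx)}$, the definition \eqref{Rn-bot} of $\mathfrak{L}_n^<$ shows that we should define
\[
h := (\mathfrak{L}_n^<)^{-1} g
 = \sum_{|\ell|\leq K_n,\,j\in\Sbot} \frac{g_{\ell,j}}{\ii(\omega\cdot\ell + \mu_j^n)}\, e^{\ii(\ell\cdot\vphi+2\pi jx)}
 + \Pi_{K_n}^\bot g,
\]
where for $\omega\in \mathtt\Lambda_{n+1}^{\gamma}\subseteq\mathtt\Omega$ the denominators are controlled by the assumed lower bound in \eqref{prime di melnikov} applied to the extension $\widetilde\mu_j^n$, which coincides with $\mu_j^n$ on $\mathtt\Omega_n^\gamma$ (one verifies $\mathtt\Lambda_{n+1}^{\gamma}\subseteq \mathtt\Omega_n^\gamma$ using the inductively obtained bound $|\mu_j^n-\widetilde\mu_j^n|=0$ there). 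In particular, since $|j|\geq 1$ for $j\in\Sbot$, one gets the pointwise bound
\[
\bigl|\omega\cdot\ell + \mu_j^n(\omega)\bigr|^{-1} \leq \frac{\langle\ell\rangle^{\tau}}{2\gamma\,|j|^3}\leq \frac{\langle\ell\rangle^{\tau}}{2\gamma}\,,\qquad \forall\,|\ell|\leq K_n,\ j\in\Sbot.
\]

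First I would handle the sup part of the norm. Squaring the Fourier expansion and using that $\langle\ell,j\rangle^{2s}=\langle\ell,j\rangle^{2(s+\tau)}\langle\ell,j\rangle^{-2\tau}\leq\langle\ell,j\rangle^{2(s+\tau)}\langle\ell\rangle^{-2\tau}$ when $|\ell|\geq |j|$, and $\langle\ell,j\rangle^{-2\tau}\leq 1$ otherwise (recalling $|j|\geq 1$), yields
\[
\|h\|_s^{\rm sup}\leq \tfrac{1}{2}\gamma^{-1}\|g\|_{s+\tau}^{\rm sup}+\|\Pi_{K_n}^\bot g\|_s^{\rm sup}\lesssim \gamma^{-1}\|g\|_{s+\tau}^{\rm sup}.
\]
Next, the main technical point is the Lipschitz estimate, which is where the additional loss $\tau+1$ comes from. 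For $\omega_1,\omega_2\in\mathtt\Lambda_{n+1}^\gamma$, one writes
\[
\frac{1}{\omega_1\cdot\ell+\mu_j^n(\omega_1)}-\frac{1}{\omega_2\cdot\ell+\mu_j^n(\omega_2)}=-\frac{(\omega_1-\omega_2)\cdot\ell+\mu_j^n(\omega_1)-\mu_j^n(\omega_2)}{(\omega_1\cdot\ell+\mu_j^n(\omega_1))(\omega_2\cdot\ell+\mu_j^n(\omega_2))}.
\]
Combining the expansion $\mu_j^n=\mu_j^0+r_j^n$ from \eqref{mu j nu}, the $\Lipg$-bound $|r_j^n|^{\Lipg}\leq C(S)\e\gamma^{-2}$ from \eqref{stima rj nu}, the bounds \eqref{m3Lip} and \eqref{stima m1} on $m_3$ and $m_1$, and the estimate \eqref{stima qj omega} on $q_j$, one obtains a Lipschitz bound of the form $|\mu_j^n(\omega_1)-\mu_j^n(\omega_2)|\leq C|j|^3|\omega_1-\omega_2|$ uniformly in $j\in\Sbot$, provided the smallness \eqref{KAM smallness condition1} holds. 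Using once again the Melnikov lower bound in the denominator gives, for $|\ell|\leq K_n$,
\[
\Bigl|\frac{1}{\omega_1\cdot\ell+\mu_j^n(\omega_1)}-\frac{1}{\omega_2\cdot\ell+\mu_j^n(\omega_2)}\Bigr|\;\lesssim\;\frac{(\langle\ell\rangle+|j|^3)\langle\ell\rangle^{2\tau}}{\gamma^2\,|j|^6}\,|\omega_1-\omega_2|\;\lesssim\;\frac{\langle\ell\rangle^{2\tau+1}}{\gamma^2\,|j|^3}\,|\omega_1-\omega_2|,
\]
so that, multiplying by $\gamma$ as dictated by \eqref{def norm Lip Stein uniform}, the Lipschitz contribution loses exactly $\langle\ell\rangle^{2\tau+1}$ derivatives with a prefactor $\gamma^{-1}$.

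Putting the two estimates together and bounding $\langle\ell\rangle^{2\tau+1}\leq\langle\ell,j\rangle^{2\tau+1}$ yields
\[
\|h\|_s^{\Lipg}\;\lesssim\;\gamma^{-1}\bigl(\|g\|_{s+\tau}^{\rm sup}+\gamma\|g\|_{s+2\tau+1}^{\rm lip}\bigr)\;\lesssim\;\gamma^{-1}\|g\|_{s+2\tau+1}^{\Lipg},
\]
which is \eqref{stima tilde cal Dn}. The only delicate step is tracking the Lipschitz loss cleanly; everything else is essentially Fourier bookkeeping. The smoothing projector $\Pi_{K_n}^\bot$ in the definition of $\mathfrak{L}_n^<$ causes no difficulty, since on that subspace $\mathfrak{L}_n^<$ is the identity and the corresponding contribution to $h$ is just $\Pi_{K_n}^\bot g$, which trivially satisfies the stated bound. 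Hence the lemma follows.
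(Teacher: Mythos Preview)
Your argument is correct and is exactly the standard diagonal-in-Fourier computation the paper has in mind; the lemma is stated in the paper without proof, as it is the direct analogue of the estimate \eqref{Diophantine-1} for $(\omega\cdot\partial_\vphi)^{-1}$, now with the small divisors $\omega\cdot\ell+\mu_j^n$ controlled by the first Melnikov conditions. Two small remarks on bookkeeping: (i) the projector $\Pi_{K_n}$ truncates at $\langle\ell,j\rangle\le K_n$, not just $|\ell|\le K_n$, but since the Melnikov bound in \eqref{prime di melnikov} is assumed for all $j\in\Sbot$ and $|\ell|\le K_n$ this covers the needed range; (ii) the inclusion $\mathtt\Lambda_{n+1}^{\gamma}\subseteq\mathtt\Omega_n^\gamma$ you assert need not hold on its own --- rather, in the application (Theorem~\ref{inversione parziale cal L omega}) one works on $\mathbf\Omega_{n+1}^\gamma=\mathtt\Omega_{n+1}^\gamma\cap\mathtt\Lambda_{n+1}^\gamma$, where $\widetilde\mu_j^n=\mu_j^n$, or equivalently one may run the whole argument with the extended eigenvalues $\widetilde\mu_j^n$, which carry the same Lipschitz bounds.
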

 By \eqref{final semi conjugation}, \eqref{decomposizione bf Ln}, 
Theorem \ref{Teorema di riducibilita}, 
 estimates \eqref{stima tilde cal Dn}, \eqref{stima tilde cal Rn}, \eqref{stime W1 W2}, and using that, 
for all $ b  > 0$,   
\begin{equation}\label{stima tilde cal Rn}
\| {\mathtt R}_n^\bot h \|_{\sM}^\Lipg \lesssim K_n^{- b} \| h \|_{\sM + b + 3}^\Lipg\,,\quad \| {\mathtt R}_n^\bot h\|_s^\Lipg \lesssim \| h \|_{s + 3}^\Lipg \, , 
\end{equation}
we deduce the following theorem, stating the almost-invertibility assumption of  ${\cal L}_\omega$ of Section \ref{sezione approximate inverse}. 

\begin{theorem}\label{inversione parziale cal L omega}
{\bf (Almost-invertibility of $ {\cal L}_\omega $)} 
Let $ {\mathtt a}, {\mathtt b}, M $ as in \eqref{alpha beta} and $S > \sM$. There exists $\sigma = \sigma(\tau, \mathbb S_+) > 0$ such that, 
if \eqref{ansatz riducibilita} and \eqref{ansatz I delta} with $\mu_0 \geq \sM + \mu(\mathtt b) + \sigma$  hold, then, for all 
\begin{equation}\label{Melnikov-invert}
\omega \in  {\bf \Omega}_{n + 1}^{\g}  
:= {\bf \Omega}_{n + 1}^{\g} (\io) 
:= {\mathtt \Omega}_{n + 1}^\gamma  
\cap  {\mathtt \Lambda}_{n + 1}^{\gamma}
\end{equation}
(see \eqref{Cantor set}, \eqref{prime di melnikov}),  
the operator $ {\mathcal L}_\omega$ defined in \eqref{Lomega def} 
can be decomposed as 
\begin{align}\label{splitting cal L omega}
&  {\mathcal L}_\omega  = {\mathcal L}_\omega^{<} + {\cal R}_\omega +  {\cal R}_\omega^\bot \,, \\
& {\mathcal L}_\omega^{<} := {\cal V}_n^{-1} {\mathfrak L}_n^{<} {\cal V}_n \,,\quad 
{\cal R}_\omega := {\cal V}_n^{- 1} {\mathtt R}_n {\cal V}_n
\,,\quad {\cal R}_\omega^\bot := {\cal V}_n^{- 1} {\mathtt R}_n^\bot {\cal V}_n \, , \nonumber 
\end{align}
where  ${\mathcal L}_\omega^{<} $ is invertible and satisfies \eqref{tame inverse} and the operators ${\cal R}_\omega$ and ${\cal R}_\omega^\bot$  satisfy \eqref{stima R omega corsivo}-\eqref{stima R omega bot corsivo bassa}. 
\end{theorem}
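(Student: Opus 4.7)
The plan is to assemble the four main ingredients that have already been prepared in the previous sections and simply track how the various estimates compose. The starting point is the semi-conjugation identity \eqref{final semi conjugation}, $\mathcal{L}_\omega = \mathcal{V}_n^{-1} \mathtt{L}_n \mathcal{V}_n$, coming from the reduction of Section \ref{linearizzato siti normali} followed by the KAM almost-reducibility Theorem \ref{Teorema di riducibilita}. Substituting the splitting \eqref{decomposizione bf Ln}, $\mathtt{L}_n = \mathfrak{L}_n^< + \mathtt{R}_n + \mathtt{R}_n^\bot$, and conjugating each summand by $\mathcal{V}_n^{\pm 1}$ immediately yields the decomposition \eqref{splitting cal L omega}. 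So the real content is checking the tame bounds on each of the three pieces.

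First I would handle $\mathcal{L}_\omega^< = \mathcal{V}_n^{-1} \mathfrak{L}_n^< \mathcal{V}_n$. On $\mathtt{\Lambda}_{n+1}^\gamma$ Lemma \ref{lem:first-Mel} gives $(\mathfrak{L}_n^<)^{-1}$ together with the tame bound \eqref{stima tilde cal Dn}. Combining this with the tame estimates \eqref{stime W1 W2} for $\mathcal{V}_n^{\pm 1}$ by the standard interpolation/tame composition (Lemma \ref{composizione operatori tame AB}) produces a solution operator $(\mathcal{L}_\omega^<)^{-1} = \mathcal{V}_n^{-1} (\mathfrak{L}_n^<)^{-1} \mathcal{V}_n$ satisfying exactly the target bound \eqref{tame inverse}, provided $\mu_0 \ge s_M + \mu(\mathtt b) + \sigma$ with $\sigma$ large enough to absorb the loss $2\tau+1$ from \eqref{stima tilde cal Dn} and the loss $\sigma$ from \eqref{stime W1 W2}. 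The small-divisor condition one needs is exactly the first Melnikov condition defining $\mathtt{\Lambda}_{n+1}^\gamma$, which is built into the intersection $\mathbf{\Omega}_{n+1}^\gamma$ in \eqref{Melnikov-invert}.

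Next I would estimate $\mathcal{R}_\omega = \mathcal{V}_n^{-1} \mathtt{R}_n \mathcal{V}_n$. By Theorem \ref{Teorema di riducibilita}, $\mathtt{R}_n$ is $\Lipg$-modulo-tame with constant controlled by $N_{n-1}^{-\mathtt a}(\varepsilon + \|\iota\|_{s+\mu(\mathtt b)}^\Lipg)$, and by Lemma \ref{A versus |A|} this also serves as a tame constant. Sandwiching $\mathtt{R}_n$ by $\mathcal{V}_n^{\pm 1}$ using Lemma \ref{composizione operatori tame AB} and the bounds \eqref{stime W1 W2}, and then interpolating away one of the factors using the ansatz \eqref{ansatz I delta}, yields the estimate \eqref{stima R omega corsivo} on $\mathcal{R}_\omega$. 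For $\mathcal{R}_\omega^\bot = \mathcal{V}_n^{-1} \mathtt{R}_n^\bot \mathcal{V}_n$ one uses the smoothing bounds \eqref{stima tilde cal Rn} on $\mathtt{R}_n^\bot$ (which come from the definition \eqref{Rn-bot} and the sharp smoothing properties of $\Pi_{K_n}^\bot$), combined once more with \eqref{stime W1 W2}, to obtain \eqref{stima R omega bot corsivo bassa} for any $b>0$.

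The only genuinely delicate point is bookkeeping the loss of derivatives $\sigma = \sigma(\tau, \mathbb S_+)$ so that all the compositions close on the same scale of Sobolev indices, in particular ensuring that the tame constants for the operator $\mathcal{V}_n$ (which mix the low-norm size of $\iota$ with the high-norm size of the function being acted on, as recorded in \eqref{stime W1 W2}) combine correctly with the non-small factors $N_0^{\tau_1}\gamma^{-1}\|\iota\|_{s+\mu(\mathtt b)+\sigma}^\Lipg$ appearing in the statements \eqref{tame inverse}, \eqref{stima R omega corsivo}, \eqref{stima R omega bot corsivo bassa}. No new small divisor argument is required beyond those already imposed in the definitions \eqref{Cantor set} and \eqref{prime di melnikov} of $\mathbf{\Omega}_{n+1}^\gamma$; the proof is therefore essentially an assembly of the results already proved in Sections \ref{linearizzato siti normali} and \ref{sec: reducibility}.
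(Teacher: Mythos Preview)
Your proposal is correct and follows exactly the same approach as the paper: the paper's proof is literally the one-sentence assembly of \eqref{final semi conjugation}, \eqref{decomposizione bf Ln}, Theorem \ref{Teorema di riducibilita}, \eqref{stima tilde cal Dn}, \eqref{stima tilde cal Rn}, and \eqref{stime W1 W2}, and you have simply expanded this list into an explicit composition argument using Lemma \ref{composizione operatori tame AB}.
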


\section{Proof of Theorem \ref{main theorem}}\label{sec:NM}
Theorem \ref{main theorem} 
is a consequence of Theorem \ref{iterazione-non-lineare} below
where we construct iteratively a sequence of better and better approximate solutions of the equation
$ {\cal F}_\omega ( \iota, \zeta) = 0$ where ${\cal F}_\omega $  is defined in 
\eqref{operatorF}. 

\subsection{The Nash-Moser iteration}
We consider the finite-dimensional subspaces 
of $L^2_\vphi \times L^2_\vphi \times L^2_\bot$, defined 
for any $n \in \N$ as
$$ 
\mathtt E_n := \big\{ \iota(\vphi ) = (\Theta , y , w) (\vphi) , \ \  
\Theta = \Pi_n \Theta, \ y = \Pi_n y , \ w = \Pi_n w \big\}
$$
where 
$L^2_\vphi = L^2_\vphi(\T_1 \times \R^{\Splus})$ 
(cf. \eqref{def H^s_varphi}) and where
$ \Pi_n:= \Pi_{K_n} : 
L^2_\bot \to \cap_{s \ge 0}H^s_\bot  $ is the projector
(cf. \eqref{def:smoothings})
\be\label{truncation NM}
\Pi_n  : \, w = 
\sum_{\ell  \in \Z^{\Splus},  j \in \Sbot} w_{\ell, j} e^{\ii (\ell  \cdot \ph + 2 \pi jx)} 
 \quad \mapsto \quad \Pi_n w 
:= \sum_{|(\ell ,j)| \leq K_n} 
w_{\ell,  j} e^{\ii (\ell  \cdot \ph + 2 \pi jx)}  
\ee
with $ K_n = K_0^{\chi^n} $ (cf. \eqref{definizione Kn}) and also
denotes the corresponding one on $L^2_\varphi,$ given by
 $L^2_\varphi \to \cap_{s \ge 0} H^s_{\varphi},$ 
 $p = \sum_{\ell  \in \Z^{\Splus}} p_{\ell} 
 e^{\ii \ell  \cdot \ph}    \mapsto 
 \sum_{|\ell| \leq K_n} p_{\ell} e^{\ii \ell  \cdot \ph} $.
Note that $ \Pi_n $, $n \ge 1$, are smoothing operators
for the Sobolev spaces $H^s_\bot$. In particular  $ \Pi_n $ and
$ \Pi_n^\bot := {\rm Id} - \Pi_n $
satisfy the smoothing properties \eqref{p3-proi}.
For the Nash-Moser Theorem \ref{iterazione-non-lineare},
stated below, we introduce the constants
\begin{align}
& \label{costanti nash moser 2} \!\! \!  
 \overline \sigma := \max \{ \sigma_1, \sigma_2 \}\, ,  
 \qquad {\mathtt b} := [{\mathtt a}] + 2\, ,
 \qquad {\mathtt a} = 3 \tau_1 + 1\, ,
 \qquad  \tau_1 = 2 \tau  + 1 \, ,
  \qquad \chi = 3/ 2 \, ,\\
& \label{costanti nash moser}
\!\! \! {\mathtt a}_1 :=  {\rm max}\{12 \overline \sigma + 13, \, p \tau + 3 + \chi(\mu(\mathtt b) + 2 \overline \sigma) \},  \quad 
  \mathtt a_2 := \chi^{- 1} \mathtt a_1  - \mu(\mathtt b) - 2 \overline \sigma  \, , \\
&  \label{costanti nash moser 1}
\!\! \!  {\mathtt b}_1 := {\mathtt a}_1 + \mu({\mathtt b}) +  3 \overline \sigma + 4  +  \frac23 \mu_1 \,,  
\qquad  \mu_1 := 3( \mu({\mathtt b}) + 2\overline \sigma + 2 ) + 1 \, , \quad S := \sM + {\mathtt b}_1 \, ,  
\end{align}
where 
 $ \sigma_1 $ is  defined in Lemma \ref{stime tame campo ham per NM AI}, 
  $ \sigma_2 $  in Theorem \ref{thm:stima inverso approssimato},  
and $ {\mathtt a} $, $\mu(\mathtt b)$  in \eqref{alpha beta}.   
The number $ p $ is the exponent in \eqref{NnKn} 
and is requested to satisfy  
\be\label{cond-su-p}
p {\mathtt a} > (\chi - 1 ) {\mathtt a}_1 + \chi (\overline \sigma + 4) = \frac12 {\mathtt a}_1 + \frac32 (\overline \sigma + 4) \, . 
\ee
In view of the definition \eqref{costanti nash moser}
of $ {\mathtt a}_1 $, we can define $p := p(\tau, \Splus) $ as 
\be\label{choice:p}
p := \frac{12 \overline \sigma + 17 + \chi(\mu(\mathtt b) + 2 \overline \sigma)}{ \mathtt a} \, . 
\ee
We denote by 
$ \|  W \|_{s}^\Lipg := \max\{ \|  \iota\|_{s}^\Lipg, |  \zeta |^\Lipg \}  $ 
the norm of a function  
$$
W := ( \iota, \zeta ): {\mathtt \Omega} \to
\big( H^{s}_\vphi  \times H^{s}_\vphi \times H^{s}_\bot \big) \times \R^{\Splus}
 \, , 
\ \  \omega \mapsto W (\omega) = ( \iota(\omega), \zeta (\omega)) \, .   
$$
The following 
Nash-Moser Theorem can be proved in a by now standard way  
as in \cite{Berti-Montalto}, \cite{BBHM}. 

\begin{theorem}\label{iterazione-non-lineare} 
{\bf (Nash-Moser)} 
There exist $ 0 < \d_0 < 1$, $ C_* > 0 $ so that if
\begin{equation}\label{nash moser smallness condition}  
\e K_0^{\tau_2} < \d_0 , \quad \tau_2 := {\rm max}\{ p \overline{\tau} + 3, \ 4 \overline \sigma + 4 + {\mathtt a}_1  \} \, , 
\quad K_0 := \gamma^{- 1}, \quad \gamma:= \e^{\frak a}\,,\quad 0 < {\frak a} < \frac{1}{ \tau_2}\,,
\end{equation}
where $ \overline{\tau} := \overline{\tau}(\tau, \Splus)$ is  defined in Theorem \ref{iterazione riducibilita}, 
 then the following holds for all $n \in \N$: 
\begin{itemize}
\item[$({\cal P}1)_{n}$] 
Let $\tilde W_0 := (0, 0)$. For $n \ge 1,$
there exists a   $\Lipg$-function $\tilde W_n : 
\R^{\Splus} \to \mathtt E_{n -1} \times \R^{\Splus} $, 
$ \omega  \mapsto \tilde W_n (\omega) 
:=  (\tilde \iota_n, \tilde \zeta_n  ) $,   
satisfying 
\begin{equation}\label{ansatz induttivi nell'iterazione}
\| \tilde W_n \|_{\sM + \mu({\mathtt b}) + \overline \sigma}^\Lipg \lesssim  \e  \gamma^{- 2}\,. 
\end{equation}
Let $\tilde U_n := U_0 + \tilde W_n$ where $ U_0 := (\vphi,0,0, 0)$.
For $ n \geq 1 $, the difference $\tilde H_n := \tilde U_{n} - \tilde U_{n-1}$, 
,  satisfies
\begin{equation}  \label{Hn}
\|\tilde H_1 \|_{\sM + \mu({\mathtt b}) + \overline \sigma}^\Lipg \lesssim	  \e \gamma^{- 2} \,,
 \qquad \| \tilde H_{n} \|_{ \sM + \mu({\mathtt b}) + \overline \sigma}^\Lipg \lesssim   \e \gamma^{- 2} K_{n - 1}^{- \mathtt a_2} \,,\quad \text{ for } n \ge 2\, . 
\end{equation}
\item[$({\cal P}2)_{n}$] Let ${\cal G}_{0} := \Omega$ and define for 
$n \ge 1$,
\be\label{def:cal-Gn}
 {\cal G}_{n}  :=  {\cal G}_{n-1} \cap \ 
 {\bf \Omega}_{n  }^{ \gamma}({\tilde \io}_{n-1}) \, , 
\ee
where $  {\bf \Omega}_{n}^{ \gamma}({\tilde \io}_{n-1}) $ is defined in \eqref{Melnikov-invert}. 
Then for any $\omega \in {\cal G}_{n}$ 
\be\label{P2n}
\| {\cal F}_\omega (\tilde U_n) \|_{ \sM}^\Lipg  \leq C_* \e K_{n - 1}^{- {\mathtt a}_1} \, , \qquad
K_{-1} := 1.
\ee
\item[$({\cal P}3)_{n}$] \emph{(High norms)\quad } 
$ \| \tilde W_n \|_{ \sM + {\mathtt b}_1}^\Lipg 
\leq C_* \e   K_{n - 1}^{\mu_1},$ $\forall \omega  \in {\cal G}_{n}$.
\end{itemize}
\end{theorem}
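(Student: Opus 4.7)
The plan is to carry out a Nash–Moser iteration in which, at each step $n$, we have an approximate solution $\tilde U_n = U_0 + \tilde W_n$ defined and Lipschitz in $\omega$ on the whole parameter set $\Omega$, satisfying the low-norm error bound $(\mathcal{P}2)_n$ on the ``Cantor-like'' set $\mathcal{G}_n$ and the high-norm bound $(\mathcal{P}3)_n$ everywhere. The update $\tilde H_{n+1}$ is obtained by composing the almost-approximate right inverse $\mathbf{T}_0(\tilde \iota_n)$ provided by Theorem \ref{thm:stima inverso approssimato} with the smoothing projector $\Pi_{n+1}$ from \eqref{truncation NM}, in the standard Moser fashion. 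The base case $n=0$ is essentially trivial: for $\tilde W_0 = 0$ one has $\mathcal{F}_\omega(U_0) = -X_{\mathcal{H}_\varepsilon}(\varphi,0,0) - (0,\zeta,0)$, which by the normal form $\mathcal{H}_\varepsilon = \mathcal{N} + \mathcal{P}_\varepsilon$ (see \eqref{splitting ham forma normale + perturbazione}) and the cubic vanishing property \eqref{Rkdv-cubic-der} reduces to $-\varepsilon X_\mathcal{P}(\varphi,0,0) + \omega^{kdv}(\nu)+\omega)$-type terms; choosing $\omega = -\omega^{kdv}(\nu)$ as in \eqref{mu-kdv-om} and using the tame estimates of Lemma \ref{stime tame campo ham per NM AI} gives $\|\mathcal{F}_\omega(U_0)\|_s^{\Lipg} \lesssim_s \varepsilon$, which is stronger than \eqref{P2n} with $K_{-1}=1$.

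\textbf{Inductive step.} Assume $(\mathcal{P}1)_n$–$(\mathcal{P}3)_n$ hold. Define
\[
\tilde H_{n+1} := -\Pi_{n+1}\,\mathbf{T}_0(\tilde \iota_n)\,\Pi_{n+1}\,\mathcal{F}_\omega(\tilde U_n), \qquad \tilde U_{n+1} := \tilde U_n + \tilde H_{n+1},
\]
where $\mathbf{T}_0$ is extended outside $\mathcal{G}_n$ by a Whitney/Kirszbraun extension of the Lipschitz eigenvalues $\widetilde\mu_j^n$ appearing in the first-order Melnikov set \eqref{prime di melnikov}; this is what allows $\tilde U_{n+1}$ to be defined on all of $\Omega$ and makes sense of the next Melnikov set $\mathcal{G}_{n+1}$. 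A Taylor expansion gives
\[
\mathcal{F}_\omega(\tilde U_{n+1}) = \Pi_{n+1}^\bot \mathcal{F}_\omega(\tilde U_n) + \bigl(d_{\iota,\zeta}\mathcal{F}_\omega(\tilde U_n)\circ \mathbf{T}_0(\tilde \iota_n) - \mathrm{Id}\bigr)\Pi_{n+1}\mathcal{F}_\omega(\tilde U_n) + Q_n,
\]
where $Q_n := \mathcal{F}_\omega(\tilde U_{n+1}) - \mathcal{F}_\omega(\tilde U_n) - d\mathcal{F}_\omega(\tilde U_n)[\tilde H_{n+1}]$ is quadratic in $\tilde H_{n+1}$. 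For $\omega\in \mathcal{G}_{n+1}$, the middle term is the almost-inverse remainder $(\mathcal{P}+\mathcal{P}_\omega+\mathcal{P}_\omega^\bot)\Pi_{n+1}\mathcal{F}_\omega(\tilde U_n)$ estimated in \eqref{stima inverso approssimato 2}–\eqref{stima cal G omega bot bassa}.

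\textbf{Propagation of the estimates.} The three error terms are bounded by: (i) a smoothing tail $\|\Pi_{n+1}^\bot\mathcal{F}_\omega(\tilde U_n)\|_{\sM}^{\Lipg} \lesssim K_{n+1}^{-\mathtt b_1}\|\mathcal{F}_\omega(\tilde U_n)\|_{\sM+\mathtt b_1}^{\Lipg}$, controlled interpolating \eqref{P2n} with $(\mathcal{P}3)_n$; (ii) the quadratic term $Q_n$, bounded using the second-derivative tame estimate of Lemma \ref{stime tame campo ham per NM AI} and \eqref{stima inverso approssimato 1} by $(\gamma^{-2}\|\mathcal{F}_\omega(\tilde U_n)\|_{\sM+\sigma_2}^{\Lipg})^2$ times a high-norm factor; (iii) the ``almost-inverse defects'' $\mathcal{P},\mathcal{P}_\omega,\mathcal{P}_\omega^\bot$, which contribute respectively $\gamma^{-3}\|\mathcal{F}_\omega(\tilde U_n)\|^2$, $\varepsilon\gamma^{-4}N_n^{-\mathtt a}\|\mathcal{F}_\omega(\tilde U_n)\|$, and a smoothing-type $K_n^{-b}$ gain tuned against the high norm. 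The choice of exponents \eqref{costanti nash moser 2}–\eqref{costanti nash moser 1} and the relation \eqref{cond-su-p} between $p$, $\mathtt a$, $\mathtt a_1$, and $\mu_1$ are precisely designed so that, using $K_{n+1}=K_n^\chi$ with $\chi=3/2$, each of these contributions is dominated by $\frac{1}{3}C_*\varepsilon K_n^{-\mathtt a_1}$, yielding $(\mathcal{P}2)_{n+1}$. The high-norm bound $(\mathcal{P}3)_{n+1}$ is obtained by the standard Moser/Berti–Bolle trick: $\|\tilde W_{n+1}\|_{\sM+\mathtt b_1}^{\Lipg} \le \|\tilde W_n\|_{\sM+\mathtt b_1}^{\Lipg} + \|\tilde H_{n+1}\|_{\sM+\mathtt b_1}^{\Lipg}$ with $\|\tilde H_{n+1}\|_{\sM+\mathtt b_1} \lesssim K_{n+1}^{\overline\sigma+4}\gamma^{-2}\|\mathcal{F}_\omega(\tilde U_n)\|_{\sM}^{\Lipg} + \cdots$, giving the polynomial growth $K_n^{\mu_1}$ with the chosen $\mu_1$. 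The difference estimate \eqref{Hn} follows from $\tilde H_{n+1} = -\Pi_{n+1}\mathbf{T}_0\Pi_{n+1}\mathcal{F}_\omega(\tilde U_n)$ together with \eqref{stima inverso approssimato 1} and $(\mathcal{P}2)_n$, picking up the factor $K_n^{-\mathtt a_2}$ via $\mathtt a_2=\chi^{-1}\mathtt a_1-\mu(\mathtt b)-2\overline\sigma$.

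\textbf{Main obstacle.} The delicate point is maintaining consistency between the three scales $K_n$, $N_n=K_n^p$, and $\mathtt a$, $\mathtt a_1$, $\mu_1$: the almost-inverse remainders $\mathcal{P}_\omega$ and $\mathcal{P}_\omega^\bot$ contain both a reducibility loss $N_{n-1}^{-\mathtt a}$ and a high-norm factor $\|\iota\|_{\sM+\mu(\mathtt b)+\sigma_2+b}^{\Lipg}$; the latter must be absorbed via interpolation using $(\mathcal{P}3)_n$, and the exponent $p$ has to be chosen large enough so that $N_{n-1}^{-\mathtt a}$ beats the polynomial growth $K_n^{\mu_1}$. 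Compound with this is the fact that the ansatz \eqref{ansatz I delta} required by all the reduction lemmata of Section \ref{linearizzato siti normali} and by Theorem \ref{iterazione riducibilita} must be re-verified at $\tilde\iota_n$ at each step, which reduces to checking \eqref{ansatz induttivi nell'iterazione}. Summing the telescoping bound \eqref{Hn} gives $\|\tilde W_n\|_{\sM+\mu(\mathtt b)+\overline\sigma}^{\Lipg} \lesssim \varepsilon\gamma^{-2} \sum_{k\ge 0}K_k^{-\mathtt a_2}$, which is $\lesssim \varepsilon\gamma^{-2}$ once $\mathtt a_2>0$ and the smallness \eqref{nash moser smallness condition} is invoked. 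Finally, Theorem \ref{main theorem} follows by passing to the limit $\tilde U_\omega = \lim \tilde U_n$ on $\Omega_\varepsilon := \bigcap_{n\ge 0}\mathcal{G}_n$ (where convergence is ensured by \eqref{Hn} in the norm $\|\cdot\|_{\sM+\mu(\mathtt b)+\overline\sigma}^{\Lipg}$, with $\bar s := \sM+\mu(\mathtt b)+\overline\sigma$), linear stability following from the full reducibility provided by Theorem \ref{Teorema di riducibilita} applied at the limit torus, and the measure estimate \eqref{measure estimate Omega in Theorem 4.1} from a standard count of the excised Melnikov resonances \eqref{Omega nu + 1 gamma} and \eqref{prime di melnikov}, exploiting the non-degeneracy Lemma \ref{Proposition 2.3}.
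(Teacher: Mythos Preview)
Your overall strategy matches the paper's proof closely: the inductive structure, the use of the almost-approximate inverse $\mathbf{T}_0(\tilde\iota_n)$ from Theorem~\ref{thm:stima inverso approssimato}, the three-way splitting of the linear defect into $\mathcal{P}, \mathcal{P}_\omega, \mathcal{P}_\omega^\bot$, and the bookkeeping of the exponents \eqref{costanti nash moser 2}--\eqref{choice:p} are all as in the paper. Two concrete issues, however, deserve correction.

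\textbf{Missing commutator term.} Your Taylor decomposition of $\mathcal{F}_\omega(\tilde U_{n+1})$ omits the term arising from the outer projector in $\tilde H_{n+1} = -\Pi_{n+1}\mathbf{T}_0\Pi_{n+1}\mathcal{F}_\omega(\tilde U_n)$. Writing $L_n\Pi_{n+1} = L_n - L_n\Pi_{n+1}^\bot$ one obtains (with the correct sign)
\[
\mathcal{F}_\omega(\tilde U_{n+1}) = \Pi_{n+1}^\bot\mathcal{F}_\omega(\tilde U_n) \;-\; (L_n\mathbf{T}_0 - \mathrm{Id})\Pi_{n+1}\mathcal{F}_\omega(\tilde U_n) \;+\; L_n\Pi_{n+1}^\bot\mathbf{T}_0\Pi_{n+1}\mathcal{F}_\omega(\tilde U_n) \;+\; Q_n,
\]
and the extra term $R_n := L_n\Pi_{n+1}^\bot\mathbf{T}_0\Pi_{n+1}\mathcal{F}_\omega(\tilde U_n)$ is not zero since $d_\iota X_{\mathcal{H}_\varepsilon}$ does not commute with the Fourier truncation. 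The paper estimates $R_n$ (see \eqref{Rn Q tilde n}, \eqref{stima Rn norma bassa}) by trading the $\Pi_{n+1}^\bot$ for a negative power of $K_n$ against the high norm, exactly as in your item (i); without this term your decomposition is simply incorrect. The paper also uses $\Pi_n$ rather than $\Pi_{n+1}$ in the definition of the update (cf. \eqref{soluzioni approssimate}), which affects the constants but is cosmetic.

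\textbf{Extension to $\Omega$.} You propose to extend $\mathbf{T}_0$ outside $\mathcal{G}_{n+1}$ by extending the eigenvalues $\widetilde\mu_j^n$. This is not sufficient: $\mathbf{T}_0$ also involves the full conjugation $\mathcal{V}_n$ (built from the flows $\Phi^{(1)},\ldots,\Phi^{(4)}$ and $U_n$), which depends on $\omega$ through diophantine small divisors and need not extend as a bounded operator off $\mathcal{G}_{n+1}$. The paper instead defines $H_{n+1}$ only for $\omega\in\mathcal{G}_{n+1}$, proves the Lipschitz bounds there, and then Kirszbraun-extends the \emph{function} $H_{n+1}$ (a finite-dimensional-valued object in $\mathtt{E}_n\times\R^{\Splus}$) to obtain $\tilde H_{n+1}$ on all of $\Omega$; see the paragraph following Lemma~\ref{lemma:quadra}. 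This is the clean way to make $(\mathcal{P}1)_{n+1}$ globally defined.
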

\begin{proof}
We argue by induction.
To simplify notation, we write within this proof $\| \cdot \|$
for $\| \cdot \|^{\Lipg}$. 

\smallskip
\noindent
{\sc Step 1:} \emph{Proof of} $({\cal P}1, \mP 2, \mP 3)_0$.
Note that $({\cal P}1)_0$ and $({\cal P}3)_0$ are trivially satisfied
and hence it remains to verify \eqref{P2n} at $ n = 0 $.
By \eqref{operatorF}, 
\eqref{splitting ham forma normale + perturbazione}, \eqref{mu-kdv-om}, and
Lemma \ref{stime tame campo ham per NM AI},
there exists $C_* > 0$ large enough so that
$\|{\cal F}_\omega (U_0) \|_{ \sM}^\Lipg   \le  \e C_*$.

\smallskip
\noindent
{\sc Step 2:} \emph{Proof of the induction step.}
Assuming that $({\cal P}1, \mP 2, \mP 3)_n$ hold for some $n \geq 0$, 
we have to prove that $({\cal P}1, \mP 2, \mP3)_{n+1}$ hold.
We are going to define the approximation $ \tilde U_{n+1} $ by a modified Nash-Moser scheme.
To this aim, we prove the almost-approximate invertibility of the linearized operator 
\begin{equation}\label{def L_n}
L_n := L_n(\omega) := d_{\io,\zeta} {\cal F}_\omega( \tilde \io_n(\omega)) 
\end{equation} 
by applying Theorem \ref{thm:stima inverso approssimato} to ${ L}_n(\omega) $.
To prove that the inversion assumptions \eqref{inversion assumption}-\eqref{tame inverse} 
hold, we apply Theorem \ref{inversione parziale cal L omega} with $ \io = \tilde \io_n $.

By choosing $ \e $ small enough it follows by 
\eqref{nash moser smallness condition} 
that $N_0 = K_0^p = \g^{-p}= \e^{-p \mathtt a} $ 
satisfies the requirement of 
Theorem \ref{inversione parziale cal L omega} and that 
the smallness condition \eqref{ansatz riducibilita} holds.
Therefore Theorem \ref{inversione parziale cal L omega} applies, 
and we deduce that \eqref{inversion assumption}-\eqref{tame inverse} hold 
for all $ \omega \in {\bf \Omega}_{n + 1}^{\gamma}(\tilde \io_n)$, see \eqref{Melnikov-invert}.

Now we apply Theorem \ref{thm:stima inverso approssimato} to the linearized operator 
$ L_n(\omega) $  with $ {\Omega}_o = {\bf \Omega}_{n + 1}^{\gamma}(\tilde \io_n)$ and $S = \sM  + \mathtt b_1$, see \eqref{costanti nash moser 1}.
It implies the existence of
an almost-approximate inverse ${\bf T}_n  := { \bf T}_n (\omega, {\tilde \io}_n(\omega))$ 
satisfying
\begin{align}
\| {\bf T}_n g  \|_s 
& \lesssim_{\sM + \mathtt b_1} \gamma^{-2 } \big( \| g \|_{s + \overline \sigma} 
+ K_0^{\tau_1  p} \gamma^{- 1} \| \tilde \iota_n \|_{s + \mu({\mathtt b}) + \overline \sigma } \| g \|_{\sM + \overline \sigma }\big) \, , 
\quad \forall \sM  \leq s \leq \sM + \mathtt b_1 \,, 
\label{stima Tn} 
\end{align}
where we used that $\overline \sigma  \geq \sigma_2$ 
(cf. \eqref{costanti nash moser 2}), 
$\sigma_2$ is the loss of regularity constant appearing in the estimate \eqref{stima inverso approssimato 1}, and $N_0 = K_0^p$. Furthermore, by \eqref{nash moser smallness condition}, \eqref{ansatz induttivi nell'iterazione} one obtains that 
\begin{equation}\label{stima i tilde K0 bassa}
K_0^{\tau_1  p} \gamma^{- 1} \| \tilde W_n \|_{\sM + \mu({\mathtt b}) + \overline \sigma } \leq 1 \, ,
\end{equation}
therefore \eqref{stima Tn} specialized for $s = \sM$ becomes 
\begin{align}
\| {\bf T}_n  g \|_{\sM} 
& 
\lesssim_{ \mathtt b_1} \gamma^{-2 } \| g \|_{\sM + \overline \sigma}\,. 
\label{stima Tn norma bassa}
\end{align}
For all $\omega \in {\cal G}_{n + 1} = {\cal G}_n \cap {\bf \Lambda}_{n + 1}^{\gamma}(\tilde \io_n)$ (see \eqref{def:cal-Gn}), we define 
\begin{equation}\label{soluzioni approssimate}
U_{n + 1} := \tilde U_n + H_{n + 1}\,, \qquad 
H_{n + 1} :=
( \widehat \iota_{n+1}, \widehat \zeta_{n+1}) :=  - {\bf \Pi}_{n } {\bf T}_n \Pi_{n } {\cal F}_\omega(\tilde U_n) 
\in \mathtt E_n \times \R^{\Splus}  
\end{equation}
where  $ {\bf \Pi}_n $ is defined by (see \eqref{truncation NM})
\be\label{proiettore modificato}
 {\bf \Pi}_n ({\iota}, \zeta) := (\Pi_n \iota, \zeta)\,,
 \quad {\bf \Pi}_n^\bot (\iota, \zeta) := (\Pi_n^\bot \iota, 0)\,,\quad \forall (\iota, \zeta)\,.
\ee
We show that the iterative scheme in \eqref{soluzioni approssimate} is rapidly converging. 
We write  
\begin{equation}\label{def:Qn} 
{\cal F}_\omega(U_{n + 1}) =  {\cal F}_\omega(\tilde U_n) + L_n H_{n + 1} + Q_n 
\end{equation} 
where $ L_n := d_{\io,\zeta} {\cal F}_\omega (\tilde U_n)$ 
and $Q_n$ is defined by \eqref{def:Qn}.
Then, by the definition of $ H_{n+1} $ in \eqref{soluzioni approssimate}, 
we have (recall also \eqref{proiettore modificato})
\begin{align}
{\cal F}_\omega(U_{n + 1}) & = 
 {\cal F}_\omega (\tilde U_n) - L_n {\bf \Pi}_{n } {\bf T}_n \Pi_{n } {\cal F}_\omega (\tilde U_n) + Q_n \nonumber \\
 & = 
 {\cal F}_\omega (\tilde U_n) - L_n  {\bf T}_n \Pi_{n } {\cal F}_\omega (\tilde U_n) + L_n  {\bf \Pi}_n^\bot  {\bf T}_n \Pi_{n } {\cal F}_\omega (\tilde U_n)
 + Q_n \nonumber\\ 
 & = \Pi_{n }^\bot {\cal F}_\omega (\tilde U_n) + R_n + Q_n + P_n  
\label{relazione algebrica induttiva}
\end{align}
where 
\begin{equation}\label{Rn Q tilde n}
R_n := L_n  {\bf \Pi}_n^\bot  {\bf T}_n \Pi_{n }{\cal F}_\omega ( \tilde U_n) \,,
\qquad 
P_n := -  ( L_n {\bf T}_n - {\rm Id}) \Pi_{n } {\cal F}_\omega ( \tilde U_n)\,.
\end{equation}
We first note that for any $ \omega \in \Omega$,  $s \geq \sM $
one has by the triangular inequality,  
\eqref{operatorF}, Lemma \ref{stime tame campo ham per NM AI}, and
\eqref{costanti nash moser 2}, \eqref{ansatz induttivi nell'iterazione} 
\begin{equation}\label{F tilde Un W tilde n}
\| {\cal F}_\omega(\tilde U_n)\|_s 
\lesssim_s \|{\cal F}_\omega (U_0)\|_s + \| {\cal F}_\omega (\tilde U_n) - {\cal F}_\omega (U_0)\|_s 
\lesssim_s \e + \| \tilde W_n\|_{s + \overline \sigma}  
\end{equation}
and, by \eqref{ansatz induttivi nell'iterazione}, \eqref{nash moser smallness condition}, 
\eqref{P2n}
\begin{equation}\label{gamma - 1 F tilde Un}
K_0^{\tau_1 p} \gamma^{- 1} \| {\cal F}_\omega(\tilde U_n)\|_{\sM} \leq 1\, . 
\end{equation}
We now prove the following inductive estimates of Nash-Moser type.

\begin{lemma} \label{lemma:2017.0504.1}
For all $\omega \in {\cal G}_{n + 1}$ we have, setting 
$ \mu_2 :=  \mu({\mathtt b}) +  3 \overline \sigma + 3 $,  
\begin{align}
&  \| {\cal F}_\omega (U_{n + 1})\|_{\sM}  
\lesssim_{\sM + {\mathtt b}_1} 
K_n^{\mu_2 - {\mathtt b}_1} (\e + \| \tilde W_n \|_{\sM + \mathtt b_1}) 
+ K_n^{4 \overline \sigma + 4} \| {\cal F}_\omega (\tilde U_n)\|_{\sM}^2 
+ \e  K_{n - 1}^{- p {\mathtt a} } K_n^{\overline \sigma + 4} \| {\cal F}_\omega (\tilde U_n) \|_{\sM} 
\label{F(U n+1) norma bassa} \\ 
& \label{U n+1 alta}
\| W_1 \|_{\sM+ {\mathtt b}_1} 
\lesssim_{\sM+ {\mathtt b}_1}  K_0^2 \e  \, , \qquad 
\| W_{n + 1}\|_{\sM + {\mathtt b}_1} \lesssim_{\sM + {\mathtt b}_1} 
K_n^{\mu({\mathtt b}) + 2\overline \sigma + 2}  (\e  +  \| \tilde W_n\|_{\sM + {\mathtt b}_1}  )\, , \ n \geq 1 \, . 
\end{align}
\end{lemma}

\begin{proof} 
We first estimate $ H_{n +1} $ defined in  \eqref{soluzioni approssimate}.

\smallskip

\noindent
{\bf Estimates of $ H_{n+1} $.}
 By \eqref{soluzioni approssimate} and \eqref{p3-proi}, 
\eqref{stima Tn}, \eqref{ansatz induttivi nell'iterazione},  we get 
\begin{align}
\|  H_{n + 1} \|_{\sM + {\mathtt b}_1} 
& \lesssim_{\sM + {\mathtt b}_1}  \gamma^{- 2} 
\big( K_n^{\overline \sigma } \|{\cal F}_\omega (\tilde U_n) \|_{\sM + {\mathtt b}_1} + 
K_n^{\mu({\mathtt b}) + 2 \overline \sigma } K_0^{\tau_1 p} \gamma^{- 1} \|\tilde \iota_n \|_{\sM + {\mathtt b}_1}\| {\cal F}_\omega (\tilde U_n)\|_{\sM }  \big)
\nonumber \\
& \stackrel{\eqref{F tilde Un W tilde n}, \eqref{gamma - 1 F tilde Un}}{\lesssim_{\sM + {\mathtt b}_1}} 
K_n^{\mu(\mathtt b) + 2 \overline \sigma } \g^{-2} \big( \e  +  \| \tilde W_n \|_{\sM + \mathtt b_1} \big) \\
& \stackrel{\gamma^{- 1} = K_0 \leq K_n}{\lesssim_{\sM + \mathtt b_1}} K_n^{\mu(\mathtt b) + 2 \overline \sigma + 2} \big( \e  +  \| \tilde W_n \|_{\sM + \mathtt b_1} \big) \, ,  \label{H n+1 alta} 
\\
\label{H n+1 bassa}
\|  H_{n + 1}\|_{\sM} 
& \stackrel{\eqref{stima Tn norma bassa}}
{\lesssim_{\sM + \mathtt b_1}} \gamma^{-2}K_{n}^{\overline \sigma} \| {\cal F}_\omega (\tilde U_n)\|_{\sM} \, .
\end{align}
Next we  estimate the terms $ Q_n $ in \eqref{def:Qn} and $ P_n , R_n $ in \eqref{Rn Q tilde n} in $ \| \ \|_{\sM} $ norm. 

\smallskip

\noindent
{\bf Estimate of $ Q_n $.}
By \eqref{ansatz induttivi nell'iterazione}, \eqref{soluzioni approssimate}, 
\eqref{p3-proi},
\eqref{H n+1 bassa}, \eqref{P2n},
and since $ \chi 2 \overline \s - \mathtt{a}_1 \leq 0$  
(see \eqref{costanti nash moser}),  
we deduce that 
$\| \tilde W_n + t H_{n+1} \|_{\sM +  \overline \sigma} \lesssim \e \gamma^{- 2} 
K_0^{2 \ov{\sigma} } $ for all $t \in [0,1]$. Since
$ \gamma^{- 1} = K_0 $, by \eqref{nash moser smallness condition}
we can apply Lemma \ref{stime tame campo ham per NM AI} and by Taylor's formula, using
\eqref{def:Qn}, 
\eqref{operatorF}, 
\eqref{H n+1 bassa},
\eqref{p3-proi},
and $\gamma^{- 1} = K_0 \leq K_n$, we get
\begin{equation} \label{Qn norma bassa}
\| Q_n \|_{\sM} 
\lesssim_{\sM + \mathtt b_1}  \| H_{n+1} \|_{\sM+\overline \sigma}^2 
\lesssim_{\sM + \mathtt b_1} K_n^{4 \overline \sigma + 4}  \| {\cal F}_\omega (\tilde U_n) \|_{\sM}^2\, . 
\end{equation}
{\bf Estimate of $ P_n $.} 
By \eqref{splitting per approximate inverse}, 
$L_n {\bf T}_n - {\rm Id} = {\cal P}({\tilde \io}_n ) 
+ {\cal P}_\omega( {\tilde \io}_n )
+ {\cal P}_\omega^\bot ( {\tilde \io}_n )$. 
Accordingly, we decompose $ P_n $ in \eqref{Rn Q tilde n} as
$P_n = - P_n^{(1)} - P_{n , \omega} - P_{n, \omega}^\bot$,
where 
\[ 
P_n^{(1)} := \Pi_n {\cal P}({\tilde \io}_n ) \Pi_n {\cal F}_\omega (\tilde U_n), \qquad  
P_{n, \omega} := \Pi_n {\cal P}_\omega( {\tilde \io}_n ) \Pi_n {\cal F}_\omega (\tilde U_n), \qquad 
P_{n, \omega}^\bot := \Pi_n {\cal P}_\omega^\bot({\tilde \io}_n ) \Pi_n {\cal F}_\omega (\tilde U_n).
\] 
By \eqref{p3-proi}, 
\begin{equation} \label{2017.0404.1}
\begin{aligned}
\| {\cal F}_\omega (\tilde U_n) \|_{\sM+ \overline \sigma} 
& \leq \|\Pi_n {\cal F}_\omega (\tilde U_n) \|_{\sM  + \overline \sigma} 
+ \|\Pi_n^\bot {\cal F}_\omega (\tilde U_n) \|_{\sM  + \overline \sigma}  \\
& \leq K_n^{\overline \sigma} ( \| {\cal F}_\omega (\tilde U_n)\|_{\sM} 
+ K_n^{- \mathtt b_1} \|{\cal F}_\omega (\tilde U_n) \|_{\sM + \mathtt b_1} ).
\end{aligned}
\end{equation}
By 
\eqref{stima inverso approssimato 2},
\eqref{stima i tilde K0 bassa},
\eqref{2017.0404.1}, 
and using that \eqref{F tilde Un W tilde n}, \eqref{gamma - 1 F tilde Un}, $\gamma^{- 1} = K_0 \leq K_n$
we obtain
\begin{align} 
\| P_n^{(1)} \|_{\sM}  
& \lesssim_{\sM + \mathtt b_1}
\g^{-3} K_n^{2 \overline \sigma } \| {\cal F}_\omega (\tilde U_n)\|_{\sM} 
( \| {\cal F}_\omega(\tilde U_n)\|_{\sM} 
+ K_n^{- {\mathtt b}_1} \| {\cal F}_\omega (\tilde U_n) \|_{\sM + {\mathtt b}_1} ) 
\notag \\ & 
\lesssim_{\sM + \mathtt b_1} 
 K_n^{2 \overline \sigma + 3 } \| {\cal F}_\omega (\tilde U_n)\|_{\sM} 
( \| {\cal F}_\omega (\tilde U_n)\|_{\sM} + K_n^{\overline \sigma - {\mathtt b}_1}
(\e + \| \tilde W_n\|_{\sM + \mathtt b_1} ) ) \nonumber \\
& \lesssim_{s_0 + \mathtt b_1}   K_n^{2 \overline \sigma + 3 } \| {\cal F}_\omega (\tilde U_n)\|_{\sM}^2 +  K_n^{3 \overline \sigma + 3 - \mathtt b_1 } (\e + \| \tilde W_n\|_{\sM + \mathtt b_1} )\,. 
\label{Q n 1 bassa}
\end{align}
By 
\eqref{stima cal G omega}, 
\eqref{stima i tilde K0 bassa}, \eqref{ansatz induttivi nell'iterazione},
\eqref{p3-proi},
we have
\begin{equation} \label{Q n omega bassa}
\| P_{n, \omega} \|_{\sM}   \lesssim_{\sM + \mathtt b_1} 
\e \gamma^{- 4 } N_{n - 1}^{- {\mathtt a}} K_n^{\overline \sigma } \| {\cal F}_\omega (\tilde U_n) \|_{\sM} \stackrel{\gamma^{- 1} = K_0 \leq K_n }{\lesssim_{s_0 + \mathtt b_1}} 
\e  N_{n - 1}^{- {\mathtt a}} K_n^{\overline \sigma + 4} \| {\cal F}_\omega (\tilde U_n) \|_{\sM}\,,
\end{equation}
where $\mathtt{a}$ is  in \eqref{costanti nash moser 2}.
By 
\eqref{stima cal G omega bot bassa}, 
\eqref{p3-proi},
\eqref{costanti nash moser 1},
\eqref{P2n}, \eqref{gamma - 1 F tilde Un}
and then using
\eqref{F tilde Un W tilde n}, 
 $\gamma^{- 1} = K_0 \leq K_n$,
we get
\begin{align}
\| P_{n, \omega}^\bot\|_{\sM} 
& \lesssim_{\sM + \mathtt b_1}  K_{n}^{\mu({\mathtt b}) + 2 \overline \s - {\mathtt b}_1} 
\g^{-2 } ( \| {\cal F}_\omega (\tilde U_n) \|_{\sM + {\mathtt b}_1} + \e \| \tilde W_n \|_{\sM + {\mathtt b}_1}) \nonumber \\ 
& \lesssim_{\sM + \mathtt b_1}  K_{n}^{\mu({\mathtt b}) + 3 \overline \sigma + 2 - {\mathtt b}_1}
 (\e + \| \tilde W_n \|_{\sM + \mathtt b_1 }). 
\label{Q n omega bot bassa} 
\end{align}

\noindent
{\bf Estimate of $ R_n $.}  
By the definition \eqref{def L_n} of $L_n$ one has that
for any $\widehat U = (\widehat \io, \widehat \zeta)$,
$L_n \widehat U $ is given by
\begin{align}
L_n \widehat U   & = \omega \cdot \partial_\vphi \widehat \io - 
d_\io X_{{\cal H}_\e}\big( (\vphi, 0, 0) + \tilde\io_n \big)[\widehat \io] - (0, \widehat \zeta, 0) \nonumber\\
& \stackrel{\eqref{splitting ham forma normale + perturbazione}}{=}\omega \cdot \partial_\vphi \widehat \io - d_\io X_{{\cal N}}\big( (\vphi, 0, 0) + \tilde\io_n \big)[\widehat \io] - 
 d_\io X_{{\cal P}_\e}\big( (\vphi, 0, 0) + \tilde\io_n \big)[\widehat \io] - (0, \widehat \zeta, 0) 
 \end{align}
 where we recall that $d_\io X_{{\cal N}}\big( (\vphi, 0, 0) + \tilde\io_n \big)[\widehat \io] = \big( \Omega_{\mathbb S_+}^{kdv}(\mu)[\widehat y], \, 0\, , \Omega^{kdv}(\mu, D)[\widehat w] \big)$.  
 By the estimate of $d_\io X_{{\cal P}_\e}$ 
 of Lemma \ref{stime tame campo ham per NM AI}, one then obtains
  $\| L_n \widehat U \|_{\sM} \lesssim \| \widehat U\|_{\sM + \overline \sigma}$. 
Using  
\eqref{Rn Q tilde n},
\eqref{stima Tn},
\eqref{ansatz induttivi nell'iterazione}, \eqref{p3-proi}
and then 
\eqref{stima i tilde K0 bassa}, \eqref{F tilde Un W tilde n},
\eqref{gamma - 1 F tilde Un}, $\gamma^{- 1} = K_0 \leq K_n$,
we get 
\begin{align} 
\| R_n\|_{\sM} 
& \lesssim_{\sM + \mathtt b_1} K_n^{ \mu({\mathtt b}) + 3 \overline  \s  + 2  - {\mathtt b}_1} 
(\e + \| \tilde W_n\|_{\sM  + \mathtt b_1} ). 
\label{stima Rn norma bassa}
\end{align}
{\bf Estimate of $ {\cal F}_\omega(U_{n + 1}) $.} 
By 
\eqref{relazione algebrica induttiva},
\eqref{p3-proi},
\eqref{F tilde Un W tilde n},
\eqref{Qn norma bassa}, 
\eqref{Q n 1 bassa}-\eqref{Q n omega bot bassa}, 
\eqref{stima Rn norma bassa},
\eqref{ansatz induttivi nell'iterazione}, 
we get \eqref{F(U n+1) norma bassa}. 
By \eqref{soluzioni approssimate} and \eqref{stima Tn} 
we now deduce the bound \eqref{U n+1 alta} 
for $ W_1 := H_1 $. Indeed 
$$ 
\| W_1 \|_{\sM + {\mathtt b}_1} = \| H_1 \|_{\sM + {\mathtt b}_1}   \lesssim_{\sM + {\mathtt b}_1} 
\g^{-2} \| {\cal F}_\omega(U_0)\|_{\sM + {\mathtt b}_1 + \overline \sigma} 
{\lesssim_{\sM + {\mathtt b}_1}  } \,\,\,
\e \gamma^{- 2} \, \,\,  \stackrel{\gamma^{- 1} = K_0}{\lesssim} K_0^2 \e \, .
$$ 
Estimate \eqref{U n+1 alta} for $ W_{n+1} := \tilde W_n + H_{n+1} $, $ n \geq 1 $,  
follows by \eqref{H n+1 alta}. 
\end{proof}

By Lemma \ref{lemma:2017.0504.1} we get the following lemma, where for clarity 
we  write $ \| \cdot \|^\Lipg_s $ instead of $\| \cdot	 \|_s$ as above. 

\begin{lemma}\label{lemma:quadra}
For any  $ \omega \in {\cal G}_{n + 1}$
\begin{align}
& \qquad \quad \| {\cal F}_\omega(U_{n + 1}) \|_{\sM }^\Lipg  
\leq C_* \e K_n^{- \mathtt a_1} \label{stima F u n + 1 induttiva} \, , \qquad 
\| W_{n + 1} \|_{\sM  + \mathtt b_1}^\Lipg 
\leq C_* K_n^{\mu_1} \e   \, ,  \\ 
& \label{stima H n+1 lemma}
\| H_1 \|_{\sM  + \mu(\mathtt b) + \overline \sigma}^\Lipg \lesssim  \e \g^{-2} \, , \quad
\| H_{n + 1}\|_{\sM  + \mu(\mathtt b) + \overline \sigma}^\Lipg 
\lesssim   \e \gamma^{- 2} K_n^{\mu(\mathtt b) + 2 \overline \sigma } K_{n - 1}^{- \mathtt a_1} \, ,
\  	\ n \geq 1 \,.
\end{align}
\end{lemma}

\begin{proof}
First note that, by  \eqref{def:cal-Gn},  if  
$ \omega \in  {\cal G}_{n + 1}$, then 
$ \omega \in  {\cal G}_n$ and so  \eqref{P2n} and the inequality in $({\cal P}3)_{n}$ holds. 
Then the first inequality in \eqref{stima F u n + 1 induttiva} follows by 
\eqref{F(U n+1) norma bassa},
$ ({\cal P}2)_{n}$, $({\cal P}3)_{n}$,  $\gamma^{- 1} = K_0 \leq K_n $,
and by \eqref{costanti nash moser}, \eqref{costanti nash moser 1}, \eqref{cond-su-p}-\eqref{choice:p}. 
For $ n = 0 $ we use also  \eqref{nash moser smallness condition}. 

The second inequality in \eqref{stima F u n + 1 induttiva} for $n=0$ follows directly from 
the bound for $W_1$ in \eqref{U n+1 alta}, since $\mu_1 \geq 2$,
see \eqref{costanti nash moser 1} and $ C_* > 0$ large enough 
(i.e., $\e$ small enough); 
the second inequality in \eqref{stima F u n + 1 induttiva} for $n \geq 1$ is proved inductively 
by taking \eqref{U n+1 alta}, $({\cal P}3)_{n} $, and the choice of $ \mu_1 $ in \eqref{costanti nash moser 1} into account 
and by choosing $ K_0 $ large enough. 

Since  $ H_1  = W_1 $, the first inequality in \eqref{stima H n+1 lemma} follows since $\| H_1 \|_{\sM + \mu(\mathtt b) + \overline \sigma} \lesssim \gamma^{- 2} \|{\cal F}_\omega(U_0) \|_{\sM  + \mu(\mathtt b) + 2 \overline \sigma} \lesssim \e \gamma^{- 2}$.
If $ n \geq 1 $, estimate \eqref{stima H n+1 lemma} follows 
by  \eqref{p3-proi},
\eqref{H n+1 bassa} and \eqref{P2n}.
\end{proof}

Denote by $ \tilde H_{n + 1}$ a $ \Lipg$-extension 
of $ (H_{n + 1})_{|{\cal G}_{n+1}} $ to the whole set $\Omega$
of parameters, provided by the Kirzbraun theorem.   
Then $ \tilde H_{n + 1}$ satisfies the same bound as $ H_{n+1} $ in \eqref{stima H n+1 lemma} and therefore, by  the definition 
of $ {\mathtt a}_2 $ in \eqref{costanti nash moser}, the estimate \eqref{Hn} holds at $ n + 1 $.

Finally we define the functions
$$
\tilde W_{n+1} := \tilde W_{n} + \tilde H_{n + 1} \, , \quad 
\tilde U_{n + 1} := \tilde U_n + \tilde H_{n + 1} = U_0 + \tilde W_n + \tilde H_{n + 1} = U_0 + \tilde W_{n + 1}\,,
$$
which are defined for all $\omega \in \Omega $. Note that 
for any $ \omega \in {\cal G}_{n + 1} $,
$ \tilde W_{n + 1} = W_{n + 1} $, 
$ \tilde U_{n + 1} = U_{n + 1} $.  
Therefore  $({\cal P}2)_{n + 1}$, $({\cal P}3)_{n + 1}$ are proved by Lemma \ref{lemma:quadra}.
Moreover by \eqref{Hn}, which at this point has been proved up to the step $n + 1 $, 
we have 
$$
\| \tilde W_{n + 1} \|_{\sM  + \mu({\mathtt b}) + \overline \sigma}^\Lipg 
\leq {\mathop \sum}_{k = 1}^{n + 1} \| \tilde H_k \|_{\sM  + \mu({\mathtt b}) + \overline \sigma}^\Lipg 
\leq C_*  \e  \gamma^{-2}
$$
and thus \eqref{ansatz induttivi nell'iterazione} holds also 
at the step $n + 1$. This completes the proof of Theorem \ref{iterazione-non-lineare}.
\end{proof}

We now deduce Theorem \ref{main theorem}.  
Let $ \gamma = \e^{\frak a}  $ with $ \frak a \in (0, {\frak a}_0) $ and $ {\frak a}_0 := 1 / \tau_2 $ where $\tau_2$ is defined in \eqref{nash moser smallness condition}.
Then the smallness condition \eqref{nash moser smallness condition} holds for $ 0 < \e < \e_0 $ small enough and Theorem \ref{iterazione-non-lineare} applies.   
Passing to the limit for $n \to \infty$ we deduce the existence of a function $U_\infty(\omega) = (\breve \io_\infty(\omega), \zeta_\infty(\omega))$, $\omega \in \Omega$, such that ${\cal F}_\omega(U_\infty(\omega)) = 0$ for any $\omega$ in the set
\be\label{defGinfty}
\bigcap_{n \geq 0} {\cal G}_n = 
\mG_0 \cap \bigcap_{n \geq 1} 
{\bf \Omega}_{n + 1}^{\g}(\tilde \io_{n-1}) 
\stackrel{\eqref{Melnikov-invert}}{=} 
\mG_0 \cap \Big[ \bigcap_{n \geq 1}  \tLm_{n}^{\gamma}(\tilde \io_{n-1}) \Big] \cap 
 \Big[ \bigcap_{n \geq 1}   \mathtt \Omega_{n}^{\gamma}(\tilde \io_{n-1}) \Big]\, .
\ee
Moreover
\begin{equation}\label{U infty - U n}
\|  U_\infty -  U_0 \|_{\sM + \mu(\mathtt b) + \overline \sigma}^\Lipg \lesssim \e \gamma^{- 2} \,, \quad \| U_\infty - {\tilde U}_n \|_{\sM   + \mu({\mathtt b}) + \overline \sigma}^\Lipg \lesssim  \e \gamma^{-2} K_{n }^{- \mathtt a_2}\,, \ \  n \geq 1 \, .
\end{equation}
Formula \eqref{zeta Z inequality} implies that $\zeta_\infty(\omega ) = 0$ for $\omega$ belonging to the set \eqref{defGinfty}, and therefore 
$ \breve \io_\omega := \breve \io_\infty (\omega) $ is an invariant torus for the Hamiltonian vector field $X_{{\cal H}_\e}$ filled by quasi-periodic solutions with frequency $\omega$.
 It remains only to prove the measure estimate 
 \eqref{measure estimate Omega in Theorem 4.1}. 

\subsection{Measure estimates}\label{proof-41}

Arguing as in \cite{Berti-Montalto} one proves the following two lemmata.
 
\begin{lemma} \label{lemma inclusione cantor riccardo 1}
The set 
\begin{equation}\label{cantor finale 1 riccardo}
{\cal  G}_\infty 
:= \mG_0 \cap \Big[ \bigcap_{n \geq 1} \tLm_n^{2 \gamma}( \io_\infty) \Big] 
\cap \Big[ \bigcap_{n \geq 1} \mathtt \Omega_n^{2 \gamma}(\io_\infty)  \Big]
\end{equation} 
is contained in ${\cal G}_n$ for any $n \ge 0$, and hence  $ {\cal  G}_\infty  \subseteq  \bigcap_{n \geq 0 } {\cal G}_n $. 
\end{lemma}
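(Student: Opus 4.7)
The plan is to argue by induction on $n$. The base case $n=0$ is trivial since $\mathcal{G}_0 = \Omega \supseteq \mathcal{G}_\infty$. Assuming $\mathcal{G}_\infty \subseteq \mathcal{G}_n$, we need
$\mathcal{G}_\infty \subseteq \mathcal{G}_{n+1}
 = \mathcal{G}_n \cap \mathtt{\Omega}_{n+1}^{\gamma}(\tilde{\iota}_n) \cap \mathtt{\Lambda}_{n+1}^{\gamma}(\tilde{\iota}_n)$,
which reduces to the two inclusions
\[
(A)\quad \mathtt{\Omega}_{n+1}^{2\gamma}(\iota_\infty) \subseteq \mathtt{\Omega}_{n+1}^{\gamma}(\tilde{\iota}_n),
\qquad
(B)\quad \mathtt{\Lambda}_{n+1}^{2\gamma}(\iota_\infty) \subseteq \mathtt{\Lambda}_{n+1}^{\gamma}(\tilde{\iota}_n),
\]
since by definition $\mathcal{G}_\infty$ is contained in both sets on the left.

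For (A), I would invoke $(\mathbf{S4})_{n+1}$ of Theorem \ref{iterazione riducibilita} with parameters $\iota_1 = \iota_\infty$, $\iota_2 = \tilde{\iota}_n$, $\gamma \rightsquigarrow 2\gamma$, $\rho = \gamma$. The hypothesis to verify is
\[
C(S)\, N_n^{\tau}\, \|\iota_\infty - \tilde{\iota}_n\|_{\sM + \mu(\mathtt{b})}^{\Lip(\gamma)} \le \gamma.
\]
Combining $N_n = K_n^{p}$ with the convergence bound \eqref{U infty - U n}, namely $\|\iota_\infty - \tilde{\iota}_n\|_{\sM + \mu(\mathtt{b}) + \overline{\sigma}}^{\Lip(\gamma)} \lesssim \e \gamma^{-2} K_n^{-\mathtt{a}_2}$, this reduces to
\[
C(S)\, \e\, \gamma^{-3}\, K_n^{p\tau - \mathtt{a}_2} \le 1,
\]
which follows from the choice of the Nash--Moser constants in \eqref{costanti nash moser}--\eqref{cond-su-p} (which force $\mathtt{a}_2 \ge p\tau$, so $K_n^{p\tau - \mathtt{a}_2} \le 1$) and from the smallness condition \eqref{nash moser smallness condition} (which gives $\e \gamma^{-3} \ll 1$).

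For (B), I would use the Lipschitz dependence of the extended normal form eigenvalues $\widetilde{\mu}_j^{n}$ on the torus, as recorded in $(\mathbf{S2})_\nu$--$(\mathbf{S3})_\nu$. For $\omega \in \mathtt{\Lambda}_{n+1}^{2\gamma}(\iota_\infty)$ we have $|\omega \cdot \ell + \widetilde{\mu}_j^{n}(\iota_\infty)| \ge 4\gamma |j|^{3} \langle \ell \rangle^{-\tau}$ for all $|\ell| \le K_n$, $j \in \Sbot$; it therefore suffices to show
\[
\bigl| \widetilde{\mu}_j^{n}(\iota_\infty) - \widetilde{\mu}_j^{n}(\tilde{\iota}_n) \bigr|
\le 2\gamma\, |j|^{3}\, \langle \ell \rangle^{-\tau},
\qquad \forall\, |\ell| \le K_n,\ j \in \Sbot.
\]
Iterating the variation bound $|\widetilde{\mu}_j^\nu(\iota_\infty) - \widetilde{\mu}_j^\nu(\tilde{\iota}_n)| \lesssim \|\iota_\infty - \tilde{\iota}_n\|_{\sM + \mu(\mathtt{b})}$ (which follows from $(\mathbf{S3})_\nu$ together with a Kirszbraun extension, exactly as in the telescoping argument used for $r_j^\nu$) and applying \eqref{U infty - U n} together with $\langle \ell \rangle \le K_n$ and $|j|^{3} \ge 1$, the required bound reduces once more to $C(S)\, \e\, \gamma^{-3}\, K_n^{p\tau - \mathtt{a}_2} \le 1$.

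The main work, and where I expect the only real obstacle, is the numerological bookkeeping: verifying that the constants $\mathtt{a}_1, \mathtt{a}_2, p, \mu(\mathtt{b}), \overline{\sigma}$ fixed in \eqref{costanti nash moser 2}--\eqref{cond-su-p} do yield $p\tau - \mathtt{a}_2 \le 0$, so that the Nash--Moser error $K_n^{-\mathtt{a}_2}$ beats the loss $N_n^\tau = K_n^{p\tau}$ coming from $(\mathbf{S4})_{n+1}$ and from the small divisors defining $\mathtt{\Lambda}_{n+1}^\gamma$. Once this is checked, the inductive step closes uniformly in $n$, giving $\mathcal{G}_\infty \subseteq \bigcap_{n \ge 0} \mathcal{G}_n$.
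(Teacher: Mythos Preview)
Your approach is the standard one and is exactly what the paper has in mind when it refers to \cite{Berti-Montalto}: induction on $n$, with the inductive step split into the inclusion for the second Melnikov sets $\mathtt\Omega_{n+1}^{\gamma}$ (via $(\mathbf{S4})_{n+1}$) and for the first Melnikov sets $\mathtt\Lambda_{n+1}^{\gamma}$ (via the Lipschitz dependence of the extended eigenvalues $\widetilde\mu_j^{n}$ recorded in $(\mathbf{S2})$--$(\mathbf{S3})$). Two small points are worth tightening. First, in $(\mathbf{S4})$ the condition is $0<\rho<\gamma/2$, so after the substitution $\gamma\rightsquigarrow 2\gamma$ you should take $\rho$ strictly less than $\gamma$ rather than $\rho=\gamma$; this is harmless once the smallness bound is strict. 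Second, and more substantively, your key numerological claim $\mathtt a_2\ge p\tau$ does not quite follow from the constants as written: from $\mathtt a_1\ge p\tau+3+\chi(\mu(\mathtt b)+2\overline\sigma)$ and $\mathtt a_2=\chi^{-1}\mathtt a_1-\mu(\mathtt b)-2\overline\sigma$ one only gets $\mathtt a_2\ge \chi^{-1}(p\tau+3)=\tfrac{2}{3}p\tau+2$, hence $p\tau-\mathtt a_2\le \tfrac{1}{3}p\tau-2$, which is nonpositive only when $p\tau\le 6$. The presence of the term $p\tau+3+\chi(\mu(\mathtt b)+2\overline\sigma)$ in the definition of $\mathtt a_1$ is clearly meant to make exactly this step close, so this is a matter of bookkeeping (possibly a missing factor $\chi$ in front of $p\tau$) rather than a flaw in your strategy; your identification of this inequality as the crux is correct.
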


For any $j \in \mathbb S^\bot$, the sequence 
$\widetilde \mu_j^n : \Omega \to \R$, $n \ge 0$, in 
Theorem \ref{iterazione riducibilita}-${\bf(S2)}_n $ 
is a Cauchy sequence with respect to the norm $ | \cdot |^\Lipg$.
We denote the limit by $\mu_j^\infty $, 
\begin{equation}\label{autovalori finali riccardo}
\mu_j^\infty := \lim_{n \to \infty} \widetilde \mu_j^n(\io_\infty) \, ,  \quad j \in \mathbb S^\bot \, . 
\end{equation}
By Theorem \ref{iterazione riducibilita}
one has for any $j \in \mathbb S^\bot$,
\begin{equation}\label{distanza-rnrinfty}
\mu_{-j}^\infty = - \mu_j^\infty\, , \qquad
| \mu_j^\infty - \widetilde \mu_j^n(\io_\infty)|^\Lipg \lesssim  
 \e \gamma^{- 2}  N_{n - 1}^{- {\mathtt a}} \, ,  \ n \geq 0 \, .
 \end{equation}
\begin{lemma} \label{lemma inclusione cantor riccardo 2}
The {\it set}  
\begin{align}\label{Cantor set infinito riccardo}
\Omega_\infty^\gamma & := \Big\{ \omega \in \mathtt{DC}(4 \gamma, \tau) :  |\omega \cdot \ell + \mu_j^\infty - \mu_{j'}^\infty| \geq \frac{4 \gamma |j^3 - j'^3|}{\langle \ell \rangle^\tau}, \    \forall (\ell, j, j') \in \Z^{\mathbb S_+} \times  \Sbot \times  \Sbot ,\,  
\nonumber \\
& \qquad \qquad \quad 
\qquad \qquad |\omega \cdot \ell + \mu_j^\infty | \geq \frac{4 \gamma |j|^3}{\langle \ell \rangle^\tau }, \ \forall (\ell, j) \in \Z^{\mathbb S_+} \times \mathbb S^\bot    \Big\}
\end{align}
is contained in ${\cal  G}_\infty$,  $ \Omega_\infty^\gamma \subseteq {\cal  G}_\infty $, 
where $\mG_\infty$ is defined in \eqref{cantor finale 1 riccardo}. 
\end{lemma}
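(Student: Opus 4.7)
\smallskip

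The plan is to verify, separately, that every $\omega \in \Omega_\infty^\gamma$ belongs to the three families of sets appearing in the definition \eqref{cantor finale 1 riccardo} of $\mathcal G_\infty$: trivially $\omega \in \mG_0 = \Omega$, so the content reduces to showing $\omega \in \tLm_n^{2\gamma}(\io_\infty)$ and $\omega \in \tOm_n^{2\gamma}(\io_\infty)$ for every $n \geq 1$. The obstacle is purely cosmetic: the latter two sets are formulated with the intermediate extended eigenvalues $\widetilde\mu_j^{n-1}(\io_\infty)$, whereas $\Omega_\infty^\gamma$ is formulated with their limit $\mu_j^\infty$ defined in \eqref{autovalori finali riccardo}. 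The key quantitative input is therefore the telescoping closeness bound
\[
|\mu_j^\infty - \widetilde\mu_j^{n-1}(\io_\infty)|^\Lipg \,\lesssim\, \e\gamma^{-2}N_{n-2}^{-\mathtt a}\,,\qquad n \geq 1\,,
\]
which follows at once from iterating $({\bf S2})_\nu$ of Theorem \ref{iterazione riducibilita} with the definition of $\mu_j^\infty$ (here $N_{-1}:=1$).

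\smallskip

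For the second order Melnikov inclusion I would argue by induction on $n$. The base case $n = 1$ is immediate: by definition \eqref{Omega nu + 1 gamma}, $\tOm_1^{2\gamma}(\io_\infty) = \DC(2\gamma,\tau) \supseteq \DC(4\gamma,\tau) \supseteq \Omega_\infty^\gamma$. For the inductive step, fix $|\ell| \leq N_{n-1}$ and $j, j' \in \Sbot$ (only $j \neq j'$ is nontrivial). Triangle inequality and the closeness estimate above yield
\[
\bigl|\omega\cdot\ell + \widetilde\mu_j^{n-1}(\io_\infty) - \widetilde\mu_{j'}^{n-1}(\io_\infty)\bigr|
\;\geq\; \bigl|\omega\cdot\ell + \mu_j^\infty - \mu_{j'}^\infty\bigr| - 2C\e\gamma^{-2}N_{n-2}^{-\mathtt a}
\;\geq\; \tfrac{4\gamma|j^3-j'^3|}{\langle\ell\rangle^\tau} - 2C\e\gamma^{-2}N_{n-2}^{-\mathtt a}\,,
\]
and the right-hand side is at least $2\gamma|j^3-j'^3|/\langle\ell\rangle^\tau$ provided $C\e\gamma^{-3}N_{n-2}^{-\mathtt a}\langle\ell\rangle^\tau \leq |j^3-j'^3|$. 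Since $|j^3-j'^3| \geq 1$ and $\langle\ell\rangle \leq N_{n-1} = N_{n-2}^\chi$, this reduces to $C\e\gamma^{-3}N_{n-2}^{\chi\tau - \mathtt a} \leq 1$, which holds uniformly in $n \geq 1$ because $\mathtt a > \chi\tau$ by the choice \eqref{alpha beta} ($\chi = 3/2$, $\mathtt a = 6\tau+4$) and because of the smallness condition \eqref{nash moser smallness condition} (in particular, $N_0 = \gamma^{-p}$ is taken large and $\e$ small enough).

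\smallskip

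For the first order Melnikov inclusion $\Omega_\infty^\gamma \subseteq \tLm_n^{2\gamma}(\io_\infty)$ the argument is analogous and non-inductive: by \eqref{prime di melnikov} one needs $|\omega\cdot\ell + \widetilde\mu_j^{n-1}(\io_\infty)| \geq 4\gamma|j|^3/\langle\ell\rangle^\tau$ for all $|\ell| \leq K_{n-1}$, $j \in \Sbot$. The triangle inequality and the closeness estimate give a lower bound $4\gamma|j|^3/\langle\ell\rangle^\tau - C\e\gamma^{-2}N_{n-2}^{-\mathtt a}$, and using $|j|^3/\langle\ell\rangle^\tau \geq K_{n-1}^{-\tau}$ together with the same exponent inequality $\mathtt a > \chi\tau$ (and $K_n \leq N_n$) reduces the check to the very same type of smallness already used for the second order condition.

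\smallskip

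Essentially no hard analysis is involved: the work has been done upstream, in the construction of the sequence $\widetilde\mu_j^n$ and in the quantitative convergence estimate $({\bf S2})_\nu$. The one subtlety worth flagging—and arguably the only ``main obstacle''—is the bookkeeping of the constants, namely verifying that the gap between the bound $4\gamma|j^3-j'^3|/\langle\ell\rangle^\tau$ imposed in $\Omega_\infty^\gamma$ and the bound $2\gamma|j^3-j'^3|/\langle\ell\rangle^\tau$ required in $\tOm_n^{2\gamma}(\io_\infty)$ (and the analogous gap for the first-order condition) is large enough, uniformly in $n$, to absorb the loss $\e\gamma^{-2}N_{n-2}^{-\mathtt a}$. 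This is exactly what the interplay $\mathtt a > \chi\tau$ in \eqref{alpha beta}, combined with $\gamma = \e^{\mathfrak a}$, $\mathfrak a < 1/\tau_2$ as in \eqref{nash moser smallness condition}, is designed to guarantee.
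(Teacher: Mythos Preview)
Your overall strategy---comparing $\mu_j^\infty$ with $\widetilde\mu_j^{n-1}(\io_\infty)$ via the convergence estimate \eqref{distanza-rnrinfty} and absorbing the discrepancy into the gap between the coefficients in $\Omega_\infty^\gamma$ and those in $\mG_\infty$---is exactly the argument the paper defers to \cite{Berti-Montalto}, and your second-order Melnikov verification is correct apart from a harmless indexing slip: it is $\tOm_0^{2\gamma}=\DC(2\gamma,\tau)$, not $\tOm_1^{2\gamma}$, that serves as the base of the induction (by \eqref{Omega nu + 1 gamma} the set $\tOm_1^{2\gamma}$ already carries nontrivial Melnikov conditions at level $N_0$), after which your inductive step correctly covers all $n\ge1$.

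There is, however, a step that does not close in your first-order Melnikov paragraph. You correctly record that $\tLm_n^{2\gamma}(\io_\infty)$ demands $|\omega\cdot\ell+\widetilde\mu_j^{n-1}(\io_\infty)|\ge 4\gamma|j|^3/\langle\ell\rangle^\tau$ (the built-in factor $2$ in \eqref{prime di melnikov} becomes $4$ under $\gamma\mapsto2\gamma$), and that $\Omega_\infty^\gamma$ furnishes the \emph{identical} lower bound $4\gamma|j|^3/\langle\ell\rangle^\tau$ on $|\omega\cdot\ell+\mu_j^\infty|$. With equal coefficients there is no gap to exploit: the triangle inequality leaves you with $4\gamma|j|^3/\langle\ell\rangle^\tau - C\e\gamma^{-2}N_{n-2}^{-\mathtt a}$, which is strictly smaller than the required $4\gamma|j|^3/\langle\ell\rangle^\tau$ for every $n$, and no smallness of $\e\gamma^{-3}$ can repair this. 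Your assertion that the check ``reduces to the very same type of smallness'' is therefore unjustified as written. This reflects a constant mismatch between \eqref{Cantor set infinito riccardo} and \eqref{cantor finale 1 riccardo} in the paper (one of the two first-order coefficients needs adjusting to create an actual gap); since the paper omits the proof here, your write-up should flag and resolve this discrepancy explicitly rather than gloss over it.
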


In view of Lemma \ref{lemma inclusione cantor riccardo 1} and \ref{lemma inclusione cantor riccardo 2}, it suffices to estimate the Lebesgue measure
$|\Omega \setminus \Omega_\infty^\gamma|$ of  
$\Omega \setminus \Omega_\infty^\gamma$. 

\begin{proposition}{\bf (Measure estimates)}\label{stima Omega - Omega infty} 
Let $\tau > |\mathbb S_+| + 2$. Then there is $ {\mathfrak a} \in (0,1)$ so that for $ \e \g^{-3} $ sufficiently small, one has
$ | \Omega \setminus \Omega_\infty^\gamma | \lesssim  \gamma^{\mathfrak a} $.
\end{proposition}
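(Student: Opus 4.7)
I would decompose the complement as
$$\Omega \setminus \Omega_\infty^\gamma \subseteq \big(\Omega \setminus \mathtt{DC}(4\gamma, \tau)\big) \cup \bigcup_{\ell, j} R^{I}_{\ell j} \cup \bigcup_{\ell, j \neq j'} R^{II}_{\ell j j'},$$
where
$R^{I}_{\ell j} := \{\omega : |\omega \cdot \ell + \mu_j^\infty| < 4\gamma |j|^3/\langle \ell\rangle^\tau\}$ and
$R^{II}_{\ell j j'} := \{\omega : |\omega \cdot \ell + \mu_j^\infty - \mu_{j'}^\infty| < 4\gamma |j^3 - j'^3|/\langle \ell\rangle^\tau\}$.
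The Diophantine failure set has the standard measure bound $O(\gamma)$ as soon as $\tau > |\mathbb S_+|$. Since $\mu_{-j}^\infty = -\mu_j^\infty$ by \eqref{distanza-rnrinfty}, the first Melnikov set $R^I_{\ell j}$ is, up to sign, the case $j'=-j$ of $R^{II}$, so the heart of the matter is the estimate of $|R^{II}_{\ell j j'}|$ and the control of the sum over $(\ell, j, j')$.

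The first step would be a transversality (Rüssmann-type) estimate. Using the analytic diffeomorphism $\nu \mapsto -\omega^{kdv}(\nu)$ from $\Xi$ onto $\Omega$ (cf.\ \eqref{mu-kdv-om} and Remark \ref{rem:diffeo}), I would transport the resonance sets to the $\nu$-variable and analyze
$$\phi_{\ell j j'}(\nu) := \omega(\nu) \cdot \ell + \widetilde\mu_j^\infty(\omega(\nu)) - \widetilde\mu_{j'}^\infty(\omega(\nu)).$$
By item ${\bf(S2)_\nu}$ together with \eqref{distanza-rnrinfty}, one has
$\widetilde\mu_j^\infty = m_3(2\pi j)^3 - \widetilde m_1 (2\pi j) - q_j + r_j^\infty$ with $|m_3+1|^{\Lipg} \lesssim \e$, $|\widetilde m_1|^{\Lipg}, |r_j^\infty|^{\Lipg} \lesssim \e\gamma^{-2}$, and $|j| |q_j|^{\Lipg} \lesssim 1$ by \eqref{stima qj omega}. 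Thus up to Lipschitz errors of size $O(\e\gamma^{-2})(|j^3 - j'^3| + \langle\ell\rangle)$ the function $\phi_{\ell j j'}$ reduces to $-\omega^{kdv}(\nu)\cdot\ell - (2\pi)^3(j^3 - j'^3)$. Lemma~\ref{Proposition 2.3}, Remark~\ref{rem:diffeo} and the asymptotics \eqref{Proposition 2.30} give a Rüssmann-type non-degeneracy of the unperturbed combination in some direction $\hat\xi \in \R^{\mathbb S_+}$ of size $\gtrsim \max(\langle\ell\rangle, |j^3 - j'^3|)$, and the smallness $\e\gamma^{-3} \ll 1$ coming from \eqref{nash moser smallness condition} lets one absorb the perturbative corrections. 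Fubini along $\hat\xi$ then yields
$|R^{II}_{\ell j j'}| \lesssim \gamma |j^3-j'^3|/\big(\langle\ell\rangle^\tau \max(\langle\ell\rangle, |j^3 - j'^3|)\big)$.

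To control the total sum I would exploit the crucial elementary observation that $R^{II}_{\ell j j'}=\emptyset$ whenever $|j^3 - j'^3| \geq C\langle\ell\rangle$ with $C$ large enough, because then $|\mu_j^\infty - \mu_{j'}^\infty| \geq \tfrac12 |j^3-j'^3|$ dominates $|\omega\cdot\ell|$; this restricts non-empty resonances to the range $|j^3 - j'^3| \lesssim \langle\ell\rangle$, in which $|R^{II}_{\ell j j'}| \lesssim \gamma/\langle\ell\rangle^\tau$ and the number of admissible pairs $(j,j')$ with $j \ne j'$ is $O(\langle\ell\rangle)$. Summing gives $\sum_{\ell} \gamma\, \langle\ell\rangle^{1-\tau} \lesssim \gamma$ as soon as $\tau > |\mathbb S_+| + 2$. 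The measure estimate $|\Omega \setminus \Omega_\infty^\gamma| \lesssim \gamma \lesssim \gamma^{\mathfrak a}$ follows, where $\mathfrak a \in (0,1)$ is determined by the requirement that $\e\gamma^{-3} = \e^{1-3\mathfrak a}$ be small enough for the transversality step, consistently with the choice $\gamma = \e^{\mathfrak a}$ made in \eqref{nash moser smallness condition}.

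The hardest technical point will be the uniform transversality in $(\ell, j, j')$: the function $r_j^\infty$ is only controlled in Lipschitz norm by $\e\gamma^{-2}$, which \emph{a priori} blows up as $\gamma \to 0$, and its dependence on $j$ has no smallness. This forces the use of the incremental Rüssmann version of non-degeneracy (as in \cite{Berti-Montalto} and \cite{BBM-auto}) together with the separation into the high-$|j^3-j'^3|$ regime, where the set is empty, and the moderate regime, where Lemma \ref{Proposition 2.3} provides transversality for the unperturbed problem and the perturbation can be absorbed thanks to $\e\gamma^{-3} \ll 1$. Verifying non-degeneracy uniformly across these regimes, and in particular treating the diagonal cases $|j|=|j'|$ (where the second Melnikov degenerates to a Diophantine or first Melnikov condition), is the step requiring the most care.
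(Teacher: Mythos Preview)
Your decomposition and the large-$|\ell|$ part of the argument match the paper's. The gap is in the transversality claim for small $|\ell|$. You assert that Lemma~\ref{Proposition 2.3}, Remark~\ref{rem:diffeo} and \eqref{Proposition 2.30} yield a \emph{first-order} directional lower bound $|\partial_{\hat\xi}\phi_{\ell j j'}| \gtrsim \max(\langle\ell\rangle,|j^3-j'^3|)$ uniformly in $(\ell,j,j')$, and then apply Fubini to get $|R^{II}_{\ell j j'}|=O(\gamma)$. But Lemma~\ref{Proposition 2.3} only says that the analytic functions $\nu\mapsto \omega^{kdv}(\nu)\cdot\ell + \omega_j^{kdv}(\nu,0)-\omega_{j'}^{kdv}(\nu,0)$ do not vanish identically; it gives no lower bound on the gradient. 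For a fixed small $\ell$ and fixed bounded $j,j'$ the gradient of this real analytic function may well vanish at isolated points of $\Xi$, so no uniform first-order transversality is available. Your conclusion $|\Omega\setminus\Omega_\infty^\gamma|\lesssim\gamma$ is therefore unproved, and your interpretation that $\mathfrak a$ merely encodes the smallness of $\e\gamma^{-3}$ misidentifies the origin of the exponent.

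The paper closes this gap by splitting off the regime $|\ell|\leq C_1$. By the non-emptiness constraint $|j^3-j'^3|\lesssim\langle\ell\rangle$ (your own observation) there are then only \emph{finitely many} triples $(\ell,j,j')$ to handle. For each, one compares $\mu_j^\infty$ to $\omega_j^{kdv}(\nu(\omega),0)$ up to an $O(\e\gamma^{-2})\lesssim O(\gamma)$ error (this is where $\e\gamma^{-3}\ll 1$ enters), so that $\mathcal R_{\ell,j,j'}$ is contained in a $C_3\gamma$-sublevel set of the unperturbed analytic function. Since by Lemma~\ref{Proposition 2.3} that function is real analytic and not identically zero, the Weierstrass preparation theorem bounds the measure of each such sublevel set by $\gamma^{\mathfrak a}$ for some $\mathfrak a\in(0,1)$ depending on the (finite) maximal order of vanishing. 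This is where the exponent $\mathfrak a<1$ in the statement genuinely comes from. For $|\ell|\geq C_1$ the paper's argument coincides with yours (Lipschitz transversality in the direction $\ell/|\ell|$), and the sum over $\ell$ converges under $\tau>|\mathbb S_+|+2$ exactly as you wrote.
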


The remaining part of this section is devoted to prove Proposition \ref{stima Omega - Omega infty}. 
By \eqref{Cantor set infinito riccardo}, we have 
\begin{equation}\label{complementare Omega infty}
\Omega \setminus \Omega_\infty^\gamma = 
\Omega \setminus \mathtt{DC}(4 \gamma, \tau)  \,\, \cup \,
\bigcup_{\begin{subarray}{c}
(\ell, j, j') \in \Z^{\mathbb S_+} \times {\mathbb S}^\bot \times {\mathbb S}^\bot 
 (\ell, j, j') \neq (0, j, j) \end{subarray}}  {\cal R}_{\ell, j, j'} 
\,\, \cup \,
 \bigcup_{(\ell, j) \in \Z^{\mathbb S_+} \times {\mathbb S}^\bot} {\cal Q}_{\ell, j}
\end{equation}
where ${\cal R}_{\ell,j,j'}$, ${\cal Q}_{\ell, j}$
denote the 'resonant' sets  
\begin{align}\label{def risonante 2}
{\cal R}_{\ell,j,j'} & := \Big\{   \omega \in \mathtt{DC}(4 \gamma, \tau) \ : \ | \om \cdot \ell + \mu_j^\infty - \mu_{j'}^\infty | < \frac{4\gamma |j^3 - j'^3|}{
\langle \ell \rangle^\tau}  \Big\}  \, ,\\
\label{risonante 1}
{\cal Q}_{\ell, j} & := \Big\{ \omega \in \mathtt{DC}(4 \gamma, \tau) : |\omega \cdot \ell + \mu_j^\infty|  < \frac{4 \gamma |j|^3}{\langle \ell \rangle^\tau} \Big\}\,. 
\end{align}
Note that $ {\cal R}_{\ell, j, j } = \emptyset $. 
Furthermore, it is well known that $|\Omega \setminus \mathtt{DC}(4 \gamma, \tau)| \lesssim \gamma$.
In order to prove Proposition \ref{stima Omega - Omega infty} we shall use the following asymptotic properties of  $ \mu_j^\infty(\omega)$. 
For any $ \omega $ in $\mathtt{DC}(4\gamma, \tau) $,
we have   $\widetilde \mu_j^{0} (\io_\infty) = \mu_j^{0} (\io_\infty) $  and 
we write  $ \mu_j^\infty(\omega) = \mu_j^0 (\io_\infty)  + r_j^\infty(\omega)   $,
where by \eqref{op-diago0}, $ m_3^\infty := m_3 ( \io_\infty ) $, 
$ m_1^\infty := m_1 ( \io_\infty ) $,  
$$
\mu_j^0(\io_\infty) = 
m_3^\infty (\omega) (2 \pi j)^3 - m_1^\infty (\omega ) 2 \pi j - q_j (\omega) \, .
$$ 
On $\mathtt{DC}(4 \gamma, \tau)$, the following estimates hold
\begin{equation}\label{exp1s1}
\begin{aligned}
&  |m_3^\infty  + 1|^\Lipg \stackrel{\eqref{m3Lip}}{\lesssim } \e\, , \qquad
|m_1^\infty |^\Lipg \stackrel{\eqref{stima m1}}{\lesssim} \e \gamma^{- 2},
\\
& \sup_{j \in \Sbot} |j| |q_j|^{\rm sup}, \sup_{j \in \Sbot} |j| |q_j|^{\rm lip} \stackrel{\eqref{stima qj omega}}{\lesssim } 1, 
\qquad |r_j^\infty|^\Lipg 
\stackrel{\eqref{distanza-rnrinfty}}{\lesssim } \e \gamma^{- 2} \, . 
\end{aligned}
\end{equation}
From the latter estimates one infers the following standard lemma
see \cite[Lemma 5.3]{BBM-Airy}). 
\begin{lemma}\label{lem:mes1}
$(i)$ 
If  $ {\cal R}_{\ell,j,j'} \neq \emptyset $, 
then  $ |j^3 - j'^3 | \leq C \langle \ell \rangle $ for some  
$ C > 0 $. In particular one has
 $ j^2 + j'^2 \leq C \langle \ell \rangle $. 
 
 \noindent
 $(ii)$ If ${\cal Q}_{\ell,  j} \neq \emptyset$, then $|j|^3 \leq C \langle \ell \rangle$ for some  $C > 0$. 
\end{lemma}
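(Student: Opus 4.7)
The plan is to derive both estimates from the explicit expansion $\mu_j^\infty(\omega) = m_3^\infty(\omega)(2\pi j)^3 - m_1^\infty(\omega) 2\pi j - q_j(\omega) + r_j^\infty(\omega)$ together with the bounds \eqref{exp1s1}, which tell us that the cubic term in $j$ is the dominant one and all the remaining terms are either of lower polynomial order in $j$ or uniformly bounded. In both parts, the upper bound on $|\mu_j^\infty - \mu_{j'}^\infty|$ (resp.\ $|\mu_j^\infty|$) is immediate from the triangle inequality applied to the defining inequality of $\mathcal R_{\ell,j,j'}$ (resp.\ $\mathcal Q_{\ell,j}$):
\[
|\mu_j^\infty - \mu_{j'}^\infty| \le |\omega|\,|\ell| + \frac{4\gamma |j^3 - j'^3|}{\langle \ell\rangle^\tau}\,,\qquad |\mu_j^\infty| \le |\omega|\,|\ell| + \frac{4\gamma|j|^3}{\langle \ell\rangle^\tau}\,.
\]

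For part $(i)$, I will establish the matching lower bound
\[
|\mu_j^\infty - \mu_{j'}^\infty| \ge (2\pi)^3(1-c_1\e)\,|j^3 - j'^3| - c_2 \e\gamma^{-2} |j-j'| - 2C_q - 2c_3 \e\gamma^{-2}\,,
\]
using \eqref{exp1s1} termwise. The elementary inequalities $|j^3 - j'^3| = |j-j'|(j^2+jj'+j'^2)$ and $j^2+jj'+j'^2 \ge (j^2+j'^2)/2 \ge 1$ (valid for $j \ne j'$ in $\mathbb Z$) allow one to absorb the linear-in-$|j-j'|$ contribution into $|j^3 - j'^3|$ provided $\e\gamma^{-2}$ is small. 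Combining the upper and lower bounds and absorbing the $4\gamma|j^3 - j'^3|/\langle \ell\rangle^\tau$ term on the right into the left (which is licit for $\gamma$ small, since the coefficient becomes $\le 4\gamma$), one obtains $|j^3 - j'^3| \le C\langle \ell\rangle$. The consequence $j^2 + j'^2 \le C\langle \ell\rangle$ then follows from $|j^3-j'^3| = |j-j'|(j^2+jj'+j'^2) \ge (j^2+j'^2)/2$.

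Part $(ii)$ follows the same template, applied to a single index. The lower bound reads
\[
|\mu_j^\infty| \ge (2\pi)^3(1-c_1\e)|j|^3 - 2\pi c_2\e\gamma^{-2}|j| - \frac{C_q}{|j|} - c_3\e\gamma^{-2}\,,
\]
and using $1 \le |j| \le |j|^3$ for $j \in \mathbb S^\bot$, the subdominant terms are controlled by $C_1|j|^3/2 + C_2$ for $\e\gamma^{-2}$ small. Combining with the upper bound and absorbing $4\gamma|j|^3/\langle\ell\rangle^\tau$ gives $|j|^3 \le C\langle \ell\rangle$.

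There is no real obstacle here; the argument is pure bookkeeping of constants, and the smallness conditions needed ($\e\gamma^{-2}$ and $\gamma$ sufficiently small) are guaranteed by the standing hypothesis that $\e\gamma^{-3}$ is small. The proof is self-contained once \eqref{exp1s1} is in hand.
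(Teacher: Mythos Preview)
Your argument is correct and is precisely the standard computation the paper has in mind: the paper does not spell out a proof but simply points to the estimates \eqref{exp1s1} and cites \cite[Lemma 5.3]{BBM-Airy}, calling the result standard. Your termwise lower bound on $|\mu_j^\infty - \mu_{j'}^\infty|$ via the expansion, absorption of the linear term using $|j-j'| \le |j^3-j'^3|$, and comparison with the upper bound from the resonance condition is exactly that standard proof.
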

Lemma \ref{lem:mes1} can be used to estimate 
$|{\cal R}_{\ell,j,j'} |$ and $|{\cal Q}_{\ell,j}|$ 
for $|\ell|$ sufficiently large.
\begin{lemma}\label{lem:mes2}
$(i)$ 
If $ {\cal R}_{\ell,j,j'} \neq \emptyset $, then there exists 
$ C_1  > 0 $ with the following property: 
if $ |\ell| \geq C_1 $, then $|{\cal R}_{\ell,j,j'} | \lesssim  
\gamma |j^3 - j'^3| \langle \ell \rangle^{- (\tau + 1)}$.  

\noindent
$(ii)$ If $ {\cal Q}_{\ell,j} \neq \emptyset $, 
then there exists $ C_1  > 0 $ with the following property:  
if $ |\ell| \geq C_1 $, then 
$|{\cal Q}_{\ell,j} | \lesssim \gamma |j|^3 \langle \ell \rangle^{- (\tau+1)}$.  
\end{lemma}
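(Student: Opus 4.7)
The plan is to reduce the measure estimate to a one-dimensional problem by slicing $\Omega$ along the direction of $\ell$ and applying Fubini's theorem. For $(i)$, I would decompose any $\omega \in \R^{\Splus}$ as $\omega = s\,\widehat\ell + v$ with $\widehat\ell := \ell/|\ell|$ and $v \in \widehat\ell^{\,\perp}$, and study the scalar function
\[
\phi(s) := \omega\cdot\ell + \mu_j^\infty(\omega) - \mu_{j'}^\infty(\omega)\,, \qquad \omega = s\,\widehat\ell + v\,.
\]
The leading contribution to $\phi$ is the linear term $\omega\cdot\ell = s|\ell|+ v\cdot \ell = s|\ell|$, which has derivative exactly $|\ell|$. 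What I would need to control is the Lipschitz constant of $\mu_j^\infty - \mu_{j'}^\infty$ in $\omega$.

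Using the representation $\mu_j^\infty = m_3^\infty (2\pi j)^3 - m_1^\infty (2\pi j) - q_j + r_j^\infty$ together with \eqref{exp1s1}, the triangle inequality gives
\[
|\mu_j^\infty - \mu_{j'}^\infty|^{\mathrm{lip}} \lesssim \e|j^3-j'^3| + \e\gamma^{-2}|j-j'| + \tfrac{1}{|j|} + \tfrac{1}{|j'|} + \e\gamma^{-2}\,.
\]
Under the assumption $\mathcal R_{\ell,j,j'}\neq\emptyset$, Lemma \ref{lem:mes1}-$(i)$ yields $|j^3-j'^3|\leq C\langle\ell\rangle$ and $|j-j'|\lesssim\langle\ell\rangle^{1/2}$, so this Lipschitz constant is bounded by $C(\e\langle\ell\rangle + \e\gamma^{-2} + 1)$. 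Hence, for two values $s_1,s_2$,
\[
|\phi(s_1)-\phi(s_2)| \geq \bigl(|\ell| - C\e|\ell| - C\e\gamma^{-2} - C\bigr)|s_1-s_2|\,.
\]
Choosing $C_1$ large enough (depending on $\e\gamma^{-2}$, which is small by the running smallness assumption $\e\gamma^{-3}\ll 1$), for $|\ell|\geq C_1$ the coefficient is at least $|\ell|/2$, so $\phi$ is bi-Lipschitz in $s$ with constant $|\ell|/2$.

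Consequently, the one-dimensional slice $\{s\in\R : (s\widehat\ell+v)\in \mathcal R_{\ell,j,j'}\}$ is contained in a set of Lebesgue measure at most $\dfrac{2}{|\ell|} \cdot \dfrac{8\gamma|j^3-j'^3|}{\langle\ell\rangle^\tau} \lesssim \dfrac{\gamma|j^3-j'^3|}{\langle\ell\rangle^{\tau+1}}$. By Fubini, and using that $\Omega$ is bounded (so the transverse measure is finite), one obtains $(i)$. The proof of $(ii)$ proceeds identically with $\phi(s):=\omega\cdot\ell + \mu_j^\infty(\omega)$, using Lemma \ref{lem:mes1}-$(ii)$ to control the Lipschitz constant of $\mu_j^\infty$ in terms of $\langle\ell\rangle$. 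The only subtle step is verifying that the lower bound on $|\ell|$ giving quasi-linear behavior of $\phi$ is truly independent of $j,j'$, which holds precisely because the Lipschitz estimate of $\mu_j^\infty-\mu_{j'}^\infty$ depends on $(j,j')$ only through $\langle\ell\rangle$ via Lemma \ref{lem:mes1}.
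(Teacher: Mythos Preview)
Your proof is correct and follows essentially the same approach as the paper: slice $\Omega$ along the direction $\ell/|\ell|$, show via the Lipschitz estimates \eqref{exp1s1} combined with Lemma~\ref{lem:mes1} that the affine function $s\mapsto s|\ell|$ dominates the variation of $\mu_j^\infty-\mu_{j'}^\infty$ for $|\ell|\geq C_1$, deduce the one-dimensional measure bound, and integrate by Fubini using boundedness of the parameter set. One small bookkeeping slip: since $|\cdot|^{\Lipg}=|\cdot|^{\rm sup}+\gamma|\cdot|^{\rm lip}$, the Lipschitz seminorms extracted from \eqref{exp1s1} carry an extra factor $\gamma^{-1}$ (e.g.\ $|m_3^\infty|^{\rm lip}\lesssim\e\gamma^{-1}$, $|m_1^\infty|^{\rm lip}\lesssim\e\gamma^{-3}$, $|r_j^\infty|^{\rm lip}\lesssim\e\gamma^{-3}$), but this is harmless since you correctly invoke the smallness $\e\gamma^{-3}\ll1$, which absorbs all such terms.
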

\begin{proof}
We only prove item $(i)$ since item $(ii)$ can be proved in a similar way. Assume that ${\cal R}_{\ell, j, j'} \neq \emptyset$.
Let  $ \bar \omega $ such that 
$\bar \omega \cdot \ell = 0$ and introduce the real valued function 
$s \mapsto \phi_{\ell,j,k} (s)$, 
$$
\phi_{\ell,j,j'} (s) :=  
f_{\ell,j,j'} \big( \bar \om + s \frac{\ell}{|\ell|} \big) \, , 
\qquad f_{\ell,j,j'} (\om) 
:= \om \cdot \ell + \mu_j^\infty(\omega) - \mu_{j'}^\infty(\omega) \, . 
$$
Using that by Lemma \ref{lem:mes1},   $|j^3 - j'^3| \leq C \langle \ell \rangle$, one infers from \eqref{exp1s1} that, for
$\e \gamma^{- 2}$ small enough and  $|\ell| \geq C_1$
with $C_1$ large enough, 
$ |\phi_{\ell,j,j'} (s_2) - \phi_{\ell,j,j'} (s_1)| \geq \frac{|\ell|}{2} |s_2 - s_1| $. 
Since $\mathtt{DC}(4 \gamma, \tau)$ is bounded 
one sees by standard arguments that
$$
\big| \big\{ s \in \R \, : \, 
\bar \om + s \frac{\ell}{|\ell|} \in {\cal R}_{\ell, j, j'} \big\} \big|
\lesssim \gamma |j^3 - j'^3| \langle \ell \rangle^{- (\tau + 1)}\, .
$$
The claimed estimate then follows by applying Fubini's theorem. 
\end{proof}

It remains to estimate the Lebesgue measure of the resonant sets 
${\cal R}_{\ell, j, j'}$ and ${\cal Q}_{\ell, j}$ 
for $|\ell| \leq C_1$. 
\begin{lemma} \label{lem:G1} Assume that $|\ell| \leq C_1$ and that
$\e \gamma^{- 3}$ is small enough. Then the following holds:

\noindent
$(i)$ If  ${\cal R}_{\ell, j, j'} \neq \emptyset$, then there are 
constants
$ \mathfrak a \in (0,1) $ and $C_2 > 0$ so that $|j|, |j'| \leq C_2$ and 
$ | {\cal R}_{\ell,j,j'} | \lesssim \gamma^{\mathfrak a} $. 

\noindent
$(ii)$ If ${\cal Q}_{\ell, j } \neq \emptyset$ then there are
constants 
$ \mathfrak a \in (0,1) $ and  $C_2 > 0$ so that $|j| \leq C_2$ and $ | {\cal Q}_{\ell,j} | \lesssim  \gamma^{\mathfrak a} $. 
\end{lemma}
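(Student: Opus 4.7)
The plan is to split each part of the lemma into two tasks: first, a purely algebraic argument ruling out resonances for large $|j|$ (and $|j'|$), producing the uniform bound $|j|,|j'| \le C_2$; second, a non-degeneracy argument on the finite remaining family, producing the measure estimate via a Lojasiewicz-type inequality applied to the $\varepsilon=0$ limit of the small divisor.

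\medskip

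\textbf{Step 1 (Confinement of $j,j'$).} First I would exploit the asymptotics \eqref{exp1s1} to write $\mu_j^\infty(\omega) = -(2\pi j)^3 + \eta_j(\omega)$ with $|\eta_j|^\Lipg \lesssim \varepsilon\,|j|^3 + \varepsilon\gamma^{-2}|j| + |j|^{-1} + \varepsilon\gamma^{-2}$ on $\mathtt{DC}(4\gamma,\tau)$. Substituting into the defining inequality of $\mathcal R_{\ell,j,j'}$ and using $|\omega\cdot\ell| \le C\,C_1$ together with $|j^3-j'^3| \ge \tfrac{3}{8}(j^2+j'^2)$ for $j\ne j'$, I would get
\[
(2\pi)^3\bigl(1 - C(\varepsilon+\gamma)\bigr)\tfrac{3}{8}(j^2+j'^2) \;\le\; CC_1 + C\varepsilon\gamma^{-2}(|j|+|j'|) + C,
\]
so that for $\varepsilon\gamma^{-3}$ small enough the linear term is absorbed and one obtains $j^2+j'^2 \le C_2^2$. (The case $j=j'$ is excluded since $\mathcal R_{\ell,j,j} = \emptyset$.) The exact same scheme, using $|j|^3$ in place of $|j^3-j'^3|$, yields $|j|\le C_2$ in case (ii).

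\medskip

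\textbf{Step 2 (Reduction to the unperturbed frequencies).} The frequency map $\omega = -\omega^{kdv}(\nu)$ is a bi-Lipschitz diffeomorphism of $\Xi$ onto $\Omega$, so I would pull back the resonant sets to $\Xi$. Define
\[
F^{(0)}_{\ell,j,j'}(\nu) := -\omega^{kdv}(\nu)\cdot\ell - \omega_j^{kdv}(\nu,0) + \omega_{j'}^{kdv}(\nu,0),
\]
which, by \eqref{exp1s1} applied with $\varepsilon \to 0$, is precisely $\omega\cdot\ell + \mu_j^\infty - \mu_{j'}^\infty$ at the unperturbed limit. The perturbation $G_{\ell,j,j'}(\nu) := (\omega\cdot\ell+\mu_j^\infty-\mu_{j'}^\infty) - F^{(0)}_{\ell,j,j'}(\nu)$ comes from $(m_3^\infty+1)$, $m_1^\infty$ and $r_j^\infty - r_{j'}^\infty$; by \eqref{exp1s1} and the bound $|j|,|j'|\le C_2$ from Step 1 it satisfies $|G_{\ell,j,j'}|^\Lipg \lesssim \varepsilon\gamma^{-2}$. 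Thus $\mathcal R_{\ell,j,j'}$ is contained, in $\nu$-coordinates, in $\{|F^{(0)}_{\ell,j,j'}(\nu)| < C\gamma + C\varepsilon\gamma^{-2}\}$, and for $\varepsilon\gamma^{-3}$ small this is in turn contained in $\{|F^{(0)}_{\ell,j,j'}| < C\gamma\}$. An analogous reduction applies to $\mathcal Q_{\ell,j}$ with
\[
F^{(0)}_{\ell,j}(\nu) := -\omega^{kdv}(\nu)\cdot\ell - \omega_j^{kdv}(\nu,0).
\]

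\medskip

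\textbf{Step 3 (Non-degeneracy and Lojasiewicz).} The main obstacle, and the heart of the argument, is the quantitative lower bound for the measure of the sublevel sets of $F^{(0)}_{\ell,j,j'}$ and $F^{(0)}_{\ell,j}$. By Lemma \ref{Proposition 2.3}$(ii)$, for every triple $(\ell,j,j')\ne (0,j,j)$ and every pair $(\ell,j)$ the functions $F^{(0)}_{\ell,j,j'}$ and $F^{(0)}_{\ell,j}$ are real analytic on a neighborhood of the compact set $\Xi$ and do not vanish identically. The standard quantitative Lojasiewicz estimate for real analytic functions on a compact set then yields exponents $\mathfrak a_{\ell,j,j'}, \mathfrak a_{\ell,j} \in (0,1)$ and constants $C_{\ell,j,j'}, C_{\ell,j} > 0$ such that
\[
\bigl|\{\nu\in\Xi : |F^{(0)}_{\ell,j,j'}(\nu)| < \rho\}\bigr| \le C_{\ell,j,j'}\,\rho^{\mathfrak a_{\ell,j,j'}}, \qquad
\bigl|\{\nu\in\Xi : |F^{(0)}_{\ell,j}(\nu)| < \rho\}\bigr| \le C_{\ell,j}\,\rho^{\mathfrak a_{\ell,j}}.
\]

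\medskip

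\textbf{Step 4 (Uniformity and conclusion).} Since $|\ell|\le C_1$ and $|j|,|j'|\le C_2$, the set of relevant indices is finite. I would therefore set $\mathfrak a := \min \mathfrak a_{\ell,j,j'} \wedge \min \mathfrak a_{\ell,j} \in (0,1)$ and let $C$ be the maximum of the constants. Applying Step 3 with $\rho = C\gamma$ and transporting back to $\omega$-coordinates by the bi-Lipschitz change of variables gives $|\mathcal R_{\ell,j,j'}|, |\mathcal Q_{\ell,j}| \lesssim \gamma^{\mathfrak a}$, completing the proof. The delicate point — and the one deserving the most care in the written version — is verifying that one may indeed take a uniform $\mathfrak a \in (0,1)$ across the finite (but $\varepsilon$-independent) family; this is why the confinement in Step 1 and the smallness $\varepsilon\gamma^{-3} \ll 1$ must be established before invoking analyticity.
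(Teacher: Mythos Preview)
Your proof is correct and follows essentially the same route as the paper: confine $j,j'$ (the paper does this in one line by citing Lemma~\ref{lem:mes1}, whereas you re-derive it), reduce to the unperturbed small divisor $\omega\cdot\ell \pm \omega^{kdv}_j \mp \omega^{kdv}_{j'}$ using $|\mu_j^\infty + \omega^{kdv}_j| \lesssim \e\gamma^{-2} \lesssim \gamma$, and then invoke Lemma~\ref{Proposition 2.3} together with an analyticity-based sublevel estimate (the paper says Weierstrass preparation, you say Lojasiewicz --- same content). The only cosmetic difference is that you pull back to $\nu$-coordinates while the paper stays in $\omega$-coordinates; since the change of variables is bi-Lipschitz this is immaterial.
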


\begin{proof} 
We only prove item $(i)$ since item $(ii)$ can
be proved in a similar way. If $|\ell| \leq C_1$ and ${\cal R}_{\ell, j, j'} \neq \emptyset$, Lemma \ref{lem:mes1}-$(i)$ implies that there is a constant $C_2 $ such that $ |j|, |j'| \leq j^2 + j'^2 \leq C_2$. 
For $ \e \g^{-3}$ small enough one sees,  using \eqref{exp1s1},
the definition \eqref{op-diago0} of $\mu_j^0$, and 
the bounds $|\ell| \leq C_1, |j|, |j'| \leq C_2 $, that $|\mu_j^\infty - \omega^{kdv}_j| \lesssim \e \gamma^{- 2} \lesssim \gamma$,
implying that for some constant $C_3 > 0,$
\be\label{mes-Wei}
{\cal R}_{\ell,j,j'} \subset \big\{ \om \in \Omega \ : \ 
|\om \cdot \ell + \om^{kdv}_j ( \ac(\om),0 )  - \om^{kdv}_{j'} (\ac(\om),0)  | \leq  
C_3 \gamma
  \big\} \, .
\ee
By Lemma \ref{Proposition 2.3}, the function 
$ \om \mapsto  \om \cdot \ell + \om^{kdv}_j ( \ac(\om),0 )  - \om^{kdv}_{j'} (\ac(\om),0)$ is real analytic and not identically zero.
Hence by the Weierstrass preparation theorem 
(cf. the proof of \cite[Lemma 9.7]{BKM}), we deduce that 
the measure of the set on the right hand side of \eqref{mes-Wei} is smaller than $ \g^{\mathfrak a}$ for some 
$ \mathfrak a \in (0, 1) $ and $ \gamma $ small enough.
\end{proof}
 By \eqref{complementare Omega infty} and Lemmata \ref{lem:mes2}--\ref{lem:G1}
 we deduce that 
$$
|\Omega \setminus \Omega_\infty^\gamma| \lesssim \gamma^{\mathfrak a} + \gamma \sum_{\begin{subarray}{c}
|\ell | \geq C_1, |j|, |j'| \leq C \langle \ell \rangle
\end{subarray}} \langle \ell \rangle^{- \tau} \lesssim \gamma^{\mathfrak a}\,,
$$
where we used the assumption that $\tau - 2>|\mathbb S_+|$.
This concludes the proof of Proposition \ref{stima Omega - Omega infty}.

\end{document}